\def\Fbar{\overline{\mathbb{F}}}
\def\id{\mathrm{id}}
\def\ad{\mathrm{ad}}
\newcommand{\Mat}{\operatorname{M}}
\newcommand{\GL}{\operatorname{GL}}
\newcommand{\disc}{\operatorname{disc}}
\newcommand{\Ker}{\operatorname{Ker}}
\newcommand{\Aut}{\operatorname{Aut}}
\newcommand{\Root}{\operatorname{Root}}
\newcommand{\GA}{\operatorname{GA}}
\newcommand{\BAut}{\operatorname{BAut}}
\newcommand{\End}{\operatorname{End}}
\newcommand{\Irr}{\operatorname{Irr}}
\newcommand{\Inn}{\operatorname{Inn}}
\newcommand{\Vect}{\operatorname{span}}
\newcommand{\im}{\operatorname{Im}}
\newcommand{\car}{\operatorname{char}}
\newcommand{\tr}{\operatorname{tr}}
\newcommand{\SB}{\operatorname{SB}}
\newcommand{\SSB}{\operatorname{SSB}}
\newcommand{\rk}{\operatorname{rk}}
\newcommand{\Hom}{\operatorname{Hom}}
\renewcommand{\setminus}{\smallsetminus}
\def\F{\mathbb{F}}
\def\K{\mathbb{K}}
\def\R{\mathbb{R}}
\def\Q{\mathbb{Q}}
\def\N{\mathbb{N}}
\def\Z{\mathbb{Z}}
\renewcommand{\L}{\mathbb{L}}
\def\calA{\mathcal{A}}
\def\calB{\mathcal{B}}
\def\calC{\mathcal{C}}
\def\calD{\mathcal{D}}
\def\calE{\mathcal{E}}
\def\calH{\mathcal{H}}
\def\calM{\mathcal{M}}
\def\calN{\mathcal{N}}
\def\calQ{\mathcal{Q}}
\def\calU{\mathcal{U}}
\def\calW{\mathcal{W}}
\def\lcro{\mathopen{[\![}}
\def\rcro{\mathclose{]\!]}}
\theoremstyle{definition}
\newtheorem{Def}{Definition}[section]
\newtheorem{Not}[Def]{Notation}
\theoremstyle{plain}
\newtheorem{theo}{Theorem}[section]
\newtheorem{prop}[theo]{Proposition}
\newtheorem{cor}[theo]{Corollary}
\newtheorem{lemma}[theo]{Lemma}
\theoremstyle{plain}
\theoremstyle{remark}
\newtheorem{Rems}{Remarks}
\newtheorem{Rem}[Rems]{Remark}
\title{The free Hamilton algebra}
\author{Cl\'ement de Seguins Pazzis\footnote{Universit\'e de Versailles Saint-Quentin-en-Yvelines, Laboratoire de Math\'ematiques
de Versailles, 45 avenue des \'Etats-Unis, 78035 Versailles cedex, France}
\footnote{e-mail address: dsp.prof@gmail.com}}
\begin{document}

\thispagestyle{plain}

\maketitle

\begin{abstract}
Over an arbitrary field $\F$, let $p$ and $q$ be monic polynomials with degree $2$ in $\F[t]$.
The free Hamilton algebra of the pair $(p,q)$ is the free noncommutative algebra in two generators $a$ and $b$ subject only to the relations $p(a)=0=q(b)$.
Free Hamilton algebras are models of free products of two $2$-dimensional algebras over $\F$.
They can be viewed as the most elementary nontrivial noncommutative algebras over fields.

It has been recently observed that the free Hamilton algebra has surprising connections with quaternion algebras.
Here, we exploit these connections to investigate its zero divisors, group of units, maximal ideals, finite-dimensional subalgebras, and its automorphism group.
\end{abstract}

\vskip 2mm
\noindent
\emph{AMS Classification :} 16S10, 11E88, 16W22

\vskip 2mm
\noindent
\emph{Keywords :} free algebras, quaternion algebras, free groups, $2$-dimensional algebras

\begin{center}
\emph{Dedicated to Tom Laffey.}
\end{center}

\tableofcontents


\section{Introduction}

\subsection{Introduction for lay readers: A words play}

Our story starts with a fairly simple game of words. We take two letters $a,b$,
and we construct words in $a$ and $b$ of arbitrary length, requiring that the two letters $a$ and $b$ are never adjacent in a word, so $abababa$ and $bab$ are allowed, but not $baa$.
We also allow the empty word $()$.
Then we can take pondered chains (also known as linear combinations) of those words with real coefficients, like $2.abab-\sqrt{2}.ba+\pi.b+5.()$.
So we can add chains and multiply them with coefficients.
And next we consider a bilinear multiplication $*$ of these linear combinations, defined on the words as follows:
whenever we have two words, we try to concatenate them, but if this concatenation makes a pair $aa$ or $bb$ appear, we drop one of the occurrences of these letters,
so $abab$ and $ba$ multiply as $ababa$, not as $ababba$. Thus
$$(2.abab+3.aba)* (ab+b)=2.ababab+8.abab.$$
Then we can ask very simple questions: can we easily detect the zero divisors, i.e., the nonzero linear combinations of words $x$ for which there exists
another nonzero linear combination of words $y$ such that $x*y=0.()$ or $y*x=0.()$? Can we also easily detect the units, i.e., the linear combinations of words $x$ for which there exists another linear combination of words $y$ such that $x*y=()=y*x$? And our fellow algebraists will ask deeper questions on this algebra: what are the maximal ideals? the algebraic elements? the automorphisms?

This was just a simple game, and we can consider a different set of rules. What if, instead of simplifying the product $aa$ as $a$, we decide to
write off any product of words that would make this subword formally appear, and likewise with $b$?
So, in the above we would have
$$(2.abab+3.aba)* (ab+b)=2.ababab+3.abab.$$
But we could also have a mix of rules for the two letters $a$ and $b$, requiring to replace $aa$ with $a$ and to write off all concatenations that make $bb$ appear.
This way, we would have
$$(2.abab+3.aba)* (ab+b)=2.ababab+6.abab.$$
Another interesting variation of the set of rules would have us, every time we meet $aa$ or $bb$ in product computations, delete
the two occurrences, and then repeat the operation until no two identical letters appear side by side. In that case
$$(2.abab+3.aba)* (ab+b)=2.ababab+2.aba+3.a+3.abab.$$
And finally some of us might enjoy playing under the so-called ``semi-Hamilton rule book", requiring that anytime $aa$ or $bb$ appears formally in a product, one both deletes it \emph{and} multiplies the coefficient of the resulting word by $-1$. Thus e.g.,
$$(2.abab+3.aba)* (ab+b)=2.ababab-2.aba+3.a+3.abab.$$
The associative algebras we obtain in this way are, in some sense, the simplest examples of noncommutative algebras.
But, as incredible as it may sound, although the representation theory of the first one we have mentioned has been known for about a half-century, little was known until now on the \emph{deep} internal structure of those algebras. Wouldn't it be time for a change?

\subsection{The free Hamilton algebra}

We will now generalize the previous problem.

Throughout, we fix an arbitrary field $\F$, possibly of characteristic $2$. We also fix an indeterminate $t$ and two monic polynomials $p(t)$ and $q(t)$ with degree $2$ in $\F[t]$
(we will simply say that they are \textbf{quadratic polynomials}).
We recall that the trace of $p(t)$ is the opposite of its coefficient in $t$, and we denote it by $\tr(p)$ or $\tr p$. The constant coefficient of $p$ is
denoted by $N(p)$, so that $p(t)=t^2-(\tr p) t+N(p)$.
We also denote by $\Irr(\F)$ the set of all irreducible monic polynomials in $\F[t]$. Whenever possible, we drop the parentheses to designate polynomials, so
$p(t)$ is simply written $p$, and so on.

Our central object of study here is the associative unital $\F$-algebra
$$\calW_{p,q}:=\F\langle \mathbf{a},\mathbf{b}\rangle/(p(\mathbf{a}),q(\mathbf{b})),$$
defined as the quotient algebra of the free associative algebra $\F\langle \mathbf{a},\mathbf{b}\rangle$ in two noncommuting generators $\mathbf{a},\mathbf{b}$
by the two-sided ideal generated by the elements $p(\mathbf{a})$ and $q(\mathbf{b})$. We have decided to call it the \textbf{free Hamilton algebra}\footnote{To this day, no name had been attached to it.} of the pair $(p,q)$, and the homage to the discoverer of quaternions is almost self-evident: if $p$ and $q$ have trace zero then the free Hamilton algebra would be the definition of a (generalized) quaternion algebra given by a distracted student who has forgotten the skew-commutation rule!

Throughout, $a$ and $b$ will denote the respective cosets of $\mathbf{a},\mathbf{b}$ in $\calW_{p,q}$, which we call the \textbf{basic generators},
and their generated subalgebras $\F[a]$ and $\F[b]$, which will turn out to have dimension $2$ and hence be isomorphic respectively to $\F[t]/(p)$ and $\F[t]/(q)$,
are called the \textbf{basic subalgebras} of $\calW_{p,q}$, and we say that each one is \textbf{opposite} to the other one.
A vector of either $\F[a]$ or $\F[b]$ is called a \textbf{basic vector}, while an element of the subalgebra $\F$ is called \textbf{scalar}.
For two vectors $x$ and $y$ of $\calW_{p,q}$, we write $x \sim y$ to mean that there exists $\lambda \in \F^\times$
such that $x=\lambda y$.

The algebra $\calW_{p,q}$ will shortly be seen to be isomorphic to the \emph{free product} $\F[a] * \F[b]$ of the associative $\F$-algebras $\F[a]$ and $\F[b]$.
And conversely, given $2$-dimensional algebras $\calA$ and $\calB$ over $\F$,
there exist monic quadratic polynomials $p_1$ and $q_1$ such that $\calA\simeq \F[t]/(p_1)$ and $\calB \simeq \F[t]/(q_1)$,
and hence $\calA * \calB$ appears to be isomorphic to $\calW_{p_1,q_1}$.
It follows that studying free Hamilton algebras is entirely equivalent to studying free products of two $2$-dimensional $\F$-algebras.

Our initial motivation for considering the free Hamilton algebra was its connection to issues in the representation theory of algebras. For example, if $p=q=t^2-t$, of which the reader of our introduction will recognize the first set of rules for the word game, classifying the linear representations of $\calW_{p,q}$ amounts to classifying pairs of idempotent operators of a finite-dimensional vector space up to conjugation by an automorphism, i.e., pairs of idempotent matrices over $\F$ up to simultaneous conjugation by an invertible matrix.
This is actually a special case of the Four Subspace Problem \cite{Brenner,GelfandPonomarev,Nazarova}, in which the quadruple $(V_1,V_2,V_3,V_4)$ of subspaces of a finite-dimensional vector space $V$ is required to satisfy $V=V_1\oplus V_2=V_3 \oplus V_4$.
More generally, when both $p$ and $q$ split with simple roots, the
linear representations of $\calW_{p,q}$ are naturally deduced from the ones of $\calW_{t^2-t,t^2-t}$: popular examples include
the case $p=q=t^2-1$ when $\car(\F) \neq 2$, which amounts to determining the linear representations of the infinite dihedral group
\cite{BermanBuzaki,Djokovic}. See also \cite{Gindi} for the very special case where $\F=\R$ and $p=q=t^2+1$.
We are not aware however of a full classification for the linear representations of $\calW_{p,q}$, and
critically the case $p=t^2-t$ and $q=t^2$ over an algebraically closed field seems to be missing from the literature.
The case where $p$ and $q$ are split with simple roots also stands out as a very special case in the general study of
free product of algebras: it is proved in \cite{Buchananetal} that, when $\F$ is algebraically closed,
a free product of (at least two) semi-simple $\F$-algebras is of tame representation type only if it is isomorphic to $\F^2 * \F^2$,
i.e.\ to $\calW_{t^2-t,t^2-t}$.

We also mention the recent series of articles \cite{dSPregular,dSPsum,dSPprod}, in which a full characterization, in terms of invariant factors, has been given of the endomorphisms $u$ of a finite-dimensional vector space that admit a decomposition into a sum $u=a+b$ where the summands $a$ and $b$ are endomorphisms that satisfy $p(a)=q(b)=0$, and a similar result was obtained for decompositions into products.

As we were working on these problems, the importance of the free Hamilton algebra and its internal structure gradually emerged, and to our bewilderment we discovered that we could find little systematic study of it in the literature.
There are mainly two sets of prior works. On the one hand, there is the systematic work undertaken by P.M. Cohn
\cite{CohnFIR,CohnFreeAssociativeII,CohnFreeAssociativeIII,CohnFreeProductSkewFields}
 in the 1960's and the 1970's on free products of algebras over a field (and even a skew field). Some of Cohn's results apply to the free Hamilton Algebra, but they are essentially concentrated in the special case where both $p$ and $q$ are irreducible.
On the other hand, there has more recently been specialized work on the free Hamilton algebra,
mostly limited to the special case $p=q=t^2-t$ (i.e.\ the two idempotents case, see \cite{Laffey,Weiss1,Weiss2}), and the only recent reference that considers the general case \cite{SZW} contains some important basic results but only scratches the structure of the free Hamilton algebra.
In particular, all these prior studies have entirely missed the connection between the free Hamilton algebra and quaternion algebras. Some elements of these connections have been laid out in the recent \cite{dSPregular}, with critical applications to the above representation problems, and here we will explore it much more systematically to obtain many new results on the free Hamilton algebra.

At this point, the reader might still want extra motivation for studying the free Hamilton algebra. We simply hope that the sheer beauty of the results will settle this issue, and to make our case even stronger, let us simply state the most remarkable results that will be proved in this piece of work.

To start with, we recall that a zero divisor in a ring $R$ is a \emph{nonzero} $x \in R$ for which there exists a nonzero element $y \in R$ such that $xy=0_R$ or $yx=0_R$.
Our first result is actually a special case of a more general result of Cohn \cite{CohnFreeAssociativeII}, who proved that a free product of division rings over $\F$ has no zero divisor. We will give as many as two new proofs of this special case:

\begin{theo}[Zero Divisors Theorem]\label{theo:introzerodiv}
The algebra $\calW_{p,q}$ has zero divisors if and only if one of $p$ and $q$ splits.
\end{theo}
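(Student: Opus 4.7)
For the easy ``if'' direction, if $p(t)=(t-\alpha)(t-\beta)$ splits in $\F[t]$, then $(a-\alpha)(a-\beta)=p(a)=0$ in $\calW_{p,q}$ exhibits two nonzero elements of the two-dimensional basic subalgebra $\F[a]$ whose product is zero. The argument is symmetric if $q$ splits instead.

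For the converse, assume that both $p$ and $q$ are irreducible, so that $\F[a]$ and $\F[b]$ are quadratic field extensions of $\F$. My plan is to realize $\calW_{p,q}$ as an order inside a quaternion division algebra over a field of rational fractions. I first reduce to $\car(\F)\ne 2$ with $p(t)=t^2-\alpha$ and $q(t)=t^2-\beta$ for non-squares $\alpha,\beta\in\F^\times$, by translating $a$ and $b$ to the trace-free normalization (the characteristic $2$ case would require a separate treatment based on Artin--Schreier quadratic extensions). A direct commutator calculation shows that the element $Q := ab + ba$ is central in $\calW_{p,q}$, and an induction on the length of alternating words, combined with the free-product normal form of $\calW_{p,q} = \F[a]*\F[b]$, establishes that $\{1,a,b,ab\}$ is an $\F[Q]$-basis of $\calW_{p,q}$ and that $\F[Q]\subset \calW_{p,q}$ is a polynomial ring in the transcendental $Q$. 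Hence $\calW_{p,q}$ embeds into the four-dimensional $\F(Q)$-algebra $\calD := \calW_{p,q}\otimes_{\F[Q]}\F(Q)$, which is naturally a quaternion algebra over $\F(Q)$ equipped with the standard involution $z\mapsto \bar z$ fixing $\F(Q)$ and sending $a\mapsto -a$, $b\mapsto -b$, and with the multiplicative reduced norm $N(z):=z\bar z\in \F(Q)$.

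Computing on this basis gives
\[
N(z_1+z_2 a+z_3 b+z_4 ab) \;=\; z_1^2 - \alpha z_2^2 - \beta z_3^2 + \alpha\beta z_4^2 + Q\,(z_1 z_4 - z_2 z_3),
\]
and completing the squares in $z_1$ and $z_2$ identifies $\calD$ with the classical quaternion algebra $\bigl(\alpha,\, \beta - Q^2/(4\alpha)\bigr)_{\F(Q)}$. By multiplicativity of $N$, the absence of zero divisors in $\calW_{p,q}$ reduces to showing that $\calD$ is a division ring: if $x,y\in \calW_{p,q}$ are nonzero with $N(x)N(y)\ne 0$ in the integral domain $\F[Q]$, then $N(xy)=N(x)N(y)\ne 0$, whence $xy\ne 0$. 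The main obstacle is this non-splitness claim, which I would establish via the residue map on the Brauer group of $\F(Q)$: the Hilbert symbol $(\alpha,\,Q^2-4\alpha\beta)_{\F(Q)}$ has a nontrivial residue at at least one closed point of $\mathbb{A}^1_\F$ (namely an irreducible factor of $Q^2-4\alpha\beta$ in $\F[Q]$), given by the class of $\alpha$ in the square-class group of the residue field. The irreducibility of $p$ and $q$ --- i.e., the non-squareness of $\alpha$ and $\beta$ --- then guarantees that this residue is nontrivial (the case where $\alpha\beta$ itself is a square is handled by a linear factor and the non-squareness of $\alpha$ in $\F$; the remaining case uses the non-squareness of $\beta$ to prevent $\alpha$ from becoming a square in $\F(\sqrt{\alpha\beta})$), which rules out splitness of $\calD$ and completes the proof.
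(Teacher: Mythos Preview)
Your argument in characteristic $\neq 2$ is correct and takes a genuinely different route from the paper. After the same reduction to the quaternion algebra $\calD$ over $\F(\omega)$ (your $Q$ is $-\omega$, and your discriminant $Q^2-4\alpha\beta$ is exactly $\Lambda_{p,q}(\omega)$), the paper does \emph{not} identify $\calD$ with an explicit symbol algebra nor invoke Brauer-group residues. Instead it uses the hyperbolicity of an isotropic quaternion norm to restrict to the $3$-dimensional subspace $\Vect_C(1,a,b)$, obtains a homogeneous equation in three polynomial unknowns over $\F[\omega]$, and excludes nontrivial solutions by a degree-parity analysis powered by Springer's theorem (anisotropy survives odd-degree extensions) together with preservation of anisotropy under purely transcendental extension. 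Your residue computation is more conceptual where it applies; the paper's argument is more hands-on but entirely characteristic-free.

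That said, there is a genuine gap: you explicitly leave out $\car(\F)=2$, and the theorem is asserted over arbitrary fields. The parenthetical ``would require a separate treatment based on Artin--Schreier quadratic extensions'' is not a proof. In characteristic~$2$ the trace-free normalization is unavailable, the diagonalization of the norm form by completing squares fails, and the relevant quadratic extensions can be \emph{inseparable} (e.g.\ $p=t^2-\alpha$ with $\alpha$ a non-square yields a purely inseparable $\F[a]$), which takes you outside the standard tame-symbol formalism you are implicitly using. The paper's proof handles all characteristics uniformly; if you want to retain your approach, the characteristic~$2$ case needs a substantive separate argument, not a cosmetic adaptation.
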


Hence $\calW_{p,q}$ has zero divisors if and only if at least one of the basic subalgebras $\F[a]$ and $\F[b]$ has a zero divisor, which is the trivial case.
In addition to giving new proofs of this result, we will give a very simple algorithm that detects whether a given element $x$ of
$\calW_{p,q}$ is a zero divisor, and provides a corresponding nonzero left-annihilator $y$ of $x$ if so.

The corresponding result for units (i.e., invertible elements of $\calW_{p,q}$) is more spectacular,
although part of it is also a special case of a general result of Cohn \cite{CohnFreeAssociativeII}.
The units that are basic are the elements of $\F[a]^\times \cup \F[b]^\times$, and we naturally call them the \textbf{basic units},
and from these units we can of course create new units by multiplying the basic units.
The \textbf{monomial units} are the products of basic units, and we denote by $\calM_{p,q}$ the
subgroup of such units. It is not difficult to prove that
$\calM_{p,q}$ is naturally isomorphic to the amalgamated product
of the subgroups $\F[a]^\times$ and $\F[b]^\times$ over $\F^\times$, which means the following:
say that a formal product $x=\prod_{k=1}^n x_k$ of basic units is \textbf{reduced} when,
for all $k \in \lcro 1,n-1\rcro$, $x_k$ and $x_{k+1}$ belong to distinct basic subalgebras
(which requires that no factor is scalar if $n>1$).
It is then fairly elementary to prove that in a reduced expression of a given monomial unit into a product of basic units,
each factor is uniquely determined up to multiplication with a nonzero scalar.

We can restate this result as follows:
given a subgroup $G$ of $\calW_{p,q}^\times$ that includes $\F^\times$, we define its \textbf{projective group}
as $\mathrm{P}G:=G/\F^\times$. The elements of $\F^\times$ are precisely the central units in $\calW_{p,q}$
(i.e., the units that commute with all the elements of $\calW_{p,q}$),
so the denomination of projective group definitely makes sense here.
Then $\mathrm{P}\calM_{p,q}$ is naturally isomorphic to the free product of the groups $\mathrm{P}\F[a]^\times$ and $\mathrm{P}\F[b]^\times$.

And then we may of course ask if all units are monomial. Here is the answer:

\begin{theo}[Weak Units Theorem]\label{theo:introunits}
Every unit of $\calW_{p,q}$ is monomial if and only if both $p$ and $q$ are irreducible.
\end{theo}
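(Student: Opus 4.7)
The plan is to treat the two directions separately.

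For the backward implication (both $p$ and $q$ irreducible $\Rightarrow$ every unit is monomial), observe that when $p$ and $q$ are both irreducible the basic subalgebras $\F[a]$ and $\F[b]$ are $2$-dimensional field extensions of $\F$, so that $\calW_{p,q}\simeq \F[a]*\F[b]$ is a free product of fields over $\F$. Cohn's theorem \cite{CohnFreeAssociativeII} on units in free products of skew fields over a common centre then yields that every unit is a product of units of the factor fields, i.e.\ a monomial unit.

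For the converse, assume without loss of generality that $p$ is reducible. The basic subalgebra $\F[a]$ is then a $2$-dimensional commutative algebra that is not a field, so it contains a pair of nonzero elements $m,m'$ with $m m'=0$: one takes $(m,m'):=(e,1-e)$ for a nontrivial idempotent $e \in \F[a]$ when $p$ has two distinct roots, and $(m,m'):=(n,n)$ with $n:=a-\alpha$ square-zero when $p=(t-\alpha)^2$. Setting $c:=m b m'$, a direct expansion shows that $c\neq 0$, while
$$c^2 \;=\; m b (m' m) b m' \;=\; 0,$$
so $u:=1+c$ is a unit with explicit inverse $1-c$.

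To prove that $u$ is not monomial, I use the standard $\F$-basis of $\calW_{p,q}$ made of alternating words in $a,b$. Writing $m=\alpha+\beta a$ and $m'=\alpha'+\beta'a$ with $\beta,\beta'\neq 0$, an expansion shows that the coefficient of $aba$ in $u$ equals $\beta\beta'\neq 0$, while no alternating word of length $\geq 4$ appears and $u$ lies in neither $\F[a]$ nor $\F[b]$. Suppose, for a contradiction, that $u=x_1\cdots x_n$ is a reduced monomial expression. The key lemma (a direct consequence of the uniqueness statement for reduced expressions recorded in the excerpt, obtained by tracking leading words) is that in such an expression the coefficient of the length-$n$ alternating word whose pattern matches the types of the factors equals the product of the non-scalar coefficients of the $x_i$'s, and is therefore nonzero. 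This forces $n=3$ with factors of types $\F[a],\F[b],\F[a]$ in that order. Writing $x_1=\alpha_1+\beta_1 a$, $x_2=\gamma_1+\delta_1 b$, $x_3=\alpha_2+\beta_2 a$, and matching the coefficients of $b$, $ab$, $ba$, $aba$ (taking ratios to eliminate $\delta_1$), the quantities $\alpha_i/\beta_i$ for $i=1,2$ are forced to be the negatives of roots of $p$; equivalently $x_1$ and $x_3$ are scalar multiples of $a-\alpha$ or $a-\beta$, each of which is a zero divisor in $\F[a]$. This contradicts the assumption that each $x_i$ is a basic unit.

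The principal obstacle is the leading-alternating-word lemma invoked above, which must be carefully stated and proved (or extracted cleanly from the uniqueness of reduced expressions). Once that lemma is in place, both the construction of $u$ and the final coefficient-matching argument at $n=3$ are short and work identically in the distinct-roots and double-root subcases, each time forcing a non-unit factor and yielding the desired contradiction.
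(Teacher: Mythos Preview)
Your proof is correct. For the backward direction you invoke Cohn directly, which the paper also acknowledges as a valid route (its own internal proof goes through the retracing algorithm together with the uniqueness part of the Automorphisms Theorem, proved much later as Theorem~\ref{theo:uniquenessdecompauto}). The genuine difference lies in the forward direction. The paper constructs a unit $U=1+\alpha z^\star$ (a \emph{semi-basic unit} in its later terminology) and proves it non-monomial by showing that its \emph{retracing algorithm}---an iterative procedure that conjugates a quadratic element by basic units so as to decrease an ``absolute distance'' invariant---fails on a conjugate $U^{\pm 1} y U^{\mp 1}$ of a well-chosen basic vector $y$; this relies on the whole apparatus of leading vectors, adapted deployed bases, and distance estimates developed in Section~\ref{section:units1}. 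Your unit $u=1+mbm'$ is in fact the same kind of object (it lies in $1+\alpha^\sharp$ in the paper's notation, with $\alpha$ a basic idempotent or nilpotent), but your non-monomiality argument is purely combinatorial: you read off the top alternating word $aba$, pin down the length and pattern of any putative reduced decomposition as $(\F[a],\F[b],\F[a])$, and then the ratios of the coefficients on $ab,ba,aba$ force $x_1\sim m$ and $x_3\sim m'$, both zero divisors. This is more elementary and entirely self-contained, needing only the standard $\F$-basis of alternating words and the easy ``leading-word'' lemma you flag (which is indeed the content of the height argument in Section~\ref{section:units1:plan}). The paper's heavier approach, by contrast, is an investment: the retracing algorithm and the analysis of its obstructions are reused throughout to describe the full unit group (Section~\ref{section:units2}), the conjugacy classes of quadratic elements, and the automorphism group, so the extra machinery pays dividends well beyond this single theorem.
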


Cohn \cite{CohnFreeAssociativeII} proved more generally that, in the free product
of $\F$-algebras without zero divisor, every unit is monomial, thereby directly providing the converse implication in the Weak Units Theorem.

Our critical contribution here, apart from proving the existence of non-monomial units when at least one of the basic subalgebras is not a field, is to provide the missing generators for the group of units, as well as a clear understanding of the structure of $\calW_{p,q}^\times$.
We will briefly sketch the results here. For each \emph{basic} zero divisor $\alpha$,
we consider the set $\SB(\alpha)$ of all elements of the form $1+z$ where $z \in \calW_{p,q}$
is such that $\alpha^\star z=z \alpha=0$, where $\alpha^\star$ is the conjugate\footnote{I.e.\ the image of $\alpha$ under the only non-identity involution of $\calA$ over $\F$ if there exists one, or $\alpha$ if there is no involution besides the identity.}
of $\alpha$ in the corresponding basic subalgebra $\calA$.
It can be proved that $z_1z_2=0$ for all such elements $z_1$ and $z_2$, to the effect that all the elements of $\SB(\alpha)$ are units, called the \textbf{semi-basic} units attached to $\alpha$, and $\SB(\alpha)$ is a subgroup of $\calW_{p,q}^\times$ that is isomorphic to the additive group of all $z \in \calW_{p,q}$ that satisfy $\alpha^\star z=z \alpha=0$. It will even be seen that the latter is isomorphic to the additive group $(\F[t],+)$.

Now, let $\calA$ be a basic subalgebra of $\calW_{p,q}$.
For every zero divisor $\alpha$ in $\calA$, the subgroup $\SB(\alpha)$ is clearly normalized by $\calA^\times$.
Two zero divisors in $\calA$ that are scalar multiples of one another give rise to the same group $\SB(\alpha)$,
and it follows that to $\calA$ corresponds exactly one such subgroup if $\calA \simeq \F[t]/(t^2)$ (i.e., $\calA$ degenerates),
and exactly two such subgroups if $\calA \simeq \F \times \F$ (i.e., $\calA$ splits), which are the only two possibilities
when $\calA$ is not a field. Then a special subgroup $\SSB(\calA)$ is defined as the subgroup of $\calW_{p,q}$
generated by the semi-basic units attached to any zero divisor in $\calA$:
of course $\SSB(\calA)=\SB(\alpha)$ when $\calA$ degenerates and $\alpha$ is one if its zero divisors, and
$\SSB(\calA)=\{1\}$ if $\calA$ is a field. And finally one defines $\SB(\calA)$ as the subgroup generated by $\calA^\times$ and $\SSB(\calA)$.
Hence $\SSB(\calA)$ is a normal subgroup of $\SB(\calA)$.

We can now state the new theorems which, combined,
yield a completely clear picture of the group of units $\calW_{p,q}^\times$:

\begin{theo}[Strong Units Theorem]
The group $\calW_{p,q}^\times$ is naturally isomorphic to the amalgamated product of
$\SB(\F[a])$ and $\SB(\F[b])$ over $\F^\times$.
\end{theo}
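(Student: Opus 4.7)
The natural inclusions $\SB(\F[a]) \hookrightarrow \calW_{p,q}^\times$ and $\SB(\F[b]) \hookrightarrow \calW_{p,q}^\times$ agree on the common subgroup $\F^\times$, so the universal property of the amalgamated free product furnishes a canonical group homomorphism
\[
\Phi : \SB(\F[a]) *_{\F^\times} \SB(\F[b]) \longrightarrow \calW_{p,q}^\times.
\]
The theorem asserts that $\Phi$ is bijective, which I would split into surjectivity (generation of $\calW_{p,q}^\times$) and injectivity (reduced words in the amalgam do not collapse).

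For surjectivity, the plan is to induct on a syllabic length invariant of units, such as the maximal length of an alternating monomial in $a$ and $b$ appearing in the canonical expansion. Given $u \in \calW_{p,q}^\times$ of positive length, I would exhibit a factor $v \in \SB(\F[a]) \cup \SB(\F[b])$ such that $uv^{-1}$ has strictly smaller length. When the leading portion of $u$ lies in a basic subalgebra and is a unit there, a basic factor suffices, essentially as in the proof of the Weak Units Theorem. The new case is when the leading portion corresponds to a basic zero divisor $\alpha$ in some basic subalgebra $\calA$: here the assumption that $u$ is a unit constrains the lower-degree corrections to be exactly of the form accommodated by the semi-basic group $\SB(\alpha) \subseteq \SB(\calA)$, and one peels off the corresponding factor $1+z$ with $\alpha^\star z = z \alpha = 0$.

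For injectivity, I would invoke the classical normal form theorem for amalgamated products, reducing the problem to showing that any reduced word $g_1 g_2 \cdots g_n$ with $n \geq 1$ and factors $g_k$ alternating between $\SB(\F[a]) \setminus \F^\times$ and $\SB(\F[b]) \setminus \F^\times$ maps under $\Phi$ to a non-scalar element of $\calW_{p,q}$. The argument would proceed again via leading syllables: each non-scalar element of $\SB(\calA)$ has a nonzero top contribution either in $\calA$ itself or in the annihilator ideal of a zero divisor of $\calA$, and the alternation between the two basic subalgebras prevents the successive top syllables from canceling across the boundary, so the product must have strictly larger syllabic length than any scalar.

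The main obstacle is the surjectivity step, specifically the explicit extraction of a semi-basic factor when the leading portion of $u$ is a basic zero divisor. Here one must exploit the additive description $\SB(\alpha) \simeq (\F[t],+)$ to show that enough semi-basic units are available to absorb the non-monomial defect of $u$, and then verify that the resulting reduction strictly decreases the syllabic length without introducing new high-length syllables elsewhere. Once this reduction is fully understood, both surjectivity and injectivity follow by formal comparison with the normal form theory of amalgamated free products; the rest of the proof reduces to keeping careful track of which coset the peeled factor lies in, so that the sequence of factors produced by the algorithm is already reduced in $\SB(\F[a]) *_{\F^\times} \SB(\F[b])$.
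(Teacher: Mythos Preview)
Your overall architecture (surjectivity of the canonical map plus a normal-form argument for injectivity) is correct, but the concrete mechanism you propose diverges from the paper's and has a genuine gap.

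\textbf{Surjectivity.} You plan to attack a unit $u$ directly, via a ``syllabic length'' on $u$ and a peeling-off of a factor in $\SB(\calA)$ determined by the ``leading portion'' of $u$. The paper never does this, and for good reason: the leading-vector/absolute-distance machinery (Lemmas~\ref{lemma:quadraticdeployeddegree1}, \ref{lemma:quadraticdeployeddegree2}, Corollary~\ref{cor:differenceofdistances}) is built specifically for \emph{quadratic} elements, because it exploits $\tr(x),N(x)\in\F$ to force a very rigid degree pattern in a deployed basis. A general unit has $\tr(u),N(u)\in\F$ too, but that is not enough to make your ``leading portion'' well-defined or to guarantee that a semi-basic factor can be extracted; the step ``the assumption that $u$ is a unit constrains the lower-degree corrections to be exactly of the form accommodated by $\SB(\alpha)$'' is precisely the hard content, and you have not indicated how to carry it out. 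The paper instead works \emph{indirectly}: given $u$, it applies the refined retracing algorithm not to $u$ but to the quadratic elements $uau^{-1}$ and $ubu^{-1}$ (Theorem~\ref{theo:superdecompositiontheorem}), producing a product $\gamma$ of basic and semi-basic units such that $i_{\gamma^{-1}u}$ maps $a,b$ to basic vectors; then the uniqueness part of the Automorphisms Theorem (Theorem~\ref{theo:uniquenessdecompauto}) forces $u\sim\gamma$. So surjectivity goes through the conjugation action and ultimately rests on the Automorphisms Theorem, not on a direct factorization of $u$.

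\textbf{Injectivity.} Your plan to show that a reduced word maps to a non-scalar by tracking ``top syllables'' of the product is again replaced in the paper by the conjugation trick: Lemma~\ref{lemme:ultimateuniqueness} chooses a basic vector $x_0\in\{a,b\}$ adapted to the first factor, and then shows by induction (via Lemmas~\ref{lemma:effectofbasic} and~\ref{lemma:effectofsemibasic}, whose proofs are the lengthy case analyses of Sections~\ref{section:effectsemibasicsplit}--\ref{section:effectsemibasicdegenerate}) that each successive conjugate $\gamma_k^{-1}x_{k-1}\gamma_k$ is non-basic with prescribed leading vector. This yields $\Gamma^{-1}x_0\Gamma$ non-basic, hence $\Gamma\notin\F$. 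Tracking the conjugate of a single quadratic element is what makes the degree bookkeeping tractable; tracking the product $\Gamma$ itself, as you propose, would require controlling all four $C$-coordinates simultaneously, and the paper's lemmas are not set up for that.

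In short: the missing idea is to pass from the unit to its conjugation action on $a$ and $b$, where the quadratic-element machinery applies.
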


\begin{theo}
Let $\calA$ be a basic subalgebra of $\calW_{p,q}$.
\begin{enumerate}[(i)]
\item If $\calA$ is a field then $\SB(\calA)=\calA^\times$.
\item If $\calA$ degenerates then $\F^\times$ is a direct factor of $\SSB(\calA)$ in $\SB(\calA)$.
\item If $\calA$ splits and $\alpha$ denotes a nontrivial idempotent in it, then
$\SSB(\calA)$ is an (internal) free product of the subgroups $\SB(\alpha)$ and $\SB(1-\alpha)$,
and $\calA^\times$ is a semi-direct factor of $\SSB(\calA)$ in $\SB(\calA)$.
\end{enumerate}
\end{theo}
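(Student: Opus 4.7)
Part (i) is immediate: when $\calA$ is a field it has no zero divisor, so $\SSB(\calA)=\{1\}$ by definition and $\SB(\calA)=\calA^\times\cdot\{1\}=\calA^\times$.

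For part (ii), I would fix a generator $\alpha$ of the unique maximal ideal of $\calA$, so that $\alpha^2=0$ and $\calA^\times=\F^\times(1+\F\alpha)$. Since $\alpha^\star\in\F\alpha$, one has $\alpha^\star(c\alpha)=(c\alpha)\alpha=0$ for every $c\in\F$, so $1+\F\alpha\subseteq\SB(\alpha)=\SSB(\calA)$. Hence $\calA^\times\subseteq\F^\times\cdot\SSB(\calA)$ and $\SB(\calA)=\F^\times\cdot\SSB(\calA)$. If $\lambda=1+z$ lies in $\F^\times\cap\SSB(\calA)$, then $z=\lambda-1\in\F$ and $z\alpha=0$ force $\lambda=1$. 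Since $\F^\times$ is central, both factors are normal, so $\SB(\calA)=\F^\times\times\SSB(\calA)$.

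Part (iii) is the substantive one. Set $e=\alpha$, so the other nontrivial idempotent is $1-e$ and $e^\star=1-e$; the defining conditions become $\SB(e)=1+eT(1-e)$ and $\SB(1-e)=1+(1-e)Te$, where $T=\calW_{p,q}$. I would work within the Peirce decomposition $T=eTe\oplus eT(1-e)\oplus(1-e)Te\oplus(1-e)T(1-e)$ and the associated $2\times 2$ matrix picture, in which elements of $\SB(e)$ and $\SB(1-e)$ appear as upper and lower strictly unitriangular matrices. For the free product claim, the universal property furnishes a surjection $\Phi\colon\SB(e)*\SB(1-e)\twoheadrightarrow\SSB(\calA)$, and the task is to establish its injectivity. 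I would take a reduced word $w=x_1\cdots x_n$ with alternating nontrivial factors and expand $\Phi(w)$ in the matrix picture: each Peirce component of $\Phi(w)-1$ is a sum of products $z_{i_1}w_{j_1}z_{i_2}\cdots$ indexed by contiguous alternating sub-products of the maximal word $z_1 w_1 z_2\cdots$. The alternating-word basis of $T$ (its structure as a free product of the two basic subalgebras) equips $T$ with a word-length filtration, each nonzero $z_i,w_j$ has a well-defined leading monomial, and the longest full product has strictly greater word length than any proper contiguous sub-product, so its leading monomial survives the summation. Hence at least one Peirce component of $\Phi(w)-1$ is nonzero. The crux is this non-cancellation step, equivalently the injectivity of the iterated multiplication
\[
eT(1-e)\otimes_\F(1-e)Te\otimes_\F eT(1-e)\otimes_\F\cdots\longrightarrow\calW_{p,q};
\]
I expect this to be the principal obstacle, to be tackled by careful bookkeeping of leading reduced-word monomials, exploiting that $e$ and $1-e$ are nonscalar elements of $\F[a]$.

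For the semidirect-factor claim, $\SB(\calA)=\calA^\times\cdot\SSB(\calA)$ holds by definition, and $\calA^\times$ normalizes $\SSB(\calA)$ since it normalizes each $\SB(\alpha')$ (as recalled above the theorem), so it suffices to check $\calA^\times\cap\SSB(\calA)=\{1\}$. Every element of $\calA^\times$ sits in the diagonal Peirce pieces $eTe\oplus(1-e)T(1-e)$, whereas a nontrivial $\Phi(w)\in\SSB(\calA)$ with $w$ reduced nonempty carries a nonzero off-diagonal Peirce component by the leading-term argument just described, so the intersection is trivial and the semidirect decomposition $\SB(\calA)=\calA^\times\ltimes\SSB(\calA)$ follows.
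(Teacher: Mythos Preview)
Your treatment of parts (i) and (ii) is correct and essentially matches the paper's brief discussion in Section~9.1--9.2.

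For part (iii), your Peirce-decomposition approach is genuinely different from the paper's. The paper never works directly inside $\SB(\calA)$ to establish the free product; instead it proves a stronger uniqueness lemma (Lemma~\ref{lemme:ultimateuniqueness}) by letting an alternating product of semi-basic units act \emph{by conjugation} on a well-chosen nonscalar basic vector $x_0$, and tracking its \emph{leading vector} and \emph{absolute distance} $\delta(\cdot)$ through Lemmas~\ref{lemma:effectofbasic} and~\ref{lemma:effectofsemibasic}. Each alternating factor strictly increases $\delta$ and forces a specific leading vector, so the final conjugate is nonbasic and hence the product is not scalar. This conjugation technique simultaneously yields the Strong Units Theorem, the refined retracing algorithm, and the conjugacy classification of quadratic elements, none of which your approach would provide.

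Your approach is viable, but the key step you flag as ``the principal obstacle'' really is one, and the tool you propose (word-length in the $\F$-basis of the free product) is not the cleanest way to resolve it. The right filtration is the $\omega$-degree on $C=\F[\omega]$: each Peirce piece $eTe$, $eT(1-e)$, $(1-e)Te$, $(1-e)T(1-e)$ is a free $C$-module of rank~$1$ (the off-diagonal ones are exactly $\alpha^\sharp$ and $(\alpha^\star)^\sharp$ from Lemma~\ref{lemma:idempotentlemma}, and one checks directly that $\alpha T\alpha=C\alpha$). Choosing generators $z_0\in\alpha^\sharp$ and $w_0\in(\alpha^\star)^\sharp$, a short computation gives $z_0w_0=\nu\,\alpha$ with $\nu\in C$ of $\omega$-degree~$2$. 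This is the fact your sketch is missing: the product of two off-diagonal generators lands in $C\alpha$ with a coefficient of \emph{strictly positive} degree. With it, your expansion of $(1+r_1z_0)(1+s_1w_0)\cdots$ becomes, in each Peirce component, a sum over contiguous subintervals whose $\omega$-degrees are strictly dominated by that of the longest subinterval, so nothing cancels. This is essentially a ping-pong argument on $\omega$-degree and fills your gap cleanly; the word-length bookkeeping you suggest would have to rediscover this degree-$2$ jump through a much more delicate analysis.
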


\vskip 3mm
Our next set of results deals with the algebraic elements of $\calW_{p,q}$ over $\F$ and more generally with the finite-dimensional subalgebras.
Remember that an element $x$ of an $\F$-algebra is called \textbf{quadratic} whenever
$x^2 \in \F+\F x$.

\begin{theo}
Every element of $\calW_{p,q}$ is either quadratic or transcendental over~$\F$.
\end{theo}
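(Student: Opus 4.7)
My plan is to analyze the length $\ell(x)$ of an element $x \in \calW_{p,q}$, defined as the maximum length of an alternating monomial (drawn from $\{1, a, b, ab, ba, aba, bab, \ldots\}$) appearing with nonzero coefficient in the free-product normal form of $x$. The core dichotomy I aim to establish is: either $\ell(x^k)$ grows unboundedly in $k$, in which case the leading parts of $1, x, x^2, \ldots$ have strictly increasing lengths and are therefore linearly independent so $x$ is transcendental, or $\ell(x^k)$ stays bounded, in which case I aim to prove that $x$ satisfies a quadratic relation.

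The trivial cases are quickly dispatched: $\ell(x) = 0$ means $x$ is scalar, and for $\ell(x) = 1$ I would write $x = \mu_0 + \mu_1 a + \mu_2 b$ and observe that $\mu_1 \mu_2 = 0$ puts $x$ in a basic subalgebra (quadratic), while $\mu_1 \mu_2 \neq 0$ gives $\ell(x^k) = k$ for all $k$ by inspecting the coefficients of $(\mu_1 a + \mu_2 b)^k$ on the two alternating monomials of length $k$ (they are nonzero powers of $\mu_1$ and $\mu_2$). For $\ell(x) = n \geq 2$, I would write the top component $x_n = \lambda_a m_a + \lambda_b m_b$ (with $m_a, m_b$ the two alternating monomials of length $n$) and examine the length-$2n$ part of $x^2$, which depends only on $x_n$. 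A case analysis on the parity of $n$ and on whether $\lambda_a \lambda_b \neq 0$ shows that this part is nonzero except when $n$ is odd and $x_n$ is supported on a single alternating monomial. In all other cases $\ell(x^2) = 2n$ and an easy induction gives $\ell(x^k) = kn \to \infty$, so $x$ is transcendental.

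The main obstacle is the remaining ``one-sided odd'' case: $n \geq 3$ odd and $x_n = \lambda \cdot a(ba)^{(n-1)/2}$ (the case where $x_n$ is a multiple of $b(ab)^{(n-1)/2}$ is handled symmetrically). Here the central $aa$ in $x_n^2$ reduces through $a^2 = \tr(p) a - N(p)$, forcing $\ell(x^2) \leq 2n - 1$. My plan is to subdivide: if the length-$(2n-1)$ component of $x^2$ is nonzero, it is again a scalar multiple of the single alternating monomial $a(ba)^{n-1}$, so the same reasoning applies to $x^2$ by induction, yielding $\ell(x^k) = k(n-1) + 1 \to \infty$ and hence transcendence; if this component vanishes (a cancellation tied to $\tr(p)$ and the lower-length components of $x$), a finer examination of the cascade of $a^2$- and $b^2$-reductions should exhibit an explicit quadratic relation for $x$, exploiting the quaternionic ``similitude'' structure developed earlier in the paper (which provides, for basic units $\alpha$ with $\alpha \alpha^\star \in \F$, that elements of the form $\alpha y \alpha^\star$ are conjugate up to scalar to $y$, hence quadratic when $y$ is).

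The principal technical obstacle is verifying this last subcase: one must show that whenever the length of $x^2$ actually collapses below $2n - 1$, what survives is not merely a length bound but a genuine relation of the shape $x^2 \in \F + \F x$, not a higher-degree algebraic relation. Tracking the successive trace-type cancellations cleanly through the nested $a^2$- and $b^2$-reductions, matching them with the similitude structure of the top $x_n$, and confirming that the lower-length components $x_0, \ldots, x_{n-1}$ conspire to respect the same quadratic relation, is where the delicate bookkeeping lies and where the structural results on $\calW_{p,q}$ proved earlier in the paper are essential.
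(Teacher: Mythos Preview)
Your proposal has a genuine gap exactly where you flag the ``principal technical obstacle.'' In the one-sided odd collapse case you \emph{hope} that the cascade of $a^2$- and $b^2$-reductions will terminate in a relation $x^2 \in \F + \F x$, but you offer no mechanism for this, and the vague appeal to ``the similitude structure'' is not an argument. A leading-term collapse $\ell(x^2) < 2n-1$ constrains only the top few coefficients of $x$; a quadratic relation over $\F$ is a statement about \emph{all} coefficients of $x^2 - \lambda x - \mu$. Nothing in your outline connects the two. Even the non-collapse subcase is not nailed down: the claim $\ell(x^k) = k(n-1)+1$ for all $k$ requires that lower-length contributions from $x$ and $x^{k-1}$ never cancel the surviving top monomial of $x^k$, and you have not checked this.

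The paper's proof is completely different and avoids word-length combinatorics entirely. It uses the quadratic identity $x^2 = \tr(x)\,x - N(x)$ with $\tr(x), N(x) \in C = \F[\omega]$: thus $x$ is integral of degree at most $2$ over the UFD $\F[\omega]$, so its minimal polynomial $\mu_{\F(\omega)}$ over the fraction field $\F(\omega)$ lies in $\F[\omega][t]$ and has degree at most $2$. If $x$ is algebraic over $\F$ with minimal polynomial $\mu_\F \in \F[t]$, then $\mu_{\F(\omega)}$ divides $\mu_\F$ in $\F[\omega][t]$; comparing $\omega$-degrees (both sides of $\mu_\F = \mu_{\F(\omega)} \cdot r$ have $\omega$-degree zero) forces $\mu_{\F(\omega)} \in \F[t]$, hence $x$ is quadratic over $\F$. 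The idea your approach is missing is precisely this: every element already satisfies a quadratic relation over the \emph{center}, so algebraicity over $\F$ reduces to whether the two coefficients $\tr(x)$ and $N(x)$ happen to land in $\F$ rather than in $\F[\omega] \setminus \F$.
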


This result was already known to Cohn for free products of skew-fields (theorem 3.5 in \cite{CohnFreeProductSkewFields}).
We also improve on the known results by obtaining the following one:

\begin{theo}
Every quadratic element of $\calW_{p,q}$ is conjugated to a basic vector provided that its
minimal polynomial does not split with a double root.
\end{theo}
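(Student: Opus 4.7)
The plan is to split on the factorisation of the minimal polynomial $r(t)=t^2-\sigma t+\pi$ of $x$: by hypothesis $r$ is either irreducible over $\F$, or splits as $(t-\alpha)(t-\beta)$ with $\alpha\neq\beta$ in $\F$. In the latter case, $e:=(x-\beta)(\alpha-\beta)^{-1}$ is a nontrivial idempotent of $\calW_{p,q}$, and the relation $e(1-e)=0$ exhibits a pair of zero divisors; the Zero Divisors Theorem then forces one of $p,q$ to split, so that a basic subalgebra---say $\F[a]$---contains a nontrivial idempotent $e_0$. The whole split case reduces to proving that any two nontrivial idempotents of $\calW_{p,q}$ are conjugate, since from $u e u^{-1}=e_0$ we deduce $uxu^{-1}=\alpha e_0+\beta(1-e_0)\in\F[a]$. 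I would build the intertwining unit $u$ from semi-basic units of the form $1+z$ with $z$ in the appropriate off-diagonal Peirce component of $\calW_{p,q}$ with respect to $e_0$, exploiting the explicit parametrisation of $\SB(e_0)$ and $\SB(1-e_0)$ recorded in the earlier sections and the Strong Units Theorem to guarantee that the resulting product is a unit.

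In the irreducible case, $\F[x]\cong\F[t]/(r)$ is a quadratic field extension and $x$ is a unit satisfying $xx^\star=\pi\in\F^\times$, where $x^\star:=\sigma-x$. The strategy has two steps: first, exhibit a basic element $y\in\F[a]\cup\F[b]$ with the same minimal polynomial $r$; second, produce an invertible $u$ solving $ux=yu$. For the second step I would pass to the subalgebra generated by $x$ and $y$: if $x$ and $y$ commute then $x\in\F[y]\subseteq\F[a]\cup\F[b]$ is already basic, and otherwise $\{1,x,y,xy\}$ is linearly independent and spans a four-dimensional subalgebra which, because both generators are quadratic with the same trace and norm, is either a quaternion algebra over $\F$ or isomorphic to $\Mat_2(\F)$. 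In either case Skolem--Noether supplies a conjugator inside this subalgebra, whose invertibility in the ambient $\calW_{p,q}$ follows from the Zero Divisors Theorem, since the irreducibility of $r$ implies that $x$ and $y$ are non-zero-divisors in $\calW_{p,q}$.

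The principal obstacle is the first step of the irreducible case: proving that any quadratic element $x\in\calW_{p,q}$ is forced to have $\disc(r)$ in the $(\F^\times)^2$-orbit of $\disc(p)$ or of $\disc(q)$, so that a basic $y$ with minimal polynomial $r$ exists in the first place. I would attack this structural constraint by writing $x$ in the reduced-word basis of the free product $\calW_{p,q}=\F[a]*\F[b]$ and inspecting the top-length contributions to the identity $x^2=\sigma x-\pi$. When $x$ has length $n\geq 2$, with highest-length component $\alpha W_a+\beta W_b$ (where $W_a,W_b$ are the two reduced words of length $n$), the length-$2n$ and length-$(2n-1)$ layers of $x^2$ must cancel; a case analysis on the parity of $n$ then forces quadratic relations between $\sigma,\pi$ and the invariants $\tr(p),N(p),\tr(q),N(q)$ of precisely the shape required to match discriminants. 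An induction on $n$, in which each layer of cancellation produces a unit that strictly shortens $x$ under conjugation, then reduces the general case to the tautological case of length $\leq 1$.
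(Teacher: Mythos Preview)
Your step~2 in the irreducible case contains a genuine gap: the span $\Vect_\F(1,x,y,xy)$ is essentially never a subalgebra of $\calW_{p,q}$. Expanding with $x^\star=\sigma-x$ and $y^\star=\sigma-y$ gives $yx=\sigma x+\sigma y-xy-\langle x,y\rangle$, so closure under multiplication forces $\langle x,y\rangle\in\F$. But $\langle x,y\rangle$ lies a priori only in the center $C=\F[\omega]$; when $y$ is basic in a field subalgebra and $x$ is nonbasic, its degree in $\omega$ is $d_{\F[y]}(x)\geq 1$ (the leading vector of $x$, if it lies in $\F[y]$, is then a unit, so Corollary~\ref{cor:differenceofdistances} applies). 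More conceptually, the classification in Section~\ref{section:finitedimalg} shows that every finite-dimensional subalgebra of dimension $>2$ is of type $\calU_n$ or $\calH_n$ and hence contains no element with irreducible minimal polynomial; the algebra generated by $x$ and any $y\notin\F[x]$ is therefore infinite-dimensional. Skolem--Noether over $\F$ is thus unavailable, and applying it instead in the completion $\overline{\calW_{p,q}}$ produces a conjugator only in $\overline{\calW_{p,q}}^\times$; descending such a conjugator to $\calW_{p,q}^\times$ is precisely the delicate analysis of Section~\ref{section:automorphismsII}.

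The good news is that your step~1, carried through, already \emph{is} the paper's proof and makes step~2 superfluous. The ``induction on $n$ in which each layer of cancellation produces a unit that strictly shortens $x$ under conjugation'' is the retracing algorithm of Section~\ref{section:units1}, refined with semi-basic units in Section~\ref{section:units2}: one measures $x$ by the absolute distance $\delta(x)$, reads off a leading basic vector $\alpha$ from the top-degree behaviour of $\tr(x)$ and $N(x)$, and conjugates by $\alpha$ (or by a semi-basic unit in $\SB(\alpha)$ when $\alpha$ is a zero divisor) to make $\delta$ strictly drop. The only obstruction is that the current vector become \emph{special degenerate}, which forces $\F[x]$ to be degenerate --- ruled out by the hypothesis on $r$. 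The descent therefore terminates at a basic vector and handles the irreducible and simply-split cases uniformly. (Incidentally, in your split case you only need $e$ conjugate to \emph{some} basic idempotent; the stronger claim that all nontrivial idempotents are mutually conjugate is false, by Proposition~\ref{prop:conjugatetoatmostonebasic}.)
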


We even go further and consider more general finite-dimensional subalgebras.
We start with the case where both $p$ and $q$ are irreducible.

\begin{theo}
If $p$ and $q$ are irreducible, then up to conjugation by a unit the only nontrivial finite-dimensional subalgebras of $\calW_{p,q}$
are the basic subalgebras.
\end{theo}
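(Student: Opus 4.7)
The plan is to combine the Zero Divisors Theorem, the theorem on quadratic elements, the conjugacy theorem for quadratic elements, and the Weak and Strong Units Theorems. Let $\calC$ be a nontrivial finite-dimensional subalgebra of $\calW_{p,q}$. Since $p,q$ are irreducible, $\calW_{p,q}$ is a domain by the Zero Divisors Theorem, so $\calC$ is a finite-dimensional $\F$-domain, hence a division algebra. By the quadratic-elements theorem each element of $\calC$ is quadratic over $\F$, so $\calC$ is a quadratic division $\F$-algebra, and the classical classification forces $\dim_\F\calC\in\{1,2,4\}$: $\calC$ is isomorphic respectively to $\F$, a quadratic field extension of $\F$, or a (generalized) quaternion division algebra with center $\F$. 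The trivial case is excluded.

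In the $2$-dimensional case, $\calC=\F[x]$ for a non-scalar $x$ whose minimal polynomial is irreducible quadratic, hence does not split with a double root; the conjugacy theorem produces a unit $u$ with $uxu^{-1}$ basic, so $u\calC u^{-1}$ is a basic subalgebra. To rule out the $4$-dimensional case, I use that every non-scalar quadratic element of $\calW_{p,q}$ has minimal polynomial $p$ or $q$ (an immediate consequence of the conjugacy theorem, since a basic vector has minimal polynomial $p$ or $q$); hence every quadratic subfield of $\calC$ is isomorphic to $\F[a]$ or $\F[b]$ and conjugate to one of them. A quaternion division algebra always contains a separable quadratic subfield (automatic in characteristic $\neq 2$; an Artin-Schreier subfield in characteristic $2$), so at least one of $p,q$ is separable; WLOG $p$ is, and after conjugating $\calC$ we may assume $\F[a]\subseteq\calC$ as a separable maximal subfield. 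The standard structure of quaternion algebras then furnishes $j\in\calC^\times$ satisfying $j^2\in\F^\times$ and $jaj^{-1}=\bar a$, where $\bar a:=\tr(p)-a\neq a$.

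We finish by showing no such $j$ exists in $\calW_{p,q}^\times$. By the Weak Units Theorem $j$ is a monomial unit and admits a reduced expression $j=u_1u_2\cdots u_n$ with the $u_i$ alternating between non-scalar elements of $\F[a]^\times$ and $\F[b]^\times$. By the Strong Units Theorem applied to this field-field case, $\calW_{p,q}^\times\simeq\F[a]^\times*_{\F^\times}\F[b]^\times$, and reduced expressions in this amalgamated free product are unique up to rescaling of successive syllables. A short case analysis on the basic subalgebras containing $u_1$ and $u_n$ then dispatches the equation $ja=\bar aj$: when $u_1$ and $u_n$ lie in different basic subalgebras, the two reduced forms of $ja$ and $\bar aj$ have unequal syllable lengths; when they lie in the same basic subalgebra, matching the outer syllables of the two reduced forms forces $\bar a$ to be a scalar, contradicting $\bar a\in\F[a]\setminus\F$; and the degenerate case $n=0$ yields $j\in\F^\times$, which commutes with $a\neq\bar a$. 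The main obstacle is this final syllable bookkeeping, whose pivotal observation is that $\bar a$, being a non-scalar basic element, cannot be absorbed into any reduced product of basic units without either changing the syllable length or demanding an impossible scalar identification.
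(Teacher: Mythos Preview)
Your argument is correct and takes a genuinely different route from the paper. One slip to fix: the claim that every non-scalar quadratic element has minimal polynomial exactly $p$ or $q$ is false (for instance $a+1$ has minimal polynomial $p(t-1)$, and a general non-scalar basic vector $\lambda a+\mu$ does not have minimal polynomial $p$). What you actually need, and what you in fact use, is that every $2$-dimensional subalgebra of $\calW_{p,q}$ is isomorphic to $\F[a]$ or $\F[b]$; that is precisely Corollary~\ref{cor:conjsousalg2}. Your appeal to the ``classical classification'' of associative quadratic division $\F$-algebras (forcing dimension $1$, $2$, or $4$, with the $4$-dimensional case necessarily a quaternion algebra with center $\F$) is correct but not entirely trivial, particularly in characteristic $2$, and would benefit from a reference or a short argument.

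The paper's proof is structural and local rather than group-theoretic: it projects the subalgebra into $\calW_{p,q}/I\simeq\K$ for a maximal ideal $I$ above the fundamental ideal, which instantly forces the subalgebra to be a \emph{commutative} field of dimension dividing $[\K:\F]\le 4$; a second projection into the genuine quaternion algebra $\calW_{p,q}/(\omega-\lambda)$ (for $\lambda$ outside $\Root(\Lambda_{p,q})$) then yields the contradiction, since a $4$-dimensional field cannot map isomorphically onto a noncommutative algebra. Your approach instead stays inside $\calW_{p,q}$: you identify the subalgebra as an abstract quaternion division algebra, conjugate a basic subalgebra into it, and then exploit the amalgamated-free-product structure of $\calW_{p,q}^\times$ to show that no unit can conjugate $a$ to $a^\star$. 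The paper's route is self-contained within the specialisation and ideal machinery of Sections~\ref{section:quaternionalgebras} and~\ref{section:ideals} and needs no external input about quadratic algebras; your route shows how directly the Units Theorems already control the subalgebra structure, and your final syllable computation is in essence the irreducible case of what the paper later records as Proposition~\ref{prop:conjugatetoatmostonebasic}.
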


Our study is not limited to this special case, and the next theorem gives the full picture, allowing any of $p$ and $q$ to split.

\begin{theo}
Up to isomorphism, every $2$-dimensional subalgebra of $\calW_{p,q}$ is isomorphic to one of the basic subalgebras unless
one of $p$ and $q$ splits and none splits with a double root, in which case there are also degenerate $2$-dimensional subalgebras.

If one of $p$ and $q$ splits, then for each integer $n \geq 3$ there exist $n$-dimensional subalgebras of $\calW_{p,q}$,
with exactly two isomorphism types if at least one of $p$ and $q$ splits with simple roots, and exactly one isomorphism type otherwise.
\end{theo}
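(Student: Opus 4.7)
For the two-dimensional part, every nontrivial $2$-dimensional subalgebra $\calS$ of $\calW_{p,q}$ is of the form $\F+\F x$ for some non-scalar $x$, and $x$ is necessarily quadratic with some minimal polynomial $m_x$, so $\calS\simeq\F[t]/(m_x)$. If $m_x$ does not split with a double root, the preceding theorem yields that $x$ is conjugated to a basic vector inside $\calW_{p,q}$ and hence $\calS$ is isomorphic to a basic subalgebra. If instead $m_x=(t-\lambda)^2$, then $x-\lambda$ is a nonzero element of square zero and $\calS\simeq\F[t]/(t^2)$ is degenerate, so the existence of such degenerate $\calS$ reduces to the existence of a nonzero square-zero element in $\calW_{p,q}$. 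By the Zero Divisors Theorem this is impossible when both $p,q$ are irreducible, while when one of them splits with simple roots $\lambda\neq\mu$, say $p=(t-\lambda)(t-\mu)$, the element $u:=(a-\mu)b(a-\lambda)$ satisfies $u^2=(a-\mu)b\cdot p(a)\cdot b(a-\lambda)=0$ and its reduced-word expansion into $aba$, $ab$, $ba$, $b$ shows $u\neq 0$. The exceptional clause in the statement simply records that when one of $p,q$ splits with a double root the degenerate isomorphism type already appears among the basic ones.

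For $n\geq 3$: if both $p$ and $q$ are irreducible the already-stated previous theorem excludes any such $\calS$, so assume one of $p,q$ splits and fix a basic zero divisor $\alpha$ in the corresponding basic subalgebra. From the discussion preceding the Strong Units Theorem in the introduction, the subspace
\[
\calN(\alpha):=\{z\in\calW_{p,q}\;:\;\alpha^\star z=z\alpha=0\}
\]
is isomorphic to $(\F[t],+)$ as an additive group and satisfies $z_1z_2=0$ for all $z_1,z_2\in\calN(\alpha)$. Adjoining $\F$ to any $(n-1)$-dimensional subspace of $\calN(\alpha)$ produces a commutative local $n$-dimensional subalgebra with square-zero radical, which I call Type I. When $\alpha$ can be chosen to be a nontrivial idempotent $e$ (i.e.\ when one of $p,q$ splits with simple roots), so that $\calN(e)=e\calW_{p,q}(1-e)$, the variant $\F\oplus\F e\oplus V$ with $V\subseteq\calN(e)$ of dimension $n-2$ defines a non-commutative $n$-dimensional subalgebra, Type II. Type I and Type II are non-isomorphic since only Type II contains a nontrivial idempotent.

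The delicate point is the uniqueness of the isomorphism types. For $\calS\subseteq\calW_{p,q}$ of dimension $n\geq 3$, I would appeal to the Wedderburn--Malcev decomposition $\calS=\calS'\oplus J$ combined with the quadraticity of every element of $\calS$ inherited from $\calW_{p,q}$. Every element of $J$ then has square zero, so $(u+v)^2=0$ for $u,v\in J$ forces $uv+vu=0$; combining this with the ideal property (every product $uv$ must lie in the finite-dimensional $J$ and again satisfy $(uv)^2=0$) and a coordinate analysis in a basis of $J$, I expect to push this to $J^2=0$. Quadraticity of every element of $\calS'$ forbids any simple summand $M_m(D)$ with $m\geq 3$ (such a factor contains elements of minimal polynomial degree $\geq 3$), and an eigenvector argument then eliminates quadratic-extension summands as soon as $J\neq 0$ (a copy of a quadratic extension $E\subseteq\calS'$ acting on any $0\neq v\in J$ would force $Ev\subseteq\F v$, hence an impossible field embedding $E\hookrightarrow\F$), leaving only $\calS'\in\{\F,\F^k,M_2(\F)\}$. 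The final and hardest step is to bound out $\F^k$ for $k\geq 3$ and to exclude $M_2(\F)$: this requires a Peirce-corner analysis in $\calW_{p,q}$ ruling out three pairwise orthogonal nontrivial idempotents and ruling out a pair of square-zero elements whose product is a nontrivial idempotent, based on the combinatorics of reduced words and on the semi-basic unit structure used earlier in the paper. This last structural step is, in my view, the true obstacle; once it is cleared, the classification as Type I (when $\calS'=\F$) or Type II (when $\calS'=\F\times\F$) is immediate.
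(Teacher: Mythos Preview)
Your two-dimensional argument is correct but leans on the conjugacy theorem (``every quadratic element whose minimal polynomial does not split with a double root is conjugated to a basic vector''), which in the paper's logical development is obtained only later via the refined retracing algorithm. The paper's own proof of the $2$-dimensional classification (Theorem~\ref{theo:classdim2}) goes a different way: it projects a candidate subalgebra onto $\calW_{p,q}/I$ for a maximal ideal $I$ above the fundamental ideal, exploiting that this quotient is the splitting field $\K$ of $pq$. Your route is legitimate but transports much heavier machinery than needed for a statement about isomorphism types.

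For dimensions $n\geq 3$, your constructions of the two families are right and match the paper's $\calU(\beta,V)$ and $\calH(\alpha,V)$. The uniqueness sketch, however, has a real gap that you yourself flag. The step ``every element of $J$ has square zero, hence $J^2=0$'' fails in general: the exterior algebra $\Lambda(\F^2)$ is a $4$-dimensional commutative algebra in which every element of the augmentation ideal squares to zero yet $J^2\neq 0$, so quadraticity alone cannot force $J^2=0$. Likewise, ruling out $\Mat_2(\F)$ and $\F^k$ for $k\geq 3$ demands structural input specific to $\calW_{p,q}$ that your Peirce-corner outline does not supply.

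The paper bypasses Wedderburn--Malcev entirely. Given $\calA$ of dimension $\geq 3$ (with one of $p,q$ split), it first shows $\calA\cap\mathfrak{J}_r\neq\{0\}$ by a dimension count against $\calW_{p,q,[r]}/\mathfrak{R}_r$, picks $z$ in this intersection, and observes that $N(z)\in\F\cap(r(\omega))=\{0\}$ and $\langle x,z\rangle\in\F\cap(r(\omega))=\{0\}$ for all $x\in\calA$, so $z^2=0$ and $xz=zx^\star$. Then, using that $\overline{\calW_{p,q}}\simeq\Mat_2(\F(\omega))$ splits, it conjugates so that $z\mapsto E_{1,2}$; the relation $xz=zx^\star$ forces every $\Phi(x)$ to be upper-triangular with diagonal entries in $\F$, and the two isomorphism types drop out from whether the induced map $\calA\to\F^2$ (the diagonal) has rank $1$ or $2$. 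This single square-zero element, located via the fundamental ideal, is the missing idea in your approach.
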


We will even go as far as to classify the finite-dimensional subalgebras of $\calW_{p,q}$ up to conjugation, but
the results cannot be stated at this point.

As a consequence of the results on subalgebras, we will obtain an unsurprising, yet nontrivial, result on the existence of isomorphisms between free Hamilton algebras:

\begin{theo}\label{theo:isomorphismbetween}
Let $p_1,q_1,p_2,q_2$ be quadratic polynomials in $\F[t]$.
The $\F$-algebras $\calW_{p_1,q_1}$ and $\calW_{p_2,q_2}$ are isomorphic if and only if
their respective basic subalgebras $\F[a_1],\F[b_1],\F[a_2],\F[b_2]$ satisfy either one of the following conditions:
\begin{enumerate}[(i)]
\item $\F[a_1] \simeq \F[a_2]$ and $\F[b_1] \simeq \F[b_2]$;
\item $\F[a_1] \simeq \F[b_2]$ and $\F[b_1] \simeq \F[a_2]$.
\end{enumerate}
\end{theo}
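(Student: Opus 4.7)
The sufficiency direction is immediate from the universal property of the free product. Since $\calW_{p_i,q_i}$ is canonically isomorphic to the free product $\F[a_i]\ast\F[b_i]$ of its basic subalgebras, any pair of $\F$-algebra isomorphisms $\F[a_1]\simeq\F[a_2]$ and $\F[b_1]\simeq\F[b_2]$ (case (i)) lifts to an isomorphism $\calW_{p_1,q_1}\simeq\calW_{p_2,q_2}$, and case (ii) is obtained by pre-composing with the tautological symmetry $\F[a_2]\ast\F[b_2]\simeq\F[b_2]\ast\F[a_2]$.

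For necessity, I would fix an isomorphism $\Phi\colon\calW_{p_1,q_1}\to\calW_{p_2,q_2}$ and study the two $2$-dimensional subalgebras $\Phi(\F[a_1])$ and $\Phi(\F[b_1])$ of $\calW_{p_2,q_2}$: they have isomorphism types $[\F[a_1]]$ and $[\F[b_1]]$, and they generate $\calW_{p_2,q_2}$ together. The aim is to prove that $\{[\F[a_1]],[\F[b_1]]\}$ equals $\{[\F[a_2]],[\F[b_2]]\}$ as a multiset of isomorphism classes of $2$-dimensional $\F$-algebras, which is precisely the disjunction of conditions (i) and (ii).

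The generic step will invoke the theorem asserting that every quadratic element with minimal polynomial not splitting with a double root is conjugate to a basic vector. For each $c\in\{a_1,b_1\}$, the image $\Phi(c)$ is non-scalar (since $\Phi(\F[c])$ is $2$-dimensional), so its minimal polynomial is exactly the corresponding $p_1$ or $q_1$. Whenever this minimal polynomial does not split with a double root, the theorem yields a unit in $\calW_{p_2,q_2}^\times$ that conjugates $\Phi(c)$ to a basic vector, forcing $\Phi(\F[c])$ to be conjugate, and in particular isomorphic, to one of $\F[a_2]$ or $\F[b_2]$. Running the same argument both for $\Phi$ and for $\Phi^{-1}$ yields the desired multiset equality whenever none of $p_1,q_1,p_2,q_2$ splits with a double root.

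The main obstacle is the degenerate case in which some basic subalgebra is of type $\F[t]/(t^2)$: the conjugation argument then fails because non-basic degenerate $2$-dimensional subalgebras may occur in the target, as the classification of $2$-dimensional subalgebras indicates. To handle it I would combine two isomorphism invariants drawn from the preceding theorems: on the one hand, the Zero Divisors Theorem implies that the mere existence of a non-zero square-zero element in $\calW_{p_2,q_2}$ already forces one of $p_2,q_2$ to split; on the other hand, the counting part of the subalgebra theorem says that the number of isomorphism types of $n$-dimensional subalgebras for $n\geq 3$ is exactly $2$ when at least one of $p_2,q_2$ splits with simple roots, and exactly $1$ otherwise. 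A careful case analysis on the types (irreducible, split with a double root, split with simple roots) of the four polynomials $p_i,q_i$, applied symmetrically to $\Phi$ and $\Phi^{-1}$, should then pin down the multiset $\{[\F[a_2]],[\F[b_2]]\}$ uniquely, thereby recovering condition (i) or (ii).
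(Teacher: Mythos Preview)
Your overall strategy mirrors the paper's: reduce to comparing the isomorphism types of the basic subalgebras, using the classification of $2$-dimensional subalgebras for the non-degenerate cases, and then treating separately the residual situations involving a degenerate basic subalgebra. The sufficiency argument and the ``generic'' case are fine.

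However, there is a genuine gap in your handling of the degenerate cases. The two invariants you propose to use---the Zero Divisors Theorem and the count of isomorphism types of $n$-dimensional subalgebras for $n\geq 3$---do \emph{not} separate $\F^2 * \F^2$ from $\F^2 * (\F[\varepsilon]/(\varepsilon^2))$. In both algebras there are zero divisors, and in both the number of isomorphism types of $n$-dimensional subalgebras (for any fixed $n\geq 3$) is exactly $2$, since in each case one of the basic polynomials splits with simple roots. Your conjugation argument does not help either: if side~1 has basic types $\{S,D\}$ and side~2 has basic types $\{S,S\}$, then applying $\Phi$ to the split basic subalgebra on side~1 and $\Phi^{-1}$ to either split basic subalgebra on side~2 only ever tells you $S\simeq S$, which is no contradiction; and for the degenerate basic subalgebra on side~1 the conjugation theorem is silent. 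So the ``careful case analysis'' you allude to cannot be completed with the tools you have listed.

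This is precisely the ambiguity the paper isolates and then resolves by a different invariant: it counts the maximal ideals $I$ of $\calW_{p,q}$ for which the quotient $\calW_{p,q}/I$ is a field (equivalently, the maximal ideals above the fundamental ideal $\mathfrak{F}$). That count is $4$ when both basic subalgebras split with simple roots and $2$ when one splits with simple roots and the other with a double root, because $\Lambda_{p,q}$ has a double root if and only if one of $p,q$ does. The paper also notes, in a remark, an alternative fix via the fundamental ideal: any isomorphism carries $\mathfrak{F}_1$ to $\mathfrak{F}_2$, forcing $\F[t]/(\Lambda_{p_1,q_1})\simeq\F[t]/(\Lambda_{p_2,q_2})$, which again distinguishes the two cases. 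Either of these would complete your argument.
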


Our last main result deals with the automorphism group of the $\F$-algebra $\calW_{p,q}$,
denoted by $\Aut(\calW_{p,q})$.
Say that an automorphism of $\calW_{p,q}$ is \textbf{basic} if it maps every basic vector to a basic vector, which is equivalent to
having it preserve or exchange the basic subalgebras $\F[a]$ and $\F[b]$. The basic automorphisms form a subgroup
of $\Aut(\calW_{p,q})$ (though not a normal one at all!) which we naturally denote by $\BAut(\calW_{p,q})$.
When neither $p$ nor $q$ has a double root in $\F$, the group of basic automorphisms is finite and of very low order:
the maximal order is $8$, and when it is reached $\BAut(\calW_{p,q})$ is isomorphic to the dihedral group $D_4$ (it is the case for instance when $p$ and $q$ split with simple roots). Computing $\BAut(\calW_{p,q})$ is completely elementary but requires a case-by-case discussion, whether one of $p$ and $q$ has a double root, the basic algebras are isomorphic, and so on.

Of course, as in any other algebra there are also the inner automorphisms $x \mapsto \gamma x \gamma^{-1}$ with $\gamma \in \calW_{p,q}^\times$,
which form a normal subgroup of $\Aut(\calW_{p,q})$, denoted by $\Inn(\calW_{p,q})$ and isomorphic to the projective group
$\mathrm{P}\calW_{p,q}^\times$ of units. Now, here is our main result:

\begin{theo}[Automorphisms Theorem]\label{theo:automorphismstheointro}
Every automorphism of the $\F$-algebra $\calW_{p,q}$ splits in a unique way as the composite of a basic automorphism followed by an inner automorphism.
In other words, $\BAut(\calW_{p,q})$ is a semi-direct factor of $\Inn(\calW_{p,q})$ in $\Aut(\calW_{p,q})$.
\end{theo}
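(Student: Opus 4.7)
Given $\varphi\in\Aut(\calW_{p,q})$, write $\alpha:=\varphi(a)$ and $\beta:=\varphi(b)$; these generate $\calW_{p,q}$ and have minimal polynomials $p$ and $q$ respectively, since $\varphi$ maps $\F[a],\F[b]$ isomorphically onto $\F[\alpha],\F[\beta]$. The universal property identifies a basic automorphism $\psi$ with the data of its values on $a$ and $b$ as basic vectors with the appropriate minimal polynomials, so finding a factorization $\varphi=\ad_\gamma\circ\psi$ with $\psi$ basic amounts to exhibiting a unit $\gamma$ for which both $\gamma^{-1}\alpha\gamma$ and $\gamma^{-1}\beta\gamma$ lie in $\F[a]\cup\F[b]$.

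I plan to produce $\gamma$ in two steps. First, using the earlier theorem that every quadratic element with a minimal polynomial not splitting with a double root is conjugate to a basic vector---together with an auxiliary argument in the double-root case, relying on the classification of $2$-dimensional subalgebras up to conjugation already established---I find a unit $\gamma_1$ with $a^*:=\gamma_1^{-1}\alpha\gamma_1$ basic. Conjugation preserves minimal polynomials, so $a^*\in\F[a]$ automatically when $p\neq q$; when $p=q$ a further basic swap can be absorbed into $\psi$ to again place $a^*$ in $\F[a]$. Second, writing $\beta^*:=\gamma_1^{-1}\beta\gamma_1$, I seek $\gamma_2\in\F[a]^\times$ such that $\gamma_2^{-1}\beta^*\gamma_2\in\F[b]$; since $\gamma_2$ commutes with $a^*$, this yields the sought-after $\gamma:=\gamma_1\gamma_2$. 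The key input here is that, because $\varphi$ is an automorphism, the universal property identifies $\calW_{p,q}$ with the free product $\F[a]*\F[\beta^*]$, so we have two free-product decompositions $\calW_{p,q}=\F[a]*\F[b]=\F[a]*\F[\beta^*]$ sharing their first factor; a Kurosh-type rigidity for complementary factors---drawing on the classification of $2$-dimensional subalgebras up to conjugation and on the fine structure of $\calW_{p,q}^\times$ developed earlier---should then yield $\gamma_2\in\F[a]^\times$ mapping $\F[\beta^*]$ onto $\F[b]$.

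For uniqueness, if $\ad_{\gamma_1}\circ\psi_1=\ad_{\gamma_2}\circ\psi_2$ then $\psi_2\circ\psi_1^{-1}=\ad_{\gamma_2^{-1}\gamma_1}$ is simultaneously basic and inner, so it suffices to check that any basic inner automorphism is the identity. If $\ad_\gamma$ is basic, both $\gamma a\gamma^{-1}$ and $\gamma b\gamma^{-1}$ are basic vectors; they cannot both lie in $\F[a]$ (else all of $\calW_{p,q}$ would), and an analysis of the resulting commutation equations $\gamma a=(\mu+\nu a)\gamma$ and $\gamma b=(\mu'+\nu' b)\gamma$ in the normal form of $\calW_{p,q}$ as a free product forces $\gamma\in\F^\times$, with the swap case $p=q$ reduced to this by considering $\gamma^2$. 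The main obstacle is the rigidity step in the second paragraph: lifting the external conjugacy classification of $2$-dimensional subalgebras to a conjugation by an element of $\F[a]^\times$, i.e.\ establishing a Kurosh-type uniqueness for the complementary factor of $\F[a]$ in a free-product presentation of $\calW_{p,q}$; it is here that the subalgebra and unit-group machinery assembled earlier in the paper will be crucial.
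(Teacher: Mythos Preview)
Your outline matches the broad shape of one of the two proofs the paper gives (the ``refined retracing'' argument of Section~\ref{section:units2}), but the second step of your existence argument contains a genuine gap that the paper has to work hard to fill.

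The claim that you can take $\gamma_2\in\F[a]^\times$ is false in general. When $\F[a]$ is not a field, the group $\F[a]^\times$ is too small: in the degenerate case $a^2=0$ one has $\F[a]^\times=\F^\times(1+\F a)$, and conjugation by such elements cannot move $\beta^*$ far enough. The paper shows (Theorem~\ref{theo:superdecompositiontheorem}) that what is actually needed is $\gamma_2\in\SB(\F[a])$, the \emph{semi-basic} subgroup, which for degenerate $\F[a]$ contains the units $1+r\alpha$ with $r\in C$ arbitrary, not just $r\in\F$. The ``Kurosh-type rigidity for complementary factors'' you invoke does not exist as a citable statement here; what replaces it is a delicate distance analysis (Lemmas~\ref{lemma:effectofsemibasic} and~\ref{lemma:retracingstepspecial}) showing that conjugation by well-chosen semi-basic units strictly decreases an absolute-distance invariant $\delta$, together with a case split on whether the leading vector of the current element is a unit or a zero divisor. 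Your step~1 also needs care in the double-root case: special degenerate elements (Lemma~\ref{lemma:specialdegenerate}) are quadratic yet \emph{not} conjugate to basic vectors, and one must argue that $\varphi(a)$ cannot be of this form---the paper does this by running the retracing algorithm and showing the obstruction never arises for images under automorphisms. A minor point: ``$a^*\in\F[a]$ automatically when $p\neq q$'' is wrong; you need $\F[a]\not\simeq\F[b]$, since $\F[b]$ can contain elements with minimal polynomial $p$ whenever the two basic algebras are isomorphic.

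For uniqueness, your reduction of the swap case via $\gamma^2$ only gives $\gamma^2\in\F^\times$, i.e.\ $\gamma$ quadratic with trace zero, which is far from $\gamma\in\F^\times$. The paper instead (Theorem~\ref{theo:uniquenessdecompauto}) computes an explicit normalized conjugator for each nontrivial basic $C$-automorphism---the commutator $[a,b]$ for the pseudo-adjunction, $a-b$ for swaps, $\lambda ab+\mu ba$ for hyperbolic automorphisms---and checks that its norm has positive degree in $\omega$, hence is not a unit. It is worth noting that the paper's \emph{primary} proof of existence (Sections~\ref{section:automorphismsI}--\ref{section:automorphismsII}) is entirely different from your outline: it first reduces to $C$-automorphisms by analyzing the induced action on the center and the invariance of $\Lambda_{p,q}(\omega)$, then applies Skolem--Noether in the completed quaternion algebra $\overline{\calW_{p,q}}$ to get a conjugator, and finishes with a signature/exponent analysis of the conjugator's norm together with a careful study of the ideals above the fundamental ideal.
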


In contrast with most of the previous results, in which the special case where $p$ and $q$ are irreducible
can frequently be seen as a special case of more general results of Cohn on free products of skew fields,
the Automorphisms Theorem does not seem to be related to more general results (although Cohn could have, with limited effort, derived
it from his methods, with again the limitation of requiring $p$ and $q$ to be irreducible).
There is an interesting contrast between the Automorphisms Theorem and the classical problem of the automorphisms of free algebras,
an in particular the known isomorphism \cite{Czerniakiewicz,Makar-Limanov}
between the automorphisms of the free algebra $\F\langle x,y\rangle$
in two generators $x$ and $y$ and the automorphisms of the polynomial ring $\F[x,y]$,
the latter of which are described by the celebrated Jung - van der Kulk theorem \cite{Jung,vanderKulk}.
In  $\F\langle x,y\rangle$, the only units are the elements of $\F^\times$
and hence the only inner automorphism is the identity. Yet the endomorphism that fixes $x$ and maps $y$ to $y+x$ is clearly an automorphism
(with inverse the endomorphism that fixes $x$ and maps $y$ to $y-x$),
but it is clear that it is not basic in the sense of free products.
It should be noted finally that to our knowledge Cohn's techniques for studying
free products of skew fields did not produce any substantial result on the automorphisms of such free products.

In light of the above, we have some hints at to why these results have waited for so long to appear.
On the one hand, Cohn was preoccupied with very general results, and probably overlooked the ``double dimension $2$"
case in the free product of algebras as a mere curiosity. On the other hand,
most authors who have tackled the free Hamilton algebra for itself so far have only done so in cases where $p$ and $q$ split \cite{Weiss1,Weiss2}, and it is arguably the situation where the greatest complexity is found. In contrast, among the split cases, the ``simple split" case, where both $p$ and $q$ split with simple roots, is the one where the linear representations of $\calW_{p,q}$ are the easiest to describe. This contrast could be explained by an important difference in the structure
of the automorphism group: the simple split case is the one where the outer automorphism group is finite (with cardinality $8$),
while in the other cases it is infinite if $\F$ is infinite; but as seen earlier it is in the simple split case that
the inner automorphism group is most complex (i.e., that the group of units is most complex).

In studying $\calW_{p,q}$ for itself, and not through its linear representations, we were initially motivated by the
issue of understanding the involutions of $\calW_{p,q}$ (meaning, the $\F$-linear involutions of the algebra that revert the products)
up to the action of the automorphism group by conjugation. We initially thought that would be easy, and discovered that the contrary was true.
Now that we have gone through all this journey, the initial question has limited importance to us, but we will nevertheless solve
it in the end of the article, leaving out however the case of fields with characteristic $2$.

It is customary to end such an expository section with a list of open problems, but we are afraid that the theorems we will prove here are so definitive that very little remains to be said on $\calW_{p,q}$ itself. The main open problem to us deal with potential analogues
of the Automorphism Theorem to free products of finite-dimensional algebras over $\F$
(as we have already stated, there is no correct analogue to it for the free product of two copies of $\F[t]$).
Also, the results on the group of units beg corresponding ones for the problem where the coefficient field $\F$ is replaced with the ring $\Z$ of integers.
The meaningful open questions that remain on $\calW_{p,q}$ all deal with the linear representations of $\calW_{p,q}$, which constitute a whole different matter although
some of the tools that are necessary to study the linear representations are developed here.

\subsection{Strategy and structure of the article}

We have already pointed out that some of our results are special cases of Cohn's results when $p$ and $q$
are irreducible. The reader will be relieved however to learn that we will not require any knowledge of
Cohn's methods and results. Rather, our main idea is to completely break free from Cohn's approach and use a viewpoint that is completely
peculiar to $\calW_{p,q}$. This viewpoint involves structures that are reminiscent of
Clifford Algebras, and in particular of generalized quaternions. And it also takes great advantage of a property that is entirely
specific to the situation under consideration, which is the nontriviality of the center of $\calW_{p,q}$.

The remainder of the present work is laid out as follows.
Section \ref{section:structure} introduces our main tools to study the Hamilton algebra, starting from the adjunction, the $\omega$ element, and then building the trace, norm and the associated inner product.
We immediately give interesting applications of these constructions, with an answer to the problem of easily detecting units, zero divisors and quadratic elements,
and an application to the fact that the adjunction commutes with every automorphism.
Then we resume this foundational section with the key analysis of the determinant of the inner product, in which another critical object appears:
the fundamental polynomial associated with the pair $(p,q)$, which is another polynomial of degree $2$ that can be thought as a sort of
``midpoint" in Galois theory. With these tools in place, we review the connection with quaternion algebras over fields, both by extending scalars
(which yields quaternion algebras over a pure transcendental extension of degree $1$ of the base field $\F$) or by specializing
(which yields quaternion algebras over algebraic extensions of $\F$). These concepts are then immediately used to
obtain the center of $\calW_{p,q}$ in a breeze, as well as to obtain a very short proof of a result of Laffey on matrix algebras generated by two idempotents
(Section \ref{section:Laffey}).

The remaining sections are organized thematically.
In Section \ref{section:zerodivisors}, we characterize the existence of zero divisors in $\calW_{p,q}$,
giving a new proof of Cohn's result, and as an application we give a definitive answer to the problem of
embedding $\calW_{p,q}$ in matrix algebras over $\F[t]$ (which can be formulated in various ways).

Section \ref{section:units1} is the first one that is devoted to the structure of the group of units, but it actually has a broader ambition,
as it solves a wealth of other issues when both $p$ and $q$ are irreducible. It is the main section where we do not use the connection with quaternion algebras: here, the methods are more elementary than in the rest of the article, but they still rely on the adjunction, the trace and the inner product. The end results we obtain in this section are mostly special cases of results obtained by Cohn for more general free products of fields, but this section is important because it lays down the essential elements that will ultimately help us decipher the group of units in all cases.

The next section (Section \ref{section:ideals}) deals with the maximal ideals of $\calW_{p,q}$. The key result is that every nonzero ideal has nontrivial intersection with the center of $\calW_{p,q}$, a result which was already known \cite{SZW} but which we easily reprove. By using specializations, we easily derive  most maximal ideals from this observation, with the exception of those that include the fundamental ideal, and which occupy the remainder of the discussion. There, we lay out essential results that are used later in the study of automorphisms. As an application, we also obtain a new proof of the Zero Divisors Theorem that is almost computation-free.

Section \ref{section:finitedimalg} deals with the algebraic elements in $\calW_{p,q}$ and the finite-dimensional subalgebras.
The main tool there is to consider things locally, by moding out a maximal ideal that includes the fundamental ideal.
There, the internal structure of the finite-dimensional subalgebras of $\calW_{p,q}$ is entirely deciphered,
while the question of their orbits under the respective actions of the inner automorphism group and of the full automorphism
group postponed to later sections.

Sections \ref{section:automorphismsI} and \ref{section:automorphismsII}, which deal with the structure of the automorphism group of $\calW_{p,q}$ and feature a proof of the Automorphisms Theorem, are the first pinnacle of our study. These sections combine results from the two previous sections to slowly dissect the structure of this automorphism group, by considering ever smaller subgroups and constructing relevant invariants (called signatures). These sections, and more prominently the second one, involve the global and the local viewpoint of quaternion algebras, as well as delicate considerations on ideals that include the fundamental ideal. Section \ref{section:automorphismsI} deals with the gap between the subgroup of automorphisms that fix the elements of the center of $\calW_{p,q}$ and the full automorphism group. Section \ref{section:automorphismsII}, which is the most intricate one, deciphers the group of all automorphisms that fix the central elements, and more specifically the gap between this group and the group of inner automorphisms.

Section \ref{section:units2} returns to the study of the group of units when one of $p$ and $q$ splits, and is the second pinnacle of our study.
There, the method of Section \ref{section:units1} is refined to obtain the full decomposition of the projective group of units into the internal free products of two subgroups, depending on the respective types of $p$ and $q$. The technique allows us to give a new proof of the existence part in the Automorphisms Theorem, one that is completely different from the proof given in Section \ref{section:automorphismsII}.

The last section is devoted to miscellaneous issues that involve many of the preceding results.
There, we give a complete classification of the orbits of quadratic elements and of finite-dimensional subalgebras,
mainly under conjugation and in some cases under the action of the full automorphism group.
We also prove that the center of the automorphism group of $\calW_{p,q}$ is trivial, thereby justifying the importance of the fundamental
adjunction of $\calW_{p,q}$ (as the only anti-automorphism that commutes with all the automorphisms).
Finally, we determine the conjugacy classes of the elements of order $2$ in the automorphism group of $\calW_{p,q}$,
thereby answering the question that initiated the present study.

\section{Basics on the free Hamilton algebra}\label{section:structure}

\subsection{The free Hamiton algebra as a free product}

We start by noting that the basic subalgebras $\F[a]$ and $\F[b]$ really have dimension $2$.
Indeed, by the universal property there is a unique homomorphism $\Phi : \calW_{p,q} \rightarrow \F[t,s]/(p(t),q(s))$ of $\F$-algebras
that takes $a$ to $t$ and $b$ to $s$, and it is known that $\F[t,s]/(p(t),q(s))$ is naturally isomorphic to $\F[t]/(p) \otimes_\F \F[t]/(q)$,
so the respective cosets of $t$ and $s$ in  $\F[t,s]/(p(t),q(s))$ are not scalar multiples of the unity, hence neither are $a$ and $b$.

Next, we observe that $\calW_{p,q}$, equipped with the canonical inclusions of $\F[a]$ and $\F[b]$ into it,
is a coproduct of these two algebras in the category of $\F$-algebras, i.e., it is the free product of $\F[a]$ and $\F[b]$.
To see this, we take an $\F$-algebra $\calA$ and homomorphisms $f : \F[a] \rightarrow \calA$ and $g : \F[b] \rightarrow \calA$,
and we simply note that $p(f(a))=f(p(a))=0$ and $q(g(b))=g(q(b))=0$ to see that there is a unique homomorphism $\Phi : \calW_{p,q} \rightarrow \calA$
that takes $a$ to $f(a)$ and $b$ to $g(b)$. From there, it is clear that $\Phi$ coincides with $f$ on $\F[a]$ and with $g$ on $\F[b]$;
the uniqueness of $\Phi$ with respect to that property is also clear.

Note that, as a consequence, we obtain that whenever we have $2$-dimensional algebras $\calA$ and $\calB$ and we take distinct elements $x \in \calA\setminus \F$
and $y \in \calB \setminus \F$ with respective minimal polynomials $p$ and $q$, then $\calA * \calB \simeq \calW_{p,q}$.
As a consequence, the structure of $\calW_{p,q}$ up to isomorphism depends only on the respective structures of its basic subalgebras.

\subsection{The fundamental involution}

\label{page:typeof2dimalg}
Let $\calA$ be a $2$-dimensional algebra over $\F$. Remember that there are three cases:
\begin{itemize}
\item Either $\calA$ is a field;
\item Or $\calA \simeq \F \times \F$, in which case one says that $\calA$ \textbf{splits};
\item Or $\calA \simeq \F[\varepsilon]/(\varepsilon^2)$, in which case one says that $\calA$ \textbf{degenerates.}
\end{itemize}
Choosing $x \in \calA \setminus \F$, and denoting by $r$ its minimal polynomial, one sees that $\calA \simeq \F[t]/(r)$,
and then $\calA$ is a field if and only if $r$ is irreducible, it splits if and only if $r$ splits with simple roots, and
it degenerates if and only if $r$ splits with a double root.

In any case there are at most two involutions of $\calA$, and exactly one if $\calA$ is an inseparable field extension
of $\F$, or $\car(\F)=2$ and $\calA$ degenerates.
The \textbf{adjunction} of $\calA$, denoted by $x \mapsto x^{\star_\calA}$, is then the non-identity involution if there is one,
otherwise it is the identity\footnote{
Note that the unifying way of seeing the adjunction is through the Clifford viewpoint: if one sees $\calA$ as isomorphic to the Clifford algebra of some
(potentially degenerate) $1$-dimensional quadratic form over $\F$, then the adjunction is the involution of $\calA$ that corresponds to the Clifford involution
through such an isomorphism. Another option would be to define the adjunction directly as $x \mapsto \tr_{\calA/\F}(x).1_\calA-x$.}.
The adjunction takes every element $x \in \calA \setminus \F$, with minimal polynomial $r=t^2-(\tr r)t+N(r)$, to
the element $(\tr r).1_\calA-x$, called the \textbf{quadratic adjoint} of $x$, which is another root of $r$ in $\calA$
(and actually the only possible extra root unless $\calA$ degenerates).

By the universal property of free algebras, there is a unique
\emph{anti}homomorphism of $\F$-algebras
$$x \in \calW_{p,q} \longmapsto x^\star \in \calW_{p,q},$$
that satisfies
$$\forall x \in \F[a], \; x^\star=x^{\star_{\F[a]}} \quad \text{and} \quad \forall y \in \F[b], \; y^\star=y^{\star_{\F[b]}}.$$
Of course, by antihomomorphism, we mean that
$$\forall (\lambda,x,y)\in \F \times \calW_{p,q}^2, \; (\lambda\,x+y)^\star=\lambda x^\star+y^\star, \quad 1^\star=1 \quad \text{and} \quad (xy)^\star=y^\star x^\star.$$
This antihomomorphism is called the \textbf{fundamental involution}, or \textbf{adjunction}, of $\calW_{p,q}$,
and $x^\star$ is called the \textbf{adjoint} of $x$.

The adjunction has a fundamental property that we will prove later: it commutes with every automorphism of $\calW_{p,q}$ (Proposition \ref{prop:commuteadjunction}).
Note in particular that
$$a^\star=\tr(p)-a \quad \text{and} \quad b^\star=\tr(q)-b.$$

\subsection{The $\omega$ element}\label{section:omega}

The next observation is critical. We introduce the element
$$\omega :=ab^\star+b a^\star=a^\star b+b^\star a,$$
where the equality is easily obtained by expanding the adjoints.
Then we find that $\omega$ commutes with $a$ and $b$. To see this most concisely, write
$$a \omega=a(a^\star b+b^\star a)=N(p)\, b+a b^\star a=(b a^\star+ab^\star)\, a=\omega a,$$
and work likewise with $b$ instead of $a$. As a consequence:

\begin{lemma}
The subalgebra $\F[\omega]$ is included in the center of $\calW_{p,q}$.
\end{lemma}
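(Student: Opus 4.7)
The plan is almost entirely routine, building on the calculation that the excerpt already performed for $a$. The strategy has three steps.

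First, I would establish the symmetric identity $b\omega=\omega b$ by the analogous computation to the one given for $a$. Using the expression $\omega=a^\star b+b^\star a$ and the identity $b b^\star=b^\star b=N(q)$ (which follows from $b^2=\tr(q)\,b-N(q)$), I would compute
\[
b\omega=b(a^\star b+b^\star a)=ba^\star b+N(q)\,a
=(ab^\star+ba^\star)\,b=\omega b.
\]

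Second, I would invoke the elementary fact that the centralizer
\[
C(S):=\{z\in\calW_{p,q}\mid zs=sz \text{ for all } s\in S\}
\]
of any subset $S$ of $\calW_{p,q}$ is an $\F$-subalgebra, and that when $S$ generates $\calW_{p,q}$ as a unital $\F$-algebra, $C(S)$ coincides with the center $Z(\calW_{p,q})$. This is standard: an element commuting with each member of a generating set commutes with every product and every $\F$-linear combination of such products, i.e.\ with every element of the algebra. Since $\{a,b\}$ generates $\calW_{p,q}$ by construction, and $\omega$ centralizes both $a$ and $b$ by the first step, we obtain $\omega\in Z(\calW_{p,q})$.

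Third, since $Z(\calW_{p,q})$ is an $\F$-subalgebra containing $1$ and $\omega$, it contains the subalgebra $\F[\omega]$ generated by $\omega$, which is the claimed inclusion. There is really no obstacle here; the only non-trivial input is the explicit symmetric expression $\omega=ab^\star+ba^\star=a^\star b+b^\star a$ recorded earlier, which is precisely what allows the same kind of telescoping computation to work on both sides.
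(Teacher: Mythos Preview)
Your proposal is correct and follows essentially the same approach as the paper: the paper computes $a\omega=\omega a$ explicitly, then says to ``work likewise with $b$ instead of $a$'', and concludes the lemma immediately from the fact that $a$ and $b$ generate $\calW_{p,q}$. Your write-up simply makes explicit the steps the paper leaves implicit.
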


We will prove later that $\F[\omega]$ is \emph{exactly} the center of $\calW_{p,q}$ (see Theorem \ref{theo:center} in Section \ref{section:traceetc}), but this can wait.
Let us simply mention that it is one of the main results of \cite{SZW} (theorem 4 and its proof there).

Another important remark is that $\omega$ is invariant under the adjunction, i.e., $\omega^\star=\omega$,
to the effect that the adjunction is an endomorphism of the $\F[\omega]$-module $\calW_{p,q}$.

It should be noted that while the adjunction is fundamental to $\calW_{p,q}$, the element $\omega$ will generally be perturbated in
applying an $\F$-automorphism of $\calW_{p,q}$. For example, in taking the $\F$-automorphism $\Phi$ that fixes $a$ and exchanges
$b$ and $b^\star$, we obtain $\Phi(\omega)=ab+b^\star a^\star=(\tr p)(\tr q)-\omega$.

Our next observation is that we can now write
$$ba=-ba^\star+(\tr p)\, b=-\omega+a b^\star+(\tr p)\,b=-ab+(\tr q)\, a+(\tr p)\, b-\omega.$$
This, along with the identities $a^2=(\tr p)a-N(p)$ and $b^2=(\tr q)a-N(q)$, allows one to simplify all monomials in $a$ and $b$ and write them as linear combinations of $1$, $a$, $b$ and $ab$ with coefficients in $\F[\omega]$. In other words, $(1,a,b,ab)$ generates the $\F[\omega]$-module $\calW_{p,q}$.
Better still, by analyzing the formal left-multiplications by $x$ when $x$ ranges in $\{1,a,b,ab\}$
(e.g., try to write $b(ab)$ as a formal linear combination of $1,a,b,ab$ with coefficients in $\F[\omega]$), one naturally comes up with
the two matrices
$$A:=\begin{bmatrix}
0 & -N(p) & 0 & 0 \\
1 & \tr(p) & 0 & 0 \\
0 & 0 & 0 & -N(p) \\
0 & 0 & 1 & \tr(p)
\end{bmatrix} \quad \text{and} \quad
B=\begin{bmatrix}
0 & -t & -N(q) & -(\tr p)N(q) \\
0 & \tr(q) & 0 & N(q) \\
1 & \tr(p) & \tr(q) & (\tr p)(\tr q)-t \\
0 & -1 & 0 & 0
\end{bmatrix}$$
of $\Mat_4(\F[t])$, and we can check that they satisfy the three identities $p(A)=0$, $q(B)=0$ and $A(\tr(q)I_4-B)+B(\tr(p) I_4-A)=t I_4$,
a routine but somewhat tedious verification.
The first two identities yield a unique homomorphism
$$\Psi : \calW_{p,q} \rightarrow \Mat_4(\F[t])$$
of $\F$-algebras
that takes $a$ and $b$ respectively to $A$ and $B$, and then the third one shows that $\Psi$ takes $\omega$ to $t\,I_4$.
In turn, this shows that $\omega$ is transcendental over $\F$. Moreover, computing that the first column of $AB$
equals $\begin{bmatrix}
0 & 0 & 0 & 1
\end{bmatrix}^T$, we also derive from $\Psi$ that $1,a,b,ab$ are linearly independent over $\F[\omega]$.
As a consequence, we find:

\begin{prop}
The element $\omega$ is transcendental over $\F$, and $\calW_{p,q}$ is a free module of rank $4$ over $\F[\omega]$,
with basis $(1,a,b,ab)$.
\end{prop}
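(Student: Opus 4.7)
The plan is to leverage the homomorphism $\Psi : \calW_{p,q} \to \Mat_4(\F[t])$ already constructed in the excerpt to deduce both claims in one stroke. The spanning part has essentially been handled in the preceding discussion: the identities $a^2 = (\tr p)\,a - N(p)$, $b^2 = (\tr q)\,b - N(q)$, and the key identity $ba = -ab + (\tr q)\,a + (\tr p)\,b - \omega$ (derived from $\omega = ab^\star + ba^\star$) allow one, by an easy induction on word length, to rewrite every monomial in $a$ and $b$ as an $\F[\omega]$-linear combination of $1, a, b, ab$. Thus $(1,a,b,ab)$ generates $\calW_{p,q}$ as an $\F[\omega]$-module.

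For the transcendence of $\omega$, I would rely on the fact that $\Psi(\omega) = tI_4$. This follows from writing $\omega = a(\tr(q)\cdot 1 - b) + b(\tr(p)\cdot 1 - a)$ and applying the third matrix identity $A(\tr(q)I_4 - B) + B(\tr(p)I_4 - A) = tI_4$. For any nonzero $P \in \F[t]$ we then have $\Psi(P(\omega)) = P(tI_4) = P(t)\,I_4 \neq 0$, forcing $P(\omega) \neq 0$.

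For linear independence of $(1,a,b,ab)$ over $\F[\omega]$, I would consider a general relation $\lambda_0(\omega) + \lambda_1(\omega)\,a + \lambda_2(\omega)\,b + \lambda_3(\omega)\,ab = 0$ with $\lambda_i \in \F[t]$. Applying $\Psi$ transforms this into the matrix identity $\lambda_0(t)\,I_4 + \lambda_1(t)\,A + \lambda_2(t)\,B + \lambda_3(t)\,AB = 0$ in $\Mat_4(\F[t])$. Inspection of the explicit matrices $A$ and $B$ given in the excerpt shows that the first columns of $I_4, A, B, AB$ are respectively the standard basis vectors $e_1, e_2, e_3, e_4$ of $\F[t]^4$ (the last being the computation flagged in the excerpt). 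Reading off the first column of the left-hand side therefore yields the vector $(\lambda_0(t), \lambda_1(t), \lambda_2(t), \lambda_3(t))^T = 0$, whence each $\lambda_i = 0$.

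The main obstacle is really the verification of the three identities $p(A) = 0$, $q(B) = 0$, and $A(\tr(q)I_4 - B) + B(\tr(p)I_4 - A) = tI_4$, which are routine but tedious $4 \times 4$ matrix computations that the excerpt explicitly leaves to the reader. The creative ingredient is the a priori choice of the matrices $A$ and $B$ themselves: they are the matrices that describe, in the basis $(1,a,b,ab)$, the left-multiplications by $a$ and $b$ on the putative free rank-$4$ $\F[\omega]$-module structure, so constructing them is essentially guessing in advance that the proposition is true and then checking consistency. Once $A$ and $B$ are in hand, every subsequent step is bookkeeping.
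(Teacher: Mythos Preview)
Your proposal is correct and follows essentially the same route as the paper: the spanning argument via the reduction identities, the transcendence of $\omega$ from $\Psi(\omega)=tI_4$, and the linear independence by reading off the first columns of $I_4,A,B,AB$ as the standard basis of $\F[t]^4$. The paper states this last step more tersely (it only flags the first column of $AB$ as the key computation), but your expanded version is exactly what is meant.
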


From the viewpoint of free products, the elements $a$ and $b$ play no special role, so it is more natural to take
arbitrary $x \in \F[a] \setminus \F$ and $y \in \F[b] \setminus \F$. Writing $x=\lambda a+\lambda'$ and $y=\mu b+\mu'$ with $\lambda,\lambda',\mu,\mu'$ in $\F$, we check that
$$xy^\star+y x^\star=\lambda \mu \omega+\lambda' \tr(q)+\mu' \tr(p)+2\lambda' \mu',$$
so in replacing $(a,b)$ with $(x,y)$ the $\omega$ element is simply replaced with another generator of the $\F$-algebra $\F[\omega]$.
Moreover, the family $(1,x,y,xy)$ is valued in $\Vect_\F(1,a,b,ab)$, and its matrix in $(1,a,b,ab)$ is upper-triangular with diagonal entries $1,\lambda,\mu,\lambda\mu$,
all nonzero, so it is also a basis of the $\F[\omega]$-module $\calW_{p,q}$. Finally, its determinant
in $(1,a,b,ab)$ is $\lambda^2\mu^2$.
We say that $(1,x,y,xy)$ is the \textbf{deployed} basis associated with $(x,y)$,
and we call the move from $(a,b)$ to $(x,y)$ a \textbf{basic base change}.

\subsection{The trace, inner product and norm}\label{section:traceetc}

Now we can introduce some key additional tools. The first one is the trace map
$$\tr : x \in \calW_{p,q} \longmapsto x+x^\star,$$
which is an endomorphism of the $\F[\omega]$-module $\calW_{p,q}$.
Better still:

\begin{prop}
The trace map is valued in $\F[\omega]$.
\end{prop}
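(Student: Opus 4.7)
The plan is to use the freeness of $\calW_{p,q}$ as a module over $\F[\omega]$ with the basis $(1,a,b,ab)$, together with the fact, already noted in Section \ref{section:omega}, that $\omega^\star=\omega$. The latter makes the adjunction an endomorphism of the $\F[\omega]$-module $\calW_{p,q}$, and hence so is the trace $\tr=\id+\star$. Consequently, it suffices to check that $\tr$ maps each of the four basis vectors $1,a,b,ab$ into $\F[\omega]$.

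Three of these computations are immediate: $\tr(1)=2\in\F\subset\F[\omega]$, and by the very definition of the adjunction on the basic subalgebras one has $\tr(a)=a+a^\star=\tr(p)$ and $\tr(b)=b+b^\star=\tr(q)$, both lying in $\F$. The only computation of substance is that of $\tr(ab)=ab+(ab)^\star=ab+b^\star a^\star$, and this is precisely where the $\omega$ element makes its appearance.

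For the last step I would expand
\[
ab+b^\star a^\star=ab+(\tr(q)-b)(\tr(p)-a)=ab+\tr(p)\tr(q)-\tr(q)\,a-\tr(p)\,b+ba,
\]
and then use the key identity
\[
ba=-ab+(\tr q)\,a+(\tr p)\,b-\omega
\]
that was derived at the end of Section \ref{section:omega}. Substituting makes the $ab$, $(\tr q)a$ and $(\tr p)b$ terms cancel, leaving $\tr(ab)=\tr(p)\tr(q)-\omega\in\F[\omega]$.

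The main (and really only) obstacle is this last calculation, which relies crucially on the relation rewriting $ba$ in terms of $1,a,b,ab$ with coefficients in $\F[\omega]$; the rest is a direct consequence of $\F[\omega]$-linearity of $\star$. Once the four values are in $\F[\omega]$, $\F[\omega]$-linearity of $\tr$ forces $\tr(x)\in\F[\omega]$ for every $x\in\calW_{p,q}$.
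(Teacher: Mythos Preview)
Your proof is correct and follows essentially the same approach as the paper: reduce to the four basis vectors via $\F[\omega]$-linearity of $\tr$, and for $ab$ compute $\tr(ab)=(\tr p)(\tr q)-\omega$. The only cosmetic difference is that the paper cites the earlier observation $ab+b^\star a^\star=(\tr p)(\tr q)-\omega$ (obtained when applying the automorphism that swaps $b$ and $b^\star$ to $\omega$), whereas you derive the same identity by expanding and using the formula for $ba$.
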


\begin{proof}
To see this we only need to check that the trace map takes every vector of the deployed basis $(1,a,b,ab)$ into $\F[\omega]$.
This is obvious for $1$, $a$ and $b$, whereas for $ab$ it comes from the observation that
$(ab)^\star=b^\star a^\star$ and hence $\tr(ab)=(\tr p)(\tr q)-\omega$ as we observed earlier that $ab+b^\star a^\star=(\tr p)(\tr q)-\omega$.
\end{proof}

As a consequence, the element $x$ always commutes with its adjoint $x^\star$.
Obviously
\begin{equation}\label{eq:tracestarinvariant}
\forall x \in \calW_{p,q}, \; \tr(x^\star)=\tr(x).
\end{equation}
Finally, one derives the classical trace property
\begin{equation}\label{eq:tracecommute}
\forall (x,y)\in (\calW_{p,q})^2, \; \tr(xy)=\tr(yx).
\end{equation}
Indeed, for all $x,y$ in $\calW_{p,q}$ we can write
$$(xy^\star+y x^\star)-(y^\star x+x^\star y)=\bigl((\tr y) x-xy+(\tr x)y-yx\bigr)-\bigl((\tr y)x-yx+(\tr x)y-xy\bigr)=0,$$
which yields $\tr(xy^\star)=\tr(y^\star x)$.

From there, we define the \textbf{inner product} of two elements $x$ and $y$ of $\calW_{p,q}$ as follows:
$$\langle x,y\rangle :=\tr(xy^\star)=xy^\star+yx^\star \in \F[\omega],$$
and in particular
$$\langle a,b\rangle=\omega.$$
Remembering again that the adjunction is an endomorphism of $\F[\omega]$-module, it turns
out that the inner product is $\F[\omega]$-bilinear. The inner product is obviously symmetric, also.
Moreover, thanks to \eqref{eq:tracecommute} we have the second expression for the inner product:
\begin{equation}\label{eq:starisometry}
\forall (x,y) \in \calW_{p,q}^2, \; \langle x,y\rangle =y^\star x+x^\star y=\langle x^\star,y^\star\rangle
\end{equation}
Finally, the inner product appears as the polar form of the \textbf{norm} mapping
$$N : x \in \calW_{p,q} \mapsto xx^\star=x^\star x,$$
so that
$$\forall (x,y)\in \calW_{p,q}^2, \; N(x+y)=N(x)+N(y)+\langle x,y\rangle.$$
Notice the absence of division by $2$ in the definition of the inner product, which is critical to handle fields with characteristic $2$.

Better still:

\begin{prop}
The norm is valued in $\F[\omega]$.
\end{prop}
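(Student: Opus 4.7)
The plan is to combine the already-established facts (the trace is $\F[\omega]$-valued, the inner product is $\F[\omega]$-bilinear and $\F[\omega]$-valued, and $(1,a,b,ab)$ is a free $\F[\omega]$-basis) with the polarization identity $N(x+y)=N(x)+N(y)+\langle x,y\rangle$, once $N$ has been checked on the deployed basis.

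First I would compute $N$ directly on the four basis vectors. For $1$ this is trivial; for $a$, using $a^\star=\tr(p)-a$ and $a^2=\tr(p)\,a-N(p)$, we get $N(a)=aa^\star=\tr(p)\,a-a^2=N(p)$; symmetrically $N(b)=N(q)$. For $ab$, using $(ab)^\star=b^\star a^\star$ and the fact that $N(b)=bb^\star$ is scalar hence central, we find
$$N(ab)=ab\,b^\star a^\star=a\,N(b)\,a^\star=N(b)\,aa^\star=N(p)\,N(q).$$
So $N$ sends each element of $(1,a,b,ab)$ into $\F \subseteq \F[\omega]$.

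Next, I would leverage the adjunction's behaviour on $\F[\omega]$. Since $\omega^\star=\omega$ and the adjunction is $\F$-linear, every $\lambda\in\F[\omega]$ satisfies $\lambda^\star=\lambda$. Combined with the centrality of $\lambda$, this yields, for any $e\in\{1,a,b,ab\}$,
$$N(\lambda e)=(\lambda e)(\lambda e)^\star=\lambda\,e\,e^\star\,\lambda^\star=\lambda^2\,N(e)\in\F[\omega].$$
Writing an arbitrary element of $\calW_{p,q}$ as $x=\sum_{i=1}^{4}\lambda_i e_i$ with $\lambda_i\in\F[\omega]$ and $e_i$ ranging over the deployed basis, iterating the polar identity gives
$$N(x)=\sum_{i}N(\lambda_i e_i)+\sum_{i<j}\langle \lambda_i e_i,\lambda_j e_j\rangle.$$
The first sum lies in $\F[\omega]$ by the previous display, and the second one does too, by $\F[\omega]$-bilinearity of $\langle\cdot,\cdot\rangle$ together with the already proved fact that the inner product is valued in $\F[\omega]$. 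Hence $N(x)\in\F[\omega]$.

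I do not expect a genuine obstacle here: the only subtle point is the characteristic-free argument, where one cannot use the shortcut $N(x)=\tfrac12\langle x,x\rangle$. The remedy is precisely the polarization-plus-basis strategy above, which only requires that $\F[\omega]$ be pointwise fixed by the adjunction and that the basis vectors have norms in $\F[\omega]$—both of which are immediate.
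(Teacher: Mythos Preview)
Your proof is correct and follows essentially the same approach as the paper's: compute $N$ on the four deployed basis vectors (obtaining $1$, $N(p)$, $N(q)$, $N(p)N(q)$), use $N(\lambda e)=\lambda^2 N(e)$ for $\lambda\in\F[\omega]$ via centrality and $\lambda^\star=\lambda$, and handle the cross terms by the already-established fact that the polar form $\langle-,-\rangle$ is $\F[\omega]$-valued. The paper's write-up is slightly terser but the logic is identical.
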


\begin{proof}
Indeed, because the polar form of $N$ is valued in $\F[\omega]$ and we have
$N(\lambda x)=\lambda xx^\star \lambda^\star=\lambda^2 N(x)$ for all $x \in \calW_{p,q}$ and all $\lambda \in \F[\omega]$,
it suffices to check that $N$ maps all the four vectors of the $\F[\omega]$-basis $(1,a,b,ab)$ into $\F[\omega]$. Yet
this is straightforward as $N(1)=1$, $N(a)=aa^\star=N(p)$, $N(b)=bb^\star=N(q)$
and $N(ab)=a(bb^\star)a^\star=N(q) aa^\star=N(q)N(p)$.
\end{proof}

In turn, this shows that $N$ is multiplicative, as the centrality of its range yields
$$\forall (x,y)\in \calW_{p,q}^2, \; N(xy)=xy^\star yx^\star=x N(y) x^\star=xx^\star N(y) =N(x)N(y).$$
Finally, we can obtain several basic identities as an application of the above:
\begin{equation}\label{eq:adjeq}
\forall (x,y,z)\in \calW_{p,q}^3, \; \langle xy,z\rangle=\langle x,zy^\star \rangle \quad \text{and} \quad \langle xy,z\rangle=\langle y,x^\star z\rangle
\end{equation}
which connects formal adjoints to the adjunction with respect to the bilinear mapping $\langle -,-\rangle$:
the proof of the first identity is straightforward, and the second one is obtained by applying the first one to the triple
$(y^\star,x^\star,z^\star)$, combined with \eqref{eq:starisometry}.

Finally, as a consequence of the multiplicativity of $N$ we find by polarizing that
$$\forall (x,y,z)\in \calW_{p,q}^3, \; \langle xy,xz\rangle=N(x)\langle y,z\rangle=\langle yx,zx\rangle,$$
which identifies left- and right-multiplication with a given element as some sort of similarity with respect to the inner product.

\begin{Rem}\label{rem:algoCbase}
We can now give a simple and efficient algorithm to express a given element $x \in \calW_{p,q}$
as an $\F[\omega]$-linear combination of $(1,a,b,ab)$,
where $x$ is given as  an $\F$-linear combination of words
in $a$ and $b$ with no consecutive equal letters (such words are called vectors of the standard $\F$-basis).

The algorithm works as follows. At each step, it gives an expression of
$x$ as an $\F[\omega]$-linear combination of the vectors of the standard $\F$-basis,
where the first one is simply with coefficients in $\F$.
Now, say that we have an expression of $x$ as an $\F[\omega]$-linear combination of the vectors of the standard $\F$-basis.
Say also that at least one word with nonzero coefficient
has length greater than $2$ or is the word $ba$. In each such word, take the first occurrence of $ba$, replace it with
$\langle b,a^\star\rangle-a^\star b^\star=\tr(a)\tr(b)-\omega-a^\star b^\star$, expand, use
$b^\star b=N(b)$ and $aa^\star=N(a)$ if such subwords appear after expanding,
and eventually $b^\star=\tr(b)-b$ and $a^\star=\tr(a)-a$ if one of $b^\star$ and $a^\star$
formally remains after expanding and using the above simplifications.

Hence, after each such step, the greatest length for the words associated with nonzero coefficients in $\F[\omega]$
decreases by at least one unit if it was greater than $3$ in the first place, so after
a very limited number of steps we end up with an expression of $x$ as a $\F[\omega]$-linear combination of $(1,a,b,ab)$.
\end{Rem}

\subsection{First applications}

By writing $x^2=x(\tr(x)-x^\star)$ we get the \textbf{quadratic identity}
\begin{equation}\label{eq:quadidentity}
\forall x \in \calW_{p,q}, \quad x^2=\tr(x)\,x-N(x).
\end{equation}
Beware however that $\tr(x)$ and $N(x)$ only belong to $\F[\omega]$, not to $\F$, and do not interpret the quadratic identity as stating that every element of $\calW_{p,q}$
is quadratic over $\F$.

At this point we can prove one of the statements we announced earlier, justifying the importance of the
adjunction map. To start with, we find that it is easy to recognize the norm and trace of the
quadratic elements in $\calW_{p,q}$.

\begin{lemma}\label{lemma:quadrecognize}
Let $x \in \calW_{p,q} \setminus \F$. Assume that there exist $\lambda,\mu$ in $\F$ such that $x^2=\lambda x-\mu$. Then $\lambda=\tr(x)$ and $\mu=N(x)$.
\end{lemma}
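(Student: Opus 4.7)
My plan is to exploit the quadratic identity \eqref{eq:quadidentity}, which already gives $x^2=\tr(x)\,x-N(x)$ with $\tr(x),N(x)\in\F[\omega]$. Subtracting the given equation $x^2=\lambda x-\mu$ yields
\[
(\tr(x)-\lambda)\,x=N(x)-\mu,
\]
an identity in $\calW_{p,q}$ where both $\tr(x)-\lambda$ and $N(x)-\mu$ lie in $\F[\omega]$. The whole proof then reduces to showing that the coefficient $\tr(x)-\lambda\in\F[\omega]$ must vanish; the value of $N(x)$ will fall out immediately.

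To see that $\tr(x)=\lambda$, I will expand $x=c_0+c_1 a+c_2 b+c_3\,ab$ in the $\F[\omega]$-basis $(1,a,b,ab)$ afforded by the previous Proposition, and read off the identity above coordinate-wise. The right-hand side $N(x)-\mu$ is a scalar multiple (over $\F[\omega]$) of the basis vector $1$, so comparing the other three coordinates gives $(\tr(x)-\lambda)\,c_i=0$ for $i=1,2,3$. Since $\omega$ is transcendental over $\F$, the ring $\F[\omega]$ is an integral domain, so if $\tr(x)-\lambda\neq 0$ then $c_1=c_2=c_3=0$, i.e.\ $x\in\F[\omega]$.

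The step I expect to require the most care is ruling out the case $x\in\F[\omega]\setminus\F$. Here I would argue by degrees: writing $x=f(\omega)$ with $f\in\F[t]$, the relation $x^2=\lambda x-\mu$ forces $f(\omega)^2=\lambda f(\omega)-\mu$ in $\F[\omega]$, hence $f(t)^2-\lambda f(t)+\mu=0$ in $\F[t]$; comparing degrees on both sides shows $\deg f=0$, so $x\in\F$, which contradicts the hypothesis $x\notin\F$. Therefore $\tr(x)-\lambda=0$, and plugging back into $(\tr(x)-\lambda)\,x=N(x)-\mu$ gives $N(x)=\mu$, completing the proof.
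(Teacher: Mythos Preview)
Your proof is correct and follows essentially the same approach as the paper: both subtract the quadratic identity from the given relation to obtain $(\tr(x)-\lambda)\,x=N(x)-\mu$, then use the fact that $x\notin\F[\omega]$ (which you establish by the degree argument, and the paper by invoking transcendence of $\omega$) to deduce that $1$ and $x$ are $\F[\omega]$-linearly independent. Your coordinate-wise expansion is simply an explicit unpacking of the paper's terse appeal to freeness of the $\F[\omega]$-module $\calW_{p,q}$.
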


\begin{proof}
The element $x$ cannot belong to $\F[\omega]$ because $\omega$ is transcendental over $\F$.
Because $\calW_{p,q}$ is a free $\F[\omega]$-module, it follows that $1$ and $x$ are linearly independent over $\F[\omega]$.
Since $\tr(x)x-N(x).1=x^2=\lambda x-\mu.1$, with $\lambda,\mu,\tr(x),N(x)$ all in $\F[\omega]$, we deduce that $\tr(x)=\lambda$ and $N(x)=\mu$.
\end{proof}

We deduce the following result, where the case $x \in \F$ is obvious because in that one $x$ is quadratic and $\tr(x)$ and $N(x)$ belong to $\F$:

\begin{cor}\label{lemma:quadratic}
Let $x \in \calW_{p,q}$. Then $x$ is quadratic if and only if $\tr(x) \in \F$ and $N(x) \in \F$.
\end{cor}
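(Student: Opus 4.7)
The corollary follows almost immediately by combining the quadratic identity \eqref{eq:quadidentity} with the preceding lemma. The plan is to split the argument according to whether $x$ lies in $\F$ or not, and then invoke the two key facts already in place: the quadratic identity expresses $x^2$ as $\tr(x)\,x - N(x)$ always, and Lemma \ref{lemma:quadrecognize} pins down the trace and norm uniquely whenever a quadratic relation over $\F$ exists.

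First I would dispose of the trivial case $x \in \F$: then $x^2 \in \F \subseteq \F + \F x$, so $x$ is quadratic, and $\tr(x) = x + x^\star = 2x \in \F$ together with $N(x) = xx^\star = x^2 \in \F$ shows both conditions hold, so the equivalence is vacuously satisfied.

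For the main case $x \in \calW_{p,q} \setminus \F$, the forward direction uses Lemma \ref{lemma:quadrecognize} directly: if $x$ is quadratic, then by definition there exist $\lambda,\mu \in \F$ with $x^2 = \lambda x - \mu$, and the lemma forces $\lambda = \tr(x)$ and $\mu = N(x)$, yielding $\tr(x),N(x) \in \F$. The converse is even easier: if $\tr(x) \in \F$ and $N(x) \in \F$, then the quadratic identity \eqref{eq:quadidentity} already exhibits a relation $x^2 = \tr(x)\, x - N(x)$ with coefficients in $\F$, which is precisely the definition of $x$ being quadratic.

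There is really no obstacle here; the work has all been done in the preceding subsection. The only subtle point worth noting is that Lemma \ref{lemma:quadrecognize} is doing the heavy lifting by using the freeness of $\calW_{p,q}$ as an $\F[\omega]$-module to guarantee that the coefficients $\tr(x)$ and $N(x)$ produced by the quadratic identity are uniquely determined, so no alternative quadratic relation over $\F$ can exist unless trace and norm themselves are scalar.
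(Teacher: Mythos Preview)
Your proposal is correct and follows exactly the approach the paper intends: the paper's own argument consists solely of the remark that the case $x \in \F$ is obvious, with the nontrivial case left implicit as an immediate consequence of Lemma~\ref{lemma:quadrecognize} and the quadratic identity~\eqref{eq:quadidentity}. You have simply spelled out what the paper leaves to the reader.
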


Here is a nice application:

\begin{prop}\label{prop:commuteadjunction}
Let $\Phi$ be an injective endomorphism of the $\F$-algebra $\calW_{p,q}$. Then $\Phi$ commutes with the adjunction.
\end{prop}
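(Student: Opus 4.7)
The plan is to exploit that the adjunction is an antihomomorphism, so both $x \mapsto \Phi(x^\star)$ and $x \mapsto \Phi(x)^\star$ are antihomomorphisms of $\calW_{p,q}$ into itself; since the algebra is generated as an $\F$-algebra by $a$ and $b$, it suffices to verify the two maps agree on those two elements (and on $1$, trivially). Thus the whole proof reduces to showing $\Phi(a^\star)=\Phi(a)^\star$ and $\Phi(b^\star)=\Phi(b)^\star$.

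To get $\Phi(a^\star)=\Phi(a)^\star$, first I would note that $a^\star=\tr(p)-a$ (and $b^\star=\tr(q)-b$), so the desired equality is equivalent to $\tr(\Phi(a))=\tr(a)=\tr(p)$, i.e.\ to the statement that $\Phi(a)$ has the same trace as $a$. Applying $\Phi$ to the relation $a^2=\tr(p)\,a-N(p)$ gives $\Phi(a)^2=\tr(p)\,\Phi(a)-N(p)$, with $\tr(p),N(p)\in\F$. The injectivity hypothesis kicks in at precisely this point: since $\F[a]$ has $\F$-dimension $2$ and $\Phi$ is injective, its restriction to $\F[a]$ is injective, hence $\Phi(a)\notin\F$. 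Lemma \ref{lemma:quadrecognize} then applies and forces $\tr(\Phi(a))=\tr(p)$ and $N(\Phi(a))=N(p)$, from which $\Phi(a)^\star=\tr(p)-\Phi(a)=\Phi(\tr(p)-a)=\Phi(a^\star)$ as desired. The same argument works verbatim with $b$ in place of $a$.

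Finally, the two $\F$-linear maps $\sigma_1:x\mapsto\Phi(x^\star)$ and $\sigma_2:x\mapsto\Phi(x)^\star$ are both antihomomorphisms (composition of a homomorphism and an antihomomorphism, in either order), both send $1$ to $1$, and by the previous paragraph they agree on $a$ and on $b$. Any two antihomomorphisms $\calW_{p,q}\rightarrow \calW_{p,q}$ that coincide on a generating subset must coincide globally: a standard induction on the length of monomials in $a,b$ does the job, because both maps reverse products in the same way. Hence $\sigma_1=\sigma_2$ on all of $\calW_{p,q}$, which is exactly the claim.

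The main (mild) obstacle is really just the use of injectivity to rule out $\Phi(a)\in\F$, so that Lemma \ref{lemma:quadrecognize} can be invoked; without it, the argument collapses since a scalar $\Phi(a)$ could solve $x^2=\tr(p)x-N(p)$ in $\F$ without forcing $\tr(\Phi(a))=\tr(p)$. Everything else is a short and routine use of the trace/norm machinery already set up in Section \ref{section:traceetc}.
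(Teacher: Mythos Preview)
Your proof is correct and follows essentially the same approach as the paper's own proof: reduce to checking on the generators $a$ and $b$, use injectivity to ensure $\Phi(a)\notin\F$ (and likewise for $b$), then invoke Lemma~\ref{lemma:quadrecognize} to identify $\tr(\Phi(a))=\tr(p)$ and conclude via the antihomomorphism argument.
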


\begin{proof}
We wish to prove the identity $\forall x \in \calW_{p,q}, \; \Phi(x^\star)=\Phi(x)^\star$, which states that certain antihomomorphisms of the
$\F$-algebra $\calW_{p,q}$ are equal. Hence it suffices to check it on the generators $a$ and $b$.
Yet $p(\Phi(a))=\Phi(p(a))=0$ so $\Phi(a)^2=\tr(p) a-N(p)$, whereas $\Phi(a) \not\in \F$ because $\Phi$ is injective.
It follows from Lemma \ref{lemma:quadrecognize} that $\tr(\Phi(a))=\tr(p)=\tr(a)$, leading to $\Phi(a)^\star=\tr(p)-\Phi(a)=\Phi(\tr(p)-a)=\Phi(a^\star)$.
Likewise $\Phi(b)^\star=\Phi(b^\star)$, which completes the proof.
\end{proof}

Next, we are now able to answer some of the questions raised in our introduction for lay readers.
How to detect a unit or a zero divisor? Well simply by a computation of norms!

For units, the result is straightforward since $N : \calW_{p,q} \rightarrow \F[\omega]$ is multiplicative, maps $1$ to the unity of $\F[\omega]$,
and is defined as $N(x)=xx^\star=x^\star x$. Hence:

\begin{prop}\label{prop:unitdetect}
An element $x \in \calW_{p,q}$ is a unit if and only if $N(x) \in \F^\times$.
\end{prop}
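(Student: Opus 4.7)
The plan is to handle the two implications separately, both being direct consequences of facts already established, with no serious obstacle beyond invoking the right tools.

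For the forward direction, suppose $x \in \calW_{p,q}^\times$ with inverse $y$. I would apply the multiplicativity of the norm $N : \calW_{p,q} \to \F[\omega]$ (already proved) to the identity $xy = 1$, getting $N(x)\,N(y) = N(1) = 1$ in $\F[\omega]$. So $N(x)$ is a unit in $\F[\omega]$. Now I use the key fact established in Section \ref{section:omega}, namely that $\omega$ is transcendental over $\F$, so that $\F[\omega]$ is isomorphic to a polynomial ring in one indeterminate over a field; hence its group of units is exactly $\F^\times$. This forces $N(x) \in \F^\times$.

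For the backward direction, suppose $N(x) \in \F^\times$. Here I would exploit the explicit formula $N(x) = xx^\star = x^\star x$ (this second equality was noted as a consequence of the fact that $x$ and $x^\star$ commute, which itself follows from $\tr(x) \in \F[\omega]$ lying in the center). Setting $y := N(x)^{-1} x^\star$, which makes sense because $N(x)^{-1}$ exists in $\F \subset \calW_{p,q}$, I get
\[
xy = N(x)^{-1}\, xx^\star = N(x)^{-1}\, N(x) = 1 \qquad \text{and} \qquad yx = N(x)^{-1}\, x^\star x = 1,
\]
so $x$ is a unit, with explicit inverse $N(x)^{-1}\, x^\star$.

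No step here is an obstacle: the only nontrivial inputs are the multiplicativity of $N$, the centrality of $\F[\omega]$, the transcendence of $\omega$, and the commutativity of $x$ with $x^\star$, all of which are already in place. The proof is essentially two lines once these are invoked, and as a bonus it yields the explicit formula $x^{-1} = N(x)^{-1}\, x^\star$ for the inverse of any unit.
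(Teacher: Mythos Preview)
Your proof is correct and follows essentially the same approach as the paper, which simply remarks that the result is straightforward from the multiplicativity of $N$, the fact that $N(1)=1$, and the defining identity $N(x)=xx^\star=x^\star x$. The paper likewise notes the explicit inverse formula $x^{-1}=N(x)^{-1}x^\star$.
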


For zero divisors, the result is unsurprising although the proof is more subtle:

\begin{prop}\label{prop:zerodivdetect}
A nonzero element $x \in \calW_{p,q}$ is a zero divisor if and only if $N(x)=0$.
\end{prop}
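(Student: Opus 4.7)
The plan is to exploit the fact that $\calW_{p,q}$ is a free $\F[\omega]$-module together with the identity $xx^\star=x^\star x=N(x)$, which turns questions about annihilators of $x$ into questions about annihilators of the scalar $N(x) \in \F[\omega]$.

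For the easy direction, assume $N(x)=0$ and $x \neq 0$. Then $xx^\star=0$, and since the adjunction is an antiautomorphism of $\calW_{p,q}$ (in particular an $\F$-linear bijection), $x^\star \neq 0$; hence $x$ is a zero divisor.

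For the harder direction, assume $x \neq 0$ is a zero divisor, so there exists $y \neq 0$ in $\calW_{p,q}$ with $xy=0$ or $yx=0$. Treat these cases symmetrically: if $xy=0$, multiply on the left by $x^\star$ to obtain $N(x)\,y=x^\star x y=0$; if $yx=0$, multiply on the right by $x^\star$ to get $y\,N(x)=y x x^\star=0$, and since $N(x) \in \F[\omega]$ is central this also gives $N(x)\,y=0$. Now the point is that $\F[\omega]$ is an integral domain, because $\omega$ is transcendental over $\F$, and $\calW_{p,q}$ is a free $\F[\omega]$-module of rank $4$ with basis $(1,a,b,ab)$. A free module over an integral domain is torsion-free, so the identity $N(x)\,y=0$ with $y \neq 0$ forces $N(x)=0$.

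There is no real obstacle; the only subtle point is to ensure that one reduces both cases $xy=0$ and $yx=0$ to the same annihilation statement $N(x)\,y=0$, which works thanks to the centrality of $N(x)$ in $\calW_{p,q}$ (itself a consequence of $N$ being valued in the central subalgebra $\F[\omega]$).
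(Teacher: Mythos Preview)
Your proof is correct and follows essentially the same approach as the paper's: both directions rely on multiplying by $x^\star$ to convert the annihilation $xy=0$ or $yx=0$ into $N(x)\,y=0$, and then use that $\calW_{p,q}$ is a free (hence torsion-free) module over the integral domain $\F[\omega]$. Your write-up is slightly more explicit on why $x^\star\neq 0$ and on the role of centrality of $N(x)$, but the argument is the same.
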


\begin{proof}
The converse implication is obvious due to the definition of the norm.

Let $x \in \calW_{p,q}$ be such that $N(x) \neq 0$.
Let $y \in \calW_{p,q}$ be such that $xy=0$ or $yx=0$. Multiplying by $x^\star$ on the left in the first case, on the right in the second case,
we find $N(x)\,y=0$ in any case. Since $\calW_{p,q}$ is a free $\F[\omega]$-module, it follows that $y=0$.
Hence $x$ is not a zero divisor.
\end{proof}

From the algorithmic viewpoint, it is then fairly easy to use the norm to test whether an element is a unit or not, or a zero divisor or not.
And of course in case $x$ is a unit we obtain its inverse as $N(x)^{-1}x^\star$.

\subsection{The determinant of the inner product}\label{section:Grammatrix}

We resume the investigation of the fundamental structure of $\calW_{p,q}$, and now we examine the inner product more closely.

The $\F[\omega]$-module $\calW_{p,q}$ is free of rank $4$, and
$\langle -,-\rangle$ is a symmetric bilinear form on it, so naturally we examine its
determinant, defined as the coset in $\F[\omega]/(\F[\omega]^\times)^2=\F[\omega]/(\F^\times)^2$
of its Gram determinant in an arbitrary basis of the $\F[\omega]$-module $\calW_{p,q}$.

Take an arbitrary deployed basis $(1,x,y,xy)$ of $\calW_{p,q}$. Then
the Gram matrix takes the rather uninspiring form:
\begin{equation}\label{eq:Gram}
\begin{bmatrix}
2 & \tr(x) & \tr(y) & \langle 1,xy\rangle \\
\tr(x) & 2N(x) & \langle x,y\rangle & N(x)\tr(y) \\
\tr(y) & \langle x,y\rangle & 2N(y) & N(y)\tr(x) \\
\langle 1,xy\rangle & N(x)\tr(y) & N(y) \tr(x) & 2N(x)N(y)
\end{bmatrix}
\end{equation}
where we have used the observation that $\langle x,xy\rangle=N(x)\langle 1,y\rangle=N(x)\tr(y)$ and
$\langle y,xy\rangle=N(y)\tr(x)$ likewise. Finally, the lower-left entry $\langle 1,xy\rangle$ can be
viewed as $\langle x^\star,y\rangle$, and hence is a polynomial of degree $1$ in $\omega$.
It is at least clear from the antidiagonal that the determinant of the latter matrix is a monic polynomial of degree $4$ in $\F[\omega]$ with respect to the transcendental $\omega$,
and with tremendous courage in computing (or a lazy appeal to a formal computing software), one can obtain that for the special deployed basis $(1,a,b,ab)$,
the Gram determinant factors as follows for the polynomial
$$\Lambda_{p,q}(t):=t^2-(\tr p)(\tr q)\,t-4N(p)N(q)+(\tr p)^2 N(q)+(\tr q)^2 N(p).$$

\begin{prop}\label{prop:gram}
The Gram determinant of $\langle -,-\rangle$ in the basis $(1,a,b,ab)$ equals $\Lambda_{p,q}(\omega)^2$.
\end{prop}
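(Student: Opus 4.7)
The plan is to reduce the computation to block-diagonal form via a convenient base change, after a short characteristic-reduction step. Once the generic Gram matrix \eqref{eq:Gram} is specialized with $x=a$, $y=b$ using $\tr(a)=\tr p$, $N(a)=N(p)$, $\tr(b)=\tr q$, $N(b)=N(q)$, $\langle a,b\rangle=\omega$, and $\langle 1,ab\rangle=\tr(ab)=(\tr p)(\tr q)-\omega$, the claimed identity becomes a polynomial identity in the formal variables $\tr p$, $\tr q$, $N(p)$, $N(q)$, $\omega$ with integer coefficients. Such an identity holds over every field if and only if it holds when the variables are algebraically independent over $\Q$, so it suffices to prove the statement under the additional assumption $\car(\F)\neq 2$.

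Under that assumption, I would set $a':=a-\tfrac{\tr p}{2}$ and $b':=b-\tfrac{\tr q}{2}$, so that $(a')^\star=-a'$ and $(b')^\star=-b'$. Then $(1,a',b',a'b')$ is a deployed basis whose transition matrix to $(1,a,b,ab)$ is upper-triangular with unit diagonal, hence the two Gram determinants coincide. In the new basis, the trace-freeness of $a'$ and $b'$ combined with the identity $\langle x,xy\rangle=N(x)\tr(y)$ forces $\langle 1,a'\rangle=\langle 1,b'\rangle=\langle a',a'b'\rangle=\langle b',a'b'\rangle=0$. Reordering the basis as $(1,a'b',a',b')$ (an even permutation, so the determinant is preserved) and setting $\alpha:=N(a')$, $\beta:=N(b')$, $\gamma:=\tr(a'b')$ (so that $\langle a',b'\rangle=-\gamma$), the Gram matrix takes the block-diagonal form
\[
\begin{bmatrix} 2 & \gamma & 0 & 0 \\ \gamma & 2\alpha\beta & 0 & 0 \\ 0 & 0 & 2\alpha & -\gamma \\ 0 & 0 & -\gamma & 2\beta\end{bmatrix}.
\]
Both $2\times 2$ blocks happen to have the same determinant $4\alpha\beta-\gamma^2$, so the Gram determinant equals $(4\alpha\beta-\gamma^2)^2$, which is manifestly a square in $\F[\omega]$.

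It then remains to identify $4\alpha\beta-\gamma^2$ with $\pm\Lambda_{p,q}(\omega)$. A short expansion gives $\alpha=N(p)-\tfrac{(\tr p)^2}{4}$, $\beta=N(q)-\tfrac{(\tr q)^2}{4}$, and $\gamma=\tr(ab)-\tfrac{(\tr p)(\tr q)}{2}=\tfrac{(\tr p)(\tr q)}{2}-\omega$, whence
\[
4\alpha\beta-\gamma^2=-\bigl(\omega^2-(\tr p)(\tr q)\omega-4N(p)N(q)+(\tr p)^2 N(q)+(\tr q)^2 N(p)\bigr)=-\Lambda_{p,q}(\omega),
\]
and squaring finishes the proof. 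The only mild obstacle is the first reduction: without passing to characteristic $\neq 2$, the trace-free translation by $\tfrac12$ is illegal, yet it is precisely this translation that produces the block decomposition. Once that reduction is accepted, the symmetry between $a$ and $b$ delivers the square structure almost automatically, and verifying that the common block determinant is $-\Lambda_{p,q}(\omega)$ is a one-line computation.
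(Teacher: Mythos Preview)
Your proof is correct and follows essentially the same route as the paper: reduce to characteristic $\neq 2$ via the universal polynomial identity argument, translate $a,b$ to trace-zero elements, observe that the resulting Gram matrix block-diagonalizes into two $2\times 2$ blocks with common determinant $4\alpha\beta-\gamma^2$, and identify this with $-\Lambda_{p,q}(\omega)$. The paper carries out the very same computation (for the slightly more general deployed basis $(1,x,y,xy)$), so there is nothing to add.
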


This is enough to justify that $\Lambda_{p,q}$ is called the \textbf{fundamental polynomial} attached to the pair $(p,q)$.
In fact, we will see Proposition \ref{prop:gram} as just a special case of computing the Gram determinant in an arbitrary deployed basis:

\begin{prop}\label{prop:gramgeneralized}
Let $x \in \F[a] \setminus \F$ and $y \in \F[b] \setminus \F$ have respective minimal polynomials $r$ and $s$.
The Gram determinant of $\langle -,-\rangle$ in the basis $(1,x,y,xy)$ equals $\Lambda_{r,s}(\langle x,y\rangle)^2$.
\end{prop}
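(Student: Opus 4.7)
The plan is to reduce Proposition \ref{prop:gramgeneralized} to Proposition \ref{prop:gram} by transporting everything through a canonical isomorphism $\Phi\colon \calW_{r,s} \xrightarrow{\sim} \calW_{p,q}$ sending the canonical generators to $x$ and $y$.

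Since $\F[x]\subseteq \F[a]$ with $\F[x]$ already $2$-dimensional, we have $\F[x]=\F[a]$, and likewise $\F[y]=\F[b]$. Denote by $a',b'$ the canonical generators of $\calW_{r,s}$ and by $\omega':=\langle a',b'\rangle$ the corresponding central element. The universal property of $\calW_{r,s}$ yields a homomorphism $\Phi\colon \calW_{r,s}\to \calW_{p,q}$ with $\Phi(a')=x$ and $\Phi(b')=y$, because $r(x)=s(y)=0$. Restricting $\Phi$ to the basic subalgebra $\F[a']$ gives an isomorphism $\F[a']\xrightarrow{\sim}\F[a]$ of $2$-dimensional $\F$-algebras, which necessarily intertwines the unique adjunctions on both sides; in particular $\Phi(a'^\star)=x^\star$, and likewise $\Phi(b'^\star)=y^\star$, so $\Phi(\omega')=xy^\star+yx^\star=\langle x,y\rangle$. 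As Section \ref{section:omega} already records, $\langle x,y\rangle$ generates $\F[\omega]$ inside $\calW_{p,q}$, so $\Phi$ restricts to an isomorphism $\F[\omega']\xrightarrow{\sim}\F[\omega]$ on coefficient rings; because $\Phi$ further maps the $\F[\omega']$-basis $(1,a',b',a'b')$ of $\calW_{r,s}$ onto the deployed basis $(1,x,y,xy)$ of the $\F[\omega]$-module $\calW_{p,q}$, it is bijective.

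Next, $\Phi$ intertwines adjunctions globally: the two antihomomorphisms $\Phi\circ\star_{\calW_{r,s}}$ and $\star_{\calW_{p,q}}\circ\Phi$ from $\calW_{r,s}$ to $\calW_{p,q}$ agree on the generators $a'$ and $b'$ by the preceding paragraph, hence coincide by the universal property of $\calW_{r,s}$ for antihomomorphisms. Consequently $\Phi$ preserves the inner product. Applying Proposition \ref{prop:gram} inside $\calW_{r,s}$ gives that the Gram determinant of $\langle-,-\rangle$ in $(1,a',b',a'b')$ equals $\Lambda_{r,s}(\omega')^2$, and transporting this along the $\F[\omega']$-linear isometry $\Phi$ yields the desired Gram determinant $\Lambda_{r,s}(\langle x,y\rangle)^2$ in $(1,x,y,xy)$.

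The only somewhat subtle step is the passage from intertwining adjunctions on the basic subalgebras to intertwining them globally, but this amounts to a universal-property argument completely parallel to Proposition \ref{prop:commuteadjunction}. A purely computational alternative would simply observe that every entry of the Gram matrix \eqref{eq:Gram} of a deployed basis is a fixed polynomial expression in the five scalars $\tr(x),\tr(y),N(x),N(y),\langle x,y\rangle$ (with $\tr(xy)=\tr(x)\tr(y)-\langle x,y\rangle$ for the entry $\langle 1,xy\rangle$), so the brute-force determinant calculation underlying Proposition \ref{prop:gram} is in fact a polynomial identity whose specialization at $(\tr r,\tr s,N(r),N(s),\langle x,y\rangle)$ immediately produces $\Lambda_{r,s}(\langle x,y\rangle)^2$.
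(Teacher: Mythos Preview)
Your argument is correct. The isomorphism $\Phi\colon\calW_{r,s}\to\calW_{p,q}$ is well-defined by the universal property, it is bijective because it carries one deployed $C$-basis to another over an isomorphism of coefficient rings, and the check that it intertwines adjunctions (hence inner products) is exactly the kind of generator-by-generator verification used for Proposition~\ref{prop:commuteadjunction}. Transporting Proposition~\ref{prop:gram} along $\Phi$ then gives the result.

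The paper takes a different route. Rather than assuming Proposition~\ref{prop:gram} and pulling it back through an isomorphism, the paper proves Proposition~\ref{prop:gramgeneralized} directly (and thereby obtains Proposition~\ref{prop:gram} as the special case $x=a$, $y=b$): it observes that the claimed identity is a polynomial identity with integer coefficients in $\tr(x),\tr(y),N(x),N(y)$, so it suffices to verify it over $\Q$; there one may shift $x$ and $y$ to trace-zero elements (a move previously checked to leave both sides unchanged), after which the Gram matrix block-factorizes into two $2\times 2$ pieces with equal determinants and the computation is immediate. Your approach is more structural and makes transparent that the Gram determinant only sees the isomorphism class of the pair of basic subalgebras, but it presupposes Proposition~\ref{prop:gram}, which the paper only asserts as a brute-force computation before giving the clever proof of the generalized statement; the paper's approach is self-contained and avoids that brute-force step entirely. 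Your closing remark, that the entries of the Gram matrix are universal polynomials in $\tr(x),\tr(y),N(x),N(y),\langle x,y\rangle$, is precisely the starting observation of the paper's proof.
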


In fact, we can give a clever proof that avoids almost any computation and which is based upon the observation of the roots of
$\Lambda_{r,s}$ in the splitting field $\L$ of $pq$ in $\Fbar$. Say that $r=(t-x_1)(t-x_2)$ and $s=(t-y_1)(t-y_2)$
over $\L$. Then one checks that
$$\Lambda_{r,s}(t)=\bigl(t-(x_1y_1+x_2y_2)\bigr)\bigl(t-(x_1y_2+x_2y_1)\bigr).$$
As a consequence, a transvection $x \leftarrow x+\lambda$ with $\lambda \in \F$ does not affect the end result: indeed
$\langle x+\lambda,y\rangle=\langle x,y\rangle+\lambda \tr(y)$, the minimal polynomial of $x+\lambda$ is $r(t-\lambda)$, with roots $x_1+\lambda$ and $x_2+\lambda$,
and
$$\Lambda_{r(t-\lambda),s}(t)=\bigl(t-(x_1y_1+x_2y_2)-\lambda \tr(y)\bigr)\bigl(t-(x_1y_2+x_2y_1)-\lambda \tr(y)\bigr).$$
Likewise, the end result is unaffected by performing a transvection $y \leftarrow y+\mu$ with $\mu \in \F$.
Note finally that we have already observed that the Gram determinant is unaffected by such changes of deployed bases (as we are only using transvections).

From there the proof will be much more satisfying:

\begin{proof}[Proof of Proposition \ref{prop:gramgeneralized}]
We observe that we are simply stating a polynomial identity with \emph{integral} coefficients in the four variables $\tr(x),\tr(y),N(x),N(y)$. Hence for its universal validity in the field $\F$ it suffices to prove it in the field $\Q$ of rationals,
which allows us to avoid the traditional problems that come from the characteristic $2$ case.
Now, assume that $\F=\Q$. Then, with the above remark, we apply the transvections
$x \leftarrow x-\frac{\tr x}{2}$ and $y \leftarrow y-\frac{\tr y}{2}$ to reduce the situation to the case where $x$ and $y$ have trace zero.
Then $x^\star=-x$ and $y^\star=-y$, and the Gram determinant takes the simplified form
$$g=\begin{vmatrix}
2 & 0 & 0 & -\langle x,y\rangle \\
0 & 2N(x) & \langle x,y\rangle & 0 \\
0 & \langle x,y\rangle & 2N(y) & 0 \\
-\langle x,y\rangle & 0 & 0 & 2N(x)N(y)
\end{vmatrix}$$
which is now easily computed since it factorizes into
$$g =\begin{vmatrix}
2 & -\langle x,y\rangle \\
-\langle x,y\rangle & 2 N(x)N(y)
\end{vmatrix}\cdot \begin{vmatrix}
2 N(x) & \langle x,y\rangle \\
\langle x,y\rangle & 2 N(y)
\end{vmatrix}
=\left(4N(x)N(y)-\langle x,y\rangle^2\right)^2.$$
The result is then obtained by noting that $\Lambda_{r,s}=t^2-4N(x)N(y)$ in our reduced situation, since $\tr(x)=\tr(y)=0$.
\end{proof}

The polynomial $\Lambda_{p,q}$ might seem to come from nowhere, but its appearance could in fact have been expected.
To see this, consider the special case where $p$ and $q$ split over $\F$, and write $p(t)=(t-x_1)(t-x_2)$ and $q(t)=(t-y_1)(t-y_2)$.
Consider a matrix representation of $\calW_{p,q}$, and denote by $A$, $B$ and $\Omega$
the matrices that correspond, respectively, to $a$, $b$ and $\omega$.
Then $p(A)=0$ and $q(B)=0$, so $A$ and $B$ are triangularizable. Now, if $A$ and $B$ have a common eigenvector $X$,
this is also an eigenvector for $\Omega$, and the corresponding eigenvalue will be $x_1y_2+x_2y_1$ or $x_1y_1+x_2y_2$, depending
on the eigenvalues of $A$ and $B$ that are attached to $X$.

In the theory of linear representation of the free Hamilton algebra, $\Lambda_{p,q}$ is connected with the dreaded \emph{exceptional} representations,
in which the image of $\omega$ is annihilated by a power of $\Lambda_{p,q}$ (see \cite{dSPsum,dSPprod}).

The polynomial $\Lambda_{p,q}$ has special resonance in Galois theory. Indeed, if $\F[a]$ and $\F[b]$ are nonisomorphic separable field
extensions of $\F$, then it can be proved that the splitting field of $\Lambda_{p,q}$ in the splitting field $\L$ of $pq$ is precisely the third quadratic
extension of $\F$ inside $\L$, where the other two are of course the respective splitting fields of $p$ and $q$. See Section \ref{section:Lambdapq} for details.

Finally, an important consequence of the previous computation is that $\langle -,-\rangle$ is non-degenerate, meaning
that its radical, defined as the set of all $x\in \calW_{p,q}$ such that $\langle x,-\rangle=0$, reduces to the zero element.
But it is not regular, meaning that $x \in \calW_{p,q} \mapsto \langle x,-\rangle \in \Hom_{\F[\omega]}(\calW_{p,q},\F[\omega])$
is not an isomorphism (it is only injective, not surjective).

\subsection{Connection with quaternion algebras}\label{section:quaternionalgebras}

At this point, we have a picture of $\calW_{p,q}$ that looks quite similar to traditional quaternion algebras over fields. The huge problem is that $\F[\omega]$ is not a field,
and worse still the determinant of the inner product is not represented by a unit of the ring $\F[\omega]$.
However, there are various ways we can connect the free Hamilton algebra to traditional quaternion algebras over fields,
and we will explain them shortly.

Before we do so, it is essential to recall some basic facts on quaternion algebras and to remind the reader of how one can recognize
a quaternion algebra in practice.
To be short, a quaternion algebra over a field $\L$ is an $\L$-algebra that is isomorphic to the Clifford algebra of a
regular $2$-dimensional quadratic form over $\L$. Such an algebra $\calA$ is always central\footnote{I.e.\ its center is reduced to $\L$.} and simple\footnote{I.e.\ it has no nontrivial two-sided ideal.}, has dimension $4$ as an $\L$-vector space, and comes equipped with a special involution called the quaternionic conjugation $x \mapsto \overline{x}$
(which corresponds to the Clifford involution), so that $\forall x \in \calA, \; x\overline{x}=\overline{x} x \in \F$, and $\{x \in \calA : \overline{x}=-x\}$
is a $3$-dimensional $\L$-linear subspace whose elements are called the pure quaternions.
By polarizing the norm $x \mapsto x\overline{x}$ at $1$ we get the quaternionic trace $x \mapsto x+\overline{x} \in \L$. Then the quaternionic conjugation and trace are uniquely determined by the
structure of $\L$-algebra of $\calA$ by the above properties. The key is that the set $\{x \in \calA : x^2 \in \L\}$
is the union of $\L$ with the hyperplane of pure quaternions. And more globally there are two options:
\begin{itemize}
\item Either $\calA$ \emph{splits}, i.e., it is isomorphic to $\Mat_2(\L)$, in which case the Clifford involution corresponds to the standard adjunction $M \mapsto M^{\ad}$ on matrices
(the transpose of the comatrix), the norm corresponds to the matrix determinant, and the quaternionic trace corresponds to the traditional matrix trace.
\item Or $\calA$ is a skew field.
\end{itemize}
Whether $\calA$ splits or not can be detected from the norm: $\calA$ splits if and only if the norm is isotropic, and in that case the norm is hyperbolic.

The following theorem will help use recognize a quaternion algebra when we have an algebra equipped with a certain involution.

\begin{theo}\label{theo:recogquaternion}
Let $\calA$ be a $4$-dimensional algebra over a field $\L$, equipped with an involution $x \mapsto \overline{x}$
such that $\forall x \in \calA, \; x\overline{x} \in \L$.
Assume furthermore that the associated inner product $(x,y) \mapsto \langle x,y\rangle:= x\overline{y}+y\overline{x}$ is nondegenerate.
Then $\calA$ is a quaternion algebra, and $x \mapsto \overline{x}$ is its quaternionic conjugation.
\end{theo}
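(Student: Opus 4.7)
I start by polarizing the hypothesis $N(x) := x\overline{x} \in \L$: the formula
\[
\langle x,y\rangle \;=\; N(x+y) - N(x) - N(y)
\]
shows that the inner product is automatically $\L$-valued, and in particular $T(x) := x+\overline{x} = \langle x,1\rangle \in \L$. Since $T(x)$ is central, $T(x)\cdot x = x\cdot T(x)$ unfolds to $\overline{x}x = x\overline{x}$, and substituting $\overline{x} = T(x)-x$ into $N(x) = x\overline{x}$ yields the quadratic identity $x^2 = T(x)x - N(x)$ for every $x \in \calA$. Polarizing this identity produces the key bilinear relation
\[
xy + yx \;=\; T(x)\,y + T(y)\,x - \langle x,y\rangle.
\]

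\textbf{Step 2: Clifford basis in characteristic $\ne 2$.} Put $V := \ker T$. Since $T(1) = 2 \ne 0$, $V$ has dimension $3$ and $\calA = \L \oplus V$ is orthogonal for $\langle -,-\rangle$, so the restriction of the inner product to $V$ is nondegenerate. For $v, w \in V$ we have $\overline{v} = -v$, $v^2 = -N(v) \in \L$, and the bilinear relation reduces to $vw + wv = -\langle v,w\rangle \in \L$. The usual theory of nondegenerate quadratic forms lets me pick $i \in V$ with $N(i) \ne 0$ and then $j \in V$ with $\langle i,j\rangle = 0$ and $N(j) \ne 0$. Pairing against $1, i, j$ in turn shows that $(1,i,j,ij)$ is linearly independent, hence a basis of $\calA$, and the relations $i^2, j^2 \in \L^\times$, $ij + ji = 0$ identify $\calA$ with the Clifford algebra of the regular $2$-dimensional form $(\alpha,\beta) \mapsto -\alpha^2 N(i) - \beta^2 N(j)$.

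\textbf{Step 3: Clifford basis in characteristic $2$.} This is the main obstacle, because $1 \in \ker T$ prevents the scalars from splitting off a ``pure'' complement. I first note that $T$ is nonzero: otherwise $\overline{x} = x$ would force $\langle x,y\rangle = 2xy = 0$, contradicting nondegeneracy. Since $\langle 1,y\rangle = T(y)$, we have $\L^\perp = \ker T$, and the radical of $\langle -,-\rangle|_{\ker T}$ is $\ker T \cap (\ker T)^\perp = \ker T \cap \L = \L$. Thus $\langle -,-\rangle$ induces a nondegenerate alternating form (it \emph{is} alternating since $\langle x,x\rangle = 2N(x) = 0$) on the $2$-dimensional quotient $\ker T / \L$. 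Lifting a hyperbolic basis of that quotient, I obtain $e, f \in \ker T$ with $\langle e,f\rangle = 1$; the bilinear relation of Step 1 then gives $ef + fe = 1$, while $e^2 = N(e) \in \L$ and $f^2 = N(f) \in \L$. Since $T(ef) = \langle e,f\rangle = 1$ whereas $T(1) = T(e) = T(f) = 0$, the family $(1,e,f,ef)$ is linearly independent, hence a basis of $\calA$, and the relations identify $\calA$ with the Clifford algebra of the regular form $(\alpha,\beta) \mapsto N(e)\,\alpha^2 + \alpha\beta + N(f)\,\beta^2$.

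\textbf{Step 4: Conclusion.} In either characteristic, $\calA$ is the Clifford algebra of a regular $2$-dimensional quadratic form, i.e., a quaternion algebra. The involution $x \mapsto \overline{x}$ fixes $\L$ pointwise and acts on the chosen Clifford generators exactly as the Clifford involution must ($\overline{ij} = (-j)(-i) = -ij$ in characteristic $\ne 2$, and $\overline{ef} = fe = 1+ef$ in characteristic $2$), hence it coincides with the quaternionic conjugation.
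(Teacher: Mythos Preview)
Your proof is correct. It takes a more explicit, case-split route than the paper's, so let me compare.

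The paper gives a characteristic-free argument: it observes that the kernel $H$ of the trace has $\dim_\L(H\cap H^{\perp})\leq 1$, hence contains a $2$-dimensional subspace $P$ on which $\langle-,-\rangle$ is nondegenerate; then $v\mapsto v^2=-N(v)$ on $P$ triggers the universal property of the Clifford algebra $\calC(-N|_P)$, giving a homomorphism $\calC(-N|_P)\to\calA$ which is an isomorphism because $\calC(-N|_P)$ is simple of dimension $4$. For the conjugation, the paper uses that $H\subseteq\{x:x^2\in\L\}$ forces $H$ to be the pure-quaternion hyperplane, on which both involutions act as $x\mapsto -x$.

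Your approach instead builds an explicit Clifford basis by hand, splitting on the characteristic. In characteristic $\ne 2$ you diagonalize $N$ on $\ker T$; in characteristic $2$ you first argue $T\not\equiv 0$ (your one-line justification is a bit compressed: the point is that $\overline{x}=x$ forces $xy=\overline{xy}=\overline{y}\,\overline{x}=yx$, hence commutativity, hence $\langle x,y\rangle=2xy=0$), then lift a hyperbolic pair from $\ker T/\L$. The paper's route is shorter and avoids the case distinction by invoking simplicity of the Clifford algebra; yours is more concrete and exhibits the Clifford generators and relations directly, and in particular makes the characteristic-$2$ structure (the relation $ef+fe=1$) completely transparent. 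Both arguments prove the same thing, and your explicit verification that the involution matches the Clifford involution on generators is a pleasant alternative to the paper's more structural identification via the pure-quaternion hyperplane.
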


We reproduce the short argument given in \cite{dSPregular}.

\begin{proof}
Throughout, we consider orthogonality with respect to the inner product.
We consider the trace $\tr : x \mapsto \langle 1,x\rangle$ and its kernel $H$.
Note that $x^2=-x\overline{x}=-N(x)$ for all $x \in H$.
Since the inner product is nondegenerate, the subspace $H \cap H^\bot$ has dimension at most $1$, and hence we can pick
a $2$-dimensional subspace $P$ of $H$ on which the inner product is nondegenerate. It follows that $x \in P \mapsto -N(x) \in \L$
is a nondegenerate quadratic form on $P$.
Hence by the universal property of Clifford algebras, we recover a homomorphism $\Phi$ of $\L$-algebras from the Clifford algebra $\calC(-N_{|P})$
to $\calA$. Since $-N_{|P}$ is nondegenerate and $\dim P=2$ the algebra $\calC(-N_{|P})$ is simple with dimension $2^2=4$, and as a consequence $\Phi$
is an isomorphism. Hence $\calA$ is a quaternion algebra over $\L$.

It remains to recognize that $x \mapsto \overline{x}$ is the quaternionic conjugation. But from the first remark we have seen that every
element of $H$ squares in $\L$, so $H$ is included in the union of $\L$ with the hyperplane of pure quaternions. Hence obviously $H$ \emph{is}
the hyperplane of pure quaternions. Then $x \mapsto \overline{x}$ and the quaternionic conjugation deduced from the above isomorphism
coincide on the pure quaternions, which is known to be a generating set of the quaternion algebra, so they are equal.
\end{proof}

We will now see that the above can be applied in two ways: \emph{globally} or \emph{locally}.

The global way works as follows.
First, we embed $\F[\omega]$ in its fraction field $\F(\omega)$.
Next, we consider the tensor product
$$\overline{\calW_{p,q}}:=\calW_{p,q} \otimes_{\F[\omega]} \F(\omega),$$
thereby obtaining an $\F(\omega)$-algebra, which we call the \textbf{completion} of $\calW_{p,q}$.
Because $\calW_{p,q}$ is a free $\F[\omega]$-module of rank $4$,
the resulting vector space over $\F(\omega)$ has dimension $4$, every basis of the free $\F[\omega]$-module $\calW_{p,q}$
is a basis of this vector space, and $\calW_{p,q}$ is naturally seen as a subring of $\overline{\calW_{p,q}}$.

Next, because $\omega^\star=\omega$, all our structural mappings are naturally extended
to the completion $\overline{\calW_{p,q}}$, yielding the completed
adjuncation
$$x \in \overline{\calW_{p,q}} \mapsto x^\star \in \overline{\calW_{p,q}},$$
the completed norm
$$N : x \in \overline{\calW_{p,q}} \mapsto xx^\star=x^\star x \in \F(\omega),$$
its polar form
$$\langle -,-\rangle : (x,y) \in \overline{\calW_{p,q}} \mapsto xy^\star+yx^\star=y^\star x+x^\star y \in \F(\omega)$$
and finally the trace map $x \in \overline{\calW_{p,q}} \mapsto \langle 1,x\rangle=x+x^\star \in \F(\omega)$.
As $(1,a,b,ab)$ becomes a basis of $\F(\omega)$-vector space in this extension of scalars, we deduce from Proposition \ref{prop:gramgeneralized} that
the determinant of the extended inner product is now represented by $\Lambda_{p,q}(\omega)^2 \in \F(\omega)$.
Note in particular that it is a square, in full accordance with the theory of quaternion algebras (the norm of a quaternion algebra is always a Pfister form).
In particular, this determinant is nonzero in the field $\F(\omega)$, so the completed inner product is non-degenerate. Thus Theorem \ref{theo:recogquaternion} helps us conclude:

\begin{theo}
The $\F(\omega)$-algebra $\calW_{p,q} \otimes_{\F[\omega]} \F(\omega)$ is a quaternion algebra
with quaternionic conjugation $x \mapsto x^\star$, trace $x \mapsto x+x^\star$ and norm $x \mapsto xx^\star$.
\end{theo}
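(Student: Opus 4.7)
The plan is to simply verify the hypotheses of Theorem \ref{theo:recogquaternion} for the $\F(\omega)$-algebra $\calA := \overline{\calW_{p,q}}$ equipped with the completed adjunction $x \mapsto x^\star$, and then read off the claims about the norm and the trace from the uniqueness of the quaternionic structure.

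First, I would spell out that $\calA$ is a $4$-dimensional $\F(\omega)$-algebra, which is immediate: since $\calW_{p,q}$ is a free $\F[\omega]$-module of rank $4$ with basis $(1,a,b,ab)$, the extension of scalars $\calW_{p,q} \otimes_{\F[\omega]} \F(\omega)$ is a $4$-dimensional $\F(\omega)$-vector space with the same basis. Next I would check that the completed adjunction is really an involution of the $\F(\omega)$-algebra $\calA$: because $\omega^\star=\omega$, the adjunction of $\calW_{p,q}$ is $\F[\omega]$-antilinear, so it extends uniquely to an $\F(\omega)$-antilinear antihomomorphism of $\calA$; and since $(x^\star)^\star=x$ already holds on the generators $a,b$, it holds everywhere on $\calA$.

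Then I would verify the two analytic conditions of Theorem \ref{theo:recogquaternion}. The identity $xx^\star \in \F(\omega)$ for all $x \in \calA$ follows from the fact, already established, that $N(x)=xx^\star \in \F[\omega]$ for every $x \in \calW_{p,q}$: because $N$ is a quadratic form with polar form the inner product (which is $\F[\omega]$-bilinear), it extends as a quadratic form to the scalar extension $\calA$, and this extension is precisely $x \mapsto xx^\star \in \F(\omega)$. To check that the completed inner product is nondegenerate, I would use Proposition \ref{prop:gram}: its Gram determinant in the basis $(1,a,b,ab)$ is $\Lambda_{p,q}(\omega)^2$, which is a nonzero element of $\F(\omega)$ since $\Lambda_{p,q}$ is a nonzero polynomial and $\omega$ is transcendental over $\F$. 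Hence the inner product is indeed nondegenerate on $\calA$.

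Theorem \ref{theo:recogquaternion} now applies and yields that $\calA$ is a quaternion algebra over $\F(\omega)$ and that $x \mapsto x^\star$ is its quaternionic conjugation. The fact that the norm of this quaternion algebra coincides with $x \mapsto xx^\star$ and that its trace coincides with $x \mapsto x+x^\star$ is then automatic, since by definition the quaternionic norm and trace are built from the quaternionic conjugation in exactly that way. There is really no obstacle here: all the hard work — the computation of the Gram determinant, the construction of the adjunction, and the abstract recognition criterion — has already been done, and the statement is essentially the payoff of the preceding subsection.
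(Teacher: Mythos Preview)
Your proof is correct and follows the same route as the paper: verify the hypotheses of Theorem \ref{theo:recogquaternion} using that $(1,a,b,ab)$ is an $\F(\omega)$-basis and that the Gram determinant $\Lambda_{p,q}(\omega)^2$ is nonzero, then conclude. One terminological slip: since $\omega^\star=\omega$, the extended adjunction is $\F(\omega)$-\emph{linear} (not antilinear) as an antihomomorphism of algebras, but this does not affect the argument.
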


Of course, the next question is whether $\overline{\calW_{p,q}}$ splits or not, depending on $p$ and $q$. It will be fully answered in Section \ref{section:zerodivisors}.

\vskip 3mm
The second way to obtain quaternion algebras from the free Hamilton algebra is to look at things \emph{locally}, by specializing.
Simply, we take an irreducible (monic) polynomial $r \in \F[t]$ and consider the quotient of $\calW_{p,q}$ by the two-sided ideal $(r(\omega))$.
The quotient algebra
$$\calW_{p,q,[r]}:=\calW_{p,q}/(r(\omega))$$
now has a natural structure of vector space over the field $\L:=\F[\overline{\omega}] \simeq \F[t]/(r)$, where $\overline{\omega}$
stands for the coset of $\omega$ mod $(r(\omega))$.
And again, since $\calW_{p,q}$ is a free $\F[\omega]$-module of rank $4$ with basis $(1,a,b,ab)$, the quotient algebra
$\calW_{p,q}/(r(\omega))$ becomes a $4$-dimensional vector space over $\L$ with basis $(1,\overline{a},\overline{b},\overline{a}\,\overline{b})$,
where $\overline{a}$ and $\overline{b}$ stand for the respective cosets of $a$ and $b$ mod $(r(\omega))$.
Since the adjunction leaves $\omega$ invariant, the two-sided ideal $(r(\omega))$ is invariant under adjunction, and we obtain
an induced involution $x \mapsto x^\star$ of $\calW_{p,q}/(r(\omega))$.
Then we recover the norm form
$$N_r : x \mapsto xx^\star=x^\star x \in \L,$$
its polar form, also known as the inner product
$$\langle -,-\rangle_r : (x,y) \mapsto xy^\star+yx^\star=x^\star y+y^\star x \in \L,$$
and the trace map $x \mapsto x+x^\star=\langle 1,x \rangle_r \in \L$.
The inner product $\langle -,-\rangle_r$ is of course $\L$-bilinear.
Finally the determinant of this new inner product is now $\Lambda_{p,q}(\overline{\omega})^2$, i.e., the coset of $\Lambda_{p,q}(\omega)^2$,
which vanishes if and only if $r$ divides $\Lambda_{p,q}$.

Hence, as an application of Theorem \ref{theo:recogquaternion} two possibilities can occur:
\begin{itemize}
\item Either $\Lambda_{p,q}$ is relatively prime with $r$, in which case $\calW_{p,q}/(r(\omega))$
is a quaternion algebra over the field $\F[\omega]/(r(\omega))$;
\item Or $r$ is an irreducible factor of $\Lambda_{p,q}$, and not much can be said (yet).
\end{itemize}

And finally in the first case, we might inquire whether $\calW_{p,q}/(r(\omega))$ splits or not.
A standard case is when one of $p$ and $q$ splits: then $N(x)=0$ for some nonzero $x$ in either $\F[a] \setminus \F$ or $\F[b] \setminus \F$, and
going to the quotient yields the isotropy of the coset $\overline{x}$, which critically is nonzero
(indeed, if $\overline{x}=0$ then all the coefficients of $x$ in the deployed basis $(1,a,b,ab)$ would be multiples of $r(\omega)$, which is clearly false).
Other interesting cases include the one where $\F$ is finite, in which every specialization splits.

Let us sum up (see also theorem 2.4 in \cite{dSPregular}):

\begin{theo}\label{theo:quatlocal}
Let $r \in \Irr(\F)$ be relatively prime with $\Lambda_{p,q}$.
Then the quotient algebra $\calW_{p,q}/(r(\omega))$ is a quaternion algebra over the residue field $\F[\omega]/(r(\omega))$.
\end{theo}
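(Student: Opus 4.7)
My plan is to apply the recognition criterion, Theorem \ref{theo:recogquaternion}, directly to the quotient $\calW_{p,q}/(r(\omega))$ over the field $\L:=\F[\omega]/(r(\omega))$. All the structural ingredients needed are already in place in the text preceding the statement; the point of the proof is only to verify the three hypotheses of Theorem \ref{theo:recogquaternion}, namely the dimension, the existence of an involution with $x\overline{x}\in\L$, and the nondegeneracy of the associated inner product.

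First I would record that $\L$ is a field because $r$ is irreducible, and that the quotient $\calW_{p,q}/(r(\omega))$ is a $4$-dimensional $\L$-vector space with basis $(1,\overline{a},\overline{b},\overline{a}\,\overline{b})$, since $\calW_{p,q}$ is a free $\F[\omega]$-module of rank $4$ with basis $(1,a,b,ab)$. Next, since $\omega^\star=\omega$, the two-sided ideal $(r(\omega))$ is stable under the adjunction, so $x\mapsto x^\star$ induces an antihomomorphism on the quotient; as it squares to the identity there, it is a genuine involution of the $\L$-algebra $\calW_{p,q}/(r(\omega))$. The norm map $N:\calW_{p,q}\to\F[\omega]$ passes to the quotient and takes values in $\L$, so that $x x^\star=x^\star x\in\L$ for every $x\in\calW_{p,q}/(r(\omega))$.

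The remaining, and crucial, point is the nondegeneracy of the induced inner product $\langle-,-\rangle_r$. Here the whole preparation in Section \ref{section:Grammatrix} pays off: by Proposition \ref{prop:gram}, the Gram matrix of $\langle-,-\rangle$ in the deployed basis $(1,a,b,ab)$ has determinant $\Lambda_{p,q}(\omega)^2$, and reducing modulo $r(\omega)$ yields $\Lambda_{p,q}(\overline{\omega})^2$ as the Gram determinant of $\langle-,-\rangle_r$ in $(1,\overline{a},\overline{b},\overline{a}\,\overline{b})$. Since $r$ is irreducible and coprime with $\Lambda_{p,q}$, the class $\Lambda_{p,q}(\overline{\omega})$ is nonzero in the field $\L$, hence so is its square, and the inner product $\langle-,-\rangle_r$ is nondegenerate.

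With these three facts established, Theorem \ref{theo:recogquaternion} applies verbatim and yields that $\calW_{p,q}/(r(\omega))$ is a quaternion algebra over $\L$, with induced adjunction as quaternionic conjugation, $N_r$ as quaternionic norm, and $x\mapsto x+x^\star$ as quaternionic trace. There is no real obstacle: the only thing that could possibly go wrong is the nondegeneracy of the specialized inner product, and the coprimality hypothesis on $r$ and $\Lambda_{p,q}$ is precisely designed to guarantee it through Proposition \ref{prop:gram}.
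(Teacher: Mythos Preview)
Your proof is correct and follows essentially the same approach as the paper: the theorem is stated as a summary (``Let us sum up'') of the discussion in Section~\ref{section:quaternionalgebras}, which verifies exactly the hypotheses of Theorem~\ref{theo:recogquaternion} --- the $4$-dimensionality of the quotient, the induced involution with $xx^\star\in\L$, and the nondegeneracy of $\langle-,-\rangle_r$ via the Gram determinant $\Lambda_{p,q}(\overline{\omega})^2$ --- precisely as you do.
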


Theorem \ref{theo:quatlocal} is key to understand the linear representations $\phi$ of $\calW_{p,q}$ that are regular, meaning
that the endomorphism $\phi(\omega)$ has its minimal polynomial relatively prime with $\Lambda_{p,q}$.
We also mention the following result from \cite{dSPregular} (proposition 2.6 there), which was an essential tool in solving the representation problem that was tackled there:

\begin{theo}\label{theo:quatlocaln}
Let $r \in \Irr(\F)$ be relatively prime with $\Lambda_{p,q}$.
If the quotient quaternion algebra $\calW_{p,q}/(r(\omega))$ splits,
then $\calW_{p,q}/(r(\omega)^n) \simeq \Mat_2\bigl(\F[t]/(r^n)\bigr)$ for all $n \geq 1$, where we mean an isomorphism of $\F[t]/(r^n)$-algebras.
\end{theo}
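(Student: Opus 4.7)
The plan is to transfer the known splitting modulo $r(\omega)$ up to the higher power $r(\omega)^n$ by a lifting-of-idempotents argument over a local ring. Set $R := \F[\omega]/(r(\omega)^n)$; since $\omega$ is transcendental over $\F$, the assignment $\omega \mapsto t$ identifies $R$ with $\F[t]/(r^n)$. The ring $R$ is local with maximal ideal $\mathfrak{m} := (r(\omega))/(r(\omega)^n)$, residue field $F := R/\mathfrak{m} \simeq \F[t]/(r)$, and $\mathfrak{m}^n = 0$. Because $\calW_{p,q}$ is a free $\F[\omega]$-module of rank $4$ with basis $(1,a,b,ab)$, its quotient $\calA_n := \calW_{p,q}/(r(\omega)^n)$ is a free $R$-module of rank $4$, and reduction modulo $\mathfrak{m}\calA_n$ yields $\calA_n/\mathfrak{m}\calA_n \simeq \calW_{p,q}/(r(\omega)) \simeq \Mat_2(F)$ by the splitting hypothesis.

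Next I invoke the classical fact that idempotents lift along surjections with nilpotent kernel: the matrix idempotent $e_{11} \in \Mat_2(F)$ lifts to an idempotent $e \in \calA_n$, giving the Peirce decomposition
\[
\calA_n = e\calA_n e \oplus e\calA_n(1-e) \oplus (1-e)\calA_n e \oplus (1-e)\calA_n(1-e).
\]
Each summand is a direct summand of $\calA_n$, hence projective and hence free over the local ring $R$; a rank comparison through the reduction modulo $\mathfrak{m}$ shows that each summand is free of rank $1$. In particular $e\calA_n e = Re$ and $(1-e)\calA_n(1-e) = R(1-e)$.

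Finally I pick lifts $u_0 \in e\calA_n(1-e)$ and $v_0 \in (1-e)\calA_n e$ of the matrix units $e_{12}$ and $e_{21}$ of $\Mat_2(F)$, obtained by taking arbitrary preimages in $\calA_n$ and sandwiching them between $e$ and $1-e$. Writing $u_0 v_0 = \lambda e$ and $v_0 u_0 = \mu(1-e)$ with $\lambda,\mu \in R$, both scalars reduce to $1 \in F$ and so are units of $R$; the identity $(v_0 u_0)^2 = v_0(u_0 v_0)u_0 = \lambda v_0 u_0$ then forces $\mu^2 = \lambda \mu$, whence $\mu = \lambda$ after cancellation in the free rank-$1$ module $R(1-e)$. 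Setting $u := \lambda^{-1} u_0$ and $v := v_0$, the four elements $e$, $1-e$, $u$, $v$ satisfy the matrix-unit relations of $\Mat_2(R)$ on the nose, producing an $R$-algebra homomorphism $\Mat_2(R) \to \calA_n$. This map is surjective by the Peirce decomposition, and since both sides are free $R$-modules of rank $4$ it is automatically an isomorphism. The only mildly delicate step is the unit-adjustment that makes the matrix-unit relations hold exactly rather than up to units, a step for which the locality of $R$ is essential; everything else is standard Peirce theory and rank comparison over a local ring.
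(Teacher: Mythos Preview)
Your proof is correct. The paper does not supply its own argument for this statement; it merely cites Proposition~2.6 of \cite{dSPregular}. Your lifting-of-idempotents approach over the local Artinian ring $R=\F[t]/(r^n)$ is the standard route and every step is sound: the Peirce summands are projective (being direct summands of a free module) hence free over the local ring $R$, with rank read off modulo $\mathfrak{m}$; the computation $(v_0u_0)^2=\lambda v_0u_0$ together with freeness of $R(1-e)$ legitimately yields $\mu=\lambda$; and the resulting $R$-algebra map $\Mat_2(R)\to\calA_n$ is surjective onto each Peirce piece, hence surjective, and a surjection between free $R$-modules of the same finite rank over a local ring is an isomorphism (the determinant is a unit since it is nonzero modulo~$\mathfrak{m}$).
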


In contrast, the degeneracy of $\calW_{p,q}/(\Lambda_{p,q}(\omega))$ is intimately connected with the difficulty of understanding
the exceptional linear representations of $\calW_{p,q}$. In general, one wants to avoid considering $\calW_{p,q}/(r(\omega))$ at all when $r$ is an irreducible divisor of $\Lambda_{p,q}$ but there are however a few basic remarks that can be made about it.
First of all, its inner product is degenerate and hence an interesting set is its \emph{radical.}
Using \eqref{eq:adjeq}, one sees that this radical is a two-sided ideal of $\calW_{p,q}/(r(\omega))$.
Note already how this is in contrast with the non-degenerate case because every quaternion algebra is simple.
Even more intriguing is the possibility that the radical be the whole of $\calW_{p,q}/(r(\omega))$, and judging from the Gram matrix \eqref{eq:Gram}
it is the case if and only if $\car(\F)=2$, $\tr(p)=\tr(q)=0$ and $r=t$ (note that $\Lambda_{p,q}=t^2$ in this case).

\subsection{The center and the fundamental ideal}\label{section:center}

It was already seen that $\F[\omega]$ was canonical in some way, because
replacing $a$ and $b$ with basic generators $x$ and $y$ yields an inner product $\langle x,y\rangle$ that generates the $\F$-algebra $\F[\omega]$.
The next result shows that $\F[\omega]$ is even more canonically attached to $\calW_{p,q}$: it is its center!

So far, we had entirely refrained from proving this fact, but now we can do this without computing.

\begin{theo}\label{theo:center}
The center of $\calW_{p,q}$ is $\F[\omega]$.
\end{theo}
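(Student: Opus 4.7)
The plan is to exploit the quaternion structure on the completion $\overline{\calW_{p,q}}$ already established just before the statement. The inclusion $\F[\omega]\subseteq Z(\calW_{p,q})$ is the lemma of Section~\ref{section:omega}; so only the converse is at stake. The key observation is that centrality in $\calW_{p,q}$ automatically propagates to centrality in $\overline{\calW_{p,q}}$, and this latter algebra is central simple.

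Concretely, I would take $z\in Z(\calW_{p,q})$ and view it inside $\overline{\calW_{p,q}}=\calW_{p,q}\otimes_{\F[\omega]}\F(\omega)$, which is allowed since $\calW_{p,q}$ embeds in its completion (the $\F[\omega]$-module $\calW_{p,q}$ is free, so the natural map is injective). Every pure tensor $x\otimes \frac{f}{g}$ can be rewritten as $\frac{1}{g}(x\otimes f)$, where $x\otimes f=(fx)\otimes 1$ is the image of an element of $\calW_{p,q}$; since $z$ commutes with every element of $\calW_{p,q}$ and with the scalars $\F(\omega)$, it commutes with every pure tensor, hence lies in the center of $\overline{\calW_{p,q}}$. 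By Section~\ref{section:quaternionalgebras}, $\overline{\calW_{p,q}}$ is a quaternion algebra over $\F(\omega)$, and quaternion algebras are central simple, so its center is precisely $\F(\omega)$. Thus $z\in \F(\omega)$.

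It only remains to upgrade $z\in \F(\omega)\cap \calW_{p,q}$ to $z\in \F[\omega]$. Here I would exploit the fact that $(1,a,b,ab)$ is simultaneously a basis of the free $\F[\omega]$-module $\calW_{p,q}$ and a basis of the $\F(\omega)$-vector space $\overline{\calW_{p,q}}$. Writing $z=\alpha_0+\alpha_1 a+\alpha_2 b+\alpha_3\, ab$ with $\alpha_i\in\F[\omega]$, and writing $z=\beta\cdot 1$ with $\beta\in\F(\omega)$ (from its belonging to $\F(\omega)\subseteq\overline{\calW_{p,q}}$), the uniqueness of coordinates forces $\alpha_1=\alpha_2=\alpha_3=0$ and $\beta=\alpha_0\in\F[\omega]$.

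Honestly, there is no serious obstacle once the preparatory material is in place: the whole weight of the proof was already carried by Proposition~\ref{prop:gramgeneralized} and the recognition theorem (Theorem~\ref{theo:recogquaternion}) for quaternion algebras, which jointly gave us the central simple completion. The only tiny vigilance point is making sure the embedding $\calW_{p,q}\hookrightarrow \overline{\calW_{p,q}}$ really is injective, which is immediate from freeness of $\calW_{p,q}$ over $\F[\omega]$, and that a central element of the subring stays central in the larger ring, which follows from $\F(\omega)$-linearity of the commutator with $z$.
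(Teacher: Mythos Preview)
Your proof is correct and is essentially the paper's own argument, spelled out in more detail: the paper also passes to the completion $\overline{\calW_{p,q}}$, invokes that a quaternion algebra is central over its base field to get $Z(\calW_{p,q})\subseteq\F(\omega)$, and then appeals to the free $\F[\omega]$-basis $(1,a,b,ab)$ to cut down to $\F[\omega]$. Your only additions are the explicit verification that centrality propagates to the tensor product and the coordinate comparison for the intersection $\F(\omega)\cap\calW_{p,q}$, both of which the paper leaves implicit.
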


\begin{proof}
The completed $\F(\omega)$-algebra $\calW_{p,q} \otimes_{\F[\omega]} \F(\omega)$
is a quaternion algebra. Hence its center is $\F(\omega)$. It is clear that the center of $\calW_{p,q}$ is the intersection of it with the
latter, and hence it equals $\F[\omega]$ (implicitly, this involves the fact that $\calW_{p,q}$ is a free $\F[\omega]$-module with basis $(1,a,b,ab)$).
\end{proof}

\begin{Not}
The center of $\calW_{p,q}$ is now denoted by $C$.
\end{Not}

At this point, we could get rid of the $\omega$ element as it is not canonical. Yet, it is very useful for discussing degrees and polynomials,
so sometimes we will keep the notation $\F[\omega]$ for clarity.

Although the element $\omega$ is not canonical with respect to the structure of $\F$-algebra of $\calW_{p,q}$,
a canonical object is the ideal generated by the Gram determinant of the inner product:

\begin{Def}
The \textbf{fundamental ideal} $\mathfrak{F}$ of $\calW_{p,q}$ is defined as the (two-sided) ideal generated by the Gram determinant of
an arbitrary basis of the $\F[\omega]$-module $\calW_{p,q}$ for the inner product.
\end{Def}

In other words, the fundamental ideal is the two-sided ideal generated by $\Lambda_{p,q}(\omega)$.

Before we move forward, we would like to stress that the nontriviality of the center of $\calW_{p,q}$
is exceptional in the theory of free products of algebras over a field.

\begin{prop}\label{prop:center:peculiar}
Let $\calA$ and $\calB$ be nontrivial $\F$-algebras. The center of $\calA * \calB$ is nontrivial only if $\dim_\F \calA=\dim_\F \calB=2$.
\end{prop}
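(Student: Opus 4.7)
The plan is to work with the standard normal form for elements of a free product of unital $\F$-algebras. Fix complements $\calA = \F \oplus \calA'$ and $\calB = \F \oplus \calB'$. Then $\calA * \calB$ admits an $\F$-basis consisting of $1$ together with all \emph{reduced words} $x_1 x_2 \cdots x_n$ with each $x_i \in \calA' \cup \calB'$ and $x_i, x_{i+1}$ in different factors; any $z \in \calA * \calB$ decomposes uniquely as $z = \lambda + \sum_{n \geq 1} z_n$, with $z_n$ a linear combination of reduced words of length $n$. I further split $z_n$ by its \emph{type} $z_n^{XY}$, collecting words starting in factor $X$ and ending in factor $Y$. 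Alternation forces only type $AB$ or $BA$ for even $n$, and only $AA$ or $BB$ for odd $n$.

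Now assume for contradiction that $z$ is central with $z \notin \F$, and pick $n \geq 1$ maximal with $z_n \neq 0$. For $a \in \calA'$, I compare the length-$(n+1)$ components of $az$ and $za$. The key observation is that, for a reduced word $w$ of length $n$, the product $aw$ has length $n+1$ exactly when $w$ starts in $\calB'$ (in which case $aw$ is itself reduced), and $wa$ has length $n+1$ exactly when $w$ ends in $\calB'$; otherwise the head or tail letter of $w$ merges with $a$ inside $\calA = \F \oplus \calA'$ and the word collapses. Comparing the resulting length-$(n+1)$ contributions of $az - za$ by type yields, for odd $n$, the two identities $a \cdot z_n^{BB} = 0$ and $z_n^{BB} \cdot a = 0$ valid for every $a \in \calA'$. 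Since $\calA' \neq 0$, picking a nonzero $a$ in the outer tensor slot forces $z_n^{BB} = 0$; the symmetric argument with $bz - zb = 0$ and $\calB' \neq 0$ gives $z_n^{AA} = 0$, contradicting maximality.

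For even $n$, the analogous comparison yields only the single relation $a \otimes z_n^{BA} = z_n^{AB} \otimes a$ in $\calA' \otimes V_{\mathrm{mid}} \otimes \calA'$, where $V_{\mathrm{mid}}$ is the ambient tensor space for the middle $n-1$ slots. Writing $z_n^{BA} = \sum_\alpha u_\alpha \otimes e_\alpha$ and $z_n^{AB} = \sum_\beta e_\beta \otimes w_\beta$ against a basis $(e_\alpha)$ of $\calA'$, specializing $a = e_\gamma$ and reading off the coefficients on basis vectors of the outer $\calA'$-slots gives $u_\epsilon = 0$ for every $\epsilon \neq \gamma$ and $w_\delta = 0$ for every $\delta \neq \gamma$. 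Provided $\dim_\F \calA' \geq 2$, varying $\gamma$ annihilates every $u_\alpha$ and every $w_\beta$, so $z_n = 0$ whenever $\dim_\F \calA \geq 3$. The symmetric argument using $bz - zb = 0$ handles the case $\dim_\F \calB \geq 3$. The hard part will be this even-length case, whose tensor bookkeeping must be done carefully; it is also the case that fails precisely when $\dim_\F \calA = \dim_\F \calB = 2$, consistent with the fact that the center of $\calW_{p,q}$ is nontrivial.
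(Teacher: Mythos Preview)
Your argument is correct and follows essentially the same route as the paper's proof: both fix complements $\calA'=H_\calA$, $\calB'=H_\calB$ to $\F$, take a central nonscalar element, isolate its top-length component, and compare the length-$(n{+}1)$ parts of $e_i x$ and $x e_i$; in the odd case both derive an immediate contradiction, while in the even case both obtain the relation $e_i z = y e_i$ (your $a\otimes z_n^{BA}=z_n^{AB}\otimes a$) and read off that the outer $\calA'$-slots are forced to a single basis vector, which is impossible once $\dim\calA'\ge 2$. Your tensor bookkeeping in the even case is somewhat more explicit than the paper's ``it easily follows'', but the idea is identical.
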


For the proof, which is inessential to our study of the free Hamilton algebra, we need some classical facts on free products of $\F$-algebras, that will reappear when we discuss monomial units.

Consider two non trivial $\F$-algebras $\calA$ and $\calB$ and their free product $\Pi:=\calA * \calB$. We naturally identify
$\calA$ and $\calB$ with subalgebras of $\Pi$, and the elements of $\calA \cup \calB$ are called the basic elements. For an integer $n \geq 0$, one denotes by $\Pi^{(n)}$ the linear subspace of $\Pi$ spanned by the products of length at most $n$ of basic elements.
Clearly $(\Pi^{(n)})_{n \geq 0}$ is a filtration of the $\F$-vector space $\Pi$ (beware that it is not a gradation!).
Another viewpoint is the following: we choose respective linear hyperplanes $H_\calA$ and $H_\calB$ of $\calA$ and $\calB$ that do not contain $1$,
and we choose respective bases $(e_{i,\calA})_{i \in I}$ and $(e_{j,\calB})_{j \in J}$ of them.
Then it can be shown that a basis of the vector space $\Pi$ is obtained by taking all the
\emph{alternating} words in letters of the form $e_{i,\calA},e_{j,\calB}$, i.e., the words in these letters
in which no two consecutive letters belong to the same basic subalgebra.
Moreover, the subspace spanned by the words with length at most $n$ is exactly $\Pi^{(n)}$.
In particular, a direct factor of $\Pi^{(n-1)}$ in $\Pi^{(n)}$ has as basis
the set of all alternating words of length $n$ in letters of the form $e_{i,\calA},e_{j,\calB}$.

\begin{proof}[Proof of Proposition \ref{prop:center:peculiar}]
Set $\Pi:=\calA*\calB$, and take respective linear hyperplanes $H_\calA$ and $H_\calB$ and respective bases $(e_i)_{i \in I}$ and $(f_j)_{j \in J}$ of them
as in the above. Denote by $\calA^{(n)}$ (respectively, $\calB^{(n)}$) the set of all alternating words of length $n$ in letters of the form
$e_i,f_j$ and that start with a vector of $\calA$ (respectively, of $\calB$).
Let $x \in \calA * \calB$ be central and nonscalar.
Denote by $n$ the greatest integer such that $x \in (\calA * \calB)^{(n)}$ (i.e., the height of $x$), and assume that $n>0$.
We will prove that $|I|=|J|=1$, i.e., that $\dim_\F \calA=\dim_\F \calB=2$.

We write $x \equiv y+z$ mod $\Pi^{(n-1)}$ for a unique $y \in \calA^{(n)}$ and a unique $z \in \calB^{(n)}$, with at least one of $y$ and $z$ nonzero.
\begin{itemize}
\item Assume first that $n$ is odd, and let $i \in I$ (which exists). Then
$e_i x \equiv e_i z$ mod $\Pi^{(n)}$, while $x e_i=z e_i$ mod $\Pi^{(n)}$, and it follows that
$e_i z=z e_i$. But if $z \neq 0$ then on the left-hand side we have a nonzero element of $\calA^{(n+1)}$, and on the right-hand side a nonzero element of
$\calB^{(n+1)}$, so this is absurd. Hence $z=0$. Likewise, one would obtain $y=0$ because $J$ is nonempty, which is absurd.
\item It follows that $n$ is even. Let again $i \in I$.
Then $e_i x \equiv e_i z$ mod $\Pi^{(n)}$, while $x e_i=y e_i$ mod $\Pi^{(n)}$.
Hence $e_i z=y e_i$, and it easily follows that, in the basis of $\Pi$ we have taken,
$y$ has nonzero coefficients only on the words that start with $e_i$, and likewise $z$ has nonzero coefficients only on the words
that end with $e_i$. If $|I|>1$ this yields $y=0=z$ by varying $i$. Hence $|I|=1$ and likewise we obtain $|J|=1$.
\end{itemize}
\end{proof}

\subsection{Application to Laffey's theorem}\label{section:Laffey}

In \cite{Laffey}, which seems to be the first article to mention the free Hamilton algebra (by concept if not by name), Thomas Laffey proved the following result:

\begin{theo}
The matrix algebra $\Mat_n(\F)$ is generated by two idempotents if and only if either $n=1$, or $n=2$ and $|\F|>2$.
\end{theo}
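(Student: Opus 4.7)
The approach is to identify $\calW := \calW_{t^2-t,\,t^2-t}$ as the universal unital $\F$-algebra generated by two idempotents, so that $\Mat_n(\F)$ is generated by two idempotents if and only if there is a surjective $\F$-algebra homomorphism $\calW \twoheadrightarrow \Mat_n(\F)$. Any such surjection must carry the center of $\calW$, which by Theorem \ref{theo:center} equals $\F[\omega]$, into the center $\F\cdot I_n$ of $\Mat_n(\F)$; hence $\omega$ is sent to some scalar $\lambda\in\F$ and the surjection factors through $\calW/(\omega-\lambda)$. Since $\calW$ is a free $\F[\omega]$-module of rank $4$, this quotient has dimension $4$ over $\F$, which immediately forces $n^2\leq 4$, so $n\in\{1,2\}$. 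The case $n=1$ is trivial.

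For $n=2$, I would first compute $\Lambda_{p,q}(t)=t(t-1)$ directly from the definition, using $\tr(p)=\tr(q)=1$ and $N(p)=N(q)=0$. If $|\F|>2$, pick $\lambda\in\F\setminus\{0,1\}$, so that $t-\lambda$ is coprime to $\Lambda_{p,q}$. Theorem \ref{theo:quatlocal} then gives that $\calW/(\omega-\lambda)$ is a quaternion algebra over $\F$. Its norm is isotropic: the class $\overline{a}$ in the quotient is nonzero, since $a$ is a vector of the $\F[\omega]$-basis $(1,a,b,ab)$ of $\calW$, and $N(\overline{a})=\overline{a}(1-\overline{a})=0$. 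Therefore this quaternion algebra splits, yielding an isomorphism $\calW/(\omega-\lambda)\simeq \Mat_2(\F)$, and composing with the canonical projection produces the desired surjection $\calW\twoheadrightarrow \Mat_2(\F)$.

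The main obstacle is the case $n=2$, $|\F|=2$, where one must show that $\Mat_2(\F_2)$ is \emph{not} generated by two idempotents. Here $\lambda\in\{0,1\}$, so $t-\lambda$ divides $\Lambda_{p,q}$ and the quaternion route is blocked; the point is instead to rule out the putative isomorphism $\calW/(\omega-\lambda)\simeq \Mat_2(\F_2)$ of $4$-dimensional $\F_2$-algebras. Since $\Mat_2(\F_2)$ is simple, it suffices to exhibit a proper nonzero two-sided ideal in $\calW/(\omega-\lambda)$. The natural candidate is the radical of the inner product induced on the quotient, which is a two-sided ideal by identity \eqref{eq:adjeq}. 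It is nonzero because the Gram determinant equals $\Lambda_{p,q}(\lambda)^2=0$, and it is proper because $\tr(p)=\tr(q)=1\neq 0$, as can be read off the Gram matrix \eqref{eq:Gram} and is noted in the discussion following Theorem \ref{theo:quatlocaln}. This contradiction finishes the proof.
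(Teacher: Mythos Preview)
Your proposal is correct and follows essentially the same route as the paper's proof: factor a surjection $\calW_{t^2-t,t^2-t}\twoheadrightarrow \Mat_n(\F)$ through $\calW/(\omega-\lambda)$ using centrality of $\omega$, deduce $n\leq 2$ from $\dim_\F \calW/(\omega-\lambda)=4$, handle $n=2$ with $|\F|>2$ via a split quaternion specialization at $\lambda\notin\{0,1\}$, and exclude $|\F|=2$ by observing that the radical of $\langle-,-\rangle_r$ is a nontrivial two-sided ideal. The only cosmetic difference is that you verify splitting by exhibiting the isotropic vector $\overline{a}$ explicitly, whereas the paper invokes the general remark that any specialization splits once one of $p,q$ splits.
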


Laffey also proved that every matrix algebra over a field is generated by three idempotents.

Thanks to our previous work, we can give an enlightening proof of Laffey's theorem.
Assume that we have idempotent matrices $P$ and $Q$ that generate the algebra $\Mat_n(\F)$.
We take $p=q=t^2-t$. To $P$ and $Q$, we attach a linear representation $\varphi : \calW_{p,q} \rightarrow \Mat_n(\F)$ such that $\varphi(a)=P$ and $\varphi(b)=Q$,
and the fact that $\{P,Q\}$ generates $\Mat_n(\F)$ means that this representation is surjective.
But $\varphi(\omega)$ is a central element in $\im \varphi$, so $\varphi(\omega)=\delta\,I_n$ for some $\delta \in \F$, and hence $\varphi$ induces a surjective homomorphism of $\F$-algebras
$\calW_{p,q}/(\omega-\delta) \twoheadrightarrow \Mat_n(\F)$. Yet the source algebra now is an $\F[\omega]/(\omega-\delta) \simeq \F$ vector space of dimension $4$,
and hence $n^2 \leq 4$, so $n \leq 2$, and if $n=2$ then we have an isomorphism $\calW_{p,q}/(\omega-\delta) \overset{\simeq}{\longrightarrow} \Mat_2(\F)$ of $\F$-algebras.

Now, assume that $n=2$ and $|\F|=2$. Then $\delta \in \{0,1\}$ is a root of $\Lambda_{p,q}=t^2-t$ (where we use the specific fact that $p=q=t^2-t$)
so the inner product in $\calW_{p,q}/(\omega-\delta)$ degenerates (but not fully, as seen from the Gram matrix \eqref{eq:Gram}), to the effect that its
radical is a nontrivial two-sided ideal of $\calW_{p,q}/(\omega-\delta)$. But then this contradicts the simplicity of $\Mat_2(\F)$.
Hence $\Mat_2(\F_2)$ has no generating subset consisting of two idempotent matrices (which can, of course, also be checked by tedious verification).

Conversely, assume that $|\F|>2$, and let us prove the existence of a generating set of two idempotents of $\Mat_2(\F)$ in the abstract.
Naturally, we simply choose $\delta \in \F \setminus \{0,1\}$ and consider the specialization
$\calW_{p,q}/(\omega-\delta)$, which is a quaternion algebra over $\F$, and this specialization splits because $t^2-t$ splits. Hence we have an isomorphism
$\Phi : \calW_{p,q}/(\omega-\delta) \overset{\simeq}{\longrightarrow} \Mat_2(\F)$ of $\F$-algebras, we simply compose it with the canonical projection to recover
a surjective homomorphism $\varphi : \calW_{p,q} \twoheadrightarrow \Mat_2(\F)$, and we conclude by taking $\{\varphi(a),\varphi(b)\}$ as our set of idempotent generators.

Clearly, a similar proof can be given to consider a much wider variety of pairs of quadratic generators, and the same result will be obtained whenever at least one specialization at a nondegenerate point renders the norm isotropic, yielding a split quaternion algebra over $\F$ (this is always the case whenever one of $p$ and $q$ splits and the roots of $\Lambda_{p,q}$ do not cover $\F$, but might fail for specific fields and choices of $p$ and $q$).

\section{Zero divisors in the free Hamilton algebra}\label{section:zerodivisors}

\subsection{The Zero Divisors Theorem: statement and comments}

This section is devoted to the existence of zero divisors in $\calW_{p,q}$, and to applications of this problem.
The main result is the following one:

\begin{theo}[Zero Divisors Theorem]\label{theo:zerodivisors}
The algebra $\calW_{p,q}$ has a zero divisor if and only if one of $p$ and $q$ splits over $\F$.
\end{theo}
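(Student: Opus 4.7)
The ``if'' direction is immediate. Suppose $p$ splits, say $p(t)=(t-c)(t-c')$ with $c,c' \in \F$. Both $a-c$ and $a-c'$ lie in $\F[a] \subset \calW_{p,q}$ and are nonzero (since $(1,a)$ is part of the free $\F[\omega]$-basis $(1,a,b,ab)$), yet their product equals $p(a)=0$, so $a-c$ is a zero divisor. The case where $q$ splits is symmetric.

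For the converse, assume that $p$ and $q$ are both irreducible over $\F$. By Proposition \ref{prop:zerodivdetect}, it suffices to prove that $N(x) \neq 0$ for every nonzero $x \in \calW_{p,q}$. Since $\calW_{p,q}$ is a free $\F[\omega]$-module of rank $4$, the natural map into the completion $\overline{\calW_{p,q}}=\calW_{p,q}\otimes_{\F[\omega]} \F(\omega)$ is injective and the norm extends to an $\F(\omega)$-valued quadratic form. On a quaternion algebra, the norm is anisotropic if and only if the algebra is a division algebra (for $x \neq 0$, invertibility is equivalent to $N(x) \neq 0$, with inverse $x^\star/N(x)$). The problem therefore reduces to showing that $\overline{\calW_{p,q}}$ is a division algebra over $K:=\F(\omega)$ whenever $p$ and $q$ are both irreducible.

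To this end, I would describe $\overline{\calW_{p,q}}$ explicitly by finding two anticommuting pure quaternions that generate it. In characteristic not $2$, the pure quaternion $\alpha:=2a-\tr(p)$ satisfies $\alpha^2=\disc(p) \in \F^\times$ (nonzero because $p$ is irreducible), and a Gram--Schmidt-type adjustment of $2b-\tr(q)$ against $\alpha$ produces a pure quaternion $\beta$ anticommuting with $\alpha$ and satisfying $\beta^2=-4\,\Lambda_{p,q}(\omega)/\disc(p)$, which is consistent with the Gram determinant computed in Proposition \ref{prop:gram}. Hence, up to squares, $\overline{\calW_{p,q}}$ is the quaternion algebra with Hilbert symbol $(\disc(p),-\Lambda_{p,q}(\omega))$ over $K$, and it splits if and only if $-\Lambda_{p,q}(\omega)$ is a norm from the quadratic extension $E := \F[a](\omega)=K(\sqrt{\disc(p)})$ down to $K$. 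A valuation argument on the rational function field $E$ rules this out: the roots of $\Lambda_{p,q}$ in the splitting field of $pq$ are $x_1y_1+x_2y_2$ and $x_1y_2+x_2y_1$, and they generate a quadratic extension of $\F$ that is either disjoint from $\F[a]$ (so $\Lambda_{p,q}$ is irreducible over $\F[a]$, and the place of $E$ it defines has odd valuation in $\Lambda_{p,q}(\omega)$, obstructing norm representability) or coincides with $\F[a]$ (the degenerate subcase $\F[a] \simeq \F[b]$, which needs a separate valuation check at the two linear factors).

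The main obstacle is making the valuation obstruction uniform across these subcases and handling characteristic $2$, where one must substitute an Artin--Schreier symbol for the Hilbert symbol and an inseparable-friendly analogue of the above Gram--Schmidt step. In light of the paper's announcement of an ``almost computation-free'' second proof coming later (see the remarks preceding Section \ref{section:ideals}), I would expect the alternative route to entirely bypass this casework by using the structural fact that every nonzero two-sided ideal of $\calW_{p,q}$ meets the center nontrivially, combined with the classification of maximal ideals containing the fundamental ideal $\mathfrak{F}$.
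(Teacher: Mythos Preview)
Your approach is genuinely different from the paper's first proof, and it is not complete. The paper does \emph{not} compute the Hilbert symbol of $\overline{\calW_{p,q}}$ and then invoke local obstructions. Instead, it reduces from four variables to three by exploiting the hyperbolicity of the norm on a split quaternion algebra (so the isotropy cone meets the $3$-dimensional subspace $\Vect_{K}(1,a,b)$), clears denominators to obtain a coprime triple $(\alpha,\beta,\gamma)\in\F[\omega]^3$ satisfying
\[
\alpha^2-\tr(p)\alpha\beta+N(p)\beta^2=\tr(q)\alpha\gamma-N(q)\gamma^2-\omega\beta\gamma,
\]
and then runs a parity-of-degree argument: by the Artin--Springer theorem, any irreducible odd-degree divisor of $\gamma$ would force the anisotropic binary form $x^2-\tr(p)xy+N(p)y^2$ to become isotropic over an odd-degree extension, contradicting coprimality; hence $\deg\gamma$ and symmetrically $\deg\beta$ are even, and a leading-term comparison finishes the contradiction. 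This proof is uniform in the characteristic and requires no case split on whether $\F[a]\simeq\F[b]$.

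By contrast, your Hilbert-symbol route has real gaps that you yourself flag. The valuation obstruction you sketch at the place defined by $\Lambda_{p,q}$ over $E=\F[a](\omega)$ does work in the generic separable case, but the ``degenerate subcase'' $\F[a]\simeq\F[b]$ (where $\Lambda_{p,q}$ factors over $\F$) is not handled, and in characteristic~$2$ your Gram--Schmidt step and the Hilbert symbol itself break down (one or both of $p,q$ may be inseparable, and then $\disc(p)=0$). The paper's second proof, which you correctly anticipate, does go through the ideal structure: it shows that any normalized zero divisor must lie in $\mathfrak{J}_r$ for each irreducible divisor $r$ of $\Lambda_{p,q}$, reduces to a zero divisor in $C+Ca+Cb$ with low-degree coefficients via a lifting lemma, and finishes with a short direct computation. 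If you want to salvage your own line, the cleanest fix is probably to abandon the symbol computation and argue isotropy of $N$ restricted to $\Vect_K(1,a,b)$ directly, which is exactly what the paper does.
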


We claim no originality here, as we recognize a special case of Cohn's general results on free products of rings.
The originality however lies in our proof, which emphasizes the connection with quaternion algebras,
as well as in the applications of this result.

Remember from Proposition \ref{prop:zerodivdetect} that the zero divisors in $\calW_{p,q}$ are the nonzero elements with norm $0$.
In this theorem, the ``if" statement is obvious because having $p$ or $q$ split over $\F$
immediately yields a zero divisor in one of the basic subalgebras.

The difficult point is to prove that $\calW_{p,q}$ has no zero divisor when $p$ and $q$ are irreducible.

The proof is not long, but we must warn the reader of several red herrings.
For example, it is tempting to use specializations of $\calW_{p,q}$ to prove the result:
if for instance $\F=\R$ and $p$ and $q$ are irreducible, we can choose several real numbers
$\lambda$ such that $\calW_{p,q}/(\omega-\lambda)$ does not split (we will leave it as an exercise to the reader, who should first reduce the situation to the one where
$p=q=t^2+1$, and then compute the resulting $\Lambda_{p,q}$), and from there it is easy to derive that $\calW_{p,q}$ has no zero divisor.
However the argument surely fails for finite fields, as over such fields all quaternion algebras split (every $3$-dimensional regular quadratic form over a finite field is isotropic).

In an initial failed attempt to \emph{disprove} the Zero Divisors Theorem (back when we were not aware of Cohn's work), we tried to use the Chevalley-Warning theorem
(see, e.g., theorem 3 page 5 in \cite{Serre}) by taking a fixed integer $n \geq 1$ and searching for a nontrivial solution for the
polynomial equation $N(p_1(\omega)+p_a(\omega)a+p_b(\omega)b+p_{ab}(\omega))=0$
in the polynomials $p_1,p_a,p_b,p_{ab}$ with degree less than $n$. Yet it appears that the norm is then valued
in the polynomials with degree less than $2n$, thereby resulting in $2n$ scalar equations that are homogeneous of degree $2$ in $4n$ unknowns in $\F$: this is the critical bound at which the Chevalley-Warning theorem fails!

Before we give the proof, we need to connect the problem to the structure of the extended quaternion algebra
$\overline{\calW_{p,q}}$, which is almost obvious because $\overline{\calW_{p,q}}$ splits if and only if it has a zero divisor.

\begin{prop}
The algebra $\calW_{p,q}$ has a zero divisor if and only if $\overline{\calW_{p,q}}$ splits.
\end{prop}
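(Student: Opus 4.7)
The plan is to combine two facts already available: zero divisors in $\calW_{p,q}$ are exactly the nonzero isotropic vectors of the norm (Proposition \ref{prop:zerodivdetect}), and a quaternion algebra over a field splits if and only if its norm form is isotropic (noted in Section \ref{section:quaternionalgebras}). Since $\overline{\calW_{p,q}}$ is a quaternion algebra over $\F(\omega)$ with quaternionic conjugation extending $x \mapsto x^\star$, the statement reduces to comparing isotropy of the norm on $\calW_{p,q}$ with isotropy on $\overline{\calW_{p,q}}$. This comparison is straightforward because $\calW_{p,q}$ is a free $\F[\omega]$-module, and the extension $\F[\omega] \hookrightarrow \F(\omega)$ is a localization.

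For the forward direction, start from a zero divisor $x \in \calW_{p,q}$. By Proposition \ref{prop:zerodivdetect}, $N(x) = 0$ in $\F[\omega]$. Because $\calW_{p,q}$ is a free $\F[\omega]$-module of rank $4$, the canonical map into $\overline{\calW_{p,q}} = \calW_{p,q} \otimes_{\F[\omega]} \F(\omega)$ is injective, so $x$ remains nonzero there; the completed norm still satisfies $x x^\star = 0$. Thus $\overline{\calW_{p,q}}$ is a quaternion algebra whose norm is isotropic, and hence it splits.

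Conversely, if $\overline{\calW_{p,q}}$ splits, its norm is isotropic, so there exists a nonzero $y \in \overline{\calW_{p,q}}$ with $y y^\star = 0$. Writing $y$ in the basis $(1,a,b,ab)$ of the $\F(\omega)$-vector space $\overline{\calW_{p,q}}$ and multiplying $y$ by a common denominator $f(\omega) \in \F[\omega] \setminus \{0\}$ of its coefficients, we obtain a nonzero $x = f(\omega)\,y \in \calW_{p,q}$. Since $\omega^\star = \omega$, the scalar $f(\omega)$ commutes with the adjunction, and $N(x) = f(\omega)^2 N(y) = 0$ in $\F(\omega)$; as $N(x)$ actually lies in $\F[\omega]$, this forces $N(x) = 0$. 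Applying Proposition \ref{prop:zerodivdetect} once more, $x$ is a zero divisor in $\calW_{p,q}$.

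There is no real obstacle here: the content is genuinely just the elementary observation that isotropy of a quadratic form over a domain and isotropy over its fraction field are equivalent when the underlying module is free. The substance of the Zero Divisors Theorem will therefore be concentrated in the next step, namely deciding when the specific quaternion algebra $\overline{\calW_{p,q}}$ splits in terms of $p$ and $q$.
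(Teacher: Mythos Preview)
Your proof is correct and follows essentially the same approach as the paper: both directions amount to clearing denominators to pass between $\calW_{p,q}$ and its completion $\overline{\calW_{p,q}}$. The only cosmetic difference is that you route through the norm and Proposition~\ref{prop:zerodivdetect} (using that a quaternion algebra splits iff its norm is isotropic), whereas the paper works directly with a pair $x,y$ satisfying $xy=0$ in $\overline{\calW_{p,q}}$ and scales each factor into $\calW_{p,q}$; this is not a substantive distinction.
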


\begin{proof}
If $\calW_{p,q}$ has a zero divisor, then so does $\overline{\calW_{p,q}}$ and hence it splits.
Conversely, assume that $\overline{\calW_{p,q}}$ splits. Then it contains two nonzero elements $x,y$ such that $xy=0$.
Then we can find nonzero elements $\lambda,\mu$ of $\F[\omega]$ such that $\lambda x \in \calW_{p,q} \setminus \{0\}$ and $\mu y \in \calW_{p,q} \setminus \{0\}$.
Then $(\lambda x)(\mu y)=(\lambda \mu) xy=0$ and we conclude that $\lambda x$ is a zero divisor in $\calW_{p,q}$.
\end{proof}

Hence, a corollary of the Zero Divisors Theorem is:

\begin{cor}
The extended quaternion algebra $\overline{\calW_{p,q}}$ is a skew field if and only if both $p$ and $q$ are irreducible.
\end{cor}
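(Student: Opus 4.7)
The plan is to simply chain together three equivalences already established in the section, since this corollary is a direct consequence and does not require a new argument of its own. The three ingredients are: (a) the dichotomy recalled in Section~\ref{section:quaternionalgebras}, namely that every quaternion algebra over a field either splits (i.e., is isomorphic to $\Mat_2$ over that field) or is a skew field; (b) the proposition immediately preceding this corollary, which states that $\calW_{p,q}$ has a zero divisor if and only if $\overline{\calW_{p,q}}$ splits; and (c) the Zero Divisors Theorem itself, asserting that $\calW_{p,q}$ has a zero divisor if and only if at least one of $p$ and $q$ splits over $\F$.

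First, I would apply (a) to the extended quaternion algebra $\overline{\calW_{p,q}}$ (which was shown in Section~\ref{section:quaternionalgebras} to be a quaternion algebra over $\F(\omega)$, by virtue of the non-degeneracy of the extended inner product and Theorem~\ref{theo:recogquaternion}). By (a), $\overline{\calW_{p,q}}$ is a skew field if and only if it does not split. Next, by (b), non-splitting of $\overline{\calW_{p,q}}$ is equivalent to $\calW_{p,q}$ having no zero divisor. Finally, by (c), having no zero divisor in $\calW_{p,q}$ is equivalent to neither $p$ nor $q$ splitting over $\F$. Since $p$ and $q$ are monic of degree $2$, failing to split is the same as being irreducible: a monic quadratic over $\F$ factors into linear factors precisely when it has a root in $\F$. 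Threading these equivalences together yields the claim.

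There is no real obstacle here, since every ingredient has already been stated. The only minor subtlety to flag is logical order: as a deductive statement, the corollary rests on the Zero Divisors Theorem, whose proof is only given later in Section~\ref{section:zerodivisors}; so one should present it explicitly as a corollary of that theorem rather than of the preceding proposition alone. The proof itself then reduces to the two-line chain
\[
\overline{\calW_{p,q}} \text{ is a skew field} \iff \overline{\calW_{p,q}} \text{ does not split} \iff \calW_{p,q} \text{ has no zero divisor} \iff p, q \text{ are both irreducible}.
\]
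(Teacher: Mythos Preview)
Your proposal is correct and matches the paper's approach exactly: the corollary is stated immediately after the proposition linking zero divisors in $\calW_{p,q}$ to the splitting of $\overline{\calW_{p,q}}$, and the paper presents it explicitly as a consequence of the Zero Divisors Theorem via precisely the chain of equivalences you wrote down. Your remark about logical order is slightly off, though, since the corollary already sits inside Section~\ref{section:zerodivisors}, right between the statement and the proof of the Zero Divisors Theorem.
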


Note that Cohn proved more generally that the free product of two skew fields over a skew field
is always embeddable in a skew field \cite{CohnFirsEmbedding}.

\subsection{Proof of the Zero Divisors Theorem}

We assume throughout that $p$ and $q$ are irreducible. We perform a \emph{reductio ad absurdum}, assuming that
$\calW_{p,q}$ has a zero divisor. This yields a vector $x \in \calW_{p,q} \setminus \{0\}$ with norm $0$.
It is tedious however to analyze the equation $N(x)=0$, because it can be viewed as an equation in four variables in $\F[\omega]$.

The first trick consists in reducing the number of variables to just three by using the properties of quaternion algebras: the norm
of a quaternion algebra is hyperbolic whenever it is isotropic, and hence in this case it has a totally isotropic $2$-dimensional subspace.
Working in the extended algebra, whose norm is isotropic, we deduce that every $3$-dimensional $\F(\omega)$-linear subspace
of $\overline{\calW_{p,q}}$ contains a nonzero isotropic vector.

And now we carefully choose the space $H:=\Vect_{\F(\omega)}(1,a,b)$, in which the norm has the relatively simple expression
$$N(\alpha-\beta a-\gamma b)=\alpha^2+N(p)\, \beta^2 +N(q)\,\gamma^2-\tr(p)\alpha \beta-\tr(q) \alpha \gamma+\omega \beta \gamma$$
for $\alpha,\beta,\gamma$ in $\F(\omega)$.
By the above, $H$ contains a nonzero element with norm $0$, and by sweeping denominators
we recover a triple $(\alpha,\beta,\gamma)$ of \emph{polynomials} in $\F[\omega]$, not all zero,
with $\gcd(\alpha,\beta,\gamma)=1$ and such that
\begin{equation}\label{eq:ZD0}
\alpha^2+N(p)\,\beta^2 +N(q)\,\gamma^2 -\tr(p)\alpha \beta-\tr(q) \alpha \gamma=-\omega \beta \gamma,
\end{equation}
which we can write alternatively as
\begin{equation}\label{eq:ZD1}
\alpha^2-\tr(p) \alpha\beta+N(p)\beta^2=\tr(q) \alpha \gamma-N(q) \gamma^2 -\omega \beta \gamma
\end{equation}
or symmetrically as
\begin{equation}\label{eq:ZD2}
\alpha^2-\tr(q) \alpha\gamma+N(q)\gamma^2=\tr(p) \alpha \beta-N(p) \beta^2 -\omega \beta \gamma.
\end{equation}
We will now perform an analysis of the degrees of $\alpha,\beta,\gamma$ in $\omega$.

First of all, the irreducibility of $p$ of $q$ means that the quadratic forms
$$Q_1 : (x,y) \mapsto x^2-\tr(p)xy+N(p)y^2 \quad \text{and} \quad Q_2 : (x,y) \mapsto x^2-\tr(q)xy+N(q)y^2$$
are nonisotropic over $\F$.
Next, we recall two important principles in the theory of quadratic forms:
\begin{itemize}
\item A regular nonisotropic quadratic form remains nonisotropic after extending the scalar field to a purely transcendental extension
(see, e.g., lemma 1.21 in \cite{Elmanetal});
\item A regular nonisotropic quadratic form remains nonisotropic after extending the scalar field to an algebraic extension of odd degree (the celebrated Artin-Springer theorem):
see, e.g., corollary 18.5 in \cite{Elmanetal}, and note that the result is known to hold over fields with any characteristic although in the literature
it is frequently stated only for fields with characteristic other than $2$.
\end{itemize}

Now all the tools are in place. First of all, if $\gamma=0$ then \eqref{eq:ZD1} would yield that $Q_1$ becomes isotropic over $\F(\omega)$,
contradicting the first principle. Hence $\gamma \neq 0$, and likewise \eqref{eq:ZD2} leads to $\beta \neq 0$.

Next, let us consider an irreducible divisor $r$ of odd degree of $\gamma$, and let us mod out the ideal $(r(\omega))$.
Equation \eqref{eq:ZD1} yields that the respective cosets $x$ and $y$ of $\alpha$ and $\beta$ in the quotient field $\F[\omega]/(r(\omega))$ satisfy $x^2-\tr(p) xy+N(p) y^2=0$.
Since $Q_1$ is irreducible, Springer's Theorem yields $x=y=0$, i.e., $r$ divides $\alpha$ and $\beta$. Yet this contradicts the assumption that
$\gcd(\alpha,\beta,\gamma)=1$. It follows that all the irreducible divisors of $\gamma$ have even degree, and hence
$\deg(\gamma)$ is even.

Symmetrically, applying Springer's theorem to \eqref{eq:ZD2} shows that $\deg(\beta)$ is even.

And now the conclusion is almost at hand. Denote by $d$ the greatest degree among $\alpha,\beta,\gamma$.
If $\deg(\beta)=\deg(\gamma)=d$, then the right-hand side of \eqref{eq:ZD0} has degree $2d+1$, and the left-hand side degree at most $2d$: this is absurd.
Hence, at most one of $\beta$ and $\gamma$ has degree $d$. However if none of $\beta$ and $\gamma$ has degree $d$, then the left-hand side of
\eqref{eq:ZD1} has degree $2d$, with $\alpha^2$ as the only summand contributing to the leading term, whereas the degree of the right-hand side is at most $2d-1$,
so again this is absurd. Hence exactly one of $\beta$ and $\gamma$ has degree $d$, and without loss of generality we assume that $\deg(\beta)=d$.
But then $d$ is even and $\deg(\gamma) \leq d-2$ because $\gamma$ has even degree, so now the degree of the right-hand side of \eqref{eq:ZD1} is less than $2d$.

To complete the proof, we consider the respective coefficients $\lambda$ and $\mu$ of $\alpha$ and $\beta$ on $\omega^d$ and extract the coefficient on $\omega^{2d}$
in \eqref{eq:ZD1}:
$$\lambda^2-\tr(p) \lambda\mu+N(p)\mu^2=0$$
with $\mu \neq 0$. This contradicts the nonisotropy of $Q_1$ over $\F$, and the proof is now completed.

\subsection{Application: embedding the free Hamilton algebra into matrix algebras over $\F[t]$}

The authors of \cite{SZW} proved that $\calW_{p,q}$ embeds as an $\F$-algebra into $\Mat_2(\Fbar[t])$, where $\Fbar$ stands for an algebraic closure of $\F$.
Their proof actually shows that $\calW_{p,q}$ can be embedded as an $\Fbar[\omega]$-algebra into $\Mat_2(\Fbar[\omega])$,
and naturally that $\Fbar$ can be replaced with a splitting field of $pq$.

This raises two interesting questions:
\begin{itemize}
\item Is there always an embedding of $\F[\omega]$-algebras of $\calW_{p,q}$ into $\Mat_2(\F[\omega])$?
And what are precisely the integers $n \geq 2$ for which an embedding of $\F[\omega]$-algebra into $\Mat_n(\F[\omega])$ exists?
\item Is there always an embedding of $\F$-algebras of $\calW_{p,q}$ into $\Mat_2(\F[t])$?
And what are precisely the integers $n \geq 2$ for which an embedding of $\F$-algebra into $\Mat_n(\F[t])$ exists?
\end{itemize}

Here, we will answer all these questions thanks in part to the Zero Divisors Theorem.
Here are our results:

\begin{theo}\label{theo:embed1}
\begin{enumerate}[(a)]
\item There exists an embedding of $\F[\omega]$-algebras of $\calW_{p,q}$ into $\Mat_2(\F[\omega])$
if and only if one of $p$ and $q$ splits.
\item The integers $n \geq 1$ for which there is an embedding of $\F[\omega]$-algebras of $\calW_{p,q}$ into $\Mat_n(\F[\omega])$
are the multiples of $4$ if $p$ and $q$ is irreducible, and the multiples of $2$ otherwise.
\end{enumerate}
\end{theo}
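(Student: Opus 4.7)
The strategy is to reduce systematically to the completion $\overline{\calW_{p,q}} = \calW_{p,q} \otimes_{\F[\omega]} \F(\omega)$ by extending scalars from $\F[\omega]$ to $\F(\omega)$. Any $\F[\omega]$-algebra embedding $\iota : \calW_{p,q} \hookrightarrow \Mat_n(\F[\omega])$ tensors up to an $\F(\omega)$-algebra embedding $\overline{\iota} : \overline{\calW_{p,q}} \hookrightarrow \Mat_n(\F(\omega))$; conversely, since $\calW_{p,q}$ is a free $\F[\omega]$-module it sits inside $\overline{\calW_{p,q}}$, so injectivity of an $\F[\omega]$-algebra homomorphism out of $\calW_{p,q}$ can be tested after extending scalars.

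For the necessity in (a), $\overline{\iota}$ is a nonzero homomorphism between two $4$-dimensional $\F(\omega)$-algebras whose source $\overline{\calW_{p,q}}$ is simple (quaternion algebras are simple), hence an isomorphism. So $\overline{\calW_{p,q}}$ splits, and by the corollary of the Zero Divisors Theorem one of $p,q$ splits. For the necessity in (b), view $\F(\omega)^n$ as a left $\overline{\calW_{p,q}}$-module via $\overline{\iota}$: when $\overline{\calW_{p,q}}$ splits every simple left module has $\F(\omega)$-dimension $2$, forcing $n$ to be even; when $\overline{\calW_{p,q}}$ is a skew field (both $p,q$ irreducible) $\F(\omega)^n$ is free over a $4$-dimensional division algebra, forcing $n$ to be a multiple of $4$.

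For the sufficiency in (a), assume without loss of generality that $p$ splits, and pick a zero divisor $\zeta = a - \alpha$ in $\F[a]$ for a root $\alpha \in \F$ of $p$. Form the left ideal $L := \calW_{p,q}\,\zeta$: since $\F[\omega]$ is a PID and $\calW_{p,q}$ is free of rank $4$ over it, the submodule $L$ is automatically $\F[\omega]$-free, and its rank equals $\dim_{\F(\omega)} \overline{L}$. Since $\zeta$ is a nonzero zero divisor in the simple algebra $\overline{\calW_{p,q}} \simeq \Mat_2(\F(\omega))$, the left ideal $\overline{L}$ is nonzero and proper, hence minimal of $\F(\omega)$-dimension $2$, and $L$ has $\F[\omega]$-rank $2$. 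Left multiplication yields an $\F[\omega]$-algebra homomorphism $\varphi : \calW_{p,q} \to \End_{\F[\omega]}(L) \simeq \Mat_2(\F[\omega])$, whose kernel vanishes because any $\xi$ annihilating $L$ annihilates the two-sided ideal $\calW_{p,q}\,\zeta\,\calW_{p,q}$, which after scalar extension fills the whole simple algebra $\overline{\calW_{p,q}}$. For the sufficiency in (b), the block-diagonal inclusion $\Mat_m(\F[\omega]) \hookrightarrow \Mat_{mk}(\F[\omega])$ sending a matrix to the block matrix with $k$ identical copies of itself on the diagonal lifts the embedding from (a) (when $m = 2$ and one of $p,q$ splits) and the embedding $\Psi : \calW_{p,q} \hookrightarrow \Mat_4(\F[\omega])$ from Section~\ref{section:omega} (when $m = 4$, without hypothesis on $p,q$) to all multiples of $2$ or $4$, respectively.

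The main technical step, and the one most deserving of care, is the rank computation for $L$ in the sufficiency part of (a): one must confirm that $\zeta$ remains a nonzero zero divisor after passing to $\overline{\calW_{p,q}}$, uniformly across the two subcases (where $\zeta^2 = 0$ if $p$ has a double root, or $\zeta$ has an idempotent complement if $p$ has simple roots), and one must identify $\overline{L}$ with a minimal left ideal of $\Mat_2(\F(\omega))$ to fix its $\F(\omega)$-dimension as $2$. Once this is in place, the simplicity of quaternion algebras, the freeness of submodules of free modules over the PID $\F[\omega]$, and the two-sided-ideal trick for injectivity assemble the rest of the argument at once.
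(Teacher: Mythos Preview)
Your proof is correct and the necessity arguments are essentially identical to the paper's: extend scalars to $\F(\omega)$, view $\F(\omega)^n$ as an $\overline{\calW_{p,q}}$-module, and read off the divisibility constraint from the dimension of simple modules.

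For the sufficiency in (a), however, you take a genuinely different (and arguably cleaner) route. The paper starts from an abstract simple $\overline{\calW_{p,q}}$-module $M$ of $\F(\omega)$-dimension~$2$, chooses a rank-$2$ $\F[\omega]$-lattice $M_0 \subset M$, and then \emph{saturates} it to $N := M_0 + aM_0 + bM_0 + abM_0$, arguing that $N$ is still free of rank~$2$ because it lies in a rank-$2$ lattice obtained by sweeping denominators. Your construction instead takes the concrete left ideal $L = \calW_{p,q}\,\zeta$ for a basic zero divisor $\zeta$; this is already a $\calW_{p,q}$-submodule of a free $\F[\omega]$-module of rank~$4$, so no saturation step is needed, and its rank is read off directly from $\dim_{\F(\omega)}\overline{L}=2$. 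What you gain is concreteness and the elimination of the lattice-adjustment step; what the paper's route buys is that it would work verbatim for any simple module, not just one realised inside the algebra. For the multiple-of-$4$ case in (b), you invoke the explicit $\Psi$ of Section~\ref{section:omega} while the paper uses the left regular representation; both give the required embedding into $\Mat_4(\F[\omega])$ and then block-diagonalise.

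One small phrasing issue: when you write that $\xi$ ``annihilates the two-sided ideal $\calW_{p,q}\,\zeta\,\calW_{p,q}$'', you mean that $\xi$ \emph{left}-annihilates it (from $\xi \calW_{p,q}\zeta = 0$ one gets $\xi \calW_{p,q}\zeta\calW_{p,q} = 0$). The conclusion is unaffected, since after extending scalars this two-sided ideal is all of $\overline{\calW_{p,q}}$ and hence $\xi = \xi\cdot 1 = 0$; equivalently, one can argue as the paper does that the extension $\overline{\varphi}$ is a nonzero unital map out of a simple ring, hence injective, and restrict.
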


\begin{theo}\label{theo:embed2}
\begin{enumerate}[(a)]
\item There exists an embedding of $\F$-algebras of $\calW_{p,q}$ into $\Mat_2(\F[t])$ whatever the choice of $p$ and $q$.
\item The integers $n \geq 1$ for which there is an embedding of $\F$-algebras of $\calW_{p,q}$ into $\Mat_n(\F[t])$
are the multiples of $2$ if one of $p$ and $q$ are irreducible, and all the integers that are greater than $1$ otherwise.
\end{enumerate}
\end{theo}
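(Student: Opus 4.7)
The plan is to handle the two parts separately: part (a) via an explicit two-matrix construction whose injectivity follows from the central simplicity of $\overline{\calW_{p,q}}$, and part (b) by combining part (a) with a rank argument over a polynomial ring that turns out to be a PID.

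For part (a), I would choose explicit matrices so that the central element $\omega$ maps to a scalar matrix whose entry is a polynomial of degree $2$ in $t$. Let $A$ be the companion matrix of $p$ and set
$$B := \begin{pmatrix} t & t\tr(q) - t^2 - N(q) \\ 1 & \tr(q) - t \end{pmatrix}.$$
Computing $\tr(B)$ and $\det(B)$ immediately gives $q(B) = 0$, and $p(A) = 0$ is automatic; hence there is a unital $\F$-algebra morphism $\Psi : \calW_{p,q} \to \Mat_2(\F[t])$ with $\Psi(a) = A$ and $\Psi(b) = B$. A short direct calculation will show $\Psi(\omega) = AB^\star + BA^\star = \phi(t)\,I_2$ where $\phi(t) := t^2 + (\tr p - \tr q)\,t + N(p) + N(q)$ has degree $2$ in $t$, hence is transcendental over $\F$. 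Consequently $\Psi$ sends the nonzero elements of $\F[\omega]$ to scalar matrices that are invertible in $\Mat_2(\F(t))$, and $\Psi$ extends to an $\F(\omega)$-algebra morphism $\overline{\Psi} : \overline{\calW_{p,q}} \to \Mat_2(\F(t))$, where the target is viewed as an $\F(\omega)$-algebra through $\omega \mapsto \phi(t)$. Since $\overline{\calW_{p,q}}$ is central simple (being a quaternion algebra over $\F(\omega)$), the unital morphism $\overline{\Psi}$ is automatically injective, and so is its restriction $\Psi$.

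For the existence half of part (b), I would use $\Psi$ as a building block. Every even $n = 2k$ is handled by composing $\Psi$ with the block-diagonal morphism $M \mapsto \mathrm{diag}(M,\ldots,M)$ from $\Mat_2(\F[t])$ to $\Mat_{2k}(\F[t])$. When both $p$ and $q$ split, they have respective roots $x_0, y_0$ in $\F$, producing a character $\varepsilon : \calW_{p,q} \to \F$ with $\varepsilon(a) = x_0$ and $\varepsilon(b) = y_0$; the $\F$-algebra morphism
$$x \longmapsto \mathrm{diag}\bigl(\Psi(x),\, \varepsilon(x),\, \ldots,\, \varepsilon(x)\bigr) \in \Mat_n(\F[t])$$
(with $n-2$ scalar slots) is still injective because $\Psi$ is, covering every $n \geq 3$.

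For the non-existence half of (b), I would assume that $p$ is irreducible (the case where $q$ is irreducible is symmetric) and that $\calW_{p,q}$ embeds into $\Mat_n(\F[t])$. Then $\F[a] \simeq \F[t]/(p)$ maps to a $2$-dimensional subfield $F$ of $\Mat_n(\F[t])$. Since $F$ commutes with the center $\F[t]\cdot I_n$ and meets it only in $\F\cdot I_n$, the subring they generate inside $\Mat_n(\F[t])$ is naturally isomorphic to the polynomial ring $F[t]$. The column module $\F[t]^n$ then becomes a finitely generated torsion-free module over the PID $F[t]$, hence free of some rank $m$, and comparing $\F[t]$-ranks yields $n = 2m$. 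The main obstacle is part (a) in the case where both $p$ and $q$ are irreducible: there $\overline{\calW_{p,q}}$ is a skew field over $\F(\omega)$ and admits no $\F(\omega)$-algebra embedding into $\Mat_2(\F(\omega))$, but the construction above bypasses this by letting $\omega$ act via the non-scalar polynomial $\phi(t)$ instead of a scalar of $\F$, effectively identifying $\F(\omega)$ with the proper subfield $\F(\phi(t)) \subsetneq \F(t)$ so that central simplicity alone delivers the required injection.
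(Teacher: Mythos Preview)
Your proof is correct and, for part (a), takes a genuinely different route from the paper. The paper proceeds abstractly: it first observes that the norm equation
\[
\alpha^2+N(p)\beta^2+N(q)\gamma^2-\tr(p)\alpha\beta-\tr(q)\alpha\gamma=-r(u)\beta\gamma
\]
has the nontrivial solution $(\alpha,\beta,\gamma)=(u,1,1)$ for a suitable degree-$2$ polynomial $r(u)$, then passes to an extension $\L=\F(\omega)[u]/(s(u))$ of $\F(\omega)$ in which the quaternion algebra $\overline{\calW_{p,q}}$ splits, extracts a rank-$2$ free $\F[\kappa]$-submodule from a simple module, and appeals to Proposition~\ref{prop:nonzeroidealmeetscenter} (nonzero ideals meet the center) for injectivity. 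Your explicit companion-matrix construction bypasses all of this: you write down $A$ and $B$ directly, verify $\Psi(\omega)=\phi(t)I_2$ with $\deg\phi=2$, and obtain injectivity in one stroke from the simplicity of $\overline{\calW_{p,q}}$. Your argument is shorter and more elementary, and it makes the paper's heuristic remark (that one should look for $r(t)$ with $\Phi(\omega)=r(t)I_2$) completely concrete. Conversely, the paper's approach explains \emph{why} such matrices exist in terms of the quaternionic structure, and its injectivity criterion via Proposition~\ref{prop:nonzeroidealmeetscenter} is the same tool used elsewhere in the article.

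For part (b) the two proofs agree on existence. For non-existence when $p$ is irreducible, the paper simply notes that $p$ stays irreducible over the purely transcendental extension $\F(t)$, so the minimal polynomial of $\Phi(a)$ in $\Mat_n(\F(t))$ has degree $2$ and $n$ must be even. Your $F[t]$-module argument is a valid reformulation: it hides the irreducibility step inside the claim that the subring generated by the image of $\F[a]$ and $\F[t]I_n$ is the integral domain $F[t]$, which ultimately rests on the same fact that $p$ has no root in $\F(t)$.
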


\begin{proof}[Proof of Theorem \ref{theo:embed1}]
We directly prove the necessity in the second statement.
Let $n \geq 2$ and let $\Phi : \calW_{p,q} \hookrightarrow \Mat_n(\F[\omega])$ be an embedding of $\F[\omega]$-algebras.
We can then extend it to an embedding $\overline{\Phi} : \overline{\calW_{p,q}} \hookrightarrow \Mat_n(\F(\omega))$ of $\F(\omega)$-algebras,
which endows $\F(\omega)^n$ with a structure of left $\overline{\calW_{p,q}}$-module.
It is then known from the theory of simple algebras that any such module is the sum of simple ones, and all simple modules are isomorphic
(see e.g.\ lemma 7.2.17 of \cite{Voight}).

Finally, the theory of quaternion algebras (over fields) shows that the simple modules over a quaternion algebra
are either of dimension $2$ or $4$, whether the algebra splits or not. Hence $n$ is even in any case, and by the Zero Divisors Theorem $n$ is a multiple of $4$
if $p$ and $q$ are irreducible.

Conversely, we can view $\calW_{p,q}$ as a module over itself, and hence we recover for all $m \geq 1$ a natural embedding
$\calW_{p,q} \hookrightarrow \End_{\F[\omega]}((\calW_{p,q})^m)$, and the target $\F[\omega]$-algebra is isomorphic to
$\Mat_{4m}(\F[\omega])$.

Assume now that one of $p$ and $q$ splits, to the effect that the quaternion algebra $\overline{\calW_{p,q}}$ splits.
Thus we can take a simple $\overline{\calW_{p,q}}$-module $M$, which has then dimension $2$ over $\F(\omega)$.
Some caution is needed because $M$ is only an $\F(\omega)$-vector space, but the argument is classical: We take an arbitrary free
$\F[\omega]$-submodule $M_0$ of $M$ of rank $2$, and we consider the $\F[\omega]$-submodule $N=M_0+a M_0+b M_0+ab M_0$.
It is clear then that $N$ is invariant under multiplication with $a$ and $b$, and hence it is a $\calW_{p,q}$-submodule of $M$.
Moreover, by sweeping denominators in an arbitrary $\F(\omega)$-basis of $M$, we find that $N$ is included in a free $\F[\omega]$-submodule
of rank $2$ of $M$, and then, by the theory of modules over principal ideal domains, $N$ turns out to be  a free $\F[\omega]$-module of rank $2$.

Finally, for all $m \geq 1$, we recover a homomorphism of $\F[\omega]$-algebras $\Psi : \calW_{p,q} \rightarrow \End_{\F[\omega]}(N^m) \simeq \Mat_{2m}(\F[\omega])$.
This homomorphism must be injective because it can be extended to a homomorphism of $\F(\omega)$-algebras from $\overline{\calW_{p,q}}$ to $\Mat_{2m}(\F(\omega))$,
and the ring $\overline{\calW_{p,q}}$ is simple.
\end{proof}

Now we turn to the second embedding problem. Before we give the proof, we quickly explain the difficulty and the strategy.

Assume that we have an embedding $\Phi$ of $\F$-algebras of $\calW_{p,q}$ into $\Mat_2(\F[t])$. Note that $\omega$ is central in the $\F$-subalgebra
generated by the range of $\Phi$, and hence in the $\F(t)$-subalgebra of $\Mat_2(\F(t))$ generated by it: hence if $\Phi(\omega)$
is not central in $\Mat_2(\F(t))$, i.e., not a scalar multiple of $I_2$ over $\F(t)$, it is cyclic and its centralizer is commutative, so
$\calW_{p,q}$ would be commutative, which is false. Hence $\Phi(\omega)=r(\omega) I_2$ for some $r(t) \in \F[t]$, which of course is nonconstant,
and $r$ cannot always have degree $1$, as this would contradict Theorem \ref{theo:embed1} in the special case where $p$ and $q$ are irreducible.
Now, the question becomes whether there always exists a polynomial $r(t) \in \F[t]$ with degree at least $2$ such that the norm defined over $\F[t]$
with the Gram matrix \eqref{eq:Gram} is isotropic when $\omega$ is replaced with $r(t)$. Conversely, if such a polynomial exists then we will see that it is not difficult
to define an embedding of $\F$-algebras. Interestingly, our proof is optimal, in that $r(t)$ will be found of degree $2$.

\begin{proof}[Proof of Theorem \ref{theo:embed2}]
With the previous heuristics, we naturally come right back to the simplified equation
\eqref{eq:ZD0} from the proof of the Zero Divisors Theorem. The idea is to find an algebraic extension of
$\F(\omega)$ that is generated \emph{over $\F$} by an element $\kappa$ such that $\omega \in \F[\kappa]$.
To find such an extension, we start by considering a purely transcendental extension $\F(\omega)(u)$ of $\F(\omega)$.
With the new transcendental $u$, we remark that we can find a nonconstant polynomial $r(u) \in \F[u]$
for which some nonzero triple $(\alpha(u),\beta(u),\gamma(u)) \in \F[u]^3$ satisfies
\begin{equation}\label{eq:ZD4}
\alpha(u)^2+N(p)\,\beta(u)^2 +N(q)\,\gamma(u)^2 -\tr(p)\alpha(u) \beta(u)-\tr(q) \alpha(u) \gamma(u)=-r(u) \beta(u)\gamma(u).
\end{equation}
The choice is fairly simple : we take $\alpha(u)=u$, $\beta(u)=\gamma(u)=1$ and then
$$r(u):=-\bigl(u^2+N(p)+N(q)-\tr(p)u -\tr(q)u\bigr).$$
Next, we choose an irreducible divisor $s(u)$ of $r(u)-\omega$ in $\F(\omega)[u]$
and mod out the ideal $(s(u))$ in $\F(\omega)[u]$ to obtain an algebraic extension $\L:=\F(\omega)[u]/(s(u))$ of $\F(\omega)$.
Denoting by $\kappa$ the coset of $u$ in $\L$, we deduce that $\omega=r(\kappa)$, which leads to $\L=\F(\kappa)$, and in particular $\kappa$
is transcendental over $\F$.
Now, we consider the extended quaternion algebra $\calQ:=\overline{\calW_{p,q}} \otimes_{\F(\omega)} \L$, with its extended norm.
Identity \eqref{eq:ZD4} now tells us that in this extended quaternion algebra the norm becomes isotropic
(because $\beta(\kappa)=1$), and hence splits.

Hence, we can find a $\calQ$-module $M$ that has dimension $2$ over $\F(\kappa)$, and we can extract from it
a $2$-dimensional free $\F[\kappa]$-submodule $M_0$.
Just like in the proof of Theorem \ref{theo:embed1}, we take $N:=M_0+aM_0+bM_0+(ab)M_0$, which is a $2$-dimensional free submodule of the $\F[\kappa]$-module $M$.
And here we use the identity $\omega=r(\kappa)$, with $r\in \F[t]$, to see that $N$ is invariant under multiplication with $a$ and $b$.
This yields a homomorphism of $\F$-algebras
$$\Phi : \calW_{p,q} \rightarrow \End_{\F[\kappa]}(N) \simeq \Mat_2(\F[\kappa])$$
that maps $\omega$ to $r(\kappa)\, I_2$, and in particular is injective on the center $C$ of $\calW_{p,q}$.
By Proposition \ref{prop:nonzeroidealmeetscenter} in Section \ref{section:ideals} (see also the last part of the proof of theorem 4 of
\cite{SZW}, where it is proved that any nonzero ideal of $\calW_{p,q}$ contains a nonzero central element),
it follows that $\Phi$ is injective, which completes the proof of point (a).
Consequently, by taking as many copies of $N$ as necessary, we obtain an injective homomorphism of $\F$-algebras from
$\calW_{p,q}$ to $\Mat_{2n}(\F[\kappa])$ for all $n \geq 1$.

We finish with point (b). First of all, if we have an embedding of $\F$-algebras $\Phi : \calW_{p,q} \hookrightarrow \Mat_n(\F[t])$
and $p$ is irreducible, $\Phi$ must map $a$ to a matrix with minimal polynomial $p$, which remains irreducible over $\F(t)$,
and hence $n$ must be a multiple of the degree of $p$, i.e., $n$ is even.
Assume finally that both $p$ and $q$ split. Then we can choose respective roots $x_1$ and $y_1$ of $p$ and $q$ in $\F$, let $k \geq 0$ be an arbitrary integer,
choose an embedding $\Phi : \calW_{p,q} \hookrightarrow \Mat_2(\F[t])$ of $\F$-algebras and extend it to an embedding
$\calW_{p,q} \hookrightarrow \Mat_{k+2}(\F[t])$ of $\F$-algebras by taking $a$ and $b$ respectively to
the block-diagonal matrices $\Phi(a) \oplus x_1\,I_k$ and $\Phi(b) \oplus y_1\,I_k$.
\end{proof}

To close this discussion, we point out that in \cite{SZW}, embeddings into matrix algebras over polynomials rings were essentially a tool to prove
results on the free Hamilton algebra. Here, we will never use this technique as the quaternion algebras appear to be far more efficient for our needs.

\section{Units in the free Hamilton algebra (part 1)}\label{section:units1}

\subsection{Introduction}

This section is the first one in which we investigate the structure of the group of units (i.e., of the invertible elements) in $\calW_{p,q}$.
Our main aim here is to prove the following result:

\begin{theo}[Weak Units Theorem]
The group $\calW_{p,q}^\times$ is generated by the basic units if and only if both $p$ and $q$ are irreducible.
\end{theo}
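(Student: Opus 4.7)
The plan is the following. The ``only if'' direction is a direct construction: when $p$ splits, choose a nonzero zero divisor $\alpha = a_0 + a_1 a \in \F[a]$ (necessarily with $a_1 \neq 0$ and $N(\alpha) = 0$) and set $z := \alpha b \alpha^\star$. From $\alpha \alpha^\star = N(\alpha) = 0$ one gets $\alpha^\star z = z \alpha = 0$, so $z^2 = 0$, which makes $1+z$ a unit with inverse $1-z$. Direct expansion gives
\[ z = a_0(a_0 + a_1 \tr p)\, b - a_0 a_1\, ba + a_1(a_0 + a_1 \tr p)\, ab - a_1^2\, aba, \]
of length exactly $3$ in the alternating-word basis of $\calW_{p,q}$. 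Were $1 + z$ monomial, its reduced expression would be $u_1 u_2 u_3$ with $u_1 = \alpha_1 + \beta_1 a$, $u_2 = \alpha_2 + \beta_2 b$, $u_3 = \alpha_3 + \beta_3 a$ all non-scalar; matching the length-$3$, length-$2$ and length-$1$ coefficient equations successively determines $\alpha_1$ and $\alpha_3$ in terms of the $\beta_i$ and the coefficients of $\alpha$, and the length-$0$ equation then collapses to $\alpha_2 \beta_1 \beta_3 \cdot N(\alpha)/a_1^2 = -1$, which is impossible because $N(\alpha) = 0$. Hence $1 + z$ is not monomial (and likewise when $q$ splits, by the symmetric construction).

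For the main direction, assume both $p$ and $q$ are irreducible. Filter $\calW_{p,q}$ by length: let $L_n$ be the $\F$-span of the alternating words in $a, b$ of length $\leq n$, and set $\ell(x)$ to be the least such $n$. For each $n \geq 1$ there are exactly two alternating words of length $n$, namely $w_a^{(n)} := abab\cdots$ (starting with $a$) and $w_b^{(n)} := baba\cdots$ (starting with $b$). The adjunction preserves the filtration, and in the associated graded algebra $\mathrm{gr}\,\calW_{p,q}$ one has $\overline{a}^2 = \overline{b}^2 = 0$ together with $\overline{a}^\star = -\overline{a}$ and $\overline{b}^\star = -\overline{b}$, so the product of two leading alternating monomials vanishes in $\mathrm{gr}$ exactly when the two connecting letters belong to the same basic subalgebra. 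I argue by strong induction on $\ell(x)$ for a unit $x$; the case $\ell(x) = 0$ is trivial since then $x \in \F^\times$.

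For $\ell(x) = n \geq 1$, write the image of $x$ in $L_n/L_{n-1}$ as $\overline{x} = \lambda_a w_a^{(n)} + \lambda_b w_b^{(n)}$. A direct computation of $\overline{x\, x^\star}$ in $L_{2n}/L_{2n-1}$, split on the parity of $n$ to track which length-$2n$ concatenations collapse and which do not, yields $\overline{x x^\star} = \pm \lambda_a \lambda_b (w_a^{(2n)} + w_b^{(2n)})$. Since $x x^\star = N(x) \in \F^\times \subseteq L_0$ by Proposition \ref{prop:unitdetect}, this image must vanish, forcing $\lambda_a \lambda_b = 0$. Up to exchanging the roles of $a$ and $b$, assume $\lambda_b = 0$ and $\lambda_a \neq 0$; let $\mu_b \in \F$ be the coefficient of $w_b^{(n-1)}$ in $x$ and set $u := -\lambda_a a^\star - \mu_b \in \F[a]$. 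A short calculation gives $N(u) = \mu_b^2 + \lambda_a \mu_b \tr(p) + \lambda_a^2 N(p)$, the value at $(\mu_b, \lambda_a)$ of the binary quadratic form attached to $p$; this form is anisotropic over $\F$ because $p$ is irreducible, so $\lambda_a \neq 0$ forces $N(u) \neq 0$, and $u$ is a basic unit by Proposition \ref{prop:unitdetect}.

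Using the explicit action formulas $u \cdot w_a^{(m)} = (u_0 + u_1 \tr p) w_a^{(m)} - u_1 N(p)\, w_b^{(m-1)}$ and $u \cdot w_b^{(m)} = u_1 w_a^{(m+1)} + u_0 w_b^{(m)}$ (with $u = u_0 + u_1 a$ and $m \geq 1$), the hypothesis $\lambda_b = 0$ kills any length-$(n+1)$ part of $u x$, while the coefficient of $w_a^{(n)}$ in $u x$ collapses to $\lambda_a(u_0 + u_1 \tr p) + \mu_b u_1$, which is zero by the choice of $u$. Therefore $\ell(u x) < n$, and as $u x$ is still a unit, the induction hypothesis writes $u x$ as a product of basic units, whence so is $x = u^{-1}(u x)$ (with $u^{-1} \in \F[a]^\times$ a basic unit). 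The main obstacle lies in Step A, the graded identity $\lambda_a \lambda_b = 0$: it forces the leading alternating part of a unit to be supported on only one of $w_a^{(n)}$ and $w_b^{(n)}$, thereby pinning down the basic subalgebra on which the reduction must occur; once this rigidity is secured, the irreducibility of $p$ (or $q$) directly supplies the reducing basic unit through the anisotropy of the attached binary quadratic form.
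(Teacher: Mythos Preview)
Your proof is correct, and it follows a genuinely different route from the paper's.

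For the converse direction (both $p,q$ irreducible implies every unit is monomial), the paper works over the center $C=\F[\omega]$ and develops a ``retracing algorithm'' based on distances to the basic subalgebras measured in $\omega$-degree; it conjugates a quadratic element step by step into a basic one, and then closes the argument by invoking the separately proved Theorem~\ref{theo:uniquenessdecompauto} (no nontrivial basic automorphism is inner), whose proof in turn relies on the quaternion-algebra machinery and the signature homomorphism of Section~\ref{section:automorphismsII}. Your argument bypasses all of this: you use only the alternating-word length filtration, the graded computation $\overline{xx^\star}=\pm\lambda_a\lambda_b(w_a^{(2n)}+w_b^{(2n)})$ to force one top coefficient to vanish, and then a single left multiplication by a basic unit---whose invertibility is precisely the anisotropy of the norm form of $p$ (or $q$)---to drop the length. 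This is essentially Cohn's leading-term / weak-algorithm philosophy applied to this specific free product, and it is considerably more elementary and self-contained than the paper's route; the trade-off is that the paper's machinery also yields the finer structural results (automorphisms, finite-dimensional subalgebras, maximal ideals) that your filtration argument does not touch.

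For the direct direction (if $p$ splits there is a non-monomial unit), the paper again uses its retracing algorithm, showing it fails on a suitable conjugate; your direct coefficient-matching argument is cleaner here. One small point you leave implicit: you should say in one line why the reduced expression of $1+z$ must follow the pattern $\F[a],\F[b],\F[a]$ rather than $\F[b],\F[a],\F[b]$ (because the unique length-$3$ word in $1+z$ is $aba$, not $bab$). With that remark added, the argument is complete.
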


Better still, we will describe an algorithm that takes as entry a unit $\gamma \in \calW_{p,q}^\times$,
and returns an explicit reduced decomposition of $\gamma$ into a product of basic units if $\gamma$ is a monomial unit,
and otherwise returns an error message.
The root of this algorithm will even yield two other important results on
the Hamilton algebra in the irreducible case:
\begin{enumerate}[(i)]
\item If $p$ and $q$ are irreducible, then every automorphism of $\calW_{p,q}$ is the composite of an inner automorphism with a basic automorphism.
\item If $p$ and $q$ are irreducible, then every $2$-dimensional subalgebra of $\calW_{p,q}$ is conjugated to one of the basic subalgebras.
\end{enumerate}

This algorithm, which constitutes the main object of this section,
is laid out in Sections \ref{section:units1:plan} to \ref{section:units1:retracing}.

We point out once more that most of the above results are special cases of theorems of Cohn on free products of skew fields
(with the notable exception of point (i) in the above). The main points here are to give a new method for proving them, a method that will serve as a warm up for
the one that will help us entirely decipher the group of units (see Section \ref{section:units2}). It introduces most of the
important technical concepts that are required for a complete understanding of the group of units.

Before we get into the algorithm, we start with elementary considerations on monomial units, i.e., on products of basic units.
To start with, we note that since the center of $\calW_{p,q}$ is $\F[\omega]$ and $\omega$ is transcendental over $\F$,
the central units are simply the elements of $\F^\times$, i.e., the \textbf{scalar units}.

It is obvious that every nonscalar monomial unit $x$ has a \emph{reduced} decomposition, i.e., a decomposition
$x=\prod_{k=1}^n x_k$ in which every $x_k$ is a basic unit and no two consecutive $x_k$'s belong to the same basic subalgebra.
We have stated in the introduction that such a decomposition is then unique up to multiplication of each factor with a nonzero scalar.
This result is not specific to free products of $2$-dimensional algebras, as it actually extends to any free product of two algebras $\calA$ and
$\calB$ over $\F$.
We quickly recall the explanation.
As in Section \ref{section:center}, we take respective families $(e_i)_{i \in I}$ and $(f_j)_{j \in J}$
that, along with the unity $1$, form bases of the vector spaces $\calA$ and $\calB$. We set $H_\calA:=\Vect(e_i)_{i \in I}$ and $H_\calB:=\Vect(f_j)_{j \in J}$.
We set $\Pi:=\calA * \calB$ and we recall the notation $\Pi^{(n)}$ for the linear subspace spanned by the words with length
at most $n$ and letters in $\calA \cup \calB$.
We notice that if we have a product $x=x_1 \cdots x_n$ in which $x_1 \in \calA \setminus \F$,
$x_2 \in \calB \setminus \F,\cdots$ then $x \notin \Pi^{(n-1)}$. Indeed, in
writing each $x_k$ as a linear combination of the vectors of the corresponding bases, and in expanding the product,
we find, for each word of length $n$ in the letters $e_i,f_j$ and no two consecutive equal letters, at most one corresponding summand,
and at least one such summand comes with a nonzero coefficient, whereas all the other summands belong to $\Pi^{(n-1)}$.
This shows that $x \not\in \Pi^{(n-1)}$. In particular, $x\not\in \F$.
From there, the uniqueness, up to multiplication of each factor with a nonzero unit, of a reduced decomposition
of a monomial unit is easily proved (we leave it as an exercise for the reader).

\subsection{The plan}\label{section:units1:plan}

When we tackled the free Hamilton algebra for itself, our first motivation was to understand its involutions.
This naturally prompted us to investigate its automorphisms first, and the following strategy came quite naturally:
Take an arbitrary automorphism $\Phi$ of $\calW_{p,q}$, and then consider the pair $(\Phi(a),\Phi(b))$ (which fully characterizes $\Phi$).
Then, we can try to apply well-chosen elementary inner automorphisms to $\Phi(a)$ and $\Phi(b)$
until the resulting elements are basic. Naturally, the elementary inner automorphisms
we try to apply are the conjugations by basic units.

The reader could deem this strategy as naive because the degrees of freedom seem very small at each step, but as we shall see this very lack
of freedom might explain the miracle of the Automorphism Theorem and our various results on units.

Now, let us get deeper into the method. Instead of taking the two elements $\Phi(a)$ and $\Phi(b)$, we start from
an arbitrary element $x \in \calW_{p,q}\setminus \F$ and simply require that it be \textbf{quadratic}, to the effect that $\tr(x)$ and $N(x)$
belong to $\F$ (Corollary \ref{lemma:quadratic}) and not simply to the center $C$.
The element $x$ can be split along the standard deployed basis $x=?+?a+?b+?ab$
with coefficients in $\F[\omega]$, represented by question marks, and at this point the reader must beware that
the fact that $\tr(x)$ belongs to $\F$ does \emph{not} imply that these coefficients belong to $\F$.
But we sense that the larger the degrees of the coefficients with respect to $\omega$ are, the farther $x$ is away from the basic subalgebras.
So, the idea is simply to make small steps, each step consisting of a conjugation by a basic unit, so as to make this distance decrease
at each step, until eventually the resulting vector is caught in a basic subalgebra.

In order to move forward, it is necessary to start by analyzing the consequences of having $x$ quadratic, which is measured in the trace and norm,
on the degrees of the coefficients of $x$ in the deployed basis. This will yield important measures of $x$, which we call the
\emph{distances} of $x$ with respect to the basic subalgebras. Afterwards we will investigate how a conjugation by a basic unit affects
these measures.

To simplify things and because we need to work with well-chosen deployed bases,
we will now largely forget about the canonical generators $a$ and $b$, and simply set
$$\calA:=\F[a] \quad \text{and} \quad \calB:=\F[b].$$

In what follows, we generalize the notion of a deployed basis, without reference to the special generators $a$ and $b$:

\begin{Def}
A \textbf{deployed basis} of $\calW_{p,q}$ is a quadruple of the form $(1,\alpha,\beta,\alpha\beta)$
in which $\alpha$ and $\beta$ are non-scalar basic vectors that do not belong to the same basic subalgebra.
\end{Def}

\begin{prop}
Every deployed basis of $\calW_{p,q}$ is a basis of the $C$-module $\calW_{p,q}$.
\end{prop}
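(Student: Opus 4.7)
The plan is to exhibit an explicit change of basis matrix from the known $C$-basis $(1,a,b,ab)$ to $(1,\alpha,\beta,\alpha\beta)$ and to show that this matrix has determinant in $C^\times=\F^\times$. Recall that $C=\F[\omega]$ by Theorem \ref{theo:center}, and that the Proposition at the end of Section \ref{section:omega} already gives us that $\calW_{p,q}$ is free of rank $4$ over $C$ with basis $(1,a,b,ab)$. So it suffices to show that the transition matrix expressing $(1,\alpha,\beta,\alpha\beta)$ in $(1,a,b,ab)$ is invertible over $C$, and for this it is enough to check that its determinant is a nonzero scalar.

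By the definition of a deployed basis, there are two cases depending on which basic subalgebra contains $\alpha$. In the first case, $\alpha\in\calA\setminus\F$ and $\beta\in\calB\setminus\F$: write $\alpha=\lambda a+\lambda'$ and $\beta=\mu b+\mu'$ with $\lambda,\mu\in\F^\times$ and $\lambda',\mu'\in\F$. This is exactly the situation treated in the final paragraph of Section \ref{section:omega}: expanding $\alpha\beta$ in $(1,a,b,ab)$ yields an upper-triangular transition matrix with diagonal $(1,\lambda,\mu,\lambda\mu)$, whose determinant is $\lambda^2\mu^2\in\F^\times$.

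The second case, where $\alpha\in\calB\setminus\F$ and $\beta\in\calA\setminus\F$, is the only step that requires a small additional computation. Writing $\alpha=\mu b+\mu'$ and $\beta=\lambda a+\lambda'$ with $\lambda,\mu\in\F^\times$ and $\lambda',\mu'\in\F$, the expansion of $\alpha\beta$ in $(1,a,b,ab)$ now involves the product $ba$, which we reduce using the fundamental identity $ba=-ab+(\tr q)\,a+(\tr p)\,b-\omega$ derived in Section \ref{section:omega}. The resulting transition matrix is no longer strictly triangular in the natural ordering, but a direct expansion (e.g.\ along the first column and then along the last row) yields once more the determinant $\lambda^2\mu^2\in\F^\times$.

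In either case the transition matrix is invertible over $C$, so $(1,\alpha,\beta,\alpha\beta)$ is a free $C$-basis of $\calW_{p,q}$. The only real (and minor) obstacle is the bookkeeping in the second case: reducing $ba$ introduces an $\omega$-dependent entry in the last column, which is why one has to check that this entry does not affect the determinant computation. Everything else is essentially a restatement of the basic base change already used in Section \ref{section:omega}.
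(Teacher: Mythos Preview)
Your proof is correct and follows essentially the same strategy as the paper: both reduce to the known $C$-basis via an explicit change-of-basis argument and split into the two cases according to which basic subalgebra contains $\alpha$. The only minor difference is that in the second case the paper passes through the intermediate basis $(1,\beta,\alpha,\beta\alpha)$ (which falls under Case~1 by symmetry) and then relates $\beta\alpha$ to $\alpha\beta$, whereas you compute the transition matrix to $(1,a,b,ab)$ directly; both routes yield the same determinant $\lambda^2\mu^2\in\F^\times$.
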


\begin{proof}
Let $(1,\alpha,\beta,\alpha\beta)$ be a deployed basis of $\calW_{p,q}$.
Then the result has already been proved in Section \ref{section:omega} if $\alpha \in \calA$ and $\beta \in \calB$.
Assume now that $\alpha \in \calB$ and $\beta \in \calA$.
Then $(1,\beta,\alpha,\beta\alpha)$ is a basis of the $C$-module $\calW_{p,q}$.
Yet $\beta \alpha=\langle \beta,\alpha^\star\rangle-\alpha^\star \beta^\star=-\alpha \beta+\tr(\alpha) \beta+\tr(\beta) \alpha+
\langle \beta,\alpha^\star\rangle-(\tr \alpha)(\tr \beta)$,
and from there it is easily seen that $(1,\alpha,\beta,\alpha\beta)$ is a basis of the $C$-module $\calW_{p,q}$.
\end{proof}

Crucial to our analysis will be the notion of degree in the center $C$ of $\calW_{p,q}$.
We have seen that $C=\F[\omega]$, and hence we can talk of the degree of an element of $C$ as a polynomial in $\omega$:
critically, this notion is independent of the choice of generator $\omega$ (whereas e.g., the notion of leading term for a polynomial
depends on the choice of generator).
A unifying viewpoint would be to define the degree of $r \in C$ as the dimension of the quotient $\F$-vector space $C/(r)$, but this is not very practical,
and it is a good idea that the reader refers to the $\omega$ element as an anchor point to understand the proofs.

\begin{Not}
Following French conventions, we denote by $\N$ the set of all non-negative integers. We also denote
by $\deg(r)$ the \textbf{degree} of an element $r \in C$, which is an element of $\{-\infty\} \cup \N$.
For all $n \in \N$, we also set
$$C_n:=\{r \in C : \; \deg(r) \leq n\}=\Vect_\F(\omega^k)_{0 \leq k \leq n},$$
We also convene that $C_{-1}=\{0\}=\{r \in C : \; \deg(r) \leq -1\}$.
\end{Not}

An important point, which was seen shortly after introducing the $\omega$ element and will be used throughout, is that
$\langle x,y\rangle$ has degree $1$ for all $x \in \calA \setminus \F$ and all $y \in \calB \setminus \F$.

\subsection{The consequences of being quadratic}

\begin{Def}
For an element $x \in \calW_{p,q}$ and a basic subalgebra $\calC$, we set
$$d_\calC(x):=\max \{\deg \langle \alpha,x\rangle \mid \alpha \in \calC \setminus \F\} \in \N \cup \{-\infty\},$$
and call it the \textbf{distance} of $x$ to $\calC$.
The greatest value among $d_\calA(x)$ and $d_\calB(x)$ is called the \textbf{absolute distance} of $x$, denote by $\delta(x)$.
\end{Def}

Now, assume that $x$ is quadratic. Because of the condition $\langle 1,x\rangle=\tr(x) \in \F$,
whenever $d_\calC(x)>0$ we have $d_\calC(x)=\deg \langle \alpha,x\rangle$ whatever the choice of $\alpha \in \calC \setminus \F$.

If $x$ belongs to $\calA \setminus \F$ then its distance to $\calA$ is either $0$ or $-\infty$
(more precisely, it is $-\infty$ if and only if $\F$ has characteristic $2$ and $\calA$ degenerates),
whereas its distance to $\calB$ is $1$.
This imbalance of distances, which we have just observed at the level of the basic subalgebras, will shortly be seen to be a common feature of all the
(nonscalar) quadratic elements. We start with a basic lemma.

\begin{lemma}\label{lemma:quadraticdeployeddegree1}
Let $x \in \calW_{p,q} \setminus \F$ be quadratic, with decomposition
$x=x_1+x_\alpha \, \alpha +x_\beta \, \beta +x_{\alpha \beta }\, \alpha \beta $ in an arbitrary deployed basis $(1,\alpha ,\beta ,\alpha \beta )$ of $\calW_{p,q}$.
Denote by $n$ the greatest degree among $x_1,x_\alpha ,x_\beta ,x_{\alpha \beta }$.
Then:
\begin{enumerate}[(a)]
\item Exactly one of $x_\alpha$ and $x_\beta$ has degree $n$.
\item One has $\deg(x_{\alpha \beta})<n$ in any case.
\item If $n=0$ then $x$ is basic.
\end{enumerate}
\end{lemma}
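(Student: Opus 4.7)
The plan is to extract information from the two scalar conditions $\tr(x)\in\F$ and $N(x)\in\F$ furnished by Corollary~\ref{lemma:quadratic}, and to track degrees in~$\omega$ carefully. The structural inputs I will use are that $\tr(\alpha),\tr(\beta),N(\alpha),N(\beta)$ all lie in $\F$, while both $\tr(\alpha\beta)=\langle 1,\alpha\beta\rangle$ and $\langle\alpha,\beta\rangle$ have degree exactly~$1$ in $\omega$ with nonzero leading coefficient: this is the same computation that gave $\tr(ab)=(\tr p)(\tr q)-\omega$ in Section~\ref{section:omega}, transported to any deployed basis by writing $\alpha=\lambda a+\lambda'$, $\beta=\mu b+\mu'$ with $\lambda,\mu\in\F^\times$ (the case where $\alpha\in\calB$ and $\beta\in\calA$ is symmetric). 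All remaining off-diagonal entries of the Gram matrix~\eqref{eq:Gram} are scalar.

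For (b), the $C$-linearity of the trace yields
\[
\tr(x)=2x_1+\tr(\alpha)\,x_\alpha+\tr(\beta)\,x_\beta+\tr(\alpha\beta)\,x_{\alpha\beta}.
\]
If $\deg(x_{\alpha\beta})=n$, then the last summand has degree exactly $n+1$ while each of the other three has degree at most $n$, so $\deg\tr(x)=n+1\geq 1$, contradicting $\tr(x)\in\F$.

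For (a), I would use the Gram matrix~\eqref{eq:Gram} to write
\[
N(x)=\sum_i N(e_i)\,x_i^2+\sum_{i<j}\langle e_i,e_j\rangle\,x_i x_j,
\]
noting that among the off-diagonal coefficients only $\tr(\alpha\beta)$ (multiplying $x_1 x_{\alpha\beta}$) and $\langle\alpha,\beta\rangle$ (multiplying $x_\alpha x_\beta$) have degree~$1$. First suppose $\deg(x_\alpha)=\deg(x_\beta)=n$; using (b) to bound $\deg(x_{\alpha\beta})\leq n-1$, every summand of $N(x)$ has degree at most $2n$ except $\langle\alpha,\beta\rangle\,x_\alpha x_\beta$, which has degree exactly $2n+1\geq 1$, contradicting $N(x)\in\F$. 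Conversely, assume $\deg(x_\alpha),\deg(x_\beta)<n$; if $n=0$ this combined with (b) gives $x=x_1\in\F$, contradicting $x\notin\F$, so $n\geq 1$ and $\deg(x_1)=n$. In characteristic $\neq 2$, the degree-$2n$ part of $N(x)$ receives contributions only from $x_1^2$ and $\tr(\alpha\beta)\,x_1 x_{\alpha\beta}$, yielding
\[
[\omega^n]x_1\cdot\bigl([\omega^n]x_1+\tau_1\,[\omega^{n-1}]x_{\alpha\beta}\bigr)=0,
\]
where $\tau_1$ denotes the leading coefficient of $\tr(\alpha\beta)$, while the degree-$n$ part of $\tr(x)$ gives $2\,[\omega^n]x_1+\tau_1\,[\omega^{n-1}]x_{\alpha\beta}=0$. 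Combining these two relations forces $[\omega^n]x_1=0$, contradicting $\deg(x_1)=n$. In characteristic~$2$, the $2x_1$ summand in $\tr(x)$ vanishes, so the degree-$n$ part of $\tr(x)$ reduces to $\tau_1\,[\omega^{n-1}]x_{\alpha\beta}=0$, whence $\deg(x_{\alpha\beta})\leq n-2$; the degree-$2n$ part of $N(x)$ then collapses to $([\omega^n]x_1)^2=0$, again a contradiction.

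Part (c) is then immediate: for $n=0$, part (b) forces $x_{\alpha\beta}=0$, part (a) makes exactly one of $x_\alpha,x_\beta$ a nonzero scalar while the other vanishes, so $x$ lies in $\F\oplus\F\alpha=\calA$ or in $\F\oplus\F\beta=\calB$. The main obstacle is the coordinated degree bookkeeping in the converse half of~(a); characteristic~$2$ at first sight looks worse because the $2x_1$ term disappears from the trace relation, but this actually sharpens the bound on $\deg(x_{\alpha\beta})$ and makes the norm relation decisive on its own, so the argument simplifies rather than worsens there.
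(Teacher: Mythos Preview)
Your proof is correct and follows the same overall strategy as the paper's: use $\tr(x)\in\F$ for~(b), then $N(x)\in\F$ for~(a), tracking degrees in~$\omega$. The execution differs in one instructive way. Where you expand $N(x)$ fully and extract the $\omega^{2n}$-coefficient (which forces you to keep the cross-term $\tr(\alpha\beta)\,x_1 x_{\alpha\beta}$ in play and then split on characteristic), the paper first sets $y:=x-x_1$ and rewrites
\[
N(x)=-x_1^2+\tr(x)\,x_1+N(y).
\]
This absorbs exactly that troublesome cross-term into $\tr(x)\,x_1$, which has degree at most~$n$ since $\tr(x)\in\F$; the remaining $N(y)$ involves only $x_\alpha,x_\beta,x_{\alpha\beta}$ and is easily bounded. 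In the ``neither has degree $n$'' subcase one then gets $N(x)\equiv -x_1^2$ mod $C_{2n-1}$ directly, with no coefficient extraction and no characteristic split.

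Incidentally, even within your own approach the split is unnecessary: substituting the trace relation $\tau_1\,[\omega^{n-1}]x_{\alpha\beta}=-2\,[\omega^n]x_1$ into your norm relation gives $-([\omega^n]x_1)^2=0$ in every characteristic. Your separate treatment of characteristic~$2$ is correct but superfluous.
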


\begin{proof}
Remember from Corollary \ref{lemma:quadratic} that $N(x)$ and $\tr(x)$ belong to $\F$.
Note first that
$$\tr(x)=2x_1+\tr(\alpha )x_\alpha +\tr(\beta )x_\beta +\langle \alpha ^\star,\beta \rangle x_{\alpha \beta }$$
and $\langle \alpha ^\star,\beta \rangle$ has degree $1$.
Then $\deg(x_{\alpha \beta })<n$, otherwise the last summand would exceed the others in degree.

Next, we set $y:=x-x_1$ and rewrite
\begin{equation}\label{eq:NxvsNy}
N(x)=x_1^2+x_1 \tr(y)+N(y)=x_1^2+x_1(\tr(x)-2x_1)+N(y)=(-x_1^2+\tr(x)x_1)+N(y)
\end{equation}
and
\begin{multline}\label{eq:Ny}
N(y)=N(\alpha )\,(x_\alpha )^2+N(\beta )\,(x_\beta )^2+\langle \alpha ,\beta \rangle x_\alpha  x_\beta +N(\alpha )\tr(\beta ) x_\alpha  x_{\alpha \beta }+N(\beta ) \tr(\alpha ) x_\beta  x_{\alpha \beta }\\
+N(\alpha )N(\beta ) (x_{\alpha \beta })^2.
\end{multline}
Moding out the linear subspace $C_{2n}$, we find
$$N(x) \equiv N(y) \equiv \langle \alpha ,\beta \rangle x_\alpha  x_\beta  \quad \text{mod}\; C_{2n}.$$
Yet $N(x) \in \F$, and hence $\deg(\langle \alpha ,\beta \rangle x_\alpha  x_\beta ) \leq 2n$.
Since $\langle \alpha ,\beta \rangle$ has degree $1$ we deduce that $\deg(x_\alpha )<n$ or $\deg(x_\beta )<n$.

Assume now that none of $x_\alpha $ and $x_\beta $ has degree $n$.
Then we must have $\deg(x_1)=n$, and $n>0$ (otherwise $x \in \F$).
Obviously $\deg(N(y)) \leq 2n-1$, and now we find
$$N(x) = -x_1^2+\tr(x)x_1+N(y) \equiv -x_1^2 \quad \text{mod}\; C_{2n-1}.$$
Again this yields $x_1^2 \in C_{2n-1}$, contradicting the fact that $\deg(x_1^2)=2n$.

We conclude that exactly one of $x_\alpha $ and $x_\beta $ has degree $n$.

Assume finally that $n=0$. Then from the above we deduce that if $\deg(x_\alpha )=n$ then $x \in \F[\alpha]$,
and if $\deg(x_\beta )=n$ then $x \in \F[\beta]$.
\end{proof}

We immediately draw a consequence in terms of distances to the basic subalgebras, which we will refine later:

\begin{cor}\label{cor:quadraticdistance1}
Let $x \in \calW_{p,q} \setminus \F$ be quadratic, with decomposition
$x=x_1+x_\alpha \, \alpha +x_\beta \, \beta +x_{\alpha \beta }\, \alpha \beta $ in an arbitrary deployed basis $(1,\alpha ,\beta ,\alpha \beta )$ of $\calW_{p,q}$. Set $n:=\max(\deg(x_\alpha),\deg(x_\beta))$.
\begin{enumerate}[(i)]
\item If $\deg(x_\alpha)=n$ then $d_{\F[\beta]}(x)=n+1$ and $d_{\F[\alpha]}(x) \leq n$.
If $\deg(x_\beta)=n$ then $d_{\F[\beta]}(x) \leq n$ and $d_{\F[\alpha]}(x) = n+1$.
\end{enumerate}
In any case $d_\calA(x) \neq d_\calB(x)$ and $\delta(x)=n+1$.
\end{cor}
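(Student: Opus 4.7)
My plan is to compute $\langle\alpha,x\rangle$ and $\langle\beta,x\rangle$ directly by expanding in the deployed basis $(1,\alpha,\beta,\alpha\beta)$, exploit the fact that among all the resulting pairings of basis vectors only $\langle\alpha,\beta\rangle$ has positive degree in $\omega$, and read off the two distances. The whole argument is bookkeeping of degrees, and I foresee no serious obstacle; the only delicate point is to confirm that the integer $n$ of the Corollary coincides with the one introduced in Lemma \ref{lemma:quadraticdeployeddegree1}, so that the Lemma's bounds $\deg x_1 \leq n$ and $\deg x_{\alpha\beta} < n$ apply here. But this is immediate: if the Lemma's $n'$ were strictly larger than $n := \max(\deg x_\alpha,\deg x_\beta)$, then neither $x_\alpha$ nor $x_\beta$ would reach degree $n'$, contradicting part (a) of the Lemma. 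By the same part I may also assume without loss of generality that $\deg x_\alpha = n$ and $\deg x_\beta < n$, the other case being symmetric in $\alpha$ and $\beta$.

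Next I would record the elementary identities $\langle\alpha,1\rangle=\tr(\alpha)$, $\langle\alpha,\alpha\rangle=2N(\alpha)$, $\langle\alpha,\alpha\beta\rangle=\tr(\beta)N(\alpha)$ (and the symmetric ones obtained by swapping the roles of $\alpha$ and $\beta$), each of which lies in $\F$, whereas $\langle\alpha,\beta\rangle$ has degree exactly $1$ in $\omega$. By $\F[\omega]$-bilinearity,
\begin{align*}
\langle\alpha,x\rangle &= x_1\tr(\alpha) + 2x_\alpha N(\alpha) + x_\beta\langle\alpha,\beta\rangle + x_{\alpha\beta}N(\alpha)\tr(\beta),\\
\langle\beta,x\rangle &= x_1\tr(\beta) + x_\alpha\langle\beta,\alpha\rangle + 2x_\beta N(\beta) + x_{\alpha\beta}N(\beta)\tr(\alpha).
\end{align*}
A term-by-term inspection, using $\deg x_1 \leq n$, $\deg x_\alpha = n$, $\deg x_\beta < n$ and $\deg x_{\alpha\beta} < n$, shows that every summand of $\langle\alpha,x\rangle$ has degree at most $n$, whereas in $\langle\beta,x\rangle$ the summand $x_\alpha\langle\beta,\alpha\rangle$ has degree exactly $n+1$ and strictly dominates the others. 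Hence $\deg\langle\alpha,x\rangle \leq n$ and $\deg\langle\beta,x\rangle = n+1$.

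To conclude, I would upgrade from $\alpha$ and $\beta$ to arbitrary nonscalar elements of $\F[\alpha]$ and $\F[\beta]$. Writing such elements as $\alpha'=\lambda\alpha+\mu$ and $\beta'=\lambda'\beta+\mu'$ with $\lambda,\lambda' \in \F^\times$ and $\mu,\mu' \in \F$, one has $\langle\alpha',x\rangle = \lambda\langle\alpha,x\rangle + \mu\tr(x)$ and likewise for $\beta'$; since $\tr(x) \in \F$, the scalar correction keeps the degree at most $n$ on the $\alpha$-side and preserves the degree $n+1$ on the $\beta$-side. This yields $d_{\F[\alpha]}(x) \leq n$ and $d_{\F[\beta]}(x) = n+1$, and the final assertion $d_\calA(x) \neq d_\calB(x)$ with $\delta(x)=n+1$ follows at once from $\{\F[\alpha],\F[\beta]\} = \{\calA,\calB\}$.
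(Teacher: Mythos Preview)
Your proof is correct and follows essentially the same approach as the paper's: both compute $\langle\alpha,x\rangle$ and $\langle\beta,x\rangle$ by bilinearity, invoke Lemma~\ref{lemma:quadraticdeployeddegree1} for the degree bounds on the coefficients, and identify $x_\alpha\langle\alpha,\beta\rangle$ as the unique dominant term. Your version is slightly more explicit in two places---you spell out why the $n$ of the Corollary agrees with that of the Lemma, and you pass from the specific $\alpha,\beta$ to arbitrary nonscalar elements of $\F[\alpha],\F[\beta]$ via the relation $\langle\alpha',x\rangle=\lambda\langle\alpha,x\rangle+\mu\tr(x)$---but these are minor elaborations of the same argument.
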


\begin{proof}
We note that
$$\langle \alpha,x\rangle=\tr(\alpha)x_1+N(\alpha) x_\alpha+\langle \alpha,\beta\rangle x_\beta+N(\alpha) \tr(\beta) x_{\alpha \beta}$$
and
$$\langle \beta,x\rangle=\tr(\beta)x_1+\langle \alpha,\beta\rangle x_\alpha+N(\beta) x_\beta+N(\beta) \tr(\alpha) x_{\alpha \beta}.$$
If $\deg(x_\beta) <n$ we use Lemma \ref{lemma:quadraticdeployeddegree1} to see that all the summands in $\langle \alpha,x\rangle$
have degree at most $n$, while all the summands in $\langle \beta,x\rangle$ but $\langle \alpha,\beta\rangle x_\alpha$ have degree at most $n$,
and $\langle \alpha,\beta\rangle x_\alpha$ has degree $n+1$. This yields the first result. The second one is obtained symmetrically.
The third one is straightforward.
\end{proof}

\begin{Def}
Let $x \in \calW_{p,q} \setminus \F$ be quadratic.
The basic subalgebra with smaller distance to $x$ is called the \textbf{leading basic subalgebra} of $x$,
while the one with larger distance to $x$ is called the \textbf{trailing basic subalgebra} of $x$.
\end{Def}

In particular, if $x$ is basic and nonscalar then its leading basic subalgebra is simply the only basic subalgebra that contains it.

\begin{lemma}
Let $x \in \calW_{p,q} \setminus \F$ be quadratic, and let $\alpha$ be a nonscalar vector in its leading basic subalgebra.
Take a nonscalar vector $\beta$ in the trailing basic subalgebra of $x$, and write
$x=x_1+x_\alpha \, \alpha +x_\beta \, \beta +x_{\alpha \beta }\, \alpha \beta $.
Then the validity of the condition $\deg(x_1)<\deg(x_\alpha)$ does not depend on the choice of $\beta$.
When it is satisfied, we say that $\alpha$ is a \textbf{leading vector} of $x$, or that it is leading for $x$.
\end{lemma}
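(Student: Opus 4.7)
The plan is to compute directly how the four coefficients $x_1, x_\alpha, x_\beta, x_{\alpha\beta}$ transform when $\beta$ is replaced by another nonscalar vector $\beta'$ of the trailing basic subalgebra $\calT$, and then read off that the condition of interest is preserved. Since $\calT$ is $2$-dimensional with basis $(1,\beta)$, any such $\beta'$ has the form $\beta'=\lambda\beta+\mu$ for a unique $(\lambda,\mu)\in\F^\times\times\F$. Solving for $\beta=\lambda^{-1}(\beta'-\mu)$, substituting into $x=x_1+x_\alpha\,\alpha+x_\beta\,\beta+x_{\alpha\beta}\,\alpha\beta$, and re-expanding in the new deployed basis $(1,\alpha,\beta',\alpha\beta')$ yields coefficients
$$x_1'=x_1-\lambda^{-1}\mu\,x_\beta, \quad x_\alpha'=x_\alpha-\lambda^{-1}\mu\,x_{\alpha\beta}, \quad x_\beta'=\lambda^{-1}x_\beta, \quad x_{\alpha\beta}'=\lambda^{-1}x_{\alpha\beta}.$$

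The key step is then to show that the degree corrections to $x_1$ and to $x_\alpha$ are negligible. Set $n:=\deg(x_\alpha)$. Because $\alpha$ lies in the leading basic subalgebra of $x$, Corollary \ref{cor:quadraticdistance1} applied to the deployed basis $(1,\alpha,\beta,\alpha\beta)$ forces $\deg(x_\beta)<n$, while Lemma \ref{lemma:quadraticdeployeddegree1}(b) forces $\deg(x_{\alpha\beta})<n$ (here I am using that $n$ must equal the global maximum $\max(\deg x_1,\deg x_\alpha,\deg x_\beta,\deg x_{\alpha\beta})$, which is immediate from Lemma \ref{lemma:quadraticdeployeddegree1}(a) since $x_\alpha$ is the one of $\{x_\alpha,x_\beta\}$ attaining this maximum).

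From $\deg(x_{\alpha\beta})<n$ the correction term $\lambda^{-1}\mu\,x_{\alpha\beta}$ has degree strictly less than $\deg(x_\alpha)=n$, hence $\deg(x_\alpha')=n$. From $\deg(x_\beta)<n$ the correction term $\lambda^{-1}\mu\,x_\beta$ has degree strictly less than $n$, so by the ultrametric-like inequality for polynomial degrees one has $\deg(x_1')<n$ if and only if $\deg(x_1)<n$. Combining both observations, $\deg(x_1')<\deg(x_\alpha')$ is equivalent to $\deg(x_1)<\deg(x_\alpha)$, which is the desired invariance.

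I expect no real obstacle here beyond carefully invoking the earlier results to justify $\deg(x_\beta)<n$ and $\deg(x_{\alpha\beta})<n$; once these two inequalities are in place the argument is a one-line comparison of degrees.
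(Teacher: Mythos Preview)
Your proof is correct and follows essentially the same approach as the paper: an explicit change of deployed basis $(1,\alpha,\beta,\alpha\beta)\to(1,\alpha,\beta',\alpha\beta')$ followed by the observation that the correction terms to $x_1$ and $x_\alpha$ have degree below $n=\deg(x_\alpha)$, using Lemma~\ref{lemma:quadraticdeployeddegree1} to control $\deg(x_\beta)$ and $\deg(x_{\alpha\beta})$. The only cosmetic difference is that the paper writes $\beta=\lambda\beta'+\mu$ directly rather than $\beta'=\lambda\beta+\mu$ and then inverting.
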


\begin{proof}
Indeed, let us take $\beta' \in \F[\beta] \setminus \F$ and write $\beta=\lambda \beta'+\mu$ for some
$(\lambda,\mu)\in \F^\times \times \F$.
Then
$$x=(x_1+\mu x_\beta)+(x_\alpha+\mu x_{\alpha\beta}) \alpha+\lambda x_\beta \beta'+\lambda x_{\alpha\beta} \alpha \beta'.$$
Set $n:=\deg(x_\alpha)$.
Since $\alpha$ belongs to the leading subalgebra of $x$, we have $\deg(x_{\alpha\beta})<n$, and hence
$\deg(x_\alpha+\mu x_{\alpha\beta})=n$.
For the same reason $\deg(x_\beta)<n$, and hence $\deg(x_1+\mu x_\beta)<n$ if and only if $\deg(x_1)<n$.
\end{proof}

\begin{lemma}\label{lemma:leading}
Let $x \in \calW_{p,q} \setminus \F$ be quadratic.
Then $x$ has a leading vector, and it is unique up to multiplication by an element of $\F^\times$.
\end{lemma}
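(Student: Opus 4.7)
The plan is to fix a reference deployed basis adapted to the leading and trailing basic subalgebras of $x$, produce a leading vector by a single scalar translation within the leading subalgebra, and then check uniqueness by tracking how a change of variable $\alpha' = \lambda \alpha + \mu$ alters the scalar coefficient in the expansion.

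Set up. Let $\calC$ denote the leading basic subalgebra of $x$ and let $\calC'$ denote the trailing one. Fix any $\gamma \in \calC \setminus \F$ and any $\beta \in \calC' \setminus \F$, and expand
$$x = x_1 + x_\gamma \gamma + x_\beta \beta + x_{\gamma\beta}\, \gamma\beta$$
in the deployed basis $(1,\gamma,\beta,\gamma\beta)$. Set $n := \delta(x)$, which by Corollary \ref{cor:quadraticdistance1} equals $\deg(x_\gamma) + 1$ so that $\deg(x_\gamma) = n-1$; the same corollary and Lemma \ref{lemma:quadraticdeployeddegree1} give $\deg(x_\beta) < n-1$, $\deg(x_{\gamma\beta}) < n-1$, and $\deg(x_1) \leq n-1$.

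Existence. I would try $\alpha := \gamma + \mu$ for a suitable $\mu \in \F$. Substituting $\gamma = \alpha - \mu$ and $\gamma\beta = \alpha\beta - \mu\beta$ into the expansion above yields
$$x = (x_1 - \mu\, x_\gamma) + x_\gamma\, \alpha + (x_\beta - \mu\, x_{\gamma\beta})\, \beta + x_{\gamma\beta}\, \alpha\beta,$$
so that in the new deployed basis $(1,\alpha,\beta,\alpha\beta)$ the coefficient of $\alpha$ is $x_\gamma$, still of degree $n-1$, while the new scalar coefficient is $x_1 - \mu\, x_\gamma$. If $\deg(x_1) < n-1$, the choice $\mu = 0$ already gives $\alpha = \gamma$ leading. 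Otherwise $\deg(x_1) = n-1$; denoting by $c_1, c_\gamma \in \F^\times$ the coefficients of $\omega^{n-1}$ in $x_1$ and $x_\gamma$ respectively, the choice $\mu := c_1/c_\gamma$ makes the top coefficient of $x_1 - \mu\, x_\gamma$ cancel, so that $\deg(x_1 - \mu\, x_\gamma) < n-1 = \deg(x_\gamma)$. In both cases $\alpha := \gamma + \mu$ is a leading vector of $x$, and by the previous lemma this property is intrinsic (independent of the choice of $\beta$).

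Uniqueness. Let $\alpha$ and $\alpha'$ both be leading vectors of $x$. Both lie in $\calC \setminus \F$, hence $\alpha' = \lambda \alpha + \mu$ for some $(\lambda,\mu) \in \F^\times \times \F$, and it suffices to show $\mu = 0$. Expand $x = x_1 + x_\alpha\, \alpha + x_\beta\, \beta + x_{\alpha\beta}\, \alpha\beta$ with $\deg(x_1) < \deg(x_\alpha) = n-1$, and substitute $\alpha = \lambda^{-1}(\alpha' - \mu)$: the new scalar coefficient becomes $x_1 - \mu\lambda^{-1}\, x_\alpha$ while the new coefficient of $\alpha'$ is $\lambda^{-1} x_\alpha$, still of degree $n-1$. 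If $\mu \neq 0$, then $\mu\lambda^{-1} x_\alpha$ has degree exactly $n-1$, which strictly exceeds $\deg(x_1)$; hence $x_1 - \mu\lambda^{-1} x_\alpha$ has degree $n-1$, contradicting the assumption that $\alpha'$ is leading. Therefore $\mu = 0$ and $\alpha' = \lambda \alpha$, as claimed. The only potential pitfall is bookkeeping in the base change, but no conceptual obstacle arises.
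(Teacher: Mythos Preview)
Your proof is correct and follows essentially the same approach as the paper: both arguments fix a reference deployed basis and perform an affine substitution $\alpha \leftrightarrow \lambda\alpha + \mu$ within the leading subalgebra, tracking how the scalar coefficient $x_1$ changes to identify the unique value of the translation term that drops its degree. The only cosmetic difference is that the paper handles existence and uniqueness in a single parametrized computation (finding the leading vectors are exactly $\F^\times(\alpha - v_0)$ for one explicit $v_0$), whereas you split the two parts; the underlying calculation is identical.
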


\begin{proof}
Let us take arbitrary nonscalar basic vectors $\alpha$ and $\beta$, with $\alpha$ (respectively, $\beta$) in the
leading (respectively, trailing) basic subalgebra of $x$.
Let us write
$x=x_1+x_\alpha \alpha+x_\beta \beta +x_{\alpha\beta} \alpha\beta $ in that basis, and set
$n:=\deg(x_\alpha)$.
Let $\alpha' \in \F[\alpha] \setminus \F$, and write $\alpha=u\alpha'+v$ with $u \in \F^\times$ and $v \in \F$.
Then
$$x=(x_1+v x_\alpha)+u x_\alpha \alpha'+ ? \beta+u \alpha' \beta.$$
Denote by $n$ the degree of $x_\alpha$, and by $\lambda$ and $\mu$ the respective coefficients of $x_\alpha$ and $x_1$ on $\omega^n$
as polynomials in $\F[\omega]$.
We still have $\deg(u x_\alpha)=n$, and now
$\deg(x_1+v x_\alpha)<n$ if and only if $\mu+\lambda v=0$, i.e., $v=-\lambda^{-1} \mu=:v_0$.
Hence, the leading vectors of $x$ are exactly the vectors of $\F^\times (\alpha-v_0)$.
\end{proof}

\begin{Def}
Let $x \in \calW_{p,q} \setminus \F$ be quadratic.
A deployed basis $(1,\alpha,\beta,\alpha\beta)$ is called \textbf{adapted to} $x$ when $\alpha$ is leading for $x$.
\end{Def}

\begin{Rem}
Assume that we have written a decomposition $x=x_1+x_\alpha \alpha+x_\beta \beta+x_{\alpha\beta} \alpha\beta$,
with $x$ quadratic and nonscalar, and $\deg(x_\beta)>\max(\deg(x_1),\deg(x_\alpha))$.
Then, and although $\F[\beta]$ is the leading basic subalgebra of $x$ in that case,
we cannot infer that $\beta$ is leading for $x$. In fact, $\beta^\star$ is leading for $x$ since
$$x=\tr(x)-x^\star=(\tr(x)-x_1)-x_\beta \beta^\star-x_\alpha \alpha^\star - x_{\alpha\beta} \beta^\star\alpha^\star.$$
\end{Rem}

Now, we move forward in our analysis of the situation. We will systematically use adapted deployed bases:

\begin{lemma}\label{lemma:quadraticdeployeddegree2}
Let $x \in \calW_{p,q} \setminus \F$ be quadratic and non-basic, with decomposition
$x=x_1+x_\alpha \alpha+x_\beta \beta+x_{\alpha\beta} \alpha\beta$ in an \emph{adapted} deployed basis.
Set $n:=\deg(x_\alpha)$ and $\omega':=\langle \alpha,\beta \rangle$.
Writing the elements of $C$ as polynomials in $\omega'$ with coefficients in $\F$, we
denote by $L(x_\beta)$ and $L(x_{\alpha\beta})$ the respective coefficients of $x_\beta$ and $x_{\alpha\beta}$ on $(\omega')^{n-1}$,
and by $L(x_\alpha)$ the coefficient of $x_\alpha$ on $(\omega')^n$.
Then:
$$L(x_{\alpha\beta})=\tr (\alpha)\,L(x_\alpha) \quad \text{and} \quad L(x_\beta)=-N(\alpha)\,L(x_\alpha).$$
As a consequence, $\deg(x_\beta)<n-1$ if and only if $N(\alpha)=0$.
\end{lemma}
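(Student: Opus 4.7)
The plan is to use the two scalar constraints $\tr(x)\in\F$ and $N(x)\in\F$ (which, by Corollary~\ref{lemma:quadratic}, together characterize $x$ being quadratic) and extract the top-degree coefficients with respect to $\omega':=\langle\alpha,\beta\rangle$. Working throughout in the adapted deployed basis $(1,\alpha,\beta,\alpha\beta)$, the hypothesis that $\alpha$ is leading combined with Lemma~\ref{lemma:quadraticdeployeddegree1} gives $\deg(x_1)<n$, $\deg(x_\beta)\le n-1$ and $\deg(x_{\alpha\beta})\le n-1$, so that the degree-$n$ content lives entirely in $L(x_\alpha)$ and the degree-$(n-1)$ content is what is extracted by $L(x_\beta)$ and $L(x_{\alpha\beta})$.

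The first identity will come from the trace. I would first establish the auxiliary formula $\tr(\alpha\beta)=\tr(\alpha)\tr(\beta)-\omega'$, by rewriting $\tr(\alpha\beta)=\langle\alpha,\beta^\star\rangle$, using $\beta^\star=\tr(\beta)-\beta$, and invoking the $C$-bilinearity of the inner product. Plugging this into the basis expansion $\tr(x)=2x_1+\tr(\alpha)x_\alpha+\tr(\beta)x_\beta+\tr(\alpha\beta)x_{\alpha\beta}$, only the two summands $\tr(\alpha)x_\alpha$ and $-\omega'\,x_{\alpha\beta}$ can contribute to the coefficient on $(\omega')^n$. Their combined contribution $\tr(\alpha)\,L(x_\alpha)-L(x_{\alpha\beta})$ must vanish since $\tr(x)\in\F$, which yields $L(x_{\alpha\beta})=\tr(\alpha)\,L(x_\alpha)$.

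The second identity comes from the norm. I would reuse the expansion $N(x)=-x_1^2+\tr(x)\,x_1+N(y)$ with $y=x-x_1$, together with the formula for $N(y)$ obtained in the proof of Lemma~\ref{lemma:quadraticdeployeddegree1}. The correction terms $-x_1^2+\tr(x)x_1$ have degree at most $2n-2$ because $\deg(x_1)<n$, so the fact that $N(x)\in\F$ forces the coefficient of $N(y)$ on $(\omega')^{2n}$ to be zero. Going through the six summands of $N(y)$, only $N(\alpha)\,x_\alpha^{2}$ and $\omega'\,x_\alpha x_\beta$ can reach degree $2n$; they contribute $N(\alpha)L(x_\alpha)^2$ and $L(x_\alpha)L(x_\beta)$ respectively. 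Setting the sum to zero and dividing by $L(x_\alpha)\neq 0$ produces $L(x_\beta)=-N(\alpha)L(x_\alpha)$.

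The concluding equivalence is then essentially tautological: since $L(x_\alpha)\neq 0$, one has $L(x_\beta)=0$ if and only if $N(\alpha)=0$; and because we already know $\deg(x_\beta)\le n-1$, the vanishing of $L(x_\beta)$ is simply the statement $\deg(x_\beta)<n-1$. The only genuine obstacle I anticipate is the degree bookkeeping—keeping straight which coefficient sits at which power of $\omega'$—but this is handled painlessly by the adapted-basis hypothesis, which neatly confines all degree-$n$ behaviour to a single coefficient.
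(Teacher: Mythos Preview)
Your proof is correct and follows essentially the same approach as the paper: extract the degree-$n$ coefficient from the trace formula to obtain the first identity, then use the decomposition $N(x)=-x_1^2+\tr(x)x_1+N(y)$ and extract the degree-$2n$ coefficient from the norm to obtain the second. The only cosmetic difference is that the paper writes $\langle\alpha,\beta^\star\rangle$ where you write $\tr(\alpha\beta)$, but these are the same quantity.
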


\begin{proof}
We go back to the analysis of the proof of Lemma \ref{lemma:quadraticdeployeddegree1}, with the same notation.
With $\tr(x)=2x_1+\tr(\alpha) x_\alpha+\tr(\beta) x_\beta+\langle \alpha,\beta^\star\rangle x_{\alpha\beta}$
and $\langle \alpha,\beta^\star\rangle=\tr(\beta)\tr(\alpha)-\omega'$, we find
the first identity because $\deg(x_1) <n$, $\deg(x_\beta)<n$ and $\deg(x_{\alpha\beta})<n$.
Next, since $\deg(x_1)<n$ we see that $\deg(-x_1^2+\tr(x)x_1)<2n$
and we obtain
$$N(x) \equiv N(y) \equiv N(\alpha) (x_\alpha)^2+\omega' x_\alpha x_\beta \quad \text{mod}\; C_{2n-1}.$$
Since $N(x) \in \F$, we obtain $N(\alpha) L(x_\alpha)^2+L(x_\alpha) L(x_\beta)=0$ by extracting the coefficient on $(\omega')^{2n}$, and hence
$L(x_\beta)=-N(\alpha) L(x_\alpha)$ because $L(x_\alpha) \neq 0$.
The last statement is then obvious.
\end{proof}

Let us now see how the previous result plays out in terms of the distance of $x$ to the basic subalgebras.

\begin{cor}\label{cor:differenceofdistances}
Let $x \in \calW_{p,q} \setminus \F$ be quadratic and non-basic.
Then $|d_\calA(x)-d_\calB(x)|=1$ if and only if the leading vectors of $x$ are units.
\end{cor}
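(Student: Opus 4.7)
The plan is to work in a deployed basis $(1,\alpha,\beta,\alpha\beta)$ adapted to $x$ and to reduce everything to computing the coefficient of $(\omega')^n$ in $\langle \alpha, x\rangle$, where $\omega':=\langle\alpha,\beta\rangle$ and $n:=\deg(x_\alpha)$.

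First I set the stage. Write $x=x_1+x_\alpha\alpha+x_\beta\beta+x_{\alpha\beta}\alpha\beta$. By Corollary \ref{cor:quadraticdistance1}, $d_{\F[\beta]}(x)=n+1$ and $d_{\F[\alpha]}(x)\leq n$, so $|d_\calA(x)-d_\calB(x)|\geq 1$, with equality iff $d_{\F[\alpha]}(x)=n$; moreover $n\geq 1$ because $x$ is non-basic (Lemma \ref{lemma:quadraticdeployeddegree1}(c)). Every element of $\F[\alpha]\setminus\F$ has the form $\lambda\alpha+\mu$ with $\lambda\in\F^\times$ and $\mu\in\F$, giving $\langle\lambda\alpha+\mu,x\rangle=\lambda\langle\alpha,x\rangle+\mu\tr(x)$ with $\tr(x)\in\F$. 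Hence $d_{\F[\alpha]}(x)=n$ is equivalent to $\deg\langle\alpha,x\rangle=n$.

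Next I compute $\langle\alpha,x\rangle$ explicitly. From $\langle\alpha,1\rangle=\tr(\alpha)$, $\langle\alpha,\alpha\rangle=2N(\alpha)$, $\langle\alpha,\beta\rangle=\omega'$, and $\langle\alpha,\alpha\beta\rangle=\langle\beta,\alpha^\star\alpha\rangle=N(\alpha)\tr(\beta)$ (using \eqref{eq:adjeq}), I obtain
$$\langle\alpha,x\rangle=\tr(\alpha)x_1+2N(\alpha)x_\alpha+\omega' x_\beta+N(\alpha)\tr(\beta)x_{\alpha\beta}.$$
Since $\alpha$ is leading and $\deg x_\alpha=n$, Lemma \ref{lemma:quadraticdeployeddegree1} together with the adaptedness condition give $\deg x_1<n$, $\deg x_\beta\leq n-1$ and $\deg x_{\alpha\beta}<n$, so only $2N(\alpha)x_\alpha$ and $\omega' x_\beta$ can contribute to the coefficient of $(\omega')^n$. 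Applying the identity $L(x_\beta)=-N(\alpha)L(x_\alpha)$ from Lemma \ref{lemma:quadraticdeployeddegree2}, that coefficient comes out to
$$2N(\alpha)L(x_\alpha)+L(x_\beta)=2N(\alpha)L(x_\alpha)-N(\alpha)L(x_\alpha)=N(\alpha)L(x_\alpha).$$

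Finally I conclude. Since $L(x_\alpha)\neq 0$, this quantity is nonzero iff $N(\alpha)\neq 0$, i.e., iff $\alpha$ is a unit by Proposition \ref{prop:unitdetect}. By Lemma \ref{lemma:leading}, any two leading vectors of $x$ are nonzero scalar multiples of one another, so their norms agree up to nonzero squares and therefore are simultaneously zero or nonzero; this makes the statement ``the leading vectors are units'' well-posed and equivalent to ``$\alpha$ is a unit''. This yields the desired equivalence. The one delicate point is the coefficient computation: the two surviving terms $2N(\alpha)L(x_\alpha)$ and $-N(\alpha)L(x_\alpha)$ almost cancel, but the factor $2$ from $\langle\alpha,\alpha\rangle=2N(\alpha)$ prevents this in odd characteristic, while in characteristic $2$ the first term vanishes and the surviving $-N(\alpha)L(x_\alpha)=N(\alpha)L(x_\alpha)$ gives exactly the same answer, so a single formula works uniformly.
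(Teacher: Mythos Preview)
Your proof is correct and follows the same approach as the paper's: pick an adapted deployed basis, reduce the question to whether $\deg\langle\alpha,x\rangle=n$, and decide this via the top coefficient. In fact your write-up is more careful than the paper's, since you explicitly track the degree-$n$ contribution of $\omega' x_\beta$ via the relation $L(x_\beta)=-N(\alpha)L(x_\alpha)$ from Lemma~\ref{lemma:quadraticdeployeddegree2}, whereas the paper's text asserts (somewhat loosely) that all summands other than the $x_\alpha$-term have degree less than $n$.
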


\begin{proof}
We write $x=x_1+x_\alpha \alpha+x_\beta \beta+x_{\alpha \beta} \alpha \beta$ in an adapted deployed basis, and set
$n:=\deg(x_\alpha)>0$. We have $n>0$ because $x$ is nonbasic.
We have seen in Corollary \ref{cor:quadraticdistance1} that $d_{\F[\beta]}(x)=n+1$ and $d_{\F[\alpha]}(x) \leq n$.
Now we come back to the identity
$$\langle \alpha ,x\rangle=\tr(\alpha)\,x_1+N(\alpha)\, x_\alpha+\langle \alpha,\beta\rangle x_\beta+N(\alpha)\tr(\beta)x_{\alpha\beta},$$
and we spot that all the summands in the right-hand side but $N(\alpha) x_\alpha$ have degree less than $n$.
Hence $\langle \alpha ,x\rangle$ has degree $n$ if and only if $N(\alpha)\neq 0$, QED.
\end{proof}

So far, the assumption that $x$ is quadratic has only been used by noting that the trace and norm have no terms of high degree.
Hence the reader might be skeptical that the previous analysis will be sufficient to make our strategy work.
Yet this apparently narrow analysis will fully suffice, as we shall see in the next sections.

\subsection{The effect of conjugating by a basic unit}

We arrive at the critical point of the analysis.
Let us take a quadratic but nonbasic element $x \in \calW_{p,q} \setminus (\calA \cup \calB)$,
with decomposition $x=x_1+x_\alpha \alpha+x_\beta \beta+x_{\alpha\beta} \alpha\beta$ in an adapted deployed basis.

We seek to conjugate $x$ with a basic unit so as to decrease the absolute distance.
To this end, we must first decide whether we should conjugate $x$ with a basic unit in the leading or in the trailing subalgebra of $x$.
To start with, we observe thanks to identities \eqref{eq:adjeq} that for every unit $y$ (not just a basic one) and for all $z_1,z_2$ in
$\calW_{p,q}$,
$$\langle z_1,yz_2y^{-1}\rangle=N(y)^{-1} \langle z_1,yz_2y^\star \rangle=
N(y)^{-1} \langle y^\star z_1 y,z_2\rangle=\langle y^{-1} z_1 y,z_2\rangle.$$
Hence the distance of an element to a basic subalgebra is invariant under conjugation by a basic element of that subalgebra.
And as we want to have the absolute distance of $x$ decrease, the only way is to conjugate $x$ with a basic unit in $\F[\alpha]$, its leading basic
subalgebra.
It will however be useful to investigate what happens when we conjugate $x$ with a basic unit in its trailing basic subalgebra.
The next result handles both situations:

\begin{lemma}\label{lemma:conjugatebasicunit}
Let $x \in \calW_{p,q} \setminus \F$ be quadratic with leading vector $\alpha$,
leading basic subalgebra $\calC$ and trailing basic subalgebra $\calD$.
\begin{enumerate}[(a)]
\item Let $\gamma \in \calC^\times \setminus \F^\times$ and assume that $x$ is nonbasic. Then:
\begin{enumerate}[(i)]
\item $\delta(\gamma^{-1} x \gamma) \leq \delta(x)$;
\item $\delta(\gamma^{-1} x \gamma) < \delta(x)$ if and only if $\gamma \in \F^\times \alpha$;
\item If $\gamma \in \F^\times \alpha$ then
$\delta(\gamma^{-1} x \gamma)=\delta(x)-1$ and $\calC$ is the trailing basic subalgebra of $\gamma^{-1} x \gamma$.
\end{enumerate}

\item Let $\gamma \in \calD^\times \setminus \F^\times$. Then
$\delta(\gamma x \gamma^{-1})=\delta(x)+1$ and $\gamma$ is leading for $\gamma x \gamma^{-1}$.
\end{enumerate}
\end{lemma}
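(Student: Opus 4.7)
The plan is to fix an adapted deployed basis $(1,\alpha,\beta,\alpha\beta)$ for $x$, write $x=x_1+x_\alpha\alpha+x_\beta\beta+x_{\alpha\beta}\alpha\beta$ with $n:=\deg(x_\alpha)\geq 1$ and $\omega':=\langle\alpha,\beta\rangle$, and recall from Lemmas~\ref{lemma:quadraticdeployeddegree1} and~\ref{lemma:quadraticdeployeddegree2} that $\deg(x_1),\deg(x_{\alpha\beta})<n$, $\deg(x_\beta)\leq n-1$, $L(x_\beta)=-N(\alpha)L(x_\alpha)$ and $L(x_{\alpha\beta})=\tr(\alpha)L(x_\alpha)$, where $L$ denotes the leading coefficient relative to $\omega'$. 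The strategy is uniform for both parts: compute the coefficients of the conjugate in the same basis $(1,\alpha,\beta,\alpha\beta)$ and then read off the new absolute distance and leading subalgebra from Corollary~\ref{cor:quadraticdistance1}.

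For part (a), I write $\gamma=u\alpha+v$ with $u\in\F^\times$ and $v\in\F$, and use the identities $\beta\alpha=-\alpha\beta+\tr(\alpha)\beta+\tr(\beta)\alpha-\omega'$ and $\alpha\beta\alpha=N(\alpha)\beta+(\tr(\alpha)\tr(\beta)-\omega')\alpha-\tr(\beta)N(\alpha)$ (both direct consequences of $\alpha\beta^\star+\beta\alpha^\star=\omega'$) to express $\gamma^{-1}\beta\gamma=A+B\alpha+C\beta+D\alpha\beta$ explicitly, with $A,B\in C_1$ and $C,D\in\F$. Plugging this into $\gamma^{-1}x\gamma=x_1+x_\alpha\alpha+(x_\beta+x_{\alpha\beta}\alpha)\,\gamma^{-1}\beta\gamma$ and collecting yields new coordinates $x'_1,x'_\alpha,x'_\beta,x'_{\alpha\beta}$, the crucial one being $x'_\alpha=x_\alpha+x_\beta B+x_{\alpha\beta}(A+B\tr(\alpha))$. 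The decisive computation is that, after substituting the constraints on $L(x_\beta)$ and $L(x_{\alpha\beta})$ together with the explicit values of $A$ and $B$, the coefficient of $(\omega')^n$ in $x'_\alpha$ collapses, via the identity $N(\gamma)=u^2N(\alpha)+uv\tr(\alpha)+v^2$, to the remarkably simple expression $v^2L(x_\alpha)/N(\gamma)$. This vanishes precisely when $v=0$, that is, when $\gamma\in\F^\times\alpha$, proving (ii); in that case a parallel computation shows $\deg(x'_\beta)=n-1$ with top coefficient $N(\alpha)L(x_\alpha)\neq 0$, so Corollary~\ref{cor:quadraticdistance1} gives $\delta(\gamma^{-1}x\gamma)=n=\delta(x)-1$ and identifies $\calC$ as the trailing basic subalgebra of $\gamma^{-1}x\gamma$, proving (iii); if $v\neq 0$ then $\deg(x'_\alpha)=n$ while $\deg(x'_\beta)\leq n-1$, so $\delta$ is preserved, proving (i).

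For part (b), I write $\gamma=w\beta+z$ with $w\in\F^\times$ and $z\in\F$. A completely analogous computation yields $\gamma\alpha\gamma^{-1}=A''+B''\beta+C''\alpha+D''\alpha\beta$ with $A'',B''\in C_1$ and $C'',D''\in\F$; the critical feature is that $B''$ has degree exactly $1$ with leading coefficient $w^2/N(\gamma)\neq 0$. Expanding $\gamma x\gamma^{-1}=x_1+x_\alpha(\gamma\alpha\gamma^{-1})+x_\beta\beta+x_{\alpha\beta}(\gamma\alpha\gamma^{-1})\beta$, the new $\beta$-coordinate $x''_\beta=x_\beta+x_\alpha B''+x_{\alpha\beta}(A''+B''\tr(\beta))$ acquires degree exactly $n+1$, with top coefficient $w^2L(x_\alpha)/N(\gamma)$, while all other new coordinates have degree at most $n$. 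Corollary~\ref{cor:quadraticdistance1} immediately gives $\delta(\gamma x\gamma^{-1})=n+2=\delta(x)+1$ with $\calD$ as the new leading basic subalgebra. To identify $\gamma$ as a leading vector, I will rewrite $\gamma x\gamma^{-1}$ in the basis $(1,\beta,\alpha,\beta\alpha)$, show that the coefficient of $(\omega')^{n+1}$ in its new constant coordinate equals $wzL(x_\alpha)/N(\gamma)$, and invoke the recipe from the proof of Lemma~\ref{lemma:leading}: the unique shift $v_0$ making $\beta-v_0$ leading is $-z/w$, so the leading vectors are $\F^\times(\beta+z/w)=\F^\times w^{-1}\gamma=\F^\times\gamma$.

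The main obstacle is to orchestrate the cancellations among several terms of equal top degree. The base field may have characteristic $2$, so we cannot afford any division by $2$; the entire simplification rests on spotting the identity $N(\gamma)-u^2N(\alpha)-uv\tr(\alpha)=v^2$ that collapses the top coefficient of $x'_\alpha$ to $v^2L(x_\alpha)/N(\gamma)$ and thereby pinpoints $v=0$ as the distance-dropping condition. Once this miraculous collapse is secured, all four assertions follow mechanically from Corollary~\ref{cor:quadraticdistance1} and Lemma~\ref{lemma:leading}.
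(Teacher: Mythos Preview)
Your approach is correct and follows the same overall computational strategy as the paper: expand the conjugate in a deployed basis, track the top-degree coefficient of the relevant coordinate, and invoke Corollary~\ref{cor:quadraticdistance1}. The paper however streamlines the bookkeeping with three tricks you do not use. First, it works with $x':=\gamma^\star x\gamma=N(\gamma)\gamma^{-1}x\gamma$ rather than $\gamma^{-1}x\gamma$, which avoids carrying the factor $N(\gamma)^{-1}$ through every formula. Second, for part~(a) it computes in the deployed basis $(1,\alpha,\beta^\star,\alpha\beta^\star)$ via the single identity $\gamma^\star\beta\gamma=\langle\beta,\gamma^\star\rangle\gamma^\star-(\gamma^\star)^2\beta^\star$, bypassing your explicit formula for $\gamma^{-1}\beta\gamma$. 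Third, for part~(b) the paper simply takes $\gamma=\beta$ (legitimate since $\beta\in\calD\setminus\F$ was arbitrary from the start) and computes directly in $(1,\beta,\alpha^\star,\beta\alpha^\star)$, reading off both $\delta(x'')=n+2$ and the fact that $\beta$ is leading in one pass, whereas you carry a general $\gamma=w\beta+z$ and need a separate change of basis to identify the leading vector. For (a)(iii), the paper also takes a shortcut: rather than computing $\deg(x'_\beta)$, it uses the earlier-observed invariance $d_\calC(\gamma^{-1}x\gamma)=d_\calC(x)$, which instantly gives $\delta(x')=n$ once $N(\alpha)\neq 0$ is noted.

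One minor inaccuracy in your write-up: in part~(b), the claim that ``all other new coordinates have degree at most $n$'' is not quite right for the constant coordinate $x''_1=x_1+x_\alpha A''-x_{\alpha\beta}B''N(\beta)$, since $\deg(A'')$ can be $1$ and hence $\deg(x''_1)$ can reach $n+1$. This is harmless, however, because Corollary~\ref{cor:quadraticdistance1} depends only on $\max(\deg(x''_\alpha),\deg(x''_\beta))$, and you do correctly verify $\deg(x''_\alpha)\leq n<n+1=\deg(x''_\beta)$; Lemma~\ref{lemma:quadraticdeployeddegree1} then forces $\deg(x''_1)\leq n+1$ anyway. Your subsequent leading-vector computation in $(1,\beta,\alpha,\beta\alpha)$ is correct as stated.
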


\begin{proof}
Let first $\gamma \in \calC \setminus \F^\times$, and assume that $x$ is nonbasic.
We write $\gamma=\lambda \alpha+\mu$ with $\lambda \in \F^\times$
and $\mu \in \F$. We extend $\alpha$ to an arbitrary deployed basis $(1,\alpha,\beta,\alpha\beta)$ of $\calW_{p,q}$.
We put $x':=N(\gamma) \gamma^{-1} x \gamma=\gamma^\star x \gamma$, and write $x=x_1+x_\alpha \alpha+x_{\beta} \beta+x_{\alpha\beta} \alpha\beta$.

Let us set $n:=\deg(x_\alpha)$.
Note already that $d_\calC(x')=d_\calC(x) \leq n$ and $d_\calD(x)=n+1$.
We shall investigate the coefficients $x'_\alpha$ and $x'_{\beta^\star}$ of $x'$, respectively on $\alpha$ and $\beta^\star$,
in the deployed basis $(1,\alpha,\beta^\star,\alpha \beta^\star)$, which turns out to be more convenient than $(1,\alpha,\beta,\alpha\beta)$.
We already know that $x'_{\beta^\star}$ has degree less than $n$ because $d_\calC(x') \leq n$.

To start with, we note that $\gamma^\star 1 \gamma=N(\gamma) \in \F$ and $\gamma^\star \alpha \gamma=N(\gamma) \alpha$.
Next
$$\gamma^\star \beta \gamma=\langle \beta,\gamma^\star\rangle \gamma^\star -(\gamma^\star)^2 \beta^\star$$
and hence, since $\alpha$ commutes with $\gamma^\star$,
$$\gamma^\star (\alpha\beta) \gamma=\alpha (\gamma^\star b \gamma)=\langle \beta,\gamma^\star\rangle (\alpha\gamma^\star)-\alpha(\gamma^\star)^2 \beta^\star.$$
Next, we write
$$\gamma^\star x \gamma=x_1 \gamma^\star \gamma+x_\alpha \gamma^\star \alpha \gamma+x_{\beta} \gamma^\star \beta \gamma+x_{\alpha\beta} \gamma^\star (\alpha \beta) \gamma.$$
Since $\F[\alpha]$ contains $(\gamma^\star)^2$ and $\alpha (\gamma^\star)^2$, the elements $(\gamma^\star)^2 \beta^\star$ and
$\alpha (\gamma^\star)^2 \beta^\star$ do not contribute to the coefficient of $\gamma^\star x \gamma$ on $\alpha$ in $(1,\alpha,\beta^\star,\alpha \beta^\star)$.
Hence, by using $\alpha \gamma^\star=\alpha(\lambda \alpha^\star+\mu)=\lambda N(\alpha)+\mu \alpha$, we find
$$x'_\alpha=N(\gamma) x_\alpha+\langle \beta,\gamma^\star\rangle(-\lambda x_\beta+\mu x_{\alpha\beta}).$$
Next, we put $\omega':=\langle \alpha,\beta\rangle$ and simplify
 $\langle \beta,\gamma^\star\rangle \equiv -\lambda \omega'$ mod $C_0$, which we combine with the identities in Lemma
 \ref{lemma:quadraticdeployeddegree2}. Denoting by $L(x_\alpha)$ the coefficient of $x_\alpha$ on $(\omega')^n$ as an element of $\F[\omega']$,
 we find
$$x'_\alpha \equiv (\omega')^nL(x_\alpha) \bigl(N(\gamma) -\lambda (\lambda N(\alpha)+\mu \tr(\alpha))\bigr) \quad \text{mod} \; C_{n-1},$$
which, by expanding $N(\gamma)$, is further simplified as
$$x'_\alpha \equiv (\omega')^n L(x_\alpha)\, \mu^2 \quad \text{mod} \; C_{n-1}.$$
As a first consequence $\deg(x'_\alpha) \leq n$. At this point, remember that $\deg(x'_{\beta^\star}) <n$.
Hence, if $\deg(x'_\alpha)=n$ then $\calC$ is the leading subalgebra of $x'$ and $\delta(x')=n+1$.
Moreover
$$\deg(x'_\alpha)<n \Leftrightarrow \mu=0 \Leftrightarrow \gamma \in \F^\times \alpha,$$
to the effect that $\deg(x'_\alpha)<n$ only if $N(\alpha)\neq 0$ (because $\gamma$ must be a unit!).
Assume for a moment that $\deg(x'_\alpha)<n$. Then $N(\alpha) \neq 0$ and hence Lemma  \ref{lemma:quadraticdeployeddegree2} shows that $d_{\calC}(x)=n$,
whence $d_{\calC}(x')=n$ and $\delta(x')=n=\delta(x)-1$. In that case, $\calC$ is the trailing basic subalgebra of $x'$. Statement (a) is now entirely proven.

Finally, to prove statement (b) we can directly take $\gamma=\beta$, since $\beta$ was chosen arbitrarily
as a nonscalar element in $\calD$. Note that we no longer assume that $x$ is nonbasic.
Once more, we shall consider the coefficients of $x':=\gamma x \gamma^\star=\beta x \beta^\star$ in the deployed basis
$(1,\beta,\alpha^\star,\beta \alpha^\star)$.
Again, we compute $\beta 1 \beta^\star=N(\beta)$, $\beta \beta \beta^\star=N(\beta) \beta$,
$$\beta \alpha \beta^\star=\beta (\langle \alpha,\beta\rangle-\beta \alpha^\star)=\langle \alpha,\beta\rangle \beta -\beta^2 \alpha^\star$$
and finally
$$\beta \alpha \beta \beta^\star=N(\beta) \beta \alpha=(N(\beta) \tr(\alpha)) \beta-N(\beta) \beta \alpha^\star.$$
It follows that
$$\beta x \beta^\star=N(\beta) x_1+(\langle \alpha,\beta\rangle x_\alpha+N(\beta)x_\beta+N(\beta)\tr(\alpha)x_{\alpha\beta}) \beta
+N(\gamma) x_\alpha\, \alpha^\star +? \beta \alpha^\star.$$
Obviously $\deg(N(\beta) x_1)<n+1$, $\deg(N(\gamma) x_\alpha) \leq n$ and
$$\deg(\langle \alpha,\beta\rangle x_\alpha+N(\beta)x_\beta+N(\beta)\tr(\alpha)x_{\alpha\beta})=n+1$$
because $\deg(x_{\alpha\beta})<n$, $\deg(x_\alpha)=n$ and $\deg(x_\beta)<n$.
Hence $\gamma=\beta$ is a leading vector of $x'':=\gamma x \gamma^{-1}=N(\gamma)^{-1} \gamma x \gamma^\star$,
and $\delta(x'')=n+2=\delta(x)+1$.
\end{proof}

\subsection{The retracing algorithm}\label{section:units1:retracing}

Now that the previous preparatory work has been achieved, we can swiftly fill our objectives.

We start by an algorithm, which we call the \textbf{retracing algorithm}: it takes as entry a quadratic element $x \in \calW_{p,q} \setminus \F$
and either reports a failure or outputs a basic vector $x'$ and a (potential empty) list $(\gamma_1,\dots,\gamma_r)$ of basic
units such that $x'=\gamma^{-1} x \gamma$ for $\gamma:=\gamma_1 \gamma_2 \cdots \gamma_r$.
Here is the procedure:
\begin{itemize}
\item Initialize $y$ as $x$ and $L$ as the empty list.
\item While $y$ is non-basic:
\begin{itemize}
\item Compute a leading vector $\alpha$ for $y$.
\item If $N(\alpha)=0$ return ``Failure''.
\item Else update $y$ to $\alpha^{-1} y \alpha$ and append $\alpha$ to $L$.
\end{itemize}
\item Return $(y,L)$.
\end{itemize}
To see that this algorithm terminates, we note that, after each iteration that
does not return a failure, the absolute distance of the current vector $y$ decreases by exactly one unit,
so if no failure is reported then after exactly $\delta(x)-1$ iterations the current
vector $y$ satisfies $\delta(y)=1$ and hence is basic.

Hence, either the algorithm reports a failure or it outputs $(y,(\gamma_1,\dots,\gamma_n))$ where
the elements $\gamma_i$ are basic units, $y$ is a basic vector and $y=(\gamma_1 \cdots \gamma_n)^{-1} y (\gamma_1 \cdots \gamma_n)$:
in the latter case we say that the algorithm is successful.

Now, we prove that this algorithm is optimal:

\begin{prop}\label{prop:testretracing}
Let $x \in \calW_{p,q} \setminus \F$ be quadratic.
Then the retracing algorithm applied to $x$ is successful if and only if there exists a product $\gamma$ of basic units
such that $\gamma^{-1} x \gamma$ is basic.
\end{prop}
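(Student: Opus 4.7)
The forward direction is immediate: if the retracing algorithm succeeds, its output explicitly provides a list of basic units $(\gamma_1, \ldots, \gamma_r)$ and a basic element $y$ with $y = (\gamma_1 \cdots \gamma_r)^{-1} x (\gamma_1 \cdots \gamma_r)$, proving the existence of the required $\gamma$.

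For the converse, my plan is to argue by induction on $\delta(x)$. The base case $\delta(x) = 1$ forces $x$ to be basic by Lemma \ref{lemma:quadraticdeployeddegree1}(c), and the algorithm succeeds trivially with the empty list. For the inductive step I assume $\delta(x) \geq 2$ (so $x$ is non-basic) and fix a product $\gamma_1 \cdots \gamma_r$ of basic units of \emph{minimal} length that conjugates $x$ to a basic element. By minimality no $\gamma_i$ is scalar (otherwise deletion shortens the sequence) and consecutive $\gamma_i, \gamma_{i+1}$ lie in different basic subalgebras (otherwise they can be merged into one basic unit). Set $x_i := (\gamma_1 \cdots \gamma_i)^{-1} x (\gamma_1 \cdots \gamma_i)$, so that $x_r$ is basic.

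The core of the argument is a reverse induction on $i$, descending from $i = r-1$ down to $i = 0$, with the statement that $\gamma_{i+1}$ is a leading vector of $x_i$ and $\delta(x_i) = r - i + 1$. For the base $i = r - 1$, the leading subalgebra of the basic element $x_r$ is the basic subalgebra $\F[x_r]$ that contains it; were $\gamma_r$ to lie in $\F[x_r]$ it would commute with $x_r$, so $x_{r-1} = x_r$ would already be basic and $(\gamma_1, \ldots, \gamma_{r-1})$ would give a shorter conjugating sequence, contradicting minimality. Hence $\gamma_r$ lies in the trailing subalgebra of $x_r$, and applying Lemma \ref{lemma:conjugatebasicunit}(b) to the identity $x_{r-1} = \gamma_r x_r \gamma_r^{-1}$ yields both $\delta(x_{r-1}) = 2$ and the fact that $\gamma_r$ is leading for $x_{r-1}$. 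In the inductive step, the hypothesis that $\gamma_{i+1}$ is leading for $x_i$ identifies the leading basic subalgebra of $x_i$ as the one containing $\gamma_{i+1}$; the alternation property from minimality then forces $\gamma_i$ to lie in the other basic subalgebra, which is the trailing one for $x_i$, and a second application of Lemma \ref{lemma:conjugatebasicunit}(b) to $x_{i-1} = \gamma_i x_i \gamma_i^{-1}$ delivers the conclusion for $i-1$.

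At $i = 0$ the reverse induction yields that $\gamma_1$ is a leading vector of $x$; since $\gamma_1$ is a unit, the leading vector of $x$ is a unit, so the retracing algorithm does not fail at its first iteration. The algorithm then conjugates $x$ by some $\alpha \in \F^\times \gamma_1$, producing exactly $x_1$ (scalar conjugations being trivial), with $\delta(x_1) = \delta(x) - 1$ by Lemma \ref{lemma:conjugatebasicunit}(a)(iii). The residual product $\gamma_2 \cdots \gamma_r$ conjugates $x_1$ to the basic element $x_r$, so the main induction hypothesis guarantees success of the algorithm on $x_1$, and hence on $x$. The main delicacy is the reverse induction: one must verify that $x_i$ remains non-basic for every $i < r$ (which follows from $\delta(x_i) = r - i + 1 \geq 2$, so that leading and trailing subalgebras are well-defined) and that the nonscalar and alternation properties coming from minimality precisely supply the hypotheses of Lemma \ref{lemma:conjugatebasicunit}(b) at every rung of the induction.
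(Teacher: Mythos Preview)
Your proof is correct and follows essentially the same approach as the paper: both arguments track the leading vectors and absolute distances along the conjugation chain using Lemma~\ref{lemma:conjugatebasicunit}(b), concluding that the algorithm's chosen leading vector coincides (up to $\F^\times$) with the next basic unit in the decomposition. The only cosmetic differences are that you wrap the argument in an outer induction on $\delta(x)$ and use minimality of the decomposition length to dispose of the edge case where $\gamma_r$ lies in $\F[x_r]$, whereas the paper handles this case directly by simply dropping the redundant factor and then runs a single forward-then-downward induction without the outer induction.
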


\begin{proof}
The ``only if" part has just been explained. Assume conversely that there exists a monomial unit $\gamma$
such that $y:=\gamma^{-1} x \gamma$ is basic.
If $x$ is basic then the retracing algorithm is successful when applied to $x$.
In the remainder of the proof we assume that $x$ is nonbasic.

Let us consider a reduced decomposition $\gamma=\alpha_n \cdots \alpha_1$ into a product of basic units.
We define $x_i:=(\alpha_i \cdots \alpha_1) y (\alpha_i \cdots \alpha_1)^{-1}$ for all $i \in \lcro 0,n\rcro$,
so that $x_0=y$, $x_n=x$ and $x_{i+1}=\alpha_{i+1} x_i \alpha_{i+1}^{-1}$ for all $i \in \lcro 0,n-1\rcro$.

Assume that $y$ and $\alpha_1$ belong to distinct basic subalgebras.
Then, by point (b) of Lemma \ref{lemma:conjugatebasicunit}, the basic vector $\alpha_1$ is leading for $x_1$ and $\delta(x_1)=2$.
Since we have a reduced decomposition, the basic unit $\alpha_2$ belongs to the trailing basic subalgebra of $x_1$,
and hence $\alpha_2$ is leading for $x_2$ and $\delta(x_2)=3$. By finite induction, we obtain that
for all $i \in \lcro 1,n\rcro$, the vector $\alpha_i$ is leading for $x_i$ and $\delta(x_i)=i+1$.
In particular $\alpha_n$ is leading for $x$ and $\delta(x)=n+1>0$.

Hence at the first step, the retracing algorithm must find $\lambda \alpha_n$ as leading vector for $x$, for some $\lambda \in \F^\times$;
since $\lambda \alpha_n$ is a unit, no failure is reported and $x$ is updated to $x_{n-1}$. By downward induction, we find that the retracing algorithm succeeds.

Finally, if $y$ and $\alpha_1$ belong to the same basic subalgebra, then $\alpha_1$ commutes with $y$ and we can replace
$\gamma$ with $\gamma=\alpha_n \cdots \alpha_2$. Then the previous case applies and shows that the retracing algorithm succeeds
when applied to $x$.
\end{proof}

\begin{Rem}
As an application of the previous method,
let us give a new proof of the uniqueness of a reduced decomposition into a product of basic units
(up to multiplication with nonzero scalars) .

As seen in the introduction, it suffices to prove that $1$
has no reduced decomposition of length greater than $1$ into a product of basic units.
So, assume on the contrary that we have such a decomposition $1=\alpha_1 \cdots \alpha_n$ with $n>1$.
Because this is a reduced decomposition, we have $\alpha_1 \not\in\F$. Let us choose
$x$ as a nonscalar basic vector of the basic subalgebra opposite to $\F[\alpha_1]$,
and consider the conjugate $y:=(\alpha_1 \cdots \alpha_n)^{-1} x (\alpha_1 \cdots \alpha_n)$.
Then, by following the line of reasoning of the previous proof, we find $\delta(y)=n+1$, which is absurd because $y=x$ and $\delta(x)=1$.
\end{Rem}

\subsection{The fruits of the retracing algorithm}\label{section:retracingfruits}

We can now collect the fruits of the retracing algorithm.
Key here is the observation that the obstructions to the success of the retracing algorithm vanish if both $p$ and $q$
are irreducible, as in that case every nonzero basic vector is a unit (we do not even need the Zero Divisors Theorem to see this).

Here is a straightforward application:

\begin{theo}\label{theo:conjsousalg2}
If $p$ and $q$ are irreducible, then every quadratic element of $\calW_{p,q}$ is conjugated to a basic vector.
\end{theo}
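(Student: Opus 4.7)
The plan is to invoke the retracing algorithm directly, and to observe that the irreducibility hypotheses completely rule out the only obstruction to its success.

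First I would dispose of the trivial case: if $x \in \F$ then $x \in \F[a]$ already is a basic vector and there is nothing to prove. So assume $x \in \calW_{p,q}\setminus \F$ is quadratic. I apply the retracing algorithm to $x$. By the discussion preceding Proposition \ref{prop:testretracing}, the algorithm terminates in $\delta(x)-1$ steps provided no failure is reported; and a failure is reported only when, at some step, the current leading vector $\alpha$ satisfies $N(\alpha)=0$. So it is enough to rule out this last possibility.

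This is where I use the hypothesis. The leading vector $\alpha$ is by construction a nonscalar element of one of the basic subalgebras $\F[a]$ or $\F[b]$. Since $p$ and $q$ are irreducible, the isomorphisms $\F[a]\simeq \F[t]/(p)$ and $\F[b]\simeq \F[t]/(q)$ show that both basic subalgebras are fields. Hence every nonzero element $\alpha$ of either basic subalgebra is a unit, and by Proposition \ref{prop:unitdetect} one has $N(\alpha)\in\F^\times$, so in particular $N(\alpha)\neq 0$. Therefore the algorithm never reports a failure.

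Consequently the retracing algorithm is successful, producing basic units $\gamma_1,\ldots,\gamma_r$ and a basic vector $y$ with $y=\gamma^{-1}x\gamma$ for $\gamma:=\gamma_1\cdots\gamma_r$. This shows that $x$ is conjugated to the basic vector $y$, as desired. The main point — and the place where I expected to have to think — is the verification that ``nonscalar basic vector'' implies ``unit''; but once one notices that irreducibility of $p$ and $q$ makes both basic subalgebras into fields, this becomes immediate and the whole argument reduces to a single invocation of the machinery already built in Sections \ref{section:units1:plan}--\ref{section:units1:retracing}.
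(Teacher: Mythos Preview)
Your proof is correct and follows essentially the same approach as the paper: apply the retracing algorithm, note that the only obstruction is a leading vector with vanishing norm, and observe that irreducibility of $p$ and $q$ makes both basic subalgebras into fields so that every nonzero (hence every nonscalar) basic vector is a unit. The paper even makes the same remark that the scalar case is trivial and handled separately.
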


This is of course a special case of Cohn's theorem 3.5 in \cite{CohnFreeProductSkewFields}.

Note that the retracing algorithm only handles the case of nonscalar quadratic elements, but the remaining case is trivial.

\begin{cor}\label{cor:conjsousalg2}
If $p$ and $q$ are irreducible, then every $2$-dimensional subalgebra of $\calW_{p,q}$ is isomorphic to one of the basic subalgebras.
\end{cor}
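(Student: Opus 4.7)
The plan is to reduce this immediately to Theorem \ref{theo:conjsousalg2}. Let $\calD$ be a $2$-dimensional subalgebra of $\calW_{p,q}$. First I would pick any $x \in \calD \setminus \F$; since $\dim_\F \calD = 2$ and $\calD$ contains $1$, we have $\calD = \F + \F x$, and in particular $x^2 \in \calD = \F + \F x$, so $x$ is quadratic over $\F$ in the sense of the paper. Moreover $\calD = \F[x]$.

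Next, applying Theorem \ref{theo:conjsousalg2} to the element $x$ (which is quadratic and non-scalar, so the retracing algorithm applies), I obtain a unit $\gamma \in \calW_{p,q}^\times$ and a basic vector $y := \gamma^{-1} x \gamma$. Conjugation by $\gamma$ is an $\F$-algebra automorphism of $\calW_{p,q}$, so it induces an isomorphism $\calD = \F[x] \xrightarrow{\sim} \F[y]$ of $\F$-algebras. It remains to identify $\F[y]$ with a basic subalgebra: by construction $y$ lies in one of $\F[a]$ or $\F[b]$, say $y \in \F[a]$, and since $y \notin \F$ (because $x \notin \F$ and conjugation is injective) the subalgebra $\F[y]$ is a $2$-dimensional subalgebra of the $2$-dimensional algebra $\F[a]$, whence $\F[y] = \F[a]$.

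Combining these steps yields $\calD \simeq \F[a]$ (or $\simeq \F[b]$ if $y$ happens to lie in $\F[b]$), which is the claim. Note that this actually proves a stronger statement than stated: when $p$ and $q$ are irreducible, every $2$-dimensional subalgebra is in fact \emph{conjugated} to one of the basic subalgebras, not merely isomorphic to it. There is essentially no obstacle here beyond invoking the preceding theorem; the only point requiring a word of care is verifying that $\calD = \F[x]$, which follows at once from the dimension count and the quadratic identity \eqref{eq:quadidentity}.
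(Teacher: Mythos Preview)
Your argument is correct and is precisely the derivation the paper intends: the corollary is stated immediately after Theorem~\ref{theo:conjsousalg2} without an explicit proof, and your reduction---pick $x\in\calD\setminus\F$, observe $\calD=\F[x]$ with $x$ quadratic, conjugate to a basic vector---is exactly how one passes from the theorem to the corollary. Your remark that this in fact yields conjugacy (not merely isomorphism) to a basic subalgebra is also correct and worth noting.
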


The next applications are even more spectacular. They are all based upon the following variation of
Theorem \ref{theo:conjsousalg2}.

\begin{prop}\label{prop:doubleretracing}
Assume that $p$ and $q$ are irreducible, and let $(x,y)$ be a pair of quadratic elements of $\calW_{p,q} \setminus \F$ such that
$\deg \langle x,y\rangle=1$.
Then there exists a monomial unit $\gamma$
such that both $\gamma x \gamma^{-1}$ and $\gamma y \gamma^{-1}$ are basic.
\end{prop}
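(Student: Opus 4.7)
The strategy is to first apply the retracing algorithm to $x$ alone, obtaining a monomial unit $\gamma_1$ with $x_1 := \gamma_1 x \gamma_1^{-1}$ basic (this is legitimate by Proposition \ref{prop:testretracing}, since $p$ and $q$ irreducible means every nonzero basic vector is a unit, so no failure is ever reported). Setting $y_1 := \gamma_1 y \gamma_1^{-1}$, I then want to conjugate $y_1$ to a basic vector by a monomial unit that commutes with $x_1$, so as not to destroy the basic nature of $x_1$. A clean way to express the invariance of the hypothesis along the way is to note, using Proposition \ref{prop:commuteadjunction} and the centrality of the inner product, that for any unit $\gamma$ one has $\langle \gamma z \gamma^{-1}, \gamma z' \gamma^{-1}\rangle = \gamma \langle z, z'\rangle \gamma^{-1} = \langle z, z'\rangle$. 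Hence $\deg \langle x_1, y_1\rangle = 1$.

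The heart of the matter is the following structural observation, which I expect to be the main (and really the only nontrivial) step. Let $\calC$ denote the basic subalgebra containing $x_1$, and let $\calC'$ denote the opposite one. Since $x_1 \in \calC \setminus \F$ and since $y_1$ is quadratic, so that $\tr(y_1) \in \F$, the degrees of $\langle \alpha, y_1\rangle$ for $\alpha \in \calC \setminus \F$ all coincide as soon as one of them is positive; in particular $d_\calC(y_1) = \deg \langle x_1, y_1\rangle = 1$. If $y_1$ is already basic, we are finished with $\gamma := \gamma_1$. Otherwise $\delta(y_1) \geq 2$ by Lemma \ref{lemma:quadraticdeployeddegree1}(c), so $\calC$ is the leading basic subalgebra of $y_1$ and $\calC'$ is the trailing one. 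Moreover, because $\calC$ is a field (as $p$ and $q$ are irreducible) every leading vector of $y_1$ is a unit, so Corollary \ref{cor:differenceofdistances} forces $|d_\calA(y_1) - d_\calB(y_1)| = 1$; combined with $d_\calC(y_1)=1$ this yields $\delta(y_1) = 2$.

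With $\delta(y_1) = 2$, a single step of the retracing algorithm will reduce $y_1$ to a basic vector: pick a leading vector $\alpha$ of $y_1$ (available by Lemma \ref{lemma:leading}), which lies in $\calC$ and is a unit, and set $y_2 := \alpha^{-1} y_1 \alpha$. By Lemma \ref{lemma:conjugatebasicunit}(a)(iii), $\delta(y_2) = 1$, so $y_2$ is basic. The crucial payoff of having located the leading subalgebra of $y_1$ as $\calC$ is now visible: since $\alpha$ lies in the commutative algebra $\calC$ and so does $x_1$, we have $\alpha^{-1} x_1 \alpha = x_1$, so $x_1$ remains basic under this conjugation.

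Finally, setting $\gamma := \alpha^{-1} \gamma_1$, which is a product of basic units and hence a monomial unit, we obtain $\gamma x \gamma^{-1} = \alpha^{-1} x_1 \alpha = x_1$ and $\gamma y \gamma^{-1} = \alpha^{-1} y_1 \alpha = y_2$, both basic, as required. The entire argument hinges on the single numerical input $\deg \langle x, y\rangle = 1$, which after the first retracing forces $y_1$ to sit at absolute distance at most $2$ with leading subalgebra precisely $\calC$; this is what guarantees that the second retracing is a single step carried out inside the centralizer of $x_1$.
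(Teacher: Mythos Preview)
Your proof is correct and follows essentially the same approach as the paper's: first retrace $x$ to a basic vector, observe that the inner product is conjugation-invariant so the degree hypothesis persists, then use $d_\calC(y_1)=1$ together with Corollary~\ref{cor:differenceofdistances} to force $\delta(y_1)=2$ with leading subalgebra $\calC=\F[x_1]$, and finish with a single retracing step inside $\calC$, which commutes with $x_1$. The paper's argument is organized identically.
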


Of course then, the elements $\gamma x \gamma^{-1}$ and $\gamma y \gamma^{-1}$ belong to opposite basic subalgebras, because of
the requirement that $\deg \langle x,y\rangle=1$.

\begin{proof}
Since $p$ and $q$ are irreducible, the retracing algorithm succeeds when applied to $x$,
and hence yields a monomial unit $\gamma_1 \in \calW_{p,q}^\times$
such that $\gamma_1 x (\gamma_1)^{-1}$ is basic.

Since inner automorphisms leave the center $C$ invariant and commute with the adjunction, it is clear that they
are isometries for the inner product. In particular $\langle \gamma_1 x (\gamma_1)^{-1},\gamma_1 y (\gamma_1)^{-1}\rangle=\langle x,y\rangle$.
Hence we can reduce the situation to the one where $x$ is basic.

So, assume from now on that $x$ is basic, set $\calC:=\F[x]$ and define $\calD$ as the opposite basic subalgebra.
Of course, if $y \in \calD$ we have finished, and now we will assume that $y \not\in \calD$, to the effect that $d_\calD(y) \geq 1$.

We will prove that there exists a basic unit $\alpha \in \calC^\times$ such that $\alpha y \alpha^{-1}$ is basic, which will conclude the proof
because then $\alpha x \alpha^{-1}=x$.
Simply, we note that $x \not\in \F$ otherwise $\langle x,y\rangle=x\tr(y)$ does not have degree $1$.
Combining this fact with $\deg \langle x,y\rangle=1$ leads to $d_\calC(y)=1$.
Since $p$ and $q$ are irreducible, it follows from Corollary \ref{cor:differenceofdistances} that $d_\calD(y)=2$.
In particular $\calC$ is the leading subalgebra of $y$, and the retracing algorithm gives, in just one step,
a unit $\alpha \in \calC^\times$ such that $\alpha y \alpha^{-1}$ is basic. This completes the proof.
\end{proof}

Here is a straightforward application, where we recall that an automorphism of the $\F$-algebra $\calW_{p,q}$
is called basic when it maps every basic vector to a basic vector, which is equivalent to having it map $a$ and $b$ to basic vectors.

\begin{theo}\label{theo:decompositionautomorphismirreducible}
Assume that $p$ and $q$ are irreducible. Then every automorphism of the $\F$-algebra $\calW_{p,q}$
is the composite of a basic automorphism with an inner automorphism of the form $x \mapsto zxz^{-1}$, where $z$ is a monomial unit.
\end{theo}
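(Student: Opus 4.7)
The plan is to reduce the statement to a direct application of Proposition \ref{prop:doubleretracing}. Let $\Phi \in \Aut(\calW_{p,q})$, and set $a' := \Phi(a)$ and $b' := \Phi(b)$. Since $\Phi$ is injective and $a, b \notin \F$, both $a'$ and $b'$ are nonscalar. Moreover $p(a') = \Phi(p(a)) = 0$ and $q(b') = \Phi(q(b)) = 0$, so $a'$ and $b'$ are both quadratic. If I can verify that $\deg \langle a', b' \rangle = 1$, then Proposition \ref{prop:doubleretracing} will yield a monomial unit $\gamma$ such that $\gamma a' \gamma^{-1}$ and $\gamma b' \gamma^{-1}$ are both basic. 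Then the automorphism $\Psi : x \mapsto \gamma \Phi(x) \gamma^{-1}$ takes $a$ and $b$ to basic vectors, hence takes each basic subalgebra into some basic subalgebra; since the image of $\Psi$ cannot lie in a single two-dimensional subspace, $\Psi$ necessarily preserves or exchanges the basic subalgebras, so it is basic. Writing $\Phi = \Inn_{\gamma^{-1}} \circ \Psi$, where $\Inn_{\gamma^{-1}}$ denotes conjugation by the monomial unit $\gamma^{-1}$, finishes the argument.

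The main obstacle is therefore to prove that $\deg \langle a', b' \rangle = 1$. For this I would use two ingredients. First, by Proposition \ref{prop:commuteadjunction}, $\Phi$ commutes with the adjunction, so
\[
\langle a', b' \rangle = a' (b')^\star + b' (a')^\star = \Phi(a) \Phi(b^\star) + \Phi(b) \Phi(a^\star) = \Phi(a b^\star + b a^\star) = \Phi(\omega).
\]
Second, $\Phi$ restricts to an $\F$-algebra automorphism of the center $C = \F[\omega]$ (by Theorem \ref{theo:center}), and the $\F$-algebra automorphisms of a polynomial ring in one variable are exactly the affine substitutions $\omega \mapsto u \omega + v$ with $u \in \F^\times$ and $v \in \F$. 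Indeed, if $\Phi(\omega) = f(\omega)$ had degree $d \geq 2$, then $\F[f(\omega)]$ would only contain elements of degree in $d\N \cup \{-\infty\}$ and would thus miss $\omega$, contradicting the surjectivity of the restriction of $\Phi$ to $C$. Hence $\Phi(\omega)$ has degree exactly $1$, which is what I need.

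Putting things together, the proof is brief: (i) observe that $\Phi$ acts affinely on $\omega$, (ii) translate this via Proposition \ref{prop:commuteadjunction} into the estimate $\deg\langle \Phi(a), \Phi(b)\rangle = 1$, (iii) feed the pair $(\Phi(a),\Phi(b))$ into Proposition \ref{prop:doubleretracing}, (iv) conjugate to absorb the monomial unit obtained and conclude that what remains is basic. All the heavy lifting has already been done in the retracing algorithm and in Proposition \ref{prop:doubleretracing}; the only genuinely new idea needed here is the center-of-the-center observation in step (i).
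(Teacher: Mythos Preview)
Your proposal is correct and follows essentially the same route as the paper's proof: both use Proposition~\ref{prop:commuteadjunction} to get $\langle \Phi(a),\Phi(b)\rangle=\Phi(\omega)$, observe that the induced automorphism of $C=\F[\omega]$ preserves degree, feed the pair into Proposition~\ref{prop:doubleretracing}, and conclude by conjugating. Your write-up is slightly more detailed (you spell out why automorphisms of $\F[\omega]$ are affine and why $\Psi$ must permute rather than collapse the basic subalgebras), but the argument is the same.
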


\begin{proof}
Let $\Phi \in \Aut(\calW_{p,q})$. Set $x:=\Phi(a)$ and $y:=\Phi(b)$.
Then by Proposition \ref{prop:commuteadjunction} we find $\langle x,y\rangle=\Phi(\langle a,b\rangle)$.
We note that $\Phi$ induces an automorphism of the $\F$-algebra $C=\F[\omega]$ (because the latter is the center of $\calW_{p,q}$),
and hence this automorphism preserves the degree.
Hence the pair $(x,y)$ satisfies the assumptions of Proposition \ref{prop:doubleretracing},
and we recover a monomial unit $\gamma$ such that $\gamma x \gamma^{-1}$ and $\gamma y \gamma^{-1}$ are basic.
Hence, for $i_\gamma : z \mapsto \gamma z \gamma^{-1}$, the composite automorphism $\Psi=i_\gamma \circ \Phi$ maps $a$ and $b$
to basic vectors, and hence is a basic automorphism. Finally $\Phi=i_{\gamma^{-1}} \circ \Psi$, and
$\gamma^{-1}$ is obviously monomial.
\end{proof}

In particular, we have proved that when both $p$ and $q$ are irreducible, every automorphism of the $\F$-algebra $\calW_{p,q}$
is the composite of an inner automorphism with a basic automorphism. This is still far from the Automorphisms Theorem (Theorem
\ref{theo:automorphismstheointro}) stated in the introduction (we lack the uniqueness of the decomposition, and most importantly we lack
the case where at least one of $p$ and $q$ splits), but it is a solid first step in that direction.

Our last application of the retracing algorithm is the converse implication in the Weak Units Theorem, but here we must anticipate on a result that will be proved in Section \ref{section:automorphismsII}: this result states that the only basic and inner automorphism is the identity,
which is part of the Automorphisms Theorem and
is proved fully and as an independent item as Theorem \ref{theo:uniquenessdecompauto} in Section \ref{section:uniquenessdecompauto}.

So, assume that $p$ and $q$ are irreducible.
Let $z \in \calW_{p,q}^\times$. By the previous theorem, $i_z : x \mapsto zxz^{-1}$ can be decomposed as $i_\gamma \circ \Psi$
where $\gamma$ is a monomial unit and $\Psi$ is a basic automorphism. Then $i_{\gamma^{-1} z}$ is basic,
and by Theorem \ref{theo:uniquenessdecompauto} in Section \ref{section:uniquenessdecompauto} it is the identity. Therefore $\gamma^{-1} z$ is a central unit in $\calW_{p,q}$, to the effect that $z=\lambda \gamma$ for some $\lambda \in \F^\times$. Hence $z$ is a monomial unit.

Not only does this argument prove that in the case under scrutiny (i.e., both $p$ and $q$ are irreducible)
every unit is a monomial unit, but it also gives an algorithm that takes as entry a unit $z \in \calW_{p,q}^\times$
and outputs a list $(\alpha_1,\dots,\alpha_n)$ of basic units and a scalar $\lambda$ such that $z=\lambda \alpha_1 \cdots \alpha_n$.
This algorithm runs as follows:
\begin{itemize}
\item Initialize $x$ as $z a z^\star$, $L$ as the empty list, $\gamma$ as $z$ and $\pi$ as $1$.
\item While $x$ is not basic:
\begin{itemize}
\item Compute a leading vector $\alpha$ of $x$.
\item Update $x$ to $\alpha^\star x \alpha$, $\gamma$ to $\alpha^\star \gamma$ and $\pi$ to $\pi N(\alpha)$.
\item Append $\alpha$ to $L$.
\end{itemize}
\item Append $\gamma$ to $L$. Append $\pi$ to $L$. \\
\item Return $L$.
\end{itemize}

We write the output as $L=(\alpha_1,\dots,\alpha_n,\pi)$ and we claim that
\begin{equation}\label{eq:outputalgo}
z=\pi^{-1} \alpha_1 \cdots \alpha_n.
\end{equation}
To see this, note that when the loop stops the current conjugator $\gamma$ is $\gamma=\alpha_{n-1}^\star \cdots \alpha_1^\star z=\alpha_n$ and the
current $\pi$ is $N(\alpha_1 \cdots \alpha_{n-1})$.

Next, $\alpha_1,\dots,\alpha_{n-1}$ are all basic units, and we must explain why $\alpha_n$ is also a basic unit.
Set $a':=\alpha_{n-1}^\star \cdots \alpha_1^\star (z a z^\star) \alpha_1 \cdots \alpha_{n-1}$, which is the value of $x$
after the loop stops.
So, set $b':=\alpha_{n-1}^\star \cdots \alpha_1^\star (z b z^\star) \alpha_1 \cdots \alpha_{n-1}$.
The details of the proof of Proposition \ref{prop:doubleretracing} then show that there exists a basic unit
$\beta \in \F[a']$ such that
$\beta^\star b' \beta$ is basic.
Then with $\gamma':=\beta^\star \gamma$, we deduce that $\gamma' a (\gamma')^{-1}$ and $\gamma' b (\gamma')^{-1}$
are basic. Hence Theorem \ref{theo:uniquenessdecompauto} shows that $\gamma' \in \F^\times$ and we conclude that
$\beta \sim \gamma=\alpha_n$, which shows that $\alpha_n$ is a basic unit.

\begin{Rem}
The above algorithm is written so that it avoids using divisions, and the only division needed is to compute
$\pi^{-1}$ in \eqref{eq:outputalgo}. Indeed, in computing a leading vector for $x$ or $y$, it is easily seen from the proof of Lemma \ref{lemma:leading} that one can avoid dividing in $\F$, as in the notation of this lemma we can take $\alpha'=\lambda \alpha-\mu$ as leading vector.
\end{Rem}

\subsection{Counterexamples in the split case}\label{section:units1counterexamples}

We conclude by proving that the requirement that both $p$ and $q$ be irreducible
is necessary in the Weak Units Theorem. So, we assume that one of $p$ and $q$ is reducible,
and we construct a unit that is not a monomial one.

In any case, we have a nonscalar basic element $\alpha$ such that $N(\alpha)=0$.
Hence $\alpha^2=\tr(\alpha) \alpha$, and by scaling $\alpha$ we can reduce the situation to only two situations:
$\tr(\alpha)=1$ or $\tr(\alpha)=0$. In any case, we choose a nonscalar vector $\beta$ in the basic subalgebra opposite to $\F[\alpha]$.

In both cases, the idea is to pick a well-chosen element $z \in \calW_{p,q}$ such that
$\langle \alpha,z\rangle=0$, and we consider the element
$$U:=1+\alpha z^\star.$$
Then we note that
$$N(U)=1+\langle \alpha z^\star,1\rangle+N(\alpha z^\star)=1+\langle \alpha,z\rangle+N(\alpha)N(z^\star)=1.$$
Hence $U$ is a unit, and in any case we note that $U^{-1}=U^\star=1+z \alpha^\star$.
 Next, in order to know that $U$ is not a monomial unit it suffices to find
a nonscalar basic vector $y$ such that the retracing algorithm fails for $x:=U^{-1} y U$
or for $U y U^{-1}$.
Indeed, if $U$ is a monomial unit then so is $U^{-1}$, and we know from Proposition \ref{prop:testretracing}
that the retracing algorithm must succeed when applied to $x$.

Now, we need to split the discussion into two cases.

\vskip 3mm
\noindent \textbf{Case 1: $\tr(\alpha)=1$.} \\
Here we take $z:=-\langle \alpha,\beta\rangle+\beta$, which clearly satisfies $\langle \alpha,z\rangle=0$.
We choose $y:=\alpha$ and prove that the retracing algorithm fails for $x:=U^{-1} y U=U^\star \alpha U$.
Note to this end by using $\alpha^\star \alpha=0$ and $\alpha^2=\alpha$ that
$$x=\alpha (1+\alpha z^\star)=\alpha(1+z^\star)=\left[1-\langle \alpha,\beta\rangle\right] \alpha+\alpha \beta^\star.$$
By computing in the deployed basis $(1,\alpha,\beta^\star,\alpha \beta^\star)$, we deduce that
$\alpha$ is leading for $x$. Since $\alpha$ is a zero divisor, we deduce that the retracing algorithm fails for~$x$.

\vskip 3mm
\noindent
\textbf{Case 2: $\tr(\alpha)=0$.} \\
Then $\alpha^2=0$. Let us take an arbitrary nonscalar central element $z \in C \setminus \F$.
Clearly $\langle \alpha,z\rangle=z \tr(\alpha)=0$ and $U=1+\alpha z$.
This time around we take $y:=\beta$ and set $x:=U \beta U^{-1}$.
Then
\begin{align*}
x & = U \beta U^\star \\
& = U(\langle U,\beta\rangle- U \beta^\star) \\
& = \langle U,\beta\rangle U-U^2 \beta^\star \\
& = \langle U,\beta\rangle+ z  \langle U,\beta\rangle \alpha-(1+2z \alpha) \beta^\star \\
& = \langle U,\beta\rangle+ z  \langle U,\beta\rangle \alpha-\beta^\star-2z \alpha \beta^\star.
\end{align*}
Finally $\langle U,\beta\rangle=\tr(\beta)+z\langle \alpha,\beta\rangle$ has degree $1+\deg(z)$, and
$z \langle U,\beta\rangle$ has degree $1+2\deg(z)>1+\deg(z)$. It is then clear that
$\alpha$ is leading vector for $x$. Since $\alpha$ is a zero divisor, the retracing algorithm fails for $x$,
and we conclude that $U$ is not a monomial unit.

Hence, in any case we have exhibited a unit that is not monomial.
Therefore, the Weak Units Theorem now entirely rests upon the validity of Theorem \ref{theo:uniquenessdecompauto}.

\begin{Rem}
In Case 2, starting from $y:=\alpha$ yields $U \alpha U^{-1}=\alpha$, an element for which the retracing algorithm
succeeds. Hence the criterion that, for a given nonscalar basic vector $x$, the retracing algorithm fails for the conjugate $z x z^{-1}$
is only a sufficient condition for $z$ not to be monomial, but not a necessary one.

A correct necessary and sufficient condition for $z$ to be monomial is actually that the
retracing algorithm fails for both $zaz^{-1}$ and $zbz^{-1}$, but proving this is premature at this point.
\end{Rem}

Now, the situation is clear when both polynomials $p$ and $q$ are irreducible, but many questions remain in the other cases:
\begin{itemize}
\item What are the missing generators?
\item Can we give a reasonably simple expression of the units group in that case (as an amalgamated product of two more elementary subgroups)?
\end{itemize}

Both these questions will be answered in Section \ref{section:units2}, which builds upon the present one and takes the method further.
Interestingly, the units we have just shown to be non-monomial will be very close to the missing generators.
Since this is a very technical study, and we turn to other issues in the meantime.

\section{Maximal ideals in the free Hamilton algebra}\label{section:ideals}

Here and from now on, all the ideals we consider are two-sided ideals, and we will never repeat this precision.
We are uninterested here in modules over $\calW_{p,q}$, so left ideals and right ideals are irrelevant to us.

Our main aim is to determine the maximal ideals of $\calW_{p,q}$. This is considerably helped by the connection with quaternion algebras,
but will require a deeper analysis due to the degeneracy at the fundamental ideal.

Remember throughout that $\Irr(\F)$ stands for the set of all monic and irreducible polynomials in $\F[t]$
(and, as usual, $t$ is an indeterminate). Throughout $\K$ denotes a splitting field of $pq$, which we fix once and for all
(we will regularly recall the meaning of this notation, though).

\subsection{The first step}

To start with, take a proper ideal $J$ of $C$.
Then $J+Ja+Jb+Jab$ is clearly an ideal of $\calW_{p,q}$ that includes $J$, so it is the ideal generated by $J$, denoted by $\langle J\rangle$.
Because $(1,a,b,ab)$ is a $C$-basis the intersection of this ideal with $C$ equals $J$.
In particular, $\langle J\rangle \neq \calW_{p,q}$, and $\langle J\rangle$ is maximal as an ideal of $\calW_{p,q}$ only if
$J$ is maximal as an ideal of $C$ (but the converse may fail, as we shall see).

Conversely, we now consider the situation of an arbitrary non-zero ideal of $\calW_{p,q}$.
The starting result is not new (see the proof of point (ii) of theorem 4 in \cite{SZW}), but our proof is original.

\begin{prop}\label{prop:nonzeroidealmeetscenter}
Let $I$ be a non-zero ideal of $\calW_{p,q}$. Then $I \cap C \neq \{0\}$.
\end{prop}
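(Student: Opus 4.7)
The plan is to exploit the central structure we have already established, by extending scalars from $C$ to $\F(\omega)$ and invoking the simplicity of the resulting quaternion algebra. First, I will observe that the canonical map $\calW_{p,q} \rightarrow \overline{\calW_{p,q}}$, $x \mapsto x \otimes 1$, is injective: indeed $\calW_{p,q}$ is a free $C$-module, hence torsion-free, so localization at the multiplicative set $C \setminus \{0\}$ is faithful. I will therefore identify $\calW_{p,q}$ with a subring of $\overline{\calW_{p,q}}$.

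Given a non-zero ideal $I$ of $\calW_{p,q}$, I will let $\overline{I}$ denote the two-sided ideal of $\overline{\calW_{p,q}}$ generated by $I$. The key observation is that any element of $\overline{I}$ admits a common denominator in $C$, i.e.\ has the form $z/s$ with $z \in I$ and $s \in C \setminus \{0\}$. This is because elements of $\overline{\calW_{p,q}}$ are of the form $w/r$ with $w \in \calW_{p,q}$ and $r \in C \setminus \{0\}$, and $I$ is a two-sided ideal of $\calW_{p,q}$, so after clearing denominators any finite sum $\sum_i \alpha_i x_i \beta_i$ with $x_i \in I$ and $\alpha_i, \beta_i \in \overline{\calW_{p,q}}$ collapses into $z/s$ with $z \in I$.

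Now, since $\overline{\calW_{p,q}}$ is a quaternion algebra over the field $\F(\omega)$ (by the theorem proved at the end of Section \ref{section:quaternionalgebras}), it is simple, so $\overline{I} \in \{0,\overline{\calW_{p,q}}\}$. The injectivity established above rules out $\overline{I}=0$, since $I \neq 0$. Hence $\overline{I}=\overline{\calW_{p,q}}$, so in particular $1 \in \overline{I}$. Writing $1 = z/s$ with $z \in I$ and $s \in C \setminus \{0\}$ gives $s = z$ inside $\overline{\calW_{p,q}}$, and therefore inside $\calW_{p,q}$ by injectivity. Thus $s \in I \cap C$ and $s \neq 0$, which is the desired conclusion.

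The only delicate step is the common-denominator claim, but it is routine given that $C$ is central in $\overline{\calW_{p,q}}$ and that $I$ is two-sided in $\calW_{p,q}$; everything else is bookkeeping. The proof is essentially a one-line consequence of the simplicity of the completed quaternion algebra, which is arguably the most striking pay-off so far of the quaternion viewpoint adopted in Section \ref{section:quaternionalgebras}.
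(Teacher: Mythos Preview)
Your proof is correct, and it takes a genuinely different route from the paper's own argument. You localize at $C \setminus \{0\}$ and invoke the simplicity of the quaternion algebra $\overline{\calW_{p,q}}$ established in Section~\ref{section:quaternionalgebras}; from there the conclusion is immediate once one checks (as you do) that every element of the extended ideal $\overline{I}$ has the form $z/s$ with $z \in I$ and $s \in C \setminus \{0\}$.

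The paper instead argues entirely inside $\calW_{p,q}$, without invoking the completion or simplicity. Assuming $I \cap C = \{0\}$, it observes that $N(x)=xx^\star \in I \cap C$ forces $N$ to vanish on $I$; then any trace-zero element of $I$ lies in the radical of $\langle -,-\rangle$ and hence is zero, so $\tr$ is injective on $I$, forcing $I$ to be a rank-$1$ free $C$-module $Cx_0$. A short manipulation with an element of $(Cx_0)^\perp$ linearly independent of $x_0$ then yields a contradiction. Your approach is shorter and more conceptual, and it showcases exactly what the quaternion viewpoint buys; the paper's approach is more self-contained in that it relies only on the non-degeneracy of the inner product (Proposition~\ref{prop:gram}) rather than the full quaternion-algebra recognition theorem, and it illustrates how much mileage one gets from the norm and trace alone.
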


\begin{proof}
Assume on the contrary that $I \cap C=\{0\}$.

To start with, we note that $xx^\star \in I \cap C$ for all $x \in I$, to the effect that $N$
vanishes on $I$. By polarizing, we deduce that $I$ is a totally singular $C$-subspace for the form $\langle -,-\rangle$.

Next, let $z \in I$ have trace $0$.
Then $z^\star=\tr(z)-z \in I$. For all $y \in \calW_{p,q}$, we deduce that
$\langle z,y\rangle=zy^\star+yz^\star \in I$, and since $\langle z,y\rangle \in C$ we find $\langle z,y\rangle=0$.
Therefore $z$ is in the radical of $\langle -,-\rangle$, to the effect that $z=0$. We conclude that the sole trace zero element in $I$ is $0$.

It ensues that $\tr$ is injective on $I$, and since $I \neq \{0\}$ this yields that $I$ is a free $C$-module with rank $1$, i.e.,
$I=C x_0$ for some $x_0 \in I \setminus \{0\}$.
Next, note that the orthogonal complement of $Cx_0$ for $\langle -,-\rangle$
is a free $C$-module with rank $3$. Hence we can pick an arbitrary element $y$ of it that is linearly independent of $x_0$ over $C$.
Then $y x_0^\star=-x_0y^\star \in I$. Hence $\tr(x_0)y-yx_0 \in I$ and finally $\tr(x_0) y \in I$.
It ensues that $\tr(x_0) y=\lambda x_0$ for some $\lambda \in C$, which contradicts the assumed linear independence because $\tr(x_0)\neq 0$.
\end{proof}

\begin{cor}\label{cor:maxidealcenter}
Let $I$ be a maximal ideal of $\calW_{p,q}$. Then $I \cap C$ is a maximal ideal of $C$.
\end{cor}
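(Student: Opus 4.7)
The plan is to apply the standard principle for central subrings $R$ of rings $A$ that are module-finite over $R$: for every maximal two-sided ideal $I$ of $A$, the contraction $I \cap R$ is a maximal ideal of $R$. Here $R = C$ and $A = \calW_{p,q}$, and the hypothesis is satisfied because $(1, a, b, ab)$ is a $C$-basis of $\calW_{p,q}$.

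First I would dispose of the triviality: $I \cap C$ is proper in $C$ since $1 \notin I$, and it is nonzero by Proposition \ref{prop:nonzeroidealmeetscenter} (note that $I$ itself is nonzero because $\{0\}$ is not maximal: for any proper nonzero ideal $J$ of $C$, the ideal $\langle J\rangle$ is a proper nonzero ideal of $\calW_{p,q}$, as observed at the start of this section). So $I \cap C$ is a proper nonzero ideal of $C$, and only its maximality remains to be established.

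Set $S := \calW_{p,q}/I$. Maximality of $I$ makes $S$ a simple ring, and the center $Z(S)$ is then a field: for a nonzero $z \in Z(S)$, the two-sided ideal $zS$ equals $S$, so $zy = 1$ for some $y \in S$, and a one-line check using the centrality of $z$ shows that $y$ is central too. The reduction map $C \to S$ has kernel exactly $I \cap C$ and image contained in $Z(S)$, so it yields an embedding $C/(I \cap C) \hookrightarrow Z(S)$; in particular $C/(I \cap C)$ is an integral domain.

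To upgrade this to a field, I would invoke integrality. Since $\calW_{p,q}$ is free of rank $4$ over $C$, every $x \in \calW_{p,q}$ annihilates the characteristic polynomial of the $C$-linear endomorphism $z \mapsto xz$, hence is integral over $C$; reducing modulo $I$, every element of $S$ is integral over $C/(I \cap C)$. In particular the field $Z(S)$ is an integral extension of $C/(I \cap C)$, and the classical fact that an integral domain becomes a field as soon as some field integrally extends it (write $c^{-1} \in Z(S)$ as a root of a monic polynomial over $C/(I \cap C)$ and solve for $c^{-1}$) completes the proof. The main obstacle is essentially none: the substantive input was provided by Proposition \ref{prop:nonzeroidealmeetscenter}, and what remains is a routine application of the module-finite/central-subring principle.
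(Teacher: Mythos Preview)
Your proof is correct and takes a genuinely different route from the paper's. The paper argues by contradiction using the norm: writing $I\cap C = Cr(\omega)$ and assuming $r$ is reducible with a proper factor $s$, maximality of $I$ forces $1 = s(\omega)x + y$ with $y\in I$; applying $N$ gives $1 = s(\omega)^2 N(x) + s(\omega)\langle x,y\rangle + N(y)$, and since $N(y)=yy^\star\in I\cap C$ one concludes that $s(\omega)$ divides $1$, a contradiction. Your argument instead invokes the standard module-finite principle: $\calW_{p,q}/I$ is simple, its center is a field, and since $\calW_{p,q}$ is integral over $C$ (you cite Cayley--Hamilton, though the quadratic identity $x^2=\tr(x)x-N(x)$ already does the job) the domain $C/(I\cap C)$ sits inside that field as an integral subring, hence is itself a field. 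The paper's approach stays within the quadratic toolkit it has built and is entirely self-contained; yours is cleaner and immediately generalizes to any ring that is module-finite over its center, at the mild cost of importing a commutative-algebra fact from outside the paper's narrative.
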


\begin{proof}
By Proposition \ref{prop:nonzeroidealmeetscenter}, the ideal $I \cap C$ of $C$ is nonzero, and it does not equal $C$ because $1 \not\in I$.
Hence $I \cap C=C r(\omega)$ for some nonconstant polynomial $r$. Assume that $r$ is reducible,
and consider a divisor $s$ of it such that $s$ and $\frac{r}{s}$ are non constant.

Set $\widetilde{I}:=(s(\omega))+I$.
Since $s(\omega) \not\in I$, we see that $I \subsetneq \widetilde{I}$.
Since $I$ is maximal we deduce that $\widetilde{I}=\calW_{p,q}$.
Since $s(\omega)$ is central, it follows that $1=s(\omega) x+y$ for some $x$ in $\calW_{p,q}$ and some $y \in I$.
Computing the norm, we obtain $1=s(\omega)^2 N(x)+s(\omega) \langle x,y\rangle+N(y)$.
Yet $N(y)=yy^\star \in I \cap C$, and hence $s(\omega)$ divides $1$ in $C$. This is absurd.
Therefore $r$ is irreducible, to the effect that $I \cap C$ is a maximal ideal of $C$.
\end{proof}

Now, take a maximal ideal $I$ of $\calW_{p,q}$. By Corollary \ref{cor:maxidealcenter}, the intersection $I \cap C$
equals $Cr(\omega) $ for a unique $r \in \Irr(\F)$. Then $(r(\omega)) \subseteq I$. Note that
the quotient ring $\L:=C/(I \cap C) \simeq \F[t]/(r)$ is a field. There are two cases:
\begin{itemize}
\item Either $r$ is relatively prime with $\Lambda_{p,q}$, in which case the analysis from Section \ref{section:quaternionalgebras} shows that
$\calW_{p,q}/(r(\omega))$ is a quaternion algebra over $\L$.
As every quaternion algebra is simple, it follows that $I=(r(\omega))$.
\item Or $r$ divides $\Lambda_{p,q}$, i.e., $\mathfrak{F} \subseteq I$, and in that
case we must push the analysis further.
\end{itemize}
In particular, the same argument as in the first point
shows that the ideal $(r(\omega))$ of $\calW_{p,q}$ is maximal
for every $r \in \Irr(\F)$ that is relatively prime with $\Lambda_{p,q}$,

Hence, we have a partial conclusion at this point:

\begin{theo}
The maximal ideals of $\calW_{p,q}$ are:
\begin{enumerate}[(i)]
\item The ideals of the form $(r(\omega))$ where $r \in \Irr(\F)$ is relatively prime with $\Lambda_{p,q}$;
\item The maximal ideals that include $(r(\omega))$ for some $r \in \Irr(\F)$ that divides $\Lambda_{p,q}$.
\end{enumerate}
\end{theo}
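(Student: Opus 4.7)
The plan is to combine the two preparatory results (Proposition \ref{prop:nonzeroidealmeetscenter} and Corollary \ref{cor:maxidealcenter}) with the quaternion-algebra description of local quotients (Theorem \ref{theo:quatlocal}), pivoting on the dichotomy provided by the fundamental polynomial $\Lambda_{p,q}$.

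First I would take an arbitrary maximal ideal $I$ of $\calW_{p,q}$. By Corollary \ref{cor:maxidealcenter}, the intersection $I\cap C$ is a maximal ideal of $C=\F[\omega]$, hence equal to $(r(\omega))$ for a unique $r\in\Irr(\F)$. This immediately forces $(r(\omega))\subseteq I$. Now I would split the analysis according to whether $r$ is coprime with $\Lambda_{p,q}$ or divides it (no other cases can occur since $r$ is irreducible): in the first case, Theorem \ref{theo:quatlocal} ensures that $\calW_{p,q}/(r(\omega))$ is a quaternion algebra over the field $\F[\omega]/(r(\omega))$, which is simple as a ring. Since $I/(r(\omega))$ is a proper two-sided ideal of this simple algebra, it must be zero, giving $I=(r(\omega))$. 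In the second case, $r$ divides $\Lambda_{p,q}$ forces $\mathfrak{F}=(\Lambda_{p,q}(\omega))\subseteq (r(\omega))\subseteq I$, landing $I$ in category (ii).

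Conversely, I would verify that every ideal of the form $(r(\omega))$ with $r$ coprime with $\Lambda_{p,q}$ is indeed maximal: again this is the same observation, the quotient being a quaternion algebra, hence simple, so $(r(\omega))$ has no strict enlargement short of the entire algebra. Category (ii) ideals are maximal by definition, so nothing more is required.

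There is essentially no obstacle here: everything reduces to (a) the existence of a nontrivial central intersection for nonzero ideals, already secured by Proposition \ref{prop:nonzeroidealmeetscenter}, and (b) the simplicity of the local quotients when $r\wedge\Lambda_{p,q}=1$, secured by Theorem \ref{theo:quatlocal}. The only genuine subtlety is the degenerate case $r\mid\Lambda_{p,q}$, where the local quotient need not be simple and can even have a nontrivial Jacobson-type radical (as was flagged after Theorem \ref{theo:quatlocaln}); this is precisely why the theorem only \emph{records} category (ii) rather than resolving it, postponing the detailed description of those maximal ideals to the subsequent analysis of the fundamental ideal.
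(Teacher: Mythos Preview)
Your proposal is correct and follows essentially the same approach as the paper: both arguments use Corollary \ref{cor:maxidealcenter} to reduce to an irreducible $r$, then invoke the simplicity of the quaternion algebra $\calW_{p,q}/(r(\omega))$ (Theorem \ref{theo:quatlocal}) in the coprime case to force $I=(r(\omega))$, while merely recording the divisor case as category (ii). The paper's argument is presented as running text just before the theorem statement and is structurally identical to yours.
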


The latter are also the maximal ideals that include the fundamental ideal $\mathfrak{F}$.
It remains to understand them, which we do in the next two sections.

Our aim is the following theorem, where we recall that $\K$ is a fixed splitting field of $pq$.

\begin{theo}
Let $r \in \Irr(\F)$ be a monic irreducible divisor of $\Lambda_{p,q}$.
Then:
\begin{enumerate}[(a)]
\item For every maximal ideal $I$ of $\calW_{p,q}$ that includes $(r(\omega))$,
the quotient $\F$-algebra $\calW_{p,q}/(r(\omega))$ is isomorphic to the splitting field of $pq$.
\item There are one or two maximal ideals of $\calW_{p,q}$ that include $(r(\omega))$.
\item The following conditions are equivalent:
\begin{enumerate}[(i)]
\item There is a unique maximal ideal of $\calW_{p,q}$ that includes $(r(\omega))$.
\item Either both $p$ and $q$ have a double root in $\K$,
or one is irreducible and the other one does not split with simple roots in $\F$.
\end{enumerate}
\end{enumerate}
\end{theo}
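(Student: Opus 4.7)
My plan is to quotient by $r(\omega)$ and set $\overline{\calW} := \calW_{p,q}/(r(\omega))$, a $4$-dimensional algebra over $\L := \F[\omega]/(r(\omega))$; the maximal ideals of $\calW_{p,q}$ containing $(r(\omega))$ correspond bijectively to those of $\overline{\calW}$, with the same quotient. Because $r\mid\Lambda_{p,q}$, the Gram determinant $\Lambda_{p,q}(\overline{\omega})^2$ vanishes in $\L$, so the radical $R$ of the induced inner product on $\overline{\calW}$ is a nonzero two-sided $*$-invariant ideal (by \eqref{eq:adjeq}). My first step is to verify that $R^2 = 0$: I would read off a basis of $R$ from the kernel of the Gram matrix \eqref{eq:Gram} reduced modulo $r(\omega)$, and check vanishing of products of basis elements by direct computation, using the quadratic identity \eqref{eq:quadidentity} together with the relation $\Lambda_{p,q}(\overline{\omega}) = 0$. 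This places $R$ inside the Jacobson radical of $\overline{\calW}$.

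Next I would fix a splitting field $\K$ of $pq$ together with an embedding $\L\hookrightarrow\K$, identifying $\overline{\omega}$ with one of the two roots $\mu_1=x_1y_1+x_2y_2$ or $\mu_2=x_1y_2+x_2y_1$ of $\Lambda_{p,q}$ in $\K$, where $x_1,x_2$ and $y_1,y_2$ are the roots of $p$ and $q$ in $\K$. The $\F$-algebra homomorphisms $\calW_{p,q}\to\K$ factoring through $\overline{\calW}$ are exactly the $1$-dimensional $\K$-representations $(a,b)\mapsto(x_i,y_j)$ for which $x_iy_{j'}+x_{i'}y_j$ equals the chosen root (primes denoting complementary indices), and there are $1$ or $2$ such representations (exactly $1$ iff both $p$ and $q$ have a double root in $\K$). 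A case-by-case inspection of the types of basic subalgebras shows that the image of each such representation is all of $\K$. Setting $\overline{\calW}_\K := \overline{\calW}\otimes_\L\K$, since $R\otimes_\L\K$ is contained in the intersection of the kernels of these $m$ representations, and both have $\K$-dimension $4-m$ by a dimension count, the two coincide and $\overline{\calW}_\K/(R\otimes_\L\K)\cong\K^m$. In particular the maximal ideals of $\overline{\calW}_\K$ are exactly the kernels of these representations.

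Finally I would apply Galois descent for the extension $\K/\L$: the maximal ideals of $\overline{\calW}$ correspond bijectively to $\Gal(\K/\L)$-orbits on those of $\overline{\calW}_\K$, and the quotient of $\overline{\calW}$ by each maximal ideal is the $\Gal(\K/\L)$-fixed subalgebra of the corresponding factor of $\K^m$. When $\L=\K$ the Galois group is trivial, every orbit has size $1$, and the quotient is $\K$ itself; when $\L\subsetneq\K$ the nontrivial element of $\Gal(\K/\L)$ must send one of the two representations to the other (it fixes the shared value $\mu_1$ and therefore, by direct inspection, acts on $\{(x_i,y_j)\}$ as the involution $x_1\leftrightarrow x_2$, $y_1\leftrightarrow y_2$), producing a single orbit of size $2$ with quotient $\{(x,\sigma(x)) : x\in\K\}\cong\K$. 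This proves (a). For (b), there are $1$ or $2$ orbits, as claimed; for (c), uniqueness holds exactly when either $m=1$ (both $p$ and $q$ have a double root in $\K$) or $m=2$ together with $\L\subsetneq\K$, and running through the possible types of $(p,q)$ one checks that the latter is equivalent to one of $p, q$ being irreducible while the other is either irreducible or has a double root in $\K$.

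The main obstacle I foresee is the explicit verification $R^2 = 0$ in every subcase, and in particular the exceptional characteristic-$2$ scenario singled out at the end of Section \ref{section:quaternionalgebras} (where $\car(\F)=2$, $\tr(p)=\tr(q)=0$ and $r=t$), in which the inner product is identically zero on $\overline{\calW}$ and $R$ is the whole algebra; there the description of $R$ via the Gram matrix collapses, but the algebra $\overline{\calW}$ becomes commutative and the max ideals can be analyzed directly from its explicit tensor-product structure.
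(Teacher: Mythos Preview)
Your approach is essentially correct and genuinely different from the paper's, but there is one technical slip and one point that deserves explicit verification.

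\textbf{The slip.} The claim $R^2=0$ is false in the case where $\dim_{\L} R = 3$, namely when $\car(\F)\neq 2$ and both $p,q$ split with double roots. After a basic base change one may take $p=q=t^2$, so $a^2=b^2=0$ and $ab+ba=-\omega$; then $R=\Vect_{\L}(\overline{a},\overline{b},\overline{ab})$ and $\overline{a}\cdot\overline{b}=\overline{ab}\neq 0$. What you actually need (and what your computations would reveal) is that $R$ is \emph{nilpotent}---in fact $R^3=0$ in this case---which still places $R$ inside the Jacobson radical and lets the rest of your argument proceed unchanged.

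\textbf{The point needing verification.} For the Galois descent step you implicitly assume $\K/\L$ is Galois. This is true in every non-exceptional case, but it is worth stating why: the only way $\K/\L$ can be inseparable is if both $p$ and $q$ are inseparable, which forces $\car(\F)=2$ and $\tr(p)=\tr(q)=0$---precisely the scenario you already set aside. Also, your justification that the nontrivial element of $\Gal(\K/\L)$ swaps the two representations is cleanest stated as: a fixed representation would have $x_i,y_j\in\L$, hence $\K=\F(x_i,y_j)\subseteq\L$, contradicting $\L\subsetneq\K$.

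\textbf{Comparison with the paper.} The paper avoids base change and Galois theory entirely. Instead of the bilinear radical $R$, it works with the smaller set $\mathfrak{R}_r:=\{x\in R:N_r(x)=0\}$, which is always a \emph{proper} ideal (even in your exceptional characteristic-$2$ case). The key observation is that $N_r$ vanishes on every proper ideal (one line: $xx^\star\in J\cap\L$), so every maximal ideal contains $\mathfrak{R}_r$. The quotient $\calU_r:=\overline{\calW}/\mathfrak{R}_r$ then has $\L$-dimension at most $2$ (or is directly seen to be commutative in the exceptional case), hence is commutative; since any further quotient by a maximal ideal is a commutative $\F$-algebra generated by a root of $p$ and a root of $q$, it is immediately identified as a splitting field of $pq$. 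The count of maximal ideals is then read off from whether the $2$-dimensional $\L$-algebra $\calU_r$ is a field, split, or degenerate. This route is more uniform (no exceptional case to split off, no separability hypothesis to check) and slightly shorter; your route is more explicit and geometric, making the maximal ideals visible as kernels of concrete one-dimensional representations.
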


The next section quickly discusses the nature of the fundamental polynomial $\Lambda_{p,q}$ as a function of the polynomials $p$ and $q$.

\subsection{Relationship between the basic subalgebras and the fundamental ideal}\label{section:Lambdapq}

In the prospect of the next two sections, it is crucial to understand exactly when $\Lambda_{p,q}$ is irreducible, and in case it splits,
to understand whether $\Lambda_{p,q}$ has simple roots or not.
To this end, consider the splitting field $\K$ of $pq$, and split $p$ and $q$ over $\K$ as
$$p=(t-x_1)(t-x_2) \quad \text{and} \quad q=(t-y_1)(t-y_2),$$
so that
$$\Lambda_{p,q}=\bigl(t-(x_1y_1+x_2y_2)\bigr)\bigl(t-(x_2y_1+x_1y_2)\bigr).$$
Our first observation is that the discriminant of $\Lambda_{p,q}$ equals
$$\disc(\Lambda_{p,q})=((x_1-x_2)(y_1-y_2))^2=\disc(p)\disc(q),$$
so $\Lambda_{p,q}$ has simple roots in $\K$ if and only if both $p$ and $q$ have simple roots in $\K$.
Moreover, if $p$ has a double root $x$ in $\K$, then the sole root of $\Lambda_{p,q}$ is $x\tr(q)$,
which belongs to $\F$ if and only if $x\in \F$ or $\tr(q)=0$.

Obviously, if $p$ and $q$ split over $\F$, then $\Lambda_{p,q}$ also does. For the remaining situations, we split the discussion into many cases.

\begin{itemize}
\item \textbf{Case 1.} Exactly one of $p$ and $q$ is irreducible, say $q$.
\begin{itemize}
\item \textbf{Subcase 1.1:} $p$ has a double root. Then $\Lambda_{p,q}$ has a double root in $\F$.
\item \textbf{Subcase 1.2:} $p$ has simple roots and $q$ has a double root $y$. This is possible only if $\car(\F)=2$. Then
$\Lambda_{p,q}$ has double root $y \tr(p)$, and $\tr(p) \neq 0$ because $\car(\F)=2$ and $p$ has simple roots.
Therefore $y \tr(p)\not\in \F$, and hence $\Lambda_{p,q}$ is irreducible and has the same splitting field as $q$ in $\K$.
\item \textbf{Subcase 1.3:} $p$ and $q$ have simple roots. Then the Galois group of $\K$ over $\F$
exchanges the roots of $\Lambda_{p,q}$, so $\Lambda_{p,q}$ is irreducible and has the same splitting field as $q$ in $\K$.
\end{itemize}

\item \textbf{Case 2.} Both $p$ and $q$ are irreducible with the same splitting field in $\K$, which then equals $\K$.
\begin{itemize}
\item \textbf{Subcase 2.1:} $\K$ is inseparable over $\F$. Then $\Lambda_{p,q}=t^2$.
\item \textbf{Subcase 2.2:} $\K$ is separable over $\F$. Then the Galois group of $\K$ over $\F$ fixes the roots of
$\Lambda_{p,q}$, so $\Lambda_{p,q}$ splits over $\F$ with simple roots.
\end{itemize}

\item \textbf{Case 3.} Both $p$ and $q$ are irreducible, with distinct splitting fields in $\K$. \\
Denote then by $\L$ the splitting field of $\Lambda_{p,q}$ in $\K$.
\begin{itemize}
\item \textbf{Subcase 3.1:} $p$ and $q$ are separable. Then the Galois automorphism of $\K$ over $\F$ that fixes the roots of $p$ and exchanges the ones of $q$ also exchanges the roots of $\Lambda_{p,q}$. Hence $\Lambda_{p,q}$ is irreducible and $\L$
    is not the splitting field of $p$ in $\K$. Likewise $\L$ is not the splitting field of $q$ in $\K$.
    Therefore, the respective splitting fields of $p$, $q$ and $\Lambda_{p,q}$ in $\K$ are exactly the extensions of degree $2$ of $\F$ in $\K$.
 \item \textbf{Subcase 3.2:} Exactly one of $p$ and $q$ is separable, say $p$ (so that $\car(\F)=2$).
Then $\Lambda_{p,q}$ has a double root, which equals $\tr(p)y$ for the sole root $y$ of $q$, and $\tr(p) \neq 0$ because $p$ is separable,
so $\L$ is the splitting field of $q$ in $\K$, and hence $q$ splits over $\L$ but $p$ remains irreducible over $\L$.
\item \textbf{Subcase 3.2:} Both $p$ and $q$ are inseparable. Then $\Lambda_{p,q}=t^2$, and $p$ and $q$ remain irreducible over $\L=\F$.
\end{itemize}
\end{itemize}

We sum up the results in the next table for future reference, in which we denote by $n(p)$ and $n(q)$ the respective numbers of roots of $p$ and $q$, when relevant to the case under consideration, and by $\L$ the splitting field of $\Lambda_{p,q}$.

\begin{table}[H]\label{table1}
\begin{center}
\caption{The irreducibility and splitting field $\L$ of $\Lambda_{p,q}$ with respect to $p$ and $q$}
\label{figure3}
\begin{tabular}{| c | c | c || c | c |}
\hline
Type of $p$ and $q$ & $n(p)$ & $n(q)$ & Type of $\Lambda_{p,q}$ & Irreducibility of $p$ and $q$ over $\L$ \\
\hline
\hline
Both split & $?$ & $?$ & split & both split \\
\hline
$p$ splits & 1 & ? & split & $p$ splits, $q$ irreducible   \\
$q$ irreducible & & & &  \\
\hline
$p$ splits & 2 & ? & irreducible & both split   \\
$q$ irreducible & & & &  \\
\hline
$p$ and $q$ irreducible, & ? & ? & split & both irreducible \\
equal splitting fields &  &  &  &  \\
\hline
$p$ and $q$ irreducible, & 2 & 2 & irreducible & both irreducible \\
distinct splitting fields &  &  &  &  \\
\hline
$p$ and $q$ irreducible & 2 & 1 & irreducible & $p$ irreducible, $q$ splits \\
distinct splitting fields &  &  &  &  \\
\hline
$p$ and $q$ irreducible & 1 & 1 & equals $t^2$ & both irreducible \\
distinct splitting fields &  &  &  &  \\
\hline
\end{tabular}
\end{center}
\end{table}

\subsection{The maximal ideals above the fundamental ideal}

Throughout this section, we consider a maximal ideal $I$ of $\calW_{p,q}$ that includes $\mathfrak{F}$.
Hence $I$ includes $(r(\omega))$ for some monic irreducible divisor $r$ of $\Lambda_{p,q}$, and analyzing
$I$ amounts to analyzing the maximal ideals of the quotient ring $\calW_{p,q}/(r(\omega))$.

\begin{Not}
For a nonconstant polynomial $r \in \F[t]$, we denote by
$\calW_{p,q,[r]}$ the quotient ring $\calW_{p,q}/(r(\omega))$.
\end{Not}

From now on we fix an arbitrary $r \in \Irr(\F)$ that divides $\Lambda_{p,q}$.

Note that there are two possible situations: either $\Lambda_{p,q}$ splits and $r$ has degree $1$, or
$\Lambda_{p,q}$ is irreducible and $r=\Lambda_{p,q}$ has degree $2$. In any case the projection of the center $C$ in
$\calW_{p,q,[r]}$ is a field extension $\L$ of $\F$ (of degree $1$ or $2$), the $\F$-algebra $\L$ is isomorphic to $\F[t]/(r)$, and
$\calW_{p,q,[r]}$ is naturally seen as an $\L$-vector space. Moreover, because $(r(\omega))$ is invariant under the adjunction,
we recover the induced adjunction $x \mapsto x^\star$ on $\calW_{p,q,[r]}$, norm
$N_r : x \mapsto xx^\star$, trace $\tr_r : x \mapsto x+x^\star$ and inner product $\langle -,-\rangle_r$.
The trace is $\L$-linear, and the inner product is $\L$-bilinear.
Crucially, we have seen in Section \ref{section:Grammatrix} that the inner product $\langle -,-\rangle_r$ is degenerate
because $r$ divides $\Lambda_{p,q}$. As could be expected, the radical of $\langle -,-\rangle_r$ will play a prominent role in what follows.

Beforehand, we start with a crucial remark:

\begin{lemma}\label{lemma:Nrvanishes}
Let $J$ be a proper ideal of $\calW_{p,q,[r]}$. Then $N_r$ vanishes on $J$.
\end{lemma}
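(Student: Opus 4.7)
The plan is to mimic, on a very small scale, the argument used to prove Proposition \ref{prop:nonzeroidealmeetscenter}, but in the much simpler setting of the quotient $\calW_{p,q,[r]}$, where the center collapses to a field.

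First I would observe that, since $r$ is irreducible in $\F[t]$, the image $\L$ of the center $C$ in the quotient $\calW_{p,q,[r]}=\calW_{p,q}/(r(\omega))$ is a field, and the norm $N_r$ is valued in $\L$ by construction (this was established in the discussion of local quaternion algebras in Section \ref{section:quaternionalgebras}). Then I would intersect $J$ with $\L$: since $J$ is a two-sided ideal and $\L$ is a subring, $J\cap \L$ is an ideal of the field $\L$, and because $J$ is proper we have $1 \notin J$, whence $J\cap \L \neq \L$. As $\L$ is a field, this forces $J \cap \L=\{0\}$.

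Now I would pick an arbitrary $x \in J$ and simply note that $N_r(x)=xx^\star$ belongs to $J$ (because $J$ is a right ideal and $x^\star \in \calW_{p,q,[r]}$) while also belonging to $\L$ (because $N_r$ is $\L$-valued), so $N_r(x) \in J \cap \L=\{0\}$. This yields $N_r(x)=0$, which is exactly what was to be proved.

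There is no serious obstacle to anticipate here: the whole argument is essentially a one-line application of the fact that an ideal in a field is trivial, combined with the already-established centrality of the norm. The lemma will subsequently be used in tandem with the analysis of the radical of $\langle-,-\rangle_r$ to pin down the maximal ideals lying above the fundamental ideal, but the lemma itself requires nothing more than the observations above.
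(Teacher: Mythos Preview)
Your proof is correct and is essentially identical to the paper's own argument: the paper also observes that $N_r(x)=xx^\star$ lies in $J$ and in the field $\L$, and concludes that $N_r(x)=0$ because a nonzero element of $\L$ would be a unit in $J$. You phrase this via $J\cap\L=\{0\}$ while the paper phrases it via invertibility, but the content is the same.
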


\begin{proof}
Let $x \in J$. Then $N(x)=xx^\star \in J$, so $N(x) = 0$ otherwise $N(x)$ would be invertible, leading to $J=\calW_{p,q,[r]}$.
\end{proof}

\begin{Not}
We define $R_r$ as the radical of the inner product $\langle -,-\rangle_r$, and
$$\mathfrak{R}_r:=\{x \in R_r : N_r(x)=0\},$$
the latter of which we call the \textbf{radical} of $N_r$.
\end{Not}

Of course we have $\mathfrak{R}_r=R_r$ if $\car(\F)\neq 2$, but as we shall see this can fail if $\car(\F)=2$.

\begin{lemma}\label{lemma:caseRr=frakRr}
The sets $R_r$ and $\mathfrak{R}_r$ are ideals of $\calW_{p,q,[r]}$, and $\mathfrak{R}_r$ is a proper one.
Moreover $R_r=\mathfrak{R}_r$ if and only if $R_r \subsetneq \calW_{p,q,[r]}$.
Finally $\mathfrak{R}_r$ is invariant under the adjunction.
\end{lemma}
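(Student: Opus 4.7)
The plan is to verify the four assertions essentially in order, relying on the adjoint identities \eqref{eq:adjeq}, the symmetry relation \eqref{eq:starisometry}, and Lemma \ref{lemma:Nrvanishes}.

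First I would show that $R_r$ is a two-sided ideal. Given $x \in R_r$ and $y,z \in \calW_{p,q,[r]}$, the identities in \eqref{eq:adjeq}, applied in the quotient, yield $\langle xy,z\rangle_r = \langle x, zy^\star\rangle_r = 0$ and $\langle yx,z\rangle_r = \langle x, y^\star z\rangle_r = 0$, so $xy$ and $yx$ remain in $R_r$. This is already remarked upon at the end of Section \ref{section:quaternionalgebras}, so here I would only briefly recall it. Then $\mathfrak{R}_r = R_r \cap N_r^{-1}(0)$ is an ideal because $R_r$ is and $N_r$ is multiplicative: for $x \in \mathfrak{R}_r$ and $y \in \calW_{p,q,[r]}$, the element $xy$ lies in $R_r$ and satisfies $N_r(xy) = N_r(x)N_r(y) = 0$, and similarly for $yx$.

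Properness of $\mathfrak{R}_r$ is immediate from $N_r(1) = 1 \neq 0$, so $1 \notin \mathfrak{R}_r$. For the equivalence $R_r = \mathfrak{R}_r \iff R_r \subsetneq \calW_{p,q,[r]}$, the forward direction follows by the properness just established. The reverse direction is the content of Lemma \ref{lemma:Nrvanishes}: if $R_r$ is a proper ideal, then $N_r$ vanishes on it, so $R_r \subseteq \mathfrak{R}_r$, while the reverse inclusion $\mathfrak{R}_r \subseteq R_r$ is built into the definition.

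Finally, adjunction invariance of $\mathfrak{R}_r$ follows from two elementary symmetries. The relation \eqref{eq:starisometry}, reduced modulo $(r(\omega))$, gives $\langle x^\star, y^\star\rangle_r = \langle x,y\rangle_r$, so if $x \in R_r$ then $\langle x^\star, z\rangle_r = \langle x, z^\star\rangle_r = 0$ for all $z$, whence $x^\star \in R_r$. Moreover $N_r(x^\star) = x^\star (x^\star)^\star = x^\star x = N_r(x)$, so the zero-norm condition is also preserved. None of these steps pose any real obstacle; the main thing to keep in mind is that the radical $R_r$ is not automatically contained in $\mathfrak{R}_r$, which is precisely why Lemma \ref{lemma:Nrvanishes} is needed for the equivalence.
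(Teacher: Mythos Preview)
Your proof is correct and follows essentially the same approach as the paper, invoking \eqref{eq:adjeq} for $R_r$, multiplicativity of $N_r$ for absorption in $\mathfrak{R}_r$, Lemma~\ref{lemma:Nrvanishes} for the equivalence, and \eqref{eq:starisometry} together with $N_r(x^\star)=N_r(x)$ for adjunction invariance. The only small omission is that you verify absorption for $\mathfrak{R}_r$ but not closure under addition; the paper handles this by observing that $N_r$ is additive on $R_r$ (the polar form vanishes there), so $\mathfrak{R}_r$ is a subgroup---a one-line fix.
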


\begin{proof}
Classically $R_r$ is a linear subspace, and the fact that it is an ideal is immediately deduced from
identities \eqref{eq:adjeq} on page \pageref{eq:adjeq}.
Next, $N_r$ is additive on $R_r$, so $\mathfrak{R}_r$ is a subgroup of $(R_r,+)$ and hence also of $(\calW_{p,q,[r]},+)$.
Let finally $a \in \mathfrak{R}_r$ and $x \in \calW_{p,q,[r]}$. We already know that $ax$ and $xa$ belong to $R_r$,
and finally $N(ax)=N(a)N(x)=0$ and $N(xa)=N(x)N(a)=0$, whence $ax$ and $xa$ belong to $\mathfrak{R}_r$.
Hence $\mathfrak{R}_r$ is an ideal of $\calW_{p,q,[r]}$.

Next $1 \not\in \mathfrak{R}_r$ because $N_r(1)=1$, and hence $\mathfrak{R}_r$ is a proper ideal.
In particular, if $R_r=\calW_{p,q,[r]}$ then $R_r \neq \mathfrak{R}_r$.
Conversely, if $R_r \subsetneq \calW_{p,q,[r]}$ then we deduce from Lemma \ref{lemma:Nrvanishes}
that $R_r=\mathfrak{R}_r$.

Finally, the invariance of $\mathfrak{R}_r$ under the adjunction is deduced from
identities \eqref{eq:starisometry} on page \pageref{eq:starisometry} and the invariance of $N$ under the adjunction.
\end{proof}

Next, we investigate the potential dimensions of $R_r$ and $\mathfrak{R}_r$ over $\L$.

\begin{prop}\label{prop:structureofRr}
One of the following statements holds:
\begin{enumerate}[(i)]
\item The dimension of $R_r$ over $\L$ is $2$, and at least one of $p$ and $q$ has simple roots in $\K$ (a splitting field of $pq$).
\item The dimension of $R_r$ over $\L$ is $3$, $\car(\F) \neq 2$ and both $p$ and $q$ split with double roots (over $\F$).
\item The dimension of $R_r$ over $\L$ is $4$, $\car(\F)=2$ and $\tr(p)=\tr(q)=0$.
\end{enumerate}
Moreover, except in case (iii) one has $R_r=\mathfrak{R}_r$.
\end{prop}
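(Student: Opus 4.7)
My plan is to show that $\dim_\L R_r = 4 - \rk_\L \overline{G}$, where $\overline{G} \in \Mat_4(\L)$ is the reduction modulo $r(\omega)$ of the Gram matrix $G$ of the inner product in the deployed basis $(1,a,b,ab)$, and to determine the rank of $\overline{G}$ by a characteristic-wise case analysis. The heart of the proof is the inequality $\rk \overline{G} \leq 2$. In characteristic $\neq 2$, I will perform the basic base change $a' := a - \tr(p)/2$ and $b' := b - \tr(q)/2$ to obtain traceless generators, and rewrite the Gram matrix in the deployed basis $(1, a', b', a'b')$ using the central element $\omega' := \omega - \tr(p)\tr(q)/2$. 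A direct computation shows that, up to a permutation of basis vectors, this Gram matrix becomes block-diagonal with two $2 \times 2$ blocks
$$\begin{bmatrix} 2 & -\omega' \\ -\omega' & 2N(a')N(b')\end{bmatrix} \quad\text{and}\quad \begin{bmatrix} 2N(a') & \omega' \\ \omega' & 2N(b')\end{bmatrix},$$
each of determinant $4N(a')N(b') - (\omega')^2 = -\Lambda_{p,q}(\omega)$, so both reduce to rank at most $1$ modulo $r(\omega)$. In characteristic $2$, every diagonal entry $\langle x, x\rangle = 2N(x)$ of $G$ vanishes, so $G$ is alternating; a direct Pfaffian computation in the basis $(1,a,b,ab)$ yields $\mathrm{Pf}(G) = \Lambda_{p,q}(\omega)$, and since alternating matrices have even rank over any field, $\rk \overline{G} \in \{0,2\}$.

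Having established $\rk \overline{G} \leq 2$, I will then identify when $\rk \overline{G} < 2$. In characteristic $\neq 2$, the top-left entry of the first block is $2 \neq 0$, so that block has rank at least $1$; total rank equal to $1$ forces the second block to vanish modulo $r(\omega)$, which imposes $N(a') = N(b') = 0$. In characteristic $\neq 2$ this means $p = (t-x)^2$ and $q = (t-y)^2$ with $x,y \in \F$, so $\Lambda_{p,q} = (t - 2xy)^2$ and $r = t - 2xy$. A direct computation then gives $\overline{G} = 2\,u u^T$ with $u = (1,x,y,xy)^T$, so $\rk \overline{G} = 1$ and $\dim_\L R_r = 3$: this is case (ii). In characteristic $2$, $\rk \overline{G} = 0$ forces every entry of $\overline{G}$ to vanish; inspecting the entries $\tr(p)$, $\tr(q)$ and $\tr(p)\tr(q) - \omega$ of $G$ gives $\tr(p) = \tr(q) = 0$ and $r \mid \omega$, hence $r = t$. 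Conversely, when $\car(\F) = 2$ and $\tr(p) = \tr(q) = 0$, one computes $\Lambda_{p,q} = t^2$ and a straightforward check confirms $\overline{G} = 0$: this is case (iii).

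In all remaining situations one therefore has $\rk \overline{G} = 2$ and $\dim_\L R_r = 2$, which is case (i); the hypothesis that at least one of $p$ and $q$ has simple roots in $\K$ is precisely the negation of ``both $p$ and $q$ have double roots'', which has just been shown to correspond exactly to cases (ii) and (iii). Finally, in cases (i) and (ii) we have $\rk \overline{G} \geq 1$, so $R_r \subsetneq \calW_{p,q,[r]}$ and Lemma \ref{lemma:caseRr=frakRr} yields $R_r = \mathfrak{R}_r$. The main obstacle is the uniform rank-at-most-$2$ estimate, which genuinely requires two different techniques in the two characteristics: a block-diagonalization after a traceless base change when $\car(\F) \neq 2$, and a Pfaffian computation for an alternating matrix when $\car(\F) = 2$. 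In both paths, however, the fundamental polynomial $\Lambda_{p,q}$ emerges as the single obstruction controlling the drop in rank, so once both arguments are in hand the conclusion falls into place through the characteristic-by-characteristic inspection described above.
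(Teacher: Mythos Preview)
Your proof is correct and follows essentially the same approach as the paper's: in characteristic $\neq 2$ you block-diagonalize the Gram matrix after a traceless basic base change into two $2\times 2$ blocks whose determinants are both $-\Lambda_{p,q}(\omega)$, and in characteristic $2$ you exploit that the form is alternating to force even rank. The paper's argument is nearly identical, with only cosmetic differences: it works with abstract traceless generators $x,y$ rather than explicitly shifting $a,b$, and in characteristic $2$ it does not name the Pfaffian but simply combines evenness of the rank with the already-known vanishing of the Gram determinant modulo $r(\omega)$ to conclude $\rk\overline G\in\{0,2\}$; your explicit computation $\overline G=2\,uu^T$ in case (ii) is a small embellishment not present in the paper.
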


\begin{proof}
Assume first that $\car(\F) \neq 2$.
Then we proceed as in the proof of Proposition \ref{prop:gramgeneralized}:
we take a deployed basis $(1,x,y,xy)$ such that $\tr(x)=\tr(y)=0$.
Then we see that $\langle -,-\rangle_r$
is the orthogonal direct sums of two symmetric bilinear forms that are represented by the respective matrices
$$A=\begin{bmatrix}
2 & \langle 1,\overline{x}\overline{y}\rangle_r \\
\langle 1,\overline{x}\overline{y}\rangle_r & 2N(x)N(y)
\end{bmatrix} \quad \text{and} \quad
B=\begin{bmatrix}
2N(x) & \langle \overline{x},\overline{y}\rangle_r \\
\langle \overline{x},\overline{y}\rangle_r & 2 N(y)
\end{bmatrix}.$$
As seen in the proof of Proposition \ref{prop:gramgeneralized}, both these matrices
have determinant zero, and hence both are singular. Moreover,
we see from its upper-left entry that $A \neq 0$.
Hence $\dim R_r=1+\dim (\Ker B) \in \{2,3\}$. Finally $\dim R_r=3$ if and only if $B=0$, which
implies $N(x)=N(y)=0$.

Conversely, assume that $N(x)=N(y)=0$. Then as $B$ is singular it is clear that the only option is that its
off-diagonal entries equal $0$, and hence $B=0$.
We conclude that $\dim R_r=3$ if and only if $N(x)=N(y)=0$, which amounts to $p$ and $q$ being split with double roots in $\F$
(remember that we had assumed $\tr(x)=\tr(y)=0$).

Assume finally that $\car(\F)=2$. Then $\langle -,-\rangle_r$ is an alternating form,
so its rank is even. Hence this rank is either $0$ and $2$, and we get from the Gram matrix \eqref{eq:Gram}
that this rank is zero only if $\tr(p)=\tr(q)=0$ and $\langle \overline{x},\overline{y}\rangle_r=\langle 1,\overline{x}\overline{y}\rangle_r=0$.
Assume now that $\tr(p)=\tr(q)=0$. Then $\Lambda_{p,q}=t^2$ and $r=t$,
whence $r$ divides $\langle a,b\rangle$, i.e., $\langle \overline{a},\overline{b}\rangle_r=0$. It follows that
$\langle 1,\overline{a}\overline{b}\rangle_r=\langle \overline{a},\overline{b}\rangle_r=0$ because $a^\star=-a=a$.
Therefore $R_r=\calW_{p,q,[r]}$ if and only if $\tr(p)=\tr(q)=0$.

The last statement is readily deduced from the previous ones and from Lemma \ref{lemma:caseRr=frakRr}.
\end{proof}

\begin{lemma}
For every proper ideal $I$ of $\calW_{p,q,[r]}$, one has $I+\mathfrak{R}_r \subsetneq \calW_{p,q,[r]}$.
\end{lemma}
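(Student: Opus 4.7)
The proof will be short and will use exactly the polar identity together with the two key properties bundled into the definition of $\mathfrak{R}_r$, namely that its elements are isotropic for $N_r$ and lie in the radical of $\langle -,-\rangle_r$.

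The plan is to argue by contradiction. Suppose on the contrary that $I+\mathfrak{R}_r=\calW_{p,q,[r]}$. Then we may write $1=y+z$ for some $y \in I$ and some $z \in \mathfrak{R}_r$. The point is that $N_r(1)=1\neq 0$, whereas I will show that the right-hand side must have norm $0$.

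To see the latter, apply the polar identity (the quotient analogue of $N(x+y)=N(x)+N(y)+\langle x,y\rangle$, which survives the passage to $\calW_{p,q,[r]}$):
$$N_r(y+z)=N_r(y)+N_r(z)+\langle y,z\rangle_r.$$
Each of the three summands vanishes: $N_r(y)=0$ by Lemma \ref{lemma:Nrvanishes} applied to the proper ideal $I$; $N_r(z)=0$ directly from the definition of $\mathfrak{R}_r$; and $\langle y,z\rangle_r=0$ because $z\in \mathfrak{R}_r\subseteq R_r$ is in the radical of $\langle -,-\rangle_r$. Thus $N_r(1)=0$, a contradiction.

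There is essentially no obstacle, because the definition of $\mathfrak{R}_r$ was tailored to make exactly this argument work: the interesting content was packaged into Lemma \ref{lemma:Nrvanishes} (norms kill proper ideals) and into the earlier structural work on $R_r$ and $\mathfrak{R}_r$. The only thing one might wish to double-check is that the polar identity passes cleanly to $\calW_{p,q,[r]}$, but this is immediate since the two-sided ideal $(r(\omega))$ is stable under the adjunction, so $N_r$ and $\langle -,-\rangle_r$ are well-defined and inherit the algebraic identities satisfied by $N$ and $\langle -,-\rangle$.
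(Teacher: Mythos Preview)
Your proof is correct and follows essentially the same approach as the paper: both use the polar identity together with Lemma~\ref{lemma:Nrvanishes}, the defining property $N_r|_{\mathfrak{R}_r}=0$, and the orthogonality coming from $\mathfrak{R}_r\subseteq R_r$ to conclude that $N_r$ vanishes on $I+\mathfrak{R}_r$, whence $1$ cannot lie there. The paper phrases this directly (showing $N_r$ vanishes on the sum) rather than by contradiction, but the content is identical.
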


\begin{proof}
Let $I$ be a proper ideal of $\calW_{p,q,[r]}$.
Then $J:=I+\mathfrak{R}_r$ is an ideal of $\calW_{p,q,[r]}$.
Yet $N_r$ vanishes on both $I$ and $\mathfrak{R}_r$, which are orthogonal for the inner product $\langle -,-\rangle_r$.
Therefore $N_r$ vanishes on $J$, and in particular $1 \not\in J$.
\end{proof}

\begin{cor}\label{cor:inclusionRr}
Every maximal ideal of $\calW_{p,q,[r]}$ includes $\mathfrak{R}_r$.
\end{cor}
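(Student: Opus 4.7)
The plan is to derive this immediately from the preceding lemma. Let $I$ be a maximal ideal of $\calW_{p,q,[r]}$. Since $\mathfrak{R}_r$ is an ideal of $\calW_{p,q,[r]}$ by Lemma \ref{lemma:caseRr=frakRr}, the sum $I + \mathfrak{R}_r$ is also an ideal, and it clearly contains $I$. By the lemma immediately preceding the corollary, $I + \mathfrak{R}_r$ is a proper ideal of $\calW_{p,q,[r]}$. The maximality of $I$ then forces $I + \mathfrak{R}_r = I$, which is exactly the inclusion $\mathfrak{R}_r \subseteq I$.

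There is no obstacle here: the entire content has already been extracted in the preceding lemma, whose proof used that $N_r$ vanishes both on $I$ (by Lemma \ref{lemma:Nrvanishes}) and on $\mathfrak{R}_r$ (by definition), together with the fact that $I$ and $\mathfrak{R}_r$ are orthogonal for $\langle -,-\rangle_r$ (since $\mathfrak{R}_r \subseteq R_r$). The corollary is essentially just a rephrasing of that lemma in the language of maximal ideals.
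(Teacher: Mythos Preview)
Your argument is correct and is exactly the intended one: the paper states this as an immediate corollary of the preceding lemma, and your deduction (that $I+\mathfrak{R}_r$ is a proper ideal containing the maximal ideal $I$, hence equals $I$) is the standard and only reasonable way to read it.
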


We deduce that the maximal ideals of $\calW_{p,q,[r]}$ are in one-to-one correspondence with the maximal ideals
of the $\L$-algebra
$$\calU_r:=\calW_{p,q,[r]}/\mathfrak{R}_{r.}$$
Their determination is simplified by the following observation:

\begin{prop}
The ring $\calU_r$ is commutative.
\end{prop}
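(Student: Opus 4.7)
The plan is to prove commutativity by splitting into the three cases of Proposition \ref{prop:structureofRr}, with each case handled by a different idea.

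The key elementary fact I will invoke is that every associative algebra of dimension at most $2$ over a field is commutative: in a basis $(1,u)$, the product $(a+bu)(c+du) = ac + bd \cdot u^2 + (ad+bc)\,u$ is symmetric in the two factors, since $u^2$ sits in $\L\cdot 1+\L\cdot u$. With this in hand, cases (i) and (ii) both reduce to a one-line dimension count. Indeed, in each of these two cases Proposition \ref{prop:structureofRr} asserts $R_r = \mathfrak{R}_r$, so $\calU_r = \calW_{p,q,[r]}/\mathfrak{R}_r$ has $\L$-dimension $4-\dim_\L R_r \in \{1,2\}$ and is therefore commutative.

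The remaining case (iii) is the one where $\dim_\L R_r = 4$ but $\mathfrak{R}_r \neq R_r$, and it requires a completely different argument. Here the hypotheses $\car(\F)=2$ and $\tr(p)=\tr(q)=0$ force, by direct computation from the definition of $\Lambda_{p,q}$, that $\Lambda_{p,q}=t^2$; hence $r=t$ and $\omega$ vanishes in $\calW_{p,q,[r]}$. Moreover, $\tr(p)=\tr(q)=0$ in characteristic $2$ means that the adjunction fixes both $a$ and $b$, so the identity $\omega = ab^\star + ba^\star$ simplifies to $\omega = ab+ba$, and $\omega = 0$ in $\calW_{p,q,[r]}$ then gives $ab = -ba = ba$. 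Thus $\calW_{p,q,[r]}$ is generated over $\F$ by two commuting elements and is itself commutative, so its quotient $\calU_r$ is too.

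The main obstacle I expect is case (iii): the collapse $R_r = \calW_{p,q,[r]}$ renders the clean dimension argument unavailable, and one might worry this signals genuine noncommutativity. The saving grace is that the very hypotheses producing this collapse also force $\calW_{p,q,[r]}$ itself to be commutative, so $\calU_r$ ends up commutative for reasons independent of $\dim_\L \mathfrak{R}_r$.
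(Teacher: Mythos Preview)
Your proof is correct and follows essentially the same approach as the paper's: the dimension argument for cases (i) and (ii), and the direct commutativity of $\calW_{p,q,[r]}$ itself in case (iii) via $ab=ba$ modulo $(\omega)$. The only cosmetic difference is that the paper treats case (iii) first and then bundles the remaining possibilities into a single ``otherwise'' clause via $\dim_\L \mathfrak{R}_r \in \{2,3\}$, rather than invoking $R_r=\mathfrak{R}_r$ explicitly.
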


\begin{proof}
Assume first that $\car(\F)=2$ and $\tr(p)=\tr(q)=0$.
Then $\Lambda_{p,q}=t^2$, $r=t$ and hence $ab^\star+ba^\star=\omega \in (r(\omega))$, while $a^\star=-a$ and $b^\star=-b=b$.
Hence $ab\equiv ba$ mod $(r(\omega))$, and we deduce that $\calW_{p,q,[r]}$ is commutative. Hence $\calU_r$ is also commutative.

Assume now otherwise. Then we know from Proposition \ref{prop:structureofRr} that $\dim_\L \mathfrak{R_r} \in \{2,3\}$, and hence
$\dim_\L \calU_r \in \{1,2\}$. Hence $\calU_r$ is commutative.
\end{proof}

\begin{prop}\label{prop:structureofquotient}
Let $J$ be a maximal ideal of $\calW_{p,q}$ that includes $(r(\omega))$.
Then the quotient $\F$-algebra $\calW_{p,q}/J$ is isomorphic to the $\F$-algebra $\K$ (the splitting field of $pq$).
\end{prop}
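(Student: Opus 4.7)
The plan is to reduce to the commutative quotient $\calU_r$ and then apply elementary field theory of quadratic polynomials.

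First, since $J$ is a maximal ideal of $\calW_{p,q}$ containing $(r(\omega))$, the correspondence theorem gives a maximal ideal $\overline{J}:=J/(r(\omega))$ of $\calW_{p,q,[r]}$. By Corollary \ref{cor:inclusionRr}, $\overline{J}$ contains $\mathfrak{R}_r$, and hence corresponds to a maximal ideal $M$ of the quotient ring $\calU_r=\calW_{p,q,[r]}/\mathfrak{R}_r$. By the preceding proposition, $\calU_r$ is commutative, so
$$\calW_{p,q}/J \;\simeq\; \calU_r/M$$
is a commutative field. Denote this field by $F$, and let $\bar a$, $\bar b$ be the images of $a$ and $b$ in $F$.

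From the defining relations $p(a)=q(b)=0$, we get $p(\bar a)=0$ and $q(\bar b)=0$, so $\bar a$ and $\bar b$ are roots in $F$ of $p$ and $q$ respectively. Since $\calW_{p,q}$ is generated as an $\F$-algebra by $\{a,b\}$ and $F$ is commutative, we have $F=\F[\bar a,\bar b]=\F(\bar a,\bar b)$.

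Now, because $p$ has degree $2$ and already has a root $\bar a$ in $F$, it splits in $F$ as $p(t)=(t-\bar a)(t-(\tr(p)-\bar a))$; the same goes for $q$. Hence $pq$ splits in $F$. Conversely, every root of $pq$ in $F$ belongs to $\F(\bar a)\cup\F(\bar b)\subseteq F$, so the subfield of $F$ generated over $\F$ by the roots of $pq$ contains both $\bar a$ and $\bar b$ and therefore equals $F$. Thus $F$ is a splitting field of $pq$ over $\F$, and by the uniqueness of splitting fields up to $\F$-isomorphism we conclude $F\simeq \K$.

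The only nontrivial step is the first one: without Corollary \ref{cor:inclusionRr} and the commutativity of $\calU_r$, one cannot directly conclude that $\calW_{p,q}/J$ is a field, since $\calW_{p,q}$ itself is very far from commutative. Once that is in hand, the rest is a routine application of the standard theory of splitting fields of quadratics.
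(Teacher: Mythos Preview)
Your proof is correct and follows essentially the same route as the paper's: reduce to a maximal ideal of the commutative ring $\calU_r$, observe that the quotient is a field generated over $\F$ by roots of $p$ and $q$, and conclude by uniqueness of splitting fields. You are somewhat more explicit than the paper in spelling out why this quotient is indeed a splitting field (the paper just asserts this is ``the very definition''), but the argument is the same.
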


\begin{proof}
This amounts to proving that for every maximal ideal $J'$ of $\calU_r$, the quotient
$\calU_r/J'$ is isomorphic to the $\F$-algebra $\K$.

Let $J'$ be such an ideal. Since $\calU_r$ is commutative the quotient $\F$-algebra $\calU_r/J'$ is a field extension of $\F$.
Moreover, it is generated as an $\F$-algebra by the cosets of $\overline{a}$ and $\overline{b}$, which
are annihilated respectively by $p$ and $q$. This is the very definition of a splitting field of $pq$ over $\F$, and
we conclude because it is known that any two such splitting fields are isomorphic.
\end{proof}

\begin{Not}
For an irreducible monic divisor $r \in \Irr(\F)$ of $\Lambda_{p,q}$, we set
$$\mathfrak{J}_r:=\{x \in \calW_{p,q} : \overline{x} \in \mathfrak{R}_r\}.$$
In other words $\mathfrak{J}_r$ consists of the vectors $x \in \calW_{p,q}$
such that $\langle x,y\rangle \equiv 0$ mod $(r(\omega))$ for all $y \in \calW_{p,q}$,
and $N(x) \equiv 0$ mod $(r(\omega))$.
\end{Not}

Note that $\mathfrak{J}_r$ is an ideal of $\calW_{p,q}$ that includes $(r(\omega))$. We have just seen that every maximal ideal of
$\calW_{p,q}$ that includes $(r(\omega))$ also includes $\mathfrak{J}_r$, and now the question remains whether
$\mathfrak{J}_r$ is maximal or not. This question is fully answered in our next proposition.

\begin{prop}\label{prop:structureofJr}
The following conditions are equivalent:
\begin{enumerate}[(i)]
\item $\mathfrak{J}_r$ is a maximal ideal of $\calW_{p,q}$;
\item $\mathfrak{J}_r$ is the sole maximal ideal of $\calW_{p,q}$ that includes $(r(\omega))$;
\item $\mathfrak{R}_r$ is the sole maximal ideal of $\calW_{p,q,[r]}$;
\item Either both $p$ and $q$ have a double root in $\K$,
or one is irreducible and the other one does not split with simple roots in $\F$.
\end{enumerate}
\end{prop}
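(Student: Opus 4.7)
The equivalences (i) $\Leftrightarrow$ (ii) $\Leftrightarrow$ (iii) are essentially formal. For (ii) $\Leftrightarrow$ (iii), I would invoke the correspondence theorem between two-sided ideals of $\calW_{p,q}$ containing $(r(\omega))$ and ideals of $\calW_{p,q,[r]}$, noting that $\mathfrak{J}_r$ corresponds to $\mathfrak{R}_r$ under this bijection. For (i) $\Rightarrow$ (ii), Corollary~\ref{cor:inclusionRr} guarantees that every maximal ideal of $\calW_{p,q}$ containing $(r(\omega))$ must contain $\mathfrak{J}_r$; so if $\mathfrak{J}_r$ is already maximal, any such ideal coincides with it. The implication (ii) $\Rightarrow$ (i) is immediate.

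The heart of the proof is (iii) $\Leftrightarrow$ (iv), for which I will first reformulate (iii) as the statement \emph{$\calU_r$ is a field}: being a commutative (by the preceding proposition) Artinian ring, $\calU_r$ has $(0)$ as its unique maximal ideal exactly when it is a field. For (not iv) $\Rightarrow$ (not iii), I would assume without loss of generality that $p$ splits with simple roots over $\F$, writing $p = (t - x_1)(t - x_2)$ with distinct $x_1, x_2 \in \F$, and suppose for contradiction that $\calU_r$ is a field. Then the image $\overline{a}$ of $a$ in $\calU_r$ satisfies $(\overline{a} - x_1)(\overline{a} - x_2) = 0$, forcing $\overline{a} \in \{x_1, x_2\} \subset \L$, with $\overline{a}^\star = \tr(p) - \overline{a} \in \L$ as well. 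Writing $\omega_0 \in \L$ for the image of $\omega$, expanding $\omega_0 = \overline{a}\,\overline{b}^\star + \overline{b}\,\overline{a}^\star$ via $\overline{b}^\star = \tr(q) - \overline{b}$ yields
$$(\overline{a}^\star - \overline{a})\,\overline{b} = \omega_0 - \overline{a}\,\tr(q),$$
with $\overline{a}^\star - \overline{a} = \pm(x_1 - x_2)$ a unit of $\L$, so $\overline{b} \in \L$ also, and hence $\calU_r = \L$. This contradicts $\dim_\L \calU_r = 2$: indeed the fact that $p$ has simple roots in $\K$ excludes Case~(ii) of Proposition~\ref{prop:structureofRr}, while $\tr(p) = x_1 + x_2 \neq 0$ excludes Case~(iii), placing us necessarily in Case~(i).

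For (iv) $\Rightarrow$ (iii), the plan is to construct an isomorphism $\calU_r \simeq \K$. Picking roots $x, y \in \K$ of $p, q$ such that $xy^\star + yx^\star$ is a root of $r$ (possible because the roots of $\Lambda_{p,q}$ in $\K$ are precisely these sums), the universal property of $\calW_{p,q}$ yields an $\F$-algebra homomorphism $\calW_{p,q} \to \K$ whose kernel is a maximal ideal containing $(r(\omega))$, hence $\mathfrak{J}_r$ by Corollary~\ref{cor:inclusionRr}. This induces a surjective $\phi:\calU_r \to \K$, surjective because its image contains $x, y, \tr(p)-x, \tr(q)-y$, hence all roots of $pq$, hence the whole splitting field $\K$. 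It then remains to verify, case by case using Table~\ref{figure3} and Proposition~\ref{prop:structureofRr}, that $\dim_\F \calU_r = [\K:\F]$ in every scenario where (iv) holds, which will force $\phi$ to be an isomorphism. The main obstacle will be this routine but somewhat extensive dimension count, especially in the characteristic-$2$ subcases with $\tr(p) = \tr(q) = 0$ (Case~(iii) of Proposition~\ref{prop:structureofRr}), where $\mathfrak{R}_r$ must be computed directly from the norm $N_r$ rather than from the inner-product radical: there $\calW_{p,q,[r]}$ becomes commutative (since $\omega \equiv 0$ and both basic adjunctions are trivial) and decomposes as $\F[a] \otimes_\F \F[b]$, so that $\mathfrak{R}_r$ is identified with the kernel of $x \mapsto x^2$ and $\calU_r$ can be computed explicitly and recognized as $\K$.
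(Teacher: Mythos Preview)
Your proposal is correct, and the overall shape—reformulating (iii) as ``$\calU_r$ is a field'' and then matching dimensions against $[\K:\F]$—is close in spirit to the paper's argument. There are two genuine differences worth noting.

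First, for the direction (not iv) $\Rightarrow$ (not iii), you give a direct computation: assuming $p$ splits with simple roots and $\calU_r$ is a field, you force $\overline a,\overline b\in\L$ and contradict $\dim_\L\calU_r=2$. The paper instead treats the whole ``at least one of $p,q$ has simple roots in $\K$'' case uniformly, using Proposition~\ref{prop:structureofquotient} to obtain $\dim_\L J=2-[\K:\F]/[\L:\F]$ for a maximal ideal $J$ of $\calU_r$, and then reads off from Table~\ref{figure3} exactly when $J=\{0\}$. Your route is more hands-on and avoids the table; the paper's route handles both implications at once for this subcase.

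Second, for the ``both $p,q$ have a double root in $\K$'' subcase of (iv) $\Rightarrow$ (iii), the paper's argument is considerably shorter than your tensor-product plan: in characteristic~$2$ one has $R_r=\calW_{p,q,[r]}$, so $\mathfrak R_r=\{x:N_r(x)=0\}$ directly, and Lemma~\ref{lemma:Nrvanishes} then forces every maximal ideal inside $\mathfrak R_r$; in characteristic $\neq 2$, $\dim_\L\mathfrak R_r=3$ makes $\mathfrak R_r$ maximal immediately. Your identification $\calW_{p,q,[r]}\simeq\F[a]\otimes_\F\F[b]$ and the nilradical computation are correct and illuminating, but heavier than necessary. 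Also note that your construction of the surjection $\calU_r\twoheadrightarrow\K$ via the universal property essentially reproves Proposition~\ref{prop:structureofquotient}, which is already available; and your notation $xy^\star+yx^\star$ for $x,y\in\K$ should be unpacked as $x(\tr q-y)+y(\tr p-x)$ to avoid ambiguity.
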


In particular, whenever both $p$ and $q$ are irreducible,
$\mathfrak{J}_r$ is the sole maximal ideal of $\calW_{p,q}$ that includes $(r(\omega))$.

\begin{proof}
We already know that conditions (i), (ii) and (iii) are equivalent.

Assume first that both $p$ and $q$ have a double root in $\K$.
\begin{itemize}
\item Assume that $\car(\F)=2$. Then
$\mathfrak{R}_r=\{x \in \calW_{p,q,[r]} : N_r(x)=0\}$, and we deduce from Lemma \ref{lemma:Nrvanishes} that every
maximal ideal of $\calW_{p,q,[r]}$ is included in $\mathfrak{R}_r$. Hence $\mathfrak{R}_r$
is the sole maximal ideal of $\calW_{p,q,[r]}$.
\item Assume that $\car(\F) \neq 2$. Then $p$ and $q$ split and hence $\L=\F$.
It ensues from point (ii) of Proposition \ref{prop:structureofRr} that $\mathfrak{R}_r$ is maximal, and by
Corollary \ref{cor:inclusionRr} we conclude that it is the sole maximal ideal of $\calW_{p,q,[r]}$.
\end{itemize}

Assume now that at least one of $p$ and $q$ has simple roots in $\K$.
Hence $\mathfrak{R}_r=R_r$ has dimension $2$ over $\L$, so $\calU_r$ has dimension $2$ over $\L$.
Pick a maximal ideal $J$ of $\calU_r$. Then by Proposition \ref{prop:structureofquotient} we have $\dim_\L J=2-\frac{[\K:\F]}{[\L:\F]}$, and hence
$J \neq \{0\}$ if and only if $[\K:\F]=[\L:\F]$, i.e., $\L$ is a splitting field of $pq$.
Observe also, since $\Lambda_{p,q}$ has degree $2$, that $\L$ is a splitting field of $\Lambda_{p,q}$.
Hence $\calU_r$ has no nonzero maximal ideal if and only if one of $p$ and $q$ remains irreducible over the splitting field of
$\Lambda_{p,q}$. Now, we can conclude by browsing Table \ref{table1} that this happens unless both $p$ and $q$ split
or one of them splits with simple roots (in $\F$) and the other one is irreducible.
\end{proof}

It remains to understand the structure of the maximal ideals that include
$\mathfrak{J}_r$ when it is not maximal.

So, from now on we assume that $\mathfrak{J}_r$ is not maximal.
In particular by combining Propositions \ref{prop:structureofRr} and \ref{prop:structureofJr} we obtain that
$\mathfrak{R}_r=R_r$ and $\dim_\L R_r=2$.

We are left with finding the maximal ideals of
$\calW_{p,q,[r]}$, and we can then go to the quotient $\L$-algebra $\calU_r:=\calW_{p,q,[r]}/\mathfrak{R}_r$.
Note that $\dim_\L \calU_r=2$.
The maximal ideals of $\calW_{p,q,[r]}$ are then in one-to-one correspondence with the maximal ideals of $\calU_r$
through the canonical projection of $\calW_{p,q,[r]}$ onto $\calU_r$. In particular, $\calU_r$ has a nonzero maximal ideal.

A potential lead here is to note that $\calU_r$ is a $2$-dimensional $\L$-algebra with a nonzero maximal ideal,
so either it splits or it is degenerate. In the first case $\calU_r$ has exactly two $1$-dimensional ideals, each of which generated
by an idempotent, and in the second case $\calU_r$ has a unique $1$-dimensional ideal.

We will now discard the second possibility, but this requires that we use the quadratic structures.
Since $\mathfrak{R}_r$ is invariant under conjugation, we obtain
an induced adjunction $x \mapsto x^\star$ on $\calU_r$.
Moreover, in light of the definition of $\mathfrak{R}_r$,
the norm and inner product respectively induce a norm $N_{r,c} : \calU_r \rightarrow \L$ and an inner product
$\langle -,-\rangle_{r,c} : (\calU_r)^2 \rightarrow \L$, the latter of which is the polar form of the former.
Moreover $\langle -,-\rangle_{r,c}$ is now non-degenerate, whence $N_{r,c}$ is a regular quadratic form on the $\L$-vector space
$\calU_r$.

The quotient norm is simply $N_{r,c} : x \mapsto xx^\star$, and it is multiplicative.
Now, with the same line of reasoning as in Lemma \ref{lemma:Nrvanishes}, we obtain that every proper ideal $J$ of $\calU_r$
is included in the isotropy cone of $N_{r,c}$. Yet we know that $\calU_r$ has a nonzero maximal ideal,
so $N_{r,c}$ is hyperbolic. It follows that $N_{r,c}$ has exactly two isotropic lines, and hence at most
two maximal ideals.

Now, we will complete the proof by obtaining that the two isotropic lines are ideals.
To do so, we take a nonzero element $z \in \calU_r$ such that $N_{r,c}(z)=0$.
Since the norm is multiplicative, we get $N_{r,c}(zx)=0$ for all $x \in \calU_r$.
Hence $\calU_r z$ is a totally $N_{r,c}$-isotropic $\L$-linear subspace that includes $\L z$,
and hence it equals $\L z$ because $N_{r,c}$ is hyperbolic with rank $2$.
Likewise $z \calU_r=\L z$, and hence $\L z$ is an ideal of $\calU_r$.
We conclude that $\calU_r$ has exactly $2$ maximal ideals, to the effect that it splits.

Let us finally observe that the adjunction $x \mapsto x^\star$ exchanges the two maximal ideals of $\calU_r$.
To see this, let $z$ be idempotent in $\calU_r$ (such an element exists because $\calU_r$ splits).
Then $\L z$ is a maximal ideal, to the effect that $zz^\star=0$ (indeed otherwise $zz^\star$, which is a scalar element, is invertible in $\calU_r$).
Hence $z^\star \neq z$. It follows that $x \mapsto x^\star$ is not the identity of $\calU_r$, yet it is an involution of this $\L$-algebra.
Hence it is its nontrivial involution. The claimed result follows by observing that the nontrivial involution of the $\L$-algebra
$\calU_r \simeq \L \times \L$ exchanges its two maximal ideals.

Let us conclude:

\begin{prop}\label{prop:numberofmaxiideals}
If $\mathfrak{J}_r$ is not a maximal ideal of $\calW_{p,q}$, then exactly two maximal ideals of $\calW_{p,q}$
include $(r(\omega))$, and they are exchanged by $x \mapsto x^\star$.
\end{prop}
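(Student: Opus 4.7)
The plan is to reduce the problem to analyzing the commutative $\L$-algebra $\calU_r := \calW_{p,q,[r]}/\mathfrak{R}_r$. By Corollary \ref{cor:inclusionRr}, every maximal ideal of $\calW_{p,q,[r]}$ contains $\mathfrak{R}_r$, so maximal ideals of $\calW_{p,q,[r]}$ are in one-to-one correspondence with those of $\calU_r$. The hypothesis that $\mathfrak{J}_r$ is not maximal, combined with Propositions \ref{prop:structureofRr} and \ref{prop:structureofJr}, forces $\mathfrak{R}_r = R_r$ with $\dim_\L R_r = 2$, so that $\dim_\L \calU_r = 2$ and $\calU_r$ admits a nonzero maximal ideal.

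Next I would exploit the quadratic structure. Since $\mathfrak{R}_r$ is stable under the adjunction and is precisely the set of vectors of $R_r$ on which $N_r$ vanishes, both the adjunction and the norm descend to $\calU_r$, giving an involution $x \mapsto x^\star$ and a regular multiplicative quadratic form $N_{r,c}(x) = xx^\star$ with values in $\L$. Repeating the argument of Lemma \ref{lemma:Nrvanishes}, any proper ideal of $\calU_r$ lies in the isotropy cone of $N_{r,c}$; the nonzero maximal ideal provided above thus forces $N_{r,c}$ to be isotropic, and being regular of rank $2$ it is hyperbolic.

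The crucial step is then to upgrade from ``proper ideals are isotropic'' to ``isotropic lines are ideals''. For any nonzero isotropic $z \in \calU_r$, multiplicativity of the norm yields $N_{r,c}(xz) = N_{r,c}(x) N_{r,c}(z) = 0$ for every $x$, so $\calU_r z$ is a totally isotropic $\L$-subspace. Since a hyperbolic plane admits no totally isotropic subspace of dimension greater than $1$, we obtain $\calU_r z = \L z$, and symmetrically $z \calU_r = \L z$; thus each of the two isotropic lines of $N_{r,c}$ is an ideal. It follows that $\calU_r$ splits as $\L \times \L$ and has exactly two maximal ideals, which establishes the first assertion.

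To conclude that $x \mapsto x^\star$ exchanges these two ideals, I would take an idempotent $z \in \calU_r$ generating one of them; since $\L z$ is a maximal ideal, it meets $\L$ trivially, so $zz^\star = N_{r,c}(z) = 0$ is incompatible with $z^\star = z$. Hence the induced adjunction on $\calU_r$ is a nontrivial involution of $\L \times \L$, and so must be the coordinate swap, which visibly exchanges the two maximal ideals. The only nontrivial step above is the passage from isotropy of ideals to ideal structure of isotropic lines, and it is handled by the multiplicativity of the norm, a feature that our quadratic formalism has been carefully designed to preserve at every stage.
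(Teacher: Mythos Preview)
Your proposal is correct and follows essentially the same approach as the paper: you reduce to the $2$-dimensional $\L$-algebra $\calU_r$, descend the norm to a regular multiplicative quadratic form $N_{r,c}$, use the existence of a nonzero maximal ideal to force $N_{r,c}$ hyperbolic, exploit multiplicativity to show each isotropic line is an ideal, and finally observe that the induced adjunction is the nontrivial involution of $\L\times\L$. The paper's argument is identical step for step, including the identification of the passage from ``proper ideals are isotropic'' to ``isotropic lines are ideals'' as the crucial point.
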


Note also that in this case the two maximal ideals of $\calW_{p,q,[r]}$ are the two totally $N_r$-isotropic $\L$-linear hyperplanes of it.

This concludes our study of the maximal ideals of $\calW_{p,q}$ that include $(r(\omega))$, thereby closing our study of the maximal ideals of $\calW_{p,q}$.

\subsection{Application to the Zero Divisors Theorem}\label{section:ZDnewproof}

As an application of the previous study, we give here an alternative proof of the Zero Divisors Theorem
(Theorem \ref{theo:zerodivisors}).
The proof uses a similar reduction to a simple case, but goes much further in simplifying the situation, allowing
us to completely avoid using nontrivial results on quadratic forms (such as Springer's theorem).

Our new proof is based upon the following critical, yet very simple observation, which will be reused in the determination of the automorphism group.
Here, we choose a divisor $r \in \Irr(\F)$ of $\Lambda_{p,q}$.
As in Section \ref{section:units1}, we will use the degrees of the elements of the center $C$ as polynomials in $\omega$.

\begin{lemma}\label{lemma:liftinglemma}
Set $d:=\deg(r)$.
Let $x \in \calW_{p,q}$.
Assume that the coefficients of $x$ in the deployed basis $(1,a,b,ab)$
all have degree less than $d$, and that $N(x)=0$ mod $(r(\omega)^2)$. Then $N(x)=0$.
\end{lemma}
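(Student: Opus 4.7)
The plan is to bound the degree of $N(x)$ as a polynomial in $\omega$ and show it is strictly smaller than $2d = \deg r(\omega)^2$, which together with the divisibility assumption will force $N(x)=0$.

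Concretely, writing $x = x_1 + x_a\, a + x_b\, b + x_{ab}\, ab$ with each coefficient in $C$, I would expand $N(x)=xx^\star$ directly in the deployed basis $(1,a,b,ab)$. Using the identity $N(\sum_i x_i e_i) = \sum_i x_i^2 N(e_i) + \sum_{i<j} x_i x_j \langle e_i, e_j\rangle$ (which comes from $e_i e_j^\star + e_j e_i^\star = \langle e_i,e_j\rangle$ and holds in every characteristic, so no division by $2$ is needed), one obtains
\begin{align*}
N(x) &= x_1^2 + N(p)\,x_a^2 + N(q)\,x_b^2 + N(p)N(q)\,x_{ab}^2 \\
&\quad + \tr(p)\,x_1 x_a + \tr(q)\,x_1 x_b + \langle 1,ab\rangle\,x_1 x_{ab} \\
&\quad + \omega\,x_a x_b + N(p)\tr(q)\,x_a x_{ab} + N(q)\tr(p)\,x_b x_{ab}.
\end{align*}

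Now I would bound degrees in $\omega$. All entries of the Gram matrix \eqref{eq:Gram} except $\langle a,b\rangle=\omega$ and $\langle 1,ab\rangle=\langle a^\star,b\rangle$ lie in $\F$, and these two exceptional entries have degree $1$ in $\omega$; hence every coefficient appearing above has degree at most $1$. Each product $x_i x_j$ has degree at most $2(d-1) = 2d-2$ by hypothesis, so every summand has degree at most $2d-1$. Therefore $\deg N(x) \leq 2d-1 < 2d = \deg r(\omega)^2$.

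The conclusion is then immediate: since $r(\omega)^2$ divides $N(x)$ by assumption, either $N(x)=0$ or $\deg N(x) \geq 2d$, and the latter is ruled out. The only real point to check is the degree bookkeeping of the Gram-matrix entries, which is the sole ``obstacle'' and is entirely elementary; the lemma is essentially a degree count exploiting the fact that the polar form $\langle -,-\rangle$ has entries of degree at most one in $\omega$.
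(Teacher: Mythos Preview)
Your proof is correct and follows essentially the same approach as the paper's own proof: both expand $N(x)$ in the deployed basis, observe that the Gram entries have degree at most $1$ in $\omega$, bound every summand's degree by $1+2(d-1)=2d-1<2d$, and conclude from the divisibility hypothesis. The only difference is that you write the expansion out explicitly, whereas the paper summarizes this in a single sentence.
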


\begin{proof}
Since $N(x)=0$ mod $(r(\omega)^2)$, either $N(x)=0$ or $\deg(N(x)) \geq 2d$.
Write $x=x_1 +x_a  a+x_b b+x_{ab} ab$ with $x_1,x_a,x_b,x_{ab}$ in $C$.
In expanding $N(x)$, we see that all the summands have degree at most $1+2(d-1)<2d$.
Hence $N(x)=0$.
\end{proof}

Let us still denote by $d$ the degree of $r$.
A vector $x$ of $\calW_{p,q}$ is called normalized when it is normalized with
respect to the structure of free $C$-module of $\calW_{p,q}$, meaning that the coefficients of $x$ in the $C$-basis
$(1,a,b,ab)$ have no nonconstant common divisor. Clearly, if we have a zero divisor $x$, we can
consider a normalization $x=s y$ with $s\in C^\times$ and $y$ normalized, and $y$ is then a normalized zero divisor.

There are four main steps in this proof of the Zero Divisors Theorem. Throughout, we assume that $p$ and $q$ are irreducible.
\begin{itemize}
\item \textbf{Step 1.} We prove that every zero divisor of $\calW_{p,q}$ belongs to $\mathfrak{J_r}$.
\item \textbf{Step 2.} We prove that if a zero divisor exists, a zero divisor also exists in $H:=C+Ca+Cb$.
\item \textbf{Step 3.} We prove that if a zero divisor exists, then there exists a zero divisor of the form $x_1+x_a a+x_b b$
where $x_1,x_a,x_b$ belong to $C$ and have degree less than $d$.
\item \textbf{Step 4.} Finally, we will easily discard the latter possibility.
\end{itemize}

Let us start with Step 1. Let $x \in \calW_{p,q}$ be a zero divisor.
Denote by $x_r$ its coset in $\calW_{p,q,[r]}$.
We have $x_r x_r^\star=0$. Because both $p$ and $q$ are irreducible we know from Propositions \ref{prop:structureofquotient} and \ref{prop:structureofJr} that
$\calW_{p,q,[r]}/\mathfrak{R}_r$ is a field, and because $\mathfrak{R}_r$ is invariant under conjugation we deduce that
$x_r \in \mathfrak{R}_r$. Hence $x \in \mathfrak{J_r}$.

For the second step, the quickest route is to use the quaternionic structure of $\overline{\calW_{p,q}}$, just like in Section \ref{section:zerodivisors}.
We will however give an alternative proof that avoids using quaternions at all and relies only upon the consideration of $\calW_{p,q,[r]}$,
but we will postpone the explanation of it after the next steps are completed (see Remark \ref{rem:altroutezerodivisor} below).

We move on to Step 3. Assume that $\calW_{p,q}$ has a nonzero divisor. By Step 2, it has a nonzero divisor in $H$,
and by normalizing we can find a normalized nonzero divisor $x \in H$.
By the Euclidean division, we find a $y \in C+Ca+Cb$ all whose coefficients have degree less than $d$, and a vector $z \in \calW_{p,q}$ such that
$x=y+r(\omega) z$. Note that $y \neq 0$ because $x$ is normalized. Then we observe that
$$N(y)=N(x)-r(\omega) \langle x,z\rangle+r(\omega)^2 N(z)=-r(\omega) \langle x,z\rangle+r(\omega)^2 N(z).$$
Because $x \in \mathfrak{J}_r$ we deduce that $N(y) \equiv 0 \; (r(\omega)^2)$, and we derive from Lemma \ref{lemma:liftinglemma} that $N(y)=0$.
Hence Step 3 is completed.

Let us finish with Step 4. Again, let us assume that there is a zero divisor $x=x_1 +x_a a+x_b b$, where $x_1,x_a,x_b$ are elements of $C$
all with degree less than $d$, and hence less than $2$. Once more we can assume that $x$ is normalized.
Assume that $\deg(x_b)=1$. Then by specializing at the root $\lambda \in \F$ of $x_b$, we find
that
$$N\left(x_1(\lambda)+x_a(\lambda) a\right)=x_1(\lambda)^2+x_1(\lambda)x_a(\lambda) \tr(a)+x_a(\lambda)^2 N(a)
=N(x)[\lambda]=0.$$
Since $p$ is nonisotropic we deduce that $x_1(\lambda)=x_a(\lambda)=0$.
Hence $x_b$ divides $x_1$ and $x_a$, thereby contradicting the fact that $x$ is normalized.
It follows that $x_b$ is constant, and symmetrically we find that $x_a$ is constant.
Then, in expanding $N(x_1+x_a a+x_b b)$ we find that $N(x)=x_1^2+\mu$ for some $\mu \in C$ with degree at most $1$.
Hence $x_1$ is constant. Therefore $x_1,x_a,x_b$ are all constant.
Finally by expanding $N(x)$ one last time we find $N(x)=x_ax_b \omega +\mu$ for some constant $\mu$,
and hence $x_ax_b=0$. If $x_b=0$ then again $x_1=x_a=0$ because $p$ is irreducible, which is absurd, and likewise
$x_a=0$ leads to a contradiction. We have a contradiction in any case, and we conclude that $\calW_{p,q}$ contains no zero divisor.

\begin{Rem}\label{rem:altroutezerodivisor}
Let us give an alternative approach to Step 2, avoiding using the completion $\overline{\calW_{p,q}}$ and relying only upon the analysis of
$\calW_{p,q,[r]}$.
So, we start from a normalized zero divisor $x$ in $\calW_{p,q}$.

As in the previous section, we put $\L:=\F[t]/(r)$, which is isomorphic to the splitting field of $\Lambda_{p,q}$ in $\K$.
Next, since $p$ and $q$ are irreducible we know from Table \ref{table1}
that at least one of $p$ and $q$ remains irreducible over $\L$.
Without loss of generality, we assume that $p$ remains irreducible over $\L$.

Now, we consider the $C$-submodule $C[a] x$, and we prove that its rank over $C$ is (at least) $2$.
To do this, we use a \emph{reductio ad absurdum}, and assume that $\rk_C(C[a] x)=1$.
Denote by $a_r$ the coset of $a$ modulo $(r(\omega))$.
Then $x_r,a_r x_r$ are linearly dependent over $\L$, i.e.,
$a_r x_r=\lambda x_r$ for some $\lambda \in \L$.
Note that $a_r-\lambda$ is nonzero because it is the coset of $a-\mu$ for some $\mu \in C$,
whose coefficient on $a$ is not a multiple of $r(\omega)$.
Hence $a_r-\lambda$ is a zero divisor in $\calW_{p,q,[r]}$.
Yet it is clear that the projection of $C[a]$ in $\calW_{p,q,[r]}$
is isomorphic to $C[a]/(r(\omega))$ and hence to $\L[t]/(p)$. Because $p$ remains irreducible over $\L$, the
algebra $\L[t]/(p)$ is a field, whence $a_r-\lambda$ is invertible in
$\calW_{p,q,[r]}$. This is absurd. As a consequence, $\rk_C(C[a] x) \geq 2$.

Next, $\rk_C H=3$, and since $\calW_{p,q}$ is free $C$-module of rank $4$ this is enough to see that $H \cap (C[a]x) \neq \{0\}$.
To conclude, we pick $x' \in (H \cap (C[a]x)) \setminus \{0\}$ and observe that it is a zero divisor, which is obvious by the multiplicativity of $N$.
\end{Rem}

\subsection{Additional results on the ideals above the fundamental ideal}

Here, we will dive deeper into the study of the ideals of $\calW_{p,q,[r]}$ and will essentially complete their description,
leaving aside the special case where both $p$ and $q$ split with a double root.
These results are for use in the proof of the structure theorems for the automorphism group in Section \ref{section:automorphismsII}, and
we invite the reader to skip it at first reading and move to any one of the next sections.

\begin{prop}\label{prop:allidealsonesplits}
Assume that one of $p$ and $q$ splits, but that $p$ and $q$ do not both split with a double root.
Let $r \in \Irr(\F)$ be an irreducible divisor of $\Lambda_{p,q}$, and for $x \in \calW_{p,q}$
denote by $x_r$ its coset in $\calW_{p,q,[r]}$. Set $\L:=\F[t]/(r)$, and consider $\calW_{p,q,[r]}$
with its induced structure of vector space over $\L$.
Then:
\begin{enumerate}[(a)]
\item $\mathfrak{R}_r$ is the sole $2$-dimensional ideal of $\calW_{p,q,[r]}$.
\item All the $1$-dimensional ideals of $\calW_{p,q,[r]}$ are included in $\mathfrak{R}_r$.
\item There is a $1$-dimensional ideal in $\calW_{p,q,[r]}$ if and only if
$p$ and $q$ split over $\L$. In that case, there are exactly two such ideals,
each one is of the form $\L \alpha \beta^\star$ with $\alpha \in \L[a_r] \setminus \L$ and $\beta \in \L[b_r] \setminus \L$ such that $N_r(\alpha)=N_r(\beta)=0$, and each one is invariant under the adjunction.
\item If $q$ has a double root in $\L$, then $\mathfrak{R}_r=\beta \calW_{p,q,[r]}=\calW_{p,q,[r]} \beta=(\beta)$ for every $\beta \in \L[b_r] \setminus \L$ such that $N(\beta)=0$.
\end{enumerate}
\end{prop}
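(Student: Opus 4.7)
The plan is to analyze the ideals of $\calW_{p,q,[r]}$ via a Peirce-type decomposition of $\mathfrak{R}_r$ induced by primitive idempotents of the quotient $\calU_r = \calW_{p,q,[r]}/\mathfrak{R}_r$, with projection denoted $\pi$. The hypothesis excludes case (ii) of Proposition \ref{prop:structureofRr}. Case (iii) (which forces $\car(\F) = 2$ with $\tr(p) = \tr(q) = 0$, and, after swapping $p,q$ if necessary, $p$ split with double root while $q$ is irreducible inseparable) makes $\calW_{p,q,[r]}$ a commutative local ring whose unique maximal ideal is $\mathfrak{R}_r$; a direct computation confirms (a) and (b), while (c) and (d) are vacuous. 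We therefore concentrate on case (i), where $\mathfrak{R}_r = R_r$ has $\L$-dimension $2$. If $\calU_r$ is a field, the only proper ideals of $\calW_{p,q,[r]}$ are $0$ and $\mathfrak{R}_r$ (as $\mathfrak{R}_r$ becomes a simple $\calU_r$-bimodule), which settles all four assertions easily; the substantive sub-case is $\calU_r \simeq \L \times \L$.

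In this sub-case, we lift a primitive idempotent of $\calU_r$ to some $e_1 \in \calW_{p,q,[r]}$ with $e_1^2 = e_1$ (possible since $\mathfrak{R}_r$ is nilpotent). As $e_1 \notin \L$, the quadratic identity \eqref{eq:quadidentity} combined with $e_1^2 = e_1$ forces $\tr_r(e_1) = 1$ and $N_r(e_1) = 0$; setting $e_2 := e_1^\star$, one checks $e_2^2 = e_2$, $e_1 + e_2 = 1$, $e_1 e_2 = N_r(e_1) = 0$, and $\pi(e_2)$ is the other primitive idempotent of $\calU_r$ (the adjunction exchanges the two maximal ideals, as noted before Proposition \ref{prop:numberofmaxiideals}). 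The Peirce decomposition $\mathfrak{R}_r = \bigoplus_{i,j} e_i \mathfrak{R}_r e_j$, combined with $x^\star = -x$ on $\mathfrak{R}_r$ (since $\tr_r$ vanishes on $R_r$), gives $(e_1 x e_1)^\star = -e_2 x e_2$; but $(e_1 x e_1)^\star = -(e_1 x e_1)$ as well, so the equality $e_1 x e_1 = e_2 x e_2$ forces both sides to vanish, as they live in distinct summands. Hence $\mathfrak{R}_r = e_1 \mathfrak{R}_r e_2 \oplus e_2 \mathfrak{R}_r e_1$, and a short further verification yields $\mathfrak{R}_r^2 = 0$; each off-diagonal Peirce summand is then a two-sided ideal of $\calW_{p,q,[r]}$.

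From this structure, (a)--(c) follow cleanly. For a proper ideal $I$, the lemma preceding Corollary \ref{cor:inclusionRr} gives $\pi(I) \subsetneq \calU_r$. If $\pi(I) \neq 0$, lifting a generator as $e = \alpha e_1 + u$ with $\alpha \in \L^\times$ and $u \in \mathfrak{R}_r$, the identity $\mathfrak{R}_r^2 = 0$ reduces the inclusions $\mathfrak{R}_r e \subseteq I$ and $e \mathfrak{R}_r \subseteq I$ to $\alpha \cdot e_2 \mathfrak{R}_r e_1 \subseteq I$ and $\alpha \cdot e_1 \mathfrak{R}_r e_2 \subseteq I$; so $\mathfrak{R}_r \subseteq I$ and $\dim_\L I \geq 3$. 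This rules out $\pi(I) \neq 0$ for $1$- and $2$-dimensional ideals, proving (b) and, by dimension, (a). For (c), the two $1$-dimensional ideals are $e_1 \mathfrak{R}_r e_2$ and $e_2 \mathfrak{R}_r e_1$; to exhibit them as $\L\alpha\beta^\star$, the condition $\alpha\beta^\star \in R_r$ is equivalent to $\langle \alpha, \beta\rangle_r = 0$, which for $\alpha = a_r - x_i$ and $\beta = b_r - y_j$ (the zero divisors of $\L[a_r]$ and $\L[b_r]$) becomes $\omega_r = x_i y_{j'} + x_{i'} y_j$ after using $\tr(p) = x_1 + x_2$, $\tr(q) = y_1 + y_2$; exactly two of the four pairings $(i,j)$ meet this (depending on which root of $\Lambda_{p,q}$ equals $\omega_r$), and each produces a generator of one of the two Peirce summands. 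Invariance under the adjunction follows from $(\alpha\beta^\star)^\star = \beta\alpha^\star$ lying in the same $1$-dimensional ideal.

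For (d), suppose $q$ has a double root $y_0 \in \L$. Then $\Lambda_{p,q} = (t - y_0\tr(p))^2$ (substitute $y_1 = y_2 = y_0$ in the root factorisation), so $r = t - y_0\tr(p)$ and $\omega_r = y_0\tr(p)$ in $\L$. Setting $\beta := b_r - y_0$, the four pairings $\langle \beta, 1\rangle_r$, $\langle \beta, a_r\rangle_r$, $\langle \beta, b_r\rangle_r$, $\langle \beta, a_rb_r\rangle_r$ are verified to vanish after substituting $\omega_r = y_0\tr(p)$, so $\beta \in R_r$; combined with $N_r(\beta) = -q(b_r) = 0$, this gives $\beta \in \mathfrak{R}_r$. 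Then $\calW_{p,q,[r]}\beta \subseteq \mathfrak{R}_r$ contains the two $\L$-linearly independent elements $\beta$ and $a_r\beta$, so its $\L$-dimension is exactly $2 = \dim_\L \mathfrak{R}_r$, and equality holds; the symmetric argument handles $\beta\calW_{p,q,[r]}$. The principal obstacle throughout is the structural work of the second paragraph: establishing the adjoint-compatible lift $e_1^\star = e_2$ and exploiting it to annihilate the diagonal Peirce summands; once that is done, the ideal classification is essentially automatic.
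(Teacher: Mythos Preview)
Your approach via Peirce decomposition is a genuinely different route from the paper's, which instead analyzes the regular representation $\calW_{p,q,[r]} \to \End_\L(\calW_{p,q,[r]}/I)$ for each candidate ideal $I$ via explicit matrix forms of the images of $\alpha$ and $\beta$. Your structural work in the second paragraph---lifting to orthogonal idempotents $e_1, e_2 = e_1^\star$, killing the diagonal Peirce components via the adjunction, and deducing $\mathfrak{R}_r^2 = 0$---is correct, and parts (a), (b), (d) follow as you claim.

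There is, however, a real gap in your handling of (c). You assert that $e_1\mathfrak{R}_r e_2$ and $e_2\mathfrak{R}_r e_1$ are ``the two $1$-dimensional ideals'', but you never show that each is $1$-dimensional; you only know they are complementary in the $2$-dimensional space $\mathfrak{R}_r$. If $e_1\mathfrak{R}_r e_2$ were $2$-dimensional and $e_2\mathfrak{R}_r e_1 = 0$, then (since $\mathfrak{R}_r^2 = 0$) \emph{every} $\L$-line in $e_1\mathfrak{R}_r e_2$ would be a two-sided ideal, giving infinitely many $1$-dimensional ideals rather than two. Your subsequent root computation does not close this gap: your abstract $e_1$ is never tied to the elements $\alpha\beta^\star$ you construct, and the ``exactly two of the four pairings'' argument tacitly assumes $x_1 \neq x_2$ and $y_1 \neq y_2$, which fails when one of $p,q$ has a double root over $\L$ (e.g.\ $q$ split with a double root, or $q$ irreducible inseparable while $p$ is split with simple roots). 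A smaller issue: in your case (iii), assertion (c) is not vacuous---one must still check there is no $1$-dimensional ideal.

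The fix is to take $e_1$ concretely. In the sub-case $\calU_r \simeq \L \times \L$, at least one of $p,q$ splits with simple roots over $\L$; choosing $e_1 = \alpha$ a nontrivial idempotent in (say) $\L[a_r]$, and $\beta \in \L[b_r]\setminus\L$ with $N_r(\beta) = 0$ and $\langle\alpha,\beta\rangle_r = 0$ (which the paper shows exists via the Gram matrix), one checks directly that $\alpha(\alpha\beta^\star)\alpha^\star = \alpha\beta^\star \neq 0$ and $\alpha^\star(\alpha^\star\beta)\alpha = \alpha^\star\beta \neq 0$, so both off-diagonal summands are nonzero, hence $1$-dimensional. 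This also handles the double-root cases uniformly. Once patched in this way, your argument is arguably more transparent than the paper's: the Peirce decomposition makes the ideal lattice visible all at once, whereas the paper's approach proceeds by tracking block shapes in $\End_\L(\calW_{p,q,[r]}/I)$ case by case.
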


\begin{proof}
First of all, we know that $(1,a_r,b_r,a_rb_r)$ is a basis of the $\L$-vector space $\calW_{p,q,[r]}$,
and it easily follows, as in Section \ref{section:omega} that $(1,\alpha,\beta,\alpha\beta)$ is also a basis of it for all
$\alpha \in \L[a_r] \setminus \L$ and $\beta \in \L[\beta_r] \setminus \L$.

We assume first that none of $p$ and $q$ splits with simple roots (in $\F$).
Then without loss of generality, we assume that $q=t^2$ and that $p$ does not split with simple roots (in $\F$).
The starting assumption yields that $p$ is irreducible. Here $\L=\F$ and $[\K:\F]=2$, and we know from Propositions \ref{prop:structureofquotient}
and \ref{prop:structureofJr} that $\mathfrak{R}_r$ is the sole maximal ideal of $\calW_{p,q,[r]}$ and that its dimension equals $2$.
In particular, every proper ideal of $\calW_{p,q,[r]}$ is included in $\mathfrak{R}_r$.
Moreover, here $N(b)=0$, $\tr(b)=0$, $r(\omega)=\langle a,b\rangle$ and $\langle 1,ab\rangle=\tr(a)\tr(b)-\langle a,b\rangle=-r(\omega)$.
Hence the Gram matrix of Section \ref{section:Grammatrix}
helps us see that $b_r$ and $a_rb_r$ belong to $\mathfrak{R}_r$. Hence $\mathfrak{R}_r=\F[a_r] b_r$.
Since $\mathfrak{R}_r$ is an ideal we conclude that $\mathfrak{R}_r=(b_r)$. Moreover,
for all $\alpha \in \F[a_r] \setminus \{0\}$, we see that the ideal $(\alpha b_r)$ must include $(b_r)$ because $\alpha$ is invertible
(remember that $\F[a_r]$ is a field because $p$ is irreducible).
Hence $\mathfrak{R}_r$ has no $1$-dimensional sub-ideal. We conclude in this case that $\mathfrak{R}_r$ is the sole nontrivial ideal of
$\calW_{p,q,[r]}$ and that it is generated -- both as a left ideal and as a right ideal -- by any $\beta \in \L[b_r] \setminus \L$ such that $N(\beta)=0$
(because these elements are the ones of $\F^\times b_r$).

From now on, we assume that at least one of $p$ and $q$ splits with simple roots.
Without loss of generality, we assume that $p$ splits with simple roots.
Then Table \ref{table1} shows that $q$ splits over $\L$. Hence we can pick
$\alpha \in \L[a_r] \setminus \L$ such that $\alpha^2=\alpha$, and
$\beta \in \L[b_r] \setminus \L$ such that either $\beta^2=0$ or $\beta^2=\beta$.
In particular $N_r(\alpha)=0$ and $N_r(\beta)=0$. Hence
the Gram matrix of the basis $(1,\alpha,\beta,\alpha\beta)$ for $\langle -,-\rangle_r$ equals
$$\begin{bmatrix}
2 & \tr(\alpha) & \tr(\beta) & \langle 1,\alpha \beta\rangle_r \\
\tr(\alpha) & 0 & \langle \alpha,\beta\rangle_r & 0 \\
\tr(\beta) & \langle \alpha,\beta\rangle_r & 0 & 0 \\
\langle 1,\alpha\beta\rangle_r & 0 & 0 & 0
\end{bmatrix}.$$
Yet $\langle -,-\rangle_r$ is singular, so this matrix is also singular, and we deduce that
$\langle 1,\alpha\beta\rangle_r=0$ or $\langle \alpha,\beta\rangle_r=0$.
The first identity yields $\langle \alpha^\star,\beta\rangle_r=0$. Hence, by replacing $\alpha$ with the idempotent $\alpha^\star$ if necessary,
we lose no generality in assuming that $\langle \alpha,\beta\rangle_r=0$, which we will now do.

Now we have $\alpha \beta^\star+\beta \alpha^\star=0$.
We already note that $\alpha \beta^\star$ belongs to $\mathfrak{R}_r$. Indeed:
\begin{itemize}
\item To start with $N_r(\alpha \beta^\star)=N_r(\alpha)N_r(\beta^\star)=0$;
\item Next, $\langle \alpha \beta^\star,\alpha \beta^\star\rangle_r=2 N_r(\alpha \beta^\star)=0$,
$\langle \alpha,\alpha \beta^\star\rangle_r=N_r(\alpha) \tr_r(\beta^\star)=0$,
$\langle \beta^\star,\alpha \beta^\star\rangle_r=N_r(\beta^\star) \tr_r(\alpha)=0$,
and finally $\langle 1,\alpha\beta^\star\rangle_r=\langle \alpha,\beta\rangle_r=0$.
\end{itemize}
Likewise we obtain $\alpha^\star \beta \in \mathfrak{R}_r$.
Observe also that $\alpha \beta^\star=\tr(\beta) \alpha-\alpha \beta$ and $\alpha^\star \beta=\beta -\alpha \beta$
are linearly independent over $\L$, as seen from their coefficients in the deployed basis $(1,\alpha,\beta,\alpha \beta)$.

Now, let us take an ideal $I$ of $\calW_{p,q,[r]}$ with dimension $d \in \{1,2\}$ (over $\L$, of course). We consider the quotient $\L$-vector
space $V:=\calW_{p,q,[r]}/I$ and the regular representation
$$\Phi : x \in \calW_{p,q,[r]} \longmapsto [y \mapsto xy] \in \End_\L(V).$$
Because $I$ is a two-sided ideal, we see that $\Ker \Phi=I$.
Recall that $\alpha \beta^\star+\beta \alpha^\star=0$.
Then $\Phi(\alpha) \neq 0$, for otherwise $\Phi(\alpha^\star)=\id$ and then $\Phi(\beta)=0$, and finally
$\Phi(\alpha\beta)=\Phi(\alpha)\Phi(\beta)=0$, so $\Vect_\L(\alpha,\beta,\alpha \beta) \subseteq \Ker \Phi=I$,
contradicting the assumption that $\dim_\L I \leq 2$.
If $\Phi(\alpha^\star)=0$, we would find likewise that $I$ contains the linearly independent vectors $\alpha^\star,\beta^\star,\alpha^\star\beta^\star$,
which again would contradict $\dim_\L I \leq 2$.
Hence $\Phi(\alpha)$ is a nontrivial idempotent of $\End_\L(\calW_{p,q,[r]}/I)$.

Now, we set $n:=\dim_\L V$ and $r:=\rk \Phi(\alpha) \in \lcro 1,n-1\rcro$, and we choose a basis of $V$ that is adapted to the decomposition
$V=\im \Phi(\alpha) \oplus \Ker(\alpha)$, and we consider the respective matrices $A$ and $B$ of $\Phi(\alpha)$ and $\Phi(\beta)$ in it.
Of course, we also set $A^\star:=\tr(\alpha) I_n-A$ and $B^\star:=\tr(\beta) I_n-B$.
The identity $\alpha \beta^\star=-\beta \alpha^\star$ yields that $\Phi(\beta^\star)$ maps $\Ker \Phi(\alpha^\star)=\im \Phi(\alpha)$
into $\Ker \Phi(\alpha)$, and likewise that $\Phi(\beta)$ maps $\im \Phi(\alpha^\star)=\Ker \Phi(\alpha)$ into $\im \Phi(\alpha)$.
It follows that
$$A=\begin{bmatrix}
I_r & [0]_{r \times (n-r)} \\
[0]_{(n-r) \times r} & [0]_{(n-r) \times (n-r)}
\end{bmatrix} \quad \text{and} \quad
B=\begin{bmatrix}
\tr(\beta) I_r & C_2 \\
C_1  & [0]_{(n-r) \times (n-r)}
\end{bmatrix},$$
where $C_1 \in \Mat_{n-r,r}(\L)$ and $C_2 \in \Mat_{r,n-r}(\L)$.
We observe that
$$ABA^\star=\begin{bmatrix}
[0]_{r \times r} & C_2 \\
[0]_{(n-r)\times r}  & [0]_{(n-r) \times (n-r)}
\end{bmatrix} \quad \text{and} \quad
A^\star BA =\begin{bmatrix}
[0]_{r \times r} & [0]_{r \times (n-r)} \\
C_1  & [0]_{(n-r) \times (n-r)}
\end{bmatrix}.$$
Hence having $C_1 \neq 0$ and $C_2 \neq 0$ would lead to having $\rk \Phi=4$
(by observing that $A,A^\star,ABA^\star,A^\star BA$ are linearly independent), thereby contradicting the non-injectivity of $\Phi$.

With the same argument, we see that if one of $C_1$ and $C_2$ is nonzero then $\dim_\L I=1$.
And if both $C_1$ and $C_2$ are zero then we observe that $1,\alpha,\beta,\alpha \beta$ are mapped into $\Vect(I_n,A)$ under $\Phi$,
which yields $\dim_\L I\geq 2$.

Conversely, assume that $\dim_\L I=2$. Then $C_1=0$ and $C_2=0$, and we observe that
$\Phi(\alpha \beta^\star)=0$ and $\Phi(\alpha^\star \beta)=0$ because obviously $A B^\star=0=A^\star B$.
We have seen earlier that $\alpha \beta^\star$ and $\alpha^\star \beta$ are linearly independent over $\L$,
and hence $I=\Vect_\L(\alpha \beta^\star,\tr(\beta) \alpha-\beta)$.
This proves that $\Vect_\L(\alpha \beta^\star,\alpha^\star \beta)$ is the only possible $2$-dimensional ideal of $\calW_{p,q,[r]}$.
Yet because $p$ splits with simple roots in $\F$ we know from Proposition \ref{prop:structureofRr} that $\mathfrak{R}_r$
is such a $2$-dimensional ideal. Hence point (a) is proven, along with the identity
$$\mathfrak{R}_r=\Vect_\F(\alpha \beta^\star,\alpha^\star\beta).$$

Now, we assume $\dim I=1$. Hence exactly one of $C_1$ and $C_2$ equals $0$.

\begin{itemize}
\item If $C_1=0$, then $A^\star B=0$, so $I$ contains the nonzero element $\alpha^\star \beta$,
and we deduce that $I=(\alpha^\star \beta) \subseteq \mathfrak{R}_r$.
\item If $C_2=0$, then $A B^\star=0$, so $I$ contains the nonzero element $\alpha \beta^\star$,
and  we deduce that $I=(\alpha \beta^\star) \subseteq \mathfrak{R}_r$.
\end{itemize}
This proves that the only possible $1$-dimensional ideals of $\calW_{p,q,[r]}$ are $\L \alpha \beta^\star$ and
$\L \alpha^\star \beta$, and that both are included in $\mathfrak{R}_r$. We also know that they are distinct.
Now, we must check that $\L \alpha \beta^\star$ and $L \alpha^\star \beta$ are actually
ideals and that they are invariant under the adjunction, which will conclude the proof of point (c).
The invariance under the adjunction is clear because each one of $\alpha \beta^\star$ and $\alpha^\star \beta$
has trace zero.
Next, for $\L \alpha \beta^\star$
we use the double-writing $\L \alpha \beta^\star=\L \beta \alpha^\star$
to see that it is invariant under right-multiplication by $\beta$ (with the first expression) and by $\alpha$ (with the second one).
Since $\alpha$ and $\beta$ generate the $\L$-algebra $\calW_{p,q,[r]}$, we deduce that
$\L \alpha \beta^\star$ is a right ideal. We proceed likewise to see that $\L \alpha \beta^\star$
is invariant under left-multiplication by $\alpha^\star$ and $\beta^\star$, and deduce that it is a left ideal.
Hence $\L \alpha \beta^\star$ is an ideal, as claimed. The proof is similar for $\L \alpha^\star \beta$.

Hence, point (c) is now entirely proven.

Now, we add the assumption that $q$ has a double root in $\L$ (we still assume that $p$ splits). Then $\L[b_r]$ is degenerate and
$\tr(\beta)=0$ and $N(\beta)=0$. As seen earlier $\L \alpha \beta^\star=\L \alpha \beta$ and $\L \alpha^\star \beta$ are distinct $1$-dimensional
subspaces that are included in $\mathfrak{R}_r$, so their sum equals $\mathfrak{R}_r$.
Since $\beta^2=0$ and $\alpha+\alpha^\star=1$, it is clear that this sum equals $\calW_{p,q,[r]} \beta$.
Hence $\mathfrak{R}_r=\calW_{p,q,[r]} \beta$, and by applying the adjunction we also obtain $\mathfrak{R}_r=\beta \calW_{p,q,[r]}$
because $\beta^\star=-\beta$. It follows that $\mathfrak{R}_r$ is the ideal generated by $\beta$.
We have taken a specific $\beta \in \L[b]$, but the result is unchanged in replacing it with an arbitrary zero divisor in $\L[b]$ because
these zero divisors all belong to $\L^\times \beta$. This completes the proof of point (d).
\end{proof}

We finish with the study of the ideals in $\calW_{p,q}$ that include $(r(\omega))$ in the case where $p$ and $q$ are irreducible.
This is another manifestation of the rigidity of $\calW_{p,q}$ in the ``double-irreducible" case.

\begin{prop}\label{prop:allidealsbothirr}
Assume that $p$ and $q$ are irreducible, and let $r \in \Irr(\F)$ divide $\Lambda_{p,q}$.
Then the only proper ideals of $\calW_{p,q}$ that include $(r(\omega))$ are
$\mathfrak{J}_r$ and $(r(\omega))$, and they are equal if and only if $p$ and $q$ are inseparable with distinct splitting fields.
\end{prop}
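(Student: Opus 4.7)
The plan is to pass to the quotient $\calU:=\calW_{p,q,[r]}$ and prove that the only ideals of $\calU$ contained in $\mathfrak{R}_r$ are $0$ and $\mathfrak{R}_r$. Since $\mathfrak{J}_r$ is the unique maximal ideal of $\calW_{p,q}$ containing $(r(\omega))$ (Proposition~\ref{prop:structureofJr}), every proper ideal $I\supseteq (r(\omega))$ satisfies $I\subseteq \mathfrak{J}_r$, and $I/(r(\omega))$ is an ideal of $\calU$ contained in $\mathfrak{R}_r$. Proposition~\ref{prop:structureofRr}, combined with the irreducibility of $p$ and $q$, leaves only two structural cases: either $R_r=\mathfrak{R}_r$ has $\L$-dimension $2$ (case~(i)), or $R_r=\calU$ has $\L$-dimension $4$ with $\car \F =2$ and $\tr p=\tr q=0$ (case~(iii)).

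Consider case~(iii) first. Here $r=t$, so $\omega=0$ in $\calU$, hence $ab+ba=-\omega=0$ and $\calU$ is commutative, isomorphic to $\F[a]/(p)\otimes_\F \F[b]/(q)$, a tensor product of two inseparable degree-$2$ extensions of $\F$. If the splitting fields of $p$ and $q$ differ these extensions are $\F$-linearly disjoint, so $\calU$ is a field of degree $4$; thus $\mathfrak{R}_r=0$, $\mathfrak{J}_r=(r(\omega))$, and we recover the ``inseparable with distinct splitting fields'' branch announced in the proposition. If the splitting fields coincide, writing $\epsilon$ for the difference of the two images of a generator of the common splitting field identifies $\calU$ with the local ring $\K[\epsilon]/(\epsilon^2)$, whose only proper nonzero ideal is its maximal ideal --- necessarily $\mathfrak{R}_r$.

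The main work lies in case~(i), where the crux is to prove $R_r^2=0$. Pick an $\L$-basis $(v_1,v_2)$ of $R_r$. Every $v\in R_r$ satisfies $\tr_r(v)=\langle 1,v\rangle_r=0$ and $N_r(v)=0$, so $v^\star=-v$ and $v^2=-vv^\star=-N_r(v)=0$. Since $R_r$ is an ideal, $v_1v_2\in R_r$, so I write $v_1v_2=\lambda v_1+\mu v_2$. Associativity gives $(v_1v_2)v_2=v_1(v_2^2)=0$ and $v_1(v_1v_2)=v_1^2 v_2=0$; expanding in the basis $(v_1,v_2)$ yields $\lambda^2 v_1+\lambda\mu v_2=0$ and $\mu\lambda v_1+\mu^2 v_2=0$, forcing $\lambda=\mu=0$. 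Hence $v_1v_2=0$ and $R_r^2=0$.

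Once $R_r^2=0$, the left and right multiplications by $\calU$ on $R_r$ factor through the residue field $\K:=\calU/R_r$, making $R_r$ a $(\K,\K)$-bimodule in which every two-sided sub-ideal is automatically a left $\K$-subspace. Inspection of Table~\ref{figure3} shows that in every subcase of case~(i) with both $p$ and $q$ irreducible one has $[\K:\L]=2$, whence $R_r$ --- of $\L$-dimension $2$ --- has left $\K$-dimension one. Its only left $\K$-subspaces are $0$ and $R_r$, which closes case~(i). Together the two cases settle both assertions of the proposition.
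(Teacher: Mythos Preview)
Your proof is correct and complete. The structural split (case~(i) versus case~(iii) of Proposition~\ref{prop:structureofRr}) is the same as in the paper, but the arguments inside the cases differ.

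In the two-dimensional case the paper never computes $R_r^2$; instead it picks a nonzero $x\in\mathfrak{R}_r$ and observes, via Table~\ref{figure3}, that at least one of $p,q$ remains irreducible over $\L$, so that $\L[a_r]$ (say) is a field. Then $a_rx$ and $x$ must be $\L$-linearly independent (otherwise $a_r-\lambda$ would be a zero divisor in the field $\L[a_r]$), which already forces $(x)=\mathfrak{R}_r$. Your route---proving $R_r^2=0$ by the elementary basis computation and then passing to the induced $(\K,\K)$-bimodule structure---is a genuinely different argument: it trades the ``$p$ stays irreducible over $\L$'' input for the dimension count $[\K:\L]=2$ (both read off Table~\ref{figure3}). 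Your argument is arguably more self-contained and makes the local ring structure of $\calU$ transparent; the paper's is shorter and ties in with the technique of Remark~\ref{rem:altroutezerodivisor}. In the inseparable case (iii) the paper simply invokes the dimension formula $\dim_\F\mathfrak{R}_r=4-[\K:\F]$ from Proposition~\ref{prop:structureofquotient}, whereas you give the explicit tensor/local-ring description $\calU\simeq\K[\epsilon]/(\epsilon^2)$ when the splitting fields coincide---a nice concrete complement to the paper's more abstract count.
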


\begin{proof}
This amounts to proving that the only proper ideals of $\calW_{p,q,[r]}$ are
$\mathfrak{R}_r$ and $\{0\}$, and that $\mathfrak{R}_r=\{0\}$ if and only if $p$ and $q$ are inseparable with distinct splitting fields.

We have already seen that $\mathfrak{R}_r$ is the sole maximal ideal in $\calW_{p,q,[r]}$ (Proposition \ref{prop:structureofJr}), and hence it remains to
determine the ideals that are included in $\mathfrak{R}_r$.

Set $\L:=\F[t]/(r)$, and assume first that $\dim_\L \mathfrak{R}_r=2$ (which holds whenever one of $p$ and $q$ is separable).
By Table \ref{table1}, at least one of $p$ and $q$ remains irreducible over $\L$, and without loss of generality we will assume that $p$ does.
Let then $x \in \mathfrak{R}_r \setminus \{0\}$. The ideal $(x)$ contains $a_r x$, where $a_r$ stands for the coset of $a$ in $\calW_{p,q,[r]}$.
Just like in the proof of the Zero Divisors Theorem given in Section \ref{section:ZDnewproof}, and more precisely in Remark \ref{rem:altroutezerodivisor},
we find that $x$ and $a_r x$ are linearly independent over $\L$, whence $(x)=\mathfrak{R}_r$.
This shows that no ideal of $\calW_{p,q,[r]}$ lies strictly between $\mathfrak{R}_r$ and $\{0\}$.

Assume now that  $\dim_\L \mathfrak{R}_r \neq 2$. Then by Proposition \ref{prop:structureofRr} the polynomials $p$ and $q$ are inseparable, so $r=t$ and $\L=\F$.
By Proposition \ref{prop:structureofquotient} we find
$\dim_\L \mathfrak{R}_r=4-[\K:\F]$, and hence $\mathfrak{R}_r=\{0\}$ and $p$ and $q$ have distinct splitting fields in $\K$.
\end{proof}

\section{Finite-dimensional subalgebras of the free Hamilton algebra}\label{section:finitedimalg}

In this section, we consider the structure of the finite-dimensional $\F$-subalgebras of $\calW_{p,q}$.

Our starting point will be the observation that all the algebraic elements in $\calW_{p,q}$ are actually quadratic.
Then we will investigate the possible quadratic subalgebras of $\calW_{p,q}$, thanks to the use of the maximal ideals that include
the fundamental ideal. Finally, we will be able to understand the internal structure of the finite-dimensional subalgebras,
which uses the fact that the extended quaternion algebra $\overline{\calW_{p,q}}$ splits whenever one of $p$ and $q$ splits.

The finite-dimensional subalgebras will be considered again in Section \ref{section:misc:finitedimsubalg}, where we will
study their orbits under conjugation in $\calW_{p,q}$.

Note throughout that in case $p$ and $q$ are irreducible, the $2$-dimensional subalgebras of $\calW_{p,q}$
are already known to be the conjugates of the basic subalgebras (Theorem \ref{theo:conjsousalg2}), but to emphasize the new methods we will work as if this result were not already known.

\subsection{Every algebraic element of the free Hamilton algebra is quadratic}

\begin{prop}\label{prop:algebraicimpliesquadratic}
Every element of $\calW_{p,q}$ is either quadratic or transcendental over $\F$.
\end{prop}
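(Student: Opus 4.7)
The plan is to reduce to Corollary \ref{lemma:quadratic}, which characterizes quadratic elements by $\tr(x),N(x) \in \F$. Since the case $x \in \F$ is trivial, assume $x \notin \F$. First I would rule out $x \in C = \F[\omega]$: any nonconstant polynomial in the transcendental element $\omega$ is itself transcendental over $\F$, so $x \in C$ combined with algebraicity over $\F$ would force $x \in \F$. Because $\calW_{p,q}$ is a free $C$-module with basis $(1,a,b,ab)$, the fact that $x \notin C$ immediately yields that $(1,x)$ is $C$-linearly independent.

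By the quadratic identity \eqref{eq:quadidentity}, the monic degree-$2$ polynomial $M(t) := t^2 - \tr(x)\,t + N(x) \in C[t]$ vanishes at $x$. Combined with the $C$-linear independence of $(1,x)$, this makes the natural surjection $C[t]/(M) \twoheadrightarrow C[x]$ an isomorphism, so the annihilator ideal $\{f \in C[t] : f(x) = 0\}$ is exactly $(M)$. By hypothesis some nonzero $m \in \F[t] \subseteq C[t]$ satisfies $m(x) = 0$, hence $M$ divides $m$ in $C[t] = \F[\omega,t]$.

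This divisibility persists in the UFD $\Fbar(\omega)[t]$, where $m$ factors completely into linear pieces $m(t) = \prod_{i=1}^n (t - \alpha_i)$ with $\alpha_i \in \Fbar$; each such factor is irreducible, so the monic degree-$2$ divisor $M$ must coincide with $(t - \alpha_i)(t - \alpha_j)$ for some pair of indices. Consequently $\tr(x) = \alpha_i + \alpha_j$ and $N(x) = \alpha_i \alpha_j$ lie in $\Fbar$. Since they also lie in $\F[\omega]$, and the transcendence of $\omega$ forces $\F[\omega] \cap \Fbar = \F$, both $\tr(x)$ and $N(x)$ are scalars, whereupon Corollary \ref{lemma:quadratic} concludes. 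The main subtlety is ensuring that $M$, whose coefficients a priori lie only in $\F[\omega]$, really does split into factors with coefficients in $\Fbar$---this rests entirely on $m \in \F[t]$ splitting over $\Fbar$ and on unique factorization in $\Fbar(\omega)[t]$.
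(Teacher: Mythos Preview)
Your proof is correct. Both you and the paper start from the quadratic identity $x^2 = \tr(x)\,x - N(x)$ to get a monic degree-$2$ polynomial over $C = \F[\omega]$ annihilating $x$, and both then argue that any $\F$-polynomial annihilating $x$ must be divisible by such a polynomial in $\F[\omega][t]$. The difference lies in how the final step---showing the coefficients actually lie in $\F$---is carried out. You pass to the algebraic closure: since $m \in \F[t]$ splits over $\Fbar$, unique factorization in $\Fbar(\omega)[t]$ forces $M$ to be a product of two linear factors with roots in $\Fbar$, and then $\F[\omega] \cap \Fbar = \F$ (by transcendence of $\omega$) finishes things. The paper instead never leaves $\F$: it invokes the standard integrality-over-a-UFD argument to get the $\F(\omega)$-minimal polynomial $\mu_{\F(\omega)}$ with coefficients in $\F[\omega]$, writes $\mu_\F = \mu_{\F(\omega)}\, r$ in $\F[\omega][t]$, and views both sides as polynomials in $\omega$ with coefficients in the domain $\F[t]$; since $\deg_\omega(\mu_\F) = 0$, the factor $\mu_{\F(\omega)}$ must also have $\omega$-degree $0$, i.e., lie in $\F[t]$. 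Your route trades the integrality lemma for a brief detour to $\Fbar$; the paper's $\omega$-degree count is more self-contained and slightly slicker, but both arguments are short and valid.
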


\begin{proof}
Let $x \in \calW_{p,q}$.
We view $x$ as a vector of the extended $\F(\omega)$-algebra $\overline{\calW_{p,q}}$.
We start from the observation that  $x^2=\tr(x)\,x-N(x)$, with $\tr(x)$ and $N(x)$ in $\F[\omega]$.
Hence $x$ is integral over the ring $\F[\omega]$, with degree at most $2$, and since $\F[\omega]$
is a unique factorization domain it is classical
(combine e.g.\ corollary 1.6 and proposition 1.7 from chapter VII of \cite{Lang})
that the minimal polynomial $\mu_{\F(\omega)} \in \F(\omega)[t]$ of $x$ over the field $\F(\omega)$ has its coefficients in $\F[\omega]$ and degree at most $2$.

Assume now that $x$ is algebraic over $\F$, and denote by $\mu_\F \in \F[t]$ its minimal polynomial over $\F$.
Then $\mu_{\F(\omega)}$ divides $\mu_\F$ in $\F(\omega)[t]$, and hence in $\F[\omega][t]$ because $\mu_{\F(\omega)}$ is monic with coefficients in $\F[\omega]$. Writing $\mu_\F=\mu_{\F(\omega)} r$ for some $r \in \F[\omega][t]$,
we compare the degrees in $\omega$ (seeing all three polynomials as polynomials in the indeterminate $\omega$ with coefficients in the
domain $\F[t]$), see that $\deg_\omega(\mu_\F)=0$ and deduce that $\deg_\omega(\mu_{\F(\omega)})=0$.
In other words, $\mu_{\F(\omega)} \in \F[t]$, and we conclude that $x$ is quadratic over $\F$.
\end{proof}

\begin{cor}\label{cor:basiclemmasubalgebra}
Let $\calA$ be a finite-dimensional subalgebra of $\calW_{p,q}$.
Then:
\begin{enumerate}[(a)]
\item All the elements of $\calA$ are quadratic;
\item $\calA$ is invariant under the adjunction;
\item $\langle x,y\rangle \in \F$ for all $x,y$ in $\calA$.
\end{enumerate}
\end{cor}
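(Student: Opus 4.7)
The plan is to deduce all three statements from what has already been established, particularly Proposition \ref{prop:algebraicimpliesquadratic} and Corollary \ref{lemma:quadratic}, together with the transcendence of $\omega$ over $\F$.

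First I would prove (a). Given any $x \in \calA$, the subalgebra $\F[x]$ is contained in $\calA$ and hence finite-dimensional, so $x$ is algebraic over $\F$. Proposition \ref{prop:algebraicimpliesquadratic} then forces $x$ to be quadratic. Next, for (b), I would apply Corollary \ref{lemma:quadratic} to see that for every $x \in \calA$ one has $\tr(x) \in \F$, so that $x^\star = \tr(x) - x$ lies in $\F + \F x \subseteq \calA$. Thus $\calA$ is stable under the adjunction.

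For (c), the key ingredient is the observation that $\calA \cap C = \F$. Indeed, any element of $\calA \cap C$ is algebraic over $\F$ (as $\calA$ is finite-dimensional), but the only algebraic elements of $C = \F[\omega]$ are the scalars, since $\omega$ is transcendental over $\F$ (by the proposition following the introduction of the matrices $A$ and $B$ in Section \ref{section:omega}). Now, given $x,y \in \calA$, we have $\langle x,y\rangle = xy^\star + yx^\star$, and part (b) ensures $y^\star, x^\star \in \calA$, so $\langle x,y\rangle \in \calA$. On the other hand, it was proved in Section \ref{section:traceetc} that the inner product always takes values in $C$. Combining the two inclusions yields $\langle x,y\rangle \in \calA \cap C = \F$.

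No step is a real obstacle: everything reduces to quoting Proposition \ref{prop:algebraicimpliesquadratic}, Corollary \ref{lemma:quadratic}, the fact that the inner product is $C$-valued, and the transcendence of $\omega$. The only conceptual point worth isolating is the lemma $\calA \cap C = \F$, which underlies both (c) and the intuition behind (a) and (b).
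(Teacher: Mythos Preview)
Your proof is correct and follows essentially the same approach as the paper's. The only minor difference is in part (c): the paper argues directly that $xy^\star \in \calA$ and hence $\langle x,y\rangle = \tr(xy^\star) \in \F$ (using the already-established fact that the trace of every element of $\calA$ lies in $\F$), whereas you route through the observation $\calA \cap C = \F$; both arguments are equivalent and rest on the same ingredients.
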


\begin{proof}
All the elements of $\calA$ are algebraic over $\F$, and hence the first point readily follows from Proposition
\ref{prop:algebraicimpliesquadratic}. As a consequence $\tr(x) \in \F$ and $N(x) \in \F$ for all $x \in \calA$,
due to Lemma \ref{lemma:quadrecognize}.
Let $x \in \calA \setminus \F$. Then $x^\star=\tr(x)-x \in \calA$, yielding the second point.
As a consequence, for all $x,y$ in $\calA$, the algebra $\calA$ contains $xy^\star$ and we conclude that
$\langle x,y\rangle=\tr(xy^\star) \in \F$ (alternatively, we could start from $\forall x \in \calA, \; N(x) \in \F$ and polarize).
\end{proof}

\subsection{The possible $2$-dimensional subalgebras}\label{section:dim2subalg}

Our next step is the identification of the structure, up to isomorphism, of the $2$-dimensional subalgebras of $\calW_{p,q}$,
i.e., of the subalgebras generated by quadratic elements. Our aim is to prove the following result:

\begin{theo}\label{theo:classdim2}
Let $\calA$ be a $2$-dimensional subalgebra of $\calW_{p,q}$.
Then one of the following two statements holds:
\begin{enumerate}[(i)]
\item $\calA$ is isomorphic to one of the basic subalgebras;
\item One of $p$ and $q$ splits, and $\calA$ is degenerate.
\end{enumerate}
Moreover, whenever one of $p$ and $q$ splits with simple roots there exists a degenerate quadratic subalgebra of $\calW_{p,q}$.
\end{theo}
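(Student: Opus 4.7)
The plan is to write $\calA = \F[x]$ for some $x \in \calA \setminus \F$, which is quadratic by Corollary~\ref{cor:basiclemmasubalgebra}, and to split into three cases according to whether the minimal polynomial $\mu_x \in \F[t]$ is irreducible, has simple roots, or has a double root. The degenerate case is immediate: if $\mu_x = (t-\lambda)^2$, then $x-\lambda$ is a nonzero nilpotent, hence a zero divisor in $\calW_{p,q}$, and the Zero Divisors Theorem (Theorem~\ref{theo:zerodivisors}) forces one of $p, q$ to split, giving conclusion~(ii).

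For the field case ($\mu_x$ irreducible), if both $p$ and $q$ are irreducible, Corollary~\ref{cor:conjsousalg2} places $\calA$ in a basic conjugacy class and yields~(i). Otherwise at least one of $p, q$ splits, so the splitting field $\K$ of $pq$ has degree $1$ or $2$ over $\F$. I would pick a maximal ideal $J \supseteq \mathfrak{F}$; by Proposition~\ref{prop:structureofquotient}, $\calW_{p,q}/J \simeq \K$, and the projection is injective on $\calA$ because $\calA$ is a field and $J$ is proper. If $[\K:\F]=1$ then $\mu_x(x)=0$ descends to $\mu_x(\bar{x})=0$ with $\bar{x} \in \F$, contradicting irreducibility, so this subcase is vacuous. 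If $[\K:\F]=2$, the image of $\calA$ is a $2$-dimensional subfield of $\K$ and thus equals $\K$ itself, which is precisely the splitting field of whichever one of $p, q$ is irreducible; hence $\calA \simeq \K$ is isomorphic to that basic subalgebra.

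The main obstacle is the split case ($\mu_x$ with simple roots), which reduces to the claim that \emph{if $\calW_{p,q}$ contains a nontrivial idempotent, then one of $p, q$ splits with simple roots}. Arguing by contrapositive, assume neither $p$ nor $q$ splits with simple roots. The case where both are irreducible is ruled out immediately by the Zero Divisors Theorem, so at least one of $p, q$ splits with a double root, and Proposition~\ref{prop:structureofJr} then makes $\mathfrak{J}_r$ maximal for every irreducible divisor $r$ of $\Lambda_{p,q}$. Thus $\calW_{p,q,[r]}$ is a local ring with unique maximal ideal $\mathfrak{R}_r$ and residue field $\K$. For an idempotent $e \in \calW_{p,q}$, after possibly replacing $e$ by $1-e$ the residue vanishes, so the image $\bar{e} \in \mathfrak{R}_r$ lies in the Jacobson radical; then $1-\bar{e}$ is a unit and the relation $\bar{e}(1-\bar{e})=0$ forces $\bar{e}=0$. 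Thus $e = r(\omega) u$ for some $u \in \calW_{p,q}$, and substituting into $e^2 = e$ gives $r(\omega) u^2 = u$. Iterating this relation places $u$ in $\bigcap_{n\geq 1} (r(\omega))^n \calW_{p,q}$, which is trivial because $\calW_{p,q}$ is a free $\F[\omega]$-module of finite rank; hence $e=0$, a contradiction. Granted the claim, in the split case $\calA \simeq \F\times\F$ is isomorphic to whichever basic subalgebra has that structure.

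For the \emph{moreover} statement, assume $p$ splits with simple roots and fix a nontrivial idempotent $e \in \F[a]$; a direct idempotency check shows $e^\star = 1-e$. Set $u := (1-e) b e$. Then $u^2 = (1-e) b \bigl(e(1-e)\bigr) b e = 0$, and $u^\star = -e b (1-e)$ by direct expansion. If $u$ vanished, both off-diagonal components of $b$ in the Peirce decomposition relative to $e$ would vanish, forcing $e$ to commute with $b$; but this fails because the identity $ab + ba = (\tr q)a + (\tr p)b - \omega$ together with the $\F[\omega]$-linear independence of $(1, a, b, ab)$ shows that $ab - ba \neq 0$, and hence $[e, b] \neq 0$ for every $e \in \F[a] \setminus \F$. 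Therefore $u \neq 0$, and $\F[u]$ is the required degenerate $2$-dimensional subalgebra.
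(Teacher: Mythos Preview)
Your argument is largely correct and, for the split case, takes a genuinely different route from the paper.

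\textbf{Comparison with the paper.} For the field case with both $p,q$ irreducible, you invoke Corollary~\ref{cor:conjsousalg2} (a consequence of the retracing algorithm), which is perfectly legitimate since that result is already established. The paper deliberately avoids this shortcut and instead runs a delicate subcase analysis via the projection to $\K$, handling separately the situations where the splitting fields of $p$ and $q$ coincide or differ, and where separability fails; this is done to showcase a method independent of Section~\ref{section:units1}. Your route is shorter but less self-contained. For the split case, your argument is different and elegant: you use that $\calW_{p,q,[r]}$ is local (which follows from Proposition~\ref{prop:structureofJr} together with the fact that in a finite-dimensional algebra the Jacobson radical is the intersection of the maximal two-sided ideals), force the idempotent into $(r(\omega))$, and then kill it by a Krull-intersection argument in the free $\F[\omega]$-module. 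The paper instead observes directly that a nontrivial idempotent $x$ has $\tr(x)=1\notin (r(\omega))$, so neither $x$ nor $x^\star$ lies in $\mathfrak{J}_r$, producing a nontrivial idempotent in the residue field $\K$---a contradiction in one line.

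\textbf{A gap in the ``moreover'' part.} Your computation $u^\star=-eb(1-e)$ is incorrect. Since $e$ is a nontrivial idempotent one has $\tr(e)=1$ and hence $e^\star=1-e$; then
\[
u^\star=\bigl((1-e)be\bigr)^\star=e^\star b^\star (1-e)^\star=(1-e)b^\star e=(1-e)(\tr q - b)e=-u,
\]
because $(1-e)e=0$. So $u=0$ gives no information on the \emph{other} off-diagonal component $v:=eb(1-e)$, and you cannot conclude that $e$ commutes with $b$. The fix is easy: if $u=0$, replace $u$ by $v$, which also squares to zero; if $v=0$ as well then indeed $eb=ebe=be$, and your commutator contradiction applies. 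Alternatively, a direct expansion in the deployed basis $(1,a,b,ab)$ shows that the $ab$-coefficient of $u=(1-e)be$ equals $-\lambda\neq 0$ where $e=\lambda a+\mu$, so $u\neq 0$ immediately.
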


\begin{proof}
We choose an irreducible monic divisor $r \in \Irr(\F)$ of $\Lambda_{p,q}$ and
a maximal ideal $I$ of $\calW_{p,q}$ that includes $(r(\omega))$. We consider the residue field $\L:=\F[t]/(r)$.
Remember from Proposition \ref{prop:structureofquotient} that
$\calW_{p,q}/I$ is isomorphic to the splitting field $\K$ of $pq$.

The main idea is to consider the homomorphism
$$\Phi : \calA \rightarrow \calW_{p,q}/I$$
of $\F$-algebras induced by the canonical projection.

\noindent
\textbf{Step 1.} \textbf{The case when $p$ and $q$ are irreducible.} \\
For this case, we could of course refer to Corollary \ref{cor:conjsousalg2}, but we will give a completely different proof
that can be adapted to the other cases.

To start with, the Zero Divisors Theorem yields that $\calA$ is a field.
Hence $\Phi$ is injective, to the effect that $\calA$ is isomorphic (as an $\F$-algebra) to a subfield of $\K$.
If $p$ and $q$ have the same splitting field, this is enough to see that $\calA \simeq \K \simeq \F[a]$.

In the remainder of this step, we assume that $p$ and $q$ have distinct splitting fields in $\K$.

\vskip 3mm
\noindent \textbf{Subcase 1.} Exactly one of $p$ and $q$ is separable. \\
Then there are only two subfields of $\K$ with degree $2$ over $\F$, they are isomorphic to $\F[a]$ and $\F[b]$,
and hence $\calA$ is isomorphic to either $\F[a]$ or $\F[b]$.

\vskip 3mm
\noindent \textbf{Subcase 2.} Both $p$ and $q$ are inseparable. \\
In particular $\car(\F)=2$ and $r=t$. In that case we will not use the projection to $\calW_{p,q}/I$.
Take $x \in \calA \setminus \F$. The field $\calA$ is inseparable so its quadratic trace is zero,
and we deduce that $\tr(x)=0$. Then $1,a,b,x$ all have trace zero, and by decomposing $x$ in the deployed basis $(1,a,b,ab)$
we infer that $x \in \Vect_C(1,a,b)$.
Next, we write $x=s_1(\omega)+s_2(\omega)\,a+s_3(\omega)\,b$ and compute the norm to get
$$N(x)=s_1(\omega)^2+s_2(\omega)^2 N(a)+s_3(\omega)^2 N(b)+s_2(\omega)s_3(\omega) \omega.$$
Remembering that $\car(\F)=2$, we deduce that the first three summands on the right-hand side
belong to $\F[\omega^2]$. Since $N(x)\in \F$, evaluating the derivative at $0$ yields $s_2(0)s_3(0)=0$.
If $s_3(0)=0$, then $s_1(0)^2+s_2(0)^2 N(a)=N(x)$ and we deduce that $t^2-\tr(x) t+N(x)$, which is the minimal polynomial of $x$, also annihilates
$s_1(0)+s_2(0) a$, which must then belong to $\F[a] \setminus \F$. It follows that $\F[x] \simeq \F[a]$.
Likewise, if $s_2(0)=0$ then we obtain $\F[x] \simeq \F[b]$.

\vskip 3mm
\noindent \textbf{Subcase 3.} Both $p$ and $q$ are separable. \\
Here $r=\Lambda_{p,q}$ and we know from Table \ref{table1} that the only quadratic subextensions of $\F - \K$ are the respective splitting fields
of $p$, $q$ and $\Lambda_{p,q}$. Now, we assume that $\calA$ is not isomorphic to one of the first two, and seek to find a contradiction.
Then $\Phi$ induces an isomorphism from $\calA$ to $\L$, identified with a subalgebra of $\calW_{p,q}/I$.
Moreover $I=\mathfrak{J}_r$ by Proposition \ref{prop:structureofJr}.
Hence for every $x \in \calA$ there exists a unique $\varphi(x) \in \F[\omega]$ of degree at most $1$ such that
$x \equiv \varphi(x)$ mod $\mathfrak{J}_r$, and $\varphi(x) \neq 0$ whenever $x \neq 0$.
Moreover, $\varphi : x \mapsto \varphi(x)$ is clearly linear, so it is injective.

Remembering the definition of $\mathfrak{J}_r$ in terms of inner product, we deduce that
$$\forall y \in \calW_{p,q}, \; \langle x,y\rangle \equiv \langle \varphi(x),y\rangle \quad \text{mod} \; (\Lambda_{p,q}(\omega))$$
i.e.,
\begin{equation}\label{eq:modeqtrace}
\forall y \in \calW_{p,q}, \; \tr(x y^\star) \equiv \varphi(x)\,\tr(y) \quad \text{mod} \; (\Lambda_{p,q}(\omega)).
\end{equation}
Remember also from Corollary \ref{cor:basiclemmasubalgebra} that $\calA$ is invariant under adjunction.
Then, we apply \eqref{eq:modeqtrace} to an arbitrary pair $(x,y)\in \calA^2$, and note thanks to point (c) of Corollary \ref{cor:basiclemmasubalgebra}
that the right and left-hand sides of \eqref{eq:modeqtrace} are polynomials with degree at most $1$, whereas $\deg(\Lambda_{p,q}(\omega))=2$.
Therefore
$$\forall (x,y)\in \calA^2, \; \tr(x y^\star)=\varphi(x)\,\tr(y).$$
Finally, since $\calA$ is separable we can pick $y \in \calA$ such that $\tr(y) \neq 0$.
It ensues that $\varphi$ is valued in $\F$, i.e., it is an $\F$-linear form. Yet we have seen that
$\varphi$ is injective from the start. This yields a final contradiction.

\vskip 3mm

\noindent
\textbf{Step 2.} \textbf{The case when at least one of $p$ and $q$ splits.}\\
Then $[\K:\F] \leq 2$. Assume that $\calA$ is a field.
Then $\Phi$ is an isomorphism and $[\K:\F]=2$. It follows that exactly one of $p$ and $q$
is irreducible and $\K$ is its splitting field, so $\calA \simeq \F[a]$ or $\calA \simeq \F[b]$.

Assume that $\calA$ splits but none of $p$ and $q$ splits with simple roots.
Then we note from Proposition \ref{prop:structureofJr} that $I=\mathfrak{J}_r$, and the definition of
$\mathfrak{J}_r$ yields that $\tr(x)=\langle 1,x\rangle \in (r(\omega))$ for all $x \in \mathfrak{J}_r$.
Let $x \in \calA$ be idempotent. Then $\tr(x)=1$, and we deduce from the previous remark that $x \not\in \mathfrak{J}_r$,
whence $\Phi(x) \neq 0$. Likewise $\Phi(x^\star) \neq 0$, and hence $\Phi(x)$ is a nontrivial idempotent in $\K$.
This is absurd because $\K$ is a field.

Hence, we have proved the first part of Theorem \ref{theo:classdim2}.

In order to conclude, it suffices to prove that if at least one of $p$ and $q$ splits with simple roots then $\calW_{p,q}$ includes a $2$-dimensional
degenerate algebra. We use the same elementary construction as in Section \ref{section:units1counterexamples}.
Say that $p$ splits, and choose an idempotent $\alpha$ in $\F[a] \setminus \F$.
Then we can take $y \in \calW_{p,q}$ such that $\alpha y^\star \neq 0$ and $\langle \alpha,y\rangle=0$.
It follows that $\tr(\alpha y^\star)=0$ and $N(\alpha y^\star)=0$, to the effect that $\calB:=\F[\alpha y^\star]$ is a degenerate $2$-dimensional subalgebra.
\end{proof}

\subsection{Application: Isomorphisms between free Hamilton algebras}

Before we proceed with the study of finite-dimensional subalgebras,
we can already give an application of the study of $2$-dimensional subalgebras,
as now we can obtain the result of Theorem \ref{theo:isomorphismbetween} almost effortlessly. Let us restate it:

\begin{theo}
Let $p_1,q_1,p_2,q_2$ be monic polynomials with degree $2$ in $\F[t]$.
Then the $\F$-algebras $\calW_{p_1,q_1}$ and $\calW_{p_2,q_2}$ are isomorphic if and only if one of the following conditions holds:
\begin{itemize}
\item There are isomorphisms $\F[t]/(p_1) \simeq \F[t]/(p_2)$ and $\F[t]/(q_1) \simeq \F[t]/(q_2)$ of $\F$-algebras.
\item There are isomorphisms $\F[t]/(p_1) \simeq \F[t]/(q_2)$ and $\F[t]/(q_1) \simeq \F[t]/(p_2)$ of $\F$-algebras.
\end{itemize}
\end{theo}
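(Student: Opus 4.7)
Plan: The ``if'' direction follows immediately from the universal property of the free product: if the basic subalgebras of $\calW_{p_1,q_1}$ are $\F$-algebra isomorphic to those of $\calW_{p_2,q_2}$ (possibly after a swap), then the two free Hamilton algebras arise as free products of isomorphic $2$-dimensional $\F$-algebras and are therefore isomorphic, using $\calA * \calB \simeq \calB * \calA$ for the swapped case.

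For the converse, I would fix an isomorphism $\Phi : \calW_{p_1,q_1} \to \calW_{p_2,q_2}$ and study the pair of $2$-dimensional $\F$-subalgebras $\calA := \F[\Phi(a_1)]$ and $\calB := \F[\Phi(b_1)]$ of $\calW_{p_2,q_2}$: they satisfy $\calA \simeq \F[a_1]$, $\calB \simeq \F[b_1]$, $\calA \cap \calB = \F$, and together generate $\calW_{p_2,q_2}$. By Theorem \ref{theo:classdim2}, each of them is either isomorphic to a basic subalgebra of $\calW_{p_2,q_2}$ or degenerate (in which case one of $p_2,q_2$ splits). Moreover $\Phi$ carries the center $\F[\omega_1]$ isomorphically onto $\F[\omega_2]$, so its restriction to the centers has the form $\omega_1 \mapsto c_0 + c_1\omega_2$ with $c_1 \in \F^\times$, and applying $\Phi$ to $\omega_1 = \langle a_1,b_1\rangle$ gives $\langle \Phi(a_1), \Phi(b_1)\rangle = c_0 + c_1\omega_2$, which has degree one in $\omega_2$.

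When both $p_1, q_1$ are irreducible, the Zero Divisors Theorem (Theorem \ref{theo:zerodivisors}) forces $p_2, q_2$ to be irreducible as well, so Proposition \ref{prop:doubleretracing} applies to the pair $(\Phi(a_1), \Phi(b_1))$ and yields a monomial unit $\gamma \in \calW_{p_2,q_2}$ conjugating both $\Phi(a_1)$ and $\Phi(b_1)$ to basic vectors; because $\calA \cap \calB = \F$, these conjugates must lie in distinct basic subalgebras of $\calW_{p_2,q_2}$, which forces $\F[a_1] \simeq \F[a_2]$ and $\F[b_1] \simeq \F[b_2]$, or else the swapped matching.

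The main obstacle will be the case when at least one of $p_1, q_1$ splits, whence by the Zero Divisors Theorem at least one of $p_2, q_2$ also splits. Theorem \ref{theo:classdim2} alone is too coarse here: for instance it does not rule out a configuration where $\F[a_1] \simeq \F[b_1] \simeq \F \times \F$ while $\F[a_2] \simeq \F \times \F$ and $\F[b_2] \simeq \F[\varepsilon]/(\varepsilon^2)$, since both configurations admit a degenerate $2$-dimensional subalgebra. To resolve this I plan to combine Theorem \ref{theo:classdim2} applied to both $\Phi$ and $\Phi^{-1}$ with the finer invariants from Section \ref{section:ideals}: the splitting type of the fundamental polynomial $\Lambda_{p,q}$, which by Table \ref{table1} is tightly controlled by the types of $p$ and $q$ and is preserved under the affine substitution $\omega \mapsto c_0 + c_1\omega$ induced by $\Phi$ on the centers; and, for each irreducible divisor $r$ of $\Lambda_{p,q}$, the count and structure of maximal ideals of $\calW_{p,q}$ lying above $(r(\omega))$ given by Propositions \ref{prop:structureofJr} and \ref{prop:numberofmaxiideals}. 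Together these invariants should restrict the possible configurations tightly enough to force the multiset equality $\{[\F[a_1]], [\F[b_1]]\} = \{[\F[a_2]], [\F[b_2]]\}$ of $\F$-algebra isomorphism classes, which is precisely the claim.
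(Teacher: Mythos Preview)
Your proposal is correct and ultimately leans on the same two invariants the paper uses (the isomorphism types of $2$-dimensional subalgebras via Theorem~\ref{theo:classdim2}, and the maximal-ideal structure above the fundamental ideal from Section~\ref{section:ideals}), but your route is more circuitous than the paper's.

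The paper never tracks the specific images $\Phi(a_1),\Phi(b_1)$ and never invokes Proposition~\ref{prop:doubleretracing}. Instead it observes directly that, by Theorem~\ref{theo:classdim2}, the \emph{set} of isomorphism classes of $2$-dimensional subalgebras of $\calA*\calB$ is $\{[\calA],[\calB]\}$, augmented by $[\F[\varepsilon]/(\varepsilon^2)]$ precisely when one of $\calA,\calB$ splits and neither degenerates. This intrinsic invariant already determines the unordered pair $\{[\calA],[\calB]\}$ in every configuration except one: it cannot separate $\F^2*\F^2$ from $\F^2*(\F[\varepsilon]/(\varepsilon^2))$, since both have type set $\{\F^2,\F[\varepsilon]/(\varepsilon^2)\}$. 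For that single residual case the paper counts the maximal ideals $I$ with $\calW_{p,q}/I$ a field (equivalently, the maximal ideals containing $\mathfrak{F}$): Propositions~\ref{prop:structureofJr} and~\ref{prop:numberofmaxiideals} give exactly four such ideals for $\F^2*\F^2$ (since $\Lambda_{p,q}$ has simple roots) and exactly two for $\F^2*(\F[\varepsilon]/(\varepsilon^2))$ (since $\Lambda_{p,q}$ has a double root). The remark following the paper's proof also notes your alternative of comparing the splitting types of $\Lambda_{p_1,q_1}$ and $\Lambda_{p_2,q_2}$ directly.

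So your detour through Proposition~\ref{prop:doubleretracing} in the irreducible case is unnecessary: Theorem~\ref{theo:classdim2} alone already forces $\{[\F[a_1]],[\F[b_1]]\}=\{[\F[a_2]],[\F[b_2]]\}$ there. And in the split case your ``plan'' is right in spirit but can be sharpened to a single exceptional configuration rather than an open-ended combination of invariants.
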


If we take two $2$-dimensional algebras $\calA$ and $\calB$, we have just proved in Theorem \ref{theo:classdim2} that
the isomorphism types of the $2$-dimensional subalgebras of the free product $\calA * \calB$ are
the ones of $\calA$, of $\calB$ and in addition of $\F[\varepsilon]/(\varepsilon^2)$ if at least one of $\calA$ and $\calB$ splits and none is degenerate.
This is clearly enough to determine the unordered pair $\{\calA,\calB\}$ up to isomorphism,
with the only possible remaining ambiguity residing in the case where none of $\calA$ and $\calB$ is a field and at least
one of them splits.

Theorem \ref{theo:classdim2} is insufficient to differentiate
$\F^2 * (\F[\varepsilon]/(\varepsilon^2))$ from $\F^2 * \F^2$ up to isomorphism; indeed, in both the possible isomorphism types of the $2$-dimensional subalgebras
are the ones of $\F^2$ and $\F[\varepsilon]/(\varepsilon^2)$.

There are several ways to deal with this special case, and at this point of the analysis the quickest one is to
consider the maximal ideals. Indeed, if $I$ is a maximal ideal of $\calW_{p,q}$, then either $I+\mathfrak{F}=\calW_{p,q}$
in which case $\calW_{p,q}/I$ is noncommutative (it is a quaternion algebra over the field $C/(I \cap C)$),
or $I$ includes $\mathfrak{F}$ and $\calW_{p,q}/I$ is a field (isomorphic to the splitting field of $pq$).
Hence a relevant invariant under isomorphism is the \emph{number} of maximal ideals $I$ such that $\calW_{p,q}/I$
is a field. Now, assume that both $p$ and $q$ split, and at least one has simple roots.
Then we know from Propositions \ref{prop:structureofJr} and \ref{prop:numberofmaxiideals} that for every root $z$ of $\Lambda_{p,q}$, exactly two maximal ideals of $\calW_{p,q}$ include the ideal $(\omega-z)$. Hence, there are exactly four maximal ideals of $\calW_{p,q}$ that include $\mathfrak{F}$
if $\Lambda_{p,q}$ has simple roots, and exactly two if $\Lambda_{p,q}$ has a double root.
Finally $\Lambda_{p,q}$ has a double root if and only if at least one of $p$ and $q$ has a double root (this is explained at the start of Section \ref{section:Lambdapq}). Hence we conclude that $\F^2 * (\F[\varepsilon]/(\varepsilon^2))$ and $\F^2 * \F^2$ are not isomorphic,
which completes the proof of Theorem \ref{theo:isomorphismbetween}.

\begin{Rem}
Another way to differentiate $\F^2 * (\F[\varepsilon]/(\varepsilon^2))$ from $\F^2 * \F^2$ up to isomorphism
is to look at the effect of a potential isomorphism on the central elements.
This anticipates the method of the start of Section \ref{section:automorphismsI}, and we briefly sketch the ideas.
Assume that there exists an isomorphism $\Phi : \calW_{p_1,q_1} \overset{\simeq}{\rightarrow} \calW_{p_2,q_2}$ of $\F$-algebras, and denote by
$\langle -,-\rangle_i$ the inner product on $\calW_{p_i,q_i}$, by $C_i$ its center and by $\mathfrak{F}_i$ its fundamental ideal.
With the same line of reasoning as for Proposition \ref{prop:commuteadjunction}, one proves that
$\Phi(x^\star)=\Phi(x)^\star$ for all $x \in \calW_{p,q}$, and one deduces that
$\langle \Phi(x),\Phi(y)\rangle_2=\Phi(\langle x,y\rangle_1)$ for all $x,y$ in $\calW_{p_1,q_1}$.
Then, by analyzing the Gram determinant of a deployed basis, one proves that
$\Phi$ maps $\mathfrak{F}_1$ onto $\mathfrak{F}_2$. By considering the intersection with the centers,
it follows that the $\F$-algebras $\F[t]/(\Lambda_{p_1,q_1})$ and $\F[t]/(\Lambda_{p_2,q_2})$
are isomorphic. If both $p$ and $q$ split with simple roots, then $\F[t]/(\Lambda_{p,q})$ splits, whereas
if both split and at least one of them has a double root, then $\F[t]/(\Lambda_{p,q})$ is degenerate.
Therefore $\F^2 * (\F[\varepsilon]/(\varepsilon^2))$ and $\F^2 * \F^2$ are nonisomorphic as $\F$-algebras.
\end{Rem}

\subsection{Finite-dimensional subalgebras: the irreducible case}

At this point, the reader might be worried that we spent so much time analyzing the subalgebras of dimension $2$. Surely, things will get even more difficult when we get to arbitrary dimensions? Well, quite the contrary.

To simplify things, we tackle two cases separately: the case where both polynomials are irreducible, and the case where at least one splits.
In each case, we note that every element in a finite-dimensional subalgebra is algebraic, and hence quadratic by Proposition \ref{prop:algebraicimpliesquadratic}.

\begin{theo}
If $p$ and $q$ are irreducible, then every nontrivial finite-dimensional subalgebra of $\calW_{p,q}$ is isomorphic to one of the basic subalgebras.
\end{theo}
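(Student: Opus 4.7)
The plan is to reduce to the case where $\calA$ contains a nonscalar basic vector, and then to exploit the constraint $\langle x,y\rangle \in \F$ for pairs in $\calA$ to force every element of $\calA$ into a single basic subalgebra.

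First I set up: every element of $\calA$ is algebraic over $\F$, hence quadratic by Proposition \ref{prop:algebraicimpliesquadratic}, and by Corollary \ref{cor:basiclemmasubalgebra} the subalgebra $\calA$ is stable under the adjunction and satisfies $\langle x,y\rangle \in \F$ for every pair $x,y \in \calA$. Picking a nonscalar $x \in \calA$, the retracing algorithm succeeds on $x$ because $p$ and $q$ are irreducible (Theorem \ref{theo:conjsousalg2}), producing a monomial unit $\gamma$ such that $\gamma^{-1} x \gamma$ is basic. Since $\gamma^{-1} \calA \gamma$ is isomorphic to $\calA$, I may replace $\calA$ by this conjugate and assume outright that $\calA$ contains a nonscalar basic element, which by symmetry I take to lie in $\F[a]\setminus \F$; call it $x$.

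The heart of the argument is to prove $\calA \subseteq \F[a]$. Let $y \in \calA$ be nonscalar and write $y = y_1 + y_a\,a + y_b\,b + y_{ab}\,ab$ in the deployed basis, with $n$ the maximum of the degrees of the $y_i$. Suppose $y$ is nonbasic, so $n \geq 1$ by Lemma \ref{lemma:quadraticdeployeddegree1}(c). Because $p$ and $q$ are irreducible, every nonscalar element of either basic subalgebra has nonzero norm, so Corollary \ref{cor:differenceofdistances} applies and yields $|d_\calA(y) - d_\calB(y)| = 1$. Combining this with Corollary \ref{cor:quadraticdistance1}, whichever basic subalgebra is leading for $y$ satisfies $d_\calA(y) \in \{n,n+1\}$, in particular $d_{\F[a]}(y) \geq n \geq 1$. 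Since $d_{\F[a]}(y) > 0$, this distance is realised as $\deg \langle x,y\rangle$, contradicting $\langle x,y\rangle \in \F$. Hence $y$ is basic; a direct expansion of $\langle x,y\rangle$ in terms of $\omega = \langle a,b\rangle$ then shows that $y \in \F[b]\setminus \F$ would force a nonzero $\omega$-coefficient in $\langle x,y\rangle$, again incompatible with $\langle x,y\rangle \in \F$. Thus $y \in \F[a]$, proving $\calA \subseteq \F[a]$; since $\calA$ is nontrivial, $\calA = \F[a]$, and $\calA$ is isomorphic to the basic subalgebra $\F[a]$ (the case where the initial conjugation lands in $\F[b]$ yields $\calA \simeq \F[b]$).

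The only delicate step is the degree bookkeeping: one must correctly identify which of the two basic subalgebras is leading for $y$, and invoke the irreducibility of $p$ and $q$ precisely where it is needed, namely to guarantee that leading vectors in a basic subalgebra are units (so that Corollary \ref{cor:differenceofdistances} gives distance gap exactly $1$). Everything else is a routine application of the structural tools assembled in Section \ref{section:units1}.
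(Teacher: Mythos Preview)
Your proof is correct, and it takes a genuinely different route from the paper's own argument. The paper proceeds by contradiction for $\dim_\F \calA \geq 3$: using the Zero Divisors Theorem it shows that $\calA$ is a skew field, then projects $\calA$ injectively into $\calW_{p,q}/I \simeq \K$ (via a maximal ideal above the fundamental ideal) to force $\dim_\F \calA = 4$ and $\calA$ commutative, and finally projects into a nondegenerate specialization $\calW_{p,q}/(\omega-\lambda)$, a quaternion algebra, to reach a contradiction with noncommutativity. Your argument instead stays entirely within the combinatorial machinery of Section~\ref{section:units1}: you conjugate one element to basic via the retracing algorithm, and then use the distance inequalities (Corollaries~\ref{cor:quadraticdistance1} and~\ref{cor:differenceofdistances}) together with $\langle x,y\rangle \in \F$ to pin every other element of $\calA$ to the same basic subalgebra. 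The paper's approach showcases the local (quotient) methods it develops in Section~\ref{section:ideals} and avoids relying on Theorem~\ref{theo:conjsousalg2}; yours is shorter, needs no ideal theory, and in fact yields the stronger conclusion that $\calA$ is \emph{conjugated} to a basic subalgebra, not merely isomorphic to one.
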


\begin{proof}
Assume that $p$ and $q$ are irreducible. Let $\calA$ be a finite-dimensional subalgebra of $\calW_{p,q}$ with $\dim_\F \calA>1$.
If $\dim_\F \calA=2$ then we already know that $\calA$ is isomorphic to one of the basic subalgebras.

Now, we assume that $\dim_\F \calA \geq 3$ and seek to find a contradiction. First of all, we use the Zero Divisors Theorem:
for all $x \in \calA \setminus \{0\}$, we know that $N(x) \neq 0$ and hence $x$ is invertible in $\calA$
(e.g., remember from Corollary \ref{cor:basiclemmasubalgebra} that $\calA$ is invariant under adjunction).
Hence $\calA$ is a skew field.

Next, once more we take a maximal ideal $I$ of $\calW_{p,q}$ such that $\calW_{p,q}/I$ is isomorphic to the splitting field $\K$ of $pq$.
Then the standard projection induces a homomorphism $\Phi : \calA \rightarrow \calW_{p,q}/I$ of $\F$-algebras.
Because $\calA$ is a skew field this homomorphism is injective, and hence $\calA$ is isomorphic to a subfield of $\K$,
and in particular $\dim_\F \calA$ divides $4$. Hence $[\K:\F]=4$ and $\Phi$ is an isomorphism, and in particular $\calA$
is a field.

To conclude, we use another projection. We note first that there exists a scalar $\lambda \in \F$
that is not a root of $\Lambda_{p,q}$. Indeed, either at least one of $p$ and $q$ is separable, and then it is known from Table \ref{table1}
that $\Lambda_{p,q}$ is irreducible, or both $p$ and $q$ are inseparable and $\Lambda_{p,q}=t^2$.
In any case we can take $\lambda=1$.
Then the quotient algebra $\calW_{p,q}/(\omega-\lambda)$ is a quaternion algebra over $\F$,
and the projection onto it induces a homomorphism $\Psi : \calA \rightarrow \calW_{p,q}/(\omega-\lambda)$ of $\F$-algebras.
Then again $\Psi$ is injective because $\calA$ is a field, and hence it is an isomorphism because $\dim_\F \calA=4=\dim_\F \bigl( \calW_{p,q}/(\omega-\lambda)\bigr)$. This contradicts the fact that $\calW_{p,q}/(\omega-\lambda)$ is noncommutative.
\end{proof}

\subsection{Finite-dimensional subalgebras: two key examples}\label{section:newsubalgebras}

In order to deal with the case where at least one of $p$ and $q$ splits,
we shall construct two possible kinds of subalgebras with dimension greater than $2$.

The following definition will be useful:

\begin{Not}
For an $\F$-algebra $\calA$, we denote by $\calN(\calA)$ the set of all its nilpotent elements,
called the \textbf{nilpotent cone} of $\calA$.
\end{Not}

In general $\calN(\calA)$ is not an ideal of $\calA$, but it is the case if $\calA$ has dimension $2$.
We will see that all the finite-dimensionsal subalgebras of $\calW_{p,q}$ share the property that their nilpotent cone is
an ideal.

\begin{Not}
For every integer $n \geq 0$, we denote by
$\calH_{n+2}$ the set of all matrices of $\Mat_2(\F[t])$ of the form
$$\begin{bmatrix}
\lambda & s(t) \\
0 & \mu
\end{bmatrix}$$
where $\lambda,\mu$ belong to $\F$ and $s(t) \in \F[t]$ has degree less than $n$.
We also denote by $\calU_{n+1}$ the set of all such matrices in which $\lambda=\mu$.
\end{Not}

One easily checks that $\calH_{n+2}$ and $\calU_{n+1}$ are $\F$-subalgebras of $\Mat_2(\F[t])$,
with respective dimensions $n+2$ and $n+1$. In both cases, the nilpotent cone is the
set of all matrices of the form
$\begin{bmatrix}
0 & s(t) \\
0 & 0
\end{bmatrix}$ where $s(t) \in \F[t]$ has degree less than $n$:
it is clearly an ideal in both $\calH_{n+2}$ and $\calU_{n+1}$.

\begin{Rem}
It is easily checked that $\calU_{n+1}$ is isomorphic to the quotient algebra $\F[t_1,\dots,t_n]/(t_1^2,\dots,t_n^2)$.
\end{Rem}

Now, we return to $\calW_{p,q}$.
In the discussion, we need notions that are related to the structure of the $C$-module $\calW_{p,q}$.
A nonzero element $x$ of $\calW_{p,q}$ is called \textbf{normalized}
when its coefficients in an arbitrary $C$-basis have $1$ as greatest common divisor, or in other words
if the quotient $C$-module $\calW_{p,q}/Cx$ is torsion-free.
Every nonzero element $x$ splits as $x=r y$ where $y$ is normalized and $r \in C$, and
$y$ is uniquely determined up to multiplication with a nonzero scalar.
In particular, there is a unique such splitting with $r$ monic with respect to $\omega$, and then we say that
$r$ is the \textbf{modular norm} of $x$ (as opposed to the norm $N(x)$), and $y$ is called the \textbf{normalization} of $x$ (with respect to $\omega$).

Now, assume that we have a normalized element $\beta \in \calW_{p,q} \setminus \{0\}$ such that $\beta^2=0$.
We define
$$\calU(\beta):=\F \oplus C \beta.$$
Clearly $\calU(\beta)$ is a commutative subalgebra of $\calW_{p,q}$, and $\calN(\calU(\beta))=C \beta$
is an ideal of it (and obviously a maximal one).
Now, consider an arbitrary $\F$-linear subspace $V$ of $C$ with finite dimension $n>0$. Then
$$\calU(\beta,V):=\F \oplus V \beta$$
is clearly a subalgebra of $\calU(\beta)$ with dimension $n+1$.
It is also clear that $V$ is uniquely determined by $\calU(\beta,V)$, and that $\beta$ is uniquely determined by
$\calU(\beta,V)$ up to multiplication with a nonzero element of $\F$.
We say that these subalgebras are of \textbf{degenerate type}.

Note that the existence of $\beta$ is equivalent to the one of an element $x \in \calW_{p,q} \setminus \{0\}$
such that $x^2=0$ (it then suffices to take $\beta$ as the normalization of $x$), and the existence of $\beta$ is granted
whenever one of $p$ and $q$ splits, as we have seen in Section \ref{section:dim2subalg}.
Now, we prove that the structure of the algebra $\calU(\beta,V)$ up to isomorphism depends only on the dimension of $V$:

\begin{prop}
Let $V$ be an $n$-dimensional linear subspace of $C$, and $\beta$ be a normalized element of $\calW_{p,q} \setminus \{0\}$ such that $\beta^2=0$.
Then $\calU(\beta,V) \simeq \calU_{n+1}$.
\end{prop}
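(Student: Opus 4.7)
The plan is to observe that both algebras are square-zero extensions of $\F$ by an $n$-dimensional $\F$-vector space, and then to write down the obvious isomorphism.

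First, I would establish that the description $\calU(\beta,V)=\F\oplus V\beta$ is an internal direct sum of $\F$-vector spaces of total dimension $n+1$. This amounts to checking that $v\in V\mapsto v\beta\in \calW_{p,q}$ is injective (so that $\dim_\F V\beta=n$) and that $V\beta\cap\F=\{0\}$. For injectivity, since $\beta\neq 0$ and $\calW_{p,q}$ is a free $C$-module (hence torsion-free), multiplication by $\beta$ defines an injection $C\hookrightarrow \calW_{p,q}$; restricting to $V\subseteq C$ preserves injectivity. The intersection $V\beta\cap\F=\{0\}$ follows because any nonzero element of $V\beta$, being a nonzero element of $C\beta$, has zero coefficient on $1$ in any deployed $C$-basis of $\calW_{p,q}$.

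Next I would identify the ring structure. For $\lambda,\mu\in\F$ and $v,w\in V\subseteq C$, since $C$ is the center of $\calW_{p,q}$ one has $\beta w=w\beta$, hence
\[
(\lambda+v\beta)(\mu+w\beta)=\lambda\mu+(\lambda w+\mu v)\beta+vw\beta^2=\lambda\mu+(\lambda w+\mu v)\beta,
\]
because $\beta^2=0$. This confirms that $\calU(\beta,V)$ is a subalgebra (closure under multiplication is visible) and that, as an $\F$-algebra, it is the trivial square-zero extension $\F\ltimes V$, where $V$ is the abelian group $(V,+)$ viewed as an $\F$-bimodule on which $\F$ acts by scalar multiplication on both sides.

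The same computation applied to the matrices in $\calU_{n+1}$ gives
\[
\begin{bmatrix}\lambda&s(t)\\0&\lambda\end{bmatrix}\begin{bmatrix}\mu&r(t)\\0&\mu\end{bmatrix}=\begin{bmatrix}\lambda\mu&\lambda r(t)+\mu s(t)\\0&\lambda\mu\end{bmatrix},
\]
so $\calU_{n+1}$ is isomorphic to the square-zero extension $\F\ltimes\F[t]_{<n}$, where $\F[t]_{<n}$ denotes the $n$-dimensional $\F$-vector space of polynomials of degree less than $n$.

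Finally, since $V$ and $\F[t]_{<n}$ are both $n$-dimensional $\F$-vector spaces, any $\F$-linear isomorphism $\varphi:V\xrightarrow{\simeq}\F[t]_{<n}$ induces an $\F$-algebra isomorphism
\[
\Psi:\calU(\beta,V)\longrightarrow \calU_{n+1},\qquad \lambda+v\beta\longmapsto\begin{bmatrix}\lambda&\varphi(v)\\0&\lambda\end{bmatrix},
\]
as is immediate from the two multiplication formulas above. There is no genuine obstacle: the argument reduces to recognizing both sides as square-zero extensions of $\F$ by a fixed-dimension $\F$-vector space, and the only point worth care is the injectivity of $v\mapsto v\beta$, which is guaranteed by the freeness of the $C$-module $\calW_{p,q}$.
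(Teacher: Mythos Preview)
Your proof is correct and follows exactly the same route as the paper: choose an $\F$-linear isomorphism $\varphi:V\to \F[t]_{<n}$ and send $\lambda+v\beta$ to $\begin{bmatrix}\lambda & \varphi(v)\\ 0 & \lambda\end{bmatrix}$. The paper simply writes down this map and leaves the verification to the reader, while you spell out the square-zero-extension structure on both sides; the content is the same.

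One small slip: your justification that $V\beta\cap\F=\{0\}$ (``any nonzero element of $C\beta$ has zero coefficient on $1$ in any deployed $C$-basis'') is not correct as stated, since a normalized $\beta$ with $\beta^2=0$ need not have vanishing $1$-coordinate. The claim itself is true, and the quickest fix is: if $v\beta=\lambda\in\F^\times$ then $0=v\beta^2=\lambda\beta$, forcing $\beta=0$, a contradiction. This is a cosmetic issue only; the rest of your argument stands.
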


\begin{proof}
Denote by $\F_{<n}[t]$ the vector space of all polynomials with degree less than $n$.
Consider an isomorphism $\varphi : V \overset{\simeq}{\rightarrow} \F_{<n}[t]$ of $\F$-vector spaces.
Then one easily checks that $\lambda+u \beta \in \calU(\beta,V) \mapsto \begin{bmatrix}
\lambda & \varphi(u) \\
0 & \lambda
\end{bmatrix} \in \calU_{n+1}$ is an isomorphism of $\F$-algebras. Details are left to the reader.
\end{proof}

Our next family of examples requires that we can find a nontrivial idempotent $\alpha \in \calW_{p,q}$.
Note that the existence of such an element is not guaranteed: as seen in Theorem \ref{theo:classdim2}, it holds
if and only if at least one of $p$ and $q$ splits with simple roots.

We start with a lemma:

\begin{lemma}\label{lemma:idempotentlemma}
Let $\alpha \in \calW_{p,q}$ be a nontrivial idempotent.
Let $x \in \calW_{p,q}$. The following conditions are equivalent:
\begin{enumerate}[(i)]
\item $x$ is of the form $\alpha z \alpha^\star$ for some $z \in \calW_{p,q}$;
\item $x$ is of the form $\alpha y^\star$ for some $y \in \calW_{p,q}$ such that $\langle \alpha,y\rangle=0$;
\item $\alpha^\star x=0$ and $x \alpha=0$.
\end{enumerate}
Moreover, the set $\alpha^\sharp$ of all $x \in \calW_{p,q}$ that satisfy these conditions is a rank $1$ $C$-submodule that is generated
by a normalized vector.

Finally $x^2=0$ for all $x \in \alpha^\sharp$.
\end{lemma}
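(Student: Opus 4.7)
My plan is to exploit the identities $\tr(\alpha)=1$, $N(\alpha)=0$, $\alpha^\star=1-\alpha$, $\alpha\alpha^\star=\alpha^\star\alpha=0$ that Lemma \ref{lemma:quadrecognize} hands us for a nontrivial idempotent $\alpha$. The equivalence (i)$\Leftrightarrow$(iii) is the easiest: if $x=\alpha z\alpha^\star$ then $\alpha^\star x=x\alpha=0$ is immediate; conversely, writing $1=\alpha+\alpha^\star$ and using (iii) gives $x=\alpha x$ and $x=x\alpha^\star$, hence $x=\alpha x\alpha^\star$, which is (i) with $z=x$.

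For (i)$\Rightarrow$(ii) I would set $y:=\alpha z^\star$ so that $y^\star=z\alpha^\star$ (by the involutive property of the adjunction); then $\alpha y^\star=\alpha z\alpha^\star=x$, and the computation $\alpha y^\star+y\alpha^\star=x+\alpha z^\star\alpha^\star=x+(\alpha z\alpha^\star)^\star=x+x^\star=\tr(x)$ reduces the problem to showing $\tr(x)=0$. This is the small but key point: $\tr(x)$ is central, and from (iii), together with the consequence $x^\star\alpha=(\alpha^\star x)^\star=0$, we get $\tr(x)\cdot\alpha=(x+x^\star)\alpha=0$. Since $\calW_{p,q}$ is a free $C$-module and $\alpha\neq 0$, this forces $\tr(x)=0$. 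The converse (ii)$\Rightarrow$(iii) uses $\langle\alpha,y\rangle=0$ to rewrite $x=\alpha y^\star=-y\alpha^\star$ and then multiplies by $\alpha^\star$ on the left or $\alpha$ on the right, killing everything via $\alpha^\star\alpha=\alpha\alpha^\star=0$.

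Next I would identify $\alpha^\sharp$ as a $C$-submodule (clear from (i), since $C$ is central) and pin down its rank by scalar extension. Because $\alpha$ is a nontrivial idempotent, $\calW_{p,q}$ has zero divisors, so by the Zero Divisors Theorem one of $p,q$ splits, and consequently the quaternion algebra $\overline{\calW_{p,q}}=\calW_{p,q}\otimes_C\F(\omega)$ splits, say $\overline{\calW_{p,q}}\simeq\Mat_2(\F(\omega))$. Under such an isomorphism $\alpha$ and $\alpha^\star$ become complementary rank-$1$ idempotent matrices, and $\alpha\overline{\calW_{p,q}}\alpha^\star$ is visibly $1$-dimensional over $\F(\omega)$. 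Since $\alpha^\sharp\otimes_C\F(\omega)=\alpha\overline{\calW_{p,q}}\alpha^\star$, the $C$-module $\alpha^\sharp$ has generic rank~$1$; being a finitely generated submodule of the free $C$-module $\calW_{p,q}$ over the PID $C=\F[\omega]$, it is free of rank~$1$.

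The last point I expect will be slightly more delicate: showing that any generator is normalized. If $\beta_0$ generates $\alpha^\sharp$ and we write $\beta_0=s(\omega)\beta_{00}$ with $\beta_{00}\in\calW_{p,q}$ normalized, I would verify $\beta_{00}\in\alpha^\sharp$ via condition (iii) (applying $\alpha^\star$ and $\alpha$ to $s(\omega)\beta_{00}=\beta_0$ and using the fact that $\calW_{p,q}$ is torsion-free over $C$), so that $\beta_{00}=u\beta_0$ for some $u\in C$, forcing $s$ to be a unit, i.e., a nonzero scalar. Finally, $x^2=0$ follows instantly from (i): $x^2=\alpha z\alpha^\star\alpha z\alpha^\star=0$ because $\alpha^\star\alpha=0$. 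The only substantial obstacle is really the bookkeeping for the normalization argument; everything else flows directly from the elementary identities for an idempotent together with the quaternion-algebra reading of $\overline{\calW_{p,q}}$.
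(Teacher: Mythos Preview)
Your proposal is correct and follows essentially the same route as the paper: the equivalence via $\alpha+\alpha^\star=1$ and $\alpha\alpha^\star=0$, the passage to $\overline{\calW_{p,q}}\simeq\Mat_2(\F(\omega))$ to see that $\alpha^\sharp$ has rank~$1$, and the use of torsion-freeness over the PID $C$ for the normalized generator. The only cosmetic difference is in (i)$\Rightarrow$(ii): the paper observes directly that $\langle\alpha,y\rangle=\langle\alpha,\alpha z^\star\rangle=N(\alpha)\tr(z^\star)=0$ via the similarity identity $\langle\alpha u,\alpha v\rangle=N(\alpha)\langle u,v\rangle$, which saves your detour through $\tr(x)=0$ (though your detour is perfectly valid and yields that fact as a bonus).
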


Note also that $\alpha^\sharp$ is invariant under the adjunction.

\begin{proof}
Assume (i) holds, and take an associated $z$.
Then $y:=\alpha z^\star$ satisfies $\langle \alpha,y\rangle=N(\alpha) \tr(z^\star)=0$.
Hence (i) implies (ii).

Assume that (ii) holds, and take an associated $y$. It is clear that $\alpha^\star(\alpha y^\star)=0$, and
$\alpha y^\star \alpha=\alpha(\langle y,\alpha\rangle-\alpha^\star y)=0$.
Hence (ii) implies (iii).

Assume finally that (iii) holds. Since $\alpha$ is a nontrivial idempotent, it is non-scalar and hence $\tr(\alpha)=1$ by Lemma \ref{lemma:quadrecognize}.
Then
$$x=(\alpha+\alpha^\star)x(\alpha+\alpha^\star)=\alpha x \alpha^\star,$$
and so (i) holds, and then $x^2=\alpha x \alpha^\star \alpha x \alpha^\star=0$ since $\alpha^\star \alpha=0$.

Next, the existence of $\alpha$ shows that the completed quaternion algebra $\overline{\calW_{p,q}}$ splits.
Hence, we have an isomorphism of $\F(\omega)$-algebras $\Phi : \overline{\calW_{p,q}} \overset{\simeq}{\rightarrow} \Mat_2(\F(\omega))$.
Set $L:=\{x \in \calW_{p,q} : \alpha^\star x=0\}$ and $R:=\{x \in \calW_{p,q} : x \alpha=0\}$.
Classically, for every pair $(M,N) \in \Mat_2(\F(\omega))$ of rank $1$ matrices (i.e., of non-zero singular matrices),
$\{A \in \Mat_2(\F(\omega)) : MA=0 \; \text{and} \; AN=0\}$ is an $\F(\omega)$-linear subspace
of dimension $1$. Hence $L \cap R$ is the intersection of $\calW_{p,q}$ with a
$1$-dimensional $\F(\omega)$-linear subspace. Since $\calW_{p,q}$ is a free $\F[\omega]$-module, it is then clear
that $\alpha^\sharp=L \cap R$ is a free submodule of rank $1$ of $\calW_{p,q}$ that is generated by a normalized element.
\end{proof}

For every idempotent $\alpha$, we have $\calU(z)=\F \oplus \alpha^\sharp$ for any normalized element $z$ of $\alpha^\sharp$, and obviously
$\calU(z)$ does not contain $\alpha$ (because the only idempotents in $\calU(\alpha)$ are $0$ and $1$).

\begin{Def}
Let $\alpha$ be a nontrivial idempotent in $\calW_{p,q}$.
Choose a normalized element $z$ in $\alpha^\sharp$ and
set
$$\calH(\alpha):=\F \oplus \F \alpha \oplus \alpha^\sharp = \F \alpha \oplus \calU(z)=\F \alpha^\star \oplus \calU(z).$$
More generally, for every linear subspace $V$ of $C$, we set
$$\calH(\alpha,V):=\F \alpha \oplus \calU(z,V)=\F \alpha^\star \oplus \calU(z,V).$$
\end{Def}

Using Lemma \ref{lemma:idempotentlemma}, we see that $\alpha^\star (\F \alpha^\star \oplus \calU(z,V))=\F \alpha^\star$
and $(\F \alpha \oplus \calU(z,V)) \alpha=\F\alpha$.
Combining this with the fact that $\calU(z,V)$ is an $\F$-subalgebra of $\calW_{p,q}$,
we deduce that $\calH(\alpha,V)$ is an $\F$-subalgebra of $\calW_{p,q}$, and if $V$ is finite-dimensional $\calH(\alpha,V)$ has finite
dimension $\dim V+2$.
Such subalgebras will be called of \textbf{idempotent type}.
In particular $\calH(\alpha)=\calH(\alpha,C)$ is a subalgebra, and from point (iii) in Lemma \ref{lemma:idempotentlemma}
it is easily seen that $\alpha^\sharp$ is an ideal of $\calH(\alpha)$.

Once again, we identify the internal structure of $\calH(\alpha,V)$ as a function of the dimension of $V$:

\begin{prop}
Let $V$ be an $n$-dimensional $\F$-linear subspace of $C$.
Then $\calH(\alpha,V) \simeq \calH_{n+2}$.
\end{prop}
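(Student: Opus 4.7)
The plan is to mimic, with one extra coordinate, the proof just given for $\calU(\beta,V) \simeq \calU_{n+1}$. The starting point is to fix a normalized generator $z$ of the rank one $C$-submodule $\alpha^\sharp$, so that by definition
$$\calH(\alpha,V) = \F\alpha \oplus \F\alpha^\star \oplus Vz$$
as an internal direct sum of $\F$-vector spaces (this uses that $\alpha + \alpha^\star = 1$, which follows from $\tr(\alpha) = 1$ via Lemma \ref{lemma:quadrecognize}, so that $\F \oplus \F\alpha = \F\alpha \oplus \F\alpha^\star$). In particular $\dim_\F \calH(\alpha,V) = n+2 = \dim_\F \calH_{n+2}$.

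Next I would collect the complete multiplication table on the generators $\alpha,\alpha^\star,z$. Directly: $\alpha^2 = \alpha$, $(\alpha^\star)^2 = \alpha^\star$, $\alpha\alpha^\star = \alpha^\star\alpha = 0$; from Lemma \ref{lemma:idempotentlemma}(iii) applied to $z \in \alpha^\sharp$, we get $\alpha^\star z = 0$ and $z\alpha = 0$, and from (i) of the same lemma (i.e.\ $z = \alpha z \alpha^\star$) we deduce $\alpha z = z$ and $z\alpha^\star = z$; finally $z^2 = 0$ by the last assertion of the same lemma. All elements of $V \subseteq C$ are central, so they commute with $\alpha$, $\alpha^\star$ and $z$.

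Now pick any $\F$-linear isomorphism $\varphi : V \overset{\simeq}{\longrightarrow} \F_{<n}[t]$ (where $\F_{<n}[t]$ denotes the space of polynomials of degree $<n$) and define
$$\Phi : \lambda\alpha + \mu\alpha^\star + uz \longmapsto \begin{bmatrix} \lambda & \varphi(u) \\ 0 & \mu \end{bmatrix}.$$
The map $\Phi$ is visibly $\F$-linear and bijective, and sends $1 = \alpha + \alpha^\star$ to $I_2$. To check that it is multiplicative, expand
$$(\lambda\alpha + \mu\alpha^\star + uz)(\lambda'\alpha + \mu'\alpha^\star + u'z)$$
using the nine products above: the six cross terms involving $z$ collapse via $\alpha z = z$, $z\alpha^\star = z$, $\alpha^\star z = 0$, $z\alpha = 0$, and $z^2 = 0$, and the centrality of $u,u'$ lets us pull them out; the result is
$$\lambda\lambda'\,\alpha + \mu\mu'\,\alpha^\star + (\lambda u' + \mu' u)\,z,$$
whose image under $\Phi$ is exactly the product of the two $2\times 2$ matrices.

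There is no real obstacle here; the only thing to keep track of is the asymmetry $\alpha z = z\alpha^\star = z$ versus $\alpha^\star z = z\alpha = 0$, which is precisely what forces the single off-diagonal slot in $\calH_{n+2}$ to behave as a bimodule over $\F \oplus \F$ with $\alpha$ acting on the left and $\alpha^\star$ on the right. Hence $\Phi$ is an isomorphism of $\F$-algebras, and $\calH(\alpha,V) \simeq \calH_{n+2}$.
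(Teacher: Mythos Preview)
Your proof is correct and follows exactly the same approach as the paper's: fix a generator $z$ of $\alpha^\sharp$, choose an $\F$-linear isomorphism $\varphi : V \to \F_{<n}[t]$, and define $\Phi(\lambda\alpha + \mu\alpha^\star + uz) = \begin{bmatrix} \lambda & \varphi(u) \\ 0 & \mu \end{bmatrix}$. The paper leaves the verification to the reader, while you have carried it out in full; your multiplication table and the resulting expansion are correct.
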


\begin{proof}
Denote by $\F_{<n}[t]$ the vector space of all polynomials with degree less than $n$, and choose a generator $z$ of $\alpha^\sharp$.
Consider an isomorphism $\varphi : V \overset{\simeq}{\rightarrow} \F_{<n}[t]$ of $\F$-vector spaces.
Then one easily checks that $\lambda \alpha+\mu \alpha^\star +u z \mapsto \begin{bmatrix}
\lambda & \varphi(u) \\
0 & \mu
\end{bmatrix}$ is an isomorphism of $\F$-algebras. Details are left to the reader.
\end{proof}

As for $\calH(\alpha)$, it is easily seen that it is isomorphic to the $\F$-subalgebra of $\Mat_2(\F[t])$
consisting of all the matrices of the form
$\begin{bmatrix}
\lambda & u \\
0 & \mu
\end{bmatrix}$ where $u \in \F[t]$ and $(\lambda,\mu)\in \F^2$.

Note that for all $x \in \alpha^\sharp$, $(\alpha+x)^2=\alpha+\alpha x+x \alpha+x^2=\alpha+x$,
and obviously $\alpha+x \not\in \{0,1\}$. It is then easily checked that $(\alpha+x)^\sharp=\alpha^\sharp$, and hence
$\calH(\alpha+x,V)=\calH(\alpha,V)$. Thus $\calH(\alpha,V)$ does not determine $\alpha$.
The following lemmas will help us be more precise:

\begin{lemma}\label{lemma:nilpotentcone}
Let $\alpha$ be a nontrivial idempotent in $\calW_{p,q}$.
Then the nilpotent cone of $\calH(\alpha)$ is $\alpha^\sharp$, and
the nontrivial idempotents in $\calH(\alpha)$ are the elements of $(\alpha+\alpha^\sharp) \cup (\alpha^\star+\alpha^\sharp)$.
\end{lemma}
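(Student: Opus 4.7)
The plan is to exploit the direct sum decomposition $\calH(\alpha)=\F\alpha\oplus\F\alpha^\star\oplus\alpha^\sharp$ (a direct sum because $\alpha,\alpha^\star$ are linearly independent and $\alpha^\sharp$ does not meet $\F\alpha+\F\alpha^\star$, as elements of $\alpha^\sharp$ are killed on the left by $\alpha^\star$ and on the right by $\alpha$). Writing an arbitrary element of $\calH(\alpha)$ as $y=\lambda\alpha+\mu\alpha^\star+z$ with $(\lambda,\mu)\in\F^2$ and $z\in\alpha^\sharp$, I will squeeze everything out of a complete multiplication table for the three ``types'' of generators.

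First I would assemble that multiplication table. Since $\tr(\alpha)=1$ (Lemma \ref{lemma:quadrecognize} applied to the nonscalar idempotent $\alpha$), one has $\alpha\alpha^\star=\alpha^\star\alpha=0$ and $(\alpha^\star)^2=\alpha^\star$. For $z\in\alpha^\sharp$, condition (iii) of Lemma \ref{lemma:idempotentlemma} gives $\alpha^\star z=0=z\alpha$, while the characterization $z=\alpha z\alpha^\star$ from (i) immediately yields $\alpha z=\alpha^2 z\alpha^\star=\alpha z\alpha^\star=z$ and similarly $z\alpha^\star=z$. Finally $z^2=0$ by the last assertion of Lemma \ref{lemma:idempotentlemma}. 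Expanding $y^2$ with these rules collapses everything into
\begin{equation*}
y^2=\lambda^2\alpha+\mu^2\alpha^\star+(\lambda+\mu)\,z.
\end{equation*}

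For the first assertion I would observe, by a straightforward induction on $n$, that $y^n=\lambda^n\alpha+\mu^n\alpha^\star+c_n z$ for some $c_n\in\F$ (the induction step reads $c_{n+1}=\lambda c_n+\mu^n$ once one reuses the multiplication table). By the directness of the sum decomposition, $y^n=0$ forces $\lambda^n=\mu^n=0$ in the field $\F$, hence $\lambda=\mu=0$, i.e., $y\in\alpha^\sharp$; conversely every element of $\alpha^\sharp$ already squares to $0$. This identifies $\calN(\calH(\alpha))$ with $\alpha^\sharp$.

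For the second assertion I would solve $y^2=y$ by again using the directness of the decomposition: the equation forces $\lambda^2=\lambda$, $\mu^2=\mu$ and $(\lambda+\mu)z=z$. Thus $\lambda,\mu\in\{0,1\}$ and one tabulates the four cases. The pairs $(0,0)$ and $(1,1)$ yield respectively $z=0$ with $y=0$ and, using $\alpha+\alpha^\star=\tr(\alpha)=1$, the equation $2z=z$ hence $z=0$ with $y=1$ — the two trivial idempotents, which must be discarded. The remaining pairs $(1,0)$ and $(0,1)$ impose no constraint on $z$ and give exactly $y\in\alpha+\alpha^\sharp$ and $y\in\alpha^\star+\alpha^\sharp$, as claimed. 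The only mildly subtle point is the case $(1,1)$ in characteristic $2$, where the equation $2z=z$ still forces $z=0$, so the treatment is uniform across characteristics.
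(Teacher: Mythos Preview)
Your proof is correct and follows essentially the same approach as the paper. Both rest on the direct-sum decomposition $\calH(\alpha)=\F\alpha\oplus\F\alpha^\star\oplus\alpha^\sharp$; the paper phrases the argument via the quotient $\calH(\alpha)/\alpha^\sharp\simeq\F\times\F$ (noting that nilpotents and nontrivial idempotents in the quotient are exactly $0$ and $\{\overline{\alpha},\overline{\alpha^\star}\}$), while you unwind the same computation in coordinates, but the content is identical.
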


\begin{proof}
We have seen earlier that $\alpha^\sharp$ is an ideal of $\calH(\alpha)$, and it is then clear from $\calH(\alpha)=\F \alpha\oplus \F \alpha^\star\oplus \alpha^\sharp$
that $\calH(\alpha)/\alpha^\sharp$ is a split $2$-dimensional algebra whose only nontrivial idempotents are the cosets of $\alpha$ and $\alpha^\star$.
In particular, every nilpotent element of $\calH(\alpha)$ belongs to $\alpha^\sharp$;
the converse is obvious.

Let $x \in \calH(\alpha)$ be a nontrivial idempotent. Then it cannot belong to
$\alpha^\sharp$, and neither does $x^\star$, so the coset of $x$ in $\calH(\alpha)/\alpha^\sharp$
is a nontrivial idempotent. Hence it equals the coset of $\alpha$ or the one of $\alpha^\star$.
This proves that every nontrivial idempotent in $\calH(\alpha)$ belongs to
$(\alpha+\alpha^\sharp) \cup (\alpha^\star+\alpha^\sharp)$.
For the converse inclusion, we have seen right before stating this lemma that $\alpha+\alpha^\sharp$ consists only of nontrivial idempotents, and by applying the adjunction we deduce that every element of $\alpha^\star+\alpha^\sharp$ is a nontrivial idempotent
(recall indeed from Lemma \ref{lemma:idempotentlemma} that $\alpha^\sharp$ is invariant under the adjunction).
\end{proof}

\begin{lemma}
Let $\alpha$ and $\beta$ be nontrivial idempotents in $\calW_{p,q}$, and
$V$ and $W$ be nonzero linear subspaces of $C$. Choose a generator $z$ of $\alpha^\sharp$.
Assume that $\calH(\alpha,V)=\calH(\beta,W)$. Then $\beta \in \alpha+Vz$ and $V=W$.
\end{lemma}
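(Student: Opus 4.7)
The plan is to use the structural information about $\calH(\alpha)$ provided by Lemma \ref{lemma:nilpotentcone}, applied to both parameterizations of $\calA := \calH(\alpha,V)=\calH(\beta,W)$, in order first to pin down the nilpotent cone and then to locate $\beta$ among the nontrivial idempotents. Throughout, let $z'$ denote a normalized generator of $\beta^\sharp$.

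First, I would compute the nilpotent cone $\calN(\calA)$ in two different ways. Since $\calH(\alpha,V) \subseteq \calH(\alpha)$ and by Lemma \ref{lemma:nilpotentcone} the nilpotent cone of $\calH(\alpha)$ is $\alpha^\sharp$, an element $\lambda \alpha+\mu \alpha^\star+vz$ of $\calA$ is nilpotent if and only if it lies in $\alpha^\sharp=Cz$, which forces $\lambda=\mu=0$. Thus $\calN(\calA)=Vz$. The same reasoning applied to the second parameterization gives $\calN(\calA)=Wz'$. Since $V,W\neq 0$, the equality $Vz=Wz'$ produces $v_0 z=w_0 z'$ for some $v_0\in V\setminus\{0\}$, $w_0\in W\setminus\{0\}$; writing $z'=(v_0/w_0)z$ in the fraction field $\F(\omega)$ and using that both $z$ and $z'$ are normalized in the free $C$-module $\calW_{p,q}$, a comparison of coefficients in a $C$-basis forces $v_0/w_0\in\F^\times$. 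Hence $z'=\lambda z$ for some $\lambda\in\F^\times$, so $\beta^\sharp=Cz'=Cz=\alpha^\sharp$, and $Wz'=(\lambda W)z=Wz$. The identity $Vz=Wz$, combined with the injectivity of $v\mapsto vz$ (as $z$ generates a free rank-$1$ $C$-submodule), yields $V=W$.

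Next, I would locate $\beta$. By Lemma \ref{lemma:nilpotentcone} applied to $\calH(\alpha)$ and restricted to $\calA$, the nontrivial idempotents of $\calA$ form the set $(\alpha+Vz)\cup(\alpha^\star+Vz)$. Applying the same lemma to $\calH(\beta)$ and using $\beta^\sharp=\alpha^\sharp$ and $W=V$, the nontrivial idempotents of $\calA$ also form $(\beta+Vz)\cup(\beta^\star+Vz)$. These two cosets of $Vz$ are disjoint in each description, because $\alpha^\star-\alpha\notin\alpha^\sharp$ (otherwise $\alpha^\star(\alpha^\star-\alpha)=\alpha^\star\neq 0$ would vanish). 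Equality of the two two-coset unions therefore forces either $\beta\in\alpha+Vz$ (with $\beta^\star\in\alpha^\star+Vz$), or the swap $\beta\in\alpha^\star+Vz$ (with $\beta^\star\in\alpha+Vz$). In the second situation, using that $\calH(\beta,W)=\calH(\beta^\star,W)$ (which holds because $(\beta^\star)^\sharp=\beta^\sharp$, again by Lemma \ref{lemma:nilpotentcone}), one may replace $\beta$ by $\beta^\star$ without affecting the hypothesis, landing in the first situation and yielding $\beta\in\alpha+Vz$.

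The main obstacle is the normalization step that upgrades $z'\in\F(\omega)z$ to $z'\in\F^\times z$; everything else is essentially a bookkeeping exercise once Lemma \ref{lemma:nilpotentcone} is in hand. The only other subtlety is the mild indeterminacy $\beta\leftrightarrow\beta^\star$ in the last step, which is resolved by the symmetry $\calH(\beta,W)=\calH(\beta^\star,W)$.
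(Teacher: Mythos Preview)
Your argument is sound up to and including the identification of the two disjoint cosets, so you correctly reach the dichotomy $\beta\in\alpha+Vz$ or $\beta\in\alpha^\star+Vz$. The gap is in your final step. The swap $\beta\leftrightarrow\beta^\star$ does not resolve the second case: even granting $\calH(\beta,W)=\calH(\beta^\star,W)$, applying the whole argument to the pair $(\alpha,\beta^\star)$ only yields $\beta^\star\in\alpha+Vz$, which is precisely the statement that $\beta\in\alpha^\star+Vz$ --- the case you are trying to exclude. The conclusion of the lemma is not symmetric in $\beta$ and $\beta^\star$, so this is not a legitimate ``without loss of generality'' reduction; you have merely restated the bad case rather than ruled it out. (Your justification ``$(\beta^\star)^\sharp=\beta^\sharp$ by Lemma~\ref{lemma:nilpotentcone}'' is also off: that lemma does not say this, and in general $(\beta^\star)^\sharp\neq\beta^\sharp$.)

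The paper closes this gap by a short direct computation. Assuming $\beta=\alpha^\star+uz$ with $u\in V$, one uses $z\in\alpha^\sharp=\beta^\sharp$ to get $z\beta=0$, hence
\[
0=z(\alpha^\star+uz)=z\alpha^\star+u z^2=z(1-\alpha)=z-z\alpha=z,
\]
since $z^2=0$ and $z\alpha=0$ (both from $z\in\alpha^\sharp$). This contradicts $z\neq 0$, so the second case cannot occur and $\beta\in\alpha+Vz$ follows. You should replace your symmetry argument with this computation (or an equivalent one).
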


\begin{proof}
Take $z'$ as a generator of $\beta^\sharp$.
By extracting the nilpotent cone, we find $V z=W z'$, and since $V \neq \{0\}$
we deduce that $z \in \F^\times z'$ and $V=W$. In turn, this shows that $\alpha^\sharp=\beta^\sharp$.
Finally $\beta$ is a nontrivial idempotent in $\calH(\alpha,V)$, so either $\beta \in \alpha+V z$ or $\beta \in \alpha^\star+Vz$.
Assume that the second case holds. Then $\beta=\alpha^\star +u z$ for some $u \in V$.
As $z \in \alpha^\sharp=\beta^\sharp$ we find $z \beta=0$.
Hence
$$0=z\alpha^\star+\underbrace{u z^2}_0=z (1-\alpha)=z,$$
which is absurd. Hence $\beta \in \alpha+V z$.
\end{proof}

\subsection{Finite-dimensional subalgebras: the case where one of $p$ and $q$ splits}

Now, we can state our main result:

\begin{theo}\label{theo:subalgdim>2split}
Assume that one of $p$ and $q$ splits.
Let $\calA$ be a finite-dimensional subalgebra of $\calW_{p,q}$, with $\dim \calA>2$.
Then:
\begin{itemize}
\item Either $\calA=\calU(\beta,V)$ for some normalized $\beta \in \calW_{p,q} \setminus \{0\}$ such that $\beta^2=0$,
and some finite-dimensional linear subspace $V$ of $C$.
\item Or $\calA=\calH(\alpha,V)$ for some idempotent $\alpha \in \calW_{p,q} \setminus \F$
and some finite-dimensional linear subspace $V$ of $C$.
\end{itemize}
\end{theo}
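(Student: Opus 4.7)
The plan is to analyze $\calA$ by producing elements with vanishing norm via projection to a small quotient, then splitting into cases according to whether $\calA$ contains a nontrivial idempotent. I begin by fixing a maximal ideal $I$ of $\calW_{p,q}$ containing the fundamental ideal $\mathfrak{F}$: by Proposition~\ref{prop:structureofquotient}, $\calW_{p,q}/I$ is isomorphic to the splitting field $\K$ of $pq$, which has dimension at most $2$ over $\F$ since one of $p$ and $q$ splits. Hence the induced homomorphism $\calA \to \calW_{p,q}/I$ has nontrivial kernel $\calA \cap I$, and for $x \in \calA \cap I$ we get $N(x) \in I \cap \F = \{0\}$, so that $x^2 = \tr(x)\,x$: either $x$ is nilpotent or $x/\tr(x)$ is a nontrivial idempotent of $\calA$. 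The same argument yields a \emph{rigidity principle}: any simple $\F$-subalgebra $\calB \subseteq \calA$ with $\dim_\F \calB > 2$ would satisfy $\calB \cap I = \{0\}$ (as a proper two-sided ideal of a simple algebra not containing $1$), embedding $\calB$ into $\K$, which is impossible. In particular $\calA$ contains no copy of $\Mat_2(\F)$.

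Suppose first that $\calA$ has no nontrivial idempotent. Then $\calA \cap I \subseteq \calN(\calA)$, so $\calN(\calA) \neq \{0\}$. For $\beta_1,\beta_2 \in \calN(\calA)$ I would compute $\beta_1\beta_2 + \beta_2\beta_1 = -\langle \beta_1,\beta_2\rangle \in \F$, and rule out the case $\langle \beta_1,\beta_2\rangle \neq 0$ by observing that otherwise $-\beta_1\beta_2/\langle \beta_1,\beta_2\rangle$ would be a nontrivial idempotent in $\calA$: its only possible values $0$ or $1$ both force one of the $\beta_i$ to be a unit, contradicting nilpotency. Hence $\calN(\calA)$ is a totally isotropic $\F$-subspace, and an analogous norm calculation applied to $a\beta$ shows it is a two-sided ideal of $\calA$. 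Viewing the situation inside $\overline{\calW_{p,q}} \simeq \Mat_2(\F(\omega))$, rank-one anticommuting nilpotents are $\F(\omega)$-proportional, so $\calN(\calA) = V\beta$ for one normalized $\beta \in \calW_{p,q}$ with $\beta^2 = 0$ and some $\F$-subspace $V \subseteq C$. The quotient $\calA/\calN(\calA)$ is then a local algebra with no nilpotents and no nontrivial idempotents, hence a field extension of $\F$, and the rigidity principle bounds its $\F$-dimension by $2$. The dimension-$2$ case I would eliminate by an action argument: a generator $x$ of the extension acts on $V\beta$ by $x \cdot v\beta = vw\beta$ for some $w \in C$, making $V$ an $\F[w]$-module; but $\F[w] \subseteq C = \F[\omega]$ is either $\F$ (if $w$ is constant) or a polynomial ring (if $w$ is nonconstant, hence transcendental), never a quadratic field extension, forcing $V=0$. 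This yields $\calA/\calN(\calA) = \F$ and $\calA = \calU(\beta, V)$.

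Suppose now that $\calA$ contains a nontrivial idempotent $\alpha$. The Peirce decomposition combined with the identity $\alpha\calW_{p,q}\alpha \cap \calA = \F\alpha$ (deduced from $\F(\omega)\alpha \cap \calA = \F\alpha$, which follows from quadraticity of elements of $\calA$) yields $\calA = \F\alpha \oplus \F\alpha^\star \oplus V\beta_0 \oplus W\beta_0'$, where $\beta_0$ and $\beta_0'$ are normalized generators of $\alpha^\sharp$ and $(\alpha^\star)^\sharp$ (Lemma~\ref{lemma:idempotentlemma}), and $V, W \subseteq C$ are $\F$-subspaces. The remaining task is to show that $V=0$ or $W=0$. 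Writing $\beta_0\beta_0' = \mu_0\alpha$ with $\mu_0 \in C$, the containment $(v\beta_0)(w\beta_0') = vw\mu_0\,\alpha \in \calA \cap C\alpha = \F\alpha$ forces $vw\mu_0 \in \F$ for all $v \in V$ and $w \in W$. The case $\mu_0 = 0$ is excluded because two complementary rank-one nilpotents have nonzero product in $\Mat_2(\F(\omega))$. If $\deg \mu_0 \geq 1$, then $vw\mu_0 \in \F$ with $v,w \in C$ immediately forces $v=0$ or $w=0$, whence $V=0$ or $W=0$. If $\mu_0 \in \F^\times$, the condition $VW \subseteq \F$ compels $V \subseteq \F$ or $W \subseteq \F$; if both equal $\F$, the subalgebra $\F\alpha \oplus \F\alpha^\star \oplus \F\beta_0 \oplus \F\beta_0'$ would be isomorphic to $\Mat_2(\F)$, violating the rigidity principle. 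Hence $\calA$ equals $\calH(\alpha, V)$ or $\calH(\alpha^\star, W)$, both of the required form.

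The main obstacle is the second case, specifically the delicate interplay between $C$-coefficients and $\F$-coefficients that must force $V = 0$ or $W = 0$ in the Peirce decomposition; the rigidity principle from the first step is the decisive tool, as it lets us rule out an embedded $\Mat_2(\F)$ without any explicit computation of $\mu_0$ in coordinates.
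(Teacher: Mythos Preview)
Your argument is sound and takes a genuinely different route from the paper. The paper first establishes the stronger fact $\calA\cap\mathfrak{J}_r\neq\{0\}$ (which requires a delicate case analysis of $\dim_\L\mathfrak{R}_r$), and from it extracts a single $z$ with $z^2=0$ that is \emph{orthogonal to all of $\calA$}; the resulting identity $xz=zx^\star$ simultaneously upper-triangularizes every $\Phi(x)$ in $\Mat_2(\F(\omega))$, producing a homomorphism $\Theta:\calA\to\F^2$ whose rank decides between $\calU(\beta,V)$ and $\calH(\alpha,V)$ in one stroke. You instead use only the immediate bound $\calA\cap I\neq\{0\}$, then split on the presence of a nontrivial idempotent: the nilpotent-cone analysis handles one branch and the Peirce decomposition the other, with your rigidity principle (no embedded $\Mat_2(\F)$) as the common tool. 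Your opening is cheaper and your tools more structural; the paper's argument is more uniform and avoids the idempotent/non-idempotent bifurcation.

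Two points need tightening. In Case~1, the bound $\dim_\F(\calA/\calN(\calA))\le 2$ does not follow from the rigidity principle, which concerns \emph{subalgebras}, not quotients; the fix is already implicit in your setup: since (in Case~1) every element of $\calA\cap I$ is nilpotent, you have $\calA\cap I\subseteq\calN(\calA)$, so $\calA/\calN(\calA)$ is a quotient of the subfield $\calA/(\calA\cap I)\hookrightarrow\K$, hence itself a field of $\F$-dimension at most $[\K:\F]\le 2$. In the dimension-$2$ elimination, the scalar $w$ with $x\cdot v\beta=vw\beta$ lies a priori only in $\F(\omega)$, not in $C$: from $v(x\beta)\in V\beta$ for some nonzero $v\in V$ you get $x\beta=w\beta$ with $w\in\F(\omega)$, and then $w$ is an eigenvalue of the quadratic element $x$, hence algebraic over $\F$, hence in $\F$ (as $\F$ is algebraically closed in $\F(\omega)$). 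But then the irreducible minimal polynomial of $x$ would have a root in $\F$, which is the contradiction you want.
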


Combining this with the results of the previous paragraphs, we can conclude:

\begin{cor}
Let $n>2$ be an integer.
\begin{enumerate}[(i)]
\item If both $p$ and $q$ are irreducible, then $\calW_{p,q}$ has no $n$-dimensional subalgebra.
\item If one of $p$ and $q$ splits, but none splits with simple roots, there is an $n$-dimensional subalgebra of
$\calW_{p,q}$, and every such subalgebra is isomorphic to~$\calU_n$.
\item If one of $p$ and $q$ splits with simple roots, then there are two isomorphism classes of $n$-dimensional subalgebras of
$\calW_{p,q}$, and each such subalgebra is isomorphic to $\calH_n$ or to $\calU_n$.
\end{enumerate}
\end{cor}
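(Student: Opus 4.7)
The plan is to derive the corollary almost immediately from Theorem \ref{theo:subalgdim>2split} and the earlier theorem on the irreducible case, together with some bookkeeping on the isomorphism invariants that distinguish $\calU_n$ from $\calH_n$.

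For (i), I would simply invoke the theorem stating that when both $p$ and $q$ are irreducible, every nontrivial finite-dimensional subalgebra of $\calW_{p,q}$ is isomorphic to a basic subalgebra. Since basic subalgebras have dimension $2$, this rules out any subalgebra of dimension $n>2$.

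For (ii) and (iii), I would apply Theorem \ref{theo:subalgdim>2split}: every $n$-dimensional subalgebra (with $n>2$) is of the form $\calU(\beta,V)$ or $\calH(\alpha,V)$ with $\dim V = n-1$ or $\dim V = n-2$ respectively, and by the two propositions of Section \ref{section:newsubalgebras} these are isomorphic to $\calU_n$ and $\calH_n$. For the existence part in both cases, I would note that $C=\F[\omega]$ has $\F$-linear subspaces of every finite dimension, so the only question is whether appropriate $\beta$ and $\alpha$ exist. By Theorem \ref{theo:classdim2}, a nontrivial idempotent $\alpha$ exists in $\calW_{p,q}$ if and only if one of $p,q$ splits with simple roots (since such an idempotent would generate a split $2$-dimensional subalgebra), while a nonzero $\beta$ with $\beta^2=0$ exists whenever one of $p,q$ splits: if $p=(t-\lambda)^2$ take $\beta=a-\lambda$ (normalized), and if $p$ splits with simple roots the construction at the end of the proof of Theorem \ref{theo:classdim2} produces such a $\beta$. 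This gives the existence of $\calU_n$ in both (ii) and (iii) and of $\calH_n$ in (iii), and excludes $\calH(\alpha,V)$ in (ii).

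The final item is to verify that $\calU_n\not\simeq\calH_n$ for $n>2$, so that (iii) really gives two isomorphism classes. This is an elementary check: from the matrix descriptions, the nilpotent cone of $\calU_n$ has codimension $1$ and the quotient is $\F$, so $\calU_n$ is local and has no nontrivial idempotents; whereas the nilpotent cone of $\calH_n$ has codimension $2$ and the quotient is $\F\times\F$, yielding nontrivial idempotents.

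The whole proof is essentially a packaging exercise; the main obstacle was proving Theorem \ref{theo:subalgdim>2split} itself, and the only small point of care here is making sure the existence of $\beta$ with $\beta^2=0$ is handled in the double-root subcase of (ii), where no idempotent is available and one must instead exploit a nilpotent basic vector.
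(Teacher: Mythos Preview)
Your proposal is correct and follows exactly the approach the paper intends: the corollary is stated without its own proof, as an immediate combination of the theorem on the irreducible case, Theorem \ref{theo:subalgdim>2split}, and the structure propositions for $\calU(\beta,V)$ and $\calH(\alpha,V)$ in Section \ref{section:newsubalgebras}. Your verification that $\calU_n\not\simeq\calH_n$ via idempotents, and your care in handling the existence of a normalized square-zero $\beta$ in the double-root subcase, are precisely the small points the reader is expected to supply.
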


Now, we prove Theorem \ref{theo:subalgdim>2split}.
So, we assume that one of $p$ and $q$ splits, and we take a subalgebra $\calA$ of $\calW_{p,q}$ with finite dimension $n>2$.
As is now customary, we choose a divisor $r \in \Irr(\F)$ of $\Lambda_{p,q}$, and we consider the residue field $\L:=\F[t]/(r)$. Note that the splitting field $\K$ of $pq$
has degree at most $2$ over $\F$.
There are two main steps in the proof. The first step consists in proving that $\calA \cap \mathfrak{J}_r \neq \{0\}$.

\vskip 3mm
\noindent \textbf{Step 1:} Proof that $\calA \cap \mathfrak{J}_r \neq \{0\}$. \\
We use a \emph{reductio ad absurdum}. Assume that $\calA \cap \mathfrak{J}_r=\{0\}$.
Moding out $(r(\omega))$ and then $\mathfrak{R}_r$, we obtain
an injective homomorphism $\Phi : \calA \rightarrow \calW_{p,q,[r]}/\mathfrak{R}_r$ of $\F$-algebras.
We claim that $\dim_\L \mathfrak{R}_r \geq 2$.
If at least one of $p$ and $q$ has simple roots in $\K$ or if $\car(\F) \neq 2$ then this directly follows from Proposition \ref{prop:structureofRr};
if both $p$ and $q$ split with double roots in $\K$ and $\car(\F)=2$ then we observe that $\L=\F$, $r=t$, and by taking
$\alpha \in \F[a] \setminus \F$ and $\beta \in \F[b] \setminus \F$ such that $N(\alpha)=N(\beta)=0$
we see that $(\alpha_r,\beta_r)$ is $\F$-linearly independent and $\alpha_r,\beta_r$ belong to $\mathfrak{R}_r$.

As a consequence of $\dim_\L \mathfrak{R}_r \geq 2$, we find $\dim_\F \calA \leq 2\, [\L:\F]$ and we deduce that $[\L:\F]=2$ and $\dim_\L \mathfrak{R}_r=2$.
It follows that $\dim_\F \calA=4$, that $\calA$ is isomorphic to $\calW_{p,q,[r]}/\mathfrak{R}_r$, that exactly one of $p$ and $q$ is irreducible,
and that the other one splits with simple roots.

Moreover, by going back to the proof of Proposition \ref{prop:structureofJr}, we see that $\calW_{p,q,[r]}/\mathfrak{R}_r$
is a split quadratic algebra over $\L$, and in particular it is commutative and has no nonzero element with square zero.
Hence $\calA$ is commutative and has no nonzero element with square zero.

Now, since $\Lambda_{p,q}$ is irreducible the quotient algebra $\calW_{p,q}/(\omega)$ is a quaternion algebra over $\F$.
We consider the projection $\Psi : \calA \rightarrow \calW_{p,q}/(\omega)$.
Let $x \in \calA \setminus \{0\}$. Then $x$ is quadratic with $(\tr(x),N(x)) \neq (0,0)$, and we deduce that $x$ is normalized:
indeed, by considering a factorization $x=s\,y$ with $y$ normalized and $s \in \F[\omega]$,
we find $\tr(x)=s\,\tr(y)$ and $N(x)=s^2\,N(y)$, so either $\tr(x) \in \F^\times$ and $s$ is constant, or $N(x) \in \F^\times$ and again $s$ is constant,
and in any case it follows that $x$ is normalized.
In particular $x \not\in (\omega)$. It follows that $\Psi$ is injective. Since $\dim_\F \calA=4=\dim_\F \calW_{p,q}/(\omega)$,
it follows that $\Psi$ is an isomorphism. This is absurd because we have shown that $\calA$ is commutative, while
the quaternion algebra $\calW_{p,q}/(\omega)$ is not! We conclude that $\calA \cap \mathfrak{J}_r \neq \{0\}$.

\vskip 3mm
\noindent \textbf{Step 2:} Using a nonzero element of $\calA \cap \mathfrak{J}_r$. \\
Now, we choose a nonzero element $z \in \calA \cap \mathfrak{J}_r$.
We claim that $N(z)=0$ and $\langle x,z\rangle=0$ for all $x \in \calA$.
To see this, note that $N(z) \in \F$, whereas $N(z) \equiv 0$ mod $(r(\omega))$ due to
the definition of $\mathfrak{J}_r$.
Likewise, for all $x \in \calA$, we know from Corollary \ref{cor:basiclemmasubalgebra} that $\langle x,z\rangle \in \F$
whereas $\langle x,z\rangle \in (r(\omega))$, and we deduce that $\langle x,z\rangle=0$.
In particular $\tr(z)=\langle 1,z\rangle=0$ and we deduce that $z^2=0$.
Note that for all $x \in \calA$ we have $x z^\star+z x^\star=0$, i.e., $xz=zx^\star$.

Now, it will be convenient to use the extended quaternion algebra $\overline{\calW_{p,q}}$, which splits because one of $p$ and $q$ splits.
We choose an isomorphism of $\F(\omega)$-algebras
$$\Psi : \overline{\calW_{p,q}} \overset{\simeq}{\longrightarrow} \Mat_2(\F(\omega))$$
and consider the matrix $\Psi(z)$, whose square is zero. Hence there exists $P \in \GL_2(\F(\omega))$
such that $\Psi(z)=P E_{1,2} P^{-1}$, where $E_{1,2}:=\begin{bmatrix}
0 & 1 \\
0 & 0
\end{bmatrix}$. By composing $\Psi$ with the conjugation $M \mapsto P^{-1}MP$, we
reduce the situation to the one where $\Phi(z)=E_{1,2}$.
Now, let $x \in \calA$. Then $\Phi(x)E_{1,2}=E_{1,2} \Phi(x^\star)$
shows that the matrix $\Phi(x)$ leaves the column space of $E_{1,2}$ invariant, i.e.,
$$\Phi(x)=\begin{bmatrix}
f_1(x) & ? \\
0 & f_2(x)
\end{bmatrix} \quad \text{for some $f_1(x)$ and $f_2(x)$ in $\F(\omega)$.}$$
Next, since $\Phi(x)$ is algebraic over $\F$, it follows that $f_1(x)$ and $f_2(x)$ are algebraic over $\F$,
and hence they belong to $\F$.
This yields a homomorphism of $\F$-algebras
$$\Theta : x \in \calA \mapsto (f_1(x),f_2(x)) \in \F^2.$$
Finally, we take a normalization $\beta$ of $z$ in the $C$-module $\calW_{p,q}$.
Then $\Ker \Theta$ consists of the elements $x \in \calA$ that are
$\F(\omega)$-scalar multiples of $z$, and hence $\Ker \Theta = \calA \cap (\F[\omega]\beta)$,
which equals $V \beta$ for some $\F$-linear subspace $V$ of $\F[\omega]$ with dimension $n-\rk \Theta$.

Hence if the range of $\Theta$ has dimension $1$, then clearly
$\calA=\F \oplus \Ker \Theta=\calU(\beta,V)$.

Assume finally that $\Theta$ is surjective.
Then we can pick $\alpha \in \calA$ such that $\Theta(\alpha)=(1,0)$. By matrix computation it is clear that
$\Phi(\alpha)$ is a nontrivial idempotent in $\Mat_2(\F[t])$, and hence $\alpha$ is a nontrivial idempotent in $\calW_{p,q}$.
Then we see that $(I_2-\Phi(\alpha)) \Phi(\beta)=0$ and $\Phi(\beta)\Phi(\alpha)=0$ by matrix computation,
and it follows that $\alpha^\star \beta=0=\beta \alpha$. Using Lemma \ref{lemma:idempotentlemma}, we deduce that
$\beta$ is a generator of the $C$-module $\alpha^\sharp$.
And finally because $\Theta$ maps $\F+\F \alpha$ onto $\F^2$ we find $\calA=\F +\F \alpha+ \Ker \Theta=\calH(\alpha,V)$.
This completes the proof of Theorem \ref{theo:subalgdim>2split}.

\vskip 3mm
At this point, we have a constructive description of all the finite-dimensional subalgebras of $\calW_{p,q}$,
and a clear understanding of the internal structure of such algebras.
Two main questions remain:
\begin{enumerate}[(i)]
\item What are the orbits of the finite-dimensional subalgebras of $\calW_{p,q}$ under conjugation
(i.e., under the standard action of the group of inner automorphisms)?
\item What are the orbits of the finite-dimensional subalgebras of $\calW_{p,q}$ under the action of the automorphism group of
$\Aut(\calW_{p,q})$?
\end{enumerate}
The second issue has already been solved in Section \ref{section:units1} in the case where both $p$ and $q$ are irreducible
(i.e., there is a single orbit for each isomorphism type of finite-dimensional subalgebras).
Solving both in the general case is premature at this point: we will wait until the very last section, which combines deep
result on the automorphism group and on the group of units.

\section{Automorphisms of the free Hamilton algebra (part 1): The action on the center}\label{section:automorphismsI}

\subsection{Introduction}

The aim of this section and the next one is to analyze the automorphisms of the $\F$-algebra $\calW_{p,q}$.

Our first observation is that in most cases $\calW_{p,q}$ has outer automorphisms.
Indeed, every inner automorphism leaves $\omega$ invariant, yet we have seen in Section \ref{section:omega} that the automorphism
that leaves $a$ invariant and maps $b$ to $b^\star$ takes $\omega$ to $(\tr p)(\tr q)-\omega$,
which differs from $\omega$ unless $\car(\F)=2$ and $(\tr p)(\tr q)=0$.

Hence, remembering that $C$ denotes the center of $\calW_{p,q}$,
we can already sense that a big part in our study will be played by the normal subgroup $\Aut_C(\calW_{p,q}) \trianglelefteq \Aut(\calW_{p,q})$ of all
the automorphisms of the $C$-algebra $\calW_{p,q}$.
Such automorphisms will be called \textbf{$C$-automorphisms} (as opposed to plain automorphisms of the $\F$-algebra $\calW_{p,q}$,
and which we simply call automorphisms).

Another big part of course will also be played by the \emph{basic} automorphisms
that we have introduced earlier and whose definition we quickly recall.
Remember that an $\F$-automorphism $\Phi$ of $\calW_{p,q}$ is called basic when it maps every basic vector to a basic vector,
which by linearity (and injectivity) amounts to have it either leave both basic subalgebras $\F[a]$ and $\F[b]$ invariant (in which case we say that it is \textbf{positive}), or maps each basic subalgebra to the opposite one (in which case we say that it is \textbf{negative}).
In the first case, the automorphism induces respective automorphisms of $\F[a]$ and $\F[b]$, and in the second one the algebras $\F[a]$ and $\F[b]$ are isomorphic. In any case the set of all basic automorphisms of $\calW_{p,q}$ is a subgroup of $\Aut(\calW_{p,q})$, denoted by
$\BAut(\calW_{p,q})$, and it contains the normal subgroup $\BAut_+(\calW_{p,q})$ of all positive basic automorphisms, which is
a proper subgroup of $\BAut(\calW_{p,q})$ only if $\F[a]$ and $\F[b]$ are isomorphic, in which case it has index $2$.

Let us now recall our main goal, by restating the Automorphisms Theorem that was already announced in the introduction:

\begin{theo}[Automorphisms Theorem]\label{theo:maintheoautomorphisms}
The subgroup $\BAut(\calW_{p,q})$ is a semi-direct factor $\Inn(\calW_{p,q})$ in $\Aut(\calW_{p,q})$.
In other words, every automorphism of the $\F$-algebra $\calW_{p,q}$ splits uniquely as the composite of an inner automorphism
followed by a basic automorphism.
\end{theo}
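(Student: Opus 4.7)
The plan is to match the structure of Sections \ref{section:automorphismsI} and \ref{section:automorphismsII}: prove existence in two stages (reduction of an arbitrary automorphism to a $C$-automorphism by composition with a basic one, then a proof that every $C$-automorphism is inner), and prove uniqueness by showing $\BAut(\calW_{p,q}) \cap \Inn(\calW_{p,q}) = \{\id\}$.

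For the first stage, I would exploit the natural action on the center. Every $\Phi \in \Aut(\calW_{p,q})$ restricts to an $\F$-algebra automorphism $\phi$ of $C = \F[\omega]$, necessarily of the form $\omega \mapsto \lambda\omega + \mu$ with $\lambda \in \F^\times$ and $\mu \in \F$. Moreover, since the fundamental ideal $\mathfrak{F}$ is the ideal generated by the Gram determinant of any $C$-basis of $\calW_{p,q}$ for the inner product, and since Proposition \ref{prop:commuteadjunction} makes $\Phi$ an isometry of $\langle -,-\rangle$ up to the action of $\phi$, we have $\Phi(\mathfrak{F}) = \mathfrak{F}$ and $\phi$ sends $(\Lambda_{p,q}(\omega))$ to itself. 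The observation from Section \ref{section:omega} that any basic change $a \mapsto x$, $b \mapsto y$ (with $x \in \F[a] \setminus \F$ and $y \in \F[b] \setminus \F$) sends $\omega$ to $\langle x, y\rangle$, together with a brief case-by-case check, shows that every admissible $\phi$ is realized by some $\Psi \in \BAut(\calW_{p,q})$, whence $\Phi \circ \Psi^{-1} \in \Aut_C(\calW_{p,q})$.

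For the second stage, I would extend $\Phi \in \Aut_C(\calW_{p,q})$ to an $\F(\omega)$-algebra automorphism $\overline{\Phi}$ of the quaternion algebra $\overline{\calW_{p,q}}$. Skolem--Noether yields $\overline{\Phi}(x) = \gamma x \gamma^{-1}$ with $\gamma \in \overline{\calW_{p,q}}^\times$. The main obstacle of the whole theorem is to descend this back to $\calW_{p,q}^\times$: one normalizes $\gamma$ to some $\gamma_0 \in \calW_{p,q}$ whose $C$-coordinates in a deployed basis have trivial common divisor, observes that $N(\gamma_0) \in C \setminus \{0\}$, and must show $N(\gamma_0) \in \F^\times$. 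If some irreducible $r(\omega) \in C$ divided $N(\gamma_0)$, the reduction of $\gamma_0$ in $\calW_{p,q,[r]}$ would be a nonzero zero divisor intertwining $\overline{\Phi}$ with the identity of $\calW_{p,q,[r]}$, landing in a proper two-sided ideal. The detailed classification of such ideals (Propositions \ref{prop:allidealsonesplits} and \ref{prop:allidealsbothirr}), combined with the induced action on the quotient $\calW_{p,q,[r]}/\mathfrak{R}_r$, will force a contradiction with normalization---the argument splitting according to whether $r$ divides $\Lambda_{p,q}$ and to the splitting types of $p$ and $q$.

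For uniqueness, I would prove that any basic inner automorphism $i_\gamma$ forces $\gamma \in \F^\times$. Lemma \ref{lemma:conjugatebasicunit}(b) already handles the double-irreducible case: for a non-scalar monomial $\gamma$, conjugation strictly increases the absolute distance of some basic vector, precluding basicness. For the general case, the distance analysis is refined to include the semi-basic contributions parametrized by the groups $\SB(\alpha)$ introduced for the Strong Units Theorem, yielding the same conclusion. This is the content of Theorem \ref{theo:uniquenessdecompauto}, and together with the existence statement it closes the proof.
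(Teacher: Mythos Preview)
Your second stage contains a genuine error: you claim to ``prove that every $C$-automorphism is inner,'' but this is false. The pseudo-adjunction $\Phi_\star$ (taking $a \mapsto a^\star$, $b \mapsto b^\star$) is a basic $C$-automorphism whose normalized conjugator is $[a,b]$, and $N([a,b]) = -\Lambda_{p,q}(\omega)$ is not a unit in $C$ (Section~\ref{section:pseudoadjunction}); likewise the swaps and the hyperbolic automorphisms are non-inner $C$-automorphisms (Sections~\ref{section:swaps}--\ref{section:hyperbolicautomorphisms}). So your proposed contradiction---that any irreducible $r(\omega)$ dividing $N(\gamma_0)$ forces non-normalization---simply cannot go through: for $\Phi_\star$ the normalized conjugator is genuinely normalized yet has $r \mid N(\gamma_0)$ for each $r \in \Irr_{\Lambda_{p,q}}$. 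The correct target is Theorem~\ref{theo:automorphismstheoC}: every $C$-automorphism is a basic $C$-automorphism composed with an inner one. The paper's route introduces the $r$-exponents $n_r(\Phi)$ and signatures $\varepsilon_r(\Phi)$, computes them on the basic $C$-automorphisms, and then runs a five-case analysis (on the splitting types of $p,q$) to show that after multiplying by a suitable basic $C$-automorphism the normalized conjugator becomes a unit. The ideal classification you cite is indeed a key ingredient, but it enters via congruences modulo $(r(\omega)^2)$ and sometimes $(r(\omega)^3)$, together with the collapsing phenomenon for products of conjugators---not via a direct contradiction with normalization.

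Your uniqueness approach also diverges from the paper. The paper proves Theorem~\ref{theo:uniquenessdecompauto} by the conjugator-norm computation just mentioned: every nontrivial basic $C$-automorphism has a normalized conjugator of nonconstant norm, hence is not inner. Your distance-based approach would need to show that for every non-scalar unit $\gamma$, the conjugate $\gamma x \gamma^{-1}$ is non-basic for some basic $x$; but to apply Lemmas~\ref{lemma:effectofbasic} and~\ref{lemma:effectofsemibasic} you first need a decomposition of $\gamma$ into basic and semi-basic factors, i.e.\ the Refined Units Theorem---and the paper's proof of that theorem (end of Section~\ref{section:units2}) already invokes Theorem~\ref{theo:uniquenessdecompauto}. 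So as written your uniqueness argument is circular.
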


Now, let us briefly discuss the $C$-automorphisms of $\calW_{p,q}$.
Obviously $\Aut_C(\calW_{p,q})$ includes $\Inn(\calW_{p,q})$ as a normal subgroup.
One might think that $\Aut_C(\calW_{p,q})$ equals $\Inn(\calW_{p,q})$, but in general
we will see that $\Inn(\calW_{p,q})$ is a proper subgroup of $\Aut_C(\calW_{p,q})$ (but with very low index, except in one very special case).

A classical example of basic $C$-automorphism is the \textbf{pseudo-adjunction},
which is the involutory automorphism $\Phi_\star$ that swaps $a$ and $a^\star$ and swaps $b$ and $b^\star$
(it should not be confused with the adjunction, which is an antiautomorphism, although it coincides with the adjunction on basic vectors). Clearly $\Phi_\star(\omega)=a^\star b+b^\star a=\omega$, yet it will be seen that $\Phi_\star$ is not inner unless it equals the identity
(which amounts to having $a^\star=a$ and $b^\star=b$, which happens in the rare situation where $\car(\F)=2$ and $\tr(p)=\tr(q)=0$).

Our study is naturally split into two main parts. In the first part, which constitutes the remainder of the present section and is the shorter one, we
analyze the gap between $\Aut_C(\calW_{p,q})$ and $\Aut(\calW_{p,q})$, and we prove that it is filled in some sense by basic automorphisms.
The second part, which is dealt with in Section \ref{section:automorphismsII}, analyzes the gap between the subgroup of inner automorphisms
and $\Aut_C(\calW_{p,q})$. The most difficult part is the second one, and the two parts involve largely different tools
(with the notable exception of the use of the maximal ideals above the fundamental ideal, which are used in the first part to deal with some very special situations
for fields with characteristic $2$, and are ubiquitous in the second part).

Since the existence part of the Automorphisms Theorem has already been partly proved in Section
\ref{section:units1} when both $p$ and $q$ are irreducible, we could have decided to leave this case aside.
However, it will turn out that, with the sole exception of two tedious special cases in the analysis of the gap between $\Aut_C(\calW_{p,q})$ and
$\Aut(\calW_{p,q})$, all connected with the characteristic $2$ situation, very limited relief is obtained by using Theorem
\ref{theo:decompositionautomorphismirreducible}. Instead, we have chosen to keep the treatment general throughout, which makes sense because we will use techniques that are completely different from the ones featured in Section \ref{section:units1}. Hence the following two sections are completely independent of Section \ref{section:units1}.

\subsection{Additional considerations on the basic automorphisms}

If we have respective automorphisms $f$ and $g$ of $\F[a]$ and $\F[b]$, then $p(f(a))=f(p(a))=0$ and $q(g(b))=g(q(b))=0$,
so we can extend them to a whole endomorphism $\Phi_{f,g}$ of $\F$-algebra, and since it is clear that
$\Phi_{f,g} \circ \Phi_{f^{-1},g^{-1}}=\id= \Phi_{f^{-1},g^{-1}} \circ \Phi_{f,g}$ on generators, $\Phi_{f,g}$ is an $\F$-automorphism.
Hence $\BAut_+(\calW_{p,q})$ is naturally isomorphic to $\Aut_\F(\F[a]) \times \Aut_\F(\F[b])$.

Assume now that $\F[a]$ and $\F[b]$ are isomorphic. Then, for every pair $(h,j)$ consisting of an isomorphism from $\F[a]$ to $\F[b]$
and an isomorphism from $\F[b]$ to $\F[a]$, there is a unique automorphism $\Psi_{h,j}$ of $\calW_{p,q}$ that takes $a$ to $h(a)$
and $b$ to $j(b)$ (its inverse is $\Psi_{j^{-1},h^{-1}}$).

As a consequence, if the algebras $\F[a]$ and $\F[b]$ are isomorphic, then
$\BAut_+(\calW_{p,q})$ has index $2$ in $\BAut(\calW_{p,q})$, and it is easy to find a direct factor of it (take $j=h^{-1}$ in the preceding construction).

As a consequence, and unless one of $p$ and $q$ splits with a double root, $\BAut(\calW_{p,q})$ has order $1,2,4$ or $8$ and it is then an easy exercise to detect the isomorphism class of this group depending on the respective choices of $p$ and $q$. For example, this group is trivial exactly when both $p$ and $q$ are inseparable and the algebra $\F[a]$ and $\F[b]$ are nonisomorphic, and it has cardinality $8$ if both $p$ and $q$ have simple roots (but not necessarily split) and the algebras $\F[a]$ and $\F[b]$ are isomorphic (e.g., in the idempotent case where $p=q=t^2-t$). In the latter case it is easy to recognize that $\BAut(\calW_{p,q}) \simeq D_4$ by noting that $\BAut(\calW_{p,q})$ is naturally embedded as a subgroup of the permutation group $\mathfrak{S}(\{a,b,a^\star,b^\star\})$ and by remembering that the only groups of order $8$ in the latter, which are the $2$-Sylow subgroups, are known to be isomorphic to $D_4$.

Now, at this point we could immediately inquire about the intersection $\BAut(\calW_{p,q}) \cap \Inn(\calW_{p,q})$,
yet although it can be done by entirely elementary means, we believe it is premature to do so. When the time comes, we will have all the necessary tools to obtain this with as little computation as possible, and the result will be easy to understand.
At this point though, the fruits are not ripe yet.

\subsection{The first invariant: the action on the center}\label{section:actiononcenter}

Let $\Phi$ be an automorphism of $\calW_{p,q}$. Then $\Phi$ induces an $\F$-automorphism of the center
$\F[\omega]$, which then reads $r(\omega) \mapsto r(\lambda \omega+\mu)$ for a unique pair $(\lambda,\mu)\in \F^\times \times \F$.
As opposed to what one might think at first glance, not every pair $(\lambda,\mu)$ is possible, and a key invariance will now be observed.

\begin{prop}
Let $\Phi$ be an automorphism of $\calW_{p,q}$. Then there is a nonzero scalar $\delta \in \F^\times$ such that
$$\Phi(\Lambda_{p,q}(\omega))=\delta \Lambda_{p,q}(\omega).$$
\end{prop}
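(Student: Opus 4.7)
The plan is to deduce this from two ingredients already at our disposal: the fact (Proposition \ref{prop:commuteadjunction}) that $\Phi$ commutes with the adjunction, and the computation (Proposition \ref{prop:gram}) of the Gram determinant of $\langle-,-\rangle$ in the basis $(1,a,b,ab)$. Throughout, I write $\Phi_C$ for the induced automorphism of the center $C=\F[\omega]$.

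First I would observe that, since $\Phi$ commutes with $\star$ and $\langle x,y\rangle = xy^\star+yx^\star$, one has
$$\Phi(\langle x,y\rangle)=\langle \Phi(x),\Phi(y)\rangle \quad \text{for all } x,y\in \calW_{p,q}.$$
In particular $\Phi$ restricts to an $\F$-automorphism $\Phi_C$ of $C$ (this also follows from Theorem~\ref{theo:center}). Next, since $\Phi(cx)=\Phi_C(c)\,\Phi(x)$ for all $c\in C$, $x\in \calW_{p,q}$, and $\Phi$ is bijective, the image $(\Phi(1),\Phi(a),\Phi(b),\Phi(ab))$ of the $C$-basis $(1,a,b,ab)$ is again a $C$-basis of $\calW_{p,q}$.

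Now compare the Gram matrix $G$ of $\langle-,-\rangle$ in $(1,a,b,ab)$ with the Gram matrix $G'$ of $\langle-,-\rangle$ in $(\Phi(1),\Phi(a),\Phi(b),\Phi(ab))$. By the isometry property above, $G'$ is obtained from $G$ by applying $\Phi_C$ entrywise, so
$$\det(G')=\Phi_C(\det G)=\Phi_C(\Lambda_{p,q}(\omega)^2)=\Phi_C(\Lambda_{p,q}(\omega))^2.$$
On the other hand, changing the basis of the free $C$-module $\calW_{p,q}$ multiplies the Gram determinant by the square of the determinant of the change-of-basis matrix, which is an element of $C^\times$. Since $C=\F[\omega]$ with $\omega$ transcendental, $C^\times=\F^\times$, and hence there exists $u\in \F^\times$ with $\det(G')=u^2\,\det(G)$, i.e.,
$$\Phi_C(\Lambda_{p,q}(\omega))^2 = u^2\,\Lambda_{p,q}(\omega)^2.$$

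Finally, since $\F[\omega]$ is an integral domain, this polynomial identity forces $\Phi_C(\Lambda_{p,q}(\omega))=\delta\,\Lambda_{p,q}(\omega)$ for some $\delta\in\F^\times$: if $\car(\F)\neq 2$ one factors the difference of squares and uses that $\Lambda_{p,q}(\omega)\neq 0$, while if $\car(\F)=2$ the identity rewrites as $(\Phi_C(\Lambda_{p,q}(\omega))-u\,\Lambda_{p,q}(\omega))^2=0$ and one concludes directly. The only subtlety worth flagging is the legitimacy of the ``change-of-basis'' step, which rests on the fact that $(1,a,b,ab)$ and its $\Phi$-image are indeed $C$-bases—guaranteed by the semi-linearity of $\Phi$ with respect to $\Phi_C$—so no real obstacle arises.
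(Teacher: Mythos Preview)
Your proof is correct and follows essentially the same approach as the paper: both argue via the compatibility of $\Phi$ with the adjunction to get the isometry property $\Phi(\langle x,y\rangle)=\langle\Phi(x),\Phi(y)\rangle$, then compare Gram determinants in the original and $\Phi$-transported $C$-bases, using that $C^\times=\F^\times$. Your characteristic-2 case split at the end is a harmless elaboration of what the paper does in one line (factoring $a^2=b^2$ in the domain $C$).
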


\begin{proof}
We use the relationship between the fundamental polynomial $\Lambda_{p,q}$ and the determinant of the inner product $\langle -,-\rangle$.
First of all, remember from Proposition \ref{prop:commuteadjunction} that $\Phi$ commutes with the adjunction, and hence
$$\forall (x,y) \in (\calW_{p,q})^2, \; \Phi(\langle x,y\rangle)=\langle \Phi(x),\Phi(y)\rangle.$$
It follows that the Gram matrix of the quadruple $(\Phi(1),\Phi(a),\Phi(b),\Phi(ab))$ with respect to $\langle -,-\rangle$
is obtained by applying $\Phi$ entrywise to the Gram matrix of $(1,a,b,ab)$ (with respect to $\langle -,-\rangle$), and hence
the Gram determinant of $(\Phi(1),\Phi(a),\Phi(b),\Phi(ab))$ is the image under $\Phi$ of the one
in $(1,a,b,ab)$, i.e., of $\Lambda_{p,q}(\omega)^2$.

Because $\Phi$ is an automorphism of $\F$-algebra that leaves $C$ (globally) invariant, the family $(\Phi(1),\Phi(a),\Phi(b),\Phi(ab))$
is a basis of the $C$-module $\calW_{p,q}$, and hence the two corresponding Gram determinants
are equal up to multiplication with the square of an element of $C^\times=\F^\times$. This yields a nonzero scalar $\delta \in \F^\times$ such that
$$\Phi(\Lambda_{p,q}(\omega)^2)=\delta^2 \Lambda_{p,q}(\omega)^2,$$
and we conclude that $\Phi(\Lambda_{p,q}(\omega))=\pm \delta \Lambda_{p,q}(\omega)$ because $C$ has no zero divisor.
\end{proof}

Hence, applying the previous result to $\Phi$, we recover that, for some $\delta \in \F^\times$,
$$\Lambda_{p,q}(\lambda \omega+\mu)=\delta \Lambda_{p,q}(\omega).$$
As in the previous sections, denote by $\K$ the splitting field of $pq$.
It ensues from the above that the affine mapping
$$\calA(\Phi) : z \in \K \mapsto \lambda z+\mu \in \K$$
permutes the roots of $\Lambda_{p,q}$ in $\K$. Note in any case that the trace of $\Lambda_{p,q}$ is $(\tr p)(\tr q)$,
and that an affine mapping on a line is determined by its values at two different points.
For a subset $X$ of $\K$, we will denote by $\GA(\F,X)$ the group of all affine automorphisms of the line $\F$ whose
(algebraic) extension to $\K$ permutes the set $X$.
Hence, in denoting by $\Root(\Lambda_{p,q})$ the set of all roots of $\Lambda_{p,q}$ in $\K$,
we recover a group homomorphism
$$\Theta : \Aut(\calW_{p,q}) \longrightarrow \GA(\F,\Root(\Lambda_{p,q})),$$
Let us briefly discuss the precise nature of $\GA(\F,\Root(\Lambda_{p,q}))$ according to $(p,q)$.
\begin{itemize}
\item If $\Lambda_{p,q}$ has two distinct roots in $\K$, then
$\GA(\F,\Root(\Lambda_{p,q}))$ has exactly two elements: the identity and $z \mapsto (\tr p)(\tr q)-z$.
Note that if $\car(\F)=2$ these are really distinct mappings otherwise $\Lambda_{p,q}$ would have a double root.
\item If $\Lambda_{p,q}$ has a double root $z_0$ in $\K \setminus \F$, then $\GA(\F,\Root(\Lambda_{p,q}))$
contains only the identity mapping. Indeed, given $(\lambda,\mu) \in \F^\times \times \F$, the equality
$\lambda z_0+\mu=z_0$ leads to  $\lambda=1$ and $\mu=0$.
\item Finally, if $\Lambda_{p,q}$ has a double root $z_0$ in $\F$, then $\GA(\F,\Root(\Lambda_{p,q}))$ is the group of all homotheties $z \mapsto \lambda(z-z_0)+z_0$ (with $\lambda \in \F^\times$) around the point $z_0$, and it is isomorphic to $\F^\times$.
\end{itemize}

Remember finally from Section \ref{section:Lambdapq} that $\Lambda_{p,q}$ has a double root if and only if at least one of $p$ and $q$
has a double root, and this double root belongs to $\F$ unless one of $p$ and $q$ splits with simple roots and the other one is inseparable.
Let us conclude:

\begin{lemma}
Let $\Phi$ be an automorphism of the $\F$-algebra $\calW_{p,q}$.
Then $\Phi(\omega) \in \{\omega,(\tr p)(\tr q)-\omega\}$ unless one of the following special cases holds:
\begin{itemize}
\item At least one of $p$ and $q$ has a double root in $\F$, in which case $\Lambda_{p,q}$ has a double root $z$ in $\F$,
and $\Phi(\omega)=\lambda(\omega-z)+z$ for some $\lambda \in \F^\times$;
\item Both $p$ and $q$ are irreducible and inseparable.
\end{itemize}
\end{lemma}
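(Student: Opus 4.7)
The plan is to combine the proposition just proved (that $\Phi(\Lambda_{p,q}(\omega))=\delta\,\Lambda_{p,q}(\omega)$ for some $\delta\in\F^\times$) with the three-case analysis of $\GA(\F,\Root(\Lambda_{p,q}))$ that immediately precedes the lemma. First I would set the stage: since $\Phi$ restricts to an $\F$-automorphism of the center $C=\F[\omega]$, it induces an affine automorphism $\calA(\Phi):z\mapsto\lambda z+\mu$ of the line, and the preservation of $\Lambda_{p,q}(\omega)$ up to a scalar implies $\Lambda_{p,q}(\lambda z+\mu)=\delta\,\Lambda_{p,q}(z)$ in $\K[z]$. Equating roots shows that $\calA(\Phi)$ permutes $\Root(\Lambda_{p,q})\subset\K$, so the whole question reduces to reading off the elements of $\GA(\F,\Root(\Lambda_{p,q}))$ in terms of $(p,q)$.

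Next I would split into the three cases already catalogued. If $\Lambda_{p,q}$ has two distinct roots in $\K$, then $\GA(\F,\Root(\Lambda_{p,q}))=\{\id,z\mapsto(\tr p)(\tr q)-z\}$, so $\Phi(\omega)\in\{\omega,(\tr p)(\tr q)-\omega\}$, which is the asserted main conclusion. If $\Lambda_{p,q}$ has a double root $z_0\in\K\setminus\F$, then $\GA$ is trivial, so $\Phi(\omega)=\omega$, again in the main set. The only remaining possibility is that $\Lambda_{p,q}$ has a double root $z\in\F$, and then $\GA$ is the homothety group around $z$, giving exactly $\Phi(\omega)=\lambda(\omega-z)+z$ with $\lambda\in\F^\times$.

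It then remains to translate "$\Lambda_{p,q}$ has a double root in $\F$" into the two special cases listed in the statement. Here I would invoke the formula $\disc(\Lambda_{p,q})=\disc(p)\disc(q)$ from Section~\ref{section:Lambdapq}: $\Lambda_{p,q}$ has a double root iff at least one of $p$ and $q$ has a double root (in $\K$). If the double root of $p$, say, lies in $\F$, we are in the first special case and the formulas for the roots of $\Lambda_{p,q}$ show that its double root $x\tr(q)$ lies in $\F$. If the double root of $p$ lies in $\K\setminus\F$, then $p$ is irreducible inseparable (so $\car(\F)=2$); the double root of $\Lambda_{p,q}$ is $x\tr(q)$ with $x\notin\F$, which lies in $\F$ exactly when $\tr(q)=0$, i.e.\ exactly when $q$ is itself inseparable (so in particular, in this char $2$ setting, $q$ is irreducible inseparable too). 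Symmetrically if it is $q$ that has a nonrational double root. Unpacking this dichotomy gives precisely the two exceptions claimed.

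I do not expect any genuine obstacle: the work is almost entirely bookkeeping against Table~\ref{table1}. The only point requiring a little care is the last paragraph above, where one must check that a nonrational double root of $\Lambda_{p,q}$ contributes the trivial element to $\GA$ (hence is already covered by the main conclusion) and cannot masquerade as a third exceptional case; this is why the statement lists exactly two special cases rather than three.
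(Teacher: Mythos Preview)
Your proposal is correct and mirrors the paper's approach exactly: the paper presents the lemma as an immediate consequence of the preceding proposition, the three-case description of $\GA(\F,\Root(\Lambda_{p,q}))$, and the remark (drawn from Section~\ref{section:Lambdapq}) on when the double root of $\Lambda_{p,q}$ lies in $\F$. One small wrinkle to tighten in your final paragraph: ``$\tr(q)=0$'' in characteristic $2$ does not by itself force $q$ to be irreducible inseparable, since $q$ could split with a double root in $\F$; but that possibility is already absorbed by the first exceptional case, so your dichotomy still closes once this is made explicit.
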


Here is our next step:

\begin{prop}\label{prop:analysisautomorphismcenter}
Unless both $p$ and $q$ are irreducible and inseparable, the restriction
$\BAut(\calW_{p,q}) \rightarrow \GA(\F,\Root(\Lambda_{p,q}))$
of $\Theta$ to the subgroup of basic automorphisms is surjective.
\end{prop}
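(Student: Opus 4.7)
The plan is to argue by cases on the structure of $\Root(\Lambda_{p,q})$, using the classification of $\GA(\F,\Root(\Lambda_{p,q}))$ recalled just before the statement, and in each non-trivial case to exhibit explicit basic automorphisms that together surject onto that group. Throughout, I will use that an automorphism of $\calW_{p,q}$ commutes with the adjunction (Proposition \ref{prop:commuteadjunction}).

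First, if $\Lambda_{p,q}$ has a double root lying in $\K\setminus\F$, then $\GA(\F,\Root(\Lambda_{p,q}))$ is trivial and there is nothing to prove. If $\Lambda_{p,q}$ has two distinct roots in $\K$, then $\disc(\Lambda_{p,q})=\disc(p)\disc(q)\ne 0$ forces both $p$ and $q$ to be separable, and in particular $b^\star\ne b$. The basic positive automorphism $\Phi$ determined by $\Phi(a)=a$ and $\Phi(b)=b^\star$ is then well-defined and non-trivial, and computing as in Section \ref{section:omega} gives $\Phi(\omega)=ab+b^\star a^\star=\tr(ab)=(\tr p)(\tr q)-\omega$, which is the unique non-identity element of the two-element group $\GA(\F,\Root(\Lambda_{p,q}))$.

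The remaining case is that $\Lambda_{p,q}$ has a double root $z_0\in\F$, so that $\GA(\F,\Root(\Lambda_{p,q}))$ consists of the homotheties $z\mapsto\lambda(z-z_0)+z_0$ with $\lambda\in\F^\times$. I would invoke the table of Section \ref{section:Lambdapq} to observe that, unless both $p$ and $q$ are irreducible and inseparable (the excluded case), one of the two polynomials actually has a double root in $\F$; without loss of generality, $p=(t-x_0)^2$ with $x_0\in\F$, and then $z_0=x_0\,\tr(q)$. For every $\lambda\in\F^\times$ I would define the basic positive automorphism $\Phi_\lambda$ by $\Phi_\lambda(a):=\lambda(a-x_0)+x_0$ and $\Phi_\lambda(b):=b$, which is well-defined since $p\bigl(\Phi_\lambda(a)\bigr)=\lambda^2\,p(a)=0$. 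A short computation using $\Phi_\lambda(a^\star)=\lambda a^\star+(1-\lambda)x_0$ (which follows from $\tr(p)=2x_0$ and the commutation with the adjunction) and collapsing the sum $b^\star+b=\tr(q)$ then yields $\Phi_\lambda(\omega)=\lambda\,\omega+(1-\lambda)\,x_0\,\tr(q)=\lambda(\omega-z_0)+z_0$, which realizes the required element of $\GA(\F,\Root(\Lambda_{p,q}))$.

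The main obstacle, insofar as there is one, is the case bookkeeping in the last paragraph: one must carefully read off from the table of Section \ref{section:Lambdapq} that when $\Lambda_{p,q}$ has a double root in $\F$, the only situation in which neither $p$ nor $q$ has a double root in $\F$ is precisely the announced exception (both irreducible and inseparable, corresponding to the subcases where $\Lambda_{p,q}=t^2$). Everything else is direct verification.
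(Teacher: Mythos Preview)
Your proof is correct and follows essentially the same approach as the paper: you use the same two families of basic automorphisms (the one fixing $a$ and sending $b\mapsto b^\star$ for the two-element target, and the dilations $a-x_0\mapsto\lambda(a-x_0)$ when $p$ has a double root in $\F$), with the same computations of $\Phi(\omega)$. The only difference is organizational: you structure the argument by first splitting on the shape of $\Root(\Lambda_{p,q})$ and then invoking the table of Section~\ref{section:Lambdapq} to see that a double root of $\Lambda_{p,q}$ in $\F$ forces one of $p,q$ to have a double root in $\F$ (outside the excluded case), whereas the paper relies on the lemma immediately preceding the proposition, which already packages this case distinction.
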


\begin{proof}
To start with, remember that the basic automorphism $\Phi$ that fixes $a$ and takes $b$ to $b^\star$ satisfies
$$\Phi(\omega)=ab+b^\star a^\star=(\tr p)(\tr q)-\omega.$$
This is enough to conclude unless the first special case cited in the previous lemma is encountered.

So, assume now that $p$ has a double root $x$ in $\F$. Then $z:=\tr(q)\,x$ is the double root of $\Lambda_{p,q}$ in $\F$.
Let $\lambda \in \F^\times$, and consider the basic automorphism that takes
$a-x$ to $\lambda (a-x)$ and leaves $b$ invariant.
Then
$$\langle \Phi(a-x),\Phi(b)\rangle=\langle \lambda (a-x),b\rangle=\lambda (\omega-x\tr(q))=\lambda (\omega-z)$$
and hence
$$\Phi(\omega)=\langle \Phi(a-x),\Phi(b)\rangle+\langle \Phi(x),\Phi(b)\rangle=\lambda (\omega-z)+\langle x,b\rangle=\lambda (\omega-z)+z.$$
We proceed likewise if $q$ has a double in $\F$, which completes the proof.
\end{proof}

\subsection{Analyzing the remaining special cases}\label{section:centerspecialcases}

Here, we complete the previous study by considering the special case where
$\car(\F)=2$ and both $p$ and $q$ are inseparable (and hence irreducible).
Then $\Lambda_{p,q}(\omega)=\omega^2$ and there are at most two basic automorphisms of $\calW_{p,q}$.
The group $\GA(\F,\Root(\Lambda_{p,q}))$ is isomorphic to $\F^\times$, which has no element of order $2$ because $\car(\F)=2$.
Hence in that case the mapping $\Theta$ is trivial (i.e., constant) on $\BAut(\calW_{p,q})$.
Thus $\Theta$ is useless here.

Fortunately we can prove the following:

\begin{prop}\label{prop:analysisautomorphismcenterinseparable}
If both $p$ and $q$ are inseparable then every automorphism of $\calW_{p,q}$ is a $C$-automorphism.
\end{prop}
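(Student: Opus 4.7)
The plan is to reduce the claim to an analysis of basic automorphisms via Theorem \ref{theo:decompositionautomorphismirreducible}, which applies here because an inseparable quadratic polynomial is automatically irreducible. That theorem furnishes, for any $\Phi\in\Aut(\calW_{p,q})$, a decomposition $\Phi=I_\gamma\circ \Psi$ where $\Psi$ is basic and $I_\gamma$ denotes conjugation by some monomial unit $\gamma$. Since inner automorphisms fix $C=\F[\omega]$ pointwise, it will suffice to prove that every basic automorphism $\Psi$ satisfies $\Psi(\omega)=\omega$ (and hence fixes all of $C$, as $\Phi$ is $\F$-linear and $\omega$ generates $C$ as an $\F$-algebra).

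The key observation is a rigidity phenomenon peculiar to the inseparable setting: since $\car(\F)=2$ and $\tr(p)=\tr(q)=0$, one has $a^\star=a$, $b^\star=b$, and $\omega=ab+ba$; moreover, the basic subalgebras $\F[a]$ and $\F[b]$ are inseparable quadratic field extensions, on which the Frobenius (squaring) map is injective. Consequently, $a$ is the unique square root of $N(p)$ inside $\F[a]$, and similarly for $b$, which forces $\Aut_\F(\F[a])=\{\id\}=\Aut_\F(\F[b])$; and if $\F[a]\simeq\F[b]$ (a necessary condition for any negative basic automorphism to exist), then there is exactly one isomorphism $\F[a]\to\F[b]$, namely $a\mapsto \mu b$ where $\mu\in\F^\times$ is the unique scalar satisfying $N(p)=\mu^2 N(q)$, and symmetrically one unique isomorphism $\F[b]\to \F[a]$ sending $b\mapsto \mu^{-1}a$.

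Once this is in hand, the conclusion is almost immediate. In the positive basic case, $\Psi$ is determined by an element of $\Aut_\F(\F[a])\times \Aut_\F(\F[b])=\{(\id,\id)\}$, hence $\Psi=\id$ and trivially fixes $\omega$. In the negative basic case, the only candidate $\Psi$ sends $a\mapsto \mu b$ and $b\mapsto \mu^{-1}a$, and a one-line expansion using $a^\star=a$, $b^\star=b$ gives
\[
\Psi(\omega)=\Psi(ab^\star+ba^\star)=(\mu b)(\mu^{-1}a)+(\mu^{-1}a)(\mu b)=ba+ab=\omega,
\]
as required.

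The only genuine obstacle is the negative basic case. In a general setting there could be several isomorphisms $\F[a]\to\F[b]$ producing distinct basic automorphisms, each needing a separate check on $\omega$; the argument above succeeds precisely because the Frobenius injectivity in an inseparable extension collapses the family of candidates to a single one whose action on $\omega$ is obviously trivial. No deeper tool than Theorem \ref{theo:decompositionautomorphismirreducible} and the structural rigidity of inseparable quadratic fields is needed.
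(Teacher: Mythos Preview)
Your overall strategy is sound and is precisely the shortcut the paper itself acknowledges but deliberately bypasses: the paper remarks that one \emph{could} invoke Theorem~\ref{theo:decompositionautomorphismirreducible} here, but instead gives a direct argument (analyzing $\Phi(\omega)=\lambda\omega$ via the ideal $\mathfrak{J}_r$ and the norm modulo $(\omega^2)$) so as to keep Sections~\ref{section:automorphismsI}--\ref{section:automorphismsII} independent of Section~\ref{section:units1}. Your route is shorter and more transparent once that theorem is available; the paper's route is self-contained and illustrates the local techniques used later.

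There is, however, a genuine error in your handling of the negative basic case. Your claim that the unique isomorphism $\F[a]\to\F[b]$ sends $a\mapsto\mu b$ presupposes that $N(p)/N(q)$ is a square in $\F$, which need not hold even when $\F[a]\simeq\F[b]$. For instance, take $\F=\F_2(s)$, $p=t^2+s$, $q=t^2+s+1$: then $\F[a]\simeq\F[b]$ via $a\mapsto 1+b$ (check: $(1+b)^2=1+(s+1)=s$), yet $s/(s+1)$ is not a square in $\F$. So your explicit formula for $\Psi$ is wrong, and the one-line computation that follows is unjustified as written.

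The fix is easy and stays within your framework. You have correctly argued (Frobenius injectivity) that there is at most one isomorphism $h:\F[a]\to\F[b]$ and at most one $j:\F[b]\to\F[a]$; hence $j\circ h\in\Aut_\F(\F[a])=\{\id\}$ forces $j=h^{-1}$, so the unique negative basic automorphism $\Psi$ satisfies $\Psi^2=\id$, i.e.\ it is a swap. Now either quote Section~\ref{section:basicCauto}, where swaps are shown to be $C$-automorphisms in general, or redo the computation honestly: writing $\Psi(a)=\lambda+\mu b$ and $\Psi(b)=\lambda'+\mu' a$, the relation $\Psi^2(a)=a$ gives $\mu\mu'=1$, and in characteristic~$2$ one expands
\[
\Psi(\omega)=\Psi(ab+ba)=(\lambda+\mu b)(\lambda'+\mu' a)+(\lambda'+\mu' a)(\lambda+\mu b)=\mu\mu'(ab+ba)=\omega.
\]
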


At this very point, we could get rid of this difficulty by relying on Theorem \ref{theo:decompositionautomorphismirreducible},
but we will actually take the challenge and give a direct proof of
Proposition \ref{prop:analysisautomorphismcenterinseparable}, at least one that does not use a technique that is similar to the one featured in Section \ref{section:units1}. The reader is invited to skip this part at first reading and move directly to the next section.

\begin{proof}
Assume throughout the proof that both $p$ and $q$ are inseparable, in which case $\car(\F)=2$
and $\Lambda_{p,q}=t^2$. We set $r:=t$ throughout, and we take an arbitrary automorphism $\Phi$ of $\calW_{p,q}$.
It will suffice to prove that $\Phi(\omega)=\omega$.
First of all, we deduce from Section \ref{section:actiononcenter} that $\Phi(\omega)=\lambda \omega$
for some $\lambda \in \F^\times$. So, we need to prove that $\lambda=1$.

There are two subcases that must be dealt with separately: the one where $p$ and $q$ have distinct splitting fields in $\K$
(i.e., $[\K:\F]=4$), and the one where they have the same splitting field in $\K$ (i.e., $[\K:\F]=2$). The latter is the more difficult,
so we start with the former. In each case the idea is to compute $\lambda$ by analyzing the
$\F$-linear mappings induced by $\Phi$ on the quotient $\F$-vector spaces $\calW_{p,q}/\mathfrak{J}_r$ and
$\mathfrak{J}_r/(\omega)$. Remember to this end that since both $p$ and $q$ are irreducible
$\mathfrak{J}_r$ is the sole maximal ideal of $\calW_{p,q}$ that includes $(\omega)$
(see Proposition \ref{prop:structureofJr}),
and as $\Phi$ leaves $(\omega)$ invariant (because $\Phi(\omega)=\lambda \omega$) it must also leave $\mathfrak{J}_r$ invariant.

Note also that $\mathfrak{J}_r$ is the subset of all $x \in \calW_{p,q}$ such that $N(x) \equiv 0 \; (\omega)$ (see the last case in Proposition \ref{prop:structureofRr}). Finally
$$\forall (x,y)\in \calW_{p,q}^2, \quad \langle x,y\rangle \equiv 0 \quad \text{mod} \; (\omega),$$
again by the last case in Proposition \ref{prop:structureofRr}. The latter identity will be used repeatedly.

As a consequence, the norm $N$ induces a non-isotropic and totally degenerate (i.e., with polar form zero) quadratic form $\overline{N}$ on the $\F$-vector space $\calW_{p,q}/\mathfrak{J}_r$. In particular $\overline{N}$ is an injective homomorphism from the additive group $(\calW_{p,q}/\mathfrak{J}_r,+)$
to $(\F,+)$. Next, observe that $\Phi$ induces an $\F$-linear endomorphism $\overline{\Phi}$ of $\calW_{p,q}/\mathfrak{J}_r$.
Letting $x \in \calW_{p,q}$, we write $N(x) \equiv \alpha \quad \text{mod} \; (\omega)$ and use the invariance of $(\omega)$ under $\Phi$
to deduce that $\Phi(N(x)) \equiv \Phi(\alpha) \quad \text{mod} \; (\omega)$, which yields $N(\Phi(x)) \equiv \alpha \quad \text{mod} \; (\omega)$.
This shows that $\overline{N}(\overline{\Phi}(\overline{x}))=\overline{N}(\overline{x})$ for all $\overline{x} \in \calW_{p,q}/\mathfrak{J}_r$.
Since $\overline{N}$ is injective, we deduce that $\overline{\Phi}$ is the identity, i.e.,
\begin{equation}\label{eq:identitymod}
\forall x \in \calW_{p,q}, \quad \Phi(x) \equiv x \quad \text{mod} \; \mathfrak{J}_r.
\end{equation}
Now, we need to split the discussion into two subcases.

\vskip 3mm
\noindent \textbf{Case 1:} $[\K:\F]=4$. \\
Here $\mathfrak{J}_r=(\omega)$. By \eqref{eq:identitymod} we have $\Phi(a)=a+\omega a'$ and $\Phi(b)=b+\omega b'$ for some $a',b'$ in $\calW_{p,q}$.
Then
$$\Phi(\omega)=\Phi(\langle a,b\rangle)=\langle \Phi(a),\Phi(b)\rangle=\langle a,b\rangle+\omega \langle a',\Phi(b)\rangle+\omega \langle a,b'\rangle$$
and hence $\Phi(\omega) \equiv \omega \; (\omega^2)$. Hence $\lambda=1$ and we conclude that $\Phi(\omega)=\omega$.

\vskip 3mm
\noindent \textbf{Case 2:} $[\K:\F]=2$. \\
Here the $\F$-algebras $\F[a]$ and $\F[b]$ are isomorphic. Without loss of generality we can then assume that $p=q=t^2+\delta$ for some $\delta \in \F$ which is not a square in $\F$. Then we see that $N(a+b)=2\delta+\langle a,b\rangle=\omega$ and $N(\delta+ab)=\delta^2+\delta^2+\delta \langle 1,ab\rangle=\delta \omega$ because $b^\star=b$.
Next, the respective cosets $\overline{a}$ and $\overline{b}$ in $\calW_{p,q}/\mathfrak{J}_r$
are such that $(1,\overline{a},\overline{a}+\overline{b},\delta+\overline{a}\overline{b})$
is a basis of the $\F$-vector space $\calW_{p,q}/(\omega)$, and from the previous computation we observe that $(\overline{a}+\overline{b},\delta+\overline{a}\overline{b})$ is a basis of $\mathfrak{J}_r/(\omega)=\mathfrak{R}_r$.

We set $V:=\Vect_\F(a+b,\delta+ab)$ and we note that $N$ is totally singular on $V$: Indeed,
$\langle a+b,\delta+ab\rangle=\delta(\tr(a)+\tr(b))+N(a) \tr (b)+N(b)\tr(a)=0$ since $\tr(a)=\tr(b)=0$.
Since $\Phi$ leaves $\mathfrak{J_r}$ invariant, we can write
$$\Phi(\delta+ab)\equiv \mu (a+b)+\nu (\delta+ab) \quad \text{ mod $(\omega)$}$$
for some $(\mu,\nu)\in \F^2$.
We also use \eqref{eq:identitymod} to obtain $\Phi(a)=a+\omega a'$ for some $a' \in \calW_{p,q}$.
Now, from $\langle a,\delta+ab\rangle=0$, we deduce that $\langle \Phi(a),\Phi(\delta+ab)\rangle=0$,
and since $\langle a',\mu(a+b)+\nu(\delta+ab)\rangle \equiv 0$ mod $(\omega)$ this yields
\begin{equation}\label{eq:tordu}
\langle a,\mu(a+b)+\nu(\delta+ab)\rangle \equiv 0 \quad \text{mod} \; (\omega^2).
\end{equation}
As the left-hand side in \eqref{eq:tordu} equals $\mu \delta+\mu \omega$, we deduce that $\mu=0$.

Now we remember that $\Phi(\omega)=\lambda \omega$.
Noting that $\langle 1,\delta+ab\rangle=\omega$, we deduce that
$\langle \Phi(1),\Phi(\delta+ab)\rangle=\lambda \omega$ and hence $\lambda=\nu$.
Finally,
$$\Phi(N(\delta+ab))=N(\Phi(\delta+ab)) \equiv N(\lambda(\delta+ab)) \quad \text{mod} \; (\omega^2),$$
and hence $\Phi(\delta \omega) \equiv \lambda^2 \delta \omega$  mod $(\omega^2)$.
This yields $\lambda=\lambda^2$, whence $\lambda=1$ and we conclude that $\Phi(\omega)=\omega$.
\end{proof}

\subsection{A milestone in the proof of the Automorphisms Theorem}

A straightforward consequence of Propositions \ref{prop:analysisautomorphismcenter} and \ref{prop:analysisautomorphismcenterinseparable}
is the following milestone for our proof of the Automorphisms Theorem:

\begin{cor}\label{cor:analysisautomorphismcenter}
For every $\Phi \in \Aut(\calW_{p,q})$, there exist $\Phi_1 \in \Aut_C(\calW_{p,q})$
and $\Phi_2 \in \BAut(\calW_{p,q})$ such that $\Phi=\Phi_1 \circ \Phi_2$.
\end{cor}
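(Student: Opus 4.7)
The plan is to read off the decomposition directly from the group homomorphism $\Theta : \Aut(\calW_{p,q}) \rightarrow \GA(\F,\Root(\Lambda_{p,q}))$ introduced in Section \ref{section:actiononcenter}, using Propositions \ref{prop:analysisautomorphismcenter} and \ref{prop:analysisautomorphismcenterinseparable} to split the argument into two mutually exclusive cases.

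First I would treat the generic case, where we do \emph{not} simultaneously have $p$ and $q$ both irreducible and inseparable. Given $\Phi \in \Aut(\calW_{p,q})$, I consider its image $\Theta(\Phi) \in \GA(\F,\Root(\Lambda_{p,q}))$. By Proposition \ref{prop:analysisautomorphismcenter}, the restriction of $\Theta$ to $\BAut(\calW_{p,q})$ is surjective, so I can pick $\Phi_2 \in \BAut(\calW_{p,q})$ with $\Theta(\Phi_2)=\Theta(\Phi)$, and set $\Phi_1 := \Phi \circ \Phi_2^{-1}$. Then $\Theta(\Phi_1)$ is the identity affine mapping of $\F$, which means that the pair $(\lambda,\mu)$ describing the action of $\Phi_1$ on $\omega$ as $r(\omega)\mapsto r(\lambda\omega+\mu)$ is $(1,0)$, i.e., $\Phi_1(\omega)=\omega$. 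Since $C=\F[\omega]$, this forces $\Phi_1$ to fix $C$ pointwise, so $\Phi_1 \in \Aut_C(\calW_{p,q})$, and $\Phi=\Phi_1 \circ \Phi_2$ is the desired decomposition.

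The remaining case, where both $p$ and $q$ are irreducible and inseparable, is handled even more directly by Proposition \ref{prop:analysisautomorphismcenterinseparable}: in that situation every $\Phi \in \Aut(\calW_{p,q})$ already lies in $\Aut_C(\calW_{p,q})$, so it suffices to take $\Phi_1:=\Phi$ and $\Phi_2:=\id$, which trivially belongs to $\BAut(\calW_{p,q})$. Combining the two cases yields the statement.

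There is no real obstacle here, since both the surjectivity result and the inseparable-case rigidity have already been proved; the only thing to verify carefully is that the identity element of $\GA(\F,\Root(\Lambda_{p,q}))$ corresponds to $\Phi_1(\omega)=\omega$ (and not merely to $\Phi_1$ preserving $\Root(\Lambda_{p,q})$), which is immediate from the very definition of $\Theta$ via the pair $(\lambda,\mu)$.
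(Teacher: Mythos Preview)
Your argument is correct and is exactly the intended one: the paper itself presents this corollary as a straightforward consequence of Propositions \ref{prop:analysisautomorphismcenter} and \ref{prop:analysisautomorphismcenterinseparable}, and your case split with the choice $\Phi_1=\Phi\circ\Phi_2^{-1}$ (respectively $\Phi_1=\Phi$, $\Phi_2=\id$) is precisely how one unpacks that consequence. The check that $\Theta(\Phi_1)=\id$ forces $\Phi_1(\omega)=\omega$ is indeed immediate from the definition of $\Theta$ via the pair $(\lambda,\mu)$.
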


Clearly, this entirely reduces the Automorphisms Theorem to the corresponding statement for $C$-automorphisms.

\section{Automorphisms of the free Hamilton algebra (part 2): Analyzing the $C$-automorphisms}\label{section:automorphismsII}

Recall that $C$ denotes the center of $\calW_{p,q}$.
This section focuses on the group $\Aut_C(\calW_{p,q})$ consisting of the automorphisms of the $C$-algebra
$\calW_{p,q}$, also called the $C$-automorphisms.
We observe that this group contains all the inner automorphisms of $\calW_{p,q}$, but not all the basic ones in general.
To this end we denote by $\BAut_C(\calW_{p,q})$ the subgroup of all $C$-automorphisms that are also basic.

Keeping in mind that our ultimate aim is to prove the Automorphisms Theorem (Theorem \ref{theo:automorphismstheointro}), we observe thanks to Corollary
\ref{cor:analysisautomorphismcenter} that this theorem is now reduced to the following statement:

\begin{theo}\label{theo:automorphismstheoC}
Every $C$-automorphism of $\calW_{p,q}$ splits uniquely as the composite of a basic $C$-automorphism followed by an inner automorphism.
\end{theo}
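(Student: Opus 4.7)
The plan has two parts, dealing with uniqueness and existence separately.

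For uniqueness, we need to show that the only basic $C$-automorphism which is also inner is the identity. Suppose $\Psi = i_\gamma$ is basic with $\gamma \in \calW_{p,q}^\times$ and $\Psi|_C = \id_C$. Then $\gamma a \gamma^{-1}$ is a basic vector with the same minimal polynomial as $a$, similarly for $b$, and $\langle \gamma a \gamma^{-1}, \gamma b \gamma^{-1}\rangle = \omega$. The classification of basic automorphisms from the start of Section \ref{section:automorphismsI}, combined with the rigidity of $\omega = \langle a, b\rangle$, leaves only finitely many candidates for the pair $(\gamma a\gamma^{-1}, \gamma b\gamma^{-1})$. A case-by-case analysis, using the classification of $2$-dimensional subalgebras from Section \ref{section:finitedimalg} and, in borderline characteristic $2$ situations, quotienting by maximal ideals above the fundamental ideal as developed in Section \ref{section:ideals}, should eliminate every candidate other than the trivial one, forcing $\gamma \in \F^\times$ and hence $\Psi = \id$.

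For existence, let $\Phi \in \Aut_C(\calW_{p,q})$ and extend it by $\F(\omega)$-linearity to an $\F(\omega)$-automorphism $\overline{\Phi}$ of the quaternion algebra $\overline{\calW_{p,q}}$. By the Skolem--Noether theorem for central simple algebras, $\overline{\Phi}$ is inner: there is some $\widetilde{\gamma} \in \overline{\calW_{p,q}}^\times$ with $\overline{\Phi}(z) = \widetilde{\gamma} z \widetilde{\gamma}^{-1}$ for every $z$. Since $\widetilde{\gamma}$ is determined only up to multiplication by $\F(\omega)^\times$, we can clear denominators and remove the greatest common $C$-factor of the coefficients in a deployed basis to obtain a \emph{normalized} representative $\gamma \in \calW_{p,q}$, unique up to $\F^\times$, satisfying $\gamma z = \Phi(z)\, \gamma$ for all $z \in \calW_{p,q}$. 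Then $N(\gamma) \in C \setminus \{0\}$, and by Proposition \ref{prop:unitdetect} the element $\gamma$ is a unit of $\calW_{p,q}$ precisely when $N(\gamma) \in \F^\times$, in which case $\Phi = i_\gamma \circ \id$ is already the sought-after decomposition.

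The decisive step is to control the irreducible factors of $N(\gamma)$. Let $r \in \Irr(\F)$ divide $N(\gamma)$. Because $\Phi$ fixes $\omega$, it descends to a $C/(r(\omega))$-automorphism $\Phi_r$ of $\calW_{p,q,[r]}$; moreover $\overline{\gamma} \in \calW_{p,q,[r]}$ has norm $0$ yet is nonzero by normality. If $r$ were coprime to $\Lambda_{p,q}$, then Theorem \ref{theo:quatlocal} would make $\calW_{p,q,[r]}$ a quaternion algebra over $\L := C/(r(\omega))$, and Skolem--Noether inside this algebra would produce a local unit $\overline{\delta}$ with $\Phi_r = i_{\overline{\delta}}$. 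Then $\overline{\delta}^{-1}\overline{\gamma}$ lies in the center $\L$, so it is either zero (contradicting normality of $\gamma$) or a unit (contradicting $N_r(\overline{\gamma}) = 0$). This rules out $r \nmid \Lambda_{p,q}$, so every irreducible factor of $N(\gamma)$ divides the fundamental polynomial, a finite list.

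For each such $r$, the proper ideals of $\calW_{p,q,[r]}$ are fully described by Propositions \ref{prop:allidealsonesplits} and \ref{prop:allidealsbothirr}; the location of $\overline{\gamma}$ within this ideal lattice, together with the way $\Phi_r$ permutes the one or two maximal ideals above $(r(\omega))$, encodes a local invariant, a \emph{signature}, attached to $\Phi$ at $r$. Aggregating these over the finitely many $r$ dividing $\Lambda_{p,q}$ gives a signature $\sigma(\Phi)$ that should vanish exactly when $\Phi$ is inner. The existence part is then completed by constructing, for each realized value of $\sigma$, an explicit basic $C$-automorphism with the same signature, so that composing with $\Phi$ cancels all the local obstructions and leaves an inner automorphism. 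The main obstacle in this programme is precisely the construction and the compatibility analysis of the signature at each $r \mid \Lambda_{p,q}$: the quotient $\calW_{p,q,[r]}$ is not a quaternion algebra there (its inner product degenerates), so Skolem--Noether is unavailable and the entire analysis must rest on the fine-grained structure of ideals above the fundamental ideal developed in Section \ref{section:ideals}.
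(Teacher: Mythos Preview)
Your overall architecture matches the paper's: use Skolem--Noether to get a normalized conjugator $\gamma$, show every prime factor of $N(\gamma)$ divides $\Lambda_{p,q}$, and then build local invariants to reduce to basic $C$-automorphisms. But there is a genuine gap in the signature step.

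You assert that the aggregate signature $\sigma(\Phi)$ ``should vanish exactly when $\Phi$ is inner''. This is false. The paper's signature is the \emph{parity} of the $r$-adic valuation $n_r(\Phi)$ of $N(\gamma)$, and when both $p$ and $q$ split with a double root the \emph{hyperbolic} basic $C$-automorphisms $H_\lambda$ (Section~\ref{section:hyperbolicautomorphisms}) have normalized conjugator norm $\lambda\omega^2$, hence signature zero, yet are not inner. So the kernel of the signature homomorphism strictly contains $\Inn(\calW_{p,q})$ in that case. Your vaguer definition (position of $\overline{\gamma}$ in the ideal lattice plus the permutation of maximal ideals induced by $\Phi_r$) does not obviously repair this: the hyperbolic $H_\lambda$ still fix both maximal ideals above $(r(\omega))$, and $\overline{\gamma}$ lands in the radical $\mathfrak{R}_r$ just as for compositions of inner automorphisms that happen to have $n_r \geq 2$. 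Nor is it clear that your proposed invariant is a group homomorphism, which is essential for the cancellation strategy.

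What the paper actually does after the signature step is substantially harder than you anticipate. It splits into five cases (Sections~\ref{section:auto:case1}--7.12) according to the splitting behaviour of $p$ and $q$, and in each case analyzes $\gamma$ modulo $(r(\omega)^2)$, and in the worst case modulo $(r(\omega)^3)$, exploiting Lemma~\ref{lemma:liftinglemma}, the explicit structure of $\mathfrak{R}_r$ from Proposition~\ref{prop:allidealsonesplits}, and a delicate ``collapsing phenomenon'' (Lemma~\ref{lemma:conjugatormultiplicativity}) whereby the product of two normalized conjugators acquires a central factor that drops the exponent. In the double-degenerate case one must explicitly recognize which hyperbolic $H_\delta$ to compose with and verify collapse; in the case where exactly one basic algebra is degenerate, the argument requires a contradiction obtained only at the $(r(\omega)^3)$ level. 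None of this is captured by a purely local signature valued in a finite group.

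For uniqueness, your plan is also less direct than the paper's. The paper (Theorem~\ref{theo:uniquenessdecompauto}) simply computes the normalized conjugator of each nontrivial basic $C$-automorphism and observes its norm has positive degree, hence cannot be a unit in $\calW_{p,q}$; no appeal to Section~\ref{section:finitedimalg} or to maximal ideals is needed.
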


Our main starting point to study the $C$-automorphisms is the observation that the extension $\overline{\calW_{p,q}}$ is a quaternion algebra
over the field of fractions of $C$, along with the Skolem-Noether theorem. Remember that the latter states that every automorphism of a central simple algebra over a field is an inner automorphism (see e.g.\ \cite{Pierce} \S 12.6), and in the special case of $\overline{\calW_{p,q}}$ this yields that every automorphism
of the $\F(\omega)$-algebra $\overline{\calW_{p,q}}$ is inner. A major difficulty however is that it will not always be true that
such an automorphism that leaves $\calW_{p,q}$ invariant is the conjugation by a unit of $\calW_{p,q}$.

Before we start studying the problem in depth, it is useful that we quickly review the basic $C$-automorphisms.
Of course, throughout we keep the notation $\K$ for a fixed splitting field of $pq$.

Some simple observations before we truly start. Let $\Phi$ be a $C$-automorphism of $\calW_{p,q}$.
Since $\Phi$ is in particular an $\F$-automorphism, we have seen in the previous section that
it satisfies
$$\forall x \in \calW_{p,q}, \; N(\Phi(x))=\Phi(N(x)) \quad \text{and} \quad \forall (x,y)\in \calW_{p,q}^2, \; \langle \Phi(x),\Phi(y)\rangle=\Phi(\langle x,y\rangle).$$
Hence, because it is a $C$-automorphism, $\Phi$ satisfies the identities
$$\forall x \in \calW_{p,q}, \; N(\Phi(x))=N(x) \quad \text{and} \quad \forall (x,y)\in \calW_{p,q}^2, \; \langle \Phi(x),\Phi(y)\rangle=\langle x,y\rangle,$$
and we deduce from the latter identity that $\Phi$ also preserves the trace.

\subsection{The basic $C$-automorphisms}\label{section:basicCauto}

Leaving aside the identity, we identify three main types of basic automorphisms that fix the elements of $C$.

\begin{enumerate}[(i)]
\item The \textbf{pseudo-adjunction}, denoted by $\Phi_\star$, is the automorphism of $\calW_{p,q}$ that takes $a$ to $a^\star$ and $b$ to $b^\star$ (and hence also $a^\star$ to $a$ and $b^\star$ to $b$). It should not be confused with the adjunction, as it is an automorphism
rather than an antiautomorphism. The pseudo-adjunction is the identity only if $\car(\F)=2$ and $p$ and $q$ have a double root in $\K$.
In any case, it coincides with the adjunction on basic vectors.

\item The \textbf{swaps} are defined when the algebras $\F[a]$ and $\F[b]$ are isomorphic.
They are the \emph{involutory} basic automorphisms that exchange $\F[a]$ and $\F[b]$.
Let us immediately check that a swap is a $C$-automorphism. Take a swap $\Phi$,
and set $\beta:=\Phi(a)$. Then $\Phi$ exchanges $a$ and $\beta$
therefore $\Phi(\langle a,\beta\rangle)=\langle \Phi(a),\Phi(\beta)\rangle=\langle \beta,a\rangle=\langle a,\beta\rangle$,
and since $\langle a,\beta\rangle$ generates the $\F$-algebra $C$ this proves that $\Phi$ is a $C$-automorphism.

If say $p=q$ and $p$ does not have a double root in $\F$, the swap automorphisms are the automorphism that exchange $a$ and $b$ (and therefore also $a^\star$ and $b^\star$), and the automorphism that exchanges $a$ and $b^\star$ (and therefore also $a^\star$ and $b$).
Moreover, these two automorphisms are different unless $p$ and $q$ are inseparable.
If both $p$ and $q$ have a double root, the set of all swaps has the cardinality of $\F^\times$.

\item The \textbf{hyperbolic automorphisms} are defined when both algebras $\F[a]$ and $\F[b]$ are degenerate. \\
They are the basic automorphisms that leave $\F[a]$ and $\F[b]$ invariant and fix the $\omega$ element.
Now, say that $a^2=b^2=0$ to simplify things.
In that case, it is clear that the automorphisms that leave
$\F[a]$ and $\F[b]$ invariant are the ones $\Phi$ for which there exists a pair $(\lambda,\mu) \in (\F^\times)^2$ such that
$\Phi(a)=\lambda a$ and $\Phi(b)=\mu b$ (of course for each such pair $(\lambda,\mu)$ there is a corresponding automorphism),
and we note that $\Phi(\omega)=\lambda \mu \omega$, so $\Phi$ is a $C$-automorphism if and only if $\lambda\mu=1$.
In that situation, given $\lambda \in \F^\times$ we will denote by $H_\lambda$ the automorphism such that
$$H_\lambda(a)=\lambda a \quad \text{and} \quad H_\lambda(b)=\lambda^{-1} b.$$
Obviously $\{H_\lambda \mid \lambda \in \F^\times\}$ is a subgroup of $\BAut_C(\calW_{p,q})$ and it is isomorphic to $\F^\times$.
\end{enumerate}

Now, we must of course prove that these are the \emph{only} basic $C$-automorphisms besides the identity, which is a bit tedious.

\begin{prop}
The basic $C$-automorphisms of $\calW_{p,q}$ are the identity, the pseudo-adjunction, the swaps and the hyperbolic automorphisms.
\end{prop}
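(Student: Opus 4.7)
The plan is to let $\Phi$ be a basic $C$-automorphism and exhaust the possibilities according to whether $\Phi$ is positive (preserving each of $\F[a]$ and $\F[b]$) or negative (exchanging them). The master tool is the identity $\Phi(\omega) = \langle \Phi(a), \Phi(b) \rangle$, which must equal $\omega$ because $\Phi$ fixes $C$ pointwise. A direct expansion of $\langle x, y \rangle = xy^\star + yx^\star$ yields the bilinear formula
\[
\langle \lambda a + \lambda', \mu b + \mu' \rangle \;=\; \lambda\mu\,\omega \;+\; \lambda\mu'\tr(p) \;+\; \mu\lambda'\tr(q) \;+\; 2\lambda'\mu',
\]
so that $\Phi(\omega) = \omega$ forces $\lambda\mu = 1$ (matching the coefficient on $\omega$, which is transcendental over $\F$) together with a scalar equation.

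For a positive $\Phi$, the restrictions to $\F[a]$ and $\F[b]$ are $\F$-algebra automorphisms. When the basic subalgebra is non-degenerate, its automorphism group consists of the identity and, when it exists, the restriction of the adjunction; when it is degenerate with double root $\rho$, the automorphisms are the dilations $a \mapsto \lambda a + (1-\lambda)\rho$ parametrized by $\lambda \in \F^\times$. If neither basic subalgebra is degenerate, the nondegeneracy of $\disc(p)$ and $\disc(q)$ (together with $\Phi$ preserving the norm) forces $\lambda, \mu \in \{1,-1\}$, and the relation $\lambda\mu = 1$ then selects exactly $\lambda = \mu = 1$ (identity) or $\lambda = \mu = -1$, the latter producing $\Phi_\star$ after verifying the scalar condition. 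When exactly one of $\F[a], \F[b]$ is degenerate the non-degenerate side again forces $\lambda \in \{1,-1\}$, and again only $\id$ and $\Phi_\star$ survive. When both are degenerate with double roots $\rho, \sigma \in \F$, substituting $\lambda' = (1-\lambda)\rho$ and $\mu' = (1-\mu)\sigma$ together with $\tr(p) = 2\rho, \tr(q) = 2\sigma$ makes the scalar equation collapse identically to $\lambda\mu - 1 = 0$, so that $\Phi$ is precisely $H_\lambda$ for the unique $\lambda \in \F^\times$ with $\mu = \lambda^{-1}$.

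For a negative $\Phi$ we must already have $\F[a] \simeq \F[b]$; writing $\Phi(a) = \mu b + \mu'$ and $\Phi(b) = \lambda a + \lambda'$, the conditions that $\Phi(a)$ and $\Phi(b)$ be roots of $p$ and $q$ yield the trace-matching relations $\mu\tr(q) + 2\mu' = \tr(p)$ and $\lambda\tr(p) + 2\lambda' = \tr(q)$, while $\Phi(\omega) = \omega$ gives $\lambda\mu = 1$ together with a scalar equation which, via these very trace relations, is automatically satisfied. The decisive step is then a one-line computation:
\[
\Phi^2(a) \;=\; \mu\,\Phi(b) + \mu' \;=\; \lambda\mu\,a + (\mu\lambda' + \mu') \;=\; a + \tfrac{1}{2}\tr(p)(1 - \lambda\mu) \;=\; a,
\]
using $\lambda\mu = 1$, and symmetrically $\Phi^2(b) = b$, so $\Phi^2 = \id$ and $\Phi$ is a swap in the sense defined earlier.

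The main obstacle is the characteristic $2$ situation for negative basic automorphisms, since there the trace identity reads $(\lambda - 1)\tr(p) = 0$ and cannot be inverted to solve for $\lambda'$. This must be handled by splitting into subcases according to whether each basic subalgebra is inseparable (in which case its adjunction is trivial so $\Phi_\star$ partially collapses), using the classification of $2$-dimensional algebras recalled on page \pageref{page:typeof2dimalg}; in each subcase one verifies directly that the admissible parameters $(\mu, \mu', \lambda, \lambda')$ still force $\Phi^2 = \id$, so the involutivity conclusion persists without characteristic restriction. A similar but easier case-analysis in characteristic $2$ confirms that the only positive basic $C$-automorphisms in the non-degenerate cases remain the identity and $\Phi_\star$.
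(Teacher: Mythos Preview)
Your approach is essentially correct and genuinely different from the paper's. You parametrize the automorphisms of each basic subalgebra explicitly and impose the constraint $\langle\Phi(a),\Phi(b)\rangle=\omega$, extracting the leading condition $\lambda\mu=1$ and a scalar side-condition, then check case by case. The paper instead argues by reduction: for positive $\Phi$ it first disposes of the case $\Phi(a)=a$ (where the inner-product identity $\langle a,b-\Phi(b)\rangle=0$ forces $\Phi(b)-b\in\F$, and a short trace/norm argument pins down the possibilities), and then composes with $\Phi_\star$ to reduce the general positive case to this one; for negative $\Phi$ it composes with a known swap $\Psi$ to land back in the positive case. Your direct computation $\Phi^2(a)=a+\tfrac12\tr(p)(1-\lambda\mu)=a$ for the negative case is pleasant, but exhibits exactly why your route is more sensitive to characteristic~$2$: the division by~$2$ is unavoidable there, whereas the paper's composition trick works uniformly.

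That said, the characteristic-$2$ handling in your write-up is a genuine gap rather than a mere stylistic shortcut. In characteristic~$2$ your key selector $\lambda\mu=1$ becomes vacuous whenever both $\lambda,\mu\in\{1,-1\}=\{1\}$, so the actual work is done by the scalar side-equation (for instance, when $\F[a]$ is separable and $\F[b]$ degenerate one must use $\tr(q)=0$ to force $\mu'=0$, not $\lambda\mu=1$). Likewise in the negative case with both algebras inseparable, the trace-matching relations collapse to $0=0$ and one must bring in the norm conditions to recover $\Phi^2=\id$. You acknowledge that ``in each subcase one verifies directly'', but these verifications are where the content lies, and leaving them unperformed means the proof is incomplete as written. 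The paper's reduction-by-composition avoids this entirely: once the case $\Phi(a)=a$ is settled (and that case already absorbs the characteristic-$2$ subtlety via $2\lambda=0\Rightarrow\car\F=2$), everything else follows without further case-splitting.
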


\begin{proof}
Let $\Phi$ be a basic $C$-automorphism.

Assume first that $\Phi$ is positive.

Consider the special case where $\Phi(a)=a$. Then
$$\langle a,b\rangle=\langle \Phi(a),\Phi(b)\rangle=\langle a,\Phi(b)\rangle$$
and hence $\langle a,b-\Phi(b)\rangle=0$. Since $b-\Phi(b) \in \F[b]$
we must have $\Phi(b)-b=\lambda$ for some $\lambda \in \F$, and if $\lambda \neq 0$ then $\tr(a)=0$.
Assume now that $\lambda \neq 0$. Then analyzing $\tr(\Phi(b))=\tr(b)$ and $N(\Phi(b))=N(b)$
leads to $2\lambda=0$ and $\lambda \tr(b)+\lambda^2=0$, and hence $\car(\F)=2$ and $\lambda=\tr(b)$.
Then $\Phi(b)=b^\star$ and $\Phi(a)=a=a^\star$ because $\tr(a)=0$, and we conclude that $\Phi=\Phi_\star$.
Hence, we have proved that if $\Phi$ fixes at least one of $a$ and $b$, then it is the identity or the pseudo-adjunction.

Assume now that $\Phi(a) \neq a$ and $\Phi(b) \neq b$, while keeping the starting assumption that $\Phi$ is positive.
Then both $p$ and $q$ are separable. If $p$ has simple roots in $\K$, then $\Phi(a)=a^\star$, hence $\Phi_\star \circ \Phi$ fixes $a$, and hence by the previous case
$\Phi_\star \circ \Phi \in \{\id,\Phi_\star\}$, whence $\Phi \in \{\Phi_\star,\id\}$.
Symmetrically, the same conclusion holds if $q$ has simple roots in $\K$.
Hence, the only remaining case is when both $p$ and $q$ have a double root in $\F$, and by the previous study this
implies that $\Phi$ is a hyperbolic automorphism.

Assume now that $\Phi$ is negative.
Then we must prove that $\Phi^2=\id$.
Consider the swap $\Psi$ that coincides with $\Phi$ on $\F[a]$. Then
$\Psi \circ \Phi$ leaves $\F[a]$ and $\F[b]$ invariant, and fixes $a$.
By the previous study $\Psi \circ \Phi$ is the identity or the pseudo-adjunction.
In the second case, we must have $a^\star=a$, and hence $b^\star=b$ because $\F[a]$ and $\F[b]$ are isomorphic.
Hence $\Psi \circ \Phi=\id$ in any case, and we deduce that $\Phi=\Psi^{-1}=\Psi$, whence $\Phi$ is a swap.
\end{proof}

\subsection{The conjugators of a $C$-automorphism}\label{section:conjugatorsintroduction}

Now, we take an arbitrary $C$-automorphism $\Phi$ and start analyzing it.

By tensoring, we extend $\Phi$ to an $\F(\omega)$-algebra automorphism
$$\overline{\Phi} : \overline{\calW_{p,q}} \overset{\simeq}{\longrightarrow} \overline{\calW_{p,q}}.$$
Since $\overline{\calW_{p,q}}$ is a quaternion algebra over $\F(\omega)$, and in particular
a central simple algebra, the Skolem-Noether theorem (\cite{Pierce} \S 12.6)
yields that
$$\overline{\Phi} : x \mapsto \gamma x \gamma^{-1}$$
for some invertible element $\gamma \in \overline{\calW_{p,q}}^\times$, which we call a \textbf{conjugator} of $\Phi$.

The problem now is that $\gamma$ does not necessarily belong to $\calW_{p,q}$, and even if it is, in which case $N(\gamma) \neq 0$,
it might not be an invertible element of $\calW_{p,q}$ (i.e., $N(\gamma) \in \F^\times$).
To get closer to the solution, we introduce a ``right" choice of conjugator.
First of all, because $\overline{\calW_{p,q}}$ is a central $\F(\omega)$-algebra, the set of all conjugators of $\Phi$ is $\F(\omega)^\times \gamma$.
Then $\F(\omega) \gamma \cap \calW_{p,q}$ is a $C$-submodule of rank $1$ of $\calW_{p,q}$
and it is spanned as such by a \emph{normalized} element $\gamma_1$, to the effect that the conjugators of $\Phi$
that belong to $\calW_{p,q}$ are the nonzero elements of $\F[\omega] \gamma_1$.
We say that $\gamma_1$ is a \textbf{normalized conjugator} of $\Phi$, and we note that it is uniquely determined by $\Phi$
up to multiplication with an element of $\F^\times$.

From now on we will systematically take $\gamma$ as a normalized conjugator of $\Phi$.
Note that $\Phi$ is an inner automorphism if and only if $\gamma$ is a unit in $\calW_{p,q}$,
i.e., $N(\gamma) \in \F^\times$. Obviously, the key will lie in the analysis of the norm of $\gamma$.

We must now warn the reader of two difficulties. First of all, there is no easy converse here:
if we start from a normalized vector $\gamma \in \calW_{p,q}$ such that $N(\gamma) \neq 0$,
the mapping $x \mapsto \gamma x \gamma^{-1}$ is in general \emph{not} an automorphism of $\calW_{p,q}$,
because we cannot guarantee that it leaves $\calW_{p,q}$ invariant.
Even it did leave $\calW_{p,q}$ invariant, one could doubt that it is really an automorphism of $\calW_{p,q}$,
as it could fail to be surjective, but actually it is not difficult to prove that it is always surjective if it maps
$\calW_{p,q}$ into itself (hint: use the Gram matrices of the inner product). We will however focus
solely on the invariance of $\calW_{p,q}$.

Another potential source of misunderstanding lies in the problem of composition of $C$-automorphisms.
In composing two $C$-automorphisms $\Phi_1$ and $\Phi_2$, with associated normalized conjugators $\gamma_1$ and $\gamma_2$,
the product $\gamma_1\gamma_2$ is a conjugator of $\Phi_1 \Phi_2$, but not a normalized one in general.
Here is a simple example: assuming that there exists a $C$-automorphism $\Phi$ that is not inner, then
$\Phi^{-1}$ is not inner either; take normalized conjugators $\gamma$ and $\gamma'$ associated with $\Phi$ and $\Phi^{-1}$. Then
$\gamma \gamma'$ is a conjugator of $\Phi \circ \Phi^{-1}=\id$,
but it cannot be normalized otherwise it would belong to $\F^\times$, yet $N(\gamma \gamma')=N(\gamma)N(\gamma')$
has positive degree in $C$.

In general the following results holds:

\begin{lemma}\label{lemma:conjugatormultiplicativity}
Let $\Phi_1$ and $\Phi_2$ be $C$-automorphisms of $\calW_{p,q}$, and let $\gamma_1$ and $\gamma_2$ be associated
normalized conjugators. Let $\gamma$ be a normalized commutator of $\Phi_1 \circ \Phi_2$.
Then $N(\gamma_1)N(\gamma_2)=s^2 N(\gamma)$ for some $s \in C \setminus \{0\}$.
\end{lemma}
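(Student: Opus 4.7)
The plan is to exploit the fact that, up to multiplication by a nonzero element of the center $C$, the conjugator of a $C$-automorphism that lies in $\calW_{p,q}$ is uniquely determined. First I would check that the product $\gamma_1 \gamma_2$ is itself a conjugator of $\Phi_1 \circ \Phi_2$: for every $x \in \overline{\calW_{p,q}}$ one has
$$(\Phi_1 \circ \Phi_2)(x) = \Phi_1(\gamma_2 x \gamma_2^{-1}) = \gamma_1 \gamma_2\, x\, (\gamma_1 \gamma_2)^{-1},$$
so this is immediate. In particular, since $\gamma_1, \gamma_2 \in \calW_{p,q} \setminus \{0\}$ and $\calW_{p,q}$ has no zero divisor inside $\overline{\calW_{p,q}}$, the element $\gamma_1 \gamma_2$ is a nonzero conjugator of $\Phi_1 \circ \Phi_2$ that belongs to $\calW_{p,q}$.

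Next, from the discussion in Section \ref{section:conjugatorsintroduction}, the set of conjugators of $\Phi_1 \circ \Phi_2$ lying in $\calW_{p,q}$ is precisely the set of nonzero elements of $C\gamma$. Hence there exists $s \in C \setminus \{0\}$ such that
$$\gamma_1 \gamma_2 = s\, \gamma.$$

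Finally I would take norms. Because $N$ is multiplicative on $\overline{\calW_{p,q}}$ and $s \in C$ is central with $s^\star = s$ (remember that the adjunction fixes every element of $C = \F[\omega]$), we obtain
$$N(\gamma_1)\,N(\gamma_2) = N(\gamma_1 \gamma_2) = N(s\gamma) = s\gamma(s\gamma)^\star = s^2\,\gamma \gamma^\star = s^2\,N(\gamma),$$
which is exactly the claimed identity. There is really no difficult step in this proof; the only point requiring a touch of care is to invoke the centrality of $s$ (rather than just $s \in \calW_{p,q}$) in order to extract $s^2$ from the norm, and to recall that $C$-automorphisms being nontrivial on $\overline{\calW_{p,q}}$, every conjugator of a given $C$-automorphism differs from a chosen one only by an element of $\F(\omega)^\times$, which intersects $\calW_{p,q}$ in $C \setminus \{0\}$.
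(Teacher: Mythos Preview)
Your proof is correct and follows exactly the same approach as the paper: observe that $\gamma_1\gamma_2$ is a conjugator of $\Phi_1\circ\Phi_2$ lying in $\calW_{p,q}$, hence equals $s\gamma$ for some $s\in C\setminus\{0\}$, and then take norms. One small slip: the claim that ``$\calW_{p,q}$ has no zero divisor inside $\overline{\calW_{p,q}}$'' is false in general (it fails whenever $p$ or $q$ splits); the correct reason $\gamma_1\gamma_2\neq 0$ is simply that each $\gamma_i$ is invertible in $\overline{\calW_{p,q}}$ by the very definition of a conjugator, so their product is invertible there as well.
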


\begin{proof}
Indeed, as $\gamma_1\gamma_2$ is a conjugator of $\Phi_1 \circ \Phi_2$, we have
$\gamma_1\gamma_2=s \gamma$ for some $s \in C \setminus \{0\}$.
\end{proof}

The possibility that $\gamma_1 \gamma_2$ becomes non-normalized, which we call the \textbf{collapsing phenomenon}, will be
given more scrutiny in the later stages of our proof.

Let us come back to $\Phi \in \Aut_C(\calW_{p,q})$, and let us take an associated normalized conjugator $\gamma$.
The key is to use the fact that the automorphism $x \mapsto \gamma x \gamma^{-1}$ of $\overline{\calW_{p,q}}$
leaves $\calW_{p,q}$ invariant. Simply, we rewrite the fact that $\gamma$ is a conjugator of $\Phi$ as
\begin{equation}\label{eq:keyidstar}
\forall x \in \calW_{p,q}, \; \gamma x \gamma^\star =N(\gamma)\, \Phi(x)
\end{equation}
and, forgetting $\Phi$ itself and focusing entirely on the conjugator $\gamma$, we will systematically interpret \eqref{eq:keyidstar}
as follows:
\begin{equation}\label{eq:keyidstarmodular}
\forall x \in \calW_{p,q}, \; \gamma x \gamma^\star \equiv 0 \quad \text{mod} \; (N(\gamma)).
\end{equation}
We immediately state an important consequence of this:

\begin{lemma}\label{lemma:idealandconjugator}
Let $I$ be a nontrivial (two-sided) ideal of $\calW_{p,q}$ that contains $N(\gamma)$ and is invariant under the adjunction,
and for $x \in \calW_{p,q}$ denote by $x_I$ its coset in the quotient ring
$\calW_{p,q}/I$. Then for the (two-sided) ideal $(\gamma_I)$ the following identity holds:
$$\forall (x,y) \in (\gamma_I)^2, \; \forall z \in \calW_{p,q}/I, \; x_Izy_I^\star=0.$$
In particular $(\gamma_I)$ is a proper ideal of $\calW_{p,q}/I$.
\end{lemma}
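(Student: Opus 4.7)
The plan is to read off everything from the master identity \eqref{eq:keyidstarmodular} after passing to the quotient ring $\calW_{p,q}/I$, where the $\star$-invariance of $I$ guarantees that the adjunction descends to an involution of $\calW_{p,q}/I$ (which we still denote by $x \mapsto x^\star$).

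First, I would translate \eqref{eq:keyidstarmodular} into the quotient. Since $N(\gamma) \in I$ by hypothesis, reducing the congruence $\gamma x \gamma^\star \equiv 0 \pmod{N(\gamma)}$ modulo $I$ gives the key identity
\[
\gamma_I \, w \, \gamma_I^\star = 0 \qquad \text{for every } w \in \calW_{p,q}/I.
\]

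Next I would bootstrap this to arbitrary elements of the two-sided ideal $(\gamma_I)$. Any $u, v \in (\gamma_I)$ admit finite expressions $u = \sum_i a_i \gamma_I b_i$ and $v = \sum_j c_j \gamma_I d_j$ with $a_i, b_i, c_j, d_j \in \calW_{p,q}/I$, so that $v^\star = \sum_j d_j^\star \, \gamma_I^\star \, c_j^\star$ by antimultiplicativity of the induced involution. Therefore, for any $z \in \calW_{p,q}/I$,
\[
u z v^\star = \sum_{i,j} a_i \, \bigl[\gamma_I (b_i z d_j^\star) \gamma_I^\star\bigr] \, c_j^\star,
\]
and every bracketed middle term vanishes by the key identity applied to $w = b_i z d_j^\star$. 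Hence $u z v^\star = 0$, which is the desired conclusion.

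Finally, for the ``in particular'' statement, suppose $(\gamma_I) = \calW_{p,q}/I$. Then $1 \in (\gamma_I)$, so choosing $u = v = 1$ and $z = 1$ in the identity just proved yields $1 = 1 \cdot 1 \cdot 1^\star = 0$ in $\calW_{p,q}/I$. This contradicts the fact that $I$ is a proper (nontrivial) ideal, so $(\gamma_I)$ must be a proper ideal of $\calW_{p,q}/I$. The only real point requiring care is the first step — making sure that passing \eqref{eq:keyidstar} to the quotient is legitimate and that the induced $\star$ on $\calW_{p,q}/I$ is a genuine antiautomorphism — and both are immediate consequences of the two hypotheses on $I$ (namely $N(\gamma) \in I$ and $I^\star = I$).
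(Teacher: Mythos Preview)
Your proof is correct and follows essentially the same approach as the paper: reduce \eqref{eq:keyidstarmodular} modulo $I$ to get $\gamma_I w \gamma_I^\star = 0$, then sandwich this identity inside arbitrary products (the paper checks the single term $x_1 \gamma_I x_2 \, z \, (y_1 \gamma_I y_2)^\star$ while you expand the sums, but this is cosmetic), and finally derive properness by specializing to $1$. Your explicit remark that $I^\star = I$ is needed for $\star$ to descend to the quotient is a welcome clarification.
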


\begin{proof}
Thanks to \eqref{eq:keyidstarmodular} we have $\gamma_I z (\gamma_I)^\star=0$ for all $z \in \calW_{p,q}/I$.
Let $z \in \calW_{p,q}/I$.
Let $x_1,x_2,y_1,y_2$ in $\calW_{p,q}/I$. Then
$$x_1 \gamma_I x_2 z (y_1 \gamma_I y_2)^\star=x_1(\gamma_I (x_2 z y_2^\star) \gamma_I^\star) y_1^\star=0.$$
From there, the first claimed statement is readily deduced.

If $(\gamma_I)=\calW_{p,q}/I$ then taking $x=y=z=1$ in the identity we have just proved yields $1=0$ in $\calW_{p,q}/I$,
which is absurd.
\end{proof}

Here is a key application, which is the next milestone in our proof.

\begin{lemma}\label{lemma:conjugatorandquaternions}
Let $r \in \Irr(\F)$ be such that $r(\omega)$ divides $N(\gamma)$ in $C$.
Then $r$ divides~$\Lambda_{p,q}$.
\end{lemma}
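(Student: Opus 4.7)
The plan is to combine Lemma \ref{lemma:idealandconjugator} with the simplicity of the quaternion algebras $\calW_{p,q,[r]}$ when $r$ is coprime to $\Lambda_{p,q}$ (Theorem \ref{theo:quatlocal}), and to use the normalization of $\gamma$ to ensure that the relevant coset is nonzero.

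First I would consider the ideal $I := (r(\omega))$ of $\calW_{p,q}$. Since $\omega^\star = \omega$, the ideal $I$ is invariant under the adjunction, and by hypothesis $N(\gamma) \in I$. Lemma \ref{lemma:idealandconjugator} then applies: letting $\gamma_r$ denote the coset of $\gamma$ in $\calW_{p,q,[r]} = \calW_{p,q}/I$, the two-sided ideal $(\gamma_r)$ of $\calW_{p,q,[r]}$ is proper.

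Next I would check that $\gamma_r \neq 0$, which is where the normalization assumption is crucial. Since $(1,a,b,ab)$ is a basis of the free $C$-module $\calW_{p,q}$, writing $\gamma = \gamma_1 + \gamma_a\,a + \gamma_b\,b + \gamma_{ab}\,ab$ with coefficients in $C = \F[\omega]$, the normalization means that $\gcd(\gamma_1,\gamma_a,\gamma_b,\gamma_{ab}) = 1$ in $C$. In particular $r(\omega)$ cannot divide all four coefficients simultaneously, so at least one of them has a nonzero coset in $C/(r(\omega)) \simeq \F[t]/(r)$. Hence $\gamma_r \neq 0$.

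The conclusion is then immediate by a reductio ad absurdum: suppose $r$ is coprime to $\Lambda_{p,q}$. Then Theorem \ref{theo:quatlocal} asserts that $\calW_{p,q,[r]}$ is a quaternion algebra over the field $\F[\omega]/(r(\omega))$, and in particular it is a simple ring. Thus its only two-sided ideals are $\{0\}$ and the whole ring, forcing either $\gamma_r = 0$ or $(\gamma_r) = \calW_{p,q,[r]}$. Both options are ruled out by the two steps above, a contradiction. Therefore $r$ must divide $\Lambda_{p,q}$. There is no real obstacle here; the only subtle point is to remember that the normalization of $\gamma$ is precisely what prevents the coset $\gamma_r$ from collapsing to zero and makes the simplicity argument bite.
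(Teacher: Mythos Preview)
Your proof is correct and follows essentially the same approach as the paper: apply Lemma \ref{lemma:idealandconjugator} to the ideal $(r(\omega))$ to get a proper ideal $(\gamma_r)$, observe that normalization of $\gamma$ forces $\gamma_r \neq 0$, and derive a contradiction with the simplicity of the quaternion algebra $\calW_{p,q,[r]}$ when $r$ is coprime to $\Lambda_{p,q}$. Your treatment of the normalization step is in fact slightly more explicit than the paper's.
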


\begin{proof}
Assume on the contrary that $r$ is relatively prime with $\Lambda_{p,q}$, and consider the
maximal ideal $I=(r(\omega))$. By Lemma \ref{lemma:idealandconjugator}, $(\gamma_I)$ is a proper ideal of $\calW_{p,q}/I$.
Moreover it is nonzero, otherwise $\gamma_I$ would not be normalized (all its coefficients in the deployed basis $(1,a,b,ab)$
would be multiples of $r(\omega)$).
Yet we know from Section \ref{section:quaternionalgebras} that $\calW_{p,q}/I$ is a quaternion algebra over the residue field $\F[\omega]/(r(\omega))$,
and as a consequence it is simple. This contradicts the previous observation that $(\gamma_I)$ is a nontrivial ideal of it.
\end{proof}

\subsection{Exponents and signatures}

Let us continue our analysis of an arbitrary automorphism $\Phi \in \Aut_C(\calW_{p,q})$ and of
an associated normalized conjugator $\gamma$.

\begin{Not}
We denote by $\Irr_{\Lambda_{p,q}}$
the set of all $r \in \Irr(\F)$ that divide $\Lambda_{p,q}$.
\end{Not}

Remember that, for two elements $s_1,s_2$ of $\calW_{p,q}$, the notation $s_1 \sim s_2$ means that there exists $\lambda \in \F^\times$ such that
$s_1 =\lambda s_2$ (this defines an equivalence relation on $\calW_{p,q}$).

\begin{Def}
For $r \in \Irr_{\Lambda_{p,q}}$, we denote by
$n_r(\Phi)$ the valuation of $N(\gamma)$ with respect to the irreducible $r(\omega)$,
and call it the \textbf{$r$-exponent} of $\Phi$. Note that $n_r(\Phi)$ does not depend on the choice of the normalized conjugator $\gamma$.
\end{Def}

The $r$-exponent of $\Phi$ is a nonnegative integer, and in theory it can be nonzero.
Indeed, if the contrary held for all $r \in \Irr_{\Lambda_{p,q}}$ then $\Phi$ would be an inner automorphism.

Now, Lemma \ref{lemma:conjugatorandquaternions} can be reinterpreted as saying that
$$N(\gamma) \sim \prod_{r \in \Irr_{\Lambda_{p,q}}} r(\omega)^{n_r(\Phi)}.$$
Lemma \ref{lemma:conjugatormultiplicativity} makes it relevant, for all $r \in \Irr_{\Lambda_{p,q}}$,
to also introduce the \textbf{$r$-signature} of $\Phi$, defined as the parity
$\varepsilon_r(\Phi):= \overline{n_r(\Phi)} \in \Z/2$ of the exponent $n_r(\Phi)$, and
it guarantees at least that
$$\varepsilon_r : \Phi \in \Aut_C(\calW_{p,q}) \rightarrow \varepsilon_r(\Phi) \in \Z/2$$
is a group homomorphism.

We gather these homomorphisms to form the \textbf{full signature homomorphism}
$$\varepsilon : \Phi \in \Aut_C(\calW_{p,q}) \rightarrow (\varepsilon_r(\Phi))_r \in \prod_{r \in \Irr_{\Lambda_{p,q}}} \Z/2,$$
whose kernel we denote by $\Aut_{C,0}(\calW_{p,q})$.
Obviously this new group is inserted in the chain of normal subgroups
$$\Inn(\calW_{p,q}) \trianglelefteq \Aut_{C,0}(\calW_{p,q}) \trianglelefteq \Aut_{C}(\calW_{p,q})
\trianglelefteq \Aut(\calW_{p,q}).$$
The following questions are then natural:
\begin{enumerate}[(i)]
\item Is the full signature homomorphism surjective?
\item Must an automorphism with full signature zero be inner?
\end{enumerate}
A reasonable bet is that both questions have positive answers, yet it turns out that
the answer depends on the specific pair $(p,q)$ under consideration, with negative answers occurring only when
at least one of $p$ and $q$ has a double root.
These questions will occupy the remainder of this study, and a full answer to them is deeply connected with
the Automorphisms Theorem.

We start with the first question. As we are unable to provide other examples of elements of $\Aut_C(\calW_{p,q})$
beyond the inner automorphisms, some basic automorphisms and their composites, we will compute the exponents and the full signature of all the basic $C$-automorphisms. This will yield a partial answer to the first problem, and a negative answer to the second one in some cases.
We devote the next section to this study.

\subsection{Exponents and signatures of basic $C$-automorphisms}\label{section:basicCautoconjugator}

We have reviewed the basic $C$-automorphisms in Section \ref{section:basicCauto}.
For each type, excluding the identity of course, we will now compute a corresponding normalized conjugator, and as a result we will obtain the
exponents as well as the full signature.

\subsubsection{The pseudo-adjunction}\label{section:pseudoadjunction}

Here the quaternionic structures will give the heuristics for finding a conjugator.
Say for a moment that $\car(\F) \neq 2$. Then, with a harmless basic base change, we can reduce the situation to the one where $\tr(a)=\tr(b)=0$.
The extension $\overline{\Phi_\star}$ then acts on the hyperplane of pure quaternions of
$\overline{\calW_{p,q}}$ as an element of its special orthogonal group (for the norm quadratic form) that
takes $a$ to $-a$ and $b$ to $-b$. The only known rotation of the pure quaternions that acts in this way
must fix the vectors of the orthogonal complement of $\{a,b\}$ in this hyperplane. We are then looking for a nonzero trace zero element $x$ in
$\overline{\calW_{p,q}}$ such that $\tr(a^\star x)=\tr(b^\star x)=0$, that is $\tr(ax)=\tr(bx)=0$,
and naturally we look no further than to the Lie commutator $[a,b]:=ab-ba$.

With these heuristics, it becomes perfectly natural to try and prove that $\gamma:=[a,b]$ is
a normalized conjugator for $\Phi_\star$ in general (without assuming $\car(\F) \neq 2$ and $\tr(a)=\tr(b)=0$, that is),
because basic base changes leave the commutator invariant up to multiplication with an element of $\F^\times$.
Now, we must check this, and we compute
\begin{multline*}
[a,b]a^\star-a[a,b]=aba^\star-baa^\star-a^2b+aba=ab\tr(a)-b N(a)-a^2 b \\
= -(a^2-\tr(a)a+N(a))\,b=0.
\end{multline*}
Symmetrically $[a,b]b^\star-b[a,b]=0$. In order to conclude that $[a,b]$
is a conjugator of $\Phi_\star$, we must now compute its norm and check that it is nonzero.
We start from the observation that $N([a,b])=N(ab)-\langle ab,ba\rangle+N(ba)=2N(a)N(b)-\langle ab,ba\rangle$.
Next,
\begin{align*}
\langle ab,ba\rangle & =\langle b,a^\star ba\rangle=\langle b,(\omega-b^\star a)a\rangle \\
& =\omega^2-\langle b^2,a^2\rangle \\
& =\omega^2-\langle \tr(b)b-N(b),\tr(a)a-N(a)\rangle \\
& =\omega^2 - \tr(b)\tr(a)\omega+(\tr b)^2N(a)+(\tr a)^2 N(b)-2N(a)N(b).
\end{align*}
This yields the lovely identity
\begin{equation}\label{eq:normcommutator}
N([a,b])=-\Lambda_{p,q}(\omega).
\end{equation}
If $\Lambda_{p,q}$ is irreducible or has simple roots in $\F$, identity \eqref{eq:normcommutator} readily yields that $[a,b]$ is normalized.
However, because $\Lambda_{p,q}$ might have a double root in $\F$, we must resort to a different argument in general.
We simply note that
\begin{equation}\label{eq:decompcommutator}
[a,b]=ab-(\tr b)a+b^\star a=ab-(\tr b)a+\omega-a^\star b=\omega+2\,ab-(\tr b)\,a-(\tr a)\,b.
\end{equation}
Hence $[a,b]$ is normalized unless $\car(\F)=2$ and $\tr(a)=\tr(b)=0$, in which case $a^\star=a$ and $b^\star=b$
and $\Phi_\star$ is obviously the identity.
In light of \eqref{eq:normcommutator}, it is tempting to think that $[a,b]$ belongs to the ideal
$\mathfrak{J}_r$, and it turns out that such is the case. This property will be useful to us later:

\begin{lemma}\label{lemma:commutator}
For every $r \in \Irr_{\Lambda_{p,q}}$, the commutator $[a,b]$ belongs to $\mathfrak{J}_r$ and its norm equals $-\Lambda_{p,q}(\omega)$.
\end{lemma}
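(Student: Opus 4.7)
The norm identity has in effect already been handled: equation \eqref{eq:normcommutator} gives $N([a,b])=-\Lambda_{p,q}(\omega)$, and since $r$ divides $\Lambda_{p,q}$, the condition $N([a,b])\equiv 0\pmod{r(\omega)}$ required by the definition of $\mathfrak{J}_r$ is automatic. The substance of the lemma is therefore the statement that $\langle [a,b],y\rangle\equiv 0\pmod{r(\omega)}$ for every $y\in\calW_{p,q}$. My plan is to reduce this, by $C$-bilinearity of $\langle-,-\rangle$, to checking the congruence on the $C$-basis $(1,a,b,ab)$ of $\calW_{p,q}$, and then to verify the four cases by direct computation.

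For $y=1$ the claim is actually an equality: $\langle [a,b],1\rangle = \tr(ab-ba)=0$ by the cyclicity identity \eqref{eq:tracecommute}. For $y=a$ I would expand $\langle [a,b],a\rangle = \langle ab,a\rangle-\langle ba,a\rangle$ and apply the adjoint identities \eqref{eq:adjeq}: both terms collapse to $\langle b,N(a)\rangle=N(a)\tr(b)$ (one using $\langle xy,z\rangle=\langle y,x^\star z\rangle$, the other $\langle xy,z\rangle=\langle x,zy^\star\rangle$ combined with $aa^\star=a^\star a=N(a)$), and so their difference vanishes. The case $y=b$ is entirely symmetric.

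The final case $y=ab$ is the only one where $\Lambda_{p,q}$ actually surfaces, and here I would observe that essentially no new work is needed. Indeed
\[\langle [a,b],ab\rangle=\langle ab,ab\rangle-\langle ba,ab\rangle=2N(ab)-\langle ab,ba\rangle,\]
which is exactly the expression that appeared in the derivation of \eqref{eq:normcommutator}, namely $N(ab-ba)=N(ab)+N(ba)-\langle ab,ba\rangle=2N(a)N(b)-\langle ab,ba\rangle$. Hence $\langle [a,b],ab\rangle = N([a,b]) = -\Lambda_{p,q}(\omega)$, which is again congruent to $0$ modulo $r(\omega)$. Combining the four cases yields $[a,b]\in\mathfrak{J}_r$.

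There is no genuine obstacle here; the only thing to watch is that one must avoid dividing by $2$ when invoking the adjoint identities (so that the argument remains valid in characteristic $2$), but this is automatic because formulas \eqref{eq:adjeq} and \eqref{eq:tracecommute} are stated and proved characteristic-free in Section \ref{section:traceetc}.
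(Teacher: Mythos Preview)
Your proof is correct and follows essentially the same approach as the paper's: reduce to a $C$-basis, observe that the first three inner products vanish outright, and identify the fourth with $N([a,b])$. The only cosmetic differences are that the paper works with the basis $(1,a^\star,b^\star,ab)$ (after briefly considering and discarding $(1,a^\star,b^\star,[a,b])$, which fails to be a basis in characteristic~$2$), and that it handles the vanishing for $a$ and $b$ via trace cyclicity in the form $\tr(a[a,b])=\tr([a,a]b)=0$ rather than via the adjoint identities \eqref{eq:adjeq}; your route through $(1,a,b,ab)$ is arguably more direct.
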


\begin{proof}
We have just proved the second part of the statement.
Let now $r \in \Irr_{\Lambda_{p,q}}$.
Since $N([a,b]) \equiv 0$ mod $(r(\omega))$,
the condition $[a,b] \in \mathfrak{J}_r$ is equivalent to
having  $\langle x,[a,b]\rangle \in (r(\omega))$, i.e., $\tr(x^\star [a,b]) \equiv 0$ mod $(r(\omega))$, for every $x$ in a well-chosen
basis of the $C$-module $\calW_{p,q}$.
We already have $\tr([a,b])=0$, $\tr(a[a,b])=\tr([a,a]b)=0$ and $\tr(b[a,b])=-\tr(b[b,a])=0$.
If $(1,a^\star,b^\star,[a,b])$ is a $C$-basis of $\calW_{p,q}$, then we use $\tr([a,b]^\star [a,b])=2N([a,b])-2 \Lambda_{p,q}(\omega)$
to conclude. Yet $(1,a^\star,b^\star,[a,b])$ fails to be a $C$-basis if $\car(\F)=2$, since in that case
$[a,b] \in \Vect_C(1,a,b)$.

Hence, we will use the $C$-basis $(1,a^\star,b^\star,ab)$ instead.
We will directly check that $\langle ab,[a,b]\rangle \equiv 0$ mod $(r(\omega))$.
Simply, we can observe, because $N(ab)=N(ba)$, that
$$\langle ab,ab-ba\rangle=2 N(ab)-\langle ab,ba\rangle=N(ab-ba)=-\Lambda_{p,q}(\omega),$$
which yields the claimed statement. Hence $[a,b] \in \mathfrak{J}_r$.
\end{proof}

\subsubsection{Swaps}\label{section:swaps}

Here, we consider a swap $S$ of $\calW_{p,q}$.
We take $\beta:=S(a)$. Without essential loss of generality, we perform a basic base change and reduce the situation to the one where
$\beta=b^\star$, in which case $p=q$ and in particular $\tr(a)=\tr(b)$. Note then that $S(b)=a^\star$.

As in the case of the pseudo-adjunction, the heuristics from orthogonal groups will help us find
a normalized conjugator that is associated with $S$.
Again, if $\car(\F) \neq 2$ we can perform the basic base changes $a\leftarrow a-\frac{\tr a}{2}$ and
 $b\leftarrow b-\frac{\tr b}{2}$ to reduce the situation to the one where $\tr(a)=\tr(b)=0$, in which case
 $S(a)=-b$; the rotation of the hyperplane of pure quaternions that is induced by $\overline{S}$
 has fixed vector $a-b$, and hence $a-b$ should be one of its conjugators.
 Reverting the basic base change does not modify the situation because $\tr(a)=\tr(b)$ here.

 Hence, in the general case we should try to check that the vector $a-b$, which is obviously normalized, is
 a conjugator for $S$.
 Simply, we observe, thanks to $\tr(a)=\tr(b)$, that
 $$(a-b)\,b^\star=a b^\star - N(b) \quad \text{while} \quad a(a-b)=a(b^\star-a^\star)=ab^\star-N(a)$$
 and hence $(a-b)b^\star=a (a-b)$ because $N(a)=N(b^\star)=N(b)$.
 Symmetrically $(a-b) a^\star=-(b-a)a^\star=-b(b-a)=b(a-b)$.
 Finally, we compute
$$N(a-b)=N(a)+N(b)-\langle a,b\rangle=2N(a)-\omega.$$
Hence $a-b$ is a normalized conjugator for $S$, with norm $2N(a)-\omega$.
This norm has degree $1$, and by Lemma \ref{lemma:conjugatorandquaternions} it must equal $-r(\omega)$ for some $r \in \Irr_{\Lambda_{p,q}}$.
Remembering however that we had initially reduced the situation to the one where $p=q$,
going back to the general case (i.e., not assuming that $\Phi(a)=b^\star$ anymore)
proves the following result:

\begin{lemma}
For every swap automorphism $S$ of $\calW_{p,q}$,
and every normalized conjugator $\gamma$ of $S$,
the norm $N(\gamma)$ is a divisor of degree $1$ of $\Lambda_{p,q}(\omega)$ in $C$.
\end{lemma}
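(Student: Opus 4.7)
My plan is to reduce an arbitrary swap to the special form considered in the discussion preceding the lemma, and then to invoke the computation already performed there together with Lemma \ref{lemma:conjugatorandquaternions}.

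First I would take an arbitrary swap $S$ and set $\beta:=S(a)$. Since $S$ exchanges the basic subalgebras, $\beta$ is a nonscalar element of $\F[b]$ satisfying $p(\beta)=S(p(a))=0$; its minimal polynomial over $\F$ is therefore $p$, and since $\beta$ generates $\F[b]$ this minimal polynomial is also $q$, forcing $p=q$. Performing the basic base change $b \leftarrow \beta^\star$, I reduce to the situation $S(a)=b^\star$ and $S(b)=a^\star$, and in this reduced setting $\tr(a)=\tr(b)$ and $N(a)=N(b)$.

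Next I would invoke the computation carried out just before the lemma statement: the element $\gamma_0:=a-b$ satisfies the two identities $(a-b)\,b^\star=a\,(a-b)$ and $(a-b)\,a^\star=b\,(a-b)$, which translate to $\gamma_0\, x =S(x)\,\gamma_0$ for $x \in \{a,b^\star,a^\star,b\}$, hence for every $x \in \calW_{p,q}$ by linearity and multiplicativity. Thus $\gamma_0$ is a conjugator of $S$. Its coefficients in the deployed basis $(1,a,b,ab)$ are $(0,1,-1,0)$, all in $\F$ with greatest common divisor $1$ in $C$, so $\gamma_0$ is normalized. The direct computation yields $N(\gamma_0)=2N(a)-\omega$, which has degree $1$ in $C$ and in particular is nonzero. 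By Lemma \ref{lemma:conjugatorandquaternions}, every irreducible factor of $N(\gamma_0)$ lies in $\Irr_{\Lambda_{p,q}}$; since $\deg N(\gamma_0)=1$ and the units of $C$ are the nonzero scalars, $N(\gamma_0)$ is itself, up to an element of $\F^\times$, an irreducible divisor of degree $1$ of $\Lambda_{p,q}(\omega)$. Any other normalized conjugator of $S$ is of the form $\lambda \gamma_0$ with $\lambda \in \F^\times$, hence has norm $\lambda^2 N(\gamma_0)$, still a divisor of degree $1$ of $\Lambda_{p,q}(\omega)$.

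The only delicate point is to verify that the reduction via a basic base change does not affect the conclusion. A basic base change merely replaces $(a,b)$ by $(x,y)$ with $\langle x,y\rangle$ another generator of the $\F$-algebra $C$ (Section \ref{section:omega}), hence preserves degrees in $C$; the fundamental ideal $\mathfrak{F}=(\Lambda_{p,q}(\omega))$ is intrinsic to $\calW_{p,q}$ since the Gram determinant in any deployed basis generates the same ideal (Propositions \ref{prop:gram} and \ref{prop:gramgeneralized}); and the notion of normalized element, as well as its norm up to $\F^\times$, is intrinsic. The main obstacle is therefore essentially bookkeeping, the substance of the argument lying in the identity $N(a-b)=2N(a)-\omega$ combined with the quaternionic rigidity afforded by Lemma \ref{lemma:conjugatorandquaternions}.
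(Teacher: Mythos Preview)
Your proof follows essentially the same route as the paper's: reduce via a basic base change to the case $S(a)=b^\star$, $S(b)=a^\star$, exhibit $a-b$ as a normalized conjugator, compute its norm $2N(a)-\omega$, and invoke Lemma~\ref{lemma:conjugatorandquaternions}. One slip: your claim that ``since $\beta$ generates $\F[b]$ this minimal polynomial is also $q$, forcing $p=q$'' is false as stated---different generators of a $2$-dimensional algebra need not share the same minimal polynomial (e.g.\ if $q=t^2-1$ and $\beta=2b$, then $\beta$ has minimal polynomial $t^2-4 \neq q$). The identity $p=q$ (and hence $\tr(a)=\tr(b)$, $N(a)=N(b)$) holds only \emph{after} the base change $b\leftarrow\beta^\star$, because the new $b=\beta^\star$ has the same minimal polynomial as $\beta$, namely $p$. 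With this correction the argument is complete; your closing paragraph on the intrinsic nature of degrees, the fundamental ideal, and normalization is a welcome justification that the paper leaves implicit.
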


Note in any case that the existence of a swap automorphism implies that $\Lambda_{p,q}$ splits over $\F$
(see also Table \ref{table1}), and if $\Lambda_{p,q}$ has a double root then the previous lemma gives us all the information we were
seeking. However, we need to dig deeper if $\Lambda_{p,q}$ has simple roots, i.e., when $p$ has
simple roots in the splitting field $\K$. In that case indeed, we must examine whether the norms
of the normalized conjugators of all swap automorphisms correspond to the same monic irreducible divisor of $\Lambda_{p,q}(\omega)$.
Fortunately, we can solve this issue without computing.

Assume indeed that $p$ and $q$ are split with simple roots in $\K$,
and have the same splitting field in $\K$. Then there are exactly two isomorphisms
from $\F[a]$ to $\F[b]$, and these extend (uniquely) to two swaps $S_1$ and $S_2$.
Then the composite $S_2 \circ S_1$ is the pseudo-adjunction $\Phi_\star$
because it is a non-identity positive basic $C$-automorphism.
Now, take respective normalized conjugators $\gamma_1$ and $\gamma_2$ of $S_1$ and $S_2$.
Then by Lemma \ref{lemma:conjugatormultiplicativity} and identity \eqref{eq:normcommutator}
we have $N(\gamma_1)N(\gamma_2) \sim s^2 \Lambda_{p,q}(\omega)$ for some $s \in C \setminus \{0\}$.
As both $N(\gamma_1)$ and $N(\gamma_2)$ have degree $1$, we see that $s$ is constant, and we conclude that
$N(\gamma_1)$ and $N(\gamma_1)$ are relatively prime.
Hence the full signatures of $S_1$ and $S_2$ are the families in $\prod_{r \in \Irr_{\Lambda_{p,q}}} \Z/2$
with exactly one component equal to $1$.
And finally $S_1$ and $S_2$ are the only two swaps in that situation.

\subsubsection{Hyperbolic automorphisms}\label{section:hyperbolicautomorphisms}

This situation is relevant only when $p$ and $q$ split with a double root.
To simplify the analysis, we will consider only the special case $p=q=t^2$.

Let $\lambda \in \F^\times$.
Once again, we need to find a normalized conjugator for the hyperbolic automorphism $H_\lambda$
(see the notation in Section \ref{section:basicCauto}), compute its norm, and
of course we only need to do this when $\lambda \neq 1$. Assume now that $\lambda \neq 1$.
Here quaternion geometry is of poor help to find the results, and we will turn to the matrix viewpoint instead.
Since $\overline{\calW_{p,q}}$ splits, we are invited to consider the embedding of $\calW_{p,q}$ into $\Mat_2(\F[\omega])$ that takes
$a$ and $b$ to the respective matrices
$$A=\begin{bmatrix}
0 & 0 \\
1 & 0
\end{bmatrix} \quad \text{and} \quad
B=\begin{bmatrix}
0 & -\omega \\
0 & 0
\end{bmatrix}$$
(it is clear indeed that $N(a)=0=\det A$, $N(b)=0=\det B$ and $\langle a,b\rangle=\omega=\langle A,B\rangle$).

Hence $H_\lambda$ corresponds to the conjugation $M \mapsto D_\lambda M D_\lambda^{-1}$ of $\Mat_2(\F[\omega])$ with
the diagonal matrix $D_\lambda:=\begin{bmatrix}
1 & 0 \\
0 & \lambda
\end{bmatrix}$. Yet it can be checked that $D_\lambda$ does not belong to the $\F[\omega]$-subalgebra of $\Mat_2(\F[\omega])$
generated by $A$ and $B$, so we naturally replace it with
$-\omega D_\lambda=BA+\lambda AB$.
We deduce that $\gamma:=ba+\lambda ab$ is a conjugator of $H_\lambda$, and two things remain to be done.
We must check that $\gamma$ is normalized, and compute its norm.
For the first point, we note that
$$\gamma=-ba^\star+\lambda ab=-\omega+ab^\star+\lambda ab=-\omega+(\lambda-1)ab,$$
and since $\lambda-1 \neq 0$ we see that $\gamma$ is normalized.
Finally, thanks to the expression we have just obtained, we compute
$$N(\gamma)=\omega^2-\omega (\lambda-1)\langle 1,ab\rangle+(\lambda-1)^2 N(ab)=\omega^2-\omega(\lambda-1)
\langle a^\star,b\rangle=\omega^2+\omega^2(\lambda-1)=\lambda \omega^2.$$
Noting that $\omega^2=\Lambda_{p,q}(\omega)$ in the present case, we observe that the full signature of $H_\lambda$
is zero but the exponent for $r=t$ equals $2$. In particular $H_\lambda$ is not an inner automorphism
(remember here that $\lambda \neq 1$).
Note finally that with the above, for all $\lambda,\mu$ in $\F^\times$ such that $\lambda \neq \mu$,
the element $\lambda ab+\mu ba$ is a normalized conjugator for a nontrivial hyperbolic automorphism
(namely, for $H_{\lambda \mu^{-1}}$).

\subsection{First consequences of the study of signatures}\label{section:uniquenessdecompauto}

We can now draw several important consequences of the previous study of signatures of basic $C$-automorphisms, the first of which is the uniqueness
part in the Automorphisms Theorem.

\begin{theo}\label{theo:uniquenessdecompauto}
The only automorphism of $\calW_{p,q}$ that is both basic and inner is the identity.
\end{theo}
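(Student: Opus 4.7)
The plan is to piece together the classification of basic $C$-automorphisms established in Section \ref{section:basicCauto} with the explicit computations of normalized conjugators and their norms carried out in Section \ref{section:basicCautoconjugator}.

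First I would observe that every inner automorphism acts trivially on the center $C=\F[\omega]$, so any automorphism that is both basic and inner is automatically a basic $C$-automorphism. The classification then reduces the problem to four types: the identity, the pseudo-adjunction $\Phi_\star$, the swaps, and the hyperbolic automorphisms $H_\lambda$ with $\lambda\neq 1$. Only the identity needs to be reached.

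The key criterion is the following. If $\Phi$ is a $C$-automorphism with normalized conjugator $\gamma$ (unique up to $\F^\times$), then $\Phi$ is inner exactly when some (equivalently, every) conjugator of $\Phi$ that lies in $\calW_{p,q}$ is a unit, and this is equivalent to $N(\gamma)\in\F^\times$, i.e.\ $\deg_\omega N(\gamma)=0$. Indeed, any conjugator of $\Phi$ lying in $\calW_{p,q}^\times$ is an $\F(\omega)^\times$-multiple of $\gamma$, but normalized conjugators are determined up to $\F^\times$, and the norm is homogeneous of weight $2$ under scaling, so $\deg_\omega N(\gamma)$ depends only on $\Phi$. I would then quote the three norm computations from Section \ref{section:basicCautoconjugator}: for $\Phi_\star\neq\id$, the commutator $[a,b]$ is normalized (by the decomposition \eqref{eq:decompcommutator}) with norm $-\Lambda_{p,q}(\omega)$ of degree $2$; for any swap, a normalized conjugator has norm of degree exactly $1$ (an irreducible divisor of $\Lambda_{p,q}$); for $H_\lambda$ with $\lambda\neq 1$, the element $ba+\lambda ab$ is a normalized conjugator with norm $\lambda\omega^2$ of degree $2$. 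In each non-identity case $\deg_\omega N(\gamma)>0$, so $\Phi$ is not inner.

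The main obstacle is not really technical but bookkeeping: making sure the classification of Section \ref{section:basicCauto} is exhaustive and that the normalized conjugators computed in convenient bases (after a basic base change in the swap case, and after reducing to $p=q=t^2$ in the hyperbolic case) produce intrinsically correct degrees. The degrees are invariant under basic base changes because such changes only rescale a normalized conjugator by an element of $\F^\times$, and both the normalization property and the $\omega$-degree of the norm are preserved. One minor point is the degenerate situation $\car(\F)=2$ with $\tr(p)=\tr(q)=0$, in which $\Phi_\star=\id$ and $[a,b]$ fails to be normalized; but here there is nothing to show for $\Phi_\star$, and the hyperbolic or swap cases, when relevant, remain unaffected.
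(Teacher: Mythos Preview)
Your proposal is correct and follows essentially the same approach as the paper: reduce to basic $C$-automorphisms (since inner automorphisms fix the center), invoke the classification of Section~\ref{section:basicCauto}, and then use the norm-degree computations of Section~\ref{section:basicCautoconjugator} to rule out each non-identity type. Your additional remarks on basic base changes and the degenerate $\car(\F)=2$ situation are accurate but optional, as the paper's own proof is terser and simply quotes the previously established degrees.
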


\begin{proof}
As seen earlier, it suffices to take an arbitrary $\Phi \in \BAut_C(\calW_{p,q}) \setminus \{\id\}$ and prove that it is not inner.

Let $\gamma$ be a normalized conjugator of $\Phi$.

If $\Phi$ is a swap, then the previous study shows that $N(\gamma)$ has degree $1$.
If $\Phi$ is the pseudo-adjunction or is a hyperbolic automorphism, then the previous study shows that $N(\gamma)$
has degree $2$.

In any case, $N(\gamma)$ is nonconstant, and hence $\Phi$ is not inner.
\end{proof}

This theorem was the last missing key to the Weak Units Theorem, whose proof is now complete.

Now, we can also give partial answers to the previous questions on the range and kernel of the full signature homomorphism.

\begin{prop}\label{prop:fullsignaturesurjective}
In each one of the following cases, the full signature homomorphism
is surjective and its range is also the range of its restriction to $\BAut_C(\calW_{p,q})$:
\begin{itemize}
\item Both $p$ and $q$ have simple roots in $\K$;
\item Both $p$ and $q$ have a double root in $\K$, with the same splitting field in $\K$.
\item One of $p$ and $q$ is inseparable, and the other one has simple roots in $\K$.
\end{itemize}
\end{prop}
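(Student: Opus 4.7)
The plan is to prove the stronger statement that the restriction of the full signature homomorphism to $\BAut_C(\calW_{p,q})$ is already surjective, since this immediately yields both assertions of the proposition. The key inputs are the explicit normalized-conjugator computations of Sections \ref{section:pseudoadjunction}, \ref{section:swaps} and \ref{section:hyperbolicautomorphisms}, together with Table \ref{table1} from Section \ref{section:Lambdapq}, which enumerates the irreducible factorization of $\Lambda_{p,q}$ as a function of the types of $p$ and $q$. I will make repeated use of three facts: the pseudo-adjunction $\Phi_\star$ has a normalized conjugator $[a,b]$ whose norm is $-\Lambda_{p,q}(\omega)$ (Lemma \ref{lemma:commutator}); every swap admits a normalized conjugator whose norm is a degree-$1$ divisor of $\Lambda_{p,q}(\omega)$; and, whenever $\F[a]\simeq\F[b]$ is either a separable quadratic field or splits with simple roots, the two existing swaps $S_1,S_2$ satisfy $S_2\circ S_1=\Phi_\star$, which forces their normalized-conjugator norms to be coprime with product $\sim\Lambda_{p,q}(\omega)$.

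First I would dispatch Case 1 (both $p$ and $q$ have simple roots in $\K$). By inspection of Table \ref{table1} there are two kinds of sub-cases. When $\Lambda_{p,q}$ is irreducible of degree $2$ (this happens when exactly one of $p,q$ is irreducible, or when both are irreducible and separable with distinct splitting fields), the set $\Irr_{\Lambda_{p,q}}$ is the singleton $\{\Lambda_{p,q}\}$, and $\Phi_\star$ has signature $1$ at this unique factor because its conjugator has norm $-\Lambda_{p,q}(\omega)$ of valuation $1$. When $\Lambda_{p,q}$ splits over $\F$ with simple roots (this happens when both split over $\F$, or both are irreducible separable with the same splitting field), there are two distinct degree-$1$ factors $r_1,r_2$; in both sub-cases $\F[a]\simeq\F[b]$, so the two swaps $S_1,S_2$ exist and the coprimality recalled above forces their signatures to be $(1,0)$ and $(0,1)$, spanning $(\Z/2)^2$. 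Next, in Case 2 (both $p$ and $q$ have double roots in $\K$ with the same splitting field), Table \ref{table1} shows that $\Lambda_{p,q}$ has a unique irreducible factor $r$ of degree $1$; since $\F[a]\simeq\F[b]$ a swap exists, and after a basic base change reducing to $p=q$ the normalized conjugator $a-b$ has norm $2N(a)-\omega$ of degree $1$, so its signature at $r$ is $1$. Finally, in Case 3 (one of $p,q$ inseparable, the other with simple roots in $\K$), Table \ref{table1} gives that $\Lambda_{p,q}$ is again irreducible, so $\Phi_\star$ alone realizes the nontrivial signature.

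The main subtle point will be to confirm that the chosen basic $C$-automorphism is not itself trivial, so that the computed signature really is realized by a genuine element of $\BAut_C(\calW_{p,q})$. The pseudo-adjunction is the identity precisely when $\car(\F)=2$ and $\tr(p)=\tr(q)=0$, equivalently when both $p$ and $q$ are inseparable in characteristic $2$; this configuration is ruled out in Case 1 because a quadratic polynomial with simple roots in $\K$ in characteristic $2$ must have nonzero trace, and in Case 3 for the same reason applied to the non-inseparable polynomial. Swaps are automatically non-identity since they exchange $\F[a]$ and $\F[b]$, and their conjugator norms are visibly nonzero (they carry the $\omega$ term), so they always contribute a genuine signature class. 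Beyond this vigilance, the argument reduces to a tidy walk through the rows of Table \ref{table1}.
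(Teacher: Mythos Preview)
Your proposal is correct and follows essentially the same approach as the paper: both rely on the normalized-conjugator computations for $\Phi_\star$ and for swaps (including the coprimality argument via $S_2\circ S_1=\Phi_\star$), together with Table~\ref{table1} to sort out the factorization of $\Lambda_{p,q}$. The only difference is organizational: the paper first breaks into cases according to the type of $\Lambda_{p,q}$ (irreducible, split with simple roots, split with a double root) and then checks that each of the three hypotheses lands in a case where the argument succeeds, whereas you walk directly through the three hypotheses of the proposition; the content is the same.
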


\begin{proof}
Assume first that $\Lambda_{p,q}$ is irreducible. Then the full signature homomorphism essentially maps to $\Z/2$,
and the full signature of the pseudo-adjunction is $1$ unless $\car(\F)=2$ and both $p$ and $q$ have a double root in $\K$.
But since $\Lambda_{p,q}$ is irreducible, an inspection of Table \ref{table1} shows that this exception cannot take place here.

If $\Lambda_{p,q}$ splits with a double root, then again the full signature homomorphism essentially maps to $\Z/2$,
and the full signature of any swap is $1$. But there might be no swap!

Finally, if $\Lambda_{p,q}$ splits with simple roots then the full signature homomorphism essentially maps to $(\Z/2)^2$,
both polynomials $p$ and $q$ have simple roots in $\K$, and if at least one of them is not split then $p$ and $q$ must be irreducible with the same
splitting field in $\K$ (see Table \ref{table1}). Hence in any case the algebras $\F[a]$ and $\F[b]$ are isomorphic,
there are two swaps of $\calW_{p,q}$, and by the previous study of swaps the respective full signatures of these swaps are
the pairs $(1,0)$ and $(0,1)$, which is enough to prove the surjectivity of the full signature homomorphism
restricted to $\BAut_C(\calW_{p,q})$.

It remains to see that any of the three cases listed in the proposition falls into one of the previous three cases.
If $p$ and $q$ have simple roots in $\K$, then the third case occurs if $p$ and $q$ have the same splitting field in $\K$,
and the first one occurs otherwise. If both $p$ and $q$ have a double root in $\K$ with the same splitting field in $\K$,
the second case occurs and there is a swap automorphism of $\calW_{p,q}$.
Finally, if one of $p$ and $q$ is inseparable and the other one has simple roots in $\K$,
then the first case occurs. This concludes the proof.
\end{proof}

Hence, there remains two main cases for the analysis of the surjectivity of the full signature
on $\Aut_C(\calW_{p,q})$ and on $\BAut_C(\calW_{p,q})$:
the case where exactly one of $p$ and $q$ splits with a double root in $\F$,
and the case where both $p$ and $q$ are inseparable, with distinct splitting fields in $\K$.
In the second case there is no nontrivial basic automorphism, and naturally we will prove that
the full signature of every $C$-automorphism is $0$ (see Section \ref{section:auto:case1}).
The other case is much more problematic. In that one the only nontrivial basic $C$-automorphism
is the pseudo-adjunction, and its full signature is $0$. It is then very tempting
to think that there should be an easy way to prove that
every $C$-automorphism has full signature $0$ in that situation, but although this result will ultimately be proved
it will only be after a very deep analysis of the situation.

\subsection{Preparing the last part of the proof}\label{section:preparation}

It is time now for a review of where we are and what remains to be done to prove the Automorphisms Theorem.
We already know that this theorem has been reduced to Theorem \ref{theo:automorphismstheoC}, i.e., to its
analogue for $C$-automorphisms, thanks to the analysis performed in Section \ref{section:automorphismsI}.
Moreover, the uniqueness statement from the Automorphisms Theorem as been proved as Theorem \ref{theo:uniquenessdecompauto}.

Hence, all that remains is to prove that every $C$-automorphism splits as the composite of a basic $C$-automorphism with an inner automorphism.
Moreover, in the event where the range of the full signature automorphism is the range of its restriction to
$\BAut_C(\calW_{p,q})$, every $C$-automorphism $\Phi$ splits into
$\Phi=\Phi_2 \circ \Phi_1$ where $\Phi_1 \in \BAut_C(\calW_{p,q})$
and $\Phi_2 \in \Aut_{C,0}(\calW_{p,q})$ (i.e., $\Phi_2$ is a $C$-automorphism with full signature $0$).

In particular, when both $p$ and $q$ have simple roots in $\K$ our only viable option now is to prove
that $\Aut_{C,0}(\calW_{p,q})=\Inn(\calW_{p,q})$. The same goes if both $p$ and $q$ are inseparable with the same splitting field.
But if both $p$ and $q$ split with a double root we have seen earlier that there are nontrivial basic automorphisms
in $\Aut_{C,0}(\calW_{p,q})$, and we must still prove that every automorphism in $\Aut_{C,0}(\calW_{p,q})$
is the product of a basic automorphism (necessarily of hyperbolic type) with an inner automorphism.
Of course there also remain the two main cases where we do not know yet that
the full signature automorphism has the same range on $\Aut_C(\calW_{p,q})$ and on $\BAut_C(\calW_{p,q})$,
i.e., the cases where exactly one of $p$ and $q$ splits with a double root, and the case where both $p$ and $q$
are inseparable, with distinct splitting fields.

Now, we must split the discussion into several cases.
There is a common thread to all these cases, nevertheless, and we will now briefly explain it.
In any case we will take $\Phi \in \Aut_C(\calW_{p,q})$, with associated normalized conjugator $\gamma$,
take an irreducible divisor $r \in \Irr_{\Lambda_{p,q}}$, and assume that
$r$ divides $N(\gamma)$.
In all the situations where we already know that the analysis of $\Aut_{C,0}(\calW_{p,q})$
is sufficient to conclude, we will actually directly assume that $r^2$ divides $N(\gamma)$,
seeking to find a contradiction unless both $p$ and $q$ split with a double root.
In the remaining cases where the range of the full signature automorphisms is not known yet,
we will only assume that $r$ divides $N(\gamma)$, and we will either directly obtain a contradiction
(in the case where both $p$ and $q$ are inseparable, with distinct splitting fields), or
prove that $r^2$ divides $N(\gamma)$ (which is a first step in proving that the full signature homomorphism
vanishes).

In any case, we will analyze the consequences of the assumptions on the residue $\gamma_r$
of $\gamma$ modulo $(r(\omega))$ (i.e., its coset in $\calW_{p,q,[r]}$). In some cases, the simple assumption that $r(\omega)$ divides $N(\gamma)$
is enough to obtain very precise information on $\gamma_r$, mainly by using Lemma \ref{lemma:idealandconjugator}, but in most cases we will require that
$r(\omega)^2$ divides $N(\gamma)$, and we will use our thorough study of the ideals above $(r(\omega))$ from Section \ref{section:ideals}.
Once such information is obtained, and assuming that
$r(\omega)^2$ divides $N(\gamma)$, we will try to find relevant information on the residue
$\gamma_{r^2}$ of $\gamma$ mod $(r(\omega)^2)$. In many cases, we will end up with a contradiction,
and in some cases we will have enough information on the residue $\gamma_{r^2}$
to conclude that $\Phi$ is the product of a basic $C$-automorphism with an inner automorphism.

The study will be split into the following five cases, almost in increasing order of difficulty:
\begin{itemize}
\item Both $p$ and $q$ are irreducible.
\item One of $p$ and $q$ splits with simple roots, and the other one has simple roots in $\K$.
\item Both $p$ and $q$ split with a double root.
\item One of $p$ and $q$ splits with simple roots, and the other one is inseparable.
\item Exactly one of $p$ and $q$ splits with a double root.
\end{itemize}

The following simple observation will be regularly used:

\begin{lemma}\label{lemma:normmodsquare}
Let $r \in \Irr_{\Lambda_{p,q}}$. Let $x \in \calW_{p,q}$ and $y \in \mathfrak{J}_r$ be such that $x \equiv y \; (r(\omega))$.
Then $N(x) \equiv N(y) \; (r(\omega)^2)$.
\end{lemma}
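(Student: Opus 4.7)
The plan is to write $x = y + r(\omega) z$ for some $z \in \calW_{p,q}$ (which is possible since $x \equiv y$ mod $(r(\omega))$), and then expand $N(x)$ directly using the fact that the norm has polar form $\langle -,-\rangle$. Concretely, I would compute
\begin{equation*}
N(x) = (y + r(\omega) z)(y + r(\omega) z)^\star = N(y) + r(\omega)\,\langle y,z\rangle + r(\omega)^2\, N(z),
\end{equation*}
using that $r(\omega)^\star = r(\omega)$ since $\omega^\star = \omega$.

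The key step is then to observe that the cross term $r(\omega)\,\langle y,z\rangle$ is actually divisible by $r(\omega)^2$. This is immediate from the defining property of $\mathfrak{J}_r$ recalled just before Proposition \ref{prop:structureofJr}: an element $y \in \calW_{p,q}$ lies in $\mathfrak{J}_r$ precisely when $\langle y, w\rangle \equiv 0$ mod $(r(\omega))$ for every $w \in \calW_{p,q}$ (and $N(y) \equiv 0$ mod $(r(\omega))$, though this latter fact is not needed here). Applying this with $w := z$ yields $\langle y,z\rangle \in (r(\omega))$, whence $r(\omega)\,\langle y,z\rangle \in (r(\omega)^2)$.

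Combining these two observations gives $N(x) - N(y) \in (r(\omega)^2)$, which is the desired conclusion. There is no real obstacle here: the lemma is essentially the first-order Taylor expansion of the norm at a point of $\mathfrak{J}_r$, and the only ingredient beyond the polar decomposition of $N$ is the orthogonality property built into the very definition of $\mathfrak{J}_r$.
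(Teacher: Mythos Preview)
Your proof is correct and essentially identical to the paper's own: write $x=y+r(\omega)z$, expand $N(x)=N(y)+r(\omega)\langle y,z\rangle+r(\omega)^2 N(z)$, and use $y\in\mathfrak{J}_r$ to get $\langle y,z\rangle\in(r(\omega))$. The only difference is that you spell out a few more details (such as $r(\omega)^\star=r(\omega)$), which the paper leaves implicit.
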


\begin{proof}
We write $x=y+r(\omega)z$ for some $z \in \calW_{p,q}$ and note that $\langle y,z\rangle \in (r(\omega))$ because $y \in \mathfrak{J_r}$.
Hence $N(x)=N(y)+r(\omega) \langle y,z\rangle+r(\omega)^2 N(z)$ and the conclusion follows.
\end{proof}

\subsection{Case I: When both $p$ and $q$ are irreducible}\label{section:auto:case1}

Here we assume that both $p$ and $q$ are irreducible.

To start with, we consider the very special case where $p$ and $q$ are inseparable with
distinct splitting fields. In this case we do not know yet whether the full signature automorphism, which goes to $\Z/2$,
is surjective. Set $r:=\Lambda_{p,q}$.
Let $\Phi \in \Aut_C(\calW_{p,q})$, with normalized conjugator $\gamma$.
Assume that the full signature of $\Phi$ is nonzero. In particular
$\Lambda_{p,q}(\omega)$ divides $N(\gamma)$. Hence by Lemma \ref{lemma:idealandconjugator} the ideal $(\gamma_r)$ of $\calW_{p,q}/(r(\omega))$
is nontrivial. This directly contradicts Proposition \ref{prop:structureofJr}, because here $\mathfrak{R}_r=\{0\}$.
Hence the full signature homomorphism vanishes on $\Aut_C(\calW_{p,q})$.

Now, we return to the general case and conclude that all that remains to be done is prove that
$\Inn(\calW_{p,q})=\Aut_{C,0}(\calW_{p,q})$. So, assume on the contrary that there exists
$\Phi \in \Aut_{C,0}(\calW_{p,q}) \setminus \Inn(\calW_{p,q})$ and take an associated
normalized conjugator $\gamma$. Then there exists $r \in \Irr_{\Lambda_{p,q}}$ such that
$r(\omega)^2$ divides $N(\gamma)$.
Applying Lemma \ref{lemma:idealandconjugator}, we directly see that the ideal of $\calW_{p,q,[r]}$
generated by $\gamma_r$ is nontrivial. As a consequence of Theorem \ref{prop:structureofJr}, we deduce that
this ideal is included in $\mathfrak{R}_r$, i.e., $\gamma \in \mathfrak{J}_r$.
By Euclidean division, we can find $x \in \calW_{p,q}$ of the form $x=x_1+x_a a+x_{b} b+x_{ab} b$,
with all coefficients $x_1,x_a,x_b,x_{ab}$ of degree less than the one of $r$,
and such that $\gamma \equiv x$ mod $(r(\omega))$.
Then $N(\gamma) \equiv N(x)$ mod $(r(\omega)^2)$ by Lemma \ref{lemma:normmodsquare}, and we conclude by Lemma
\ref{lemma:liftinglemma} that $N(x)=0$.
Yet the Zero Divisors Theorem then yields $x=0$, and in turn we conclude that $\gamma_r=0$, contradicting the fact that
$\gamma$ is normalized.

This contradiction shows that $\Inn(\calW_{p,q})=\Aut_{C,0}(\calW_{p,q})$. Hence our proof of Theorem \ref{theo:maintheoautomorphisms}
is now complete for the special case where both $p$ and $q$ are irreducible. This very result was already proved in
Section \ref{section:units1} by a completely different method,
but treating this case did not cost us much, with the exception of several difficulties that were all located in the characteristic $2$ case
(see Section \ref{section:centerspecialcases}).

\subsection{Case II: When one of $p$ and $q$ splits with simple roots, and both have simple roots in $\K$}

Here we assume without loss of generality that $p$ splits with simple roots and $q$ has simple roots in $\K$.
In that situation, we know from Proposition \ref{prop:fullsignaturesurjective} that
it will suffice to prove that $\Inn(\calW_{p,q})=\Aut_{C,0}(\calW_{p,q})$.
Hence, just like in the previous section we assume that there exists
$\Phi \in \Aut_{C,0}(\calW_{p,q}) \setminus \Inn(\calW_{p,q})$ and we try to find a contradiction.
To this end we take an associated normalized conjugator $\gamma$ and an irreducible $r \in \Irr_{\Lambda_{p,q}}$
such that $r(\omega)^2$ divides $N(\gamma)$. Throughout, we denote by $d$ the degree of $r$.
Beware at this point that $\Lambda_{p,q}$ can be irreducible, so we must bear in mind that $d \in \{1,2\}$.

Now, we analyze $\gamma_r$. First of all, we apply Lemma \ref{lemma:idealandconjugator} to find that $(\gamma_r)$ is a nontrivial ideal
of $\calW_{p,q,[r]}$.

Assume first that $(\gamma_r)$ has dimension $2$ or $3$ over the residue field $\L:=\F[\omega]/(r(\omega))$.
In the first case, Proposition \ref{prop:allidealsonesplits} shows that $(\gamma_r)$ equals $\mathfrak{R_r}$, and in the second case
Corollary \ref{cor:inclusionRr} shows that $(\gamma_r)$ includes $\mathfrak{R_r}$.
Hence, in any case $(\gamma_r)$ includes $\mathfrak{R_r}$, and now we use the magical trick that
the residue $[a,b]_r$ of the commutator $[a,b]$ also belongs to $\mathfrak{R_r}$, along with
$N([a,b])=-\Lambda_{p,q}(\omega)$ (see Lemma \ref{lemma:commutator}).
Hence there exists $z \in (\gamma)$ such that $[a,b] \equiv z$ mod $(r(\omega))$.
But now Lemma \ref{lemma:idealandconjugator} applied to the ideal $(r(\omega)^2)$ shows that $N(z)=zz^\star \equiv 0$ mod $(r(\omega)^2)$.
Since $[a,b] \in \mathfrak{J}_r$, we deduce from Lemma \ref{lemma:normmodsquare} that $N([a,b]) \equiv 0$ mod $(r(\omega)^2)$,
whence $r(\omega)^2$ divides $\Lambda_{p,q}(\omega)$. Hence $\Lambda_{p,q}$ would have a double root,
which is not the case here because both $p$ and $q$ have simple roots in $\K$. Hence we find a contradiction.

We deduce from the previous analysis that $(\gamma_r)$ has dimension $1$ over $\L$. We can then use point (c) of Proposition
\ref{prop:allidealsonesplits}
to obtain that $\gamma_r=xy^\star$ for some $x \in \L[a_r] \setminus \L$ and $y \in \L[b_r] \setminus \L$
such that $N_r(x)=N_r(y)=0$. Since $p$ splits with simple roots, we have $x=s\,\alpha$ for some nontrivial idempotent $\alpha \in \F[a]$
and some $s \in \L^\times$. Replacing $x$ with $\alpha$ and $y$ with $s^{-1}y$ then reduces the situation to the one where $x=\alpha$ is a nontrivial idempotent in $\F[a]$. Next, by Euclidean division applied to a lifting of $y$ in $\calW_{p,q}$, we find a list $(\beta_0,\dots,\beta_{d-1}) \in \F[b]^d$ such that the element
$$\beta:=\sum_{k=0}^{d-1} \omega^k \beta_k$$
is nonzero and satisfies
$$\gamma \equiv \alpha \beta^\star \quad \text{mod} \; (r(\omega)) \quad \text{and} \quad N(\beta) \equiv 0 \quad \text{mod} \; (r(\omega)).$$
We note that $\deg(N(\beta)) \leq 2(d-1)$, and since $d \leq 2$ the fact that $r(\omega)$ divides $N(\beta)$ yields
$$N(\beta)=\lambda\, r(\omega) \quad \text{for some $\lambda \in \F$,}$$
with $\lambda=0$ if $d=1$ (i.e., if $q$ splits).
Note that in fact $\lambda \neq 0$ if $q$ is irreducible, because in that case $q$ remains irreducible over the purely transcendental extension $\F(\omega)$.
Moreover, since $\gamma_r \in \mathfrak{R}_r$ we have in particular $\tr_r(\gamma_r)=0$, which translates into
the fact that $r(\omega)$ divides $\langle \alpha,\beta\rangle$. Again $\deg \langle \alpha,\beta\rangle \leq \deg(r)$, and hence
$$\langle \alpha,\beta\rangle =\mu\, r(\omega) \quad \text{for some $\mu \in \F$.}$$
This time around, we observe that if $d=1$ then $\mu \neq 0$, because in that case $\beta$ is an element in $\F[b]$ and a nonscalar one
(being nonzero yet with norm $0$).

Next, we introduce an element $u \in \calW_{p,q}$ such that $\gamma=\alpha \beta^\star+r(\omega) u$ mod $(r(\omega)^2)$.
Expanding the identity
\begin{equation}\label{eq:keyidstarmodularcaseI}
\forall z \in \calW_{p,q}, \; \gamma z \gamma^\star \equiv 0 \quad \text{mod} \; (r(\omega)^2)
\end{equation}
thus yields
\begin{equation}\label{eq:expandedmod2id}
\forall z \in \calW_{p,q}, \; (\alpha \beta^\star) z (\alpha \beta^\star)^\star
+r(\omega)(u z (\alpha \beta^\star)^\star+(\alpha \beta^\star) z u^\star)
 \equiv 0 \quad \text{mod} \quad (r(\omega)^2).
\end{equation}

We start by examining how close the approximation $\alpha \beta^\star$ is from satisfying \eqref{eq:keyidstarmodularcaseI} in place of $\gamma$,
and for this we take $z$ among the vectors of the $C$-basis $(1,\alpha,\beta,\alpha \beta^\star)$.
We immediately observe that
$$(\alpha \beta^\star)(\alpha \beta^\star)^\star=N(\alpha \beta^\star)=N(\alpha)N(\beta^\star)=0$$
and likewise
$$(\alpha \beta^\star)(\alpha \beta^\star)(\alpha \beta^\star)^\star=0.$$
However
$$(\alpha \beta^\star)\beta(\alpha \beta^\star)^\star=N(\beta) \alpha \beta \alpha^\star=\lambda r(\omega)\, \alpha \beta \alpha^\star$$
and
$$(\alpha \beta^\star)\alpha(\alpha \beta^\star)^\star=\alpha \beta^\star \alpha \beta \alpha^\star
=\langle \alpha,\beta\rangle  \alpha \beta \alpha^\star-\beta \alpha^\star \alpha \beta \alpha^\star
=\mu r(\omega)\, \alpha \beta \alpha^\star.$$
Finally, by noting that $\alpha \beta \alpha^\star=\langle \alpha,\beta\rangle \alpha-\alpha^2 \beta^\star=\mu r(\omega)\alpha-\alpha \beta^\star$
we deduce that
$$(\alpha \beta^\star)\beta(\alpha \beta^\star)^\star \equiv -\lambda r(\omega)\,\alpha \beta^\star  \quad \text{mod} \; (r(\omega)^2)$$
and
$$(\alpha \beta^\star)\alpha(\alpha \beta^\star)^\star \equiv -\mu r(\omega)\,\alpha \beta^\star \quad \text{mod} \; (r(\omega)^2)$$
Noting that $x \mapsto (\alpha \beta^\star)x(\alpha \beta^\star)^\star$ is $C$-linear, we deduce a $C$-linear form
$\theta : \Vect_C(1,\alpha,\beta,\alpha\beta^\star) \rightarrow C$ such that
$$\forall z \in \Vect_C(1,\alpha,\beta,\alpha\beta^\star), \; (\alpha \beta^\star)z(\alpha \beta^\star)^\star \equiv -\theta(z)\,r(\omega)\,\alpha \beta^\star \quad \text{mod} \; (r(\omega)^2),$$
with $\theta(\alpha)=\mu$, $\theta(\beta)=\lambda$ and $\theta(1)=\theta(\alpha \beta^\star)=0$.
Now, the trick is to apply \eqref{eq:expandedmod2id} to $z:=\alpha^\star \beta$.
Indeed, we remember from Proposition \ref{prop:allidealsonesplits} (point (c)) that $\mathfrak{R}_r$
has exactly two $1$-dimensional subideals (the dimension refers to the structure of vector space over $\L:=\F[\omega]/(r(\omega))$), and as the first one is
$(\alpha_r \beta_r^\star)$ the other one is $(\alpha_r^\star \beta_r)$.
We also know from Proposition \ref{prop:allidealsonesplits}
that both these subideals are invariant under adjunction.
As a consequence, $u_r z_r (\alpha_r \beta_r^\star)^\star$ belongs to both, leading to
$u_r z_r (\alpha_r \beta_r^\star)^\star=0$. Likewise $(\alpha_r \beta_r^\star) z_r u_r^\star=0$.
Hence identity \eqref{eq:expandedmod2id} leads to $\theta(z)\alpha \beta^\star \equiv 0$ mod $(r(\omega))$,
i.e., $r(\omega)$ divides $\theta(z)$.

Finally $z=\alpha^\star \beta=\beta-\alpha \beta=\beta-\tr(\beta) \alpha+\alpha \beta^\star$, so by the linearity of $\theta$ we recover that
$r(\omega)$ divides $\lambda-\mu \tr(\beta)$. Yet $\deg(\tr(\beta)) \leq d-1$, and hence
\begin{equation}\label{eq:keyidstarmodularcaseIfinal}
\lambda-\mu \tr(\beta)=0.
\end{equation}
The conclusion is near, and now we have to discuss whether $r$ has degree $1$ or $2$, i.e., whether $q$ splits or not.

\vskip 3mm
\noindent \textbf{Case 1. $q$ splits.}
Then, as pointed out earlier we have $\lambda=0$ and $\mu \neq 0$, so here $\tr(\beta)=0$.
But then $N(\beta)=0$ and $\beta$ is a nonzero element of $\F[b]$. This would yield that $\F[b]$ is degenerate, contradicting the assumption that $q$
has simple roots in $\K$.

\vskip 3mm
\noindent \textbf{Case 2. $q$ is irreducible.}
Here $d=2$ and we will use \eqref{eq:keyidstarmodularcaseIfinal} in a different way. We know that $\lambda \neq 0$, so $\mu \neq 0$ and we derive that
$\tr(\beta)$ is constant as a polynomial in $\omega$. However this information is still insufficient to obtain a contradiction.
Now the trick is to apply this observation not only to $\beta$, but to $v^\star \beta$ for any $v \in \F[b]^\times$.
Indeed, let us right-compose $\Phi$ with the inner automorphism $x \mapsto v x v^{-1}$ for an arbitrary $v \in \F[b]^\times$.
By doing so we obtain an automorphism with normalized conjugator $\gamma v$, merely replacing $\beta$ with $v^\star \beta$,
and this new automorphism cannot be inner.
All the other assumptions are preserved here, and we derive from the previous analysis
that $\tr(v^\star \beta)$ is constant. Since $\F[b]$ is a field, this means that $\langle v,\beta \rangle$ is constant whatever the
choice of $v \in \F[b]$. Remembering the splitting $\beta=\beta_0+\omega \beta_1$, we infer that
$\langle v,\beta_1\rangle=0$ for all $v \in \F[b]$.
Since $q$ is separable we deduce that $\beta_1=0$. Hence $\beta \in \F[b]$.
And now we run into the contradiction that $N(\beta)$ is constant, whereas we had seen earlier that $N(\beta)=\lambda r(\omega)$ with $\lambda \neq 0$.
This final contradiction completes the proof of the case where both $p$ and $q$ have simple roots in $\K$, and at least one splits.

\subsection{Case III: When both $p$ and $q$ split with a double root}

Here we assume that both $p$ and $q$ split with a double root. Without loss of generality, we assume that $p=q=t^2$.
So here $\Lambda_{p,q}=t^2$ and we will take $r:=t$.

For a $C$-automorphism $\Phi$, we simply put
$n(\Phi):=n_r(\Phi)$.

As seen in Section \ref{section:preparation}, it only remains in that case to prove that every $C$-automorphism with full signature zero
is the composite of a basic $C$-automorphism with an inner automorphism.

Our first solid step is the following result:

\begin{lemma}\label{lemma:resolutiontwodouble}
Let $\Phi \in \Aut_C(\calW_{p,q})$ satisfy $n(\Phi) \geq 2$, with normalized conjugator $\gamma$.
Then:
\begin{enumerate}[(a)]
\item $\gamma \equiv \lambda ab$ mod $(\omega)$ for a unique $\lambda \in \F^\times$.
\item There is a unique $\mu \in \F$ such that $\gamma \equiv \lambda ab+\mu \omega$ mod $(\omega^2)+\omega \Vect_\F(a,b,ab)$,
and the inequality $n(\Phi)>2$ is equivalent to $\mu \in \{0,\lambda\}$.
\end{enumerate}
\end{lemma}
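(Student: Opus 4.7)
The plan is to work throughout in the explicit deployed basis $(1,a,b,ab)$, using the relations $a^\star=-a$, $b^\star=-b$, $(ab)^\star=-ab-\omega$, and the products $a^2=b^2=0$, $ab+ba=-\omega$, $aba=-a\omega$, $bab=-\omega b$, $(ab)^2=-\omega ab$ forced by $p=q=t^2$. Since $\Lambda_{p,q}=\omega^2$, the only relevant irreducible is $r=t$, and $n(\Phi)\geq 2$ translates to $\omega^2\mid N(\gamma)$.

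For part (a), I first invoke Lemma~\ref{lemma:idealandconjugator} with $I=(\omega)$: the class $\gamma_r$ lies in $\mathfrak{R}_r$, and Proposition~\ref{prop:structureofRr} (case (ii) in odd characteristic, case (iii) in characteristic $2$) identifies $\mathfrak{R}_r$ with the image of $\Vect_\F(a,b,ab)$. Hence $\gamma\equiv x_a a+x_b b+x_{ab}ab\pmod{\omega}$, with the triple nonzero because $\gamma$ is normalized. A direct inner-product computation yields $N(x_a a+x_b b+x_{ab}ab)=x_a x_b\,\omega$, and since this lift belongs to $\mathfrak{J}_r$, Lemma~\ref{lemma:normmodsquare} upgrades this to $N(\gamma)\equiv x_a x_b\,\omega\pmod{\omega^2}$, forcing $x_a x_b=0$.

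The main obstacle will be eliminating the two remaining symmetric cases $x_a=0\neq x_b$ and $x_b=0\neq x_a$. The strategy is to exploit the identity $\gamma z\gamma^\star=N(\gamma)\Phi(z)\equiv 0\pmod{\omega^2}$, which is valid for every $z$ because $\omega^2\mid N(\gamma)$. Writing $\gamma\equiv\gamma_0+\omega\gamma_1\pmod{\omega^2}$ with $\gamma_i\in\Vect_\F(1,a,b,ab)$, a tedious but mechanical expansion using the explicit products above shows that in the case $x_a=0$, applying this identity with $z=a$ produces (after reduction in $\calW_{p,q,[r]}$) a relation of the shape $x_b^2\, b+2x_b(x_{ab}-y_1)\,ab=0$; linear independence of $b$ and $ab$ in $\calW_{p,q,[r]}$ forces $x_b=0$, a contradiction. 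The symmetric computation with $z=b$ disposes of the other case. Hence $x_a=x_b=0$, and $\lambda:=x_{ab}\neq 0$ by normalization; uniqueness of $\lambda$ is immediate from the $C$-basis property.

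For (b), uniqueness of $\mu$ is clear, since $\{1,a,b,ab,\omega\}$ is an $\F$-basis of the quotient $\calW_{p,q}/((\omega^2)+\omega\Vect_\F(a,b,ab))$. For existence and the characterization of $n(\Phi)>2$, I use (a) to write $\gamma=\lambda ab+\omega\gamma'$, and decompose $\gamma'=g_1+g_a a+g_b b+g_{ab}ab$ along the deployed basis ($g_i\in C$), so that $\mu$ is identified with $g_1(0)$. The pivotal computation is
\[\langle ab,\gamma'\rangle=-g_1\,\omega,\]
obtained by expanding $ab(\gamma')^\star+\gamma'(ab)^\star$: the $g_a,g_b,g_{ab}$ contributions cancel using $aba=-a\omega$, $bab=-\omega b$, $(ab)^2=-\omega ab$, leaving only $-g_1\omega$. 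Combined with $N(ab)=0$, this gives the clean formula
\[N(\gamma)=\lambda\omega\langle ab,\gamma'\rangle+\omega^2 N(\gamma')=\omega^2\bigl(N(\gamma')-\lambda g_1\bigr).\]
A further basis computation yields $N(\gamma')\equiv g_1(0)^2\pmod{\omega}$, so $n(\Phi)>2$, which means $\omega\mid N(\gamma')-\lambda g_1$, amounts to $g_1(0)^2-\lambda g_1(0)=0$, equivalent to $g_1(0)\in\{0,\lambda\}$, as claimed.
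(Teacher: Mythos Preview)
Your proof is correct and follows essentially the same strategy as the paper's: reduce $\gamma_r$ into $\mathfrak{R}_r$, use the norm congruence to kill the mixed term $x_ax_b$, then exploit $\gamma z\gamma^\star\equiv 0\pmod{\omega^2}$ to pin down the residue; part (b) is computed identically via $\langle ab,\gamma'\rangle=-g_1\omega$ and $N(\gamma')\equiv g_1(0)^2\pmod\omega$.

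The only notable organizational difference is in the elimination step of (a). The paper writes the residue as $a\beta^\star$ with $\beta\in\F[b]\setminus\{0\}$, derives the simplified identity $(a\beta^\star)x(a\beta^\star)^\star\equiv -\omega(a\beta^\star)(x-x^\star)u^\star\pmod{\omega^2}$ using the inner-product trick $\langle ux,a\beta^\star\rangle\equiv 0\pmod\omega$, and then applies it with $x=b$ (when $\beta\in\F^\times$) and $x=\beta$ (otherwise), invoking the fact that $(a_rb_r)$ is a one-dimensional ideal annihilated by $\Vect_\F(a_r,b_r,a_rb_r)$. You instead compute $\gamma_0 a\gamma_0^\star+\omega(\gamma_1 a\gamma_0^\star+\gamma_0 a\gamma_1^\star)$ directly in coordinates and read off $x_b^2=0$. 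Your route is more hands-on but equally valid; the paper's route packages the computation so that the $(a_rb_r)$-ideal structure does the work. Two small points: your appeal to Proposition~\ref{prop:structureofRr} alone for $\gamma_r\in\mathfrak{R}_r$ is incomplete (you need Proposition~\ref{prop:structureofJr} to know $\mathfrak{R}_r$ is the unique maximal ideal, hence contains the proper ideal $(\gamma_r)$), and you should state explicitly that $y_1$ denotes the coefficient of $1$ in $\gamma_1$.
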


Before we prove this result, it is useful to make several observations that will be used repeatedly.
To start with, we note that $(a_r b_r)$ is an ideal of $\calW_{p,q,[r]}$
and more precisely that it is left-annihilated and right-annihilated by all the elements of $\Vect_\F(a_r,b_r,a_rb_r)$.
This is obtained by noting that $bab=-b^\star a b=-\omega b+a^\star b^2=-\omega b$ for left-multiplication,
and that $aba=-\omega a$ likewise for right-multiplication.

\begin{proof}[Proof of Lemma \ref{lemma:resolutiontwodouble}]
By assumption $\omega^2$ divides $N(\gamma)$.
Once again, we will take advantage of Lemma \ref{lemma:idealandconjugator} to obtain a precise form for the residue
$\gamma_r$. There are two main steps: first we will prove that $\gamma_r$ belongs to $(a_r)$ or $(b_r)$,
and then we will show that $\gamma_r$ belongs to $(a_r b_r)$. Afterwards, little computation will be required.

First of all, we deduce from Lemma \ref{lemma:idealandconjugator} that $(\gamma_r)$ is a nontrivial ideal of $\calW_{p,q,[r]}$.
We see from Proposition \ref{prop:structureofJr} that $\mathfrak{R}_r$ is the sole maximal ideal of  $\calW_{p,q,[r]}$, and hence
$\gamma_r \in \mathfrak{R}_r$, i.e., $\gamma \in \mathfrak{J}_r$.
Next, the Gram matrix of the deployed basis $(1,a,b,ab)$ for $\langle -,-\rangle$ equals
\begin{equation}\label{eq:lastgram}
\begin{bmatrix}
2 & 0 & 0 & -\omega \\
0 & 0 & \omega & 0 \\
0 & \omega & 0 & 0 \\
-\omega & 0 & 0 & 0
\end{bmatrix}.
\end{equation}
Hence, the one of $(1,a_r,b_r,a_rb_r)$ for $\langle -,-\rangle_r$ equals
$$\begin{bmatrix}
2 & 0 & 0 & 0 \\
0 & 0 & 0 & 0 \\
0 & 0 & 0 & 0 \\
0 & 0 & 0 & 0
\end{bmatrix},$$
while $N_r(1) \neq 0$ and $N_r(a_r)=N_r(b_r)=N_r(a_r b_r)=0$. From there
it is clear that $\mathfrak{R}_r=\Vect_\F(a_r,b_r,(ab)_r)$.

Now, we introduce the unique $z \in \Vect_\F(a,b,ab)$ such that $z_r=\gamma_r$.
By Lemma \ref{lemma:normmodsquare} we have $N(z) \equiv N(\gamma)$ mod $(\omega^2)$ because $\gamma \in \mathfrak{J}_r$, and hence
$N(z)=0$ by Lemma \ref{lemma:liftinglemma}. Writing $z=\lambda a+\mu b+\nu ab$ with $\lambda,\mu,\nu$ in $\F$, we deduce from the Gram matrix
\eqref{eq:lastgram} that $\lambda \mu \omega=0$, and hence $z \in \Vect_\F(a,ab)$ or $z \in \Vect_\F(b,ab)$.
As we can swap $\F[a]$ for $\F[b]$, no generality is lost in assuming that $z \in \Vect_\F(a,ab)$, an assumption we will now make.
In other words $z=a \beta^\star$ for some $\beta \in \F[b] \setminus \{0\}$.

Now, we take an element $u \in \calW_{p,q}$ such that $\gamma\equiv a\beta^\star+\omega u$ mod $(\omega^2)$.

The next step consists in proving that $\beta \in \F b$.
For this, the key is to apply identity \eqref{eq:keyidstarmodular} to a well-chosen $x \in \calW_{p,q}$.
Let first $x \in \calW_{p,q}$ be arbitrary.
Then we use \eqref{eq:keyidstarmodular} to find
$$(a \beta^\star) x (a \beta^\star)^\star+\omega u x (a \beta^\star)^\star+\omega (a\beta^\star) x u^\star \equiv 0 \quad \text{mod} \; (\omega^2).$$
Moreover, we can write
$$u x (a \beta^\star)^\star+(a\beta^\star) x u^\star=\langle ux,a \beta^\star\rangle+(a \beta^\star) (x-x^\star)u^\star,$$
and note that $\langle ux,a \beta^\star\rangle \equiv 0\; (\omega)$ since $a\beta^\star \in \mathfrak{J}_r$.
Hence
\begin{equation}\label{eq:doublerootsx2}
(a \beta^\star) x (a \beta^\star)^\star \equiv -\omega (a \beta^\star) (x-x^\star)u^\star  \quad \text{mod} \; (\omega^2).
\end{equation}
Assume first that $\beta \in \F^\times$. Then we observe that
$(a \beta^\star) b (a \beta^\star)^\star=N(\beta) aba^\star$ and $a b a^\star=a(\omega-ab^\star)=\omega a$,
whence $(a \beta^\star) b (a \beta^\star)^\star=N(\beta) \omega a$.
However $(a \beta^\star) (b-b^\star)u^\star=2 \beta^\star a(b u^\star)$ and $ab u^\star \equiv \lambda ab$ mod $(\omega)$ for some $\lambda \in \F$
because $(a_r b_r)$ is a $1$-dimensional ideal of $\calW_{p,q,[r]}$.
Hence $N(\beta) a \equiv - 2 \lambda \beta^\star ab$ mod $(\omega)$, which is contradictory because $N(\beta) \neq 0$.

Hence $\beta \not\in \F^\times$, so from now on we write $\beta=\lambda b+\mu$ for some $\lambda \in \F^\times$ and some $\mu \in \F$. Note that
$\langle a,\beta\rangle=\lambda \omega$.
Next, we apply \eqref{eq:doublerootsx2} to $x=\beta$.
The left-hand side then equals
$$N(\beta) a \beta a^\star=N(\beta)\langle a,\beta\rangle a-N(\beta) a^2 \beta^\star=
N(\beta)\langle a,\beta\rangle a=\lambda \omega N(\beta)\,a,$$
whence
$$\lambda N(\beta)\,a \equiv -(a \beta^\star) (\beta-\beta^\star)u^\star  \quad \text{mod} \; (\omega).$$
The right-hand side in this congruence equals $-(a \beta^\star) (2\lambda b) u^\star=-2\lambda \mu ab u^\star$ with $\lambda \in \F^\times$.
Remembering that $(a_rb_r)$ is an ideal of $\calW_{p,q,[r]}$  we deduce that
$-(a \beta^\star) (2\lambda b) u^\star  \equiv \nu ab$ mod $(\omega)$ for some $\nu \in \F$.
Hence we have found $\lambda N(\beta) a \equiv \nu ab$ mod $(\omega)$ with $\lambda \neq 0$, and it follows that $N(\beta)=0$
by working in the deployed basis $(1,a,b,ab)$.

We conclude that $\beta \sim b$, which completes the proof of point (a).

In point (b), the existence and uniqueness of $\mu$ are obvious.
Next, we write
$$u \equiv \mu +u' \quad \text{mod} \; (\omega) \quad \text{for some $u' \in \Vect_\F(a,b,ab)$.}$$
We also introduce $v \in \calW_{p,q}$ such that $\gamma=\lambda ab+\omega u+\omega^2 v$.
Since $ab \in \mathfrak{J}_r$, we find that $N(\gamma) \equiv N(\lambda ab+\omega u)$ mod $(\omega^3)$.
Next,
$$N(\lambda ab+\omega u)=\lambda \omega \langle ab,u\rangle+\omega^2 N(u)$$
and $N(u) \equiv N(\mu)$ mod $(\omega)$ because $u' \in \mathfrak{J}_r$.
Hence
$$N(\gamma) \equiv \lambda \omega \langle ab,u\rangle+\mu^2 \omega^2 \quad \text{mod} \; (\omega^3).$$
Finally we observe that $\langle ab,u'\rangle=0$ and hence $\langle ab,u\rangle=\mu \langle ab,1\rangle=-\mu \omega$.
Hence $N(\gamma) \equiv \mu(\mu-\lambda)\omega^2$ mod $(\omega^3)$,
and we conclude that $n(\Phi)>2$ if and only if $\mu=\lambda$ or $\mu=0$.
\end{proof}

Now, we will make subtle use of the collapsing phenomenon that was mentioned in Section
\ref{section:conjugatorsintroduction} when composing two $C$-automorphisms.
To start with, we introduce the swap automorphism $S$ that exchanges $a$ and $b=-b^\star$,
and we recall that $a+b$ is an associated conjugator because
$\tr(a)=\tr(b)=0$ (see Section \ref{section:swaps}).

\begin{lemma}
Let $\Phi \in \Aut_C(\calW_{p,q})$.
Then $n(\Phi \circ S) \leq n(\Phi)+1$, and if $n(\Phi) \geq 2$ then
$n(\Phi \circ S) \leq n(\Phi)-1$.
\end{lemma}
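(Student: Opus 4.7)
The plan is to exploit the fact that $a+b$ is a normalized conjugator of $S$ with norm $\omega$ (since $N(a+b)=N(a)+N(b)+\langle a,b\rangle=0+0+\omega=\omega$), so that the product $\gamma(a+b)$ is automatically a conjugator of $\Phi\circ S$ whenever $\gamma$ is a normalized conjugator of $\Phi$. The question reduces entirely to controlling the amount by which $\gamma(a+b)$ collapses on its way to a normalized conjugator of $\Phi\circ S$.

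First, if $\gamma'$ denotes a normalized conjugator of $\Phi\circ S$, we write $\gamma(a+b)=s\gamma'$ with $s\in C\setminus\{0\}$; computing norms gives $s^2 N(\gamma')=N(\gamma)N(a+b)$, i.e., up to a nonzero scalar factor, $s^2\,\omega^{n(\Phi\circ S)}=\omega^{n(\Phi)+1}$. Since $C=\F[\omega]$, this forces $s\sim\omega^k$ for some integer $k\geq0$ and $n(\Phi\circ S)=n(\Phi)+1-2k$. This already yields the first inequality $n(\Phi\circ S)\leq n(\Phi)+1$ with no assumption on $n(\Phi)$.

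The substantive point is the second inequality, and here I would use Lemma~\ref{lemma:resolutiontwodouble}(a). Assuming $n(\Phi)\geq2$, we have $\gamma_r\equiv\lambda\,a_rb_r$ in $\calW_{p,q,[r]}$ for some $\lambda\in\F^\times$, where $r=t$. I would then compute the residue of $\gamma(a+b)$ modulo $(\omega)$ and check that it vanishes. In $\calW_{p,q,[r]}$ we have $a_r^2=b_r^2=0$ and, since $\omega\equiv0\pmod{(\omega)}$ and $ab+ba=-\omega$, the anticommutation relation $b_ra_r=-a_rb_r$ holds. Consequently
\[
(a_rb_r)(a_r+b_r)=a_r(b_ra_r)+a_r(b_r^2)=-a_r^2b_r+0=0.
\]
Hence $\bigl(\gamma(a+b)\bigr)_r=\lambda\,(a_rb_r)(a_r+b_r)=0$, which means that every coefficient of $\gamma(a+b)$ in the deployed basis $(1,a,b,ab)$ is divisible by $\omega$.

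This last statement says precisely that $\omega$ divides $s$ in $C$, so $k\geq1$ in the normalization relation $\gamma(a+b)=s\gamma'$, and plugging back yields $n(\Phi\circ S)=n(\Phi)+1-2k\leq n(\Phi)-1$, which is the desired bound. The only potentially delicate point is the residue computation in $\calW_{p,q,[r]}$, but the identities $a^2=b^2=0$ and $ab+ba=-\omega$ are exactly the defining relations in the present special case $p=q=t^2$, so everything collapses to the elementary computation displayed above; no characteristic hypothesis is used. Thus no real obstacle arises beyond correctly identifying $\gamma_r$ via Lemma~\ref{lemma:resolutiontwodouble}(a).
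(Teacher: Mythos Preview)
Your proof is correct and follows essentially the same route as the paper: use that $a+b$ is a normalized conjugator of $S$ with $N(a+b)=\omega$, invoke Lemma~\ref{lemma:conjugatormultiplicativity} for the first inequality, and for the second use Lemma~\ref{lemma:resolutiontwodouble}(a) to reduce $\gamma(a+b)$ modulo $(\omega)$ to $\lambda\,a_rb_r(a_r+b_r)=0$. The paper phrases the vanishing by citing the earlier observation that $\Vect_\F(a_r,b_r,a_rb_r)$ annihilates the ideal $(a_rb_r)$, while you redo the two-line computation directly; there is no substantive difference.
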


\begin{proof}
Take a normalized conjugator $\gamma$ for $\Phi$, and a normalized conjugator $\gamma'$ for $\Phi \circ S$.
By Lemma \ref{lemma:conjugatormultiplicativity} there exists $s \in \F[\omega] \setminus \{0\}$ such that
$s^2 N(\gamma') \sim N(\gamma)N(a+b)$.
Moreover $N(a+b) \sim \omega$ as seen in Section \ref{section:swaps}.
Hence $n(\Phi \circ S) \leq n(\Phi)+1$, and $n(\Phi \circ S) \leq n(\Phi)-1$ if $s$ is nonconstant,
i.e., if $\gamma (a+b)$ is not normalized.
To conclude, we will simply check that $\gamma (a+b)$ is not normalized if $n(\Phi) \geq 2$.
So, assume that $n(\Phi) \geq 2$. By Lemma \ref{lemma:resolutiontwodouble} we have $\gamma\equiv \lambda ab$ mod $(\omega)$
for some $\lambda \in \F^\times$.
Moreover, as recalled before the proof of Lemma \ref{lemma:resolutiontwodouble}, the element $(a+b)_r$ belongs to the left-annihilator of the ideal $(a_rb_r)$ in $\calW_{p,q,[r]}$, which means that $ab(a+b) \equiv 0$ mod $(\omega)$. Hence $\gamma (a+b) \equiv 0$ mod $(\omega)$,
and hence $\gamma (a+b)$ is not normalized.
\end{proof}

\begin{cor}
Let $\Phi \in \Aut_C(\calW_{p,q})$. Then $n(\Phi) \leq 2$.
\end{cor}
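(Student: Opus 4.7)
The plan is to argue by contradiction, assuming $n(\Phi) \geq 3$ and using the preceding lemma together with Lemma \ref{lemma:resolutiontwodouble} to derive an inconsistency. The key idea is that after right-composing with the swap $S$, the normalized conjugator of $\Phi \circ S$ can be computed almost explicitly from $\gamma$, and its residue modulo $(\omega)$ turns out to contain an unavoidable $a$- or $b$-term, which is incompatible with what Lemma \ref{lemma:resolutiontwodouble}(a) forces it to be.

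Concretely, suppose $n(\Phi) \geq 3$. By Lemma \ref{lemma:resolutiontwodouble}, a normalized conjugator $\gamma$ of $\Phi$ can be written as
$$\gamma = \lambda\,ab + \mu\,\omega + \omega u + \omega^2 v,$$
with $\lambda \in \F^\times$, $\mu \in \{0,\lambda\}$, $u \in \Vect_\F(a,b,ab)$ and $v \in \calW_{p,q}$. I would then compute $\gamma(a+b)$, using $a^2 = b^2 = 0$ and $ba \equiv -ab \pmod{(\omega)}$, to show that $ab(a+b) = -\omega a$ and that $u(a+b)$ has an $ab$-component plus multiples of $\omega$. Collecting terms gives $\gamma(a+b) = \omega\,\gamma_1$, where
$$\gamma_1 \equiv (\mu - \lambda)\,a + \mu\,b + \nu\,ab \pmod{(\omega)}$$
for some $\nu \in \F$. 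In the case $\mu = 0$ the coefficient of $a$ is $-\lambda \neq 0$, and in the case $\mu = \lambda$ the coefficient of $b$ is $\lambda \neq 0$; in either case $\gamma_1 \not\equiv 0 \pmod{(\omega)}$, so (since the only possible divisor of $N(\gamma_1)$ is a power of $\omega$ by Lemma \ref{lemma:conjugatorandquaternions}) the element $\gamma_1$ is itself normalized. Consequently $\gamma_1$ is, up to a nonzero scalar, the normalized conjugator of $\Phi \circ S$, and the identity $N(\gamma)\,N(a+b) = \omega^2\,N(\gamma_1)$ forces $n(\Phi \circ S) = n(\Phi) - 1 \geq 2$.

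Now comes the punchline: since $n(\Phi \circ S) \geq 2$, Lemma \ref{lemma:resolutiontwodouble}(a) applies to $\Phi \circ S$, and its normalized conjugator must be congruent to a nonzero scalar multiple of $ab$ modulo $(\omega)$. But the computation above shows that the residue of $\gamma_1$ modulo $(\omega)$ carries a nonzero coefficient on either $a$ (in Case $\mu = 0$) or $b$ (in Case $\mu = \lambda$), and no such residue lies in $\F \cdot ab$. This contradiction forces $n(\Phi) \leq 2$.

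The main obstacle in carrying out this plan is the bookkeeping of the residue modulo $(\omega)$ of $\gamma(a+b)$: one must correctly identify the contributions of each piece of $\gamma$ after multiplying by $a+b$, and verify that all correction terms coming from $u$ land in $\Vect_\F(ab) + (\omega)\calW_{p,q}$, so that they cannot cancel the $a$- or $b$-term produced by the leading parts. Once this elementary but slightly delicate calculation is done, the two applications of Lemma \ref{lemma:resolutiontwodouble} — first to $\Phi$ (to get the shape of $\gamma$) and then to $\Phi \circ S$ (to constrain the shape of $\gamma_1$) — collide directly and close the argument.
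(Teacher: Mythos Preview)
Your argument is correct, and the residue computation is right: with $a^2=b^2=0$ one has $a(a+b)=ab$, $b(a+b)=ba=-\omega-ab$, and $ab(a+b)=aba=-\omega a$, so indeed $\gamma(a+b)=\omega\gamma_1$ with $\gamma_1\equiv(\mu-\lambda)a+\mu b+\nu ab\pmod{(\omega)}$, and since $\mu\in\{0,\lambda\}$ this residue never lies in $\F\,ab$. The only point where your justification is slightly loose is the normalization of $\gamma_1$: the clean way to phrase it is that any common divisor of the coordinates of $\gamma_1$ squares into $N(\gamma_1)=N(\gamma)/\omega$, hence is a power of $\omega$, while $\gamma_1\not\equiv 0\pmod{(\omega)}$ rules $\omega$ out.

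However, the paper's proof takes a different and considerably shorter route. It does not recompute the residue of $\gamma_1$ at all, and in particular does not invoke part~(b) of Lemma~\ref{lemma:resolutiontwodouble}. Instead it applies the preceding lemma \emph{twice}, exploiting $S^2=\id$: if $n(\Phi)>2$ then $n(\Phi\circ S)\leq n(\Phi)-1$; should $n(\Phi\circ S)\geq 2$ still hold, a second application gives $n(\Phi)=n(\Phi\circ S\circ S)\leq n(\Phi\circ S)-1<n(\Phi)$, absurd; and if $n(\Phi\circ S)\leq 1$ then the general bound $n(\Phi\circ S\circ S)\leq n(\Phi\circ S)+1$ forces $n(\Phi)\leq 2$, equally absurd. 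Your approach trades this double-use of the involution $S$ for an explicit coefficient calculation and a second appeal to Lemma~\ref{lemma:resolutiontwodouble}(a); it is more hands-on but perfectly valid.
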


\begin{proof}
Assume on the contrary that $n(\Phi) > 2$. Then $n(\Phi \circ S) < n(\Phi)$ by
the previous lemma. If $n(\Phi \circ S) \geq 2$ then $n((\Phi \circ S) \circ S) < n(\Phi \circ S) < n(\Phi)$ by another
round of this lemma, which is absurd because $S^2=\id$.
Hence $n(\Phi \circ S) \leq 1$, and then $n(\Phi)=n(\Phi \circ S^2) \leq n(\Phi \circ S)+1 \leq 2$, another contradictory statement.
\end{proof}

Now we can complete the proof of the Automorphisms Theorem in the case under consideration. Let $\Phi \in \Aut_{C,0}(\calW_{p,q}) \setminus \Inn(\calW_{p,q})$.
Then $n(\Phi)$ is even and nonzero, and it ensues from the previous corollary that $n(\Phi)=2$.
Choose a normalized conjugator $\gamma$ for $\Phi$. By Lemma \ref{lemma:resolutiontwodouble}, we have scalars
$\lambda \in \F^\times$ and $\mu \in \F \setminus \{0,\lambda\}$, and a vector
$u \in \Vect_\F(a,b,ab)=:G$ such that $\gamma\equiv \lambda ab+\mu \omega +\omega u$ mod $(\omega^2)$.
The latter congruence can be rewritten
$$\gamma \equiv (\lambda-\mu) ab-\mu ba+\omega u \quad \text{mod} \; (\omega^2),$$
and we will exploit the observation that the right-hand side is reminiscent of the conjugators of
the hyperbolic automorphisms (see Section \ref{section:hyperbolicautomorphisms}).

So, naturally we put $\lambda':=\lambda-\mu$ and $\mu':=-\mu$, noting
$\lambda'$ and $\mu'$ are non zero and distinct.
We recall that the hyperbolic automorphism $H_\delta$ for $\delta:=\mu' (\lambda')^{-1}$
(which takes $a$ to $\delta a$ and $b$ to $\delta^{-1}b$) has
$\mu' ab+\lambda' ba$ as normalized conjugator.
It is reasonable to hope that $\Phi \circ H_\delta$ is inner.
To confirm this we consider the product $\pi:=\gamma (\mu' ab+\lambda' ba) $ of conjugators.
Thanks to $abab=ab^\star a b^\star=a(\omega-a^\star b)b^\star=\omega ab^\star$ and likewise
$baba=\omega ba^\star$, we obtain
$$\pi \equiv \lambda'\mu'\omega^2+\omega u(\mu' ab+\lambda' ba) \quad \text{mod} \; (\omega^2).$$
Finally $u (\mu' ab+\lambda' ba) \equiv 0$ mod $(\omega)$ because $u \in G$ and $(\mu' ab+\lambda' ba)_r \in (a_r b_r)$
(see the remark preceding the proof of Lemma \ref{lemma:resolutiontwodouble}).
Hence $\pi \equiv 0$ mod $(\omega^2)$.

Therefore, if we take a normalized conjugator $\gamma'$ of $\Phi \circ H_\delta$,
then $\omega^2 s \gamma'=\gamma (\mu' ab+\lambda' ba)$ for some $s \in \F[\omega] \setminus \{0\}$.
Taking the norm yields $\omega^4 s^2 N(\gamma') \sim \omega^2 N(\gamma)$ and hence $n(\Phi \circ H_\delta) \leq n(\Phi)-2=0$.
Thus $n(\Phi \circ  H_\delta)=0$ and we conclude that $\Phi \circ H_{\delta}$ is inner.
Hence $\Phi= (\Phi \circ H_{\delta})\circ H_{\delta^{-1}}$ is the composite of an inner automorphism with a basic automorphism.
This case is now closed.

\subsection{Preliminary work for the remaining two cases}

Only two cases remain at this point:
The case where exactly one of $p$ and $q$ splits with a double root,
and the case where one of $p$ and $q$ splits with simple roots and the other one is inseparable.

Here, we assume to be in either one of these cases. Then $\Lambda_{p,q}$ has a double root in $\K$,
and $\Irr_{\Lambda_{p,q}}$ consists of a single element $r$ (of degree $1$ in the first case, and of degree $2$ in the second one).

Here we prove the following result, which is common to these cases:

\begin{lemma}\label{lemma:normalizedconjugatorinJr}
Let $\Phi \in \Aut_C(\calW_{p,q})$ be such that $n_r(\Phi) \geq 1$.
Then every normalized conjugator $\gamma$ of $\Phi$ belongs to $\mathfrak{J_r}$.
\end{lemma}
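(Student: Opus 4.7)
The plan is to transfer the problem to the residue algebra $\calW_{p,q,[r]}$ and to show that the image $\gamma_r$ of $\gamma$ there lies in $\mathfrak{R}_r$; by definition of $\mathfrak{J}_r$ this is equivalent to $\gamma \in \mathfrak{J}_r$. Since $n_r(\Phi) \geq 1$, the two-sided ideal $I:=(r(\omega))$ of $\calW_{p,q}$ contains $N(\gamma)$ and is obviously invariant under the adjunction, so Lemma \ref{lemma:idealandconjugator} yields that the two-sided ideal $(\gamma_r)$ of $\calW_{p,q,[r]}$ is proper, and it is nonzero because $\gamma$ is normalized.

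If condition (iv) of Proposition \ref{prop:structureofJr} is met, that is when one of $p$ and $q$ is irreducible and the other splits with a double root, then $\mathfrak{R}_r$ is the unique maximal ideal of $\calW_{p,q,[r]}$. In that situation any proper ideal is automatically contained in it, and in particular $(\gamma_r) \subseteq \mathfrak{R}_r$, finishing the argument at once.

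In the remaining sub-cases (namely, $p$ splits with simple roots and $q$ with a double root, or one of $p,q$ is inseparable while the other splits with simple roots), I would invoke Proposition \ref{prop:allidealsonesplits}: every proper nonzero ideal of $\calW_{p,q,[r]}$ of dimension one or two over $\L$ is already contained in $\mathfrak{R}_r$, while those of dimension three are precisely the two maximal ideals $M$ and $M^\star$ exchanged by the adjunction (Proposition \ref{prop:numberofmaxiideals}). Hence it only remains to rule out the possibility that $(\gamma_r)=M$ is maximal. If that held, then $(\gamma_r^\star)=M^\star$, and Lemma \ref{lemma:idealandconjugator} applied with $z=1$ would force the product $M\cdot M^\star$ to vanish.

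The heart of the argument, and its main obstacle, is to exhibit in each sub-case a nonzero element of $M\cdot M^\star$. In the first sub-case $\F[a]$ carries a nontrivial idempotent $\alpha$ and $\F[b]$ carries a nonzero $\beta$ with $\beta^2=0$; by Proposition \ref{prop:allidealsonesplits}(d) the ideal $\mathfrak{R}_r$ is generated by $\beta$, and $\alpha$, having trace $1$, lies in one of the two maximal ideals, say $M$ after relabeling, so $\alpha\beta \in M\cdot M^\star$, and expanding in the deployed basis $(1,a_r,b_r,a_rb_r)$ shows immediately that $\alpha\beta \neq 0$. In the second sub-case the roles of $p$ and $q$ are essentially interchanged: $\L[a_r]$ is degenerate and $\L[b_r]$ splits, so a nontrivial idempotent $\beta \in \F[b]$ lies in some maximal ideal $M$, while Proposition \ref{prop:allidealsonesplits}(d) applied with the roles of $p$ and $q$ swapped provides a nilpotent $\alpha \in \L[a_r]$ generating $\mathfrak{R}_r \subseteq M^\star$; a similar expansion, made a little delicate by the fact that $\car(\F)=2$ and that the adjunction acts trivially on $\L[a_r]$, again shows that $\beta\alpha \neq 0$. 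The resulting contradiction in either sub-case forces $(\gamma_r) \subseteq \mathfrak{R}_r$, and hence $\gamma \in \mathfrak{J}_r$.
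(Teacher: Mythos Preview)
Your argument is correct and tracks the paper's proof closely: both reduce to the case where $(\gamma_r)$ is a $3$-dimensional (hence maximal) ideal, locate a basic idempotent inside it, and derive a contradiction from a nonzero product with a nilpotent generator of $\mathfrak{R}_r$. The differences are cosmetic --- the paper unifies your two sub-cases via a single ``without loss of generality $p$ splits with simple roots'' (noting that in the inseparable sub-case the other polynomial acquires a double root over $\L$), and it applies Lemma~\ref{lemma:idealandconjugator} with $z=\beta_r$ rather than $z=1$, computing $\alpha_r\beta_r\alpha_r^\star=\alpha_r\beta_r$ directly. One small imprecision: when you write ``say $M$ after relabeling'', the ideal $M=(\gamma_r)$ is fixed and cannot be relabeled; what you mean is to replace $\alpha$ by $\alpha^\star$ if necessary so that $\alpha\in M$, which is harmless since both are nontrivial idempotents.
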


\begin{proof}
Let $\gamma$ be a normalized conjugator of $\Phi$.
By Lemma \ref{lemma:idealandconjugator},
we find that $(\gamma_r)$ is a nontrivial ideal of $\calW_{p,q,[r]}$.
Assume that $\gamma_r \not\in \mathfrak{R}_r$. Then in particular $\mathfrak{R}_r$ is not the sole maximal ideal of $\calW_{p,q,[r]}$.
By Proposition \ref{prop:structureofJr}, if one of $p$ and $q$ splits with a double root then the other one must split,
and with simple roots because we do not allow that both $p$ and $q$ split with a double root.

Hence, because of our initial restriction on the possibilities for $(p,q)$, exactly one of $p$ and $q$ splits with simple roots, and the other one
has a double root in $\L:=\F[\omega]/(r(\omega))$. Moreover $\K$ is the splitting field of $\Lambda_{p,q}$.
Applying Proposition \ref{prop:allidealsonesplits} we deduce that $I:=(\gamma_r)$ is a $3$-dimensional ideal of $\calW_{p,q,[r]}$
(the dimension here refers to the structure of $\L$-linear subspace).

Without loss of generality, we assume that $p$ splits with simple roots, which yields a nontrivial idempotent
$\alpha$ in $\F[a]$. Then $\alpha_r$ is a nontrivial idempotent in $\calW_{p,q,[r]}$,
and since $\calW_{p,q,[r]}/I$ is a field we deduce that $I$ contains one of $\alpha_r$ and $\alpha_r^\star$.
Replacing $\alpha_r$ by $\alpha_r^\star$ if necessary, we can assume without loss of generality that
$\alpha_r \in I$.
Finally, because $q$ has a double root in $\L$ we deduce from Proposition \ref{prop:allidealsonesplits}
that there exists $\beta_r \in \L[b_r] \setminus \L$ such that $N_r(\beta_r)=0$ and $\mathfrak{R}_r=(\beta_r)$.
Note that $\tr_r(\beta_r)=0$.

Then we deduce from Lemma \ref{lemma:idealandconjugator} that
$\alpha_r \beta_r \alpha_r^\star=0$.
Yet
$$\alpha_r \beta_r \alpha_r^\star=\langle \alpha_r,\beta_r\rangle \alpha_r -\alpha_r^2 \beta_r^\star
=\alpha_r \beta_r$$
because $\langle \alpha_r,\beta_r\rangle=0$ (remember that $\beta_r \in \mathfrak{R}_r$).
Hence $\alpha_r \beta_r=0$. However $\beta_r=s_1+s_2 b_r$ for some $s_1 \in \L$ and some $s_2 \in \L^\times$,
and hence $\alpha_r \beta_r=s_1 \alpha_r+s_2 \alpha_r b_r$ has at least one nonzero coordinate in the basis $(1,\alpha_r,b_r,\alpha_r b_r)$
of the $\L$-vector space $\calW_{p,q,[r]}$.

We deduce from this \emph{reductio ad absurdum} that $\gamma_r \in \mathfrak{R}_r$, i.e., $\gamma \in \mathfrak{J}_r$.
\end{proof}

\subsection{Case IV: When one of $p$ and $q$ splits with simple roots, and the other one is inseparable}

Here we assume that $p$ splits with simple roots and $q$ is inseparable.
In that situation $\Lambda_{p,q}$ is irreducible, and the sole basic $C$-automorphism is the pseudo-adjunction $\Phi_\star$.
So here we take $r:=\Lambda_{p,q}$.

Here is our first key observation:
$$\forall (x,y)\in (\mathfrak{J}_r)^2, \; xy \in (r(\omega)).$$
Indeed, let $x,y$ in $\mathfrak{R}_r$. Here $q$ splits over $\L:=\F[\omega]/(r(\omega))$ with a double root,
which yields an element $\beta_r \in \L[b_r] \setminus \L$ such that $N_r(\beta_r)=0$ and $\tr_r(\beta_r)=0$.
Then by the last point in Proposition \ref{prop:allidealsonesplits}
there are elements $x_1$ and $y_1$ of $\calW_{p,q,[r]}$ such that $x=x_1 \beta_r$ and $y=\beta_r y_1$.
Then $xy=0$ because $\beta_r^2=-N_r(\beta_r)=0$.
This yields the stated result.

Next, we use this observation as follows:

\begin{lemma}
Let $\Phi \in \Aut_C(\calW_{p,q})$ be such that $n_r(\Phi)>0$.
Then $n_r(\Phi \circ \Phi_\star)< n(\Phi)$.
\end{lemma}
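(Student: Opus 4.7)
The plan is to exploit the collapsing phenomenon when we multiply a normalized conjugator of $\Phi$ with the normalized conjugator $[a,b]$ of $\Phi_\star$, using the key observation that the product of any two elements of $\mathfrak{J}_r$ lies in $(r(\omega))$.

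First, I would pick a normalized conjugator $\gamma$ of $\Phi$. Since $n_r(\Phi) > 0$, Lemma \ref{lemma:normalizedconjugatorinJr} guarantees $\gamma \in \mathfrak{J}_r$. We also know from Section \ref{section:pseudoadjunction} and Lemma \ref{lemma:commutator} that $[a,b]$ is a normalized conjugator of $\Phi_\star$ with $N([a,b]) = -r(\omega)$, and crucially that $[a,b] \in \mathfrak{J}_r$. The key observation recalled just before the lemma then forces $\gamma\,[a,b] \in (r(\omega))$.

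Next, let $\gamma'$ be a normalized conjugator of $\Phi \circ \Phi_\star$. By Lemma \ref{lemma:conjugatormultiplicativity} (or rather the identity $\gamma_1\gamma_2 = s\gamma$ used in its proof), we have $\gamma\,[a,b] = s\,\gamma'$ for some $s \in C \setminus \{0\}$. Writing $\gamma\,[a,b] = r(\omega)\,z$ with $z \in \calW_{p,q}$, we get $s\,\gamma' = r(\omega)\,z$; since $\gamma'$ is normalized, its coefficients in the deployed basis $(1,a,b,ab)$ are coprime in $C$, and hence $r(\omega)$ must divide $s$. Write $s = r(\omega)\,s_2$ for some $s_2 \in C \setminus \{0\}$.

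Finally, I would take norms in the identity $\gamma\,[a,b] = s\,\gamma'$, yielding
\[
N(\gamma)\,N([a,b]) = s^2\,N(\gamma'),\quad \text{i.e.}\quad -N(\gamma)\,r(\omega) = r(\omega)^2\, s_2^2\, N(\gamma'),
\]
so that $N(\gamma) \sim r(\omega)\,s_2^2\,N(\gamma')$. Comparing $r(\omega)$-adic valuations gives $n_r(\Phi) = 1 + 2\,v_r(s_2) + n_r(\Phi \circ \Phi_\star)$, whence $n_r(\Phi \circ \Phi_\star) \leq n_r(\Phi) - 1 < n_r(\Phi)$, as desired. The only delicate point is the passage from $\gamma\,[a,b] \in (r(\omega))$ to $r(\omega) \mid s$, which rests on the fact that $\gamma'$ is normalized as an element of the free $C$-module $\calW_{p,q}$; all the rest is a routine valuation count.
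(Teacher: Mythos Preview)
Your proof is correct and follows essentially the same approach as the paper's: both use Lemma~\ref{lemma:normalizedconjugatorinJr} (and Lemma~\ref{lemma:commutator}) to place $\gamma$ and $[a,b]$ in $\mathfrak{J}_r$, invoke the key observation that products in $\mathfrak{J}_r$ land in $(r(\omega))$ to force $r(\omega)\mid s$ in $\gamma[a,b]=s\gamma'$, and then compare norms using $N([a,b])=-r(\omega)$. Your version spells out the valuation count a bit more explicitly, but the argument is the same.
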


\begin{proof}
Consider indeed a normalized conjugator $\gamma$ associated with $\Phi$.
Here $\Phi_\star \neq \id$ because $p$ splits with simple roots.

Remember that $[a,b]$ is a normalized conjugator of $\Phi_\star$. Moreover
$n_r(\Phi_\star)=1$ because $\Lambda_{p,q}=r$.
By Lemma \ref{lemma:normalizedconjugatorinJr} we find that both
$\gamma$ and $[a,b]$ belong to $\mathfrak{J}_r$. Then the previous point yields
$ \gamma [a,b] \in (r(\omega))$.
Now, taking a normalized conjugator $\gamma_1$ of $\Phi \circ \Phi_\star$, we
deduce that $\gamma [a,b]= s\gamma_1$ for some $s \in C$ that is a multiple of $r(\omega)$.
Hence $N(\gamma_1)$ divides $\frac{N(\gamma [a,b])}{r(\omega)^2}\cdot$
Remembering that $N([a,b])=-\Lambda_{p,q}(\omega)=-r(\omega)$, we deduce that
$n_r(\Phi \circ \Phi_\star) < n_r(\Phi)$.
\end{proof}

Now, we can easily conclude.
Let $\Phi \in \Aut_C(\calW_{p,q}) \setminus \Inn(\calW_{p,q})$.
Then $n_r(\Phi)>0$ and we deduce from the previous lemma that
$n_r(\Phi \circ \Phi_\star)< n_r(\Phi)$. If
$\Phi \circ \Phi_\star$ were non-inner, then applying the same principle once more would yield
$n_r((\Phi \circ \Phi_\star) \circ \Phi_\star)<n_r(\Phi \circ \Phi_\star)<n_r(\Phi)$, which is absurd because $(\Phi_\star)^2=\id$.
Hence $\Phi \circ \Phi_\star$ is inner and $\Phi=(\Phi \circ \Phi_\star) \circ \Phi_\star$.
The proof is therefore complete in that case.

\subsection{Case V: When exactly one of $p$ and $q$ splits with a double root}

We finish the proof with the last case, which is the most difficult one.
Without loss of generality, we assume that $q=t^2$ and that $p$ does not split with a double root.
Hence $\Lambda_{p,q}=t^2$. Throughout, we set $r:=t$, and for a $C$-automorphism $\Phi$
we denote by $n(\Phi)$ the valuation of $\omega$ in $N(\gamma)$ for an arbitrary normalized conjugator $\gamma$ of $\Phi$.
Here we know from Proposition \ref{prop:allidealsonesplits} that $\mathfrak{R}_r$ is the $2$-sided ideal generated by $b_r$,
but also the left-ideal and the right-ideal generated by it.

At this point, the only valid information on the possible values of $n(\Phi)$ is that if $\Phi$
is the pseudo-adjunction and $p$ is separable, then $n(\Phi)=2$.
Yet, we will also use Lemma \ref{lemma:normalizedconjugatorinJr} to obtain the following information:

\begin{lemma}\label{lemma:nphi>=2}
Let $\Phi \in \Aut_C(\calW_{p,q})$ be such that $n(\Phi) > 0$.
Then $n(\Phi) \geq 2$.
\end{lemma}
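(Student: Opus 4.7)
The plan is to exploit two facts peculiar to the current case: first, $q=t^2$ forces $N(b)=0$; second, $\mathfrak{R}_r$ is generated by $b_r$, even one-sidedly, by point (d) of Proposition \ref{prop:allidealsonesplits}. The desired jump from $n(\Phi)\geq 1$ to $n(\Phi)\geq 2$ will then fall out of Lemma \ref{lemma:normmodsquare} applied to a very simple approximation of $\gamma$ modulo $(\omega)$.

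Concretely, I would pick a normalized conjugator $\gamma$ of $\Phi$ and apply Lemma \ref{lemma:normalizedconjugatorinJr} to get $\gamma\in\mathfrak{J}_r$, i.e., $\gamma_r\in\mathfrak{R}_r$. Since $\mathfrak{R}_r=\calW_{p,q,[r]}\cdot b_r$ by Proposition \ref{prop:allidealsonesplits}(d), I can write $\gamma_r = x_r\,b_r$ for some $x\in\calW_{p,q}$, which lifts to the congruence
\[
\gamma \equiv x\,b \quad \text{mod}\ (\omega).
\]
Because $b\in\mathfrak{J}_r$ and $\mathfrak{J}_r$ is a two-sided ideal, the approximant $xb$ lies in $\mathfrak{J}_r$, so Lemma \ref{lemma:normmodsquare} gives $N(\gamma)\equiv N(xb) \ \text{mod}\ (\omega^2)$. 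But the norm is multiplicative and $N(b)=N(q)=0$, so $N(xb)=N(x)N(b)=0$. Hence $\omega^2$ divides $N(\gamma)$, which is exactly $n(\Phi)\geq 2$.

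I expect no real obstacle: the argument is a two-line computation once one has the one-sided description of $\mathfrak{R}_r$ as $\calW_{p,q,[r]}\,b_r$ in hand. The only point that could feel delicate is that we are using the one-sided generation of $\mathfrak{R}_r$ by $b_r$, which is available here precisely because $q$ splits with a double root while $p$ does not — and this is the case covered by point (d) of Proposition \ref{prop:allidealsonesplits}.
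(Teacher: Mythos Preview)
Your proof is correct and follows essentially the same approach as the paper: invoke Lemma \ref{lemma:normalizedconjugatorinJr} to place $\gamma_r$ in $\mathfrak{R}_r$, use the one-sided description $\mathfrak{R}_r=\calW_{p,q,[r]}\,b_r$ from Proposition \ref{prop:allidealsonesplits}(d) to write $\gamma\equiv zb$ mod $(\omega)$, then combine $N(zb)=N(z)N(b)=0$ with Lemma \ref{lemma:normmodsquare} to conclude $\omega^2\mid N(\gamma)$. The only cosmetic difference is that the paper applies Lemma \ref{lemma:normmodsquare} with $\gamma\in\mathfrak{J}_r$ as the element of $\mathfrak{J}_r$, while you use $xb\in\mathfrak{J}_r$; both are valid.
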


\begin{proof}
Let $\gamma$ be a normalized conjugator of $\Phi$. We have seen in Lemma \ref{lemma:normalizedconjugatorinJr}
that $\gamma_r \in \mathfrak{R}_r$.
Hence $\gamma \equiv z b$ mod $(\omega)$ for some $z \in \calW_{p,q}$.
Yet $N(z b)=N(z) N(b)=0$ and $\gamma \in \mathfrak{J}_r$.
It follows from Lemma \ref{lemma:normmodsquare} that $N(\gamma) \equiv 0$ mod $(\omega^2)$,
and hence $n(\Phi) \geq 2$.
\end{proof}

Next, an important property of the ideal $\mathfrak{J}_r$ will be used repeatedly:

\begin{lemma}\label{lemma:productJr}
One has $xy\in \omega \mathfrak{J}_r$ for all $x,y$ in $\mathfrak{J}_r$.
\end{lemma}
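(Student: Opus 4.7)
My plan is to pull back the ideal-theoretic description of $\mathfrak{R}_r$ furnished by Proposition \ref{prop:allidealsonesplits}(d). Since here $q=t^2$, the polynomial $q$ splits over $\L=\F[\omega]/(\omega)$ with a double root, and $b_r$ is a zero-divisor in $\L[b_r]$ with $N_r(b_r)=0$. Point (d) of that proposition then gives the two simultaneous factorisations
$$\mathfrak{R}_r=\calW_{p,q,[r]}\cdot b_r=b_r\cdot\calW_{p,q,[r]}.$$
Lifting these identities back to $\calW_{p,q}$ through the quotient map, and recalling that $r(\omega)=\omega$, I obtain the two descriptions
$$\mathfrak{J}_r=\calW_{p,q}\cdot b+\omega\calW_{p,q}=b\cdot\calW_{p,q}+\omega\calW_{p,q}.$$

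Given $x$ and $y$ in $\mathfrak{J}_r$, I would then apply the first description to $x$ and the second to $y$, writing $x=ub+\omega z_1$ and $y=bv+\omega z_2$ for some $u,v,z_1,z_2$ in $\calW_{p,q}$. Expanding the product gives
$$xy=ub\cdot bv+\omega\,ub\,z_2+\omega\,z_1\,bv+\omega^2 z_1z_2,$$
and the basic identity $b^2=q(b)=0$ annihilates the leading term $ub\cdot bv$. Setting $w:=ubz_2+z_1bv+\omega z_1z_2$, the identity becomes $xy=\omega w$. The first two summands of $w$ lie in $\mathfrak{J}_r$ because $ub$ and $bv$ both belong to $\mathfrak{J}_r$ and $\mathfrak{J}_r$ is a two-sided ideal, while $\omega z_1z_2 \in (\omega)\subseteq\mathfrak{J}_r$ trivially. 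Hence $w\in\mathfrak{J}_r$ and $xy=\omega w\in\omega\mathfrak{J}_r$, as claimed.

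There is essentially no obstacle beyond invoking Proposition \ref{prop:allidealsonesplits}(d) in the right form: the key point is that $\mathfrak{R}_r$ is simultaneously the left ideal and the right ideal generated by $b_r$, which is precisely what allows one to arrange a copy of $b^2$ sitting in the middle of the product $xy$. Without point (d) the statement would be far less transparent, but with it in hand the entire proof reduces to a one-line computation.
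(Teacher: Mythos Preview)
Your proof is correct and follows essentially the same approach as the paper's: both arguments write $\mathfrak{J}_r$ as $(\omega)$ plus a set of left multiples of $b$, and simultaneously as $(\omega)$ plus a set of right multiples of $b$, so that in the product $xy$ a factor $b^2=0$ appears in the middle. The only cosmetic difference is that the paper writes the sharper decompositions $\mathfrak{J}_r=(\omega)+\F[a]\,b=(\omega)+b\,\F[a]$ (using that $\mathfrak{R}_r$ is $2$-dimensional over $\F$), whereas you use the coarser $\mathfrak{J}_r=(\omega)+\calW_{p,q}\,b=(\omega)+b\,\calW_{p,q}$ lifted directly from Proposition~\ref{prop:allidealsonesplits}(d); either version suffices.
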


\begin{proof}
Indeed we can write $\mathfrak{J}_r=(\omega)+b \F[a]$ and $\mathfrak{J}_r=(\omega)+\F[a] b$.
Let then $x$ and $y$ belong to $\mathfrak{J}_r$. Then $x \equiv x'b$ mod $(\omega)$ and
$y \equiv by'$ mod $(\omega)$ for some $x'$ and $y'$ in $\F[a]$.
Hence $x y \equiv x'bby'$ mod $\omega \mathfrak{J}_r$, which completes the proof because $b^2=0$.
\end{proof}

Now, let $\Phi \in \Aut_C(\calW_{p,q})$ be such that $n(\Phi) > 0$, to the effect that $n(\Phi) \geq 2$ (by Lemma \ref{lemma:nphi>=2}),
and let $\gamma$ be an associated normalized conjugator.
By Lemma \ref{lemma:normalizedconjugatorinJr} we can find $\alpha \in \F[a] \setminus \F$ such that $\gamma \equiv b \alpha^\star$ mod $(\omega)$.
Then we can choose $u \in \F[a]+b \F[a]$ such that
$$\gamma \equiv b\alpha^\star+\omega u \quad \text{mod} \quad (\omega^2)$$
and we can further split
$$u=u_1+b u_2 \quad \text{with $u_1 \in \F[a]$ and $u_2 \in \F[a]$.}$$
Now we can better analyze the identity
$$\forall x \in \calW_{p,q}, \; \gamma x \gamma^\star \equiv 0 \quad \text{mod} \; (\omega^2).$$
Like in the previous cases, we successively reduce this identity to
$$\forall x \in \calW_{p,q}, \; (b \alpha^\star) x (b \alpha^\star)^\star
+\omega(u x (b \alpha^\star)^\star
+(b \alpha^\star) x u^\star) \equiv 0 \quad \text{mod} \; (\omega^2)$$
and to
\begin{equation}\label{eq:doublerootsx1}
\forall x \in \calW_{p,q}, \; (b \alpha^\star) x (b \alpha^\star)^\star \equiv \omega u(x-x^\star) \alpha b \quad \text{mod} \; (\omega^2)
\end{equation}
by using the fact that $\langle b\alpha^\star, u x^\star\rangle \equiv 0$ mod $(\omega)$ and that
$b^\star=-b$.
Finally, it is not difficult to check that \eqref{eq:doublerootsx1} only requires an analysis for $x \in \F[a]+b \F[a]$, which is a direct factor of $(\omega)$ in the additive group $\calW_{p,q}$.

We note that \eqref{eq:doublerootsx1} gives us no information for $x=1$ (the left-hand side is then $N(b\alpha^\star)=0$, and the right-hand side vanishes, obviously). It neither gives us any information for $x \in b \F[a]$, as in that case $x$ and $x^\star$ belong to $\mathfrak{J}_r$,
and by Lemma \ref{lemma:productJr} (applied twice for the left-hand side) we have $(b \alpha^\star) x (b \alpha^\star)^\star \equiv 0$ mod $(\omega^2)$,
and by the same lemma $\omega u(x-x^\star) \alpha b \equiv 0 \quad \text{mod} \quad (\omega^2)$.
Hence our only hope to retrieve meaningful information is to apply \eqref{eq:doublerootsx1} to $x=a$
(and not even $\alpha$, which at this point can very well belong to $\F$!).
Now on the one hand,
$$(b \alpha^\star) a (b \alpha^\star)^\star=\langle \alpha^\star a \alpha,b\rangle b
-b^2 (\alpha^\star a \alpha)^\star=N(\alpha) \omega b,$$
and on the other hand
$$u(a-a^\star) \alpha b \equiv u_1 (a-a^\star) \alpha b \quad \text{mod} \; (\omega)$$
by using Lemma \ref{lemma:productJr} once more. Because $u_1(a-a^\star) \alpha \in \F[a]$, we extract the identity
\begin{equation}\label{eq:doublerootsx1refined}
u_1(a-a^\star)\alpha=N(\alpha).
\end{equation}
This immediately discards the possibility that $p$ be inseparable.
Indeed, in that case $\alpha$ would be invertible, leading to $N(\alpha) \neq 0$, whereas $a-a^\star=0$.

Hence $p$ is separable, and in particular $a-a^\star$ is a nonzero element of $\F[a]$ with trace $0$,
and again because $p$ is separable and does not have a double root in $\F$ (thanks to our starting assumption) the element
$a-a^\star$ is invertible.

Now, we split the discussion in two cases, whether $\alpha$ is invertible or not.

\vskip 3mm
\noindent \textbf{Case 1: $\alpha$ is invertible.} \\
Since $a-a^\star$ is invertible, equation \eqref{eq:doublerootsx1refined} fully determines $u_1$ as a function
of $\alpha$. However, we cannot draw any condition on $u_2$ from \eqref{eq:doublerootsx1}.

Since $p$ has simple roots in $\K$ the pseudo-adjunction automorphism $\Phi_\star$ is not the identity,
to the effect that $[a,b]$ is an associated normalized conjugator.
We shall use the collapsing phenomenon once more to prove that $\Phi \circ \Phi_\star$ is inner.
For this, almost everything hinges on the observation that $u_1$ was fully determined by $\alpha$.
To start with, we remember from identity \eqref{eq:decompcommutator} in Section \ref{section:pseudoadjunction} that
$$[a,b]=-[b,a]=-\omega+\tr(a)\,b-2ba=b(a^\star-a)-\omega.$$
Hence $\Phi_\star$ satisfies the very assumptions
we have just analyzed! Now we can replace $\Phi$ with $\Phi':=\Phi \circ i_{\alpha (a^\star-a)}$,
which does not change the problem of decomposing $\Phi$ as the product of an inner automorphism with $\Phi_\star$,
but reduces the situation to the one where $\alpha=a-a^\star$.

Hence, we lose no generality in assuming that $\alpha=a^\star-a$, to the effect that
$\gamma \equiv [a,b]$ mod $(\omega)$. And now, since we have seen that the component $u_1$ is fully determined by $\alpha$,
by applying this observation to $\Phi$ and $\Phi_\star$ we can confidently
infer that $\gamma \equiv [a,b]$ mod $\omega \mathfrak{J}_r$.

We will now conclude by taking advantage of the collapsing phenomenon.
Since $[a,b]$ belongs to $\mathfrak{J}_r$ (see Lemma \ref{lemma:commutator}), we deduce from Lemma \ref{lemma:productJr} that
$\gamma [a,b] \equiv [a,b]^2$ mod $\omega^2 \mathfrak{J}_r$.
Since $\tr([a,b])=0$, we have $[a,b]^2=-N([a,b])=\Lambda_{p,q}(\omega)=\omega^2$.
Hence $\gamma [a,b]=\omega^2(1+z)$ for some $z \in \mathfrak{J}_r$, and we infer that
$1+z$ is a conjugator of $\Phi \circ \Phi_\star$.
Moreover since $\omega^4 N(1+z)=N(\gamma)N([a,b])=-\omega^2 N(\gamma)$ we find that
$N(1+z)$ divides $N(\gamma)$, and hence $N(1+z)$ is a power of $\omega$ multiplied by a nonzero scalar.
Remembering that $z \in \mathfrak{J}_r$, we see that $N(1+z) \equiv 1$ mod $(\omega)$ and conclude that
$N(1+z)=1$. In particular $1+z$ is normalized, and we conclude that $\Phi \circ \Phi_\star$ is inner.
This completes the proof in the case where $\alpha$ is invertible.
Note in particular that $n(\Phi)=2$.

\vskip 3mm
\noindent \textbf{Case 2: $\alpha$ is singular.} \\
In that case the polynomial $p$ splits with simple roots, and without loss of generality we will assume that $\alpha=a$
and that $a^2=a$.
We will prove that this situation leads to a contradiction,
but reaching such a contradiction will require congruences mod $(\omega^3)$, and hence
a hefty dose of additional computation.

First of all, we go right back to identity \eqref{eq:doublerootsx1refined}, which now simplifies as
$u_1 a=0$. Hence $u_1=\lambda a^\star$ for some $\lambda \in \F$.

A critical observation now is that $n(\Phi) \geq 3$. Indeed, as before we note that
$ba^\star \in \mathfrak{J_r}$ to obtain the congruence
$$N(\gamma)\equiv N(ba^\star+\lambda \omega a^\star+\omega b u_2) \quad \text{mod} \; (\omega^3).$$
Yet, thanks to $N(a)=0$ and $N(b)=0$, we find by expanding that
$$N(ba^\star+\lambda \omega a^\star+\omega bu_2)=
\lambda \omega^2 \langle a^\star,bu_2\rangle \equiv 0 \quad \text{mod} \; (\omega^3),$$
where the latter congruence comes from the observation that $bu_2 \in \mathfrak{J}_r$.
Combining the two previous congruences yields that $\omega^3$ divides $N(\gamma)$, i.e., $n(\Phi) \geq 3$.
Hence we now have the reinforced statement
\begin{equation}\label{eq:keyid3}
\forall x \in \calW_{p,q}, \; \gamma x \gamma^\star \equiv 0 \quad \text{mod} \; (\omega^3)
\end{equation}
and we will see that it yields a final contradiction.
Note that we could try using a collapsing argument, but unfortunately one can check that $\Phi \circ \Phi_\star$
satisfies exactly the same assumptions as $\Phi$ (the reader will easily compute that $\gamma[a,b]=\omega \gamma'$ for some normalized $\gamma'$).

The contradiction will actually come from applying identity \eqref{eq:keyid3} to $x=b$.
For this, we introduce $v \in \calW_{p,q}$ such that $\gamma=ba^\star+\omega u +\omega^2 v$, and we expand the left-hand side
of \eqref{eq:keyid3} to obtain
\begin{equation}\label{eq:ultimatecongruencemodomega3}
s_1+s_2+s_3+s_4 \equiv 0 \quad \text{mod} \; (\omega^3)
\end{equation}
for
$$s_1:=(ba^\star) b (ba^\star)^\star ; \quad s_2:=\omega (u b (ba^\star)^\star+(ba^\star) b u^\star);$$
$$s_3:=\omega^2 u b u^\star \quad \text{and} \quad s_4:=\omega^2 (vb (ba^\star)^\star+(ba^\star) b v^\star),$$

Let us analyze each summand separately.
\begin{itemize}
\item To start with, $s_1=ba^\star b a b^\star=\omega bab^\star=\omega^2 b$ thanks to $b^2=0$.
\item That $s_4 \equiv 0$ mod $(\omega^3)$ directly follows from Lemma \ref{lemma:productJr}.
\item Lemma \ref{lemma:productJr} also yields $s_3 \equiv \omega^2 u_1 b u_1^\star \equiv \lambda^2 \omega^2 a^\star ba$ mod $(\omega^3)$.
Yet $a^\star ba=\omega a-b^\star a^2=\omega a-b^\star a=\omega a+ba=\omega a+b-ba^\star$.
Hence
$$s_3 \equiv  \lambda^2 \omega^2 b-\lambda^2 \omega^2 ba^\star \quad \text{ mod $(\omega^3)$.}$$
\item Two additional rounds of Lemma \ref{lemma:productJr} yield
$s_2 \equiv \omega (u_1 b (ba^\star)^\star+(ba^\star) b u_1^\star)$ mod $(\omega^3)$.
Now we compute
$$u_1 b (ba^\star)^\star=\omega u_1 b-u_1 b(b a^\star)=\lambda \omega a^\star b$$
and hence
$$u_1 b (ba^\star)^\star=\lambda \omega^2-\lambda \omega b^\star a=\lambda \omega^2+\lambda \omega b a.$$
Besides $(ba^\star) b u_1^\star=\omega b u_1^\star=\lambda \omega ba$.
Hence, with $ba=b-ba^\star$ we deduce that
$$s_2 \equiv 2\lambda \omega^2\,b-2 \lambda \omega^2 ba^\star \quad \text{mod} \; (\omega^3).$$
\end{itemize}
Hence we derive from \eqref{eq:ultimatecongruencemodomega3} that
$$\omega^2(1+\lambda)^2\, b-\omega^2 \lambda(\lambda+2)\, ba^\star \equiv 0 \quad \text{mod} \; (\omega^3).$$
Extracting the coefficients in the deployed basis $(1,b,a^\star,ba^\star)$ finally yields the two identities
$(1+\lambda)^2=0$ and $\lambda(\lambda+2)=0$, which yields $1=0$ by subtracting!

This final contradiction completes the proof of the Automorphisms Theorem when exactly one of $p$ and $q$ splits with a double root.
The proof is therefore complete in all cases, at last!

\section{Units in the free Hamilton algebra (part 2)}\label{section:units2}

In this section, we pick up the study of the group of units $\calW_{p,q}^\times$
where we left it at the end of Section \ref{section:units1}. There, we had seen
that the methods were insufficient to fully understand the group $\calW_{p,q}^\times$ when at least one of $p$ and $q$ splits,
i.e., in that case some units are not products of basic units, and we must therefore find a larger generating subset of units.
Very simply, this will involve a generalization of the examples of units we have considered in Section \ref{section:units1counterexamples}.

\subsection{Semi-basic units}\label{section:semibasicunits}

To define our larger generating subset, we start by constructing
special units that are not basic in general.

Let $\alpha$ be a nonzero basic element with norm $0$ (i.e., a zero divisor in a basic subalgebra).
For all $x \in \calW_{p,q}$ we note that
$$N(1+\alpha x^\star)=N(1)+\langle 1,\alpha x^\star\rangle+N(\alpha)N(x^\star)=1+\langle \alpha,x\rangle,$$
and in particular $1+\alpha x^\star$ has norm $1$ if and only if $\langle \alpha,x\rangle=0$.
We say that $1+\alpha x^\star$ is a \textbf{semi-basic} unit attached to $\alpha$ whenever $\langle \alpha,x\rangle=0$.
In that case, we can write its inverse indifferently as $1+x\alpha^\star$ or as $1-\alpha x^\star$.

\begin{Not}
Let $\alpha$ be a basic zero divisor.
We denote by $\SB(\alpha)$ the set of all semi-basic units attached to $\alpha$.
\end{Not}

Obviously $\SB(\lambda \alpha)=\SB(\alpha)$ for all $\lambda \in \F^\times$, so the set
$\SB(\alpha)$ is really a function of the line $\F \alpha$, and in what follows we will almost always consider only the
situations where $\alpha$ satisfies $\alpha^2=\alpha$ or $\alpha^2=0$.

Noting that
$$(1+\alpha x^\star)(1+\alpha y^\star)=1+\alpha (x+y)^\star+\alpha (\langle x,\alpha\rangle-\alpha^\star x)y^\star =1+\alpha(x+y)^\star$$
for all $x,y$ in $\calW_{p,q}$ such that $\langle \alpha,x\rangle=\langle \alpha,y\rangle=0$, we find that
$\SB(\alpha)$ is a commutative subgroup of $\calW_{p,q}^\times$.

In any case, we shall recognize in $\SB(\alpha)$ a subgroup of the group of units of one of the
algebras that were introduced in Section \ref{section:newsubalgebras}.

\begin{itemize}
\item First of all, if $\alpha$ is idempotent we recognize that $\SB(\alpha)=1+\alpha^\sharp$, which can be seen as a subgroup of
the group of units of the algebra $\calH(\alpha,C)$,
and also as a subgroup of the group of units of $\calU(z)$ where $z$ stands for an arbitrary normalized vector in $\alpha^\sharp$.
Moreover, in choosing an arbitrary nonscalar element $\beta$ in the opposite basic subalgebra of $\F[\alpha]$, one sees
that $z:=\alpha y^\star$ is a correct choice if we take $y:=\langle \alpha,\beta\rangle-\beta$.

\item Assume now that $\alpha^2=0$. Denote by $\calD$ the basic subalgebra opposite to $\F[\alpha]$.
Let $y \in \calW_{p,q}$ and split $y=y'+y'' \alpha$ where $y'$ and $y''$ belong to $\Vect_C(\calD)$.
Then $\langle \alpha,y\rangle=\langle \alpha,y'\rangle$, and $\langle \alpha,y'\rangle=0$ if and only if $y' \in C$.
Finally $\alpha y^\star=\alpha (y')^\star$.
Hence, in that case the semi-basic units attached to $\alpha$ are the elements of $1+C \alpha$,
which is a subgroup of the group of units of $\calU(\alpha)$.
\end{itemize}

In any case, the group of semi-basic units attached to $\alpha$ is isomorphic to $(C,+)$.

\begin{Rem}\label{remark:leadingsemibasic}
Recalling the terminology of leading vectors from Section \ref{section:units1},
it will be useful at some point to observe that $\alpha$ is leading for every \emph{nonbasic}
vector of $\SB(\alpha)$, which is easily seen from the previous study.
\end{Rem}

We can already state one of our main results, but proving it is premature:

\begin{theo}\label{theo:refinedunits}
The group of units of $\calW_{p,q}$ is generated by the basic units and the semi-basic units.
\end{theo}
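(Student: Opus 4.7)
My plan is to refine the retracing method of Section \ref{section:units1} by enlarging the toolkit of elementary conjugations to include semi-basic units, so as to overcome exactly the failure modes identified in Section \ref{section:units1counterexamples}. Given a unit $z \in \calW_{p,q}^\times$, the strategy is to find a finite sequence of basic and semi-basic conjugations that turns both $zaz^{-1}$ and $zbz^{-1}$ into basic vectors simultaneously; the conclusion will then follow from the Automorphisms Theorem combined with the uniqueness statement in Theorem \ref{theo:uniquenessdecompauto}. Observe that when both $p$ and $q$ are irreducible there are no basic zero divisors and the statement reduces to the Weak Units Theorem, so we may assume throughout that at least one of $p$ and $q$ splits or degenerates.

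The first step is to generalize Lemma \ref{lemma:conjugatebasicunit} by studying how a quadratic element $x$ transforms under conjugation by a semi-basic unit $u \in \SB(\alpha)$ attached to a basic zero divisor $\alpha$ in the leading basic subalgebra of $x$. The analogue I aim to prove is the following: if $\alpha$ is a leading vector of $x$ with $N(\alpha)=0$ (so the ordinary retracing algorithm fails at this step by Corollary \ref{cor:differenceofdistances}), then there exists $u \in \SB(\alpha)$ such that $\delta(u^{-1}xu) < \delta(x)$. The idea is that $\SB(\alpha)$ is a large commutative subgroup of $\calW_{p,q}^\times$ isomorphic to the additive group $(\F[\omega],+)$ (see Section \ref{section:semibasicunits}), and its conjugation action on the highest-degree coefficient of $x$ in an adapted deployed basis has enough freedom to kill the top-degree term that the basic algorithm was unable to touch. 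The obstruction that prevented $\delta$ from decreasing when $N(\alpha)=0$ is precisely the freedom that a semi-basic unit provides, so tuning $y$ in $u=1+\alpha y^\star$ should yield the required cancellation; Remark \ref{remark:leadingsemibasic} moreover suggests the natural candidates for $u$.

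Next, I would bundle this into an enhanced retracing algorithm that is identical to the one in Section \ref{section:units1:retracing} except that whenever the computed leading vector $\alpha$ is a zero divisor, we replace the conjugation by $\alpha$ with conjugation by the semi-basic unit supplied by the preceding step. The absolute distance strictly decreases at each iteration, so the procedure terminates with a basic vector and a product of basic and semi-basic units that conjugates $x$ to it. Running this on $x = zaz^{-1}$ and $y = zbz^{-1}$ as in the proof of Proposition \ref{prop:doubleretracing}, and using the fact that inner automorphisms preserve the degree-one invariant $\langle x,y\rangle$, I produce an element $\gamma$ in the subgroup $G$ generated by basic and semi-basic units such that $i_{\gamma z}$ is a basic automorphism. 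Theorem \ref{theo:uniquenessdecompauto} then forces $i_{\gamma z} = \id$, hence $\gamma z \in \F^\times$ and $z \in \F^\times \gamma^{-1} \subset G$.

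The main obstacle will be the first step, that is, the quantitative analysis of the effect of a semi-basic conjugation on the expansion of $x$ in an adapted deployed basis. The computation is inevitably more delicate than in Lemma \ref{lemma:conjugatebasicunit} because a semi-basic unit does not lie in a single basic subalgebra, so the passage to a new adapted basis will not keep the structure quite as tidy, and one has to track carefully how the $C$-polynomial coefficients and their degrees transform. A secondary delicate point is verifying that applying semi-basic conjugations in the basic subalgebra containing the leading vector cannot inadvertently raise the distance through the trailing subalgebra (which is why the symmetric statement (b) of Lemma \ref{lemma:conjugatebasicunit} must also be revisited in the semi-basic setting).
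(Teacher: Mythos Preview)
Your overall plan matches the paper's approach closely, but there is a genuine obstruction you have missed in your first step. The claim that whenever the leading vector $\alpha$ of a quadratic $x$ satisfies $N(\alpha)=0$ there exists $u\in\SB(\alpha)$ with $\delta(u^{-1}xu)<\delta(x)$ is \emph{false}. Indeed, if $x$ lies in the commutative subalgebra $\F+\SB(\alpha)$ (what the paper calls a \textbf{special degenerate} element attached to $\alpha$), then every $u\in\SB(\alpha)$ commutes with $x$, so conjugation by $u$ fixes $x$ and $\delta$ cannot decrease. The paper's Lemma \ref{lemma:retracingstepspecial} carries precisely this extra hypothesis ``$x$ is not special degenerate'', and the Remark following it explains why it cannot be dropped. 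Your enhanced algorithm therefore can still fail.

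Consequently, the remainder of your argument needs an additional ingredient: you must show that the specific vectors $zaz^{-1}$ and $zbz^{-1}$ (and their successive conjugates) never become special degenerate. The paper handles this by a case analysis on $(p,q)$ (reducing to the cases $\F[a]$ a field, $a$ idempotent, or $a^2=b^2=0$) combined with two nontrivial facts: Lemma \ref{lemma:specialdegenerate}, which shows that a special degenerate element attached to a trace-zero $\alpha$ is never the image of a basic vector under an automorphism (via a normalization argument), and a determinant computation exploiting that $(\Phi(1),\Phi(a),\Phi(b),\Phi(ab))$ is a $C$-basis to rule out the idempotent obstruction when handling the second vector. Your invocation of Proposition \ref{prop:doubleretracing} does not suffice here, because that proposition relies on Corollary \ref{cor:differenceofdistances} ($|d_\calA(x)-d_\calB(x)|=1$), which is precisely the condition that fails when the leading vector is a zero divisor.
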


In fact, we will prove a much more powerful result on the structure of $\calW_{p,q}^\times$,
see Theorems \ref{theo:strongunits} and \ref{theo:semibasicunits} in the next section.

\subsection{Semi-basic subgroups}

Now, we attach a subgroup to each basic subalgebra, as follows, so as to extend the basic units.

\begin{Def}
Let $\calC$ be a basic subalgebra of $\calW_{p,q}$.
The \textbf{semi-basic subgroup} attached to $\calC$, denoted by $\SB(\calC)$, is defined as
the subgroup of units generated by:
\begin{itemize}
\item the basic units in $\calC$;
\item the semi-basic units that are associated with a zero divisor in $\calC$.
\end{itemize} By using only the generators of the second kind, we obtain a subgroup of $\SB(\calC)$
which we call the \textbf{special semi-basic subgroup} and denote by $\SSB(\calC)$.
\end{Def}

Of course, $\SB(\calC)$ equals $\calC^\times$ when $\calC$ is a field, but in any other case it is strictly larger than $\calC$,
as seen in Section \ref{section:units1counterexamples}.

Assume now that $\calC$ is not a field.
There are two subcases.

\begin{itemize}
\item If $\calC$ degenerates then it has a unique zero divisor $\alpha$ up to multiplication with a non-zero scalar,
and all its semi-basic units belong to $\Vect_C(\calC)$. It is then easily seen that $\SB(\calC)$
is the set of all elements of the form $\lambda+r \alpha$ with $r \in C$ and $\lambda \in \F^\times$,
and $\SB(\calC)$ is simply the group of units of the subring $\Vect_C(\calC)$, as well as the group
of units of the subalgebra $\calU(\alpha)$.

\item In contrast, if $\calC$ splits then it has, up to multiplication with nonzero scalars, exactly two zero divisors.
Then $\SB(\calC)$ is defined by three distinct subsets of generators, and it is easily seen that it is noncommutative.
\end{itemize}
In any case, $\SB(\calC)$ includes $\F^\times$.

Finally, we note that $\SSB(\calC)$ is a normal subgroup of $\SB(\calC)$. To see this, it suffices to check
that $\SSB(\calC)$ is invariant under conjugation by the basic units of $\calC$. But this is easy: take a zero divisor $\alpha$ in $\calC$, and take a basic unit $\gamma$ in $\calC^\times$.
Let $x \in \calW_{p,q}$ be such that $\langle \alpha,x\rangle=0$.
Then $\gamma(1+\alpha x^\star) \gamma^{-1}=1+\alpha (\gamma x^\star \gamma^{-1})=1+\alpha ((\gamma^{-1})^\star x \gamma^\star)^\star$
because $\gamma$ centralizes $\calC$, and since $\gamma(1+\alpha x^\star) \gamma^{-1}$ has norm $1$ it belongs to $\SB(\alpha)$.

We can now state the two ultimate structural results on the group of units:

\begin{theo}[Strong Units Theorem]\label{theo:strongunits}
The inclusions of $\SB(\F[a])$ and $\SB(\F[b])$ into $\calW_{p,q}^\times$ induce
an isomorphism between $\calW_{p,q}^\times$ and the amalgamated product $\SB(\F[a]) \underset{\F^\times}{*} \SB(\F[b])$.
\end{theo}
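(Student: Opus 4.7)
The plan is to prove both existence and uniqueness of reduced decompositions in $\calW_{p,q}^\times$ with respect to the generating system $\SB(\F[a]) \cup \SB(\F[b])$, which combined with the standard universal property of amalgamated products yields the stated isomorphism. Say that an expression $\gamma = \gamma_1 \cdots \gamma_n$ with each $\gamma_i \in \SB(\F[a]) \cup \SB(\F[b])$ is \emph{reduced} when no factor is scalar (for $n > 1$) and consecutive factors alternate between the two semi-basic subgroups. The universal property of the amalgamated product, combined with the obvious fact that $\F^\times = \SB(\F[a]) \cap \SB(\F[b])$ sits centrally in each side, yields a group homomorphism
$$\Psi : \SB(\F[a]) \underset{\F^\times}{*} \SB(\F[b]) \longrightarrow \calW_{p,q}^\times,$$
and the theorem amounts to showing $\Psi$ is bijective.

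For the surjectivity of $\Psi$, I would extend the retracing algorithm of Section \ref{section:units1}. Given $\gamma \in \calW_{p,q}^\times$, the algorithm tracks $x := N(\gamma)^{-1}\,\gamma\, a\, \gamma^\star$, repeatedly selects a leading vector $\alpha$ of the current $x$ and conjugates to reduce the absolute distance $\delta(x)$. Section \ref{section:units1counterexamples} showed that the plain algorithm fails precisely when $\alpha$ is a zero divisor, but the examples constructed there also point to the remedy: Remark \ref{remark:leadingsemibasic} ensures that $\alpha$ is still leading for every nonbasic element of $\SB(\alpha)$, and a direct computation patterned on Lemma \ref{lemma:conjugatebasicunit} should show that for a well-chosen semi-basic unit $\sigma \in \SB(\alpha)$ (depending on the next-to-leading coefficients of $x$ in an adapted deployed basis), the conjugate $\sigma^{-1} x \sigma$ has strictly smaller absolute distance. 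Iterating produces a sequence of basic and semi-basic conjugators that carries $x$ to a basic vector; the same argument as at the end of Section \ref{section:retracingfruits}, combined with Theorem \ref{theo:uniquenessdecompauto}, then packages $\gamma$ itself as a reduced product in $\SB(\F[a]) \cdot \SB(\F[b]) \cdot \ldots$

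For the injectivity of $\Psi$, suppose $\gamma_1 \cdots \gamma_n$ is a reduced expression of length $n \geq 1$ with $\gamma_n \in \SB(\calC)$ for some basic subalgebra $\calC$, and let $\beta$ be a nonscalar basic vector in the opposite subalgebra $\calD$. Setting $x := (\gamma_1 \cdots \gamma_n)\,\beta\,(\gamma_1 \cdots \gamma_n)^{-1}$, the plan is to show by finite induction on $k$ that $(\gamma_{n-k+1} \cdots \gamma_n)\,\beta\,(\gamma_{n-k+1} \cdots \gamma_n)^{-1}$ has absolute distance $k + 1$, with leading vector governed by $\gamma_{n-k+1}$. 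The base step $k=1$ decomposes into two cases: if $\gamma_n$ is basic, it reduces to part (b) of Lemma \ref{lemma:conjugatebasicunit}; if $\gamma_n$ is genuinely semi-basic of the form $1 + \alpha z^\star$, then a direct expansion using $\alpha^\star\alpha \in \F\alpha$ (or $0$) and $\langle \alpha,z\rangle = 0$ should exhibit $\alpha$ as the leading vector of the conjugate and bring its absolute distance up to $2$. The inductive step applies the same dichotomy. Hence $\delta(x) = n + 1$, which contradicts $\delta(\beta) = 1$ as soon as $\gamma_1 \cdots \gamma_n = 1$, forcing $n = 0$.

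The main obstacle will be the semi-basic analogue of Lemma \ref{lemma:conjugatebasicunit}: quantifying precisely how conjugation by a nonscalar semi-basic unit $1 + \alpha z^\star$ alters the absolute distance, the leading vector, and the leading basic subalgebra of a quadratic nonbasic $x$. Unlike conjugation by basic units, which acts as a similitude of the inner product in a very rigid way and raises/lowers $\delta$ by at most one, a semi-basic conjugation carries the extra parameter $z$ and its effect is very sensitive to whether $\alpha$ is idempotent or squares to zero, and to how the component of $z$ in the opposite basic subalgebra interacts with the leading coefficients of $x$ in a deployed basis adapted to $x$. Carrying out this bookkeeping carefully, while keeping track of the leading vector of the new conjugate so that the induction in the injectivity argument goes through cleanly, is where the bulk of the technical work will lie.
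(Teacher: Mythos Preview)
Your overall plan matches the paper's, but the injectivity argument has a structural gap. A nonscalar element $\gamma_i \in \SB(\calC)$ is in general \emph{not} a single basic unit or a single semi-basic unit $1+\alpha z^\star$: when $\calC$ splits, $\SB(\calC)$ is only the subgroup \emph{generated} by $\calC^\times \cup \SB(\alpha) \cup \SB(\alpha^\star)$, and a typical $\gamma_i$ is a long alternating word in these. So your base and inductive steps, which case-split on ``$\gamma_i$ basic'' versus ``$\gamma_i = 1+\alpha z^\star$'', do not cover the generic case, and the claim $\delta = k+1$ is already false quantitatively (conjugation by a single nonbasic semi-basic unit can raise $\delta$ by more than one; see Situations~1--2 in Sections~\ref{section:effectsemibasicsplit} and~\ref{section:effectsemibasicdegenerate}). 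The paper fixes this by inserting an extra layer: each $\gamma_i$ is first decomposed into a \emph{reduced chain} of basic and semi-basic factors (this is where the Semi-Basic Units Theorem enters), these chains are concatenated, and the step-by-step distance argument via Lemmas~\ref{lemma:effectofbasic} and~\ref{lemma:effectofsemibasic} is run at the level of the full chain rather than at the level of the $\gamma_i$'s (Lemma~\ref{lemme:ultimateuniqueness}). The conclusion is only that the final conjugate is nonbasic, which suffices.

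A smaller gap sits in your surjectivity sketch: the refined retracing step ``choose $\sigma \in \SB(\alpha)$ with $\delta(\sigma^{-1}x\sigma) < \delta(x)$'' fails precisely when $x$ is \emph{special degenerate} (Lemma~\ref{lemma:retracingstepspecial}), and this obstruction must be ruled out for the specific vectors you retrace. The paper does this in Theorem~\ref{theo:superdecompositiontheorem} via a case analysis on the types of $p$ and $q$ together with Lemma~\ref{lemma:specialdegenerate}; your proposal does not address it.
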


\begin{theo}[Semi-Basic Units Theorem]\label{theo:semibasicunits}
Let $\calC$ be a split basic subalgebra of $\calW_{p,q}$, and let $\alpha$ be a zero divisor in it.
Then:
\begin{enumerate}[(i)]
\item The basic subgroup $\calC^\times$ is a semi-direct factor of $\SSB(\calC)$ in $\SB(\calC)$.
\item The inclusions of $\SB(\alpha)$ and $\SB(\alpha^\star)$ in $\SSB(\calC)$ induces an
isomorphism between $\SSB(\calC)$ and the free product $\SB(\alpha) * \SB(\alpha^\star)$.
\end{enumerate}
\end{theo}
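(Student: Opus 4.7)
The plan is to refine the distance-based retracing method of Section~\ref{section:units1}, replacing conjugation by basic units with conjugation by semi-basic units.  As preliminaries, every non-scalar $u \in \SB(\alpha)$ satisfies $u^2 = 2u - 1$ (so $u$ is a non-basic quadratic unit with $\tr(u) = 2$ and $N(u) = 1$) and admits $\alpha$ as leading vector (Remark~\ref{remark:leadingsemibasic}); symmetric statements hold for $\SB(\alpha^\star)$.  One also checks $\SB(\alpha) \cap \SB(\alpha^\star) = \{1\}$: if $1 + \alpha x^\star = 1 + \alpha^\star y^\star$, left-multiplication by $\alpha$ yields $\alpha x^\star = \alpha\alpha^\star y^\star = 0$.

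The central step is a semi-basic analogue of Lemma~\ref{lemma:conjugatebasicunit}(b).  For every non-scalar $u \in \SB(\alpha)$ and every quadratic element $z$ whose leading basic subalgebra is $\calD$ (the basic subalgebra opposite to $\calC$), or whose leading basic subalgebra is $\calC$ with leading vector proportional to $\alpha^\star$, I would prove that $u z u^{-1}$ is quadratic, has leading basic subalgebra $\calC$ with leading vector proportional to $\alpha$, and satisfies $\delta(u z u^{-1}) > \delta(z)$.  The symmetric statement holds for $v \in \SB(\alpha^\star)$, producing leading vector proportional to $\alpha^\star$.  The main obstacle lies in establishing this lemma: writing $u = 1 + \alpha y^\star$, one expands
\[
u z u^{-1} = z + \alpha y^\star z - z \alpha y^\star - (\alpha y^\star)\, z\, (\alpha y^\star)
\]
and exploits the identities $\alpha y^\star \alpha = 0$, $\alpha^\star(\alpha y^\star) = 0$, and $(\alpha y^\star)^2 = 0$.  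Contrary to first guess, the last term $(\alpha y^\star)z(\alpha y^\star)$ is not negligible and in fact often dominates the linear contributions; a careful analysis in a deployed basis $(1, \alpha, \beta, \alpha\beta)$ for some $\beta \in \calD \setminus \F$ is needed to verify that, after all cancellations, the top-degree contribution pushes $\delta$ strictly upward and yields a new leading vector proportional to $\alpha$.

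Granted the lemma, part~(ii) follows by iteration.  Fix $\beta \in \calD \setminus \F$ and take a reduced alternating word $w = w_1 \cdots w_n$ ($n \geq 1$) in $\SB(\alpha) \cup \SB(\alpha^\star)$ with non-scalar entries.  Setting $z_0 := \beta$ and $z_k := w_{n-k+1}\, z_{k-1}\, w_{n-k+1}^{-1}$, an induction on $k$ using the lemma alternately (the leading vector of $z_k$ alternates between $\alpha$ and $\alpha^\star$ in lockstep with the alternation of $w$) gives $\delta(z_n) > 1$.  Hence $w \beta w^{-1} \neq \beta$, so $w \neq 1$; uniqueness of reduced decompositions then follows by the standard comparison argument, yielding $\SSB(\calC) \simeq \SB(\alpha) * \SB(\alpha^\star)$.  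For part~(i), the normality of $\SSB(\calC)$ in $\SB(\calC)$ and the factorization $\SB(\calC) = \calC^\times \cdot \SSB(\calC)$ follow directly from the definition of $\SB(\calC)$ together with the normality noted in Section~\ref{section:semibasicunits}.  For the intersection $\calC^\times \cap \SSB(\calC) = \{1\}$, any nontrivial $\gamma$ would, by part~(ii), admit a reduced alternating decomposition of length $n \geq 1$, forcing $\delta(\gamma\beta\gamma^{-1}) \geq 3$ via the iteration above; but $\gamma \in \calC^\times$ forces $\delta(\gamma\beta\gamma^{-1}) \leq 2$ by Lemma~\ref{lemma:conjugatebasicunit}(b), a contradiction.
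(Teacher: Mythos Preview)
Your approach is essentially the same as the paper's: both hinge on a conjugation lemma (the paper's Lemma~\ref{lemma:effectofsemibasic}) asserting that conjugating a quadratic element by a non-basic $u \in \SB(\alpha)$, when the current leading vector is not proportional to $\alpha$, produces a nonbasic element with leading vector $\alpha$ and strictly larger absolute distance; both then iterate starting from a basic vector $\beta$ in the opposite subalgebra.  The paper carries out the deployed-basis computation you anticipate in Sections~\ref{section:effectsemibasicsplit}--\ref{section:effectsemibasicdegenerate}.

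One point in your part~(i) argument needs care.  Your iteration, as stated, establishes only $\delta(\gamma\beta\gamma^{-1}) > 1$ (i.e.\ $\geq 2$) for $\gamma$ a nontrivial reduced word in $\SSB(\calC)$; yet you then invoke $\delta \geq 3$ to contradict the bound $\delta(\gamma\beta\gamma^{-1}) = 2$ coming from $\gamma \in \calC^\times \setminus \F^\times$.  The detailed split-case computation does in fact give a jump of at least $3$ per semi-basic conjugation (so $\delta \geq 4$ already after one step), but you have not recorded this quantitative bound in your lemma, only the strict increase.  A cleaner fix is to track the \emph{leading vector} rather than $\delta$: after any nontrivial reduced word in $\SSB(\calC)$, your lemma makes the leading vector of $\gamma\beta\gamma^{-1}$ a zero divisor ($\alpha$ or $\alpha^\star$), whereas for $\gamma \in \calC^\times \setminus \F^\times$ Lemma~\ref{lemma:conjugatebasicunit}(b) makes it the unit $\gamma$.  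The paper's own argument is slightly different again: it prepends $y^{-1}$ (for $y \in \calC^\times \setminus \F^\times$) to the semi-basic word, forming a longer mixed chain that starts with a basic-unit step, and applies its master uniqueness lemma (Lemma~\ref{lemme:ultimateuniqueness}) directly to that chain.
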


Combining the previous three theorems gives us a complete picture of $\calW_{p,q}^\times$, and the situation is even more remarkable
in terms of the group $\Inn(\calW_{p,q})$ of inner automorphisms:

\begin{cor}
The inclusions of $\mathrm{P}\SB(\F[a])$ and $\mathrm{P}\SB(\F[b])$ into $\Inn(\calW_{p,q})$ induce
an isomorphism between $\Inn(\calW_{p,q})$ and the free product
$\mathrm{P}\SB(\F[a]) * \mathrm{P}\SB(\F[b])$.
\end{cor}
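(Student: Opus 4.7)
The plan is to deduce the corollary directly from the Strong Units Theorem by projectivizing, using the elementary but essential fact that quotienting an amalgamated product by a common central subgroup yields the free product of the quotients.

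First I would identify $\Inn(\calW_{p,q})$ with $\calW_{p,q}^\times/\F^\times$. We have the standard surjection $\calW_{p,q}^\times \twoheadrightarrow \Inn(\calW_{p,q})$, $\gamma \mapsto (x\mapsto \gamma x\gamma^{-1})$, whose kernel consists of the central units of $\calW_{p,q}$. By Theorem \ref{theo:center} the center is $C=\F[\omega]$, and since $\omega$ is transcendental we have $C^\times=\F^\times$. Thus $\Inn(\calW_{p,q})\simeq \mathrm{P}\calW_{p,q}^\times$.

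Next I would invoke the Strong Units Theorem to get the canonical isomorphism $\calW_{p,q}^\times\simeq \SB(\F[a]) \underset{\F^\times}{*} \SB(\F[b])$. I would then appeal to the following general principle, which is the main technical point: if $A$ and $B$ are groups sharing a common subgroup $K$ lying in the center of both $A$ and $B$, then $K$ is central (hence normal) in $A*_K B$ -- indeed it commutes with every generator coming from $A$ or $B$ -- and moreover
\[
(A*_K B)/K \;\simeq\; (A/K) * (B/K).
\]
This isomorphism is immediate from a universal-property argument: both sides represent the functor sending a group $H$ to pairs of homomorphisms $A/K\to H$ and $B/K\to H$, since a pair of such homomorphisms is the same thing as a pair $A\to H$, $B\to H$ agreeing on $K$ and killing $K$, i.e.\ a homomorphism $A*_K B\to H$ trivial on $K$.

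Applying this with $A=\SB(\F[a])$, $B=\SB(\F[b])$, $K=\F^\times$ (which sits in the center of each $\SB(\F[\cdot])$ since $\F^\times$ is central in the whole of $\calW_{p,q}^\times$) gives $\mathrm{P}\calW_{p,q}^\times \simeq \mathrm{P}\SB(\F[a]) * \mathrm{P}\SB(\F[b])$, and composing with the isomorphism from the first step yields the claim, with the isomorphism induced by the two inclusions $\mathrm{P}\SB(\F[a])\hookrightarrow \Inn(\calW_{p,q})$ and $\mathrm{P}\SB(\F[b])\hookrightarrow \Inn(\calW_{p,q})$, as required. I do not anticipate any serious obstacle here: the heavy lifting is entirely contained in the Strong Units Theorem, and the rest is a short abstract-nonsense passage through the quotient.
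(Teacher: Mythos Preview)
Your proposal is correct and follows exactly the route the paper intends: the corollary is stated immediately after the Strong Units Theorem as a direct consequence, and your argument (identify $\Inn(\calW_{p,q})$ with $\calW_{p,q}^\times/\F^\times$ via the computation of central units, then apply the standard fact that $(A*_K B)/K\simeq (A/K)*(B/K)$ for central $K$) is precisely the intended projectivization step. The paper gives no further details, so your write-up in fact supplies what is left implicit there.
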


Moreover, if $\calC$ is a degenerate basic subalgebra of $\calW_{p,q}$ then
$\mathrm{P}\SB(\calC)$ is naturally isomorphic to $\SB(\alpha)$ for an arbitrary zero divisor $\alpha \in \calC$,
and hence $\mathrm{P}\SB(\calC)$ is commutative (and isomorphic to $(\F[t],+)$). Finally, the situation of a split basic subalgebra is described below:

\begin{cor}
Let $\calC$ be a split basic subalgebra of $\calW_{p,q}$, and $\alpha$ be a zero divisor in it.
Then:
\begin{enumerate}[(i)]
\item The projective basic units subgroup $\mathrm{P}\calC^\times$ is a semi-direct factor of $\mathrm{P}\SSB(\calC)$ in $\mathrm{P}\SB(\calC)$.
\item The projective subgroup $\mathrm{P}\SB(\alpha)$ is naturally isomorphic to $\SB(\alpha)$, and the
projective subgroup $\mathrm{P}\SB(\alpha^\star)$ is naturally isomorphic to $\SB(\alpha^\star)$.
\item The inclusions of $\mathrm{P}\SB(\alpha)$ and $\mathrm{P}\SB(\alpha^\star)$ in $\mathrm{P}\SB(\calC)$ induce
an isomorphism between $\mathrm{P}\SB(\calC)$ and the free product $\mathrm{P}\SB(\alpha) * \mathrm{P}\SB(\alpha^\star)$.
\end{enumerate}
\end{cor}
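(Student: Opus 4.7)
The plan is to derive this corollary from the Semi-Basic Units Theorem (Theorem \ref{theo:semibasicunits}) by a careful analysis of how the center $\F^\times$ intersects the relevant subgroups, so that projectivization preserves the structural decomposition.

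First I would address point (ii), which is the enabling technical lemma. The task is to show that $\SB(\alpha) \cap \F^\times = \{1\}$ for any zero divisor $\alpha$ in the split basic subalgebra $\calC$. Without loss of generality one may take $\alpha$ to be an idempotent, since $\SB(\alpha) = \SB(\lambda\alpha)$ for any $\lambda\in\F^\times$ and $\calC$ splits. Let $u = 1 + \alpha x^\star \in \SB(\alpha)$ with $\langle \alpha, x\rangle = 0$, and suppose $u = \lambda \in \F^\times$, i.e.\ $\alpha x^\star = \lambda - 1$. Left-multiplying by $\alpha^\star$ and using $\alpha^\star \alpha = N(\alpha) = 0$ gives $(\lambda-1)\alpha^\star = 0$, whence $\lambda = 1$ and thus $u = 1$. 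Therefore the canonical projection $\SB(\alpha) \twoheadrightarrow \mathrm{P}\SB(\alpha)$ is injective, hence an isomorphism; symmetrically for $\alpha^\star$.

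Next I would tackle (iii). By Theorem \ref{theo:semibasicunits}(ii), the inclusions induce $\SSB(\calC) \cong \SB(\alpha) * \SB(\alpha^\star)$. Since each free factor meets $\F^\times$ only in $\{1\}$ (from the previous step), the normal form for elements of a free product shows that $\SSB(\calC) \cap \F^\times = \{1\}$, so $\mathrm{P}\SSB(\calC)$ identifies canonically with $\SSB(\calC)$. Combining with point (ii) yields the natural isomorphism $\mathrm{P}\SSB(\calC) \cong \mathrm{P}\SB(\alpha) * \mathrm{P}\SB(\alpha^\star)$ once point (i) identifies $\mathrm{P}\SB(\calC)$ with $\mathrm{P}\calC^\times \ltimes \mathrm{P}\SSB(\calC)$ in such a way that $\mathrm{P}\SSB(\calC)$ carries the free product decomposition.

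Finally, for (i), by Theorem \ref{theo:semibasicunits}(i) we have $\SB(\calC) = \calC^\times \ltimes \SSB(\calC)$. Since $\F^\times$ lies inside $\calC^\times$ and is central in $\calW_{p,q}^\times$, it acts trivially on $\SSB(\calC)$ by conjugation; hence $\F^\times$ is a normal subgroup of $\SB(\calC)$ contained entirely in the $\calC^\times$ factor. Quotienting the semidirect product by this central subgroup preserves the semidirect product structure and yields $\mathrm{P}\SB(\calC) = (\calC^\times/\F^\times) \ltimes \SSB(\calC) = \mathrm{P}\calC^\times \ltimes \mathrm{P}\SSB(\calC)$, establishing (i). The main conceptual obstacle is not any single step but the coordination: one must check that each of the three intersections with $\F^\times$ collapses exactly as needed so that the three structural features (semidirect factorisation, injectivity of projectivisation on the semi-basic blocks, and free product shape of $\SSB(\calC)$) all survive passage to the quotient; once (ii) is in hand, everything else is essentially formal group theory applied to Theorem \ref{theo:semibasicunits}.
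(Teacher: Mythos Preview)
Your approach is correct and is exactly what the paper intends; the paper gives no explicit proof, treating the corollary as immediate from Theorem~\ref{theo:semibasicunits}.

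One small imprecision: the sentence ``the normal form for elements of a free product shows that $\SSB(\calC) \cap \F^\times = \{1\}$'' does not follow from normal forms alone---nothing prevents a nontrivial scalar from having a long reduced word in an abstract free product. The cleanest fix is already in your hands: since $\F^\times \subseteq \calC^\times$ and Theorem~\ref{theo:semibasicunits}(i) gives $\calC^\times \cap \SSB(\calC) = \{1\}$, the desired intersection is immediate. (Alternatively, a free product of two nontrivial groups has trivial center, and $\F^\times$ is central.) With that adjustment, your proof of (i)--(iii) goes through. Note also that part~(iii) as printed says $\mathrm{P}\SB(\calC)$ rather than $\mathrm{P}\SSB(\calC)$; your reading is the only one consistent with~(i), so you have interpreted it correctly.
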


Combining these results with the Automorphisms Theorem and the description of the group of basic automorphisms,
we obtain a complete picture of $\Aut(\calW_{p,q})$. We will give several applications of this description in Section \ref{section:misc}.

Our strategy to prove the above results entirely relies upon an adaptation of the retracing algorithm from Section \ref{section:units1}.
There, we discovered that the obstructions consisted in the situations where the leading vector is a zero divisor,
which forbade us to use basic units. The patch seems obvious: when we meet such an obstruction, we will replace
the basic units with semi-basic units that are associated to the said zero divisor.
Hence we must investigate the effect of conjugating a vector with a semi-basic unit.

\begin{Def}
A non-void sequence $(x_1,\dots,x_n)$ (of mathematical objects) is called \textbf{strongly $2$-periodical}
when:
\begin{enumerate}[(i)]
\item For all $i \in \lcro 1,n-2\rcro$, one has $x_{i+2}=x_i$;
\item If $n>1$ then $x_2 \neq x_1$.
\end{enumerate}
\end{Def}

\begin{Def}\label{def:reduceddecomposition}
Given a nonscalar unit $\gamma \in \calW_{p,q}^\times \setminus \F^\times$, a \textbf{reduced decomposition} of $\gamma$
is a decomposition $\gamma=\gamma_1 \cdots \gamma_n$
in which there is a strongly $2$-periodical sequence $(\calC_1,\dots,\calC_n)$ of basic subalgebras
such that $\gamma_i \in \SB(\calC_i) \setminus \F^\times$ for all $i \in \lcro 1,n\rcro$.
\end{Def}

The Refined Units Theorem yields that every nonscalar unit has a reduced decomposition, and the Strong Units
Decomposition that such a reduced decomposition is unique up to multiplication of each factor with a nonzero scalar.

\begin{Def}
Let $\calC$ be a split basic subalgebra of $\calW_{p,q}$.
Given a nontrivial unit $\gamma \in \SSB(\calC) \setminus \{1\}$, a \textbf{specialized decomposition} of $\gamma$
is a decomposition $\gamma=\gamma_1 \cdots \gamma_n$ in which
there is a strongly $2$-periodical sequence $(\alpha_1,\dots,\alpha_n)$ of nontrivial idempotents of $\calC$
such that $\gamma_i \in \SB(\alpha_i) \setminus \{1\}$ for all $i \in \lcro 1,n\rcro$.
\end{Def}

By the Semi-Basic Units Theorem, every nontrivial element of $\SSB(\calC)$
has a unique specialized decomposition.

\begin{Def}\label{def:specializeddecomposition}
Let $\calC$ be a split basic subalgebra of $\calW_{p,q}$.
Given a nonscalar unit $\gamma \in \SB(\calC) \setminus \F^\times$, a
\textbf{specialized decomposition} of $\gamma$ is a decomposition $\gamma \sim \gamma_1 \cdots \gamma_n$ in which:
\begin{itemize}
\item Either $\gamma_1 \in \calC^\times \setminus \F^\times$ and $n=1$.
\item Or $\gamma_1 \in \calC^\times \setminus \F^\times$, $n>1$ and $(\gamma_2,\dots,\gamma_n)$ is a specialized decomposition
of some unit in $\SSB(\calC) \setminus \{1\}$.
\item Or $(\gamma_1,\dots,\gamma_n)$ is a specialized decomposition
of some unit in $\SSB(\calC) \setminus \{1\}$.
\end{itemize}
\end{Def}

By the Semi-Basic Units Theorem, every nontrivial element of $\SSB(\calC)$
has a unique specialized decomposition, up to multiplication of the first factor by a nonzero scalar if the first factor is a basic unit.

\subsection{The effect of conjugating with a semi-basic unit}

We start by recalling a result that we have obtained in Section \ref{section:units1}
(see point (b) of Lemma \ref{lemma:conjugatebasicunit}):

\begin{lemma}\label{lemma:effectofbasic}
Let $x \in \calW_{p,q} \setminus \F$ be a nonscalar quadratic vector, with trailing subalgebra denoted by $\calD$.
Let $\beta \in \calD^\times \setminus \F$.
Then $\beta x \beta^{-1}$ is nonbasic and $\beta$ is a leading vector for it.
\end{lemma}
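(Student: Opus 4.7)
The plan is to simply invoke point (b) of Lemma \ref{lemma:conjugatebasicunit}, which was already proved in Section \ref{section:units1} without any nonbasicness assumption on $x$: for any nonscalar quadratic $x$ and any $\beta$ in the trailing basic subalgebra of $x$ minus $\F^\times$, one has $\delta(\beta x \beta^{-1})=\delta(x)+1$ and $\beta$ is a leading vector for $\beta x \beta^{-1}$. The second of these conclusions is exactly the ``leading'' part of the present lemma.

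The only thing that remains is to derive the nonbasicness of $\beta x \beta^{-1}$, which is immediate from the comparison of absolute distances. Recall from Section \ref{section:units1} that every nonscalar basic vector $y$ has $\delta(y)=1$: indeed, if $y\in\F[a]\setminus\F$ then $d_{\F[a]}(y)\leq 0$ while $d_{\F[b]}(y)=1$, and symmetrically when $y\in\F[b]\setminus\F$. Since $x$ is nonscalar and quadratic, $\delta(x)\geq 1$, and hence
\[
\delta(\beta x \beta^{-1})=\delta(x)+1\geq 2,
\]
so $\beta x \beta^{-1}$ cannot be basic (and is in particular nonscalar, so that the notion of leading vector applies to it unambiguously).

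There is no real obstacle here; the entire content is already in the earlier lemma, and the extra statement that the output is nonbasic is just a trivial reading of the distance formula. The point of restating the result in this particular form is evidently to package it for the forthcoming refinement of the retracing algorithm to semi-basic units, where one will want to read off immediately that after conjugating by a nonscalar element of the trailing subalgebra one is sitting on a nonbasic vector with a known leading factor.
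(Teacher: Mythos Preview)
Your proposal is correct and takes exactly the approach the paper intends: the paper presents this lemma as a mere restatement of point (b) of Lemma~\ref{lemma:conjugatebasicunit}, citing it explicitly with no further argument. Your only addition is the one-line observation that $\delta(\beta x\beta^{-1})=\delta(x)+1\geq 2$ forces nonbasicness, which is implicit in the paper's wording and entirely correct.
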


One of our aims is to prove the following adaptation to semi-basic units.

\begin{lemma}\label{lemma:effectofsemibasic}
Let $x \in \calW_{p,q}  \setminus \F$ be a nonscalar quadratic vector, with leading basic vector $y$.
Let $\alpha$ be a basic zero divisor, and let $\gamma \in \SB(\alpha)$ be non-basic.
\begin{enumerate}[(a)]
\item If $\alpha \notin \F y$ and $x$ is nonbasic, then $\gamma x \gamma^{-1}$ is nonbasic, $\alpha$ is leading for it and $\delta(\gamma x \gamma^{-1})>\delta(x)$.
\item If $\alpha \notin \F y$ and $\alpha^2 \neq 0$, then  $\gamma x \gamma^{-1}$ is nonbasic, $\alpha$ is leading for it and $\delta(\gamma x \gamma^{-1})>\delta(x)$.
\end{enumerate}
\end{lemma}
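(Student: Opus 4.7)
The approach mirrors the calculation in Lemma \ref{lemma:conjugatebasicunit}(b), with the basic unit replaced by a semi-basic one. I would begin by writing $\gamma = 1 + \alpha z^\star$ with $\langle \alpha, z\rangle = 0$ and extracting a one-parameter normal form for $\gamma$. Since $N(\alpha) = 0$ forces $z\alpha^\star = -\alpha z^\star$, the identities $\gamma\alpha = \alpha$ and $\alpha^\star\gamma = \alpha^\star$ follow immediately (together with their analogues for $\gamma^{-1} = \gamma^\star$). Fixing a nonscalar $\beta \in \calD$ and setting $\omega' := \langle \alpha,\beta\rangle$, one checks in the nilpotent case $\alpha^2 = 0$ that $\alpha z^\star \in C\alpha$, so $\gamma = 1 + r\alpha$ for a unique $r \in C$, and in the idempotent case $\alpha^2 = \alpha$ that the rank-one $C$-module $\alpha^\sharp$ is generated by $\alpha(\omega' - \beta^\star)$, so $\gamma = 1 + r\alpha(\omega' - \beta^\star)$ for a unique $r \in C$. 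In both cases the non-basicness of $\gamma$ is exactly the condition $r \notin \F$, hence $\deg(r) \geq 1$.

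The plan is then to expand $w := \gamma u \gamma^\star$ in the deployed basis $(1, \alpha, \beta, \alpha\beta)$, reducing via $\alpha\alpha^\star = 0$, $\alpha^2 = \tr(\alpha)\alpha$, $\alpha\beta\alpha = \omega'\alpha$ and the useful $\alpha u\alpha = \tr(u\alpha)\alpha$ to obtain explicit expressions for $w_1, w_\alpha, w_\beta, w_{\alpha\beta}$ as polynomials in $r$ whose coefficients depend on those of $u$. The crucial feature is that $w_\alpha$ carries a term quadratic in $r$ --- namely $-r^2 u_\beta\omega'$ in the nilpotent case, with an analogous $r^2$-contribution produced by $\tr(\alpha) = 1$ in the idempotent case --- while $w_1$ receives only a linear-in-$r$ correction and $w_\beta, w_{\alpha\beta}$ remain essentially unchanged. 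Since $\deg(r) \geq 1$, this makes $\deg(w_\alpha)$ strictly exceed the three other degrees; by Corollary \ref{cor:quadraticdistance1}, $\F[\alpha]$ is then the leading basic subalgebra of $w$, $\alpha$ is leading for $w$ (so in particular $w$ is nonbasic), and $\delta(w) = \deg(w_\alpha) + 1 \geq \delta(u) + \deg(r) > \delta(u)$.

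To justify the dominance, the analysis splits according to whether $\alpha$ lies in the leading or the trailing subalgebra of $u$. In the trailing case $\deg(u_\beta) = n$ is maximal and the $r^2$-dominance is immediate. In the leading case the hypothesis $\alpha \notin \F y$ (equivalently $\nu \neq 0$ when $y = \mu\alpha + \nu$) combines with Lemma \ref{lemma:quadraticdeployeddegree2} applied in the adapted basis $(1, y, \beta, y\beta)$ to pin down $\deg(u_\beta) = n - 1$ via the identity $L(u_\beta) = -N(y) L(u_y)$, and hence keeps the dominant term alive. The additional hypothesis $\alpha^2 \neq 0$ of part (b), which permits $u$ to be basic, is what lets $\tr(\alpha) = 1$ inject the required $r^2$-contribution into $w_\alpha$ even when $u_\beta = u_{\alpha\beta} = 0$; the nilpotent case cannot afford to drop the ``$u$ nonbasic'' assumption because this cross-term vanishes identically.

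The main obstacle I anticipate is the idempotent leading subcase in which $y \in \F\alpha^\star$ (so $N(y) = 0$ despite $\alpha \notin \F y$), together with characteristic $2$ situations where $\tr(y) = 0$ degenerates the formulas of Lemma \ref{lemma:quadraticdeployeddegree2}. In these cases one no longer has $\deg(u_\beta) = n - 1$ and the main $r^2$-dominance argument must be supplemented by a careful inspection of the $u_{\alpha\beta}$-term, whose degree is controlled by the companion identity $L(u_{\alpha\beta}) = \tr(y) L(u_y)$ when $\tr(y) \neq 0$; failing that, one should first apply a basic conjugation in $\calC^\times$ to move $y$ off the problematic line before invoking the lemma.
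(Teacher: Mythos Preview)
Your overall architecture matches the paper's: parametrize $\gamma$ by a central element $r$, expand $\gamma x\gamma^{-1}$ in a deployed basis built on $\alpha$, and split according to whether $\F[\alpha]$ is the trailing or the leading subalgebra of $x$. The trailing case (your first subcase) goes through essentially as you describe. The problems lie in the leading case and in the basic case of part~(b).

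\textbf{The main gap.} When $\F[\alpha]$ is the leading subalgebra but $\alpha$ is not leading, you pass to the \emph{adapted} basis $(1,y,\beta,y\beta)$ and invoke Lemma~\ref{lemma:quadraticdeployeddegree2} to obtain $\deg(u_\beta)=n-1$ via $L(u_\beta)=-N(y)L(u_y)$. This fails precisely when $N(y)=0$, i.e.\ in the idempotent situation when $y\sim\alpha^\star$, and your proposed repair (pre-conjugate by a basic unit to ``move $y$ off the problematic line'') does not obviously commute with the statement you are trying to prove about the specific pair $(\gamma,x)$. The paper avoids this detour entirely: it works directly in the basis $(1,\alpha,\beta^\star,\alpha\beta^\star)$ (resp.\ $(1,\alpha,\beta,\alpha\beta)$ in the nilpotent case), where ``$\alpha$ not leading'' translates simply into $\deg(x_1)=\deg(x_\alpha)=n$. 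Expanding $N(x)$ in that basis with $N(\alpha)=0$ gives
\[
N(x)=-x_1^2+\tr(x)\,x_1+N(\beta)\,x_\beta^2+\langle\alpha,\beta\rangle\,x_\alpha x_\beta+N(\beta)\,x_\beta x_{\alpha\beta},
\]
and since $-x_1^2$ contributes degree $2n$ while all other terms except $\langle\alpha,\beta\rangle\,x_\alpha x_\beta$ have degree at most $2n-2$, one reads off $\deg(x_\beta)=n-1$ directly, with no reference to $y$ or $N(y)$. This is the missing idea.

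\textbf{Two smaller errors.} First, in the idempotent case the element $\gamma=1+r\,\alpha(\omega'-\beta^\star)$ is nonbasic as soon as $r\neq 0$ (its $\alpha\beta^\star$-coefficient is $-r$), so you only get $\deg(r)\geq 0$, not $\deg(r)\geq 1$; the paper's degree counts accommodate $s=\deg(r)=0$. Second, your account of part~(b) when $x$ is basic is incorrect: if $x\in\F[\alpha]$ then $x_\beta=x_{\alpha\beta}=0$ and the paper's formula reduces to $x'_\alpha=(1-\omega' r)\,x_\alpha$, which is \emph{linear} in $r$, not quadratic. There is no ``$r^2$-contribution produced by $\tr(\alpha)=1$'' here; the conclusion $\delta(x')=\deg(r)+2>1=\delta(x)$ follows from this linear term alone.
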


We also need a critical step for enhancing the retracing algorithm.
It involves the following special case:

\begin{Def}
Let $\alpha$ be a basic zero divisor.
A vector $x \in \calW_{p,q}$ is called \textbf{special degenerate} attached to $\alpha$ when
$x \in \F+\SB(\alpha)$ and $x$ is nonbasic.
\end{Def}

It is easily seen that every such vector $x$ is quadratic, and has its generated subalgebra $\F[x]$ degenerate.
Here is what we shall prove:

\begin{lemma}\label{lemma:retracingstepspecial}
Let $x \in \calW_{p,q}$ be a nonbasic quadratic vector, with leading vector $\alpha$.
Assume that $\alpha$ is a zero divisor and that $x$ is not special degenerate.
Then there exists $\gamma \in \SB(\alpha)$ such that $\delta(\gamma x \gamma^{-1})<\delta(x)$.
\end{lemma}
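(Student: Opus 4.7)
The plan is to place $x$ in an adapted deployed basis $(1,\alpha,\beta,\alpha\beta)$ with $\calC:=\F[\alpha]$ the leading basic subalgebra of $x$ and $\calD:=\F[\beta]$ the trailing one, and to write $x=x_1+x_\alpha\alpha+x_\beta\beta+x_{\alpha\beta}\alpha\beta$ with $n:=\deg x_\alpha=\delta(x)-1\geq 1$. Since $N(\alpha)=0$, Lemmas \ref{lemma:quadraticdeployeddegree1} and \ref{lemma:quadraticdeployeddegree2} give $\deg x_1<n$, $\deg x_\beta\leq n-2$, and $\deg x_{\alpha\beta}\leq n-1$ with strict inequality when $\tr\alpha=0$. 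By rescaling $\alpha$ I reduce to two subcases: $\alpha^2=0$ when $\calC$ degenerates, and $\alpha^2=\alpha$ when $\calC$ splits.

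Next I parametrize $\SB(\alpha)$. In each subcase, Section \ref{section:semibasicunits} provides a normalized $z_0\in Z(\alpha):=\{z:\alpha^\star z=0=z\alpha\}$ generating the rank-one $C$-module $Z(\alpha)$, so that every $\gamma\in\SB(\alpha)$ reads $\gamma=1+rz_0$ with $r\in C$; concretely I take $z_0=\alpha$ when $\alpha^2=0$, and $z_0=(\tr\beta-\omega')\alpha-\alpha\beta$ with $\omega':=\langle\alpha,\beta\rangle$ when $\alpha^2=\alpha$. A key preliminary remark is that $\SB(\alpha)$ is commutative, so when $x\in\F+\SB(\alpha)$ — i.e.\ when $x$ is special degenerate attached to $\alpha$ — one has $\gamma x\gamma^{-1}=x$ for every $\gamma\in\SB(\alpha)$; the lemma's hypothesis is therefore the only obstruction we must dismantle.

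The core of the proof is an explicit expansion of $(1+rz_0)x(1-rz_0)=x+r[z_0,x]-r^2 z_0xz_0$ in the basis $(1,\alpha,\beta,\alpha\beta)$, using $z_0^2=0$, the annihilations $\alpha^\star z_0=0=z_0\alpha$, and the reduction $\alpha\beta\alpha=\langle\alpha^\star,\beta\rangle\alpha$ coming from $N(\alpha)=0$. After collecting terms the $\beta$-coefficient stays $x_\beta$, the coefficients on $1$ and $\alpha\beta$ acquire only contributions of degree strictly below $n$, and the coefficient on $\alpha$ becomes
$$x_\alpha+r\,u(x)+r^2\,v(x),$$
for explicit $u(x),v(x)\in C$ built from $\omega' x_{\alpha\beta}$, $\tr(\beta)\,x_\beta$ and $\omega' x_\beta$. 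A short verification shows that $u(x)=v(x)=0$ is equivalent to $x_\beta=0$ and $x_{\alpha\beta}=0$, which combined with the quadraticity of $x$ forces $x_1\in\F$ — precisely the special degenerate case. Thus, under the lemma's hypothesis, $(u(x),v(x))\neq 0$.

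It remains to choose $r\in C$ so that $x_\alpha+r\,u(x)+r^2\,v(x)$ has degree strictly below $n$: taking $\deg r=n-\deg u(x)$ with an appropriately adjusted leading coefficient handles the linear case $u(x)\neq 0$ straightforwardly, after a check that $r^2 v(x)$ cannot reintroduce degree-$n$ terms; a quadratic adjustment of $r$ handles the case $u(x)=0$, $v(x)\neq 0$. The hard part I anticipate is making this last step uniform across fields of arbitrary characteristic: in the residual awkward situations — notably in characteristic $2$, where parity and leading-coefficient-squareness constraints can obstruct the quadratic adjustment — it may be necessary to first perform a transvection $\beta\leftarrow\beta+\lambda$ with $\lambda\in\F$ (which leaves $\delta(x)$ and the leading-vector structure unchanged) so as to arrange $u(x)\neq 0$, reducing everything back to the linear case.
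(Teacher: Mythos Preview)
Your plan has two connected gaps that, as written, do not close.

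First, the sentence ``the coefficients on $1$ and $\alpha\beta$ acquire only contributions of degree strictly below $n$'' is not true for an arbitrary $r$: already in the degenerate case the $1$-coefficient of $\gamma x\gamma^{-1}$ is $x_1+r\omega' x_\beta$, whose degree is $\deg r+1+\deg x_\beta$ and grows with $r$. To obtain $\delta(\gamma x\gamma^{-1})<\delta(x)$ you must bound \emph{all four} coefficients, and you never return to the $1$-coefficient after choosing $r$.

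Second, your device for choosing $r$ --- kill the top of $x_\alpha+r\,u+r^2\,v$ --- runs into a genuine obstruction. In the main case $v\ne 0$ (i.e.\ $x_\beta\ne 0$) the top-degree condition is a quadratic equation in the leading coefficient of $r$, so you need a discriminant to be a square in $\F$; this is not specific to characteristic $2$. Your fallback transvection $\beta\leftarrow\beta+\lambda$ does not repair it in general: in the degenerate case $\langle\alpha,\beta+\lambda\rangle=\omega'$ is unchanged (since $\tr\alpha=0$), and in characteristic $2$ so is $\tr\beta$, hence $u$ is unaffected. Your ``linear case'' also leaks: with $\deg r=n-\deg u$ one has $\deg(r^2 v)=2(n-\deg u)+\deg v$, which in the relevant range equals $n$, so the quadratic term does reintroduce a degree-$n$ contribution.

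The paper's argument is organised around a different coefficient and sidesteps both issues. From $N(x)\in\F$ one extracts, whenever $x_\beta\ne 0$ and $\alpha$ is leading with $N(\alpha)=0$, the relation $\deg x_\alpha=2\deg x_1-\deg x_\beta-1$. Conjugation by $\gamma\in\SB(\alpha)$ leaves the $\beta$-coefficient unchanged and moves the $1$-coefficient \emph{linearly} in $r$; one can therefore always choose $r$ so that $\deg x'_1<\deg x_1$, over any field and without parity or square constraints. Applying the same degree relation to $x'$ then forces $\deg x'_\alpha<\deg x_\alpha$ automatically, with no direct computation of $x'_\alpha$. The residual case $x_\beta=0$ is handled separately: in the degenerate setting it is exactly the special-degenerate obstruction, and in the split setting it reduces via $\tr x\ne 0$. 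The missing idea in your plan is thus to target the $1$-coefficient rather than the $\alpha$-coefficient, and let the quadratic constraint $N(x)\in\F$ propagate the drop.
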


\begin{Rem}
If $x \in \F + \SB(\alpha)$ then as $\F+\SB(\alpha)$ is a commutative subalgebra of $\calW_{p,q}$
it is clear that there is no $\gamma \in \SB(\alpha)$ such that $\delta(\gamma x \gamma^{-1})<\delta(x)$.
Moreover, in that case it is not difficult to see that $\alpha$ is leading for $x$ if in addition $x$ is nonscalar,
and hence by Lemma \ref{lemma:effectofsemibasic} there is no other reasonable choice of semi-basic unit
to decrease the absolute distance after conjugating.
\end{Rem}

This result, along with other ones, will be proved in the course of the next subsections.

\subsubsection{The effect of conjugating with a semi-basic unit: split case}\label{section:effectsemibasicsplit}

We start with semi-basic units associated with a split basic subalgebra.
We let $\alpha$ be a nontrivial basic idempotent, and we choose a nonscalar element $\beta$ in the basic subalgebra opposite to $\F[\alpha]$.
We will consider the conjugation by a semi-basic unit in $\SB(\alpha) \setminus \{1\}$.
To this end, we set $\omega':=\langle \alpha,\beta\rangle$.

So, let us take $\gamma=1+\alpha y^\star$, where $y=r(\omega'-\beta)$ for some $r \in C \setminus \{0\}$. We will use $(1,\alpha,\beta^\star,\alpha \beta^\star)$ as our deployed basis.
Let $x \in \calW_{p,q}$, which we write
$$x=x_1+x_\alpha \alpha+x_{\beta^\star} \beta^\star+x_{\alpha \beta^\star} \alpha \beta^{\star}.$$
Since $\gamma$ commutes with $\alpha y^\star$, computations are easier by changing the representation and using the four vectors $1,\alpha,y^\star,\alpha y^\star$
instead of the deployed basis. Since $\beta^\star=\omega'-r^{-1} y^\star$, this leads to
$$x=(x_1+\omega' x_{\beta^\star})\,1+(x_\alpha+\omega' x_{\alpha \beta^\star})\,\alpha+(-r^{-1} x_{\beta^\star})\, y^\star+(-r^{-1} x_{\alpha \beta^\star})\, \alpha y^\star.$$
Next, we computes the four conjugates $\gamma 1 \gamma^{-1}$, $\gamma \alpha \gamma^{-1}$ and $\gamma y^\star \gamma^{-1}$
and $\gamma \alpha y^\star \gamma^{-1}$.
The first and fourth are obvious, being equal to $1$ and $\alpha y^\star$, respectively.
Next,
$$\gamma \alpha \gamma^{-1}=(1-y\alpha^\star)\alpha (1-\alpha y^\star)=\alpha(1-\alpha y^\star)=\alpha-\alpha y^\star.$$
Finally
\begin{align*}
\gamma y^\star \gamma^{-1} & = (1+\alpha y^\star)y^\star(1+y \alpha^\star) \\
& = (1+\alpha y^\star)y^\star+N(y)\,(1+\alpha y^\star) \alpha^\star \\
& = y^\star+\alpha (\tr(y^\star)y^\star-N(y^\star))+N(y)\alpha^\star-N(y) y (\alpha^\star)^2 \\
& = N(y)-2N(y)\alpha+y^\star +\tr(y)\,\alpha y^\star-N(y)\,y \alpha^\star \\
& = N(y)-2N(y)\alpha+y^\star +\bigl(\tr(y)+N(y)\bigr)\,\alpha y^\star.
\end{align*}
Hence we obtain
\begin{align*}
\gamma x \gamma^{-1} = & \left[ x_1+\omega' x_{\beta^\star}-r^{-1}  N(y) x_{\beta^\star}\right]\ \\
 & + \left[(x_\alpha+\omega' x_{\alpha \beta^\star})+2 r^{-1} N(y) x_{\beta^\star}\right]\,\alpha \\
 & +\left[-r^{-1} x_{\beta^\star}\right] \,y^\star \\
 & +\left[-(x_\alpha+\omega' x_{\alpha \beta^\star})-r^{-1} x_{\beta^\star} (\tr(y)+N(y))-r^{-1} x_{\alpha \beta^\star}\right]\,\alpha y^\star.
\end{align*}
Finally, by using $y^\star=r \omega'-r\beta^\star$, we recover that the first three coefficient of $x':=\gamma x \gamma^{-1}$ in the deployed basis $(1,\alpha,\beta^\star,\alpha \beta^\star)$ are, respectively,
$$\begin{cases}
x'_1 = x_1-r^{-1} N(y) x_{\beta^\star} \\
x'_\alpha = x_\alpha-\omega' r (x_\alpha+\omega' x_{\alpha \beta^\star})+\bigl(2r^{-1}N(y)-\omega' (\tr(y)+N(y))\bigr) x_{\beta^\star} \\
x'_{\beta^\star} = x_{\beta^\star.}
\end{cases}$$
Notice in particular the invariance of the coefficient on $\beta^\star$ when replacing $x$ with $x'$.

From now on, we assume that $x$ is quadratic and nonscalar.

\vskip 2mm
\noindent \textbf{Situation 1:} $\F[\beta]$ is the leading subalgebra of $x$. \\
Then $\deg(x_\alpha) < \deg(x_{\beta^\star})$.
Denote by $n$ the degree of $x_{\beta^\star}$. Then we know from Lemma \ref{lemma:quadraticdeployeddegree1} that
$\deg(x_{\alpha \beta^\star}) \leq n-1$ and $\deg(x_1) \leq n$.
Denote by $s$ the degree of $r$.
We observe that $r^{-1} N(y)=r ((\omega')^2-\omega' \tr(\beta)+N(\beta))$ has degree $s+2$ and that $\deg(\tr (y)) \leq s+1$.
Hence $\deg(x'_1) \leq n+s+2$, $\deg(\omega' r (x_\alpha+\omega' x_{\alpha \beta^\star})) \leq n+s+1$
and $\deg((2r^{-1} N(y)-\omega' \tr(y)) x_{\beta^\star}) \leq n+s+2$.
However $\deg(-\omega' N(y) x_{\beta^\star})=n+2s+3>n+s+2$.
Hence $\deg(x'_\alpha)=n+2s+3$, $\deg(x'_1) \leq n+s+2$ and $\deg(x'_{\beta^\star})=n<n+2s+3$.
Therefore $x'$ is nonbasic and $\alpha$ is leading for it, with $\delta(x')>\delta(x)$.

\vskip 2mm
For the remaining two situations, we come back to the analysis undertaken in Lemma \ref{lemma:quadraticdeployeddegree1} and reorganize the expression of
the norm of $x$, thanks to $N(\alpha)=0$ and $\tr(\alpha)=1$, as
\begin{equation}\label{eq:devnorme2}
N(x)=-x_1^2+\tr(x) x_1+N(\beta^\star)  x_{\beta^\star}^2+\langle \alpha,\beta^\star\rangle x_\alpha x_{\beta^\star}
+N(\beta^\star) x_{\beta^\star} x_{\alpha \beta^\star.}
\end{equation}
From now on, we set $n:=\deg(x_\alpha)$.

\vskip 2mm
\noindent \textbf{Situation 2:} $\F[\alpha]$ is the leading subalgebra of $x$ but $\alpha$ is not leading for $x$. \\
In that case we must prove that $\alpha$ is leading for $x'$ and that $\delta(x')>\delta(x)$, whatever the choice of $r$.
Note that $n=\deg(x_1)$, $\deg(x_{\beta^\star})<n$ and $\deg(x_{\alpha \beta^\star})<n$.
If $n>0$ then $\deg(\tr(x) x_1+N(\beta^\star)  x_{\beta^\star}^2+N(\beta^\star) x_{\beta^\star} x_{\alpha \beta^\star})<2n$,
whereas $x_1^2$ has degree $2n$; remembering that $N(x) \in \F$ we deduce from identity \eqref{eq:devnorme2} that $\deg(\langle \alpha,\beta^\star\rangle x_\alpha x_{\beta^\star})=2n$,
and hence $\deg(x_{\beta^\star})=n-1$.
Now we split the discussion into two subcases.
\begin{itemize}
\item Assume first that $n>0$.
Set $s:=\deg(r)$. Then, as in Situation 1 we find that
$\bigl(2r^{-1}N(y)-\omega' (\tr(y)+N(y))\bigr) x_{\beta^\star}$ has degree $(n-1)+2s+3=n+2s+2$,
whereas $\omega' r (x_\alpha+\omega' x_{\alpha \beta^\star})$ has degree at most $n+s+1$, and $\deg(x_\alpha)=n$.
Hence $\deg(x'_\alpha)=n+2s+2$. Likewise we note that $\deg(r^{-1} N(y)x_{\beta^\star})=s+2+(n-1)=n+s+1>\deg(x_1)$
and hence $\deg(x'_1)=n+s+1$. Since $\deg(x'_{\beta^\star})=\deg(x_{\beta^\star})=n-1<(n+2s+2)-1$
and $\deg(x'_1)=n+s+1<n+s+2=\deg(x'_\alpha)$, we deduce that $\alpha$ is leading for $x'$ and that
$\delta(x')=n+2s+2>\delta(x)$.
\item Assume finally that $n=0$. Then $x_{\beta^\star}=0$ and $x_{\alpha \beta^\star}=0$,
so $x'_1=x_1$ and $x'_\alpha=(1-\omega'r) x_\alpha$, whence $\deg(x'_\alpha)=n+\deg(r)+1>0$ and $\deg(x'_\alpha)>n \geq \deg(x'_1)$, so
$x'$ is nonbasic and $\alpha$ is leading for $x'$.
\end{itemize}

\vskip 2mm
\noindent \textbf{Situation 3:} $x$ is nonbasic and $\alpha$ is leading for $x$. \\
In that case we must prove that $r$ can be chosen so that $\deg(x'_\alpha)<\deg(x_\alpha)$ unless $x$ is special degenerate.
To do this, the trick will be to try to have $\deg(x'_1)<\deg(x_1)$, if possible
(this seems easier, considering the complexity of the expression of $x'_\alpha$).

Set $d:=\deg(x_1)$.
Since $\alpha$ is leading for $x$ we have $d<n$.
Now, it is obvious that either $\deg(\langle \alpha,\beta^\star\rangle x_\alpha x_{\beta^\star})>\deg(N(\beta^\star) x_{\beta^\star} x_{\alpha \beta^\star})$
or both degrees equal $-\infty$, and in any case
the degree of $N(\beta^\star)  x_{\beta^\star}^2+\langle \alpha,\beta^\star\rangle x_\alpha x_{\beta^\star}+N(\beta^\star) x_{\beta^\star} x_{\alpha \beta^\star}$ equals $1+n+\deg(x_{\beta^\star})$,
and in particular it equals $-\infty$ if and only if $x_{\beta^\star}=0$.
Remember finally that $N(x)$ and $\tr(x)$ are constant because $x$ is quadratic.
\begin{itemize}
\item Assume first that $d>0$. Then $\deg(-x_1^2+\tr(x) x_1)=2d$, and we deduce from \eqref{eq:devnorme2} that we must have
$x_{\beta^\star} \neq 0$ and then $1+n+\deg(x_{\beta^\star})=2d$, whence $\deg(x_{\beta^\star})=2d-n-1$ and in particular
$\deg(x_{\beta^\star})<n-1$.
\item If $p \leq 0$ then we obtain likewise $1+n+\deg(x_{\beta^\star}) \leq 0$, which leads to
$x_{\beta^\star}=0$. In that case $x_1$ is constant, of course.
\end{itemize}
In particular, and this is critical to our proof,
$$x_{\beta^\star} \neq 0 \Rightarrow \deg(x_\alpha)=2\deg(x_1)-\deg(x_{\beta^\star})-1.$$
Assume first that $x_{\beta^\star} \neq 0$. Then $2p-n-1\geq 0$ by the previous analysis, and in particular $p>0$.
We observe that $r^{-1}N(y)=r((\omega')^2-\omega' \tr(\beta)+N(\beta))$ belongs to $C$, has degree $\deg(r)+2$ and the same leading
coefficient as $r$ with respect to $\omega'$.
We also note that $2+\deg(x_{\beta^\star})=2p-n+1 \leq p$ because $p \leq n-1$,
and hence we can choose $r$ such that $r^{-1}N(y) x_{\beta^\star}$ has degree $p$ and the same leading coefficient as $x_1$ with respect to $\omega'$.
With this choice we find $\deg(x'_1)<\deg(x_1)$.
Now, with this choice we can be assured that $\deg(x'_\alpha)<\deg(x_\alpha)$: indeed, if $\deg(x'_\alpha) \geq \deg(x_\alpha)$ we are exactly in the previous position for $x'$
(having $\deg(x'_1)<\deg(x'_\alpha)$, $\deg(x'_{\beta^\star})<\deg(x'_\alpha)$ and $x'_{\beta^\star} \neq 0$, which in particular suffices to see that $x'$ is nonbasic),
to the effect that $\deg(x'_\alpha)=2 \deg(x'_1)-\deg(x'_{\beta^\star})-1<2 \deg(x_1)-\deg(x_{\beta^\star})-1=\deg(x_\alpha)$, and this is absurd.
Hence, with that precise choice of $r$ our aim is fulfilled, and we have $\delta(x')<\delta(x)$.

Assume next that $x_{\beta^\star}=0$.
Then $x_1$ is constant and no generality is lost in replacing $x$ with $x-x_1$, i.e., in assuming that $x_1=0$.
Observing that $\tr(x)=2x_1+x_\alpha+\omega' x_{\alpha \beta^\star}=x_\alpha+\omega' x_{\alpha \beta^\star}$,
we have the simplified identity $x'_\alpha=x_\alpha-\omega' r \tr(x)$.
Moreover, $x=\alpha (x_\alpha+x_{\alpha \beta^\star} \beta^\star)$, so $N(x)=0$.
Since $x$ is not basic we must have $n>0$, i.e., $\deg(x_\alpha)>0$.
\begin{itemize}
\item If $\tr(x) \neq 0$ we see again that $r$
can be adjusted so that $\deg(x'_\alpha)<\deg(x_\alpha)$, and hence $\delta(x') \leq n<\delta(x)$ (in that case, it is actually not difficult to see that
we can directly adjust $r$ so that $x' \in \F[\alpha]$).

\item Assume finally that $\tr(x)=0$. Then $x=\alpha z^\star$ for some $z \in \F[\beta]$ such that $\langle \alpha,z\rangle=0$, and
hence $x \in \alpha^\sharp$. In that case $x$ is special degenerate, attached to $\alpha$ (remember that we have assumed from the start that $x$ is nonbasic).
\end{itemize}
Hence, in releasing the assumption that $x_1 \neq 0$, we confirm the conclusion of Lemma \ref{lemma:retracingstepspecial}
for the case $\alpha^2 \neq 0$.

Combining the results of Situations 1 and 3, we obtain the results of Lemma \ref{lemma:effectofsemibasic}
and \ref{lemma:retracingstepspecial} in the special case where $\tr(\alpha) \neq 0$.

\subsubsection{The effect of conjugating with a semi-basic unit: degenerate case}\label{section:effectsemibasicdegenerate}

Now, we take a nonscalar basic vector $\alpha$ such that $\alpha^2=0$, and
we take a semi-basic unit in $\SB(\alpha)$, which we write $\gamma=1+r\alpha$
for some $r \in C$. We assume that $\gamma$ is nonbasic, to the effect that $\deg(r) \geq 1$.
We choose an arbitrary nonscalar vector $\beta$ in the basic subalgebra opposite to $\F[\alpha]$.
We will use $(1,\alpha,\beta,\alpha \beta)$ as our deployed basis.
Let $x \in \calW_{p,q}$, which we write
$$x=x_1+x_\alpha \alpha+x_\beta \beta+x_{\alpha \beta} \alpha \beta.$$
We set $\omega':=\langle \alpha,\beta\rangle$.

Here the computations are different from the ones of the previous section, but fortunately they are substantially simpler.
We note that $\gamma$ commutes with $1$ and $\alpha$. Next we compute
$$\gamma \beta \gamma^{-1}=\gamma \beta \gamma^\star=\langle \gamma,\beta\rangle \gamma-\gamma^2 \beta^\star
=\langle \gamma,\beta\rangle \gamma-\gamma^2 \tr(\beta)+\gamma^2 \beta.$$
Noting that $\gamma^2=1+2r\alpha$ and $\langle \gamma,\beta\rangle=\tr (\beta)+\omega' r$, we end up with
$$\gamma \beta \gamma^{-1}=(r \omega').1+r(r\omega'-\tr(\beta))\, \alpha +\beta+(2r)\,\alpha\beta.$$
Finally
$$\gamma (\alpha \beta)\gamma^{-1}=(\gamma \alpha \gamma^{-1})(\gamma \beta \gamma^{-1})=\alpha (\gamma \beta \gamma^{-1})
=(r \omega')\,\alpha+\alpha \beta.$$
Hence
the first three coefficients of $x':=\gamma x \gamma^{-1}$ in $(1,\alpha,\beta,\alpha \beta)$ are
$$x'_1=x_1+r\omega' x_\beta, \; x'_\alpha=x_\alpha+r(r\omega'-\tr(\beta))\,x_\beta+r\omega' x_{\alpha \beta} \quad \text{and} \quad x'_\beta=x_\beta.$$
Notice, just like in the previous case, the invariance of the coefficient on $\beta$.

Now, we assume that $x$ is quadratic and nonscalar, and we consider three situations separately.

\vskip 2mm
\noindent \textbf{Situation 1:} $\F[\beta]$ is the leading subalgebra of $x$. \\
Set $s:=\deg(r)$, and recall that $s>0$.
Set also $n:=\deg(x_\beta)$, so that $\deg(x_\alpha)<\deg(x_\beta)$ and $\deg(x_{\alpha \beta})<n$. Then we see that $r(r\omega'-\tr (\beta)))$ has degree $2s+1$,
whereas $r\omega' x_{\alpha+\beta}$ has degree at most $s+1+(n-1)$, which is less than $n+2s+1$.
Hence $\deg(x'_\alpha)=n+2s+1$. Moreover $\deg(x'_1)=n+s+1<\deg(x'_\alpha)$ and $\deg(x'_\beta)=\deg(x_\beta)= n < \deg(x'_\alpha)$.
Hence $x'$ is nonbasic, $\alpha$ is leading for $x'$ and $\delta(x')>\delta(x)$.

\vskip 2mm
For the remainder of the proof, we set $n:=\deg(x_\alpha)$.
As in the preceding section, we find the identity
\begin{equation}\label{eq:devnorme3}
N(x)=-x_1^2+\tr(x) x_1+N(\beta)  x_{\beta}^2+\langle \alpha,\beta\rangle x_\alpha x_{\beta}
\end{equation}
thanks to $N(\alpha)=0$ and $\tr(\alpha)=0$.

\vskip 2mm
\noindent \textbf{Situation 2:} $x$ is nonbasic, $\F[\alpha]$ is the leading subalgebra of $x$ but $\alpha$ is \emph{not} leading for $x$. \\
Here $n>0$ (because $x$ is nonbasic), $n=\deg(x_1)$ and $\deg(x_\beta)=n-1$.
Just like in the corresponding situation from the previous section, we deduce from identity \eqref{eq:devnorme3}
that $\deg(x_{\beta})=n-1$, $\deg(x_1)=n$ and $\deg(x_{\alpha \beta}) \leq n-1$.
Remembering that $\deg(r) \geq 1$, we see that $\deg(x'_1)=n+\deg(r)>n>\deg(x'_\beta)$
and $\deg(x'_\alpha)=2\deg(r)+n$ (with $r^2 \omega' x_\beta$ as the only summand of degree at least $2\deg(r)+n$ in the previous expression of
$x'_\alpha$). Again $\deg(x'_\alpha)>\deg(x'_1)>\deg(x'_\beta)$ and we conclude that $x'$ is nonbasic and that $\alpha$ is leading for it,
with $\delta(x')>\delta(x)$.

\vskip 2mm
\noindent \textbf{Situation 3:} $x$ is nonbasic and $\alpha$ is leading for $x$. \\
Here, we try to adjust $r$ so that $\delta(x')<\delta(x)$.
To do so, we examine the $x'_1$ coefficient. Set $s:=\deg(x_1)$ to this end.
With exactly the same method as in Section \ref{section:effectsemibasicsplit} (situation 3), we find the following results:
\begin{itemize}
\item If $x_1$ is nonconstant, then $x_\beta \neq 0$ and $\deg(x_\beta)=2s-n-1$.
\item If $x_1$ is constant, then $x_\beta=0$, and we compute the trace of $x$ to find that $\omega' x_{\alpha \beta}+2 x_1$ is constant,
which yields $x_{\alpha \beta}=0$. Hence in that case we recognize that $x \in \F+ C \alpha=\F +\SB(\alpha)$, and hence $x$
is special degenerate attached to $\alpha$.
\end{itemize}
Assume now that $x$ is not special degenerate attached to $\alpha$. Then the former case holds and
we set $s:=\deg(x_1)$ and $n:=\deg(x_\alpha)$. Then $\deg(x_1)\geq 1+\deg(x_\beta)=\deg(\omega' x_\beta)$
and we deduce that $r$ can be adjusted so that $\deg(x'_1)<\deg(x_1)$.
So, assume that we have adjusted $\gamma$ in this way.
Then, since $x'_\beta \neq 0$ we cannot have $x'_1$ constant, and with the same line of reasoning as in Situation 2 of Section \ref{section:effectsemibasicsplit}
we deduce that $\deg(x'_\alpha)<\deg(x_\alpha)$. Hence $\delta(x')<\delta(x)$.

Combining the results of Situations 1 and 2, we obtain the result of Lemma \ref{lemma:effectofsemibasic}
in the special case where $\tr(\alpha) = 0$, whereas the study of Situation 3 completes the proof of Lemma \ref{lemma:retracingstepspecial}
for this case.

Now, both Lemmas \ref{lemma:effectofsemibasic} and \ref{lemma:retracingstepspecial} are established.

\subsubsection{The behaviour of special degenerate elements}

\begin{lemma}\label{lemma:specialdegenerate}
Let $x \in \calW_{p,q}$ be a special degenerate element.
Then:
\begin{enumerate}[(a)]
\item If $x$ is attached to some $\alpha$ such that $\tr(\alpha) = 0$,
then there is no automorphism $\Phi$ of $\calW_{p,q}$ such that $\Phi(x)$ is basic.
\item $x$ is not conjugated to a basic vector.
\end{enumerate}
\end{lemma}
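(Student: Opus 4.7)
The plan is to prove (a) by a divisibility argument inside the $C$-module $\calW_{p,q}$, and to prove (b) by splitting on the type of attached basic zero divisor $\alpha$: the nilpotent case reduces to (a), while the idempotent case rests on the nonexistence of a nontrivial idempotent in the right-annihilator of a basic nilpotent element.

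For (a), since $\tr(\alpha)=0$ implies $\alpha^2=0$, the element $x$ can be written as $x = \lambda + r\alpha$ with $\lambda \in \F$ and $r \in C$, and the fact that $x$ is nonbasic forces $r \notin \F$. Any automorphism $\Phi$ of $\calW_{p,q}$ restricts to an $\F$-algebra automorphism of $C = \F[\omega]$ acting as a nondegenerate affine transformation of $\omega$, so $\Phi(r)$ is a nonconstant element of $C$ of the same degree as $r$. From $\Phi(x) - \lambda = \Phi(r)\,\Phi(\alpha)$, we read off that $\Phi(x) - \lambda$ is divisible by the nonconstant central element $\Phi(r)$ in the free $C$-module $\calW_{p,q}$. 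Assuming $\Phi(x)$ were basic would put $\Phi(x) - \lambda$ in a basic subalgebra, hence give it $\F$-valued coordinates in the $C$-basis $(1, a, b, ab)$; such an element can be divisible by a nonconstant element of $C$ only if it is zero. Thus $\Phi(x) = \lambda$ and, by injectivity of $\Phi$, $x = \lambda$, contradicting $x \notin \F$.

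For (b), let $\gamma$ be a unit of $\calW_{p,q}$ such that $\gamma x \gamma^{-1}$ is basic. If $x$ is attached to some $\alpha$ with $\tr(\alpha)=0$, then (a) gives an immediate contradiction, because inner automorphisms are automorphisms. Otherwise every $\alpha$ to which $x$ is attached is idempotent after normalization, and via a basic base change we reduce to $\alpha = a$ with $a^2 = a$ (so that $\F[a]$ splits and $p = t^2 - t$). Write $x = \mu + z$ with $z \in a^\sharp \setminus \{0\}$; the inner automorphism preserves trace and norm, so $w := \gamma x \gamma^{-1}$ satisfies $(w-\mu)^2 = 0$ and $w - \mu \neq 0$. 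Since no nonzero element of the split algebra $\F[a]$ has square zero, $w - \mu$ must lie in $\F[b]$, so $\F[b]$ is degenerate and a further basic base change reduces to $q = t^2$; then $w - \mu = \nu b$ with $\nu \in \F^\times$. The identity $za = 0$ of Lemma \ref{lemma:idempotentlemma} now gives $xa = \mu a$, and conjugation by $\gamma$ yields $(w - \mu)\,e = 0$, where $e := \gamma a \gamma^{-1}$ is a nontrivial idempotent of $\calW_{p,q}$; dividing by $\nu$, this reads $b\,e = 0$.

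It thus suffices to show that the right-annihilator of $b$ in $\calW_{p,q}$ contains no nontrivial idempotent. A direct computation in the $C$-basis $(1, a, b, ab)$ identifies this right-annihilator as the rank-$2$ free $C$-submodule spanned by $b$ and $\omega + ab$. Using the identities $b(\omega + ab) = 0$, $(\omega + ab)\,b = \omega b$ and $(\omega + ab)^2 = \omega(\omega + ab)$, the equation $e^2 = e$ for an arbitrary element $e = sb + t(\omega + ab)$ of this submodule collapses to the pair $\omega t s = s$ and $\omega t^2 = t$, which force $s = t = 0$ since $\omega$ is not a unit of $C$. This yields the required contradiction. The main obstacle is precisely this idempotent subcase of (b): the divisibility argument of (a) fails there because $z$ may generate the $C$-module $a^\sharp$ and hence possess no nonconstant central divisor, so a genuinely new ingredient --- the annihilator calculation above --- is essential.
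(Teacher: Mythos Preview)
Your proof is correct. Part (a) is essentially the paper's own argument (non-normalized elements cannot be mapped to basic vectors by automorphisms), and the nilpotent subcase of (b) reduces to (a) in the same way.

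For the idempotent subcase of (b), however, you take a genuinely different route from the paper. The paper decomposes the conjugating unit as $\gamma = y + zc$ with $y,z \in C + C\alpha$ (where $c := \gamma^{-1}x\gamma$), computes $\gamma c\gamma^\star$ to force $y \in C\alpha$, and then shows that $N(\gamma) = \langle z^\star y, c\rangle$ lies in the ideal $C\langle \alpha,c\rangle$, which has no units---contradicting the invertibility of $\gamma$. Your argument instead transports the relation $za=0$ through conjugation to obtain $b\,e=0$ for the nontrivial idempotent $e=\gamma a\gamma^{-1}$, and then settles the matter by an explicit computation of the right-annihilator of $b$ in the $C$-basis $(1,a,b,ab)$. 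What the paper's approach buys is that it never needs to identify the annihilator or solve equations in it: everything is packaged into the single norm identity. What your approach buys is that it avoids the somewhat delicate coordinate extraction in $(1,\alpha,c,\alpha c)$ and the inner-product manipulation, replacing them by a transparent ring-theoretic obstruction (no idempotent can satisfy $\omega t^2 = t$ in $C=\F[\omega]$). Both arguments ultimately exploit that $\F[b]$ must be degenerate in this subcase; you make this explicit by reducing to $q=t^2$, while the paper leaves it implicit in the existence of $c$.
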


\begin{proof}
Assume first that $x$ is attached to some $\alpha$ such that $\tr(\alpha) = 0$.
Then $x=\lambda +r \alpha$ for some $\lambda \in \F$ and some $r \in C \setminus \F$.
Then $x-\lambda$ is non-normalized and non-zero.
Let $\Phi \in \Aut(\calW_{p,q})$. Then as $\Phi(C)=C$ we see that $\Phi$ maps every normalized vector to a normalized vector.
Hence $\Phi(x-\lambda)$ is non-basic, and hence $\Phi(x)$ is non-basic. This proves point (a), and point (b) immediately follows
in the same situation (where $x$ is attached to a trace zero element $\alpha$).

In the remainder of the proof, we assume that $x$ is attached to a basic (nontrivial) idempotent $\alpha$.
Subtracting a scalar from $x$ leaves both the assumptions and the conclusion unchanged, so we lose no generality in
directly assuming that $x=\alpha u^\star$ for some $u \in \calW_{p,q}$ such that $\langle \alpha,u\rangle=0$. In particular
$x^2=0$.

Assume now that there is a unit $\gamma \in \calW_{p,q}^\times$ such that
$c:=\gamma^{-1} x \gamma$ is basic. Note then that $c^2=0$.

If $c \in \F[\alpha]$, then $\F[\alpha]$ would be degenerate, which is not true because we have assumed that $\alpha$ is idempotent.
Hence $c$ must belong to the basic subalgebra opposite to $\F[\alpha]$.
In particular $x=r \alpha(\langle \alpha,c\rangle-c)$ for some $r \in C$ (see Section \ref{section:semibasicunits}).

Let us set $\calE:=C+C\alpha$, which is a subalgebra of $\calW_{p,q}$.
We can then write $\gamma=y+zc$ for some (unique) $y,z$ in $\calE$. We note that
$x =\lambda \gamma c \gamma^\star$ for some $\lambda \in \F^\times$.
Then $\gamma c \gamma^\star=y c y^\star=\langle c,y\rangle y-y^2 c^\star=\langle c,y\rangle y+y^2 c$, and $y^2 \in \calE$.
Remembering that $x=r \langle \alpha,c\rangle \alpha-r \alpha c$ for some $r \in C$, we deduce that
$y \in C \alpha$ by extracting the first coefficient in the basis $(1,\alpha,c,\alpha c)$ of the $C$-module $\calW_{p,q}$.
It follows that $N(y)=0$. Then
$$N(\gamma)=N(y)+\langle y,zc\rangle+N(z)N(c)=\langle z^\star y,c\rangle$$
with $z^\star y \in \calE$. But since
$\tr(c)=0$ we observe that $\langle u,c\rangle \in C \langle \alpha,c\rangle$ for all $u \in \calE$, and in particular
$N(\gamma) \not\in \F^\times$ because $\langle \alpha,c\rangle$ has degree $1$. This is a contradiction, and the proof is completed.
\end{proof}

\subsection{The refined retracing algorithm}

Now, we can refine the retracing algorithm of Section \ref{section:units1}
so as to take semi-basic units into account, thanks to the results of the previous section.
The algorithm takes as entry a nonscalar quadratic element $x \in \calW_{p,q}$, and runs as follows:
\begin{itemize}
\item Initialize $y$ as $x$ and $L$ as the empty list.
\item While $y$ is non-basic:
\begin{itemize}
\item Compute a leading vector $\alpha$ for $y$.
\item Write $y=y_1 +y_\alpha \alpha+y_\beta d\beta+y_{\alpha\beta} \alpha\beta$ in a corresponding deployed basis.
\begin{itemize}
\item If $N(\alpha)\neq 0$, put $\gamma:=\alpha$.
\item If $N(\alpha)=0$ and $y_1$ is nonconstant, apply the process described in Sections \ref{section:effectsemibasicsplit} and \ref{section:effectsemibasicdegenerate} to compute a semi-basic unit $\gamma'$ such that
    $\delta(\gamma' y (\gamma')^{-1})<\delta(y)$, then put $\gamma:=(\gamma')^{-1}$.
\item If $N(\alpha)=0$, $y_1$ is constant and $\tr(y) \neq 2y_1$, apply the process
described in Sections \ref{section:effectsemibasicsplit} and \ref{section:effectsemibasicdegenerate} to compute a semi-basic unit $\gamma'$ such that
    $\delta(\gamma' y (\gamma')^{-1})<\delta(y)$, then put $\gamma:=(\gamma')^{-1}$.
\item Else, return ``Failure''.
\end{itemize}
\item Update $y$ to $\gamma^{-1} y \gamma$ and append $\gamma$ to $L$.
\end{itemize}
\item Return $(y,L)$.
\end{itemize}

Just like in the retracing algorithm, after each iteration that does not return a failure, the absolute distance of the current vector $y$
decreases by at least one unit, so the algorithm terminates in any situation.

Unless it reports a failure, the refined retracing algorithm outputs a basic vector $y$ and a list $L=(\gamma_1,\dots,\gamma_n)$
of basic or semi-basic units such that $y=(\gamma_1 \cdots \gamma_n)^{-1} x (\gamma_1\cdots \gamma_n)$.
Moreover, by Lemma \ref{lemma:specialdegenerate}, no failure is reported if the starting vector
is conjugate of a basic vector. As a consequence, this algorithm can be used to detect whether a given vector is
a conjugate of a basic vector (of course, it should only be used after one checks that the given vector is quadratic).

Here is an application, which reinforces Corollary \ref{cor:conjsousalg2} in two ways: it takes all possible polynomials $p$ and $q$
into account, and it only leaves out the situation of $2$-dimensional degenerate subalgebras.

\begin{theo}\label{theo:conjugateofquadratictobasic}
Let $\calC$ be a $2$-dimensional nondegenerate subalgebra of $\calW_{p,q}$.
Then $\calC$ is conjugated to a basic subalgebra.
\end{theo}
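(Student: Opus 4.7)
The plan is to reduce the statement to the success of the refined retracing algorithm. Pick any $x \in \calC \setminus \F$; since $\dim_\F \calC = 2$, we have $\calC = \F[x]$, and $x$ is a nonscalar quadratic element of $\calW_{p,q}$. I will apply the refined retracing algorithm to $x$ and use the fact that, if it does not report a failure, it outputs a basic vector $y$ and a list of basic or semi-basic units whose product $\gamma$ satisfies $y = \gamma^{-1} x \gamma$; then $\gamma^{-1} \calC \gamma = \F[y]$ is contained in one of the basic subalgebras $\F[a]$ or $\F[b]$, and by dimensional comparison equals it, which gives the desired conjugation.

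The heart of the proof is therefore to exclude the failure branch of the algorithm. By inspection of the algorithm, a failure can only be triggered at a step where the current vector $z$ has a leading vector that is a zero divisor and $z$ itself is \emph{special degenerate}, in the sense of Section~\ref{section:units2}. So suppose, for a contradiction, that the algorithm fails at such a step. Then there exists a unit $\delta \in \calW_{p,q}^\times$ (the product of the conjugators used in the previous iterations) such that $z = \delta^{-1} x \delta$ and $z$ is special degenerate. Conjugation by $\delta$ induces an isomorphism of $\F$-algebras $\F[x] \overset{\simeq}{\to} \F[z]$, so $\F[z] \simeq \calC$. On the other hand, as recorded right after the definition of special degenerate elements in Section~\ref{section:semibasicunits}, any special degenerate element $z$ satisfies $(z - \mu)^2 = 0$ for some $\mu \in \F$ (using $N(\alpha z'{}^\star) = 0$ and $\tr(\alpha z'{}^\star) = 0$ for the semi-basic-unit witness), so that $\F[z]$ contains a nonzero nilpotent and is therefore degenerate. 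This forces $\calC$ to be degenerate, contradicting the hypothesis.

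Consequently the algorithm succeeds, and its output gives a conjugator $\gamma$ such that $\gamma^{-1} x \gamma$ is a basic vector. Since $\gamma^{-1} x \gamma \notin \F$ (the algebra $\gamma^{-1}\calC\gamma$ is still $2$-dimensional), it lies in a unique basic subalgebra $\calD \in \{\F[a], \F[b]\}$, and $\gamma^{-1}\calC\gamma = \F[\gamma^{-1} x \gamma] = \calD$ by dimension. This concludes the proof.

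The only genuinely delicate point is the verification that failure of the algorithm entails special degeneracy of an intermediate conjugate of $x$, together with the observation that special degenerate elements generate degenerate $2$-dimensional subalgebras; both are furnished by the material of Sections~\ref{section:semibasicunits}, \ref{section:effectsemibasicsplit} and \ref{section:effectsemibasicdegenerate}, so no new computation is required.
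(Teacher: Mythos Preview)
Your argument is correct and is essentially the paper's own proof: pick $x\in\calC\setminus\F$, run the refined retracing algorithm, and observe that failure would force some conjugate of $x$ to be special degenerate, hence $\F[x]\simeq\calC$ degenerate, contradicting the hypothesis. The only minor inaccuracy is bibliographic: the definition of special degenerate elements and the remark that they generate degenerate $2$-dimensional subalgebras appear just before Lemma~\ref{lemma:retracingstepspecial}, not in Section~\ref{section:semibasicunits}.
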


\begin{proof}
Choose $x \in \calC \setminus \F$ and apply the refined retracing algorithm to $x$.
At each step the algebra $\F[y]$ is nondegenerate because it is isomorphic to $\calC$. Hence
in the algorithm the current vector is never a special degenerate element.
Hence the refined retracing algorithm yields a unit $\gamma \in \calW_{p,q}^\times$ such that $\gamma^{-1} x \gamma$ is basic,
to the effect that $\gamma^{-1} \calC \gamma$ is one of the basic subalgebras.
\end{proof}

The necessity to leave out degenerate subalgebras is easy to see by noting that every nonzero basic vector is normalized.
In contrast, if one of $p$ and $q$ splits there is an element $\beta \in \calW_{p,q} \setminus \{0\}$ such that $\beta^2=0$.
Then $\omega \beta$ is quadratic and non-normalized, so it is not conjugated to a basic vector, and hence
$\F[\omega\beta]$ is not conjugated to a basic subalgebra. We will complete the study of the conjugacy classes of quadratic elements
in Section \ref{section:conjclassquadratic}.

\subsection{The fruits of the refined retracing algorithm}

Now, we will apply the refined algorithm to obtain two results: the existence part
of the Automorphisms Theorem, and the Refined Units Theorem.
This is done by proving the following result:

\begin{theo}\label{theo:superdecompositiontheorem}
Let $\Phi \in \Aut(\calW_{p,q})$. Then there exists $\gamma \in \calW_{p,q}^\times$
which is a product of basic and semi-basic units, and a basic automorphism $\Psi \in \BAut(\calW_{p,q})$
such that $\Phi \circ \Psi$ is the inner automorphism $x \mapsto \gamma x \gamma^{-1}$.
\end{theo}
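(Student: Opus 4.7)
The plan is to adapt the proof of Proposition \ref{prop:doubleretracing} by substituting the refined retracing algorithm for the basic retracing algorithm, so as to accommodate the zero divisors arising when $p$ or $q$ splits. Setting $x:=\Phi(a)$ and $y:=\Phi(b)$, both elements are quadratic and $\langle x,y\rangle=\Phi(\omega)$ is a nonscalar element of $C$ of degree one in $\omega$, because $\Phi$ restricts to an $\F$-automorphism of the center $C$. The aim is to construct $\gamma\in\calW_{p,q}^\times$, a product of basic and semi-basic units, such that $\gamma^{-1}x\gamma$ and $\gamma^{-1}y\gamma$ are basic vectors lying in opposite basic subalgebras; the map $a\mapsto\gamma^{-1}x\gamma$, $b\mapsto\gamma^{-1}y\gamma$ will then define a basic automorphism $\Psi^{-1}$, and by construction $\Phi\circ\Psi=i_\gamma$.

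The first step will be to apply the refined retracing algorithm to $x$, yielding $\gamma_1$, a product of basic and semi-basic units, such that $x':=\gamma_1^{-1}x\gamma_1$ is basic. Every intermediate vector produced by the algorithm is an inner conjugate of $x=\Phi(a)$, hence lies in the $\Aut(\calW_{p,q})$-orbit of the basic vector $a$. By Lemma \ref{lemma:specialdegenerate}(a), no such vector can be special degenerate attached to a trace-zero basic zero divisor, since $\Phi^{-1}$ would otherwise carry it to the basic vector $a$. To exclude attachment to an idempotent, one combines Lemma \ref{lemma:specialdegenerate}(b) with Theorem \ref{theo:conjugateofquadratictobasic}: whenever $\F[a]$ is nondegenerate, $\F[x]\simeq\F[a]$ is conjugate to a basic subalgebra, so $x$ is conjugate to a basic vector and hence so is every inner conjugate of it. When $\F[a]$ is degenerate, a symmetric argument applied to $\Phi(b)$ settles the matter if $\F[b]$ is nondegenerate; the residual case where both subalgebras are degenerate will have to be handled by a tailored direct analysis.

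In the second step, I would set $y':=\gamma_1^{-1}y\gamma_1$; then $\langle x',y'\rangle=\Phi(\omega)$ is still of degree one, so either $y'$ is already basic (and then Corollary \ref{cor:quadraticdistance1} forces it into the basic subalgebra opposite to $\F[x']$) or $y'$ is nonbasic quadratic with leading basic subalgebra $\F[x']$ and $d_{\F[x']}(y')=1$. In the latter case, applying the refined retracing algorithm to $y'$ terminates in very few iterations. Each required conjugator can be chosen in $\F[x']^\times$ or in the semi-basic group attached to some zero divisor of $\F[x']$; a basic conjugator from $\F[x']^\times$ centralizes $x'$, and so does a semi-basic unit attached to a nilpotent zero divisor, because $1+r\alpha$ commutes with $\F[\alpha]$ when $\alpha^2=0$ (Section \ref{section:effectsemibasicdegenerate}). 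The idempotent subcase demands more care: the corresponding semi-basic unit then modifies $x'$ in the controlled way described by Lemma \ref{lemma:effectofsemibasic}, and one must verify that the combined effect on the pair $(x',y')$ still delivers a basic pair in opposite subalgebras after at most one or two iterations.

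Taking $\gamma:=\gamma_1\gamma_2$, where $\gamma_2$ gathers the units used in the second step, will produce the sought conjugator, which is by construction a product of basic and semi-basic units. The hard part will be the first step in the degenerate case---where Theorem \ref{theo:conjugateofquadratictobasic} no longer rules out special degenerate attachment to an idempotent and one must rely on the symmetric role of $\Phi(a)$ and $\Phi(b)$ together with their joint generation of $\calW_{p,q}$---and the idempotent subcase of the second step, where the concurrent evolution of $x'$ under conjugation by semi-basic units must be tracked precisely using the formulas of Section \ref{section:effectsemibasicsplit}.
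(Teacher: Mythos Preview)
Your overall strategy matches the paper's: run the refined retracing algorithm on $\Phi(a)$ to get $\gamma_1$, then handle $\Phi'(b)$ where $\Phi'=i_{\gamma_1^{-1}}\circ\Phi$. Your treatment of the first step is essentially right, though slightly overcomplicated: when both basic subalgebras are degenerate there are no basic idempotents at all, so every special degenerate element is attached to a trace-zero zero divisor and Lemma~\ref{lemma:specialdegenerate}(a) already covers it; no ``tailored direct analysis'' is needed there.

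The real gap is in your second step, precisely where you flag it. When the leading vector $\alpha'$ of $y':=\Phi'(b)$ is a basic \emph{idempotent}, you propose to conjugate by a semi-basic unit in $\SB(\alpha')$ and ``track precisely'' how $x'=\Phi'(a)$ evolves. The paper does not do this, and it is not clear your tracking would terminate cleanly: a semi-basic unit attached to an idempotent genuinely moves $x'$, and you have no mechanism to guarantee the pair stays under control. The paper's key idea, which you are missing, is to show that this obstruction \emph{cannot occur} in the idempotent case. Since $\Phi'$ is an automorphism, the quadruple $(1,\alpha,y',\alpha y')=(\Phi'(1),\Phi'(a),\Phi'(b),\Phi'(ab))$ is a $C$-basis of $\calW_{p,q}$; writing the transition matrix to $(1,\alpha,\beta,\alpha\beta)$ and computing its determinant forces the coefficients $y'_\beta$ and $y'_\beta+\tr(\alpha)y'_{\alpha\beta}$ to be nonzero constants, hence $y'_{\alpha\beta}$ is constant. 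Combined with $\deg(y'_\alpha)\geq 2$ (which follows from $d_{\calB}(y')>2$ via Corollary~\ref{cor:differenceofdistances}) and the trace identity, this contradicts either $\alpha'\sim\alpha$ or $\alpha'\sim\alpha^\star$. Thus the idempotent obstruction is ruled out by contradiction rather than navigated through, and only the degenerate case (iii) requires actually iterating the refined algorithm on $y'$---where, as you correctly note, the semi-basic units commute with $x'$.
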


\begin{proof}
For $\gamma \in \calW_{p,q}^\times$, denote by $i_\gamma$ the inner automorphism $x \mapsto \gamma x \gamma^{-1}$.
By exchanging $p$ and $q$ if necessary, and by performing a basic base change if necessary, we can
assume that one of the following three situations holds:
\begin{enumerate}[(i)]
\item $\F[a]$ is a field;
\item $a$ is idempotent;
\item $a^2=b^2=0$.
\end{enumerate}
We observe that in any case the refined retracing algorithm succeeds when applied to $\Phi(a)$.
Indeed $\F[\Phi(a)]$ is isomorphic to $\F[a]$, and hence the algorithm fails only if $\F[a]$ is degenerate.
Yet $\F[a]$ is degenerate only in case (iii), and in that case all the special degenerate elements
are associated with elements with trace $0$: In that case, point (a) of Lemma \ref{lemma:specialdegenerate}
shows that no failure can be reported because at each step we work with the image of $a$ under an automorphism of $\calW_{p,q}$.

Hence the refined retracing algorithm yields a unit $\gamma_1$ which is a product of units and of semi-basic units
and such that the composite $i_{\gamma_1} \circ \Phi$ maps $a$ to a basic vector. Then
we set $\Phi':=i_{\gamma_1} \circ \Phi$, $\alpha:=\Phi'(a)$ (which may very well belong to $\F[b]$),
$\calA:=\F[\alpha]$, and finally we define $\calB$ as the opposite basic subalgebra.

Now, we examine $x:=\Phi'(b)$, which is quadratic and nonscalar. As in the proof of Proposition \ref{prop:doubleretracing},
we observe that $d_{\calA}(x)=1$. Recalling at this point that $d_{\calB}(x) \neq d_{\calA}(x)$, there are two options to consider.
If $d_{\calB}(x) \leq 0$ then $x \in \calB$ and we immediately conclude that
$\Psi:=i_{\gamma_1} \circ \Phi$ is a basic automorphism, which completes the proof by taking $\gamma:=(\gamma_1)^{-1}$.

For the remainder of the proof, we assume that $d_{\calB}(x) \geq 2$, to the effect that $x$ is nonbasic and with leading subalgebra $\calA$.
We take a leading vector $\alpha'$ for $x$.

Assume first that $\alpha'$ is a unit.
Then we deduce from Corollary \ref{cor:differenceofdistances} that $d_{\calB}(x)=2$, and we conclude that
$(\alpha')^{-1} x \alpha'$ is basic. Since $\alpha' \in \calA$, we have $(\alpha')^{-1} \alpha \alpha'=\alpha$
and we conclude that $i_{(\alpha')^{-1}} \circ \Phi'$ is basic. Hence, the conclusion is validated in that case.

Assume finally that $\alpha'$ is a zero divisor. Then $d_{\calB}(x)>2$ by Corollary \ref{cor:differenceofdistances}. This rules case (i) out because
$\calA$ is isomorphic to $\F[a]$. It remains to consider cases (ii) and (iii).
Now, we choose an arbitrary vector $\beta \in \calB \setminus \F$
and write $x=x_1+x_\alpha \alpha+x_\beta \beta+x_{\alpha \beta} \alpha \beta$.
The key is to observe that, since $\Phi'$ is an automorphism of the algebra $\calW_{p,q}$, the family
$(\Phi'(1),\Phi'(a),\Phi'(b),\Phi'(ab))=(1,\alpha,x,\alpha x)$ is a $C$-basis of $\calW_{p,q}$.
We also note that $\alpha^2=\tr(\alpha) \alpha$ since $N(\alpha)=0$.
Hence
$$\alpha x=(x_1+\tr(\alpha)x_\alpha)\,\alpha+(x_\beta+\tr(\alpha) x_{\alpha\beta})\, \alpha \beta.$$
Thus the matrix of $(1,\alpha,x,\alpha x)$ in the $C$-basis $(1,\alpha,\beta,\alpha \beta)$ is
$$\begin{bmatrix}
1 & 0 & x_1  & 0 \\
0 & 1 & x_\alpha & x_1+\tr(\alpha) x_\alpha \\
0 & 0 & x_\beta & 0 \\
0 & 0 & x_{\alpha\beta} & x_\beta+\tr(\alpha) x_{\alpha\beta}
\end{bmatrix}.$$
The determinant of the latter is $x_\beta(x_\beta+\tr(\alpha) x_{\alpha\beta})$,
and we infer that $x_\beta$ and $x_\beta+\tr(\alpha) x_{\alpha\beta}$ are constant and nonzero.
It follows in particular that $\tr(\alpha) x_{\alpha \beta}$ is constant.

Now, we split the discussion into two cases.
Assume first that case (ii) holds, so that $\alpha$ is idempotent.
Then $\tr(\alpha)=1$, and since $\alpha'$ is a zero divisor the only possibilities are that $\alpha' \sim \alpha$ or $\alpha' \sim \alpha^\star$.
By the previous analysis, $x_{\alpha\beta}$ is constant.
Then $\tr(x)=2x_1+x_\alpha+\tr(\beta)x_\beta+\langle \alpha,\beta\rangle x_{\alpha\beta}$
and $\deg(x_\alpha) \geq 2$ because $d_\calB(x) > 2$. Since $x_{\beta}$ and $x_{\alpha\beta}$ are constant,
it follows from the computation of $\tr(x)$ that $\deg(2x_1+x_\alpha) \leq 1$.
Yet this is not possible. Indeed:
\begin{itemize}
\item Either $\alpha' \sim \alpha$, in which case $\deg(x_1)<\deg(x_\alpha)$
and hence $\deg(2x_1+x_\alpha)=\deg(x_\alpha) \geq 2$;
\item Or $\alpha' \sim \alpha^\star$, in which case
we rewrite $x=(x_1+x_\alpha) -x_\alpha\, \alpha^\star+(x_\beta+x_{\alpha\beta})\, \beta-x_{\alpha \beta}\, \alpha^\star \beta$
to see that $\deg(x_1+x_\alpha)<\deg(-x_\alpha)=\deg(x_\alpha)$, and once more we find a contradiction by writing
$2x_1+x_\alpha=2(x_1+x_\alpha)-x_\alpha$.
\end{itemize}
Therefore case (ii) is ruled out in the present situation.

Hence, only case (iii) is possible. Then $\F[\alpha]$ is degenerate
and its zero divisors are the vectors of $\F^\times \alpha$. Hence $\alpha$ is leading for $x$.
Then we apply the refined retracing algorithm to $x$, which succeeds because, just like in the start of the proof,
no obstruction can come here from the special degenerate elements.
The trick here is to observe by induction that at each step the current vector $x'$ has leading subalgebra $\calA$.
Let indeed $\gamma' \in \SB(\calA)$ be arbitrary. Since $\calA$ is degenerate
the elements of the group $\SB(\calA)$ commute with all the elements of $\calA$, and in particular
$(\gamma')^{-1} \alpha \gamma'=\alpha$. Hence, with the same argument as for $x$ we find that
$x':=(\gamma')^{-1} x \gamma'$ has leading subalgebra $\calA$ unless $x' \in \calB$.
Hence, after finitely many steps we find $\gamma' \in \SB(\calA)$ such that
$(\gamma')^{-1} x \gamma'$ is basic, while $(\gamma')^{-1} \alpha \gamma'=\alpha$. This completes the proof.
\end{proof}

A straightforward consequence of the previous theorem is of course the existence part of the decomposition in the
Automorphisms Theorem. Another immediate consequence is the Refined Units Theorem, which ensues from Theorem \ref{theo:superdecompositiontheorem}
and from the uniqueness part in the Automorphisms Theorem. The proof is a word-for-word adaptation of the one
of the Weak Units Theorem given in Section \ref{section:retracingfruits}, and hence no details are necessary at this point.

We finish by noting that the proof of Theorem \ref{theo:superdecompositiontheorem} gives an explicit algorithm
to decompose an arbitrary automorphism $\Phi$ of $\calW_{p,q}$ as the product of a basic automorphism followed by an inner automorphism,
an algorithm that also gives a decomposition of a conjugator associated with the inner automorphism into a product of basic and semi-basic units,
taking as only entries the elements $\Phi(a)$ and $\Phi(b)$.
We note that the algorithm also provides a test of whether $\Phi$, considered as an endomorphism only and defined by the datum of
$\Phi(a)$ and $\Phi(b)$, is really an automorphism. Of course, for efficiency one should, prior to using the algorithm, first test whether
$\Phi(a)$ and $\Phi(b)$ are quadratic with the same trace and norm as $a$ and $b$, respectively, and one should also test
whether $\langle \Phi(a),\Phi(b)\rangle$ has degree $1$.

\subsection{Uniqueness of decompositions}\label{section:uniquenessfull}

We conclude by proving the Strong Units Theorem and the Semi-Basic Units Theorem.
Now that the Refined Units Theorem has been proved, all that remains is to prove the uniqueness of certain decompositions,
and all the statements can be reduced to just one lemma, which we state below (Lemma \ref{lemme:ultimateuniqueness}).

Before we can state this lemma, some additional terminology is in order.
Let $\calC$ be a basic subalgebra.
If $\calC$ splits, a (potentially void) list $(x_1,\dots,x_k)$ is called a \textbf{reduced chain of semi-basic units} of $\calC$
whenever there exists a zero divisor $\alpha \in \calC$ such that
$x_i \in \SB(\alpha) \setminus \{1\}$ for every odd $i$, and
$x_i \in \SB(\alpha^\star) \setminus \{1\}$ for every even $i$, and the $k$-list
$(\alpha,\alpha^\star,\alpha,\dots)$ is then said to be adapted to it.

In the general case, a \textbf{reduced chain} attached to $\calC$ is a nonvoid list
$((x_1,y_1),\dots,(x_k,y_k))$ of pairs in which every $x_i$ is an element of $\SB(\calC) \setminus \{1\}$, every
$y_i$ is either a basic unit in $\calC \setminus \F$ or a zero divisor in $\calC$ and:
\begin{itemize}
\item Either $\calC$ is a field, $k=1$, $x_1$ is a nonscalar basic unit in $\calC^\times$ and $y_1=x_1^{-1}$;
\item Or $\calC$ degenerates, $x_1$ is a nonscalar basic unit in $\calC^\times$, $y_1=x_1^{-1}$ and if $k\geq 2$ then $k=2$,
$x_2$ is a semi-basic unit that is non-basic and $y_2$ is a zero divisor in $\calC$;
\item Or $\calC$ degenerates, $k=1$, $x_1$ is a non-basic semi-basic unit in $\SB(\calC)$, and $y_1$ is a zero divisor in $\calC$;
\item Or $\calC$ splits and $(x_1,\dots,x_k)$ is a reduced chain of semi-basic units of $\calC$ (as defined earlier), and
$(y_1,\dots,y_k)$ is an adapted chain of zero divisors in $\calC$;
\item Or $\calC$ splits, $x_1 \in \calC^\times \setminus \F^\times$, $y_1=x_1^{-1}$, $(x_2,\dots,x_k)$ is a reduced chain of semi-basic units of $\calC$ and $(y_2,\dots,y_k)$ is an adapted chain of zero divisors in $\calC$.
\end{itemize}
In particular, in such a reduced chain no two consecutive vectors of $(y_1,\dots,y_k)$ are scalar multiples of one another.

We can now state our main result, which will yield all the uniqueness results:

\begin{lemma}\label{lemme:ultimateuniqueness}
Let $N \geq 1$ be an integer, $(\calC_i)_{1 \leq i \leq N}$ be a strongly $2$-periodical sequence valued in $\{\F[a],\F[b]\}$.
Let, for each $i \in \lcro 1,N\rcro$ be a reduced chain
$((w_{i,1},z_{i,1}),\dots,(w_{i,n_i},z_{i,n_i}))$ that is attached to the basic subalgebra $\calC_i$, and set $w_i:=\prod_{k=1}^{n_i} w_{i,k} \in \SB(\calC_i)$.
Then $\prod_{i=1}^N w_i \not\in \F$.
\end{lemma}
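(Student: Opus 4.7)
The plan is to show that $\gamma := \prod_{i=1}^N w_i$ fails to centralize some nonscalar basic vector; since $\F^\times$ is central and therefore centralizes every element of $\calW_{p,q}$, this will force $\gamma \notin \F$. I pick $c_0$ as an arbitrary nonscalar basic vector of the basic subalgebra opposite to $\calC_N$ and aim to prove that $c_N := \gamma c_0 \gamma^{-1}$ is nonbasic (so in particular $c_N \neq c_0$). The argument proceeds by tracking the leading basic subalgebra of the running conjugate under successive conjugations by $w_N, w_{N-1}, \dots, w_1$.

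The heart of the proof is the following \emph{Inner Claim}: for any basic subalgebra $\calC$, any reduced chain $((w_1,z_1),\dots,(w_n,z_n))$ attached to $\calC$ with product $w := w_1 \cdots w_n$, and any nonscalar quadratic $c$ whose leading basic subalgebra is $\calC^{\mathrm{opp}}$, the conjugate $wcw^{-1}$ is nonbasic with leading basic subalgebra $\calC$. I would prove this by case analysis on the structure of the chain as in Definition \ref{def:reduceddecomposition}. If $\calC$ is a field the chain has length $1$ and Lemma \ref{lemma:effectofbasic} applies at once, since the basic unit $w_1$ lies in the trailing subalgebra of $c$. If $\calC$ splits, the chain consists of a (possibly absent) leading basic factor followed by semi-basic factors that alternate between $\SB(\alpha)$ and $\SB(\alpha^\star)$ for some nontrivial idempotent $\alpha \in \calC$; I would conjugate innermost-first and apply Lemma \ref{lemma:effectofsemibasic}(b) (valid because $\alpha^2 = \alpha \neq 0$) at every semi-basic step, observing that the leading vector of the current conjugate is always collinear to the just-used $\alpha$ or $\alpha^\star$ and therefore never collinear to the next factor's zero divisor. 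An optional leading basic factor $w_1 \in \calC^\times$ is absorbed via Lemma \ref{lemma:conjugatebasicunit}(a), since a unit in $\calC^\times$ cannot be collinear to the current leading vector (which is a zero divisor). When $\calC$ degenerates, the chain has length at most $2$ and the same strategy applies, modulo one subtlety: if the innermost factor is a non-basic semi-basic unit and the input $c$ is itself basic, neither branch of Lemma \ref{lemma:effectofsemibasic} directly applies, and one must invoke Situation~1 of the computation in Section \ref{section:effectsemibasicdegenerate}, whose analysis remains valid for any nonscalar quadratic $c$ with leading subalgebra $\calC^{\mathrm{opp}}$ regardless of whether $c$ is basic.

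With the Inner Claim in hand the lemma will follow cleanly. Define $c_j := w_{N-j+1} c_{j-1} w_{N-j+1}^{-1}$ for $1 \leq j \leq N$, so that $c_N = \gamma c_0 \gamma^{-1}$. Since $(\calC_i)$ is strongly $2$-periodical, $\calC_i^{\mathrm{opp}} = \calC_{i+1}$ for every $i < N$, and an induction on $j$ using the Inner Claim shows that $c_j$ is nonbasic with leading basic subalgebra $\calC_{N-j+1}$: the base case $j=1$ applies the Inner Claim to $\calC = \calC_N$ and $c = c_0$, whose leading subalgebra is $\calC_N^{\mathrm{opp}}$ by choice, and the step applies it to $\calC = \calC_{N-j+1}$ and $c = c_{j-1}$, whose leading subalgebra $\calC_{N-j+2} = \calC_{N-j+1}^{\mathrm{opp}}$ is the required opposite. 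In particular $c_N$ is nonbasic, which forces $c_N \neq c_0$ and hence $\gamma \notin \F$. The main difficulty is organizational rather than computational: the definition of a reduced chain has several sub-cases, each of which combines with the type of $\calC$ and with the basic/nonbasic dichotomy for the input $c$ to produce configurations that must be carefully matched to the appropriate effect-of-conjugation result from Section \ref{section:units1} or from Sections \ref{section:effectsemibasicsplit}--\ref{section:effectsemibasicdegenerate}; the most delicate one, as noted above, is precisely the one arising at the very first conjugation when $c_0$ is basic and the innermost factor is semi-basic inside a degenerate subalgebra.
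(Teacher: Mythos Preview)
Your approach is essentially the paper's: both pick a nonscalar basic test vector, conjugate it through the successive factors, and track the leading data by repeated appeals to Lemmas \ref{lemma:conjugatebasicunit} (equivalently \ref{lemma:effectofbasic}) and \ref{lemma:effectofsemibasic}, concluding that the final conjugate is nonbasic. The only differences are organizational: the paper flattens all chains into a single list $(\gamma_1,\dots,\gamma_M)$ and works factor-by-factor with $\Gamma^{-1} x_0 \Gamma$ from the $\gamma_1$ side (choosing $x_0$ according to the type of $\gamma_1$), whereas you proceed block-by-block via your Inner Claim and compute $\gamma c_0 \gamma^{-1}$ from the $w_N$ side with the simpler uniform choice $c_0 \in \calC_N^{\mathrm{opp}}$.
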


\begin{proof}
We rewrite the concatenated lists $(w_{1,1},\dots,w_{1,n_1},w_{2,1},\dots,w_{2,n_2},\dots,w_{N,n_N})$ and
$(z_{1,1},\dots,z_{1,n_1},z_{2,1},\dots,z_{2,n_2},\dots,z_{N,n_N})$
respectively as $(\gamma_1,\dots,\gamma_M)$ and $(\alpha_1,\dots,\alpha_M)$ for some integer $M \geq 1$.
We set $\Gamma:=\prod_{k=1}^M \gamma_k$. We shall deduce from Lemmas \ref{lemma:effectofbasic} and \ref{lemma:effectofsemibasic}
that at least of one of $\Gamma^{-1} a \Gamma$ and $\Gamma^{-1} b \Gamma$ is nonbasic.

Whether we take $a$ and $b$ depends on $\gamma_1$.
If $\gamma_1$ is a basic unit, then we take $x_0:=b$ if $\gamma_1 \in \F[a]$, and $x_0:=a$ if $\gamma_1 \in \F[b]$.
In any case $x_1:=\gamma_1^{-1} x_0 \gamma_1$ is not a basic vector, and $\alpha_1=(\gamma_1)^{-1}$ is leading for it (see Lemma \ref{lemma:effectofbasic}).
Assume now that $\gamma_1$ is a semi-basic unit. Then $\gamma_1 \in \SB(\alpha_1)$, and we take:
\begin{itemize}
\item $x_0:=a$ if $\F[\alpha_1]$ splits or $\alpha_1 \in \F[b]$;
\item $x_0:=b$ if $\F[\alpha_1]$ degenerates and $\alpha_1 \in \F[a]$.
\end{itemize}
In any case we see from Lemma \ref{lemma:effectofsemibasic} that $\gamma_1^{-1} x_0 \gamma_1$ is non-basic and that
$\alpha_1$ is leading for it.

The initialization is done. Now, we inductively define $(x_0,\dots,x_M)$ by $x_i:=\gamma_i^{-1} x_{i-1} \gamma_i$ for all $i\in \lcro 1,M\rcro$.
We have just shown that $x_1$ is non-basic, with leading vector $\alpha_1$.

Now, the key is to note that in the sequence $(\alpha_1,\dots,\alpha_M)$ the following properties are satisfied for every $k \in \lcro 2,M\rcro$:
\begin{enumerate}[(i)]
\item If $\alpha_k$ is a unit then $\alpha_{k-1}$ and $\alpha_k$ belong to opposite basic subalgebras;
\item If $\alpha_k$ is a zero divisor then $\alpha_{k-1} \not\sim \alpha_k$.
\end{enumerate}
Indeed, for all $k \in \lcro 2,M\rcro$:
\begin{itemize}
\item If $\alpha_{k-1}$ and $\alpha_{k}$ appear as two consecutive terms in a subsequence $(z_{j,1},\dots,z_{j,n_j})$, then
$\alpha_k$ is not a basic unit (so point (i) trivially holds), and point (ii) has already been observed after defining reduced chains;
\item If $\alpha_k$ is at the start of a subsequence $(z_{j,1},\dots,z_{j,n_j})$ and $\alpha_{k-1}$ at the end of the preceding one
$(z_{j-1,1},\dots,z_{j-1,n_{j-1}})$, then by our assumptions $\alpha_k$ and $\alpha_{k-1}$ belong to opposite basic subalgebras,
and hence both points are obvious.
\end{itemize}

From there, by induction we can combine Lemmas \ref{lemma:effectofbasic} and \ref{lemma:effectofsemibasic}
to find that, for all $i \in \lcro 1,M\rcro$ the vector $x_i$ is non-basic with leading vector $\alpha_i$.
Indeed, if for some $i \in \lcro 1,M-1\rcro$ we know that $x_i$ is non-basic with leading vector $\alpha_i$, then:
\begin{itemize}
\item Either $\alpha_{i+1}$ is a zero divisor, in which case by point (ii) it is not a scalar multiple of $\alpha_i$, and hence
Lemma \ref{lemma:effectofsemibasic} yields that $x_{i+1}$ is non-basic with leading vector $\alpha_{i+1}$;
\item Or $\alpha_{i+1}$ is a unit, in which case point (i) shows that $\alpha_i$ and $\alpha_{i+1}$ lie in opposite subalgebras,
and we deduce from Lemma \ref{lemma:effectofbasic} that $x_{i+1}$ is non-basic with leading vector $\alpha_{i+1}$.
\end{itemize}
In particular $x_M=\Gamma^{-1} x_0 \Gamma$ is nonbasic and hence $\Gamma \not\in \F$. This completes the proof.
\end{proof}

We finish by deriving the Strong Units Theorem and the Semi-Basic Units Theorem from the previous lemma.

\begin{proof}[Proof of the Semi-Basic Units Theorem]
Let $\calC$ be a split basic subalgebra. We choose a nontrivial idempotent $\alpha$ in $\calC$.

We have already observed that $\SB(\calC)=\calC^\times \cdot \SSB(\calC)$ because $\calC^\times$ normalizes $\SSB(\calC)$
(and by its very definition every element of $\SB(\calC)$ is a product of elements of $\calC^\times \cup \SSB(\calC)$).

It remains to prove that $\calC^\times \cap \SSB(\calC)=\{1\}$ and that $\SSB(\calC)$
is the internal free product of the two subgroups of semi-basic units $\SB(\alpha)$ and $\SB(\alpha^\star$).

Of course, every element of $\SSB(\calC)$ can be written in the form of a product
$\prod_{k=1}^n x_k$, where there is a strongly $2$-periodical sequence $(\beta_k)_{1 \leq k \leq n}$ valued in $\{\alpha,\alpha^\star\}$
and such that $x_k \in \SB(\beta_i)$ for all $k \in \lcro 1,n\rcro$.

So, we let $k \geq 1$ be an integer and $(\alpha_i)_{1 \leq i \leq k}$ be a strongly $2$-periodical sequence valued in $\{\alpha,\alpha^\star\}$,
and we let $(x_1,\dots,x_k) \in (\SB(\alpha_1) \setminus \{1\}) \times \cdots \times (\SB(\alpha_k) \setminus \{1\})$, and let $y \in \calC^\times$ be a basic unit.
We assume that $\prod_{i=1}^k x_i=y$ and prove that it leads to a contradiction, which will clearly yield all the results we want to prove here
(indeed, simply having $\prod_{i=1}^k x_i \neq 1$ in particular will yield point (b) in the Semi-Basic Units Theorem).

If $y \not\in \F$ we set $x_0:=y^{-1}$ and recognize that $((x_0,y),(x_1,\alpha_1),\dots,(x_k,\alpha_k))$ is a reduced
chain that is attached to $\calC$, and $\prod_{i=0}^k x_i=1$. This contradicts Lemma \ref{lemme:ultimateuniqueness}.
Hence $y \in \F$, and then $((x_1,\alpha_1),\dots,(x_k,\alpha_k))$ is a reduced
chain that is attached to $\calC$, and once more we obtain a contradiction with Lemma \ref{lemme:ultimateuniqueness}.
\end{proof}

\begin{proof}[Proof of the Strong Units Theorem]
We already know from the Refined Units Theorem that $\SB(\F[a]) \cup \SB(\F[b])$ generates the group $\calW_{p,q}^\times$.

Next, let $k \geq 1$ be a positive integer.
Let $(\calC_i)_{1 \leq i \leq k}$ be a strongly $2$-periodical sequence valued in $\{\F[a],\F[b]\}$, and
let $(x_1,\dots,x_k) \in (\SB(\calC_1) \setminus \F) \times \cdots \times (\SB(\calC_k) \setminus \F)$.
All we need to do is prove that the product $\prod_{i=1}^k x_i$ does not belong to $\F$.

Let $i \in \lcro 1,k\rcro$. By the definition of the group $\SB(\calC_i)$,
we note that there exists a reduced chain $((w_{i,1},\alpha_{i,1}),\dots,(w_{i,N_i},\alpha_{i,N_i}))$
that is attached to $\calC_i$ and such that $x_i=w_{i,1} \cdots w_{i,N_i}$.
This is obvious if $\calC_i$ is a field; if $\calC_i$ splits we have shown this through the
Semi-Basic Units Theorem; finally if $\calC_i$ degenerates then this follows from the observation that $\calC_i^\times$ normalizes $\SB(\alpha)$
for an arbitrary zero divisor $\alpha$ in $\calC_i$ (in fact, every element of $\SB(\alpha)$ commutes with every element of $\calC_i$).

Then Lemma \ref{lemme:ultimateuniqueness} directly yields that $\prod_{i=1}^k x_i \not\in \F$, and our proof is complete.
\end{proof}

Now the group of units $\calW_{p,q}^\times$ is entirely deciphered.

\subsection{Application to the action of inner automorphisms on basic vectors}

As an application of the previous results and techniques, we will obtain the following result, which
reinforces the uniqueness part in the Automorphisms Theorem.

\begin{prop}\label{prop:conjugatetoatmostonebasic}
Let $\gamma \in \calW_{p,q}^\times$ and $x$ be a \emph{nonscalar} basic vector.
If $\gamma x \gamma^{-1}$ is basic then $\gamma x \gamma^{-1}=x$.
\end{prop}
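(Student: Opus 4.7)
The plan is to reduce the problem to the ``tracking of leading vectors'' machinery developed in Lemma \ref{lemme:ultimateuniqueness}. Without loss of generality assume $x \in \F[a] \setminus \F$, and observe that by the Refined Units Theorem together with the Strong Units and Semi-Basic Units Theorems, $\gamma$ admits, up to a nonzero scalar, a fully refined decomposition $\gamma = \delta_1 \delta_2 \cdots \delta_M$ exactly of the type considered in the proof of Lemma \ref{lemme:ultimateuniqueness}: each $\delta_j$ is either a basic unit or a non-basic semi-basic unit attached to a zero divisor $\alpha_j$, and the sequence $(\alpha_1,\ldots,\alpha_M)$ satisfies the reduced-chain conditions (i) and (ii) recalled in that lemma.

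The strategy is to peel factors off from the right. Concretely, I would peel off the trailing consecutive factors $\delta_M, \delta_{M-1}, \ldots$ that commute with $x$ --- this covers in particular all the factors lying in $\SB(\F[a])^\times$ or in $\SB(\alpha)$ when $\F[a]$ is degenerate with nilpotent $\alpha$. If the peeling removes the entire product, then $\gamma$ commutes with $x$ and $\gamma x \gamma^{-1}=x$, as required. Otherwise we obtain a truncated tail $\delta_1 \cdots \delta_{M_0}$ whose rightmost factor $\delta_{M_0}$ fails to commute with $x$, and the goal becomes to prove that $\gamma x \gamma^{-1}$ is \emph{not} basic, yielding a contradiction. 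Setting $x_0:=x$ and $x_k:=\delta_{M_0-k+1} x_{k-1}\delta_{M_0-k+1}^{-1}$, I would mirror the induction of Lemma \ref{lemme:ultimateuniqueness} to show that $x_k$ is nonbasic for all $k \geq 1$, with leading vector carried by $\alpha_{M_0-k+1}$; the reduced-chain conditions (i) and (ii) guarantee, exactly as in that lemma, that the hypotheses of Lemma \ref{lemma:effectofbasic} or Lemma \ref{lemma:effectofsemibasic}(a) apply at every inductive step $k \geq 2$.

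The main obstacle is the initial step $k=1$, for which the pre-existing lemmas are not sufficient. Two sub-cases must be handled by direct computation. First, when $\delta_{M_0}$ is a non-basic semi-basic unit attached to a nilpotent zero divisor $\alpha \in \F[b]$ (so $\alpha^2 = 0$, making Lemma \ref{lemma:effectofsemibasic}(b) inapplicable), I would write $\delta_{M_0}=1+r\alpha$ and expand $\delta_{M_0} x \delta_{M_0}^{-1}= x+r[\alpha,x]-r^2\alpha x\alpha$; using $\alpha x = \langle \alpha,x^\star\rangle+\tr(x)\alpha-x\alpha$ and $\alpha x\alpha = \langle \alpha,x^\star\rangle\,\alpha$, the coefficient on $\alpha$ in the deployed basis $(1,x,\alpha,x\alpha)$ has degree $2\deg(r)+1$, which dominates the constant coefficient $1$ on $x$; hence $\delta_{M_0}x\delta_{M_0}^{-1}$ is nonbasic, with $\alpha$ as leading vector. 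Second, when $\delta_{M_0} \in \SB(\alpha) \setminus \{1\}$ with $\alpha$ an idempotent of a split basic subalgebra $\F[a]$ containing $x$ (so the peeling is blocked and the commutation fails), writing $x=\mu+\nu\alpha$ with $\nu\neq 0$ and $\delta_{M_0}=1+\alpha y^\star$ yields $\delta_{M_0} x\delta_{M_0}^{-1} = x-\nu\alpha y^\star$, which has a nonzero component in $\alpha^\sharp$ and is therefore not basic. In both sub-cases the leading vector of the resulting nonbasic element is again carried by $\alpha$, so the subsequent inductive steps relying on Lemmas \ref{lemma:effectofbasic} and \ref{lemma:effectofsemibasic} proceed without further change.
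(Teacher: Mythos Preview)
Your approach is essentially the same as the paper's: decompose the unit into its fully refined form, strip off the factors that commute with $x$, and then track the leading vector through the remaining factors via Lemmas \ref{lemma:effectofbasic} and \ref{lemma:effectofsemibasic}. You are in fact more explicit than the paper at the initialisation step; your two direct computations correctly cover the cases (a nilpotent $\alpha$ in the opposite basic subalgebra, and an idempotent $\alpha$ in $\F[x]$) where the stated hypotheses of Lemma \ref{lemma:effectofsemibasic} are not literally met, whereas the paper simply writes ``follow the line of reasoning of Lemma \ref{lemme:ultimateuniqueness}''.

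There is, however, one genuine glitch in your inductive step. You decompose $\gamma$ and iterate \emph{backwards}, whereas the paper decomposes $\gamma^{-1}$ and iterates forwards. The distinction matters because a reduced chain places its optional basic unit \emph{first}: going forward, that basic unit is peeled off (it commutes with $x$) before any semi-basic unit of the same chain is processed; going backward, you meet the semi-basic units of a chain first and only afterwards the basic unit $\delta_j\in\calC_j^\times$ sitting at the head of that chain. At that moment the current vector $x_{k-1}$ has leading subalgebra $\calC_j$, so $\delta_j$ lies in the \emph{leading} subalgebra, and neither Lemma \ref{lemma:effectofbasic} (which needs the trailing subalgebra) nor Lemma \ref{lemma:effectofsemibasic}(a) applies. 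Your blanket claim that conditions (i)--(ii) of Lemma \ref{lemme:ultimateuniqueness} push the induction through is therefore too optimistic at precisely these transitions. The fix is easy: invoke Lemma \ref{lemma:conjugatebasicunit}(a) instead, which shows that conjugating by a basic unit $\delta_j$ not proportional to the leading vector keeps $\delta(x_k)=\delta(x_{k-1})$ and keeps $\calC_j$ as the leading subalgebra; the next factor lies in $\calC_{j-1}\neq\calC_j$ and the induction resumes. Alternatively, and this is what the paper does, simply decompose $\gamma^{-1}$ rather than $\gamma$ and run the iteration forward, so that the basic unit of each chain is always the first factor encountered and is absorbed by commutation with $x$ before any tracking is needed.
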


\begin{proof}
Set $\calA:=\F[x]$ and denote by $\calB$ the opposite basic subalgebra.

We shall prove that $\gamma x \gamma^{-1}=x$ or $\gamma x \gamma^{-1}$ is nonbasic.

The result is obvious if $\gamma \in \F^\times$, so we assume that $\gamma \not\in \F^\times$ from now on.

We can now use the previous description of units to decompose $\gamma^{-1}$ in reduced form
$\gamma^{-1} =\gamma_1 \cdots \gamma_n$
where there is a strongly $2$-periodical sequence
$(\calC_1,\dots,\calC_n)$ valued in $\{\calA,\calB\}$ such that $\gamma_i \in \SB(\calC_i) \setminus \F$
for all $i \in \lcro 1,n\rcro$.
We can then further decompose each $\gamma_i$ into a product
$\prod_{k=1}^{N_i} x_{i,k}$ where $(x_{i,1},\dots,x_{i,N_i})$ can be extended to a reduced chain $((x_{i,1},y_{i,1}),\dots,(x_{i,N_k},y_{i,N_k}))$
attached to $\calC_i$.

If $\calC_1=\calB$ then we can follow the line of reasoning of the proof of Lemma \ref{lemme:ultimateuniqueness}
to see that $\gamma x \gamma^{-1}$ is nonbasic.
The same holds if $\calA$ is split and $x_{1,1}$ is not a basic unit.
Assume from now on that $\calC_1=\calA$.

\begin{itemize}
\item Assume that $\calA$ is a field or splits, and $x_{1,1}$ is a basic unit.
Then $x_{1,1}$ commutes with $x$, and hence
$\gamma x \gamma^{-1}=(\gamma') x (\gamma')^{-1}$ for
$\gamma':=(x_{1,2} \cdots x_{1,N_1} \gamma_2 \cdots \gamma_n)^{-1}$.
If $\gamma' \not\in \F$ we proceed as before to see that $(\gamma') x (\gamma')^{-1}$ is nonbasic:
indeed, either $N_1>1$ and then $x_{1,2}$ is not a basic unit, or $N_1=1$ and then we conclude because $\calC_2=\calB$.
If $\gamma' \in \F$ we directly conclude that $(\gamma') x (\gamma')^{-1}=x$.

\item Assume finally that $\calA$ degenerates. Then $\gamma_1$ commutes with $x$,
and hence $\gamma x \gamma^{-1}=(\gamma') x (\gamma')^{-1}$ for
$\gamma':=(\gamma_2 \cdots \gamma_n)^{-1}$, and we conclude like in the previous case.
\end{itemize}
\end{proof}

\subsection{Addendum: actions on the groups of semi-basic units}

We complete the study by considering the case of a
split basic subalgebra $\calA$. We have not yet explained precisely how the
conjugation of the elements of $\SSB(\calA)$ by the basic units of $\calA^\times$ works in practice.
The following proposition answers this question:

\begin{prop}\label{prop:actionbasiconsemibasic}
Let $\calA$ be a basic subalgebra and let $\alpha \in \calA$ be a nontrivial idempotent.
Let $\delta \in \F^\times$ and $\delta' \in \F^\times$, and set $\gamma:=\delta \alpha+\delta' \alpha^\star$.
Let $y \in \calW_{p,q}$ be such that $\langle \alpha,y\rangle=0$.
Then $\gamma (1+\alpha y^\star) \gamma^{-1}=1+\delta (\delta')^{-1} \alpha y^\star$.

In particular if $\delta=-\delta'$ then the conjugation by $\gamma$ in $\SB(\alpha)$ is the inversion $x \mapsto x^{-1}$.
\end{prop}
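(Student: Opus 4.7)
The plan is to verify the identity by direct computation, taking advantage of the very clean multiplicative behaviour of an idempotent with its adjoint.

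First I would collect the basic identities: since $\alpha$ is a nontrivial idempotent, Lemma \ref{lemma:quadrecognize} yields $\tr(\alpha)=1$ and $N(\alpha)=0$, so $\alpha^\star=1-\alpha$ and consequently $\alpha\alpha^\star=\alpha^\star\alpha=0$ while $\alpha^2=\alpha$ and $(\alpha^\star)^2=\alpha^\star$. Applying the adjunction to $\gamma=\delta\alpha+\delta'\alpha^\star$ gives $\gamma^\star=\delta\alpha^\star+\delta'\alpha$, and expanding $\gamma\gamma^\star$ using the four products above collapses everything to $N(\gamma)=\delta\delta'(\alpha+\alpha^\star)=\delta\delta'\in\F^\times$. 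Hence $\gamma$ is a unit with
$$\gamma^{-1}=(\delta\delta')^{-1}\gamma^\star=\delta^{-1}\alpha+(\delta')^{-1}\alpha^\star.$$

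Next I would record the two one-sided eigen-type identities
$$\gamma\alpha=\alpha\gamma=\delta\alpha,\qquad \gamma\alpha^\star=\alpha^\star\gamma=\delta'\alpha^\star,$$
which follow immediately from $\alpha\alpha^\star=\alpha^\star\alpha=0$ and $\alpha^2=\alpha$, $(\alpha^\star)^2=\alpha^\star$. These are the only facts needed about $\gamma$ going forward.

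The computational step is then essentially one line. Writing $z:=\alpha y^\star$, Lemma \ref{lemma:idempotentlemma} (applied to $\alpha$) together with $\langle \alpha,y\rangle=0$ tells us that $\alpha^\star z=0$ and $z\alpha=0$; combined with $\alpha+\alpha^\star=1$ this gives $\alpha z=z$ and $z\alpha^\star=z$. Therefore
$$\gamma z\gamma^{-1}=\gamma(\alpha z)\bigl(\delta^{-1}\alpha+(\delta')^{-1}\alpha^\star\bigr)
=(\gamma\alpha)z\bigl(\delta^{-1}\alpha+(\delta')^{-1}\alpha^\star\bigr)
=\delta\alpha\bigl((\delta')^{-1}z\alpha^\star\bigr)=\delta(\delta')^{-1}z,$$
where I used $z\alpha=0$ to drop the $\delta^{-1}\alpha$ term and $z\alpha^\star=z$ together with $\alpha z=z$ to simplify the surviving term. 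Combining with $\gamma\cdot 1\cdot\gamma^{-1}=1$ yields the stated formula $\gamma(1+\alpha y^\star)\gamma^{-1}=1+\delta(\delta')^{-1}\alpha y^\star$.

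For the final sentence, setting $\delta=-\delta'$ gives the multiplier $-1$, and since $(1+\alpha y^\star)^{-1}=1-\alpha y^\star$ (by the formulas in Section \ref{section:semibasicunits}), conjugation by $\gamma$ acts on $\SB(\alpha)$ as the group inversion. I do not foresee any serious obstacle here: the only mild subtlety is invoking Lemma \ref{lemma:idempotentlemma} to obtain the annihilation relations $\alpha^\star z=0=z\alpha$, after which the calculation is purely mechanical.
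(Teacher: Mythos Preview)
Your proof is correct and follows essentially the same route as the paper: both arguments invoke Lemma \ref{lemma:idempotentlemma} to obtain $\alpha^\star z=0=z\alpha$ (hence $\alpha z=z=z\alpha^\star$) and then perform a direct computation. The only cosmetic difference is that the paper first computes $\gamma(\alpha y^\star)\gamma^\star=\delta^2\,\alpha y^\star$ and then divides by $N(\gamma)=\delta\delta'$, whereas you expand $\gamma^{-1}$ explicitly and multiply out.
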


\begin{proof}
Remember from Lemma \ref{lemma:idempotentlemma} that $(\alpha y^\star)\alpha=0$ and $\alpha^\star (\alpha y^\star)=0$,
and the first identity also yields $(\alpha y^\star)\alpha^\star=\alpha y^\star$.
Hence $\gamma (\alpha y^\star) \gamma^\star=\delta^2 (\alpha y^\star)$, and since
$N(\gamma)=\delta \delta'$ we deduce that $\gamma (\alpha y^\star) \gamma^{-1}=\delta (\delta')^{-1} \alpha y^\star$,
which yields the claimed result.
\end{proof}

\section{Miscellaneous Issues}\label{section:misc}

In this final part, we solve various questions on the structure of $\calW_{p,q}$ that require a deep understanding of the units group
and the automorphism group.
The first two sections are devoted to the related study of the conjugacy classes of quadratic elements (Section \ref{section:conjclassquadratic}),
and of the conjugacy classes of finite-dimensional subalgebras (Section \ref{section:misc:finitedimsubalg}).
The last two sections deal with the structure of the automorphisms group.
We shall determine its center (Section \ref{section:centerofautomorphismgroup}),
and then we will classify the conjugacy classes of the elements of order $2$
provided that $\car(\F) \neq 2$ (Section \ref{section:involutions}).

Throughout, we denote by
$$i_\gamma : x \in \calW_{p,q} \mapsto \gamma x \gamma^{-1} \in \calW_{p,q}$$
the inner automorphism associated with the unit $\gamma$.

\subsection{Conjugacy classes of quadratic elements}\label{section:conjclassquadratic}

Here, we complete the study of conjugacy classes of quadratic elements of $\calW_{p,q} \setminus \F$, which we initiated in Section
\ref{section:units1}. So, let $x \in \calW_{p,q} \setminus \F$ be quadratic. An invariant of $x$ under conjugation is its minimal polynomial $r$,
which amounts to the datum of $\tr(x)$ and $N(x)$.
By Proposition \ref{prop:conjugatetoatmostonebasic}, $x$ is conjugated to at most one basic vector.

If $r$ is irreducible or splits with simple roots, then the refined retracing algorithm shows that $x$ is conjugated to a basic vector.
Hence, in any of those two cases the conjugacy class of $x$ contains a unique basic vector.

It remains to examine the case where $r=(t-\lambda)^2$ for some $\lambda \in \F$.
Then, by replacing $x$ with $x-\lambda$, we lose no generality in assuming that $x^2=0$.
Another invariant is needed: the \emph{modular norm} of $x$.
Remember that $x$ splits uniquely as $x=s\,x_n$ for some normalized $x_n \in \calW_{p,q}$ and some monic $s \in \F[\omega]$
(monic with respect to $\omega$), and $s$ is called the \textbf{modular norm} of $x$, while $x_n$ is called the \textbf{normalization} of $x$.
The modular norm is invariant under any automorphism that fixes the elements of $C$, and in particular under every
inner automorphism. Hence the problem is now entirely reduced to the one of determining the
conjugacy classes of the normalized elements $x$ such that $x^2=0$.
Now, we take such an element $x$ and we try to apply the refined retracing algorithm to it.
\begin{itemize}
\item Either the algorithm succeeds, which proves that $x$ is conjugated to a basic vector (necessarily unique).
\item Or the algorithm fails, in which case we know at least that $x$ is conjugated to a normalized special degenerate vector
$x'$. Then attached to $x'$ is a nontrivial basic idempotent $\alpha$
(because the case of a trace zero element would yield that $x'$ is not normalized),
and because $x'$ is normalized it must be a spanning vector for the $C$-module $\alpha^\sharp$.
In that case, it has been proved in Lemma \ref{lemma:specialdegenerate} that $x'$ is not conjugated to a basic vector.
Alternatively, one can also prove this fact by applying the Strong Units Theorem to the unit $1+x$.
\end{itemize}
It remains to understand when two special degenerate vectors of the above type are conjugated.

Throughout our study, the reader must beware that, given nontrivial idempotents $\alpha$ and $\alpha'$,
the equality of $\alpha^\sharp$ and $(\alpha')^\sharp$ does not imply that $\alpha=\alpha'$.
In fact, it can be proved, e.g., by using an injective homomorphism of $\calW_{p,q}$ into $\Mat_2(\F(\omega))$,
that $\alpha^\sharp=(\alpha')^\sharp$ if and only if $\alpha' \in \alpha +\alpha^\sharp$.
However this result is not convenient enough for our study, and we prefer to leave it as an exercise.
Fortunately, we will only we concerned with the situation where $\alpha$ and $\alpha'$ are \emph{basic} idempotents,
and in this case we have a more useful result.

\begin{lemma}\label{lemma:equalityofsharp}
Let $\alpha$ and $\alpha'$ be nontrivial basic idempotents such that
$\alpha^\sharp \cap (\alpha')^\sharp \neq \{0\}$. Then $\alpha=\alpha'$.
\end{lemma}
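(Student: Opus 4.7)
The plan is to reduce the statement to the claim that $N(\alpha-\alpha') \neq 0$ whenever $\alpha \neq \alpha'$, because a nonzero element of $C$ is not a zero divisor in the free $C$-module $\calW_{p,q}$.

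The first step is to extract the key annihilation identity. Take a nonzero $x \in \alpha^\sharp \cap (\alpha')^\sharp$. By characterization (iii) of Lemma \ref{lemma:idempotentlemma}, we have $\alpha^\star x = 0$ and $(\alpha')^\star x = 0$. Since both $\alpha$ and $\alpha'$ are nontrivial idempotents, $\alpha+\alpha^\star = 1 = \alpha' + (\alpha')^\star$, and hence $x = \alpha x$ and $x = \alpha' x$. Subtracting yields
$$(\alpha - \alpha')\, x = 0.$$
Multiplying on the left by $(\alpha-\alpha')^\star$ gives $N(\alpha-\alpha')\, x = 0$, so the whole proof reduces to checking that $N(\alpha-\alpha')\neq 0$.

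The second step is to compute that norm. Since $\alpha$ and $\alpha'$ are nontrivial idempotents, each satisfies $N(\alpha) = N(\alpha') = 0$ (by Lemma \ref{lemma:quadrecognize}), so
$$N(\alpha - \alpha') \;=\; N(\alpha) + N(\alpha') - \langle \alpha,\alpha'\rangle \;=\; -\langle \alpha,\alpha'\rangle.$$
Now split into two cases. If $\alpha$ and $\alpha'$ lie in the same basic subalgebra $\calA$, then $\calA$ must split (it contains a nontrivial idempotent), so the only nontrivial idempotents in $\calA$ are $\alpha$ and $\alpha^\star$; hence $\alpha \neq \alpha'$ forces $\alpha' = \alpha^\star$, and a direct check (using $\alpha^2=\alpha$ and $(\alpha^\star)^2=\alpha^\star$) gives $\langle \alpha,\alpha^\star\rangle = \alpha + \alpha^\star = 1$. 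If on the other hand $\alpha \in \F[a]$ and $\alpha' \in \F[b]$ (or vice versa), then writing $\alpha = \lambda a + \mu$ and $\alpha' = \lambda' b + \mu'$ with $\lambda,\lambda' \in \F^\times$ (they are nonscalar), bilinearity of $\langle -,-\rangle$ and $\langle a,b\rangle = \omega$ yield
$$\langle \alpha,\alpha'\rangle = \lambda\lambda'\,\omega + \lambda\mu'\tr(p) + \mu\lambda'\tr(q) + 2\mu\mu',$$
a polynomial of degree $1$ in $\omega$, hence nonzero.

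In both cases $N(\alpha-\alpha') \in C\setminus\{0\}$, and because $\calW_{p,q}$ is a free $C$-module, the relation $N(\alpha-\alpha')x = 0$ forces $x = 0$, contradicting our choice of $x$. The argument is almost computation-free, and there is no real obstacle; the only point of mild care is verifying that $\langle \alpha,\alpha^\star\rangle = 1$ in the same-subalgebra case, which is a one-line computation from the definition of the inner product.
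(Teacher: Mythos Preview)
Your proof is correct and takes a genuinely different route from the paper's. The paper argues via the machinery of \emph{leading vectors} from Section~\ref{section:units1}: for any nonzero $z\in\alpha^\sharp$ one writes $z=r\alpha(\langle\alpha,\beta\rangle-\beta)^\star$ in a deployed basis and reads off that $\alpha$ is the leading vector of the quadratic element $z$; uniqueness of the leading vector (Lemma~\ref{lemma:leading}) then forces $\alpha'\sim\alpha$, hence $\alpha'=\alpha$. Your argument is more elementary and self-contained: you never touch the deployed-basis analysis or the notion of leading vector, relying only on characterization~(iii) of Lemma~\ref{lemma:idempotentlemma}, the multiplicativity-style identity $(\alpha-\alpha')^\star(\alpha-\alpha')=N(\alpha-\alpha')$, and the freeness of $\calW_{p,q}$ over $C$. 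The norm computation $N(\alpha-\alpha')=-\langle\alpha,\alpha'\rangle$ together with the short case split (same subalgebra forces $\alpha'=\alpha^\star$ and $\langle\alpha,\alpha^\star\rangle=1$; opposite subalgebras give a degree-$1$ inner product) is clean and requires nothing beyond Section~\ref{section:structure}. The paper's approach has the advantage of reusing tools already in place for the units theory, while yours would work equally well for a reader who skipped Section~\ref{section:units1} entirely.
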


\begin{proof}
The quickest proof is through the consideration of leading vectors (see Section \ref{section:units1}).
Choose $z \in \alpha^\sharp$ with $z \neq 0$. Then, for some nonscalar element $\beta$ of the basic subalgebra opposite to $\alpha$,
we have $z=r \alpha (\langle\alpha,\beta\rangle-\beta)^\star$ for some $r \in C \setminus \{0\}$. Then
$z=r\langle \alpha,\beta\rangle \alpha-r \alpha \beta^\star$, and it is clear that the quadratic element $z$ has $\alpha$ as leading vector.
Since leading vectors are unique up to multiplication with nonzero scalars (Lemma \ref{lemma:leading}), the assumption $\alpha^\sharp \cap (\alpha')^\sharp$ yield
$\alpha' \sim \alpha$, and hence $\alpha'=\alpha$ since $\alpha$ and $\alpha'$ are idempotents.
\end{proof}

Next, we observe thanks to characterization (iii) in Lemma \ref{lemma:idempotentlemma} (combined with the fact that every automorphism of
$\calW_{p,q}$ commutes with the adjunction) that for any automorphism $\Phi$ of $\calW_{p,q}$ and any nontrivial idempotent $\alpha$,
the equality $\Phi(\alpha^\sharp)=\Phi(\alpha)^\sharp$ holds.

\begin{lemma}
Let $\alpha$ and $\alpha'$ be nontrivial basic idempotents, and let
$z \in \alpha^\sharp \setminus \{0\}$ and $z' \in (\alpha')^\sharp \setminus \{0\}$.
Then $z$ and $z'$ are conjugated in $\calW_{p,q}$ if and only if $\alpha=\alpha'$ and $z' \sim z$.
\end{lemma}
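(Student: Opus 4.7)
The first observation is that $z \in \alpha^\sharp$ forces $\tr(z) = 0$ and $z^2 = 0$ (by Lemma \ref{lemma:idempotentlemma}), so $u := 1 + z$ lies in $\SB(\alpha) \setminus \F^\times$ and $u' := 1 + z' \in \SB(\alpha') \setminus \F^\times$; moreover, $\gamma z \gamma^{-1} = z'$ is equivalent to $\gamma u \gamma^{-1} = u'$. The whole plan is to transport this conjugacy between semi-basic units through the amalgamated free product structures we have just established on $\calW_{p,q}^\times$. The ``if'' direction is then immediate from Proposition \ref{prop:actionbasiconsemibasic}: assuming $\alpha = \alpha'$ and $z' = \lambda z$ with $\lambda \in \F^\times$, the element $\gamma := \lambda \alpha + \alpha^\star \in \F[\alpha]^\times$ has norm $\lambda$ and acts on $\SB(\alpha)$ by the scalar $\lambda$, so $\gamma u \gamma^{-1} = 1 + \lambda z = u'$.

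For the ``only if'' direction, my plan is to proceed by two successive reductions, one for each free product structure. First, I invoke the Strong Units Theorem to identify $\calW_{p,q}^\times$ with $\SB(\F[a]) *_{\F^\times} \SB(\F[b])$ and run a length/type analysis on the reduced decomposition of $\gamma$ (Definition \ref{def:reduceddecomposition}). Writing $\gamma = u_1 u_2 \cdots u_n$ with $u_i \in \SB(\calC_i) \setminus \F^\times$ and the $\calC_i$ alternating between $\calA := \F[\alpha]$ and $\calB$, the expansion $\gamma u \gamma^{-1}$ reduces to an element whose central letter lies in $\SB(\calA)$: when $\calC_n = \calA$, the block $u_n u u_n^{-1}$ collapses to a single non-scalar factor in $\SB(\calA)$ (non-scalar because $\F^\times$ is central and $u$ is non-scalar), giving an alternating reduced form of odd length $2n-1$; when $\calC_n = \calB$, no cancellation occurs at the centre and the reduced length is $2n+1$. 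Either way, the uniqueness of reduced decompositions in the amalgamated free product forces the resulting expression to match the single letter $u' \in \SB(\F[\alpha'])$, which compels $\F[\alpha'] = \calA$ and either $n = 1$ with $\gamma \in \SB(\calA)$ or $n = 0$ with $\gamma \in \F^\times \subseteq \SB(\calA)$.

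Second, I invoke the Semi-Basic Units Theorem to decompose $\gamma = hk$ with $h \in \SSB(\calA)$ and $k \in \calA^\times$. Proposition \ref{prop:actionbasiconsemibasic} gives $kuk^{-1} = 1 + \lambda z =: u''$ for some $\lambda \in \F^\times$, so the problem collapses to $h u'' h^{-1} = u'$ inside the internal free product $\SSB(\calA) = \SB(\alpha) * \SB(\alpha^\star)$, this time with trivial amalgamation. An entirely analogous length/type analysis, applied to the length-one elements $u'' \in \SB(\alpha)$ and $u' \in \SB(\alpha')$ with $\alpha' \in \{\alpha, \alpha^\star\}$, forces $\alpha' = \alpha$ and $h \in \SB(\alpha)$. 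Since $\SB(\alpha)$ is abelian, $h u'' h^{-1} = u''$, so $u' = u'' = 1 + \lambda z$, which gives $z' = \lambda z \sim z$, as required. The main obstacle is to organize these two reduced-form analyses cleanly — in particular, one must carefully verify that the central block $u_n u u_n^{-1}$ really is non-scalar (so the expanded expression is in reduced form after the first round of cancellation) and that the parity of the length rules out all possibilities beyond the trivial ones in each step; once this bookkeeping is made precise, both reductions are routine invocations of the uniqueness statements from Section \ref{section:uniquenessfull}.
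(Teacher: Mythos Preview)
Your proof is correct and follows essentially the same route as the paper's: both arguments translate the conjugacy $\gamma z\gamma^{-1}=z'$ into a conjugacy of semi-basic units $1+z$ and $1+z'$, then perform a two-stage length analysis, first in the amalgamated free product $\SB(\F[a]) *_{\F^\times} \SB(\F[b])$ (Strong Units Theorem) to force $\gamma\in\SB(\F[\alpha])$ and $\F[\alpha']=\F[\alpha]$, and then inside $\SB(\F[\alpha])$ via the Semi-Basic Units Theorem to pin down $\alpha'=\alpha$ and $z'\sim z$. The only cosmetic difference is that you split $\gamma=hk$ with $h\in\SSB(\calA)$ and $k\in\calA^\times$ and treat the two pieces separately, whereas the paper runs the second reduction directly through the specialized decomposition of Definition~\ref{def:specializeddecomposition}; and where the paper invokes Lemma~\ref{lemma:equalityofsharp} to conclude $\alpha=\alpha'$, you obtain the same conclusion from the trivial intersection $\SB(\alpha)\cap\SB(\alpha^\star)=\{1\}$ in the free product.
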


\begin{proof}
We start with the converse implication. Assume that $\alpha'=\alpha$ and $z'=\lambda z$ for some $\lambda \in \F^\times$.
Set $\gamma:=\lambda \alpha+\alpha^\star$, which is a unit with $\gamma^{-1}=\lambda^{-1} \alpha+\alpha^\star$. Then clearly $\gamma \alpha \gamma^{-1}=\alpha$.
We can write $z=\alpha y \alpha^\star$ for some $y \in \calW_{p,q}$, and hence
$\gamma z \gamma^{-1}=(\gamma \alpha) y (\alpha^\star \gamma^{-1})=(\lambda \alpha) z \alpha^\star=z'$.

Conversely, assume that there exists $\gamma \in \calW_{p,q}^\times$ such that $\gamma z \gamma^{-1}=z'$.
Then $\gamma (1+z) \gamma^{-1}=1+z'$. Put $\calA:=\F[\alpha]$.
If $\gamma \in \F^\times$ then $z=z'$, so Lemma \ref{lemma:equalityofsharp} readily yields $\alpha=\alpha'$.

From now on, we assume that $\gamma$ is nonscalar and we use the
Strong Units Theorem to find a reduced decomposition
$\gamma=\gamma_1 \cdots \gamma_n$ (see Definition \ref{def:reduceddecomposition}).
Denote by $\calC$ the basic subalgebra such that $\gamma_n \in \SB(\calC) \setminus \F^\times$.
If $\calC \neq \calA$ then the Strong Units Theorem allow us to see that
$\gamma_1 \cdots \gamma_n (1+z) \gamma_n^{-1} \cdots \gamma_1^{-1}$ is not a semi-basic unit
(the decomposition cannot be simplified).
Therefore $\calC=\calA$.

If in addition $n>1$ and $\gamma_n (1+z) \gamma_n^{-1}$ is nonscalar,
then again we see that $\gamma_1 \cdots \gamma_n (1+z) \gamma_n^{-1} \cdots \gamma_1^{-1}$ is not a semi-basic unit.
Hence $\gamma \in \SB(\calA)$ and it follows that $1+z' \in \SB(\calA)$.

Now, we take \emph{specialized} decomposition
$\gamma=\gamma'_1 \cdots \gamma'_m$ (see Definition \ref{def:reduceddecomposition}; and we
denote by $\beta$ the nontrivial idempotent of $\calA$ such that $\gamma'_m \in \SB(\beta)$.

Assume first that $\gamma'_1$ is not a basic unit.
If $\beta \neq \alpha$ then we observe that
$\gamma'_1 \cdots \gamma'_m (1+z) (\gamma'_m)^{-1} \cdots (\gamma'_1)^{-1}$ is a specialized decomposition
with length greater than $1$, which contradicts our assumptions.
Otherwise, because $\SB(\alpha)$ is commutative we see that
$\gamma (1+z) \gamma^{-1}=\gamma'_1 \cdots \gamma'_{m-1}(1+z)  (\gamma'_{m-1})^{-1} \cdots (\gamma'_1)^{-1}$
is a specialized decomposition, and it has length greater than $1$ unless $\gamma=\gamma'_m$, in which case
$\gamma (1+z) \gamma^{-1}=1+z$ and again we conclude that $z=z'$ and $\alpha=\alpha'$.

Assume finally that $\gamma'_1$ is a basic unit.
Conjugating a specialized decomposition in $\SSB(\calA)$ by a basic unit yields a specialized decomposition of the same length,
so we find that the only option is that either $m=1$, or $m=2$ and $\beta=\alpha$,
and in both cases we find $1+z'=\gamma (1+z) \gamma^{-1}=\gamma_1 (1+z) (\gamma_1)^{-1}$.
We conclude by Proposition \ref{prop:actionbasiconsemibasic} that
$1+z'=1+\delta z$ for some $\delta \in \F^\times$. Hence $z \sim z'$, and by Lemma \ref{lemma:equalityofsharp} we infer that $\alpha=\alpha'$.
\end{proof}

We can now conclude the study of the conjugacy classes of quadratic elements in $\calW_{p,q}$.

\begin{theo}\label{theo:conjugacyclassesquadratic}
Let $x \in \calW_{p,q} \setminus \F$ be quadratic, with minimal polynomial $r$.
\begin{enumerate}[(a)]
\item If $r$ does not split with a double root, then the conjugacy class of $x$ contains a unique basic vector.
\item If $r$ splits with a double root $\lambda$, then either the normalization of $x-\lambda$
is conjugated to a unique basic vector of a degenerate basic subalgebra or it
is conjugated to a normalized vector of $\alpha^\sharp$ for a unique nontrivial basic idempotent $\alpha$.
In the second case, the conjugacy class of $x$ is uniquely determined by $\alpha$, $\lambda$ and by the
modular norm $s$ of $x-\lambda$.
\end{enumerate}
\end{theo}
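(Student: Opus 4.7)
The plan is to assemble the pieces that have been accumulated throughout Sections~\ref{section:units1} and~\ref{section:units2}, the refined retracing algorithm being the common engine. For part (a), Proposition~\ref{prop:conjugatetoatmostonebasic} already supplies uniqueness, so only existence has to be established. I would apply the refined retracing algorithm directly to $x$ and observe that, at every iteration, the current vector generates an $\F$-algebra isomorphic to $\F[x]\simeq \F[t]/(r)$; under the hypothesis of (a) this is either a field or a split two-dimensional algebra, and in particular it is never degenerate. Consequently the algorithm can never stumble on a special-degenerate element (those have degenerate generated subalgebra) and therefore terminates successfully, producing a basic vector in the conjugacy class of $x$.

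For part (b), translating by the double root $\lambda$ reduces the problem to quadratic elements of square zero; writing $x-\lambda=s\,y$ with $y$ normalized and $s\in C$ monic with respect to $\omega$, and recalling that conjugation by a unit fixes $C$ pointwise, shows that $s$ is a conjugacy invariant and that the conjugacy class of $x$ is entirely determined by $\lambda$, $s$ and the conjugacy class of $y$. A domain argument in the free $C$-module $\calW_{p,q}$ further yields $y^{2}=0$. I would then apply the refined retracing algorithm to $y$. Two cases arise. If the algorithm succeeds, it returns a basic vector $y'$ conjugate to $y$ with $(y')^{2}=0$, so $\F[y']$ is degenerate; Proposition~\ref{prop:conjugatetoatmostonebasic} gives the uniqueness of $y'$ and we are in case~(b1).

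If the algorithm fails, it exhibits a normalized vector $y''$ conjugate to $y$ that is special degenerate, attached to some basic zero divisor $\beta$. If $\tr(\beta)=0$ then $y''\in \F+C\beta$ and $(y'')^{2}=0$ forces $y''\in C\beta$; combined with normalization this would make $y''$ a nonzero scalar multiple of $\beta$, hence basic, contradicting the failure of the algorithm. Hence $\beta$ is forced to be a nontrivial basic idempotent $\alpha$, so $y''\in \F+\SB(\alpha)=\F+\alpha^{\sharp}$; the same quadratic computation together with Lemma~\ref{lemma:idempotentlemma} pins $y''$ down as a normalized element of $\alpha^{\sharp}$, which is case~(b2). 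The uniqueness of $\alpha$ is exactly the content of the lemma immediately preceding the theorem: if normalized vectors of $\alpha^{\sharp}$ and of $(\alpha')^{\sharp}$ are conjugate then $\alpha=\alpha'$.

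Finally, for the classification claim in case~(b2), Lemma~\ref{lemma:idempotentlemma} provides that $\alpha^{\sharp}$ is a rank-one free $C$-module, so any two normalized vectors of $\alpha^{\sharp}$ differ only by a nonzero scalar; by the same preceding lemma they are conjugate. Consequently the triple $(\alpha,\lambda,s)$ fully determines $x$ up to conjugation: pick any normalized $z\in\alpha^{\sharp}$, recover $y$ up to conjugacy as a scalar multiple of $z$, and form $x=\lambda+s\,y$. Conversely $\lambda$, $s$ and $\alpha$ are extracted from $x$ as the double root of its minimal polynomial, the modular norm of $x-\lambda$, and the idempotent furnished by the algorithm. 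The only genuine obstacle here is bookkeeping: keeping track of which previously established result applies at each branch of the case analysis. No new geometric input is required, since the refined retracing algorithm, the preservation of the $C$-module structure under inner automorphisms, the description of $\alpha^{\sharp}$ and the conjugacy criterion inside the sharps have all been secured earlier.
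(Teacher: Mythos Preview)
Your proposal is correct and follows essentially the same route as the paper: both use the refined retracing algorithm as the engine, invoke Proposition~\ref{prop:conjugatetoatmostonebasic} for uniqueness in case~(a), reduce case~(b) to normalized square-zero elements via the modular norm, and appeal to the unnamed lemma preceding the theorem (together with Lemma~\ref{lemma:equalityofsharp}) for the uniqueness of $\alpha$ and the classification in case~(b2). The only point the paper makes slightly more explicit is the mutual exclusivity of the two sub-cases in~(b), which it secures via Lemma~\ref{lemma:specialdegenerate}; you might add a sentence invoking that lemma to confirm that in the failure branch the element is genuinely not conjugate to any basic vector.
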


In particular, the conjugacy classes of the \emph{normalized} vectors of $\calW_{p,q} \setminus \F$
with square zero are in one-to-one correspondence with the set consisting of the nonzero basic square-zero elements
and of the nontrivial basic idempotents.

\begin{Rem}
We have finished the study of the conjugacy classes of quadratic elements of $\calW_{p,q} \setminus \F$,
and hence of the conjugacy classes of all the algebraic elements (see Proposition \ref{prop:algebraicimpliesquadratic}).

One could also investigate the orbits of the quadratic elements of $\calW_{p,q} \setminus \F$
under the action of the full automorphism group $\Aut(\calW_{p,q})$.
By combining the previous result with the study of basic automorphisms,
one sees that for each monic polynomial $r \in \F[t]$ of degree $2$ that does not split with a double root,
there is at most one orbit of elements that are annihilated by $r$, and there is exactly one if and only if the algebra $\F[t]/(r)$
is isomorphic to one of the basic subalgebras.
If we consider $r=t^2$ and the normalized elements that are annihilated by $r$,
then we can combine the Automorphism Theorem with the previous study to see that, given a nontrivial basic idempotent $\alpha$,
no normalized vector of $\alpha^\sharp$ is the image of a basic vector under an automorphism,
and hence:
\begin{itemize}
\item Either one of $p$ and $q$ splits with simple roots and the other one splits with a double root,
in which case there are exactly two orbits of normalized square-zero elements under the action
of $\Aut(\calW_{p,q})$ (the one of a basic nonzero square-zero element, and the one of
any normalized element in $\alpha^\sharp$ for some nontrivial basic idempotent $\alpha$).
\item Or there is at most one orbit of normalized square-zero elements under the action
of $\Aut(\calW_{p,q})$.
\end{itemize}
As for non-normalized square-zero elements, the study of their orbits under the action of $\Aut(\calW_{p,q})$
is more difficult because of the fact that not all the automorphisms leave the center invariant, and hence
the modular norm is in general not an invariant under this action.
This requires a case-by-case study that we prefer leaving out, but there is no fundamental difficulty there.
\end{Rem}

\subsection{Conjugacy classes of finite-dimensional subalgebras}\label{section:misc:finitedimsubalg}

Now that we have examined the conjugacy classes of quadratic elements, we turn to conjugacy classes of
finite-dimensional subalgebras.

We start with the two-dimensional subalgebras, for which the result directly ensues from the previous ones.

\begin{prop}
Let $\calC$ be a $2$-dimensional subalgebra of $\calW_{p,q}$.
If $\calC$ is nondegenerate then it is conjugated to a unique basic subalgebra.

If $\calC$ is degenerate then either $\calC$ is conjugated to a unique basic subalgebra
or there is a unique nontrivial basic idempotent $\alpha$ such that $\calC$ is conjugated to a linear subspace of $\calU(\alpha)$,
moreover if in addition all the nonzero elements of $\calC$ are normalized then in the second stated case $\calC$ is actually conjugated to $\calU(\alpha,\F)$.
\end{prop}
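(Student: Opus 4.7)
The plan is to reduce both parts of the proposition to the classification of nonscalar quadratic elements up to conjugation, which has just been obtained as Theorem \ref{theo:conjugacyclassesquadratic}, together with the rigidity result for basic elements in Proposition \ref{prop:conjugatetoatmostonebasic}. The starting observation is that any $2$-dimensional subalgebra $\calC$ has the form $\F \oplus \F x$ for any nonscalar $x \in \calC$, and such an $x$ is automatically quadratic; moreover the minimal polynomial of $x$ splits with a double root if and only if $\calC$ is degenerate.

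For the nondegenerate case, I would pick an arbitrary $x \in \calC \setminus \F$, whose minimal polynomial does not split with a double root, and invoke Theorem \ref{theo:conjugateofquadratictobasic} to produce a unit $\gamma$ with $\gamma x \gamma^{-1}$ basic; then $\gamma \calC \gamma^{-1}$ is the basic subalgebra generated by that vector. For uniqueness, suppose $\calC$ is conjugated to two basic subalgebras $\calA_1$ and $\calA_2$; then $\calA_1$ is conjugated to $\calA_2$ via some unit $\gamma'$. Choosing a nonscalar $x \in \calA_1$, the conjugate $\gamma' x (\gamma')^{-1} \in \calA_2$ is basic, and Proposition \ref{prop:conjugatetoatmostonebasic} forces $\gamma' x (\gamma')^{-1} = x$; this places $x$ in $\calA_1 \cap \calA_2$, so the two $2$-dimensional subalgebras coincide.

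For the degenerate case, I would pick $x \in \calC \setminus \F$ with minimal polynomial $(t-\lambda)^2$, set $y:=x-\lambda$ so that $y^2=0$ and $\calC = \F \oplus \F y$, and split $y = s\, y_n$ with $y_n$ normalized and $s \in C$ monic in $\omega$. Then Theorem \ref{theo:conjugacyclassesquadratic} applied to $y_n$ provides exactly two cases. In the first, $y_n$ is conjugated to a basic square-zero vector of a degenerate basic subalgebra, and a suitable conjugate of $\calC$ becomes the corresponding basic subalgebra, uniquely determined as in the nondegenerate argument. In the second, $y_n$ is conjugated to a normalized element of $\alpha^\sharp$ for a unique nontrivial basic idempotent $\alpha$, so the conjugate of $\calC$ takes the form $\F \oplus \F(cz)$ with $c \in C \setminus\{0\}$ and $z$ a normalized generator of $\alpha^\sharp$; this is a $2$-dimensional linear subspace of $\calU(\alpha) = \F \oplus Cz$, and the uniqueness of $\alpha$ is inherited directly from Theorem \ref{theo:conjugacyclassesquadratic}.

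The ``moreover" clause then drops out cleanly: under the hypothesis that every nonzero element of $\calC$ is normalized, the square-zero generator $y$ is itself normalized, so $s$ is a scalar and $y$ is proportional to $y_n$; in the second (idempotent-associated) case, the conjugate of $\calC$ becomes $\F \oplus \F z = \calU(\alpha,\F)$. I expect the subtlest point to be keeping the bookkeeping straight between the two conjugacy-class cases for the square-zero generator and ensuring no spurious identification across them; the key leverage is that inner automorphisms are $C$-linear and therefore preserve both the conjugacy class of the normalization $y_n$ and the modular norm $s$, so the dichotomy of Theorem \ref{theo:conjugacyclassesquadratic} transports faithfully to the level of subalgebras and yields the stated uniqueness.
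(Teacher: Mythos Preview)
Your approach is exactly what the paper has in mind—it gives no separate proof but says the proposition ``directly ensues from the previous ones,'' meaning Theorem \ref{theo:conjugacyclassesquadratic} and Proposition \ref{prop:conjugatetoatmostonebasic}. Your treatment of the nondegenerate case, of the second subcase of the degenerate case (normalization landing in some $\alpha^\sharp$), and of the ``moreover'' clause are all correct.

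There is however a real gap in your first subcase of the degenerate part. You claim that if $y_n$ is conjugated via $\gamma$ to a basic square-zero vector $\beta$, then ``a suitable conjugate of $\calC$ becomes the corresponding basic subalgebra.'' But $\gamma\,\calC\,\gamma^{-1}=\F\oplus\F(s\beta)$, and this equals $\F[\beta]$ only when the modular norm $s$ is a nonzero scalar. If $s$ is nonconstant—take for instance $\calC=\F\oplus\F(\omega b)$ with $b^2=0$—then $\calC$ is conjugated neither to a basic subalgebra (its square-zero generator $s\beta$ is not normalized, every basic vector is normalized, and conjugation preserves normalization) nor to a subspace of $\calU(\alpha)$ for any basic idempotent $\alpha$ (by Proposition \ref{prop:conjugatedegenerate} that would force $\beta$ to be conjugated to a scalar multiple of a generator of $\alpha^\sharp$, contradicting the disjointness of the two types of conjugacy classes in Theorem \ref{theo:conjugacyclassesquadratic}). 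So your case split does not actually establish the stated dichotomy.

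In fairness, this is an imprecision in the proposition itself rather than a flaw in your strategy: the paper's remark immediately following the statement (``the second case will be dealt with in greater depth shortly, by considering the situation where $\calC$ contains nonzero elements that are not normalized'') and the subsequent full classification in terms of $\calU(\beta,V)$, with $\beta$ either a basic square-zero vector or a generator of some $\alpha^\sharp$, are what supply the missing third possibility.
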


The second case will be dealt with in greater depth shortly (by considering the situation where $\calC$ contains nonzero elements that are not normalized).

As for the orbits under the action of the full automorphism group $\Aut(\calW_{p,q})$,
it is easy to obtain the following result:

\begin{prop}
\begin{enumerate}[(a)]
\item Let $\calC$ be a nondegenerate $2$-dimensional algebra over $\F$.

If there is a subalgebra of $\calW_{p,q}$ that is isomorphic to $\calC$,
then these subalgebras form a single orbit under the action of $\Aut(\calW_{p,q})$.

\item There is a degenerate $2$-dimensional subalgebra of $\calW_{p,q}$ if and only if at least one of $p$ and $q$ splits.
In that case, these subalgebras that contain a normalized square-zero element form a single orbit under the action of $\Aut(\calW_{p,q})$ unless
both $p$ and $q$ split and exactly one has a double root, in which case they form two orbits.
\end{enumerate}
\end{prop}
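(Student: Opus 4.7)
The plan is to reduce both statements to the classification of conjugacy classes of quadratic elements from Theorem \ref{theo:conjugacyclassesquadratic}, and then to track how basic automorphisms permute those classes via the decomposition supplied by the Automorphisms Theorem (Theorem \ref{theo:maintheoautomorphisms}).

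For part (a), I will first invoke Theorem \ref{theo:conjugateofquadratictobasic}, which shows that every nondegenerate $2$-dimensional subalgebra of $\calW_{p,q}$ is conjugated (under an inner automorphism) to one of the basic subalgebras $\F[a]$ or $\F[b]$. Since conjugation preserves the isomorphism type, the assumption that some subalgebra of $\calW_{p,q}$ is isomorphic to $\calC$ forces $\calC$ to be isomorphic to $\F[a]$ or $\F[b]$, and every such subalgebra is conjugate to a basic one isomorphic to $\calC$. If only one of $\F[a]$ and $\F[b]$ is isomorphic to $\calC$, this already produces a single $\Inn$-orbit, hence a single $\Aut$-orbit. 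If both $\F[a]$ and $\F[b]$ are isomorphic to $\calC$, they are isomorphic to each other and the swap automorphism from Section \ref{section:basicCauto} exchanges them, again giving a single $\Aut$-orbit.

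For part (b), the existence statement is the last part of Theorem \ref{theo:classdim2}. A degenerate $2$-dimensional subalgebra that contains a normalized square-zero element must be of the form $\F \oplus \F z$ for a normalized $z \neq 0$ with $z^2 = 0$, and two such subalgebras coincide iff the associated normalized vectors are scalar multiples. Hence classifying the $\Aut$-orbits amounts to classifying the orbits of normalized nonzero square-zero elements under the combined action of $\Aut(\calW_{p,q})$ and multiplication by $\F^\times$. By Theorem \ref{theo:conjugacyclassesquadratic}(b), the $\Inn$-conjugacy classes of such elements fall into two disjoint groups: \emph{type 1} elements, conjugate to a basic nonzero square-zero element (arising only when $p$ or $q$ splits with a double root), and \emph{type 2} elements, conjugate to a normalized generator of $\alpha^\sharp$ for some basic nontrivial idempotent $\alpha$ (arising only when one of $p$ and $q$ splits with simple roots). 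A basic automorphism $\Psi$ sends a normalized generator of $\alpha^\sharp$ to a normalized generator of $\Psi(\alpha)^\sharp$, which is still special degenerate, so Lemma \ref{lemma:specialdegenerate}(b) combined with the Automorphisms Theorem shows that no automorphism can ever send a type 2 subalgebra to a type 1 subalgebra; the two types always live in disjoint $\Aut$-orbits.

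It then remains to show that the basic automorphisms fuse all the $\Inn$-orbits within each type into a single $\Aut$-orbit, via a short case analysis on $(p,q)$. Within type 1, if both basic subalgebras are degenerate then $\F[a]\simeq\F[b]\simeq\F[\varepsilon]/(\varepsilon^2)$ and the swap automorphism identifies them; otherwise there is only one degenerate basic subalgebra, giving one $\Inn$-orbit. Within type 2, the pseudo-adjunction exchanges $\alpha$ and $\alpha^\star$ inside each split basic subalgebra, and when both basic subalgebras split with simple roots the (then existing) swap automorphism fuses the two pairs of idempotents. Both types are simultaneously nonempty only when $p$ and $q$ both split and exactly one of them has a double root, producing two orbits; in every other case only one type is present, producing a single orbit. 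The main obstacle is the bookkeeping in this case analysis, and in particular checking that the pseudo-adjunction does map a normalized generator of $\alpha^\sharp$ to a normalized generator of $(\alpha^\star)^\sharp$ (so that the two subalgebras associated with $\alpha$ and $\alpha^\star$ merge into one orbit), which follows from the fact that basic automorphisms preserve the center and commute with the adjunction (Proposition \ref{prop:commuteadjunction}).
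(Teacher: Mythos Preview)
Your proof is correct and follows essentially the same approach as the paper, which sketches this argument in the Remark immediately preceding the proposition: reduce to the conjugacy classification of normalized square-zero elements (Theorem \ref{theo:conjugacyclassesquadratic}), separate them into the two types (basic versus generators of some $\alpha^\sharp$), use Lemma \ref{lemma:specialdegenerate} together with the Automorphisms Theorem to see that the two types never merge, and then let the pseudo-adjunction and swaps fuse the $\Inn$-orbits within each type. One minor imprecision: you justify that $\Phi_\star$ sends a normalized generator of $\alpha^\sharp$ to a normalized generator of $(\alpha^\star)^\sharp$ by saying basic automorphisms ``preserve the center'', but in fact not all basic automorphisms fix $C$ pointwise; what you need (and what holds) is that any automorphism induces a ring automorphism of $C$ and therefore preserves normalization, while $\Phi(\alpha^\sharp)=\Phi(\alpha)^\sharp$ follows from Proposition \ref{prop:commuteadjunction} and characterization (iii) of Lemma \ref{lemma:idempotentlemma}.
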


Now, we turn to subalgebras of larger (finite) dimension, which exist only if one of $p$ and $q$ splits.
We start with the subalgebras of degenerate type (see Section \ref{section:finitedimalg}).

\begin{prop}\label{prop:conjugatedegenerate}
Let $\beta$ and $\beta'$ be normalized vectors such that $N(\beta')=N(\beta)=0$,
and let $V$ and $V'$ be nonzero $\F$-linear subspaces of $C$. For
$\calU(\beta,V)$ to be conjugated to $\calU(\beta',V')$, it is necessary and sufficient that
$V'=V$ and $\beta'$ be conjugated to $\lambda \beta$ for some $\lambda \in \F^\times$.

Moreover, if $\beta$ and $\beta'$ are basic then the subalgebras $\calU(\beta,V)$ and $\calU(\beta',V')$ are conjugated if and only if they are equal.
\end{prop}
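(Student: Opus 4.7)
The plan is to exploit the fact that $\calU(\beta,V)=\F\oplus V\beta$ has a very restrictive internal structure that interacts nicely with conjugation: its nilpotent cone is precisely $V\beta$. Indeed, for $\lambda\in\F$ and $v\in V\subseteq C$, the element $\lambda+v\beta$ satisfies $(\lambda+v\beta)^2=\lambda^2$ (using that $v$ is central and $\beta^2=0$), so nilpotency forces $\lambda=0$. Since conjugation preserves nilpotents, any unit $\gamma$ with $\gamma\,\calU(\beta,V)\,\gamma^{-1}=\calU(\beta',V')$ must satisfy $\gamma(V\beta)\gamma^{-1}=V'\beta'$. Using the centrality of $V$, this reads $V\beta''=V'\beta'$ where $\beta'':=\gamma\beta\gamma^{-1}$; note that $\beta''$ is itself normalized since $i_\gamma$ is a $C$-automorphism and being normalized is intrinsic to the $C$-module structure of $\calW_{p,q}$.

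The heart of the proof is extracting from the $\F$-linear identity $V\beta''=V'\beta'$ both the proportionality $\beta''\sim\beta'$ and the equality $V=V'$. I would fix an arbitrary nonzero $v_0\in V$, write $v_0\beta''=v'_0\beta'$ for some $v'_0\in V'$, and invoke the uniqueness (up to $\F^\times$) of the decomposition of a nonzero element of $\calW_{p,q}$ as (a monic element of $C$)$\,\cdot\,$(a normalized vector): the two factorizations
\[
v_0\beta''=\bigl(v_0/\mathrm{lc}(v_0)\bigr)\cdot\bigl(\mathrm{lc}(v_0)\,\beta''\bigr)\quad\text{and}\quad v'_0\beta'=\bigl(v'_0/\mathrm{lc}(v'_0)\bigr)\cdot\bigl(\mathrm{lc}(v'_0)\,\beta'\bigr)
\]
must coincide, yielding $\beta''=\mu\beta'$ for some $\mu\in\F^\times$, hence $\beta'=\mu^{-1}\gamma\beta\gamma^{-1}$. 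Substituting back gives $V\beta'=V'\beta'$ (since $V$ is an $\F$-subspace and multiplication by the scalar $\mu$ preserves it), and since $\calW_{p,q}$ is a free $C$-module of rank $4$ with $\beta'\neq 0$, right multiplication by $\beta'$ is injective on $C$, forcing $V=V'$. This is the delicate step and the only one requiring real care; everything else is assembly.

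Sufficiency is then a direct verification: given $V=V'$ and $\beta'=\lambda\,\gamma\beta\gamma^{-1}$, one computes
\[
\gamma\,\calU(\beta,V)\,\gamma^{-1}=\F\oplus V(\gamma\beta\gamma^{-1})=\F\oplus\lambda^{-1}V\beta'=\F\oplus V\beta'=\calU(\beta',V),
\]
using once more that $V$ is an $\F$-subspace.

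For the ``moreover'' part, assume $\beta$ and $\beta'$ are both basic. From the necessity direction, $\gamma\beta\gamma^{-1}=\mu\beta'$ is basic; since $\beta$ is normalized with $\beta^2=0$ it is nonscalar, and Proposition~\ref{prop:conjugatetoatmostonebasic} applies to conclude $\gamma\beta\gamma^{-1}=\beta$. Combined with $\gamma\beta\gamma^{-1}=\mu\beta'$ this gives $\beta\sim\beta'$, so $V\beta=V\beta'$ and finally $\calU(\beta,V)=\calU(\beta',V')$, as required.
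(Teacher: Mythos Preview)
Your proof is correct and follows essentially the same approach as the paper: both isolate the nilpotent cone $V\beta$ (the paper does so slightly less explicitly via a ``least modular norm'' element), then use the uniqueness of the normalized decomposition to extract $\gamma\beta\gamma^{-1}\sim\beta'$ and conclude $V=V'$, and both finish the ``moreover'' clause by invoking Proposition~\ref{prop:conjugatetoatmostonebasic}. Your version is arguably a bit cleaner in that it works with an arbitrary nonzero element of the nilpotent cone rather than one of minimal modular norm, and it makes the role of the nilpotent cone explicit from the outset.
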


\begin{proof}
We start with the first statement, in which the sufficiency of the stated condition is obvious.
Assume conversely that there exists $\gamma \in \calW_{p,q}^\times$
such that $\calU(\beta',V')=\gamma \calU(\beta,V) \gamma^{-1}=\calU(\gamma \beta \gamma^{-1},V)$.
Take a non-zero vector $x \in \calU(\beta,V)$ with modular norm $s$ of least degree, and denote by $x_n$ its normalization.
Then $\gamma x \gamma^{-1}$ has modular norm $s$ of least degree in $\calU(\beta',V')$.
Then, for the normalization $x'_n$ of $\gamma x \gamma^{-1}$ we have $\gamma x \gamma^{-1}=s x'_n$ and hence
$\gamma x_n \gamma^{-1}=x'_n$. Since $x_n \sim \beta$ and $x'_n \sim \beta'$ we deduce that
$\gamma \beta \gamma^{-1} \sim \beta'$.
Hence $\calU(\beta',V')=\calU(\gamma \beta \gamma^{-1},V)=\calU(\beta',V)$, and it is clear from here that $V'=V$.

The last statement then ensues from the first one and from Proposition \ref{prop:conjugatetoatmostonebasic}.
\end{proof}

As a consequence, we conclude the study of degenerate finite-dimensional subalgebras of $\calW_{p,q}$:

\begin{theo}
Assume that one of $p$ and $q$ splits.
Let $n \geq 2$, and $\calC$ be a finite-dimensional subalgebra of degenerate type of $\calW_{p,q}$
with dimension $n$.
Then one and only one of the following holds:
\begin{enumerate}[(i)]
\item $\calC$ is conjugated to a unique subalgebra of the form $\calU(\beta,V)$, where $\beta$
is a nonzero basic vector such that $\beta^2=0$.
\item $\calC$ is conjugated to a unique subalgebra of the form $\calU(\beta,V)$, where
$\beta$ is a normalized vector of $\alpha^\sharp$ for a nontrivial basic  idempotent $\alpha$.
\end{enumerate}
The two cases are possible only if both $p$ and $q$ split and exactly one has simple roots.
\end{theo}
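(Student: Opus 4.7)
The plan is to combine Theorem \ref{theo:subalgdim>2split} with the classification of conjugacy classes of normalized square-zero elements in Theorem \ref{theo:conjugacyclassesquadratic}(b), and then to extract uniqueness from Proposition \ref{prop:conjugatedegenerate}.

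First I would put $\calC$ into the canonical form $\calU(\beta_0,V_0)$ with $\beta_0$ normalized and $\beta_0^2=0$. For $n\geq 3$ Theorem \ref{theo:subalgdim>2split} provides this directly: the idempotent-type alternative $\calH(\alpha,V)$ is ruled out because $\calC$ is of degenerate type, whereas any $\calH(\alpha,V)$ contains the nontrivial idempotent $\alpha$. For $n=2$, a degenerate $2$-dimensional subalgebra has the form $\F\oplus \F x$ with $x^2=0$; writing $x=s\beta_0$ with $s\in C\setminus\{0\}$ monic and $\beta_0$ normalized yields $\calC=\calU(\beta_0,\F s)$. Either way I obtain such a representation.

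Next I would apply Theorem \ref{theo:conjugacyclassesquadratic}(b) to the normalized square-zero element $\beta_0$: it is either conjugated to a basic nonzero square-zero element (and to a unique such basic element), or conjugated to a normalized vector of $\alpha^\sharp$ for a unique nontrivial basic idempotent $\alpha$. Taking $\gamma\in\calW_{p,q}^\times$ that realises the conjugation and using that $i_\gamma$ fixes $C$ pointwise, I get $\gamma\calC\gamma^{-1}=\calU(\gamma\beta_0\gamma^{-1},V_0)$, which is of the form announced in case (i) or case (ii).

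For uniqueness and mutual exclusivity I would invoke Proposition \ref{prop:conjugatedegenerate}: if $\calU(\beta,V)$ and $\calU(\beta',V')$ are conjugated (with $\beta,\beta'$ normalized), then $V=V'$ and $\beta'$ is conjugated to $\lambda\beta$ for some $\lambda\in\F^\times$. In case (i) Proposition \ref{prop:conjugatetoatmostonebasic} forces the unique basic representatives of the common conjugacy class to coincide, so $\calU(\beta,V)=\calU(\beta',V')$ as subalgebras. In case (ii), both $\beta$ and $\beta'$ are normalized generators of the rank-$1$ $C$-module $\alpha^\sharp$ and hence differ by an element of $\F^\times$, yielding the same conclusion. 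The two cases are mutually exclusive because the dichotomy in Theorem \ref{theo:conjugacyclassesquadratic}(b) separates basic and $\alpha^\sharp$-type normalized square-zero vectors into distinct conjugacy classes.

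Finally, the last statement reduces to the observation that case (i) requires a nonzero basic square-zero vector (equivalent to one of $p,q$ splitting with a double root, by the classification of $2$-dimensional algebras) while case (ii) requires a nontrivial basic idempotent (equivalent to one of $p,q$ splitting with simple roots); since a single quadratic polynomial cannot simultaneously have a double root and simple roots, both cases can occur only if both $p$ and $q$ split and exactly one has simple roots. The only mild obstacle I anticipate is the $n=2$ boundary case, which falls just outside the reach of Theorem \ref{theo:subalgdim>2split}; but the explicit description $\calC=\F\oplus \F x$ handles it by a one-line normalization.
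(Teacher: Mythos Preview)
Your argument is correct and follows essentially the same route the paper intends (the paper states this theorem as a direct consequence of Proposition \ref{prop:conjugatedegenerate} and Theorem \ref{theo:conjugacyclassesquadratic}, without a separate proof). One small simplification: by the paper's definition in Section \ref{section:newsubalgebras}, a subalgebra ``of degenerate type'' is \emph{by definition} one of the form $\calU(\beta_0,V_0)$ with $\beta_0$ normalized and $\beta_0^2=0$, so your first step (invoking Theorem \ref{theo:subalgdim>2split} for $n\geq 3$ and a direct argument for $n=2$) is unnecessary --- the hypothesis already hands you this form.
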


The orbits for the action of the automorphism group $\Aut(\calW_{p,q})$ are in general less abundant
than the ones for its subgroup of inner automorphism, but their classification is tedious because it requires
a case-by-case study. We will not undertake this discussion.

We finish by classifying the finite-dimensional subalgebras of idempotent type of $\calW_{p,q}$:

\begin{theo}\label{theo:conjugtypeidempotent}
Assume that one of $p$ and $q$ splits with simple roots.
Let $n \geq 3$ and $\calA$ be an $n$-dimensional finite-dimensional subalgebra of $\calW_{p,q}$
of idempotent type.
Then $\calA$ is conjugated to $\calH(\alpha,V)$
for a unique nontrivial basic idempotent $\alpha$ and a unique $\F$-linear subspace $V$ of $\alpha^\sharp$.
\end{theo}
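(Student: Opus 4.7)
The plan is to combine the structural description of idempotent-type subalgebras from Theorem \ref{theo:subalgdim>2split} with the conjugacy classification of nontrivial idempotents from Theorem \ref{theo:conjugacyclassesquadratic}, and then to deduce uniqueness from the internal structure of the algebras $\calH(\alpha,V)$ together with Proposition \ref{prop:conjugatetoatmostonebasic}.

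First, Theorem \ref{theo:subalgdim>2split} applied to $\calA$ yields a nontrivial idempotent $\alpha_0 \in \calW_{p,q}$ and a nonzero $\F$-linear subspace $V_0 \subseteq C$ of dimension $n-2$ with $\calA = \calH(\alpha_0,V_0)$. The element $\alpha_0$ is quadratic with minimal polynomial $t^2-t$, which splits with simple roots, so Theorem \ref{theo:conjugacyclassesquadratic}(a) produces a unit $\gamma$ for which $\alpha := \gamma \alpha_0 \gamma^{-1}$ is a nontrivial basic idempotent. Inner conjugation preserves the defining conditions of $\alpha_0^{\sharp}$, so $\gamma \alpha_0^{\sharp} \gamma^{-1} = \alpha^{\sharp}$; writing $\gamma V_0 z_0 \gamma^{-1} = Vz$ for a chosen generator $z$ of $\alpha^{\sharp}$ and a suitable $(n-2)$-dimensional subspace $V \subseteq C$, we obtain $\gamma \calA \gamma^{-1} = \calH(\alpha,V)$, which gives the existence.

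For uniqueness, suppose $\calA$ is also conjugated to $\calH(\alpha',V')$ for some basic nontrivial idempotent $\alpha'$, so that some unit $\eta$ satisfies $\eta \calH(\alpha,V) \eta^{-1} = \calH(\alpha',V')$. Then $\eta\alpha\eta^{-1}$ is a nontrivial idempotent of $\calH(\alpha',V')$, so by Lemma \ref{lemma:nilpotentcone} it lies in $\alpha'+V'z'$ or in $(\alpha')^{\star}+V'z'$, where $z'$ generates $(\alpha')^{\sharp}$. A direct computation based on the relations $(z')^2=0$, $\alpha' z'=z'$ and $z'\alpha'=0$ shows that, for each $v \in V'$, the semi-basic unit $1-vz' \in \SB(\alpha')$ satisfies $(1-vz')\alpha'(1-vz')^{-1}=\alpha'+vz'$, and analogously for $(\alpha')^{\star}$. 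Composing $\eta$ on the left with the inverse of such a unit, we may assume $\eta\alpha\eta^{-1} \in \{\alpha',(\alpha')^{\star}\}$; Proposition \ref{prop:conjugatetoatmostonebasic} then forces $\alpha \in \{\alpha',(\alpha')^{\star}\}$.

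The hard part will be to eliminate the case $\alpha = (\alpha')^{\star}$, i.e.\ $\alpha' = \alpha^{\star}$. In that case, the adjusted unit $\eta_1$ centralizes $\alpha$ and satisfies $\eta_1 \calH(\alpha,V) \eta_1^{-1} = \calH(\alpha^{\star},V')$. Comparing nilpotent radicals yields $\eta_1(Vz)\eta_1^{-1} = V'z''$ with $z'' \in (\alpha^{\star})^{\sharp}$; yet $\eta_1$ also preserves $\alpha^{\sharp}$ (as it centralizes $\alpha$), so this forces $\eta_1(Vz)\eta_1^{-1} \subseteq \alpha^{\sharp} \cap (\alpha^{\star})^{\sharp}$. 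But any element $x$ of that intersection satisfies $\alpha x = x$ and $\alpha^{\star} x = x$, whence $x = \alpha x = \alpha(\alpha^{\star}x) = (\alpha\alpha^{\star})x = 0$, a contradiction with $V\neq\{0\}$. Hence $\alpha=\alpha'$, and it remains to show $V=V'$. Now $\eta_1$ centralizes $\alpha$ and therefore acts $C$-linearly on the rank-one free $C$-module $\alpha^{\sharp}$, so its restriction is multiplication by some $\lambda \in C^{\times} = \F^{\times}$; the identity $\eta_1(Vz)\eta_1^{-1} = V'z$ then gives $V' = \lambda V = V$, completing the proof.
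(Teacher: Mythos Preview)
Your proof is correct, and the existence part matches the paper's essentially verbatim. For uniqueness, however, you take a genuinely different route from the paper.

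The paper extracts the nilpotent cones of $\calH(\alpha,V)$ and $\calH(\alpha',V')$, which are degenerate-type subalgebras $\calU(\beta_1,V_1)$ and $\calU(\beta_2,V_2)$ with $\beta_i$ generating $\alpha^\sharp$ and $(\alpha')^\sharp$ respectively, and then invokes Proposition~\ref{prop:conjugatedegenerate}. That proposition (whose own proof rests on the fine conjugacy classification of normalized square-zero elements) yields $V_1=V_2$ and that $\beta_1$ is conjugated to a scalar multiple of $\beta_2$; Lemma~\ref{lemma:equalityofsharp} then forces $\alpha=\alpha'$.

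You bypass Proposition~\ref{prop:conjugatedegenerate} entirely by tracking the idempotent rather than the nilpotent cone: you locate $\eta\alpha\eta^{-1}$ among the idempotents of $\calH(\alpha',V')$ via Lemma~\ref{lemma:nilpotentcone}, correct it by an explicit semi-basic unit in $\SB(\alpha')$ (which visibly normalizes $\calH(\alpha',V')$), and invoke Proposition~\ref{prop:conjugatetoatmostonebasic}. The possibility $\alpha'=\alpha^\star$ is ruled out by the intersection $\alpha^\sharp\cap(\alpha^\star)^\sharp=\{0\}$, and $V=V'$ follows from the $C$-linearity of inner conjugation on the rank-one module $\alpha^\sharp$. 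Your argument is more self-contained: it relies only on Proposition~\ref{prop:conjugatetoatmostonebasic} and elementary identities, whereas the paper's route reuses heavier machinery already developed for the degenerate-type classification. The paper's approach is shorter on the page precisely because it outsources the work to that earlier proposition.
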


Before we prove the result, it is useful to note that if $\alpha$ is a nontrivial idempotent in $\calW_{p,q}$ and $\Phi$ is an arbitrary
automorphism of $\calW_{p,q}$, the equality $\Phi(\alpha^\sharp)=\Phi(\alpha)^\sharp$ clearly holds thanks to any one of the
various characterizations from Lemma \ref{lemma:idempotentlemma}.

\begin{proof}
We know that $\calA=\calH(\beta,V)$ for some nontrivial idempotent $\beta$ and some $\F$-linear subspace $V$ of $C$, with $\dim V =n-2$.
By Theorem \ref{theo:conjugateofquadratictobasic}, there exists $\gamma \in \calW_{p,q}^\times$ such that $\alpha:=\gamma \beta \gamma^{-1}$ is basic. Then
$\gamma \calA \gamma^{-1}=\calH(\gamma \beta \gamma^{-1},V')$ for some $\F$-linear subspace $V'$ of $(\gamma \beta \gamma^{-1})^\sharp$.
This proves the existence part of the theorem.

We now turn to uniqueness. Let $\alpha$ and $\alpha'$ be
nontrivial basic idempotents, $V$ and $V'$ be $(n-2)$-dimensional linear subspaces of $\alpha^\sharp$ and $(\alpha')^\sharp$, respectively,
and assume that there exists a unit $\gamma$ such that $\gamma \calH(\alpha,V)\gamma^{-1}=\calH(\alpha',V')$.
In particular $x \mapsto \gamma x \gamma^{-1}$ takes the nilpotent cone of $\calH(\alpha,V)$ to the one of $\calH(\alpha',V')$,
and hence it takes $\calU(\beta_1,V_1)$ to  $\calU(\beta_2,V_2)$, where $\beta_1$ is an arbitrary generator of
$\alpha^\sharp$, $\beta_2$ an arbitrary generator of $(\alpha')^\sharp$, and $V=V_1 \beta_1$ and $V'=V_2 \beta_2$.
Applying Proposition \ref{prop:conjugatedegenerate}, we obtain $\beta_1 \sim \beta_2$ and $V_1=V_2$,
which yields $V=V'$. Then $\alpha^\sharp=(\alpha')^\sharp$ and we deduce
from Lemma \ref{lemma:equalityofsharp} that $\alpha=\alpha'$.
Hence $\calH(\alpha,V)=\calH(\alpha',V')$, proving the uniqueness statements.
\end{proof}

For the action of the full automorphism group, the discussion is here substantially easier than for subalgebras of degenerate type. The result involves the range of the group homomorphism
$$\Xi : \Aut(\calW_{p,q}) \rightarrow \Aut_\F(C)$$
defined by restricting any automorphism to the center, studied in Section \ref{section:automorphismsI}.
The range $\im(\Xi)$ of $\Xi$ naturally acts on any $1$-dimensional $C$-module, with generator $e$, by
$\varphi \mapsto [\lambda e \mapsto \varphi(\lambda) e]$,
which is independent on the choice of generator. The exact nature of $\im(\Xi)$, depending on the specific polynomials $p$ and $q$, has been discussed in
Section \ref{section:automorphismsI}. The statement and proof of the next result require no deep understanding of it.

\begin{theo}
Assume that at least one of $p$ and $q$ splits with simple roots.
Let $n \geq 3$ and $\calA$ be an $n$-dimensional subalgebra of $\calW_{p,q}$ of idempotent type.
Let $\alpha$ be a nontrivial basic idempotent.
Then:
\begin{enumerate}[(a)]
\item There exists a linear subspace
$V$ of $\alpha^\sharp$ and an automorphism $\Phi$ of $\calW_{p,q}$ such that $\Phi(\calA)=\calH(\alpha,V)$.
\item For an arbitrary $\F$-linear subspace $V'$ of $\alpha^\sharp$,
the subalgebras $\calH(\alpha,V)$ and $\calH(\alpha,V')$ are conjugated under the action of $\Aut(\calW_{p,q})$
if and only if $V$ and $V'$ are conjugated under the action of the group $\im(\Xi)$.
\end{enumerate}
\end{theo}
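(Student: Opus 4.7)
The plan is to combine Theorem \ref{theo:conjugtypeidempotent}, which classifies idempotent-type subalgebras up to inner conjugation, with a careful analysis of the action of $\BAut(\calW_{p,q})$ on basic idempotents and on the $C$-modules $\alpha^\sharp$, using the Automorphisms Theorem to identify $\im(\Xi)=\Xi(\BAut(\calW_{p,q}))$.

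For part (a), first apply Theorem \ref{theo:conjugtypeidempotent} to produce $\gamma \in \calW_{p,q}^\times$ and a pair $(\beta,W)$ with $\beta$ a nontrivial basic idempotent, $W \subseteq \beta^\sharp$, and $i_\gamma(\calA)=\calH(\beta,W)$. It then suffices to exhibit a basic automorphism $\Psi$ with $\Psi(\beta)=\alpha$: since automorphisms commute with the adjunction (Proposition \ref{prop:commuteadjunction}), the characterization (iii) in Lemma \ref{lemma:idempotentlemma} gives $\Psi(\beta^\sharp)=\Psi(\beta)^\sharp=\alpha^\sharp$, and hence $\Psi \circ i_\gamma$ sends $\calA$ to $\calH(\alpha,\Psi(W))$ with $\Psi(W) \subseteq \alpha^\sharp$. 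Such a $\Psi$ exists because, under the hypothesis that at least one of $p,q$ splits with simple roots, the group $\BAut(\calW_{p,q})$ acts transitively on basic idempotents: within a basic subalgebra that splits with simple roots, the non-identity involution extended by the identity swaps the two nontrivial idempotents, and when both $\F[a]$ and $\F[b]$ split with simple roots (hence are isomorphic), a swap realizes the crossover between them.

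For the forward direction of (b), I plan to lift $\xi \in \im(\Xi)$ to an automorphism fixing $\alpha$. The Automorphisms Theorem yields $\im(\Xi)=\Xi(\BAut(\calW_{p,q}))$, so pick a basic $\Psi_0$ with $\Xi(\Psi_0)=\xi$. Since $\Psi_0(\alpha)$ is again a basic idempotent, and since the subgroup $\BAut_C(\calW_{p,q})$ (generated by the pseudo-adjunction and, when $\F[a] \simeq \F[b]$ is split with simple roots, the swaps --- all of which preserve $\omega$ as recalled in Section \ref{section:basicCauto}) already acts transitively on basic idempotents in the cases covered by our hypothesis, there exists $\Theta \in \BAut_C(\calW_{p,q})$ with $\Theta(\Psi_0(\alpha))=\alpha$. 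Then $\Phi:=\Theta \circ \Psi_0$ satisfies $\Phi(\alpha)=\alpha$ and $\Xi(\Phi)=\xi$. Fixing a generator $z$ of the rank-one $C$-module $\alpha^\sharp$, the equality $\Phi(\alpha^\sharp)=\alpha^\sharp$ together with the fact that $\Phi$ preserves normalization forces $\Phi(z)=\mu z$ for some $\mu \in \F^\times$, and the identity $\Phi(cz)=\Xi(\Phi)(c)\,\mu z$ for $c \in C$ immediately gives $\Phi(\calH(\alpha,V))=\calH(\alpha,\xi(V))=\calH(\alpha,V')$.

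The backward direction of (b) is where I expect the main obstacle. Given $\Phi$ with $\Phi(\calH(\alpha,V))=\calH(\alpha,V')$, extracting the nilpotent cone yields $\Phi(V)=V' \subseteq \alpha^\sharp$. Separately, Lemma \ref{lemma:nilpotentcone} shows that the nontrivial idempotents of $\calH(\alpha,V')$ lie in $(\alpha+V') \cup (\alpha^\star+V')$, so $\Phi(\alpha)$ belongs to one of these cosets and hence $\Phi(\alpha^\sharp)=\Phi(\alpha)^\sharp$ is either $\alpha^\sharp$ or $(\alpha^\star)^\sharp$. The delicate step is to rule out the second alternative, for which I plan to use the characterizations $\alpha^\sharp=\alpha\calW_{p,q}\alpha^\star$ and $(\alpha^\star)^\sharp=\alpha^\star\calW_{p,q}\alpha$ supplied by Lemma \ref{lemma:idempotentlemma}, combined with $\alpha\alpha^\star=0$, to deduce $\alpha^\sharp \cap (\alpha^\star)^\sharp=\{0\}$; since $n \geq 3$ forces $V \neq \{0\}$, the inclusion $\Phi(V) \subseteq V' \subseteq \alpha^\sharp$ would then contradict $\Phi(V) \subseteq \Phi(\alpha^\sharp)=(\alpha^\star)^\sharp$. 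Thus $\Phi(\alpha^\sharp)=\alpha^\sharp$, and rerunning the computation $\Phi(cz)=\Xi(\Phi)(c)\,\mu z$ from the forward direction lets us read off $V'=\Xi(\Phi)(V)$, completing the equivalence.
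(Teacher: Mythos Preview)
Your argument for (a) and for the forward implication in (b) is essentially the paper's: both use Theorem~\ref{theo:conjugtypeidempotent} followed by transitivity of basic automorphisms on basic idempotents, and both establish that every $\xi\in\im\Xi$ lifts to an automorphism fixing $\alpha$ by correcting an arbitrary basic lift with an element of $\BAut_C(\calW_{p,q})$.

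For the backward implication in (b) your route diverges from the paper's and contains a gap. The paper does not try to control $\Phi(\alpha)^\sharp$ directly; instead it splits $\Phi=i_\gamma\circ\Psi$ with $\Psi$ basic via the Automorphisms Theorem, so that $\calH(\alpha,V')$ is \emph{inner}-conjugate to $\calH(\Psi(\alpha),\Psi(V))$ with $\Psi(\alpha)$ again a \emph{basic} idempotent, and then the uniqueness clause of Theorem~\ref{theo:conjugtypeidempotent} forces $\Psi(\alpha)=\alpha$ and $\Psi(V)=V'$.

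Your direct approach asserts that $\Phi(\alpha)\in\alpha^\star+V'$ implies $\Phi(\alpha)^\sharp=(\alpha^\star)^\sharp$, but this is false. The identity $(\beta+x)^\sharp=\beta^\sharp$ recorded before Lemma~\ref{lemma:nilpotentcone} requires $x\in\beta^\sharp$; here you would need $x\in(\alpha^\star)^\sharp$, whereas you only know $x\in V'\subseteq\alpha^\sharp$, and you yourself argue that $\alpha^\sharp\cap(\alpha^\star)^\sharp=\{0\}$. A direct check (say in $\Mat_2(\F(\omega))$ with $\alpha=E_{11}$) shows that for $x\in\alpha^\sharp\setminus\{0\}$ the module $(\alpha^\star+x)^\sharp$ is neither $\alpha^\sharp$ nor $(\alpha^\star)^\sharp$. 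The repair is easy and in fact shortcuts your case analysis: you already have $\{0\}\neq V'=\Phi(V)\subseteq\alpha^\sharp\cap\Phi(\alpha)^\sharp$, and since both are rank-one $C$-submodules spanned by normalized vectors, a nontrivial intersection forces them to coincide; your final computation then goes through unchanged.
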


\begin{proof}
We know that $\calA$ is conjugated to $\calH(\alpha',V')$ for some nontrivial basic idempotent $\alpha'$ and some linear subspace $V'$ of
$\alpha^\sharp$. Then there exists a basic automorphism $\Psi$ that takes $\alpha'$ to $\alpha$.
Hence $\Psi(\calH(\alpha',V'))=\calH(\alpha,V)$ for some $\F$-linear subspace $V$ of $\alpha^\sharp$.

Next, we take two $\F$-linear subspaces $V$ and $V'$ of $\alpha^\sharp$.
We shall prove that there exists $\Phi \in \Aut(\calW_{p,q})$ such that $\Phi(\calH(\alpha,V))=\calH(\alpha,V')$
if and only if there exists $\Phi \in \Aut(\calW_{p,q})$ such that $\Phi(\alpha)=\alpha$ and $\Phi(V)=V'$.
The converse implication is obvious. For the direct one, assume that there exists $\Phi \in \Aut(\calW_{p,q})$ such that
$\Phi(\calH(\alpha,V))=\calH(\alpha,V')$. By the Automorphisms Theorem, we can split $\Phi=i_\gamma \circ \Psi$
for some basic automorphism $\Psi$ and some unit $\gamma$. Then $\calH(\alpha,V')=i_\gamma(\calH(\Psi(\alpha),\Psi(V)))$.
Since $\Psi(\alpha)$ is a nontrivial basic idempotent we deduce from Theorem \ref{theo:conjugtypeidempotent} that $\Psi(\alpha)=\alpha$
and $\Psi(V)=V'$. The claimed statement is proved.

We deduce that $\calH(\alpha,V)$ and $\calH(\alpha,V')$ are conjugated under the action of the automorphism group
$\Aut(\calW_{p,q})$ if and only if $V$ and $V'$ are conjugated under the action of the subgroup of $\im \Xi$
consisting of the $\Xi(\Phi)$ where $\Phi \in \Aut(\calW_{p,q})$ leaves $\alpha$ invariant.
In order to conclude, it suffices to prove that this subgroup equals $\im \Xi$.

Let $\Phi \in \Aut(\calW_{p,q})$. We must prove that there exists $\Psi \in \Aut(\calW_{p,q})$ such that $\Xi(\Phi)=\Xi(\Psi)$
and $\Psi(\alpha)=\alpha$. By the Automorphisms Theorem, and since the interior automorphisms act trivially on the center,
we need only consider the case where $\Phi$ is basic. Then, the statement is obvious if
$\Phi(\alpha)=\alpha$, so now we assume that $\Phi(\alpha) \neq \alpha$.
\begin{itemize}
\item If $\Phi$ is positive then $\Phi(\alpha)=\alpha^\star$ because $\F[\alpha]$ is non-degenerate, and it suffices to take $\Psi:=\Phi_\star \circ \Phi$,
where $\Phi_\star$ denotes the pseudo-adjunction (see Section \ref{section:basicCauto}), and the conclusion is obtained by noting that $\Phi_\star$ is a $C$-automorphism.
\item If $\Phi$ is negative then there is a swap $S$ (see Section \ref{section:basicCauto}) that coincides with $\Phi$ on $\F[\alpha]$,
and since every swap is a $C$-automorphism we find that $\Psi:=S^{-1} \circ \Phi$ satisfies our requirements.
\end{itemize}
\end{proof}

\subsection{The center of $\Aut(\calW_{p,q})$}\label{section:centerofautomorphismgroup}

Here, we prove that the only automorphism of $\calW_{p,q}$ that commutes with all the other ones is the identity:

\begin{theo}\label{theo:centerautomorphisms}
The only central element in $\Aut(\calW_{p,q})$ is $\id$.
\end{theo}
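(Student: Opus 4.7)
The plan is to leverage the Automorphisms Theorem to decompose $\Phi=i_\gamma\circ\Psi$ with $\Psi\in\BAut(\calW_{p,q})$, then use centrality of $\Phi$ to first force $\Phi$ to fix every unit, and finally use fixed units to pin down $\Phi=\id$.

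For the first step, let $\Phi'=i_\delta\circ\Theta$ be arbitrary. Using $\Psi\circ i_\delta=i_{\Psi(\delta)}\circ\Psi$, the decompositions
\[
\Phi\Phi'=i_{\gamma\Psi(\delta)}\circ(\Psi\Theta),\qquad \Phi'\Phi=i_{\delta\Theta(\gamma)}\circ(\Theta\Psi)
\]
are both of the form ``inner composed with basic'', so uniqueness of the decomposition yields $\Psi\Theta=\Theta\Psi$ (i.e.\ $\Psi$ lies in the center of $\BAut(\calW_{p,q})$) and $\gamma\Psi(\delta)\in\F^\times\delta\Theta(\gamma)$. Taking $\Theta=\id$ and then applying $i_\gamma$ gives $\Phi(\delta)=\chi(\delta)\,\delta$ for every unit $\delta$, where $\chi:\calW_{p,q}^\times\to\F^\times$ is a homomorphism trivial on scalars.

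The next step is to prove $\chi\equiv 1$. For a semi-basic unit $1+z$ with $\tr(z)=N(z)=0$, equating $\Phi(1+z)=\chi(1+z)(1+z)$ with $1+\Phi(z)$ yields $\Phi(z)=(\chi(1+z)-1)+\chi(1+z)\,z$; imposing $N(\Phi(z))=0$ forces $(\chi(1+z)-1)^2=0$ in every characteristic, hence $\chi$ vanishes on all semi-basic units. For basic units, commutation of $\Psi$ with the pseudo-adjunction (or, when it is trivial, a direct argument) gives $\chi(\beta)=\chi(\beta^\star)$, and $\chi(N(\beta))=\chi(\beta)\chi(\beta^\star)=\chi(\beta)^2=1$. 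A short case analysis on the type (field, split, degenerate) of each basic subalgebra, using the multiplicativity of $\chi$, the explicit action of $\Psi|_{\F[a]}$ on basic units, and commutation with positive basic automorphisms that act nontrivially on one basic generator, pins down $\chi(\beta)=1$ for every basic unit. By the Refined Units Theorem, $\chi\equiv 1$ and $\Phi$ fixes every unit.

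It remains to conclude $\Phi=\id$ from the fact that the fixed locus $R$ of $\Phi$ is an $\F$-subalgebra containing all of $\calW_{p,q}^\times$. In the generic case, some scalar $\lambda\in\F$ makes $1+\lambda a$ a unit, so $\Phi(1+\lambda a)=1+\lambda a$ directly yields $\Phi(a)=a$; the same for $b$, giving $\Phi=\id$. The main obstacle is the exceptional case $\F=\F_2$ with at least one of $p,q$ splitting with simple roots, where $\F[a]^\times$ reduces to $\{1\}$ and no such $\lambda$ exists. There the argument is: $\Phi$ fixes each $C$-submodule $\alpha^\sharp$ (or $C\alpha$, in the nilpotent case) pointwise, and the equality $\Phi(\omega z)=\omega z=\Phi(\omega)\,z$ in the torsion-free $C$-module $\calW_{p,q}$, applied to a nonzero $z$ in one of these submodules, forces $\Phi|_C=\id$; the pointwise-fixing equations $\Phi(z_i)=z_i$ for the generators $z_i$ of the $\alpha^\sharp$, rewritten in terms of $X:=\Phi(a)-a$, $Y:=\Phi(b)-b$, $Z:=\Phi(ab)-ab$ and combined with the idempotent identity $\Phi(a)^2=\Phi(a)$, collapse to the relation $X(a+b)=\omega X$ in $\calW_{p,q}$; expanding in the deployed basis $(1,a,b,ab)$ forces $X=0$, and symmetrically $Y=0$, completing the proof.
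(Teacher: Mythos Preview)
Your overall strategy differs from the paper's and is essentially sound: rather than first forcing $\gamma\in\F^\times$ via reduced decompositions in the free product (which is what the paper does), you pass directly to the character $\chi$ and try to kill it. The derivation of $\Phi(\delta)=\chi(\delta)\delta$ from uniqueness in the Automorphisms Theorem is correct, and the argument that $\chi$ is trivial on semi-basic units via $N(\Phi(z))=0\Rightarrow(\chi-1)^2=0$ is clean and valid in every characteristic.

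There are, however, two places where your sketch does not quite deliver what it promises.

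\emph{Step 3, split case.} The tools you list (``explicit action of $\Psi|_{\F[a]}$'', ``commutation with positive basic automorphisms'') do not by themselves force $\chi(\beta)=1$ when $\F[a]$ splits and $|\F|\le 3$; indeed the paper's Remark after the proof notes that over $\F_2$ or $\F_3$ the non-identity automorphism of $\F^2$ sends every unit to a scalar multiple of itself. The correct fill-in uses what you already established in Step~2: since $\Phi(z)=z$ for $z\in\alpha^\sharp$, expanding $\Phi\bigl((1+a)(1+z)\bigr)$ in two ways (once via $\chi$, once via linearity and $\Phi(z)=z$) yields $2(\chi(1+a)-1)z=0$, which forces $\chi(1+a)=1$ when $\car(\F)\neq 2$; and when $\car(\F)=2$ the identity $\chi(\beta)^2=1$ already gives $\chi(\beta)=1$. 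So the gap is fillable, but not with the ingredients you name.

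\emph{Step 5, the $\F_2$ exceptional case.} Your relation $X(a+b)=\omega X$ is correct only in the specific subcase $p=q=t^2+t$. When $\F[b]$ is not split with simple roots one has $Y=0$ from the generic argument, and the two equations coming from $a^\sharp$ and $(1+a)^\sharp$ instead yield $X(\omega+b)=0$ (and $X(\omega+b^\star)=0$); one then concludes $X=0$ because $N(\omega+b)=\omega^2+\omega\tr(b)+N(b)\neq 0$. The method works in every subcase, but the single displayed relation does not cover them all.

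By contrast, the paper first uses the Strong Units Theorem to show $\gamma\in\F^\times$ (treating negative and positive $\Phi_b$ separately via length arguments on reduced decompositions), so that $\Phi$ is basic, and then disposes of the split case in one stroke via the linear independence of $\F$, $\alpha^\sharp$ and $(\alpha^\star)^\sharp$: from $\Phi(1+z)=\lambda(1+z)$ with $\Phi(z)\in(\alpha^\star)^\sharp$ one gets $(\lambda-1)+\lambda z-\Phi(z)=0$, an immediate contradiction. This avoids both of the case analyses you need.
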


As a consequence, the only antiautomorphism of $\calW_{p,q}$ that commutes with all the automorphisms is the adjunction,
and it is also the only antiautomorphism that commutes with all the automorphisms as well as all the antiautomorphisms.

We will need the obvious identity
$$\forall \Phi \in \Aut(\calW_{p,q}), \; \forall \gamma \in \calW_{p,q}^\times, \; \Phi \circ i_\gamma \circ \Phi^{-1}=i_{\Phi(\gamma)}$$
as well as simple observations on the action of basic automorphisms on subgroups of basic units, which are essentially obvious:
\begin{itemize}
\item For every basic automorphism $\Phi$ and every basic zero divisor $\alpha$, one bas $\Phi(\SB(\alpha))=\SB(\Phi(\alpha))$;
\item As a consequence, for every basic automorphism $\Phi$ and every basic subalgebra $\calA$, one has $\Phi(\SB(\calA))=\SB(\Phi(\calA))$.
\end{itemize}

\begin{proof}[Proof of Theorem \ref{theo:centerautomorphisms}]
Let $\Phi \in \Aut(\calW_{p,q})$ be central. By the Automorphisms Theorem, we have a unique decomposition
$\Phi=\Phi_b \circ i_\gamma$ with $\Phi_b \in \BAut(\calW_{p,q})$ and $\gamma \in \calW_{p,q}^\times$.

By the Strong Units Theorem, if $\gamma \not\in \F^\times$ we also consider a reduced decomposition $\gamma=\gamma_1 \cdots \gamma_n$
(see Definition \ref{def:reduceddecomposition}), and denote by $\calA_i$ the basic subalgebra such that $\gamma_i \in \SB(\calA_i)$.

We split the discussion into two cases, whether $\Phi_b$ exchanges the basic subalgebras or not,
i.e., whether it is positive or not (see the terminology in Section \ref{section:automorphismsI}).

\vskip 3mm
\noindent \textbf{Case 1: $\Phi_b$ is negative.} \\
Then $\Phi=\Phi_b \circ \Phi \circ \Phi_b^{-1}= \Phi_b \circ i_{\Phi_b(\gamma)}$
and hence $\gamma \sim \Phi_b(\gamma)$.
If $\gamma$ is nonscalar we note that $\Phi_b(\gamma)=\Phi_b(\gamma_1) \cdots \Phi_b(\gamma_n)$, and it is clear that this is a reduced decomposition, but now
$\Phi_b(\gamma_1) \not\in \SB(\calA_1)$. By the Strong Units Theorem
this is not possible (due to the uniqueness of reduced decompositions up to scalar multiplication).
Hence $\gamma \in \F^\times$ and $\Phi=\Phi_b$.

Then we choose an arbitrary nonscalar element $\alpha \in \SB(\F[a]) \setminus \F$ and we note that the same argument shows that $i_{\alpha}$ does not commute with $\Phi_b$.
This is a contradiction.

\vskip 3mm
\noindent \textbf{Case 2: $\Phi_b$ is positive.} \\
Assume once more that $\gamma$ is nonscalar, and choose an arbitrary nonscalar unit $\gamma_0 \in \SB(\calB) \setminus \F$
where $\calB$ is the basic subalgebra opposite to $\calA_1$.
Then
$$\Phi=i_{\gamma_0}^{-1}\, \Phi\, i_{\gamma_0}=\Phi_b i_{\Phi_b^{-1}(\gamma_0^{-1})} i_{\gamma} i_{\gamma_0}=\Phi_b i_{\gamma'}$$
for $\gamma':=\Phi_b^{-1}(\gamma_0^{-1}) \gamma \gamma_0$. Then $\gamma' \sim \gamma$, but we prove that this is not true. Indeed, if $n$ is odd it is obvious that
$\gamma'=\Phi_b^{-1}(\gamma_0^{-1}) \gamma_1\cdots \gamma_n \gamma_0$ is a reduced decomposition of length $n+2$,
otherwise $n \geq 2$ and either $\gamma_n \gamma_0$ is non-scalar and
$\gamma'=\Phi_b^{-1}(\gamma_0^{-1}) \gamma_1\cdots \gamma_{n-1}(\gamma_n \gamma_0)$
is a reduced decomposition of length $n+1$, or it is a scalar and
$\gamma'=(\gamma_n \gamma_0\Phi_b^{-1}(\gamma_0^{-1})) \gamma_1\cdots \gamma_{n-1}$ is a reduced decomposition.
In any case, the Strong Units Theorem yields a contradiction.

Hence $\gamma \in \F^\times$ and we simply have $\Phi=\Phi_b$, whence $\Phi$ is a positive basic automorphism.

Assume now that $\Phi \neq \id$. Then we find a basic subalgebra $\calC$ on which $\Phi$ is not the identity.
Since $\Phi$ commutes with every inner automorphism, we find that $\Phi(\gamma) \in \F^\times \gamma$
for every unit $\gamma$. We shall find a contradiction by considering $\gamma$ in $\SB(\calC)$.
This requires a case-by-case discussion.

\begin{itemize}
\item If $\calC$ is irreducible then we find $\forall x\in \calC \setminus \{0\}, \; \Phi(x) \in \F x$,
and since $\Phi$ is linear it is classical that there exists $\lambda \in \F$ such that $\forall x \in \calC, \; \Phi(x)=\lambda x$;
the case $x=1$ then yields $\lambda=1$, thereby contradicting the assumption that $\Phi$ is not the identity on $\calC$.

\item If $\calC$ is degenerate then the zero element and the zero divisors in $\calC$ form a $1$-dimensional $\F$-linear subspace, and $\Phi$ must leave it invariant.
We are then in the same position as in the previous case, and a contradiction is found likewise.

\item Assume finally that $\calC$ splits. Then $\Phi$ is its nonidentity automorphism.
In that case we pick a nontrivial idempotent $\alpha \in \calC$ and an arbitrary $z \in \alpha^\sharp \setminus \{0\}$.
Noting that $\Phi(\alpha)=\alpha^\star$, we see that $\Phi(z) \in (\alpha^\star)^\sharp$.
Now, we remark that the linear subspaces $\F$, $\alpha^\sharp$ and $(\alpha^\star)^\sharp$ are linearly independent.
This is easily deduced from the earlier result that the nilpotent cone of $\F \oplus \alpha^\sharp$
equals $\alpha^\sharp$ (Lemma \ref{lemma:nilpotentcone}), so the intersection with $(\alpha^\star)^\sharp$ must be included in $\alpha^\sharp$.
Yet, Lemma \ref{lemma:equalityofsharp} shows that $\alpha^\sharp \cap (\alpha^\star)^\sharp=\{0\}$.

Finally, there exists $\lambda \in \F^\times$ such that $\Phi(1+z)=\lambda (1+z)$,
yielding $(\lambda-1)+\lambda z-\Phi(z)=0$. By the previous linear independence we obtain a contradiction because $z \neq 0$.
\end{itemize}
Therefore the only option left is that $\Phi=\id$, which completes the proof.
\end{proof}

\begin{Rem}
If $|\F|>3$ then the last case in the previous proof can be radically shortened: in that case indeed
$\Phi$ induces a projective automorphism of the projective line $\calC/\F^\times$ with at least three fixed points, so the latter is the identity
and one concludes that $\Phi$ is a scalar multiple of the identity, just like in the previous case.
However, this argument fails if $|\F|\leq 3$: in that case it can be checked that the non-identity automorphism of the $\F$-algebra $\F^2$
maps each unit to a scalar multiple of itself. Hence the use of semi-basic units to circumvent this difficulty.
\end{Rem}

\subsection{Conjugacy classes of involutions in $\Aut(\calW_{p,q})$}\label{section:involutions}

In this last section, we aim at classifying the involutions, i.e., the elements of order $1$ or $2$ in the group $\Aut(\calW_{p,q})$
up to conjugation. We will systematically assume $\car(\F) \neq 2$: The case where $\car(\F)=2$ can be obtained by using the same methods,
but we feel that treating it adds unnecessary complexity.

Like the previous section, this one will use the full power of our two main theorems, the Strong Units Theorem and the Automorphisms Theorem.
We consider the projective groups $\mathrm{P}\SB(\F[a])$,
$\mathrm{P}\SB(\F[b])$, $\mathrm{P}\F[a]^\times$ and $\mathrm{P}\F[b]^\times$, and will naturally identify them with subgroups of
$\Inn(\calW_{p,q})$. The Strong Units Theorem essentially states that $\Inn(\calW_{p,q})$ is a free product of
$\mathrm{P}\SB(\F[a])$ and $\mathrm{P}\SB(\F[b])$.
Likewise, for a split basic subalgebra $\calA$ with nontrivial idempotents $\alpha$ and $\alpha^\star$,
the subgroups $\SB(\alpha)$ and $\SB(\alpha^\star)$ are naturally identified with subgroups of
$\mathrm{P}\SB(\calA)$, and we know that the subgroup generated by their union is an internal free product.
Moreover, we recall that $\mathrm{P}\SB(\calA)=(\SB(\alpha) * \SB(\alpha^\star)) \rtimes \mathrm{P}\F[a]^\times$ internally.
Finally, if $\calA$ is degenerate then $\mathrm{P}\SB(\calA)$ is naturally identified with $\SB(\beta)$, where $\beta$ is an arbitrary
zero divisor in $\calA$.

Given an (internal) free product of two subgroups $G_1$ and $G_2$, a reduced decomposition of a non trivial element $\gamma$
of their free product consists in writing $\gamma=\gamma_1\cdots \gamma_n$,
where all factors belong to $G_1$ or $G_2$, and no two consecutive factors belong to the same subgroup.

We can now give first insights into the problem. We let $\Phi \in \Aut(\calW_{p,q})$
be an involution, and consider its unique decomposition $\Phi=\Phi_b \circ i$
where $\Phi_b \in \BAut(\calW_{p,q})$ is the \textbf{basic part} of $\Phi$, and $i \in \Inn(\calW_{p,q})$ is the \textbf{inner part.}
Since $\Inn(\calW_{p,q})$ is a normal subgroup, $\Phi_b$ must be an involution.
This leaves only finitely many possibilities, and we will classify them in Section \ref{section:invol:basic}.
Next, for each involutory basic automorphism $B$, we must classify up to conjugation in $\Aut(\calW_{p,q})$
the involutions in $\Aut(\calW_{p,q})$ with basic part $B$. The case $B=\id$ is very easily deduced
from the classification of quadratic elements up to conjugation, but the other cases are more difficult.
This will require that we prove some general results on semi-direct products of free products with $\Z/2$
(see Lemma \ref{lemma:extensionfreeproductinvol}).

Finally, let us consider an involutory positive basic automorphism $\Phi \in \Aut(\calW_{p,q})$,
and let $\calC$ be a basic subalgebra of $\Aut(\calW_{p,q})$. Then
$\Phi$ is the identity on $\calC$ or coincides with the standard involution on it.
As for all $\alpha \in \calC^\times$ we have $\alpha^\star \sim \alpha^{-1}$,
this has the consequence that $\Phi$ induces the identity on the projective group $\mathrm{P} \calC^\times$
if it is the identity on $\calC$, otherwise it induces the inversion $x \mapsto x^{-1}$ of the projective group $\mathrm{P} \calC^\times$.

Finally, like in the preceding section we will make systematic use of the identity
$$\forall \Phi \in \Aut(\calW_{p,q}), \; \forall \gamma \in \calW_{p,q}^\times, \; \Phi \circ i_\gamma \circ \Phi^{-1}=i_{\Phi(\gamma).}$$

\subsubsection{Conjugacy classes of involutions in $\BAut(\calW_{p,q})$}\label{section:invol:basic}

Given a $2$-dimensional $\F$-algebra $\calA$, it is known that there are at most $2$ involutions
of $\calA$.
Hence, there are essentially three types of involutions in $\BAut(\calW_{p,q})$:
\begin{itemize}
\item The identity.
\item The pseudo-adjunction (see Section \ref{section:basicCauto}), denoted by $\Phi_\star$.
\item The swaps (see Section \ref{section:basicCauto}), if (and only if) $\F[a]$ and $\F[b]$ are isomorphic.
\item The \textbf{unbalanced involutions}, i.e., the positive basic automorphisms that induce the identity on
one basic subalgebra but not on both. Note that in some cases the pseudo-adjunction is unbalanced.
\end{itemize}
It is clear that the pseudo-adjunction is central in $\BAut(\calW_{p,q})$
because every basic automorphism commutes with the adjunction, whereas the adjunction and the pseudo-adjunction
coincide on the basic subalgebras.
As a consequence, the pseudo-adjunction forms a single conjugacy class in $\BAut(\calW_{p,q})$, and of course so does the identity.
Next, it is clear that any conjugation in $\BAut(\calW_{p,q})$ takes a swap to another swap and an unbalanced involution to an unbalanced involution.
The number of unbalanced involutions equals the number of nondegenerate algebras among $\F[a]$ and $\F[b]$, and if there are exactly two
they are conjugated in $\BAut(\calW_{p,q})$ if and only if there exists a swap.

Assume finally that there exists a swap: then any two swaps are conjugated. Indeed, take two swaps $S_1$ and $S_2$,
and consider the basic automorphism $\Phi$ that is the identity on $\F[a]$ and coincides with $S_2 \circ S_1^{-1}$ on $\F[b]$.
Then $\Phi \circ S_1 \circ \Phi^{-1}$ is a swap that coincides with $S_2$ on $\F[a]$, and hence it equals $S_2$.

\subsubsection{A lemma on extensions of free product}

\begin{lemma}\label{lemma:extensionfreeproductinvol}
Let $G$ be a group, $H$ be a subgroup of $G$ of index $2$, and
$\varepsilon \in G \setminus H$ be an element of order $2$. Assume that $H$ splits internally as the free product of two nontrivial subgroups
$H_1$ and $H_2$, and consider the conjugation $\sigma : x \in H \mapsto \varepsilon x \varepsilon^{-1} \in H$. Then:
\begin{enumerate}[(a)]
\item Every involution of $H$ is conjugated in $H$ to an involution of $H_1$ or $H_2$.
\item If $\sigma$ leaves each one of $H_1$ and $H_2$ invariant, then every involution in $G \setminus H$
is conjugated to an element of the form $\varepsilon y$ for some $y \in H_1 \cup H_2$.
\item If $\sigma$ maps $H_1$ to $H_2$ and $H_2$ to $H_1$, then every involution in $G \setminus H$
is conjugated to $\varepsilon$.
\end{enumerate}
\end{lemma}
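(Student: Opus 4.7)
The plan is to reduce each assertion to a standard torsion theorem for (amalgamated) free products, after recognizing the correct combinatorial structure of $G$.

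First I would establish (a) by cyclic reduction inside $H = H_1 * H_2$. Writing an involution $\tau \in H$ in reduced form $\tau = x_1 \cdots x_n$, with the $x_k$ alternating between $H_1 \setminus \{1\}$ and $H_2 \setminus \{1\}$, the case $n \geq 2$ with $x_1$ and $x_n$ in distinct factors is impossible, since $\tau^2$ would then be reduced of length $2n$, whereas $\tau^2 = 1$. Hence $x_1$ and $x_n$ lie in the same factor; the conjugate $x_1^{-1} \tau x_1 = x_2 \cdots x_n x_1$ has reduced length strictly less than $n$ (by one if $x_n x_1 \neq 1$, by two if $x_n x_1 = 1$, after concatenating $x_n$ and $x_1$ inside their common factor). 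An induction on $n$ reduces the problem to the case $n \leq 1$, i.e.\ to an element of $H_1 \cup H_2$, which is then automatically an involution.

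For parts (b) and (c), I would begin by identifying $G$ with a standard combinatorial object using the assumption on $\sigma$. In case (b), setting $K_i := H_i \rtimes \langle \varepsilon \rangle$ (where $\varepsilon$ acts via $\sigma|_{H_i}$), the inclusions $H_i \hookrightarrow G$ and $\langle \varepsilon \rangle \hookrightarrow G$ extend to inclusions $K_i \hookrightarrow G$, and the induced homomorphism
$$K_1 *_{\langle \varepsilon \rangle} K_2 \longrightarrow G$$
is an isomorphism, as one verifies by comparing normal forms: an element of the amalgamated product admits a unique expression $\varepsilon^a y_1 \cdots y_n$ with $y_k \in H_{j_k} \setminus \{1\}$ alternating, and so does an element of $G = H \cdot \langle \varepsilon \rangle$. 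In case (c), the identity $H_2 = \varepsilon H_1 \varepsilon$ makes $H_2$ redundant as a generating set, and the presentation of $G$ collapses to $G \simeq H_1 * \langle \varepsilon \rangle$.

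Once these identifications are in place, the classical result that every element of finite order in an amalgamated free product $A *_C B$ (respectively a free product $A * B$) is conjugate to an element of one of the factors yields the conclusion. In case (b), $\tau$ is conjugate to an element of $K_1 \cup K_2$; since $\tau \notin H$, that conjugate must lie in $(K_1 \setminus H_1) \cup (K_2 \setminus H_2) = \varepsilon H_1 \cup \varepsilon H_2$, which gives the form $\varepsilon y$ with $y \in H_1 \cup H_2$. In case (c), $H$ is normal in $G$, so conjugates of elements of $H_1$ remain in $H$; consequently $\tau$ must be conjugate to a nontrivial element of $\langle \varepsilon \rangle$, which is $\varepsilon$ itself. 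The main obstacle is the rigorous justification of the amalgamated-product normal form in case (b) together with the torsion theorem there: the latter can either be invoked from Bass--Serre theory (via the action of $G$ on the Bass--Serre tree of the amalgamation, on which a finite-order element must fix a vertex) or be proved directly by a cyclic-reduction argument in the amalgamated normal form, patterned after the one used for part (a).
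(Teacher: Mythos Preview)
Your proof is correct, but it takes a genuinely different route from the paper's. The paper stays entirely inside $H=H_1*H_2$ and argues by direct manipulation of reduced words. For (a) it compares the reduced forms of $h$ and $h^{-1}$ to obtain $h_i=h_{n+1-i}^{-1}$, forcing $n=2p+1$, and then conjugates $h$ by $h_1\cdots h_p$ to land on $h_{p+1}$ in one step. For (b) and (c) it writes an involution in $G\setminus H$ as $\varepsilon x$ with $x=h_1\cdots h_n$ reduced, expands $(\varepsilon x)^{-1}=\varepsilon^{-1}\sigma(h_n^{-1})\cdots\sigma(h_1^{-1})$, and reads off $\sigma(h_{n+1-k})=h_k$ from uniqueness of the reduced form; the parity of $n$ is then forced by whether $\sigma$ preserves or swaps the factors, and an explicit conjugator (the right ``half'' of the word) brings $\varepsilon x$ to $\varepsilon h_{p+1}$ in case (b) or to $\varepsilon$ in case (c).

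Your approach is more conceptual: you recognise $G$ as $K_1*_{\langle\varepsilon\rangle}K_2$ in case (b) and as $H_1*\langle\varepsilon\rangle$ in case (c), and then invoke the torsion theorem for (amalgamated) free products. This buys you a uniform statement and connects the lemma to Bass--Serre theory, but at the cost of importing nontrivial machinery (the amalgamated normal form and the torsion theorem, or the tree action), which you yourself flag as the main obstacle. The paper's argument is entirely self-contained, uses nothing beyond the uniqueness of reduced words in $H_1*H_2$, and produces the conjugating element explicitly; for a lemma used only as a tool in a concrete classification, that directness is arguably the better fit. Your cyclic-reduction argument for (a) is a minor variant of the paper's (iterative peeling versus one-shot comparison of $h$ with $h^{-1}$), and both are fine.
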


We will actually not need the first result, but it is a very good warm up for the next two.

\begin{proof}
Let $h=h_1\cdots h_n$ be a nontrivial involution of $H_1 * H_2$, given in reduced form.
Then $h^{-1}=h_n^{-1} \cdots h_1^{-1}$ is also a reduced decomposition of $h$, and hence $h_i=h_{n+1-i}^{-1}$ for all $i \in \lcro 1,n\rcro$,
which forces $n$ to be odd. Then we write $n=2p+1$ for some integer $p \geq n$ and we note that
$h$ is conjugated to $h_{p+1}$ in $H$.

Consider an involution in $G \setminus H$, which we can then write $\varepsilon x$ for some $x \in H$.
Consider a reduced decomposition $x=h_1 \cdots h_n$. If $x=1$ then nothing needs to be proved. So, we assume $x \neq 1$ from now on.
Then
$$(\varepsilon x)^{-1}=x^{-1} \varepsilon^{-1}=\varepsilon^{-1} \prod_{k=1}^n \sigma(h_{n+1-k}).$$
Assume now that $\sigma$ either leaves each $H_i$ invariant, or maps each one into the other.
Then by the definition of an internal free product
$\sigma(h_{n+1-k})=h_k$ for all $k \in \lcro 1,n\rcro$, and we distinguish between two cases from here:
\begin{itemize}
\item Assume first that $\sigma$ leaves each one of $H_1$ and $H_2$ invariant. Then $n$ must be odd.
Setting $p:=\frac{n-1}{2}$ and $y:=\prod_{k=p+2}^{2p+1} h_k$, we check that
$$y (\varepsilon x) y^{-1}=\varepsilon \left(\prod_{k=p+2}^{2p+1} \sigma^{-1}(h_k)\right) \prod_{k=1}^{p+1} h_k=\varepsilon h_{p+1}.$$
\item Assume finally that $\sigma$ maps $H_1$ to $H_2$ and $H_2$ to $H_1$.
Then $n$ is even and we check for $p:=\frac{n}{2}$ and $y:=\prod_{k=p+1}^{2p} h_k$ that
$$y (\varepsilon x) y^{-1}=\varepsilon.$$
\end{itemize}
\end{proof}

\subsubsection{Conjugacy classes of involutory inner automorphisms}\label{section:invol:inner}

Here we determine the conjugacy classes of involutions of $\calW_{p,q}$ among the inner automorphisms.
This is intimately connected with the study of conjugacy classes of nonscalar quadratic elements.

Let $x \in \calW_{p,q}^\times$. Then $(i_x)^2=\id$ if and only if $x^2 \in \F^\times$, and
for $x$ and $y$ in $\calW_{p,q}$, the fact that $i_x$ is conjugated to $i_y$ in $\Inn(\calW_{p,q})$
is equivalent to the existence of a unit $z$ such that $x \sim z y z^{-1}$.
Now, if $x \not\in \F$ then $x^2 \in \F^\times$ holds if and only if $x$ is quadratic with trace zero
(because here we have $\car(\F) \neq 2$).
Moreover, in a $2$-dimensional $\F$-algebra $\calA$, the nonscalar elements that satisfy this property
form a $1$-dimensional subspace without the zero vector unless $\calA$ is degenerate (in which case
no unit in $\calA$ has trace zero).
Hence, there is exactly one involution in $\mathrm{P}\calA^\times$ if $\calA$ is degenerate,
and exactly two otherwise.

Now let $x \in \calW_{p,q}^\times \setminus \F$ be quadratic and with trace zero. Then $\F[x]$
is nondegenerate, and hence we know that $x$ is conjugated to a basic vector (see point (a) of Theorem \ref{theo:conjugacyclassesquadratic}), which is then of course
quadratic with trace zero. Hence, every involutory inner automorphism that is not the identity is conjugated to
a nontrivial involutory element of $\mathrm{P}\F[a]^\times$ or of $\mathrm{P}\F[b]^\times$.
Let us conclude:

\begin{prop}
There are at most two conjugacy classes of nontrivial involutions of $\Inn(\calW_{p,q})$ in $\Aut(\calW_{p,q})$.
More precisely:
\begin{itemize}
\item There is no such conjugacy class if both $\F[a]$ and $\F[b]$ are degenerate.
\item There is exactly one conjugacy class if exactly one of $\F[a]$ and $\F[b]$ is degenerate
or if none is degenerate and $\F[a]$ and $\F[b]$ are isomorphic.
\end{itemize}
\end{prop}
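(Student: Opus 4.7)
My plan is to combine the conjugacy classification of nonscalar quadratic elements (Theorem \ref{theo:conjugacyclassesquadratic}) with the existence of a swap in the isomorphic case, so as to reduce the possible conjugacy classes to at most one representative per basic subalgebra and then merge them when a swap exists.

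First I would recall, as noted just above the statement, that every nontrivial involution of $\Inn(\calW_{p,q})$ is of the form $i_x$ for a nonscalar quadratic unit $x$ with $\tr(x)=0$. Since $\car(\F)\neq 2$, such an element generates a nondegenerate $2$-dimensional subalgebra $\F[x]$ (its minimal polynomial $t^2+N(x)$ does not split with a double root), and hence Theorem \ref{theo:conjugacyclassesquadratic}(a) yields a unit $\gamma$ such that $\gamma^{-1}x\gamma$ is basic and necessarily a trace-zero basic unit. Conjugating by $\gamma$ is an inner automorphism and in particular lies in $\Aut(\calW_{p,q})$, so it is enough to classify the automorphisms of the form $i_y$ with $y\in\F[a]^\times\cup\F[b]^\times$ and $\tr(y)=0$.

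Next I would observe that each basic subalgebra contributes at most one such $i_y$. A degenerate subalgebra contains no trace-zero unit, since its trace-zero elements are nilpotent, and hence contributes nothing. A nondegenerate subalgebra $\calA$ has its trace-zero elements forming a single line $\F y_0$, and any two nonzero points on that line yield the same inner automorphism. This already bounds the number of $\Aut$-conjugacy classes by two and gives zero classes when both $\F[a]$ and $\F[b]$ are degenerate, and at most one class when exactly one of them is.

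The remaining reduction, needed when both basic subalgebras are nondegenerate and isomorphic as $\F$-algebras, is to fuse the representatives $i_{y_a}$ and $i_{y_b}$ coming from $\F[a]$ and $\F[b]$. Here I would invoke a swap automorphism $S\in\BAut(\calW_{p,q})$, whose existence under the isomorphism hypothesis is recorded in Section \ref{section:basicCauto}. Since $S$ exchanges the two basic subalgebras and preserves traces (being an $\F$-algebra automorphism that commutes with the adjunction by Proposition \ref{prop:commuteadjunction}), it maps the trace-zero line of $\F[a]$ onto that of $\F[b]$, whence $S(y_a)\in\F^\times y_b$ and $S\circ i_{y_a}\circ S^{-1}=i_{S(y_a)}=i_{y_b}$. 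This merges the two classes into one. The case where both basic subalgebras are nondegenerate but nonisomorphic is left with only the a priori upper bound of two classes, which is exactly what the statement asserts.

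The main check I expect to need, and which is not entirely cosmetic, is that distinct $\F^\times$-orbits of trace-zero units induce distinct inner automorphisms: concretely, $i_x=i_y$ should force $x\sim y$. This follows from the fact that $xy^{-1}$ would then be a central unit, and central units are scalar, since the center $C=\F[\omega]$ is a polynomial ring in the transcendental $\omega$ and hence satisfies $C^\times=\F^\times$. Once this is in place, the case analysis above is a direct assembly of results already established in the paper.
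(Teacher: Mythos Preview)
Your proposal is correct and follows essentially the same route as the paper: reduce every nontrivial involutory inner automorphism to $i_y$ for a trace-zero basic unit $y$ via Theorem \ref{theo:conjugacyclassesquadratic}(a), observe that each nondegenerate basic subalgebra contributes a single such $i_y$ while a degenerate one contributes none, and then use a swap in the isomorphic case to merge the two representatives. The additional checks you flag (that $i_x=i_y$ forces $x\sim y$ via $C^\times=\F^\times$, and that a swap sends the trace-zero line of $\F[a]$ onto that of $\F[b]$) are exactly the small points the paper leaves implicit.
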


\subsubsection{Automorphisms with nontrivial basic part}

We turn to the classification of the involutions in $\Aut(\calW_{p,q})$ with nontrivial basic part.

The easiest situation occurs when the basic part is a swap.

\begin{prop}
Let $\Phi \in \Aut(\calW_{p,q})$ be an involution whose basic part $S$ is a swap. Then
$\Phi$ is conjugated to $S$.
\end{prop}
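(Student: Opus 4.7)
The plan is to apply part (c) of Lemma \ref{lemma:extensionfreeproductinvol} inside the subgroup of $\Aut(\calW_{p,q})$ generated by $\Inn(\calW_{p,q})$ and $S$.

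First, by the Automorphisms Theorem I would write $\Phi = S \circ i_\gamma$ for some unit $\gamma \in \calW_{p,q}^\times$, so $\Phi$ belongs to the subgroup $G := \langle \Inn(\calW_{p,q}),S\rangle$ of $\Aut(\calW_{p,q})$. Since $\Inn(\calW_{p,q})$ is normal in $\Aut(\calW_{p,q})$ and $S^2=\id$, we have $G = \Inn(\calW_{p,q}) \sqcup S\cdot \Inn(\calW_{p,q})$. Moreover $S \notin \Inn(\calW_{p,q})$: indeed $S$ is a nontrivial basic automorphism, and by Theorem \ref{theo:uniquenessdecompauto} the only automorphism that is both basic and inner is the identity. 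Hence $H:=\Inn(\calW_{p,q})$ is a subgroup of index $2$ in $G$, and $\Phi$ is an involution lying in $G\setminus H$.

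Next I would identify $H$ with a free product. The existence of a swap forces $\F[a]\simeq \F[b]$, so both algebras are nondegenerate of dimension $2$; in particular $\mathrm{P}\SB(\F[a])$ and $\mathrm{P}\SB(\F[b])$ are both nontrivial. The Strong Units Theorem, passed to projective groups (killing the amalgamated $\F^\times$), yields that the natural map identifies $H$ with the internal free product
$$H_1 * H_2, \qquad H_1 := \mathrm{P}\SB(\F[a]),\quad H_2 := \mathrm{P}\SB(\F[b]).$$
Then I would check that the conjugation $\sigma : x \mapsto SxS^{-1}$ on $H$ exchanges $H_1$ and $H_2$. Because $S$ is a swap, $S(\F[a])=\F[b]$ and $S$ takes basic units (resp.\ basic zero divisors) of $\F[a]$ to those of $\F[b]$ and vice versa; in particular for any basic zero divisor $\alpha \in \F[a]$, the formula $1+\alpha y^\star \mapsto 1+S(\alpha)\,S(y)^\star$ shows $S(\SB(\alpha))=\SB(S(\alpha))$. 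Hence $S(\SB(\F[a]))=\SB(\F[b])$ and symmetrically. Using $S \circ i_\delta \circ S^{-1} = i_{S(\delta)}$, we conclude that $\sigma(H_1)=H_2$ and $\sigma(H_2)=H_1$.

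Finally I would apply part (c) of Lemma \ref{lemma:extensionfreeproductinvol} with $\varepsilon := S$: every involution in $G\setminus H$ is conjugated in $G$, hence in $\Aut(\calW_{p,q})$, to $S$. Since $\Phi$ is such an involution, this gives the desired conclusion. The only nontrivial point is the verification that $\sigma$ swaps the two factors of the free product, and this reduces to the elementary observation that a swap bijectively exchanges the groups of semi-basic units attached to opposite basic subalgebras; everything else is a direct bookkeeping application of the Strong Units Theorem and the Automorphisms Theorem.
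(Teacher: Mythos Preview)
Your proof is essentially the paper's proof: apply part (c) of Lemma \ref{lemma:extensionfreeproductinvol} inside $G=\{\id,S\}\cdot \Inn(\calW_{p,q})$, with $H_1=\mathrm{P}\SB(\F[a])$ and $H_2=\mathrm{P}\SB(\F[b])$, using that conjugation by the swap $S$ exchanges $H_1$ and $H_2$. The one slip is the sentence ``The existence of a swap forces $\F[a]\simeq\F[b]$, so both algebras are nondegenerate'': isomorphism of the basic subalgebras does not preclude degeneracy (take $p=q=t^2$), so that implication is false. Fortunately you only need $H_1$ and $H_2$ to be nontrivial, and this holds in every case (e.g.\ if $\calC$ degenerates then $\mathrm{P}\SB(\calC)\simeq\SB(\alpha)\simeq (C,+)\neq\{0\}$), so the argument stands once you correct the justification.
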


\begin{proof}
We note that Lemma \ref{lemma:extensionfreeproductinvol} can be applied in the subgroup $G:=\{\id,S\}\cdot \Inn(\calW_{p,q})$,
with $H_1:=\mathrm{P}\SB(\F[a])$ and $H_2:=\mathrm{P}\SB(\F[b])$. Indeed, since $S$
is a swap the conjugation $Z \mapsto S Z S^{-1}$ clearly exchanges $H_1$ and $H_2$.

As $\Phi=S i$ for some $i \in H_1 * H_2$, we deduce
from point (c) of Lemma \ref{lemma:extensionfreeproductinvol} that $\Phi$ is conjugated to $S$ in $G$ (and hence in $\Aut(\calW_{p,q})$).
\end{proof}

We continue with the more difficult case of a positive basic automorphism $\Phi$ that is not the identity.

\begin{lemma}\label{lemma:invol:reductionpositive}
Let $\Phi \in \Aut(\calW_{p,q})$ be an involution with basic part $B$ in $\BAut_+(\calW_{p,q}) \setminus \{\id\}$.
Then there is a basic subalgebra $\calA$ and an element
$i \in \mathrm{P}\calA^\times$ such that $\Phi$ is conjugated to $B\, i$ in $\Aut(\calW_{p,q})$.
Moreover, if $B$ is the identity on $\calA$ then $i$ is an involution.
\end{lemma}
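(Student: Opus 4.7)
The plan is to reduce $\Phi$ to the desired form by two successive applications of Lemma~\ref{lemma:extensionfreeproductinvol}, supplemented at the end by an abelian‐group averaging trick that leans on the assumption $\car(\F) \neq 2$.

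First, the Automorphisms Theorem yields a unique decomposition $\Phi = B \circ j$ with $j \in \Inn(\calW_{p,q})$; the uniqueness, combined with $\Phi^2 = \id$, forces $B^2 = \id$ and, after renaming $j$ as $y$, the key relation $B y B^{-1} = y^{-1}$. By the Strong Units Theorem, $\Inn(\calW_{p,q})$ is the internal free product of $\mathrm{P}\SB(\F[a])$ and $\mathrm{P}\SB(\F[b])$, and since $B$ is positive, conjugation by $B$ preserves each free factor. Applying point (b) of Lemma~\ref{lemma:extensionfreeproductinvol} to $G = \{\id,B\} \cdot \Inn(\calW_{p,q})$ (of index $2$ in itself over $\Inn(\calW_{p,q})$ by uniqueness in the Automorphisms Theorem) produces a basic subalgebra $\calA$ and an element $y \in \mathrm{P}\SB(\calA)$ such that $\Phi$ is conjugate in $\Aut(\calW_{p,q})$ to $By$.

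The remainder of the argument splits according to the nature of $\calA$. When $\calA$ is a field, $\mathrm{P}\SB(\calA) = \mathrm{P}\calA^\times$ and $i := y$ does the job; moreover, if $B|_{\calA} = \id$, then $B$ commutes with $y$, so $y = y^{-1}$, i.e., $y^2 = \id$. When $\calA$ splits with idempotents $\alpha,\alpha^\star$, decompose $y = i_0 s$ with $i_0 \in \mathrm{P}\calA^\times$ and $s \in \mathrm{P}\SSB(\calA) = \mathrm{P}\SB(\alpha) * \mathrm{P}\SB(\alpha^\star)$ (Semi-Basic Units Theorem); a direct computation shows that $\varepsilon := B i_0$ is an involution. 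If $B|_{\calA} = \id$, conjugation by $\varepsilon$ preserves each free factor, so part (b) of the lemma reduces $By = \varepsilon s$ to $\varepsilon s'$ with $s' \in \mathrm{P}\SB(\beta)$ for an idempotent $\beta$; since $\mathrm{P}\SB(\beta) \cong (C,+)$ is abelian and $\varepsilon s' \varepsilon^{-1} = (s')^{-1}$, conjugating by $t \in \mathrm{P}\SB(\beta)$ translates $s'$ to $s' - 2t$, and the choice $t = s'/2$ (legal because $\car(\F) \neq 2$) kills $s'$, leaving $\Phi$ conjugate to $Bi_0$ with $i_0$ forced by component analysis of $ByB^{-1}=y^{-1}$ to satisfy $i_0^2 = \id$. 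If $B|_{\calA}$ is the nontrivial involution of $\calA$, then $\varepsilon$ swaps the two free factors, and part (c) of the lemma directly yields $\Phi$ conjugate to $\varepsilon = Bi_0$.

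Finally, when $\calA$ degenerates with generator $\alpha$ satisfying $\alpha^2 = 0$, the group $\mathrm{P}\SB(\calA) \cong (C,+)$ is abelian with no free‐product structure, so a different argument is needed. Writing $y$ as $1 + r\alpha$ modulo scalars and conjugating by $\gamma = 1 + s\alpha$, the formula $i_\gamma B i_\gamma^{-1} = i_{\gamma B(\gamma)^{-1}} \cdot B$ transforms the parameter $r$ into $r + T(s)$, where $T(s) = \nu\bigl(B(s) - \nu s\bigr)$ and $\nu \in \{\pm 1\}$ is defined by $B(\alpha) = \nu \alpha$. Using $B^2 = \id$ to constrain $B|_C$ to be either $\omega \mapsto \omega$ or $\omega \mapsto -\omega + \mu$ (the assumption $\car(\F) \neq 2$ ruling out $\omega \mapsto \omega + \mu$ with $\mu \neq 0$), and using the constraint $B(r) = -r/\nu$ extracted from $ByB^{-1} = y^{-1}$, a short case‐by‐case computation shows that $r$ lies in the image of $T$ plus $\F$, so a suitable $s$ makes $r + T(s) \in \F$, producing $i := 1 + (r + T(s))\alpha \in \mathrm{P}\calA^\times$; when $B|_{\calA} = \id$ (i.e., $\nu = 1$), one sub‐checks that $r$ can actually be pushed to $0$, so $i = 1$ is trivially an involution. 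The main obstacle is precisely this degenerate case: proving that $r$ always falls in $T(C) + \F$ requires tracking the affine action of $B$ on $\omega$ together with the multiplier $\nu$, splitting into several sub‐cases in each of which multiplication by $-2$ appears and is invertible, and verifying compatibility with the involutory constraint $B^2 = \id$.
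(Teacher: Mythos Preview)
Your approach follows the same two-stage strategy as the paper: first apply Lemma~\ref{lemma:extensionfreeproductinvol}(b) to the free product $\Inn(\calW_{p,q}) = \mathrm{P}\SB(\F[a]) * \mathrm{P}\SB(\F[b])$ to reduce to $By$ with $y \in \mathrm{P}\SB(\calA)$, then refine within the chosen factor. The field and split cases are handled essentially identically; the paper derives that $\varepsilon := Bi_0$ is an involution from the uniqueness of reduced decompositions rather than from your projection onto the $\mathrm{P}\calA^\times$-component of the semidirect product, but the two arguments amount to the same thing, and the abelian averaging that kills $s'$ is the paper's square-root trick in disguise.

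The degenerate case is where your treatment genuinely differs, and to your advantage. The paper asserts without justification that $B$ restricts to the identity on a degenerate $\calA$ (this is not automatic: $\alpha \mapsto -\alpha$ is equally possible) and then claims $B = \Phi_\star$, which is actually inconsistent with $B|_\calA = \id$ since $\Phi_\star|_\calA$ sends $\alpha$ to $-\alpha$ when $\car(\F)\neq 2$. Your explicit computation with $T(s) = \nu\bigl(\bar B(s) - \nu s\bigr)$ is the right fix: decomposing $C = C^+ \oplus C^-$ into eigenspaces of the involution $\bar B = B|_C$, the constraint $\bar B(r) = -\nu r$ places $r$ exactly in the eigenspace that coincides with $\im T$, so $r$ can in fact be pushed all the way to $0$ (not merely into $\F$), recovering the paper's conclusion $i=\id$ by a sound argument.

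One minor imprecision in your split case with $B|_\calA = \id$: the claim that conjugating $\varepsilon s'$ by $t \in \mathrm{P}\SB(\beta)$ ``translates $s'$ to $s'-2t$'' is only literally true when conjugation by $\varepsilon$ acts trivially on $\SB(\beta)$. In general $\varepsilon$ induces an involution $\sigma$ on $\SB(\beta) \cong (C,+)$, and the correct transformation is $s' \mapsto s' + \sigma(t) - t$. The conclusion survives: the relation $\sigma(s') = -s'$ puts $s'$ in the $(-1)$-eigenspace of $\sigma$, which equals $\im(\id-\sigma)$ since $\car(\F)\neq 2$, so a suitable $t$ still kills $s'$.
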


\begin{proof}
Once more, we can apply Lemma \ref{lemma:extensionfreeproductinvol}, but this time around we are in the situation of statement (b).
This yields a basic subalgebra $\calA$ and an element $i \in \mathrm{P}\SB(\calA)$ such that
$\Phi$ is conjugated to $B i$.
The proof of the first point is complete if $\calA$ is a field, but we need to go further if $\calA$ splits or is degenerate.

In any case, we note that $B i$ is an involution and we will analyze this.

Assume first that $\calA$ is degenerate and denote by $\beta$ a zero divisor in it.
Note then that $B$ is the identity on $\calA$, and hence $B=\Phi_\star$ is a $C$-automorphism.
Hence $B$ is the identity on $C+C\calA$, to the effect that it commutes with $i$.
Now, from $(Bi)^2=\id$ we deduce that $i$ is an involution. Finally, since
$\mathrm{P}\SB(\calA) \simeq \SB(\beta)$ is isomorphic to $(\F[t],+)$, with $\car(\F) \neq 2$, it contains no element of order $2$, and hence $i=\id$.
We conclude that $Bi=B$.

Assume finally that $\calA$ splits, and choose $\alpha$ as one of its nontrivial idempotents.
Assume that $i \not\in \mathrm{P}\calA^\times$.
We consider a decomposition $i=i_0 i_1 \cdots i_n$, where $i_1 \cdots i_n$ is a reduced decomposition with respect to the subgroups $\SB(\alpha)$ and $\SB(\alpha^\star)$, and $i_0 \in \mathrm{P}\calA^\times$ (possibly $i_0=1$).
Then we write
$$(Bi)^{-1}=i_n^{-1} \cdots i_1^{-1} (Bi_0)^{-1} =(Bi_0)^{-1} \sigma(i_n^{-1}) \cdots \sigma(i_1^{-1})$$
for $\sigma : x \mapsto (B i_0) x (B i_0)^{-1}$, and finally $(Bi_0)^{-1}=i_0^{-1}B^{-1}=B^{-1} j_0$ for some $j_0 \in \mathrm{P}\calA^\times$.
We note that $\sigma$ either leaves each one of $\SB(\alpha)$ and $\SB(\alpha^\star)$ invariant, or exchanges them.
It follows from the Semi-Basic Units theorem that $(Bi_0)^{-1}=(Bi_0)$, and hence $\varepsilon:=B i_0$ is an involution.

Now, we must discuss whether $B$ is the identity on $\calA$ or not.
If $B (\alpha)=\alpha^\star$ then $\sigma$ exchanges $\SB(\alpha)$ and $\SB(\alpha^\star)$,
and hence point (c) of Lemma \ref{lemma:extensionfreeproductinvol} yields that $Bi$ is conjugated to $B i_0$, and we are done.

Assume now that $B(\alpha)=\alpha$.
Then point (b) of Lemma \ref{lemma:extensionfreeproductinvol} shows that
$B i$ is conjugated to $(B i_0) i'$ for some $i'$ in $\SB(\beta)$, with $\beta \in \{\alpha,\alpha^\star\}$.
We also remember that $B':=(B i_0)$ is an involution. We shall prove that $(Bi_0)i'$ is conjugated to $Bi_0$.
We know that $i_0$ and $B$ leave $\SB(\beta)$ invariant because they fix $\beta$,
hence $B'$ leaves $\SB(\beta)$ invariant, and the resulting automorphism of $\SB(\beta)$ is then an involution
which we denote by $j \mapsto \overline{j}$.
Now since $B' i'$ is an involution we get $\overline{i'} i'=1$.
Moreover, for all $j \in \SB(\beta)$, we find $j (B'i') j^{-1}=B' \overline{j} i' j^{-1}$.
Hence it suffices to prove that $\overline{j} i' j^{-1}=\id$ for some $j \in \SB(\beta)$.
Yet here we observe that $\SB(\beta)$ is isomorphic to $\F[t]$, and since $\car(\F) \neq 2$
it follows that the mapping $x \mapsto x^2$ is an automorphism of $\SB(\beta)$ (and of course $\SB(\beta)$ is commutative).
Then we take the square root $j$ of $i'$ in $\SB(\beta)$, observe that $(\overline{j} j)^2=\overline{i'} i'=1$,
to the effect that $\overline{j} j=1$, and we deduce that $\overline{j} i' j^{-1}=j^{-2} i'=1$.
Hence $Bi$ is conjugated to $Bi_0$.

Finally, since $B$ is the identity on $\calA$, we note that
$B$ and $i_0$ commute, and as $Bi_0$ and $B$ are involutions we infer that $i_0$ is an involution.
\end{proof}

Conversely, if $B \in \Aut_+(\calW_{p,q}) \setminus \{\id\}$ is an involution that is the identity on
some basic subalgebra $\calA$, then $Bi$ is an involution for every involution $i \in \mathrm{P}\calA^\times$.

Finally, let $B \in \Aut_+(\calW_{p,q}) \setminus \{\id\}$ be an involution, and
$i \in \mathrm{P}\calA^\times$ be an arbitrary element, where $\calA$ is a basic subalgebra on which
$B$ is \emph{not} the identity. Then $B i B^{-1}=i^{-1}$, and hence $Bi$ is involutory!
Hence the lack of constraint on $i$ in Lemma \ref{lemma:invol:reductionpositive} in that case.

We conclude that every involution in $\Aut(\calW_{p,q})$ is conjugated to an element of
the form $B i$ where $B$ is an involutary basic automorphism, $i$ is an element of $\mathrm{P}\calA^\times$
for some basic subalgebra $\calA$, and we can (and must!) take $i=\id$ in case $B$ is a swap,
and $i$ involutory if $B$ is the identity on $\calA$.
We have also determined the conjugacy class of $Bi$ when $B=\id$ (see Section \ref{section:invol:inner}).

It remains to understand the conjugacy class of $Bi$ when $B \in \BAut_+(\calW_{p,q}) \setminus \{\id\}$ is involutory and $i \in \mathrm{P}\calA^\times$
(not necessarily involutory!)
for some basic subalgebra $\calA$. First of all, if there exists a swap $S$, then
$S (Bi) S^{-1}=(S B S^{-1}) (SiS^{-1})$, and $Si S^{-1} \in \mathrm{P}\calB^\times$ where $\calB$ stands for the basic subalgebra opposite to $\calA$.
In that case, we deduce that every involution in $\Aut(\calW_{p,q})$ whose basic component is positive
is conjugated to $B i$ for some $B \in \BAut_+(\calW_{p,q}) \setminus \{\id\}$ and some $i \in \F[a]^\times$.
Finally, the conjugacy classes of the involutory elements of $\BAut_+(\calW_{p,q}) \setminus \{\id\}$
are easily found, and we will abstain from giving any details.

Hence, essentially one question remains: given an involution $B \in \BAut_+(\calW_{p,q}) \setminus \{\id\}$,
and given inner automorphisms $i_1$ and $i_2$ associated with basic units so that $B i_1$ and $B i_2$ are involutions,
when are the elements $Bi_1$ and $Bi_2$ conjugated in $\Aut(\calW_{p,q})$?
The answer depends on the precise nature of $B$, and it is quite natural to conjecture the answer.
To simplify the discourse, say that $i_1$ and $i_2$ have conjugators in the same basic subalgebra $\calA$.
If $B$ is the identity on $\calA$ then as $\mathrm{P}\calA^\times$ contains at most two involutions it is tempting to think that
$i_1=i_2$ whenever $Bi_1$ and $Bi_2$ are conjugated. This is actually the case (see Proposition \ref{prop:invol:conjugcasident} below).
The least obvious case is the one where $B$ is not the identity on $\calA$, as then
the assumption that $B i_1$ is involutory does not restrict the choice of $i_1$.
However, by taking $j \in \mathrm{P}\calA^\times$ we note that $j^{-1} (B i_1) j=B j i_1 j=B (i_1 j^2)$ because $\mathrm{P}\calA^\times$
is commutative. Hence the conjugacy class of $B i_1$ depends only on the coset of $i_1$ modulo the
subgroup of squares $(\mathrm{P}\calA^\times)^{[2]}$, and the conjecture is that this coset
encodes the said conjugacy class (of course the simple computation we have just performed is not sufficient to prove this conjecture, as
we should consider conjugating with more general elements of $\Aut(\calW_{p,q})$).
The following three propositions will confirm these conjectures:

\begin{prop}\label{prop:invol:conjugcasident}
Let $B \in \BAut_+(\calW_{p,q}) \setminus \{\id\}$, and let $\calA$ be a basic subalgebra on which $B$ is the identity.
Let $i_1$ be an involution in $P \calA^\times$ and let $i_2$ be another involutory inner automorphism associated with a basic unit.
Then $B i_1$ and $B i_2$ are conjugated in $\Aut(\calW_{p,q})$ if and only if $i_1=i_2$.
\end{prop}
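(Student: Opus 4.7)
The plan is to assume $\Psi(Bi_1)\Psi^{-1}=Bi_2$ for some $\Psi\in\Aut(\calW_{p,q})$ and to show $i_1=i_2$ by extracting two rigid constraints from the decomposition of $\Psi$ given by the Automorphisms Theorem. Writing $i_j=i_{\alpha_j}$ for basic units $\alpha_j\in\calA$ satisfying $\alpha_j^2\in\F^\times$ (recall from the surrounding text that both conjugators are taken in $\calA$), and writing $\Psi=B_0\circ i_\gamma$ uniquely with $B_0\in\BAut(\calW_{p,q})$ and $\gamma\in\calW_{p,q}^\times$, I would first expand $\Psi(Bi_{\alpha_1})\Psi^{-1}$ by repeatedly using the intertwining identity $i_\sigma\circ B=B\circ i_{B^{-1}(\sigma)}$ to put it back in canonical (basic then inner) form. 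The uniqueness statement in the Automorphisms Theorem then yields simultaneously $B_0BB_0^{-1}=B$ together with $\alpha_2\sim B_0(\beta)$, where $\beta:=B(\gamma)\,\alpha_1\,\gamma^{-1}$.

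The first identity immediately rules out $B_0$ being a swap (a swap would conjugate $B|_\calA=\id$ onto an involution acting nontrivially on $\calB$, contradicting $B|_\calB\neq\id$), so $B_0$ is positive basic; since $\alpha_2\in\calA$, this forces $B_0(\beta)\in\calA$ and hence $\beta\in\calA$ as well. The heart of the proof is then to impose two algebraic constraints on $\beta$. First, because $B$ commutes with the adjunction (Proposition \ref{prop:commuteadjunction}) and $N(\gamma)\in\F^\times$ is fixed by $B$, the multiplicativity of $N$ gives
\[
N(\beta)=N(B(\gamma))\,N(\alpha_1)\,N(\gamma)^{-1}=N(\alpha_1).
\]
Second, using that $\alpha_1\in\calA$ is fixed by $B$ and that $\alpha_1^2\in\F$ is central, a direct computation yields $\beta\cdot B(\beta)=B(\gamma)\,\alpha_1^2\,B(\gamma)^{-1}=\alpha_1^2$. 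Since $\beta\in\calA$ is fixed by $B$, this simplifies to $\beta^2=\alpha_1^2$.

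The conclusion is then an algebraic case analysis inside the two-dimensional $\F$-algebra $\calA$. If $\alpha_1\in\F^\times$ (so $i_1=\id$), then $\alpha_1^2=N(\alpha_1)$, so $\beta^2=N(\beta)$; in characteristic $\neq 2$ the alternative $\tr(\beta)=0$ with $\beta\notin\F$ would yield $\beta^2=-N(\beta)$, contradicting $\alpha_1^2\neq 0$, so $\beta\in\F^\times$ and $i_2=\id=i_1$. If instead $\alpha_1$ is trace-zero and nonscalar, then $N(\alpha_1)=-\alpha_1^2$, and the same sign obstruction rules out $\beta\in\F^\times$; so $\tr(\beta)=0$ and, within the one-dimensional $\F$-line of trace-zero elements of $\calA$, the equation $\beta^2=\alpha_1^2$ forces $\beta\in\F^\times\alpha_1$. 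Finally, since $B_0|_\calA$ is an $\F$-automorphism of the two-dimensional algebra $\calA$ and therefore sends trace-zero elements to trace-zero elements, a direct check in each of the three types (field, split, degenerate) shows that $B_0|_\calA$ fixes the projective class of any trace-zero unit; hence $\alpha_2\sim B_0(\beta)\sim B_0(\alpha_1)\sim\alpha_1$, giving $i_2=i_1$.

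The main obstacle I expect is the bookkeeping in Step 1: carefully commuting inner and basic pieces through $B$ so that the canonical decomposition of $\Psi(Bi_1)\Psi^{-1}$ emerges cleanly, and then reading off \emph{both} the commutation $B_0BB_0^{-1}=B$ and the formula $\alpha_2\sim B_0(\beta)$ from a single application of the uniqueness part of the Automorphisms Theorem. Everything downstream is then driven by the twin identities $\beta B(\beta)=\alpha_1^2$ and $N(\beta)=N(\alpha_1)$, together with the observation that the $\F$-line of trace-zero elements of $\calA$ is preserved by every $\F$-automorphism of $\calA$.
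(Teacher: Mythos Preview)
Your proof is correct and takes a genuinely different route from the paper's. Both arguments begin by decomposing the conjugating automorphism via the Automorphisms Theorem and ruling out a swap in the basic part, but they diverge sharply thereafter. The paper reduces to conjugation by a pure inner automorphism $j$ and then analyses the \emph{reduced decomposition} of $j$ in the free product $\mathrm{P}\SB(\F[a]) * \mathrm{P}\SB(\F[b])$ (via the Strong Units Theorem), arguing by cases on the length of this decomposition and on whether $\calA$ is a field, split, or degenerate. Your argument instead stays at the level of the single element $\beta = B(\gamma)\alpha_1\gamma^{-1}$: once you know $\beta \in \calA$ (from $\alpha_2 \in \calA$ and $B_0$ positive), the two identities $N(\beta) = N(\alpha_1)$ and $\beta\,B(\beta) = \alpha_1^2$ collapse to $\beta^2 = \alpha_1^2$, and a short computation inside the $2$-dimensional algebra $\calA$ finishes. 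This is more elementary---it uses only the uniqueness part of the Automorphisms Theorem, not the Strong Units Theorem---and is cleaner for this specific statement. The paper's approach, on the other hand, is what carries over to the companion propositions where $B$ is \emph{not} the identity on $\calA$ and the answer involves cosets modulo squares; there the free-product bookkeeping seems harder to replace by your norm-and-square trick.

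Two small remarks. First, your reading that $\alpha_2 \in \calA$ is indeed what is intended: the surrounding text makes this simplification explicit, and the paper's own proof uses it tacitly in the final paragraph when it works modulo $\SSB(\calA)$. (Without it the statement is actually false: if $B$ conjugates on $\calB$ and $\gamma \in \calB^\times$ has $\gamma^4 \in \F$ but $\gamma^2 \notin \F$, then $i_\gamma B i_\gamma^{-1} = B\,i_{\gamma^\star\gamma^{-1}}$ with $\gamma^\star\gamma^{-1} \in \calB \setminus \F$.) Second, your one-line justification for excluding swaps has the right content but the wording is inverted: a swap $B_0$ would make $B_0 B B_0^{-1}$ act \emph{trivially} on $\calB$ (it transports $B|_\calA = \id$ there), contradicting $B|_\calB \neq \id$.
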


\begin{proof}
Assume that $B i_2=\Phi B i_1 \Phi^{-1}$ for some automorphism $\Phi$.
We split $\Phi= j \Phi_b$ for some basic automorphism $\Phi_b$ and some
inner automorphism $j$. Then $\Phi_b$ commutes with $B$. It follows that
$\Phi_b$ cannot be a negative basic automorphism, otherwise $B$ would be the identity on the
basic subalgebra opposite to $\calA$, and hence would be the identity of $\calW_{p,q}$.
It follows that $\Phi_b$ is positive. Remembering that $\Phi_b$ induces the identity or the inversion in the group $\mathrm{P} \calA^\times$,
we find that it commutes with the involution $i_1$, and hence
$B i_2=j (Bi_1) j^{-1}$ directly. Hence $B i_2=B (B^{-1} j B) (i_1 j^{-1})$
and it follows that $i_2=(B^{-1} j B) i_1 j^{-1}$.
If $j=\id$ then $i_2=i_1$ and we are done. Assume now that $j \neq \id$.
Now, we consider a reduced decomposition $j=j_1 \cdots j_p$ with respect to the decomposition of $\Inn(\calW_{p,q})$
as a free product of $\SB(\calA)$ and $\SB(\calB)$, where $\calB$ stands for the basic subalgebra opposite to $\calA$.
Then
$$i_2=(B^{-1} j_1 B) \cdots (B^{-1} j_p B) i_1 j_p^{-1} \cdots j_1^{-1}.$$
To start with, we assume that $i_1 \neq 1$, which discards the possibility that $\calA$ be degenerate.

Note that if $j_p \not\in \SB(\calA)$ then the above is a reduced decomposition of $i_2$ with
size greater than $1$, which is impossible.
Moreover, if $j_p \in \SB(\calA)$, $(B^{-1} j_p B) i_1 j_p^{-1} \neq \id$
and $p>1$, then we observe a reduced decomposition of $i_2$ with size at least $3$, and again this is not possible.

Now, we will observe that $(B^{-1} j_p B) i_1 j_p^{-1} \neq \id$ in any case.
\begin{itemize}
\item If $\calA$ is a field, then $\mathrm{P}\SB(\calA)=\mathrm{P} \calA^\times$ is commutative and $B$ commutes with every element of it, so it is clear that
$(B^{-1} j_p B) i_1 j_p^{-1}=i_1 \neq \id$.
\item Assume now that $\calA$ splits, and denote by $\alpha$ a nontrivial idempotent of it.
Then, by moding out $\SSB(\calA)$, we obtain a quotient that is isomorphic to $\mathrm{P}\calA^\times$,
and in this quotient the cosets of $B^{-1} j_p B$ and $j_p$ are equal because $B$ is the identity on $\mathrm{P}\calA^\times$.
Hence $(B^{-1} j_p B) i_1 j_p^{-1}\neq \id$ in that case also.
\end{itemize}

We conclude that $i_1=\id$ or $p=1$.
Note then that $\Phi^{-1}=\Phi_b^{-1} j^{-1}=j' \Phi_b^{-1}$
where $j':=\Phi_b^{-1} j^{-1} \Phi_b$, and observe that $j'$ has a reduced decomposition with length $p$
(we obtain such a decomposition by inverting the above reduced decomposition of $j$, and then by conjugating each factor with $\Phi_b^{-1}$).
Hence, by applying the previous proof in this new situation we find $i_2=\id$ or $p=1$.
Therefore $i_1=\id=i_2$ or $p=1$.

Assume finally that $p=1$. Then $i_2=(B^{-1} j_1 B) i_1 j_1^{-1}$. If $\calA$ is a field or is degenerate,
this directly yields $i_2=i_1$, just like in the above.
If $\calA$ splits then we also see that the coset of $(B^{-1} j_1 B) i_1 j_1^{-1}$ mod $\SSB(\calA)$
equals the one of $i_1$, which yields $i_2=i_1$.
Hence $i_1=i_2$ in any case.
\end{proof}

\begin{prop}\label{prop:invol:conjugcasnonidentnoswap}
Assume that $\F[a]$ and $\F[b]$ are nonisomorphic.
Let $B \in \BAut_+(\calW_{p,q}) \setminus \{\id\}$ be an involution, and $\calA$ be a basic subalgebra on which $B$ is not the identity, and denote by
$\calB$ the opposite basic subalgebra.
Let $i_1$ belong to $P \calA^\times$, and $i_2$ be an inner automorphism attached to a basic unit.
Then $B i_1$ and $B i_2$ are conjugated in $\Aut(\calW_{p,q})$ if and only if
either one of the following situations holds:
\begin{enumerate}[(i)]
\item $i_1=i_2\, j^2$ for some $j \in \mathrm{P}\calA^\times$;
\item $i_1$ is a square in $\mathrm{P}\calA^\times$, and $i_2$ is a square in $\mathrm{P} \calB^\times$.
\end{enumerate}
Moreover, in case (ii) both $Bi_1$ and $Bi_2$ are conjugated to $B$.
\end{prop}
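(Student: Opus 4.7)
The plan is to establish each direction of the equivalence separately, with the main technical burden residing in the necessity direction.

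For \emph{sufficiency} of (i), given $j\in \mathrm{P}\calA^\times$ with $i_1=i_2 j^2$, I would conjugate $Bi_2$ by $j^{-1}$. The essential identity is $Bj=j^{-1}B$: since $B$ restricts to the unique non-identity involution of $\calA$ (using $\car(\F)\neq 2$ and the assumption that $B$ is nontrivial on $\calA$), and since every $x\in\calA^\times$ satisfies $x^\star=N(x)x^{-1}\sim x^{-1}$, the conjugation by $B$ acts as inversion on $\mathrm{P}\calA^\times$. Combined with the commutativity of $\mathrm{P}\calA^\times$, a direct computation gives $j^{-1}(Bi_2)j=(j^{-1}B)(i_2 j)=(Bj)(i_2 j)=B(ji_2)j=Bi_2 j^2 = Bi_1$. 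For \emph{sufficiency} of (ii), I would show both $Bi_1\sim B$ and $Bi_2\sim B$ and conclude by transitivity. The first is immediate from (i) applied with $i_2:=\id$ and $j$ a square root of $i_1$ in $\mathrm{P}\calA^\times$. For the second, when $B$ is also the adjunction on $\calB$ (the pseudo-adjunction case), the symmetric argument in $\calB$ applies; in the unbalanced case, where $B$ is the identity on $\calB$, the direct argument fails and I would construct a conjugator using a semi-basic unit in $\SSB(\calA)$ that encodes the square $i_2=k^2$ through the non-commutative interaction of $\mathrm{P}\SB(\calA)$ and $\mathrm{P}\SB(\calB)$ in the free product decomposition.

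For \emph{necessity}, suppose $\Phi(Bi_1)\Phi^{-1}=Bi_2$. Decompose $\Phi=j\Phi_b$ via the Automorphisms Theorem, with $j$ inner and $\Phi_b$ basic. Since $\F[a]$ and $\F[b]$ are nonisomorphic there are no swaps, so $\Phi_b$ is positive. The group $\BAut_+$ embeds into $\Aut_\F(\F[a])\times\Aut_\F(\F[b])$, which is abelian because the automorphism group of any $2$-dimensional $\F$-algebra commutes with its adjunction; hence $\Phi_b$ commutes with $B$. This yields the equation $i_2=(B^{-1}jB)\, i'_1\, j^{-1}$, where $i'_1:=\Phi_b i_1\Phi_b^{-1}\in\mathrm{P}\calA^\times$. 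I would then write $j=j_1\cdots j_p$ in reduced form with respect to the internal free product $\Inn(\calW_{p,q})=\mathrm{P}\SB(\F[a])*\mathrm{P}\SB(\F[b])$ furnished by the Strong Units Theorem. The key observation is that $B^{-1}(\cdot)B$ preserves each factor $\mathrm{P}\SB(\calC)$ (because $B$ is positive basic), acting as inversion on $\mathrm{P}\calA^\times$ and either as inversion or trivially on the $\calB$-side. Expanding the right-hand side as $(B^{-1}j_1B)\cdots(B^{-1}j_pB)\, i'_1\, j_p^{-1}\cdots j_1^{-1}$ and requiring the result to be a single element of $\mathrm{P}\calA^\times\cup\mathrm{P}\calB^\times$ would, by the uniqueness of reduced decompositions in the free product, force drastic cancellations in this word.

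The main obstacle will be the fine case analysis of the reduction in the free product expansion: one must separately handle whether $j_p$ lies in $\mathrm{P}\SB(\calA)$ or $\mathrm{P}\SB(\calB)$, whether $B^{-1}j_kB$ equals $j_k$ or $j_k^{-1}$ (or something more subtle when $j_k$ involves a semi-basic factor attached to a zero divisor when the relevant basic subalgebra splits), and how the innermost terms collapse via the Semi-Basic Units Theorem. The analysis ultimately forces $j\in\mathrm{P}\calA^\times$ (in which case the identity $(B^{-1}jB)j^{-1}=j^{-2}$ yields case (i) with $j^{-2}=i_2 i_1^{-1}$), unless $j$ terminates in a pattern involving nontrivial $\calB$-factors that cancel only when $i_1$ is a square in $\mathrm{P}\calA^\times$ and $i_2$ is a square in $\mathrm{P}\calB^\times$, yielding case (ii).
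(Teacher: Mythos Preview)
Your proposal is correct and follows essentially the same route as the paper: decompose $\Phi=j\Phi_b$ via the Automorphisms Theorem, reduce to the equation $i_2=(B^{-1}jB)\,i_1'\,j^{-1}$, expand $j$ in reduced form with respect to the free product $\mathrm{P}\SB(\F[a])*\mathrm{P}\SB(\F[b])$, and force cancellations. Two minor points of streamlining relative to the paper: since $\Phi_b$ is positive and $\calA$ is nondegenerate, one has directly $\Phi_b i_1\Phi_b^{-1}=i_1^{\pm1}$ (no need for an auxiliary $i_1'$); and for the sufficiency of (ii) the paper simply invokes the earlier computation that $Bi\sim B$ whenever $i$ is a square in $\mathrm{P}\calC^\times$ for a basic subalgebra $\calC$ on which $B$ acts nontrivially---your proposed detour through semi-basic units in $\SSB(\calA)$ for the unbalanced case is unneeded and not how the paper proceeds.
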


\begin{proof}
We have already proved the converse implication, as well as the fact that
$B i_1$ is conjugated to $B$ whenever $i_1$ is a square in $\mathrm{P}\calA$, which yields the last point.

Now we prove the direct implication. So, we assume that there exists $\Phi \in \Aut(\calW_{p,q})$ such that
$B i_2=\Phi (B i_1) \Phi^{-1}$.
Again, we split $\Phi= j \Phi_b$ for some basic automorphism $\Phi_b$ and some
inner automorphism $j$. The basic automorphism $\Phi_b$ is necessarily positive because $\F[a]$ and $\F[b]$ are nonisomorphic, and in particular it commutes with $B$.
In any case $\Phi_b i_1=i_1^{\varepsilon} \Phi_b$ for some $\epsilon \in \{1,-1\}$.

Hence $B i_2=B (B^{-1} j B) (i_1^{\varepsilon} j^{-1})$
and it follows that $i_2=(B^{-1} j B) i_1^{\varepsilon} j^{-1}$.
If $j=1$ then $i_2=i_1^{\varepsilon}$, and outcome (i) holds (if $\varepsilon=-1$, note that $i_1=(i_1)^2 i_1^{-1}$).
Hence, from now on we assume that $j \neq 1$.

We will assume first that $i_1 \neq 1$.
Let us consider a reduced decomposition $j=j_1 \cdots j_p$ with respect to the decomposition of $\Inn(\calW_{p,q})$
as the internal free product of the subgroups $\mathrm{P}\SB(\calA)$ and $\mathrm{P}\SB(\calB)$.
Then $i_2=(B^{-1} j_1 B) \cdots (B^{-1} j_p B) i_1^{\varepsilon} j_p^{-1} \cdots j_1^{-1}$
and we note that if $j_p \not\in \mathrm{P}\SB(\calA)$ then we have just found a reduced decomposition of $i_2$ with
size greater than $1$, which is impossible. Hence $j_p \in \mathrm{P}\SB(\calA)$.

\begin{itemize}
\item \textbf{The case where $p=1$.} Then $i_2=(B^{-1} j_p B) i_1^{\varepsilon} j_p^{-1}$,
and we remember that conjugating by $B^{-1}$ induces the inversion of the group $\mathrm{P}\calA^\times$
and that $\calA$ is not degenerate because $B$ does not induce the identity on it. Hence, by moding out the subgroup $\SSB(\calA)$ if $\calA$ splits, and otherwise directly, we deduce that
$i_2=k^{-1} i_1^{\varepsilon} k^{-1}$ for some $k \in \mathrm{P}\calA^\times$.
If $\varepsilon=1$ we deduce that $i_2=(k^{-1})^2 i_1$, otherwise
$i_2=((i_1 k)^{-1})^2 i_1$. In any case we find that (i) holds.

\item \textbf{The case where $p>1$.}
Then we must have $(B^{-1} j_p B) i_1^{\varepsilon} j_p^{-1}=\id$,
otherwise in the above we find a reduced decomposition of $i_2$ with length $2p-1>1$.
Note already that this requires, by the same technique as in the previous case, that $i_1 \in (\mathrm{P} \calA^\times)^{[2]}$.
Then, thanks to $(B^{-1} j_p B) i_1^{\varepsilon} j_p^{-1}=\id$,
we continue by downward induction and find $i_2=(B^{-1} j_1 B)  j_1^{-1}$, which yields that $i_2 \in (\mathrm{P}\calC^\times)^{[2]}$
for some basic subalgebra $\calC$. Hence, outcome (i) occurs if $\calC = \calA$, otherwise outcome (ii) occurs.
\end{itemize}

Assume finally that $i_1=1$. Denote by $\calC$ the basic subalgebra such that $i_2 \in \mathrm{P} \calC$.
\begin{itemize}
\item If $B$ is not the identity on $\calC$, reversing the roles of $i_1$ and $i_2$ shows that $i_2 \neq 1$ would lead to one of conclusions (i) and (ii),
whereas outcome (i) holds if $i_2=1$.
\item If $B$ is the identity on $\calC$, then we directly know by Proposition \ref{prop:invol:conjugcasident}
that $i_2=i_1$, and hence outcome (i) holds.
\end{itemize}
\end{proof}

\begin{prop}\label{prop:invol:conjugcasnonidentswap}
Assume that $\F[a]$ and $\F[b]$ are isomorphic.
Let $B \in \BAut_+(\calW_{p,q}) \setminus \{\id\}$ be an involution, and $\calA$ be a basic subalgebra on which $B$ is not the identity.
Let $i_1,i_2$ belong to $P \calA^\times$.
Then $B i_1$ and $B i_2$ are conjugated in $\Aut(\calW_{p,q})$ if and only if $i_1=i_2 j^2$ for some $j \in \mathrm{P}\calA^\times$.
\end{prop}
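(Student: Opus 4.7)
The plan is to follow closely the template of Proposition \ref{prop:invol:conjugcasnonidentnoswap}, adapting it to the new features that here $\F[a]$ and $\F[b]$ are isomorphic (so swap automorphisms exist) and that we now require both $i_1$ and $i_2$ to lie in $\mathrm{P}\calA^\times$ for the same basic subalgebra $\calA$. The converse implication is immediate: for any $j \in \mathrm{P}\calA^\times$, because $B$ induces inversion on the commutative group $\mathrm{P}\calA^\times$, one has $j(B i_1) j^{-1} = B(B^{-1} j B)(i_1 j^{-1}) = B\, j^{-1} i_1 j^{-1} = B i_1 j^{-2}$, so $B i_1$ is conjugate to $B i_2$ whenever $i_1 \in i_2 (\mathrm{P}\calA^\times)^{[2]}$.

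For the direct implication, I assume $\Phi(B i_1) \Phi^{-1} = B i_2$ for some $\Phi \in \Aut(\calW_{p,q})$, and decompose $\Phi = j \circ \Phi_b$ via the Automorphisms Theorem. Expanding and invoking the uniqueness of the basic/inner decomposition forces both
$$\Phi_b B \Phi_b^{-1} = B \qquad \text{and}\qquad i_2 = (B^{-1} j B)(\Phi_b i_1 \Phi_b^{-1}) j^{-1}.$$
First I would treat the case where $\Phi_b$ is positive. Then $\Phi_b i_1 \Phi_b^{-1} = i_1^{\varepsilon}$ for some $\varepsilon \in \{\pm 1\}$ and the argument of Proposition \ref{prop:invol:conjugcasnonidentnoswap} applies verbatim, yielding either outcome (i) ($i_1 \in i_2 (\mathrm{P}\calA^\times)^{[2]}$) or its outcome (ii) ($i_1$ is a square in $\mathrm{P}\calA^\times$ and $i_2$ is a square in $\mathrm{P}\calB^\times$). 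In outcome (ii), since $i_2 \in \mathrm{P}\calA^\times \cap (\mathrm{P}\calB^\times)^{[2]} \subseteq \mathrm{P}\calA^\times \cap \mathrm{P}\SB(\calB) = \{\id\}$ by the Strong Units Theorem, we get $i_2 = \id$ and then $i_1$ is itself a square, so $i_1 = i_2 \cdot i_1 \in i_2 (\mathrm{P}\calA^\times)^{[2]}$, which collapses to outcome (i).

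Next I would handle the delicate case where $\Phi_b = S$ is a swap. Since $S$ exchanges the two basic subalgebras, the commutation $S B S^{-1} = B$ forces $B$ to restrict to the standard involution on both $\calA$ and $\calB$, i.e., $B = \Phi_\star$. Setting $i_1' := S i_1 S^{-1} \in \mathrm{P}\calB^\times$ and taking a reduced decomposition $j = j_1 \cdots j_p$ in the internal free product $\mathrm{P}\SB(\calA) * \mathrm{P}\SB(\calB)$ (the case $j = \id$ being immediate via $i_1' \in \mathrm{P}\calA^\times \cap \mathrm{P}\calB^\times = \{\id\}$), I would analyse
$$i_2 = (\Phi_\star^{-1} j_1 \Phi_\star) \cdots (\Phi_\star^{-1} j_p \Phi_\star)\, i_1'\, j_p^{-1} \cdots j_1^{-1}.$$
Since $\Phi_\star$ preserves each basic subalgebra, this is an alternating product with respect to the free product structure, so for $i_2$ to land in $\mathrm{P}\calA^\times$ cancellations must propagate from the middle outwards. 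The pivotal middle cancellation forces $j_p \in \mathrm{P}\SB(\calB)$ and $(\Phi_\star^{-1} j_p \Phi_\star) i_1' j_p^{-1} = \id$; passing to $\mathrm{P}\SB(\calB)/\SSB(\calB) \simeq \mathrm{P}\calB^\times$ and using that $\Phi_\star$ induces inversion on $\mathrm{P}\calB^\times$ yields $i_1' \in (\mathrm{P}\calB^\times)^{[2]}$. Transporting via $S$, $i_1 \in (\mathrm{P}\calA^\times)^{[2]}$. Iterating the downward cancellation produces the symmetric conclusion $i_2 \in (\mathrm{P}\calA^\times)^{[2]}$, and the commutativity of $\mathrm{P}\calA^\times$ finally gives $i_1 \in i_2(\mathrm{P}\calA^\times)^{[2]}$, which is outcome (i). The residual case $i_1 = \id$ (or symmetrically $i_2 = \id$) is handled by swapping the roles of $i_1$ and $i_2$, exactly as in Proposition \ref{prop:invol:conjugcasnonidentnoswap}.

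The main obstacle will be the swap case: one must carefully track the alternation in the free product $\mathrm{P}\SB(\calA) * \mathrm{P}\SB(\calB)$ while accounting for the action of $\Phi_\star$ on each semi-basic subgroup, and combine this with the projections modulo $\SSB(\calA)$ and $\SSB(\calB)$ to extract the squareness conditions in $\mathrm{P}\calA^\times$ and $\mathrm{P}\calB^\times$. The relative ease of Proposition \ref{prop:invol:conjugcasnonidentnoswap} came from the absence of swaps; here it is the rigidity of the pseudo-adjunction as the only positive basic involution compatible with swapping that allows the argument to close.
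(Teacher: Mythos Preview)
Your proof is correct and follows essentially the same approach as the paper's: split $\Phi = j\,\Phi_b$, reduce the positive case to Proposition~\ref{prop:invol:conjugcasnonidentnoswap} (collapsing its outcome (ii) via $i_2 \in \mathrm{P}\calA^\times \cap \mathrm{P}\SB(\calB) = \{\id\}$), and in the swap case observe that $B=\Phi_\star$, set $i_1' = S i_1 S^{-1} \in \mathrm{P}\calB^\times$, and run the same free-product cancellation argument with $i_1'$ in place of $i_1$. Your write-up is actually more explicit than the paper's in justifying the collapse of outcome (ii) and in handling the $j=\id$ subcase of the swap branch.
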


\begin{proof}
We already know that the converse implication holds. Assume now that $B i_1$ and $B i_2$ are conjugated in $\Aut(\calW_{p,q})$.
Assume that there exists $\Phi \in \Aut(\calW_{p,q})$ such that
$B i_2=\Phi (B i_1) \Phi^{-1}$.
Again, we split $\Phi= j \Phi_b$ for some involutory basic automorphism $\Phi_b$ and some
inner automorphism $j$. If $\Phi_b$ is not a swap, we can follow the chain of arguments from the proof of Proposition
\ref{prop:invol:conjugcasnonidentnoswap}, and end up
with the stated outcome because we have assumed that $i_2 \in P \calA^\times$.

It remains to consider the case where $\Phi_b$ is a swap.
Then we note that $B$ commutes with $\Phi_b$, and hence $B=\Phi_\star$ because $B$ is not the identity on $\calA$.
Set $i'_1:=\Phi_b i_1 \Phi_b^{-1}$, which belongs to $\mathrm{P}\calB^\times$, where $\calB$ stands for the basic subalgebra opposite to $\calA$.
Now $i_2=(B^{-1} j B) i'_1 j^{-1}$.
Then we follow the same method as in the proof of Proposition \ref{prop:invol:conjugcasnonidentnoswap}, but because
$i_2$ and $i'_1$ are associated with distinct basic subalgebras the case $p=1$ now
requires that $i_2=1$ and that $i'_1$ is a square in $\mathrm{P}\calB^\times$, to the effect that $i_1$ is a square in $\mathrm{P}\calA^\times$.
The case $p>1$ yields exactly the same results, and the conclusion follows.
\end{proof}

\subsubsection{Conclusion}

Now, we can conclude our study of involutions.

We roughly put the orbits of non-identity involutions into three categories:
\begin{itemize}
\item the orbits of the basic automorphisms that are involutory (but not the identity);
\item the orbits of the inner automorphisms attached to basic units (and that are involutory but different from the identity);
\item the orbits of \textbf{mixed type}, that contain an element of the form $B i$
in which $B$ is a positive involutory basic automorphism that is not the identity, and $i$ is an inner automorphism
attached to a basic unit, but that contain no basic automorphism.
\end{itemize}
The classification of the two former types has already been explained.
As for the classification of the latter, it is given in Propositions \ref{prop:invol:conjugcasident},
\ref{prop:invol:conjugcasnonidentnoswap} and \ref{prop:invol:conjugcasnonidentswap}.
We note finally that if a basic subalgebra $\calA$ is degenerate, then $\mathrm{P}\calA^\times$ is isomorphic to $(\F,+)$,
so every element of $\mathrm{P}\calA^\times$ is a square (and we already knew that no element of $\mathrm{P}\calA^\times$ has order $2$).
This largely simplifies the classification of orbits of mixed type (there is no such orbit if both $\F[a]$ and $\F[b]$ are degenerate).

We split the discussion into two cases, whether there is a swap or not.
If there is a swap and the basic subalgebras are degenerate, then we note that
every element of order $2$ in $\Aut(\calW_{p,q})$ is conjugated to an involutory basic automorphism,
and the classification of the latter up to conjugation is known.

\begin{table}[H]\label{table2}
\begin{center}
\caption{Conjugacy classes of elements of order $2$ in $\Aut(\calW_{p,q})$ when $\car(\F) \neq 2$
and the basic subalgebras are nonisomorphic}
\begin{tabular}{| c || c | c | c | c |}
\hline
Represen- & Nature  & Nature & Nature & Classifying \\
tative  & of $B$ & of $\calA$ & of $i$ & data \\
\hline
\hline
 &  & Basic & Element of &  \\
$i$ & - & subalgebra & order $2$ & $i$ \\
 &  & & in $\mathrm{P}\calA^\times$ & \\
\hline
 & Involutory &  &  &  \\
$B$ &  basic  & - & - & $B$ \\
&  automorphism &  &  & \\
& $\neq \id$ & & & \\
\hline
 & Involutory  &  &   & \\
&  basic  & Basic & Element of  & \\
$B i$ &  automorphism & subalgebra & order $2$ & $B,i$ \\
& $\neq \id$ & & in $\mathrm{P}\calA^\times$ & \\
& $B_{|\calA} =\id_\calA$ & &  & \\
\hline
 & Involutory  & Basic  & Non-square  & $B,\calA,$ \\
$B i$ &  basic   & subalgebra &  element  & coset of $i$ \\
&  automorphism  & of $\mathrm{P}\calA^\times$ &  of $\mathrm{P}\calA^\times$  & in $\mathrm{P} \calA^\times/(\mathrm{P} \calA^\times)^{[2]}$ \\
& $B_{|\calA} \neq \id_\calA$ &  &  &  \\
\hline
\end{tabular}
\end{center}
\end{table}

\begin{table}[H]\label{table3}
\begin{center}
\caption{Conjugacy classes of elements of order $2$ in $\Aut(\calW_{p,q})$ when $\car(\F) \neq 2$
and the basic subalgebras are isomorphic}
\begin{tabular}{| c || c | c | c |}
\hline
Representative & Nature  & Nature & Classifying \\
  & of $B$ & of $i$ & data \\
\hline
\hline
 &  & Element of &  \\
$i$ & - & order $2$ & None \\
 &  & in $\mathrm{P}\F[a]^\times$ &  \\
\hline
$B$ & Swap  & - & None \\
\hline
$B$ & Pseudo-adjunction & - & None \\
\hline
 & Positive involutory   &  &  \\
$B$ &  basic automorphism & - & None \\
& $\neq \id$, $\neq \Phi_\star$ & & \\
\hline
 & Positive involutory  & Element of & \\
$B i$ & basic automorphism & order $2$  & $B,i$ \\
& $\neq \id$   & in $\mathrm{P}\F[a]^\times$ &  \\
& $B(a)=a$  &  & \\
\hline
 & Positive involutory  & Non-square  & $B,$ \\
$B i$ & basic automorphism  &   element & coset of $i$ \\
& $B(a)=a^\star$   & of $\mathrm{P}\F[a]^\times$ & in $\mathrm{P} \F[a]^\times/(\mathrm{P} \F[a]^\times)^{[2]}$ \\
\hline
\end{tabular}
\end{center}
\end{table}


\begin{thebibliography}{1}


\bibitem{BermanBuzaki}
S.D. Berman, K. Buz\'aki,
{Representations of the infinite dihedral group.}
(Russian)
Publ. Math. Debrecen
{\bf 1-2} (1991) 173--187.

\bibitem{Bondarenko}
V.M. Bondarenko,
{Representations of dihedral groups over a field of characteristic 2.}
Math. USSR Sbornik
{\bf 25-58} (1975) 58--68.

\bibitem{Buchananetal}
A. Buchanan, I. Dimitrov, O. Grace, C. Paquette, D. Wehlau, T. Xu,
{Representations of free products of semisimple algebras via quivers.}
J. Pure Appl. Algebra
{\bf 228-4} (2024).

\bibitem{Brenner}
S. Brenner,
{On Four Subspaces of a Vector Space,}
J. Algebra
{\bf 29} (1974), 587--599.

\bibitem{CohnFIR}
P.M. Cohn,
{Free Ideal Rings.}
J. Algebra
{\bf 1} (1964), 47--69.


\bibitem{CohnFreeAssociativeII}
P.M. Cohn,
{On the Free Product of Associative Rings. II. The Case of (Skew) Fields}
Math. Z.
{\bf 73} (1960), 433--456.

\bibitem{CohnFreeAssociativeIII}
P.M. Cohn,
{On the Free Product of Associative Rings. III}
J. Algebra
{\bf 8} (1968), 376--383.

\bibitem{CohnFirsEmbedding}
P.M. Cohn,
{The Embedding of Firs in Skew Fields.}
Proc. London Math. Soc.
{\bf 23} (1971), 193--213.

\bibitem{CohnFreeProductSkewFields}
P.M. Cohn,
{The Free Product of Skew Fields.}
J. Austral. Math. Soc.
{\bf 3} (1973), 300--308.

\bibitem{Czerniakiewicz}
A.J. Czerniakiewicz,
{Automorphisms of a free associative algebra of rank 2. II}
Trans. Amer. Math. Soc.
{\bf 171} (1972), 309--315.

\bibitem{Djokovic}
D.Z. Djokovic,
{Representations of the infinite dihedral group.}
J. Algebra
{\bf 100} (1986) 214--223.

\bibitem{SZW}
V. Drensky, J. Szigeti, L. van Wyk,
{Algebras generated by two quadratic elements.}
Commun. Algebra
{\bf 39} (2011), 1344--1355.

\bibitem{Elmanetal}
R. Elman, N. Karpenko, A. Merkurjev,
{The algebraic and geometric theory of quadratic forms.}
AMS Colloquium Publications
{\bf 56} (2008).

\bibitem{GelfandPonomarev}
I.M. Gelfand, V.A. Ponomarev,
{Problems of linear algebra and classification of quadruples of spaces in a finite dimensional vector space.}
Colloq. Math. Soc. J\'anos Bolyai, Vol. 5, Hilbert Space Operators, Tihany, 1970.

\bibitem{Gindi}
S. Gindi,
{Representation theory of the algebra generated by a pair of complex structures.}
J. Algebra Appl.
{\bf 20-11} (2021).

\bibitem{Jung}
H. W.E. Jung,
{Uber ganze birationale Transformationen der Ebene.}
J. Reine Angew. Math.
{\bf 184} (1942), 161--174.

\bibitem{Laffey}
T. Laffey,
{Algebras generated by two idempotents.}
Linear Algebra Appl.
{\bf 37} (1981), 45--53.

\bibitem{Lang}
S. Lang,
{Algebra, 3rd ed}
Graduate Texts in Mathematics
{\bf 211}, Springer-Verlag, New York, 2002.

\bibitem{Makar-Limanov}
L.G. Makar-Limanov,
{Automorphisms of a free algebra with two generators.}
Functional Analysis Appl.
{\bf 37} (1970), 262--264.

\bibitem{Nazarova}
L.A. Nazarova,
{Representations of tetrads.}
Izv. Akad. Nauk S.S.S.R., Ser. Mat.
{\bf 31} (1967), 1361--1378.

\bibitem{Pierce}
R.S. Pierce
{Associative Algebras.}
Graduate Texts in Mathematics
{\bf 88}, Springer-Verlag, New York, 1982.

\bibitem{Ringel}
C.M. Ringel,
{The indecomposable representations of the dihedral $2$-groups.}
Math. Ann.
{\bf 214} (1975), 19--34.

\bibitem{dSPregular}
C. de Seguins Pazzis,
{The sum and the product of two quadratic matrices: regular cases.}
Advances Appl. Clifford Algebras
{\bf 32} (2022).

\bibitem{dSPsum}
C. de Seguins Pazzis,
{The sum of two quadratic matrices: Exceptional cases.}
Linear Algebra Appl.
{\bf 653} (2022), 67--109.

\bibitem{dSPprod}
C. de Seguins Pazzis,
{The product of two invertible quadratic matrices: Exceptional cases.}
Linear Algebra Appl.
{\bf 653} (2022), 357--394.

\bibitem{Serre}
J.-P. Serre,
{A Course in Arithmetics.}
Graduate Texts in Mathematics
{\bf 7}, Springer-Verlag, New York, 1973.

\bibitem{vanderKulk}
W. van der Kulk,
{On polynomial rings in two variables.}
Nieuw Archief voor Wisk.
{\bf (3) - 1} (1953), 33--41.

\bibitem{Voight}
J. Voight,
{Quaternion Algebras.}
Graduate Texts in Mathematics
{\bf 288}, Springer-Verlag, New York, 2021.

\bibitem{Weiss1}
Y. Weiss,
{Algebras that are generated by two idempotents (Russian).}
Seminar Analysis of the Karl-Weierstrass-Inst.
(Berlin 1987/1988), {\bf 1988}, 139--145.

\bibitem{Weiss2}
Y. Weiss,
{On Banach algebras generated by two idempotents, Curto, Raul E. (ed.) et al.}
Algebraic methods in operator theory, Birkh\"auser, Boston, {\bf 1994}, 90--97.


\end{thebibliography}
\end{document}